\tikzset{knotarrow/.pic={ \draw[edge, <-] (0,0) -- +(-.001,0);}}
\tikzset{edge/.style={line width=0.8}}
\tikzset{wall/.style={very thick}}
\tikzset{->-/.style n args={2}{decoration={markings, mark=at position #1 with {\arrow{#2}}}, postaction={decorate}}} 
\tikzset{-o-/.code 2 args={\ifstreqF{#2}{} 
{\ifstreqTF{#2}{>}
   {\pgfkeysalso{decoration={markings,mark=at position #1 with {\arrow[scale=0.8]{#2}}}
                    ,postaction={decorate}}
    }
   {\ifstreqTF{#2}{<}
       {\pgfkeysalso{decoration={markings,mark=at position #1 with {\arrow[scale=0.8]{#2}}}
                    ,postaction={decorate}}
        }
       {\pgfkeysalso{decoration={markings,
                    mark=at position #1 with
                    {\draw[black, fill={#2}] circle[radius=2pt];}}
                    ,postaction={decorate}}
        }
     }
  }}}
\newtheorem{theorem}{Theorem}[section]
\newtheorem{lemma}[theorem]{Lemma}
\newtheorem{definition}[theorem]{Definition}
\newtheorem{corollary}[theorem]{Corollary}
\newtheorem{proposition}[theorem]{Proposition}
\newtheorem{remark}[theorem]{Remark}
\newtheorem{conjecture}[theorem]{Conjecture}
\newcommand{\Rlabel}[1]{\hyperref[R#1]{(R#1)}}
\newcommand{\wideoverunder}[2]{ 
\!\begin{array}{c} 
\scriptstyle{#1}\\[-.1in] 
-\!\!\!-\!\!\!-\!\!\!-\!\!\!-\!\!\!-\\[-.1in] 
\scriptstyle{#2} 
\end{array} 
\! 
} 
\newcommand{\gaa}{\mathsf{g}}
\newcommand{\bp}{\begin{proposition}}
\newcommand{\ep}{\end{proposition}}
\newcommand{\bpr}{\begin{proof}}
\newcommand{\epr}{\end{proof}}
\newcommand{\bt}{\begin{theorem}}
\newcommand{\et}{\end{theorem}}
\newcommand{\bl}{\begin{lemma}}
\newcommand{\el}{\end{lemma}}
\newcommand{\bcr}{\begin{corollary}}
\newcommand{\ecr}{\end{corollary}}
\newcommand{\be}{\begin{equation}}
\newcommand{\ee}{\end{equation}}
\newcommand{\bes}{\begin{equation*}}
\newcommand{\ees}{\end{equation*}}
\newcommand{\ba}{\begin{align}}
\newcommand{\ea}{\end{align}}
\newcommand{\bas}{\begin{align*}}
\newcommand{\eas}{\end{align*}}
\DeclareMathOperator{\skeleton}{\textbf{sk}}
\DeclareMathOperator{\im}{\mathrm{Im}}
\DeclareMathOperator{\Int}{\mathrm{Int}}
\DeclareMathOperator{\sgn}{\mathrm{sgn}}
\newcommand{\blue}[1]{{\color{blue}#1}}
\begin{document}
\bibliographystyle{alpha}

\title{Quantum cluster realization for projected stated ${\rm SL}_n$-skein algebras}

\author[Min Huang]{Min Huang}
\address{Min Huang, School of Mathematics (Zhuhai), Sun Yat-sen University, Zhuhai, China.}
\email{huangm97@mail.sysu.edu.cn}

\author[Zhihao Wang]{Zhihao Wang}
\address{Zhihao Wang, School of Mathematics, Korea Institute for Advanced Study (KIAS), 85 Hoegi-ro, Dongdaemun-gu, Seoul 02455, Republic of Korea}
\email{zhihaowang@kias.re.kr}

\keywords{}

 \maketitle

\begin{abstract}
We introduce a quantum cluster algebra structure $\mathscr A_\omega(\mathfrak{S})$ inside the skew-field fractions ${\rm Frac}\bigl(\widetilde{\mathscr{S}}_\omega(\mathfrak{S})\bigr)$ of the projected stated ${\rm SL}_n$–skein algebra $\widetilde{\mathscr{S}}_\omega(\mathfrak{S})$ (the quotient of the reduced stated ${\rm SL}_n$–skein algebra by the kernel of the quantum trace map) for any triangulable pb surface $\mathfrak{S}$ without interior punctures. To study the relationships among the projected ${\rm SL}_n$-skein algebra $\widetilde{\mathscr{S}}_\omega(\mathfrak{S})$, the quantum cluster algebra $\mathscr A_\omega(\mathfrak{S})$, and its quantum upper cluster algebra $\mathscr U_\omega(\mathfrak{S})$, we construct a splitting homomorphism for $\mathscr U_\omega(\mathfrak{S})$ and show that it is compatible with the splitting homomorphism for $\widetilde{\mathscr{S}}_\omega(\mathfrak{S})$. When every connected component of $\mathfrak{S}$ contains at least two punctures, this compatibility allows us to prove that $\widetilde{\mathscr{S}}_\omega(\mathfrak{S})$ embeds into $\mathscr A_\omega(\mathfrak{S})$ by showing that the stated arcs joining two distinct boundary components of $\mathfrak{S}$ (which generate $\widetilde{\mathscr{S}}_\omega(\mathfrak{S})$) are, up to multiplication by a Laurent monomial in the frozen variables, exchangeable cluster variables. We further conjecture that these exchangeable cluster variables generate the quantum upper cluster algebra $\mathscr U_\omega(\mathfrak{S})$, which, if true, would imply the equality $\widetilde{\mathscr{S}}_\omega(\mathfrak{S})=\mathscr A_\omega(\mathfrak{S})=\mathscr U_\omega(\mathfrak{S})$.

\end{abstract}

\tableofcontents

\newcommand{\ca}{{\cev{a}  }}

\def\fS{\mathfrak{S}}
\def\dS{\widetilde{\cS}_\omega(\fS)}

\def\BZ{\mathbb Z}
\def\Id{\mathrm{Id}}
\def\Mat{\mathrm{Mat}}
\def\BN{\mathbb N}

\def \cb {\color{blue}}
\def \cred {\color{red}}
\def \cbf {\color{blue}\bf}
\def \credf {\color{red}\bf}
\definecolor{ligreen}{rgb}{0.0, 0.3, 0.0}
\def \cg {\color{ligreen}}
\def \cgf {\color{ligreen}\bf}
\definecolor{darkblue}{rgb}{0.0, 0.0, 0.55}
\def \dbf {\color{darkblue}\bf}
\definecolor{anti-flashwhite}{rgb}{0.55, 0.57, 0.68}
\def \afw {\color{anti-flashwhite}}
\def\cF{\mathbb F}
\def\cP{\mathcal P}
\def\embed{\hookrightarrow}
\def\pr{\mathrm{pr}}
\def\cV{\mathcal V}
\def\ot{\otimes}
\def\buu{{\mathbf u}}


\def \ri {{\rm i}}
\newcommand{\bs}[1]{\boldsymbol{#1}}
\newcommand{\cev}[1]{\reflectbox{\ensuremath{\vec{\reflectbox{\ensuremath{#1}}}}}}
\def\bS{\bar \fS}
\def\cE{\mathcal E}
\def\fB{\mathfrak B}
\def\cR{\mathcal R}
\def\cY{\mathcal Y}
\def\cS{\mathscr S}
\def\rS{\overline{\cS}_\omega}

\def\fS{\mathfrak{S}}

\def\MN {(M)}
\def\cN {\mathcal{N}}
\def\SL{{\rm SL}_n}

\def\bP{\mathbb P}
\def\bR{\mathbb R}

\def\SS{\cS_{\omega}(\fS)}
\def\rdS{\overline \cS_{\omega}(\fS)}
\def\rdP{\overline \cS_{\omega}(\mathbb{P}_4)}

\def\Vm{\mathcal V_{\text{mut}}}
\def\Vc{\mathcal V}

\newcommand{\beq}{\begin{equation}}
	\newcommand{\eeq}{\end{equation}}

\section{Introduction}\label{sec-intro}
In this paper, we work with the ground ring $R$, which is a commutative domain with an invertible element
$\omega^{\frac{1}{2}}$. Set $\xi = \omega^{n}$ and $q=\omega^{n^2}$ with
$\xi^{\frac{1}{2n}} = \omega^{\frac{1}{2}}$
and $q^{\frac{1}{2n^2}} = \omega^{\frac{1}{2}}$.  
Define the following constants:
\begin{align}\label{intro-constants}
\mathbbm{c}_{i}= (-q)^{i-n} q^{\frac{1-n}{2n}},\quad
\mathbbm{t}= (-1)^{n-1} q^{\frac{1-n^2}{n}},\quad 
\mathbbm{a} =   q^{-\frac{n+1-2n^2}{4}}.
\end{align}
Unless otherwise specified, all algebras in this paper are assumed to be $R$-algebras.

\subsection{(Quantum) cluster algebras}
Cluster algebras, introduced by Fomin and Zelevinsky \cite{FZ}, are a class of commutative algebras equipped with a distinguished set of generators known as cluster variables. These variables are grouped into collections called clusters, which are related by certain birational transformations known as mutations (\S\ref{sec-mutation-classical}). In \cite{BFZ}, Berenstein, Fomin, and Zelevinsky introduced the notion of upper cluster algebras, defined as the intersection of the Laurent polynomial rings associated with each cluster. From a geometric perspective, upper cluster algebras are often more natural objects to consider. Cluster variables are rational functions by construction. In \cite{FZ}, Fomin and Zelevinsky proved that they are Laurent polynomials of initial cluster variables, known as the Laurent phenomenon. Laurent polynomials were proven to have non-negative coefficients, known as positivity, see \cite{LS,GHKK,D}.

The quantum analogues of cluster algebras and upper cluster algebras (Definition \ref{def-quan-cluster-algebra}) were introduced by Berenstein and Zelevinsky in \cite{BZ}, where they also showed that the Laurent phenomenon admits a quantum analogue in this setting.

For a (quantum) cluster algebra $\mathscr A$ and its upper cluster algebra $\mathscr U$, the Laurent phenomenon ensures that $\mathscr A\subseteq \mathscr U$. However, in general, $\mathscr A\neq \mathscr U$. The problem of whether a cluster algebra coincides with its upper cluster algebra was posed by Berenstein, Fomin, and Zelevinsky and has been studied in \cite{BFZ}. This question has attracted considerable attention since its inception; see, for example, \cite{M,M1,CLS,GY,SW,L,MW,IOS,CGGLS} and references therein. If $\mathscr A=\mathscr U$, then the cluster algebra $\mathscr A$ enjoys several desirable properties, including the existence of a generic basis and a theta basis \cite{CKQ,Q,GLS,GHKK,GLS1}. 

The search for (quantum) cluster algebra and upper (quantum) cluster algebra structures on important algebraic and geometric objects has drawn significant interest; see, for example, \cite{BFZ,FZ,PT,GLS2,S,SSW,SW,CK,G,L1} and the references therein. Cluster algebras and cluster varieties have also been constructed in the context of Teichmüller theory \cite{FG06,FST,GSV,GS19}, and are closely related to skein theory \cite{muller2016skein,ishibashi2023skein,LY22,LY23,KimWang,Kim21,IY1}.

\subsection{Reduced stated ${\rm SL}_n$-skein algebras and quantum trace maps}\label{intro-sec-reduced}

A  \emph{pb surface} $\fS$ is obtained from a compact oriented surface $\overline{\fS}$ by removing finitely many points, which are called \emph{punctures}, such that every boundary component of $\fS$ is diffeomorphic to an open interval. 
An embedding $c:(0,1)\rightarrow \fS$ is called an \emph{ideal arc} if both
$\bar c(0)$ and $\bar c(1)$ are punctures, where $\bar c\colon [0,1] \to \overline{\fS}$ is the `closure' of $c$.
For any positive integer $k$, we use $\mathbb P_k$ to denote the pb surface obtained from the closed disk by removing $k$ punctures from the boundary. 

The \emph{stated ${\rm SL}_n$–skein algebra} $\cS_\omega(\fS)$ of a pb surface $\fS$  
is the quotient of the $R$–module freely generated by the set of isotopy classes of stated $n$–webs  
(Definition~\ref{def-n-web}) in $\fS \times (-1,1)$, subject to the relations \eqref{w.cross}–\eqref{wzh.eight}.  
For two stated $n$-webs $\alpha,\beta$, their product $\alpha\beta$ is defined by stacking $\alpha$ over $\beta$.
The stated ${\rm SL}_n$–skein algebra was introduced by L{\^e} and Sikora \cite{LS21} as a generalization of the ${\rm SL}_n$–skein algebra  
\cite{Sik05} to the stated setting, extending the stated ${\rm SL}_2$– and ${\rm SL}_3$–skein algebras  \cite{le2018triangular,higgins2020triangular} to the ${\rm SL}_n$ case.  
The \emph{reduced stated ${\rm SL}_n$–skein algebra} $\overline{\cS}_\omega(\fS)$ \cite{LY23},  
which is the main focus of this paper, is defined as the quotient of $\cS_\omega(\fS)$  
by the two–sided ideal generated by all bad arcs (see Figure~\ref{Fig;badarc}).

\vspace{0.2cm}

Let $\fS$ be a triangulable pb surface (see \S\ref{sec-traceX}),  
and let $\lambda$ be a triangulation of $\fS$—that is, a maximal collection of pairwise disjoint, non-isotopic ideal arcs in $\fS$  
(note that self-folded triangles are not allowed; see \S\ref{sec-traceX}).  
Cutting $\fS$ along all ideal arcs that are not isotopic to any component of $\partial\fS$ yields a collection of triangles (copies of $\mathbb P_3$),  
denoted by $\mathbb F_\lambda$.  
For each $\tau=\mathbb P_3\in \mathbb F_\lambda$, there is a weighted quiver $\Gamma_\tau$ inside $\tau$ (see Figure~\ref{Fig;coord_ijk})  
whose arrows have weight $1$, except those lying on the boundary, which have weight $\tfrac{1}{2}$.  
The vertices of $\Gamma_\tau$ are called \emph{small vertices}.  
By gluing the triangles in $\bigsqcup_{\tau\in \mathbb F_\lambda}\tau$ back together to recover $\fS$,  
we obtain a quiver $\Gamma_\lambda$ on $\fS$: whenever two edges are identified, the small vertices on these edges are identified as well,  
and any pair of arrows of equal weight but opposite direction between two vertices cancel each other.
We denote by $V_\lambda$ the vertex set of $\Gamma_\lambda$; each element of $V_\lambda$ is called a \emph{small vertex}.

Let $Q_\lambda\colon V_\lambda\times V_\lambda\rightarrow \tfrac{1}{2}\mathbb Z$ be the signed adjacency matrix of $\Gamma_\lambda$ (see \eqref{eq-def-Q-lambda-re}).  
The \emph{$n$-th root Fock–Goncharov algebra} is defined by
\[
\mathcal{Z}_{\omega}(\fS,\lambda)
=  R \langle
Z_v^{\pm 1},\, v \in V_\lambda \rangle \big/ \bigl(
Z_v Z_{v'}= \omega^{\, 2 Q_\lambda(v,v')} Z_{v'} Z_v \;\; \text{for } v,v'\in V_\lambda \bigr).
\]
The $\mathcal X$-version quantum trace, established in \cite{LY23}, is the algebra homomorphism
\begin{align*}
    {\rm tr}_\lambda\colon \overline{\cS}_\omega(\fS)
    \longrightarrow \mathcal{Z}_{\hat\omega}(\fS,\lambda)
    \qquad \text{(see \cite{BW11,LY22,Kim20} for $n=2,3$).}
\end{align*}
L{\^e} and Yu introduced a subalgebra \cite{LY23}, called the \emph{balanced Fock–Goncharov algebra},
\begin{align}
\mathcal{Z}_{\omega}^{\rm bl}(\fS,\lambda)
= \operatorname{span}_R\{Z^{\bf k}\mid {\bf k}\in\mathcal B_\lambda\}
\subset \mathcal{Z}_\omega(\fS,\lambda),
\end{align}
where $\mathcal B_\lambda$ is the subgroup of $\mathbb Z^{V_\lambda}$ defined in \eqref{B_lambda}.
It was proved in \cite{LY23} that $\operatorname{im} {\rm tr}_\lambda \subset \mathcal{Z}_{\omega}^{\rm bl}(\fS,\lambda)$.

\vspace{0.2cm}

It is well known that $\mathcal{Z}_{\omega}^{\rm bl}(\fS,\lambda)$ is an Ore domain \cite{Cohn}.  
We denote its skew-field of fractions by ${\rm Frac}\bigl(\mathcal{Z}_{\omega}^{\rm bl}(\fS,\lambda)\bigr)$.
Let $\lambda'$ be another triangulation of $\fS$.  
Kim and the second author established an isomorphism \cite{KimWang}
\begin{align*}
    \Theta_{\lambda\lambda'}^\omega
    \colon {\rm Frac}\bigl(\mathcal{Z}_{\omega}^{\rm bl}(\fS,\lambda')\bigr)
    \longrightarrow
    {\rm Frac}\bigl(\mathcal{Z}_{\omega}^{\rm bl}(\fS,\lambda)\bigr)
\end{align*}
using a sequence of generalized quantum $\mathcal X$-mutations (see \eqref{eq-Theta2}, \eqref{eq-Theta-change2}, and Proposition~\ref{Prop-rest-bal}).  
See \cite{LY23} for a different construction of $\Theta_{\lambda\lambda'}$, and \cite{Kim21} for the case ${\rm SL}_3$.
It was shown in \cite{KimWang} that the following diagram commutes (Theorem~\ref{thm-main-compatibility}):
\begin{align}
        \label{intro-eq-compability-tr-mutation}
        \xymatrix{
        & \overline{\cS}_\omega(\fS) \ar[dl]_{{\rm tr}_{\lambda'}} \ar[dr]^{{\rm tr}_\lambda} & \\
        {\rm Frac}\bigl(\mathcal{Z}^{\rm bl}_\omega(\fS,\lambda')\bigr)
        \ar[rr]_-{\Theta^\omega_{\lambda\lambda'}} & &
        {\rm Frac}\bigl(\mathcal{Z}^{\rm bl}_\omega(\fS,\lambda)\bigr)
        }.
\end{align}

Assume that $\fS$ has no interior punctures.  
There is another antisymmetric matrix \eqref{eq-anti-matric-P-def}
\begin{align*}
    P_\lambda\colon V_\lambda\times V_\lambda\rightarrow n\mathbb Z
\end{align*}
associated with the triangulation $\lambda$ of $\fS$, which equals $-\overline{\mathsf P}_\lambda$ defined in \cite[Equations~(163) and (205)]{LY23}.
Put $\Pi_\lambda=\frac{1}{n}P_\lambda$.
The \emph{$\mathcal A$-version quantum torus} of $(\fS,\lambda)$ is defined by
\begin{equation*}
\mathcal{A}_{\omega}(\fS,\lambda)
= R \langle 
A_v^{\pm 1},\, v \in V_\lambda \rangle \big/ \bigl(
A_v A_{v'}= \xi^{\Pi_\lambda(v,v')} A_{v'} A_v
\text{ for } v,v'\in V_\lambda \bigr),
\end{equation*}
where $\xi=\omega^n$.
There exists an algebra isomorphism \cite{LY23} (see \eqref{def-A-bal-isomor})
\begin{align}
 \psi_\lambda \colon \mathcal{A}_{\omega}(\fS,\lambda)\longrightarrow
\mathcal{Z}_{\omega}^{\rm bl}(\fS,\lambda).
\end{align}
Moreover, there is an algebra homomorphism \cite{LY23}
\[
{\rm tr}_\lambda^A\colon
\overline{\cS}_\omega(\fS)\longrightarrow \mathcal{A}_{\omega}(\fS,\lambda)
\]
with the following properties:
\begin{enumerate}[label={\rm (\alph*)}]\itemsep0.3em
    \item\label{intro-thm-trace-A-a}
    $\mathcal{A}_{\omega}^{+}(\fS,\lambda)\subset\operatorname{im} {\rm tr}_\lambda^A\subset \mathcal{A}_{\omega}(\fS,\lambda)$,  
    where $\mathcal{A}_{\omega}^{+}(\fS,\lambda)$ is the $R$-subalgebra of $\mathcal{A}_{\omega}(\fS,\lambda)$ generated by $A^{\bf k}$ for ${\bf k}\in\mathbb N^{V_\lambda}$.  
    Here $A^{\bf k}$ is the Laurent monomial defined using the Weyl-ordered product (see \eqref{eq-A-power}).

    \item \label{intro-thm-trace-A-b}
    If $n=2,3$, or $n>3$ and $\fS$ is a polygon (i.e., $\fS=\mathbb P_k$ for some $k>2$), then ${\rm tr}_\lambda^A$ is injective.

    \item\label{intro-thm-trace-A-c}
    The following diagram commutes:
    \begin{align}
        \label{intro-eq-compability-tr-A-X-diag}
        \xymatrix{
        &  \overline{\cS}_\omega(\fS) \ar[dl]_{{\rm tr}_{\lambda}^A} \ar[dr]^{{\rm tr}_\lambda} & \\
        \mathcal{A}_{\omega}(\fS,\lambda) \ar[rr]_-{\psi_\lambda} & &
        \mathcal{Z}_\omega^{\rm bl}(\fS,\lambda)
        }
    \end{align}
\end{enumerate}


\subsection{The ${\rm SL}_n$ quantum cluster structure}
Let $\fS$ be a triangulable pb surface without interior punctures, and let $\lambda$ be a triangulation of $\fS$.  
Properties \ref{intro-thm-trace-A-a} and \ref{intro-thm-trace-A-b} of ${\rm tr}_\lambda^A$ imply that $\overline{\cS}_\omega(\fS)$ is an Ore domain when $n=2,3$.  We denote its skew-field of fractions by ${\rm Frac}\bigl(\overline{\cS}_\omega(\fS)\bigr)$.  
It was established in \cite{muller2016skein,ishibashi2023skein,LY22} that there is a natural quantum seed (see Definition~\ref{def-quantum-seed}) inside ${\rm Frac}\bigl(\overline{\cS}_\omega(\fS)\bigr)$ for $n=2,3$.  
In this paper, we generalize this construction to arbitrary ${\rm SL}_n$.

To achieve this, we require the injectivity of ${\rm tr}_\lambda^A$ (or ${\rm tr}_\lambda$), which has not been proved for general $n$ but is expected to hold.  
We therefore define the \emph{projected stated ${\rm SL}_n$–skein algebra} (Definition~\ref{def-key-algebra}) by
\[
\widetilde{\cS}_\omega(\fS):=
\overline{\cS}_\omega(\fS)\big/ \ker {\rm tr}_\lambda^A=
\overline{\cS}_\omega(\fS)\big/ \ker {\rm tr}_\lambda.
\]
With this definition, we can show that 
${\rm tr}_\lambda^A\colon \widetilde{\cS}_\omega(\fS)\to \mathcal{A}_{\omega}(\fS,\lambda)$ is injective and that $\widetilde{\cS}_\omega(\fS)$ is an Ore domain (Lemma~\ref{lem-basic-lem}).  
We then regard $\widetilde{\cS}_\omega(\fS)$ as a subalgebra of $\mathcal{A}_{\omega}(\fS,\lambda)$ via ${\rm tr}_\lambda^A$.  
Property \ref{intro-thm-trace-A-a} of ${\rm tr}_\lambda^A$ further implies
\begin{align}\label{intro-identity-Frac}
    {\rm Frac}\bigl(\widetilde{\cS}_\omega(\fS)\bigr)
= {\rm Frac}\bigl(\mathcal{A}_{\omega}(\fS,\lambda)\bigr).
\end{align}
Property \ref{intro-thm-trace-A-b} of ${\rm tr}_\lambda^A$ shows that 
$\widetilde{\cS}_\omega(\fS)=\overline{\cS}_\omega(\fS)$
when $n=2,3$, or $n>3$ and $\fS$ is a polygon.

\vspace{0.2cm}

To construct the quantum seed inside ${\rm Frac}\bigl(\widetilde{\cS}_\omega(\fS)\bigr)$, we first specify the vertex set $\mathcal V$ and the mutable vertex set $\mathcal V_{\rm mut}$ (see \S\ref{sec-mutation-classical}).  
Set $\mathcal V = V_\lambda$, and let $\mathcal V_{\rm mut} = \mathring{V}_\lambda$ be the subset of vertices of $V_\lambda$ lying in the interior of $\fS$.  
With the identification in \eqref{intro-identity-Frac}, the triple $(Q_\lambda,\Pi_\lambda,M_\lambda)$ forms a quantum seed (Definition~\ref{def-quantum-seed}) inside the skew-field ${\rm Frac}\bigl(\widetilde{\cS}_\omega(\fS)\bigr)$ (Lemma~\ref{lem-seed-skein}), where  
\begin{align*}
    M_\lambda \colon \mathbb Z^{V_\lambda} \longrightarrow {\rm Frac}\bigl(\widetilde{\cS}_\omega(\fS)\bigr), 
    \qquad {\bf t} \longmapsto A^{\bf t}.
\end{align*}

For each $i\in \mathcal V$, define ${\bf e}_i\in \mathbb Z^{\mathcal V}$ by  
\begin{align*}
    {\bf e}_i(v)=\delta_{i,v}\quad \text{for } v\in \mathcal V.
\end{align*}
The map $M_\lambda$ is then uniquely determined by the elements $A_i = M_\lambda({\bf e}_i)$, which we call the \emph{(quantum) cluster variables}.  
A cluster variable $A_i$ is \emph{frozen} if $i\in \mathcal V \setminus \mathcal V_{\rm mut}$, and \emph{exchangeable} otherwise.  
These notions apply to any quantum seed (see \S\ref{sec-mutation-quantum}), although we state them here only for $(Q_\lambda,\Pi_\lambda,M_\lambda)$.

For each $k\in \mathcal V_{\rm mut}$, the quantum $\mathcal A$-mutation $\mu_{k,A}$ produces a new quantum seed
\[
(Q',\Pi',M') = \mu_{k,A}\bigl(Q_\lambda,\Pi_\lambda,M_\lambda\bigr)
\]
(see \S\ref{sec-mutation-quantum}).  
Let $\mathsf S_{\fS,\lambda}$ denote the collection of quantum seeds in ${\rm Frac}\bigl(\widetilde{\cS}_\omega(\fS)\bigr)$ obtained from $(Q_\lambda,\Pi_\lambda,M_\lambda)$ by finitely many quantum $\mathcal A$-mutations.  
Define
\[
\mathscr{A}_{\fS,\lambda} := \mathscr{A}_{\mathsf S_{\fS,\lambda}},
\qquad
\mathscr{U}_{\fS,\lambda} := \mathscr{U}_{\mathsf S_{\fS,\lambda}}
\]
as in Definition~\ref{def-quan-cluster-algebra}.

\vspace{0.2cm}

The following theorem asserts that this quantum cluster structure inside ${\rm Frac}\bigl(\widetilde{\cS}_\omega(\fS)\bigr)$ is independent of the choice of triangulation $\lambda$.

\begin{theorem}[Theorem~\ref{thm-main-1}]\label{intro-naturality}
    Let $\fS$ be a triangulable pb surface without interior punctures, and let $\lambda$, $\lambda'$ be two triangulations of $\fS$. Then we have   $$\mathsf{S}_{\fS,\lambda}=\mathsf{S}_{\fS,\lambda'},\;
    \mathscr{A}_{\fS,\lambda}=
    \mathscr{A}_{\fS,\lambda'},\text{ and }
    \mathscr{U}_{\fS,\lambda}=
    \mathscr{U}_{\fS,\lambda'}\quad \text{(see Definition~\ref{def-tri-quantum}).}$$
\end{theorem}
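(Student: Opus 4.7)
The plan is to prove the single equality $\mathsf{S}_{\fS,\lambda}=\mathsf{S}_{\fS,\lambda'}$; the other two equalities follow at once from Definition~\ref{def-quan-cluster-algebra}. Since any two triangulations of $\fS$ are related by a finite sequence of flips, I may assume that $\lambda'$ is obtained from $\lambda$ by flipping a single interior ideal arc $e$. By symmetry of the two triangulations, it then suffices to exhibit a finite sequence of quantum $\mathcal{A}$-mutations carrying $(Q_\lambda,\Pi_\lambda,M_\lambda)$ to $(Q_{\lambda'},\Pi_{\lambda'},M_{\lambda'})$ using the canonical bijection between the small vertex sets $V_\lambda$ and $V_{\lambda'}$ away from the quadrilateral containing $e$.

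To build this sequence I will transport the Kim-Wang isomorphism $\Theta^\omega_{\lambda\lambda'}$ across the torus isomorphism $\psi_\lambda$ of \eqref{def-A-bal-isomor}. Extending $\psi_\lambda$ and $\psi_{\lambda'}$ to the skew-fields of fractions and combining \eqref{intro-eq-compability-tr-mutation} with \eqref{intro-eq-compability-tr-A-X-diag}, the composition
\[
\widetilde{\Theta}_{\lambda\lambda'}:=\psi_\lambda^{-1}\circ\Theta^\omega_{\lambda\lambda'}\circ\psi_{\lambda'}\colon
{\rm Frac}\bigl(\mathcal{A}_{\omega}(\fS,\lambda')\bigr)\longrightarrow
{\rm Frac}\bigl(\mathcal{A}_{\omega}(\fS,\lambda)\bigr)
\]
intertwines the embeddings ${\rm tr}_{\lambda'}^A$ and ${\rm tr}_\lambda^A$ of $\dS$ furnished by Lemma~\ref{lem-basic-lem}. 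Since $\Theta^\omega_{\lambda\lambda'}$ is by construction a composition of generalized quantum $\mathcal{X}$-mutations located at small vertices of the two triangles sharing $e$ (see \eqref{eq-Theta2}, \eqref{eq-Theta-change2}, and Proposition~\ref{Prop-rest-bal}), I will verify that $\psi_\lambda$ turns each such $\mathcal{X}$-step into the quantum $\mathcal{A}$-mutation at the same small vertex. This yields an explicit ordered sequence $k_1,\dots,k_N\in\mathring{V}_\lambda$ such that $\mu_{k_N,A}\circ\cdots\circ\mu_{k_1,A}(Q_\lambda,\Pi_\lambda,M_\lambda)=(Q_{\lambda'},\Pi_{\lambda'},M_{\lambda'})$, which places the right-hand seed in $\mathsf{S}_{\fS,\lambda}$.

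The verification splits into two routine but bookkeeping-heavy pieces. On the exchange data, I would use the explicit descriptions of $\Gamma_\lambda$ and of $P_\lambda=-\overline{\mathsf P}_\lambda$ from \cite{LY23} to check that iterated classical mutation on $(Q_\lambda,\Pi_\lambda)$ at the sequence $k_1,\dots,k_N$ reproduces $(Q_{\lambda'},\Pi_{\lambda'})$. On the cluster variables, the Weyl-ordered monomial formula \eqref{eq-A-power} combined with the intertwining property of $\widetilde{\Theta}_{\lambda\lambda'}$ reduces the matching of $M_\lambda$ with $M_{\lambda'}$ to a monomial comparison in the initial variables. The main obstacle is that the generalized $\mathcal{X}$-mutations appearing in the Kim-Wang decomposition of $\Theta^\omega_{\lambda\lambda'}$ need not correspond vertex-by-vertex to standard $\mathcal{A}$-mutations, the typical trouble spots being steps localized at small vertices lying on $\partial\fS$ or reshuffling steps used to realign vertex labels across the flip. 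To handle these, I expect to group consecutive generalized steps into composite transformations whose net effect either is a single honest quantum $\mathcal{A}$-mutation or reduces to a permutation of indices compatible with the natural identification $V_\lambda\leftrightarrow V_{\lambda'}$ outside the flipped quadrilateral, thereby completing the sequence and establishing the theorem.
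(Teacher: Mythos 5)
Your overall route is the one the paper takes: reduce to a single flip, fix the Fock--Goncharov--Shen mutation sequence $v_1,\ldots,v_r$ (with $r=\tfrac16(n^3-n)$), transport the Kim--Wang isomorphism $\Theta^\omega_{\lambda\lambda'}$ across $\psi_\lambda$ and $\psi_{\lambda'}$, and verify that this matches a composition of quantum $\mathcal{A}$-mutations. The gap is at exactly the place you yourself flag as the ``main obstacle,'' and both your diagnosis of the difficulty and your proposed fix are off. The FGS sequence consists \emph{entirely} of vertices in $V_{\lambda;e}$, which all lie in the interior of $\fS$ (either strictly inside one of the two triangles or on the flipped edge $e$); there are no boundary steps to worry about. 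And the correct statement — Proposition~\ref{prop-comp} in the paper — is that the correspondence is exactly vertex-by-vertex: each single $\nu_{v_i}^\omega$ matches the single quantum $\mathcal{A}$-mutation $\mu_{v_i,A}$. ``Grouping consecutive steps into composite transformations'' is not how this is resolved and would likely go nowhere.

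What actually makes the vertex-by-vertex match nontrivial, and what your proposal does not engage with, is that after the first mutation the relevant quantum torus no longer sits inside $\mathcal{Z}^{\rm bl}_\omega(\fS,\lambda)$. One must introduce intermediate balanced subalgebras $\mathcal{Z}^{\rm bl}_\omega(\mathcal{D}_\lambda^{\bv})$ defined via the mutated $H$-matrices $H_\lambda^{\bv}$ (Lemma~\ref{lem-key-com-mutation}), show that the generalized $\mathcal{X}$-mutation $\nu_k^\omega$ preserves them (Lemma~\ref{lem-rest-iso-to-balanced}), and construct the compatible isomorphism $\varphi_\lambda^{\bv}$ onto the based quantum torus $\mathbb T(w_\lambda^{\bv})$ (Lemma~\ref{lem-com-bal-T}); the compatibility is then checked by splitting both $\mu_{k,A}$ and $\nu_k^\omega$ into a monomial step and a quantum-dilogarithm step (Lemma~\ref{lem-decom-A} and \eqref{eq-def-quantum-mutation-X}) and matching the two steps separately. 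You also propose to directly verify that iterated mutation carries $(Q_\lambda,\Pi_\lambda)$ to $(Q_{\lambda'},\Pi_{\lambda'})$ using the explicit $P_\lambda$ from \cite{LY23}; this is unnecessarily heavy. The quiver identity $\mu_{v_r}\cdots\mu_{v_1}(Q_\lambda)=Q_{\lambda'}$ is already known, and once you have $M'=M_{\lambda'}$ — which the paper obtains by evaluating $M'({\bf e}_v)$ against the intrinsic skein element $\gaa_{v,\lambda'}$ using the identity $\varphi_\lambda\circ{\rm tr}_\lambda={\rm id}$ on $\dS$ together with the two commutative triangles \eqref{proof-thm-diag-1} and \eqref{proof-thm-diag-2} — the matrix $\Pi'$ is automatically $\Pi_{\lambda'}$ since $\Pi'$ is uniquely determined by $M'$.
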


It is well known that any two triangulations $\lambda$ and $\lambda'$ are related by a sequence of flips.  
Thus we may assume that $\lambda'$ is obtained from $\lambda$ by a single flip.  
To prove Theorem~\ref{intro-naturality}, it suffices to show that the quantum seeds
\[
(Q_\lambda,\Pi_\lambda,M_\lambda)
\quad\text{and}\quad
(Q_{\lambda'},\Pi_{\lambda'},M_{\lambda'})
\]
are related by a sequence of quantum $\mathcal A$–mutations.

Indeed, there exists a sequence of vertices  
$v_1,v_2,\ldots,v_r \in \mathcal V_{\rm mut}$ \cite{FG06,GS19} (see Figure~\ref{Fig;mutation_sequence_for_flip}) such that
\begin{align*}
   Q_{\lambda'}
   = \mu_{v_r}\,\cdots\,\mu_{v_2}\,\mu_{v_1}(Q_\lambda),
\end{align*}
where each $\mu_{v_i}$ denotes the matrix mutation defined in \eqref{eq-mutation-Q}, and $r = \tfrac{1}{6}(n^3-n)$.  
Hence it remains to show that  
\begin{align}\label{intro-mutation-sequence-flip}
   (Q_{\lambda'},\Pi_{\lambda'},M_{\lambda'})
   = \mu_{v_r,A}\,\cdots\,\mu_{v_2,A}\,\mu_{v_1,A}
     \bigl(Q_\lambda,\Pi_\lambda,M_\lambda\bigr).
\end{align}
For small $n$, this identity can be verified by direct computation, as carried out for $n=2,3$ in \cite{muller2016skein,ishibashi2023skein}.  
However, for general ${\rm SL}_n$ the number of required mutations makes such a calculation infeasible.

To overcome this, we decompose each quantum $\mathcal A$–mutation into two steps (Lemma~\ref{lem-decom-A}) and establish the compatibilities  
(Diagrams~\eqref{eq-com-step-one} and \eqref{eq-com-step-one}) between these steps and the corresponding two steps of the generalized $\mathcal X$–mutation  
(see \eqref{eq-quantum-mutation_Z} and \eqref{lem-def-nu-sharp}).  
This yields the compatibility (Proposition~\ref{prop-comp}) between the quantum $\mathcal A$–mutation and the generalized $\mathcal X$–mutation $\nu_{k}^\omega$ (see \eqref{eq-def-quantum-mutation-X}).  
Combining this compatibility with \eqref{intro-eq-compability-tr-mutation} and \eqref{intro-eq-compability-tr-A-X-diag} proves \eqref{intro-mutation-sequence-flip}.

An immediate consequence of these arguments is that the composition of quantum $\mathcal A$–mutations
\[
\mu_{v_1,A}\circ\cdots\circ\mu_{v_r,A}\colon
{\rm Frac}\bigl(\mathcal A_{\omega}(\fS,\lambda')\bigr)\longrightarrow
{\rm Frac}\bigl(\mathcal A_{\omega}(\fS,\lambda)\bigr)
\]
establishes the naturality of the $\mathcal A$–quantum trace maps  
(see Corollary~\ref{thm-naturality-sln-cluster}), coinciding with $\overline{\Psi}_{\lambda\lambda'}^A$ in \cite[Theorem~14.1]{LY23}.


\subsection{The inclusion of the ${\rm SL}_n$-skein algebra into the ${\rm SL}_n$ quantum (upper) cluster algebra}
Under the assumption of Theorem~\ref{intro-naturality}, we write  
\begin{align*}
    \mathscr{A}_{\omega}(\fS)&:=\mathscr{A}_{\fS,\lambda},\qquad
    \text{called the \emph{${\rm SL}_n$ quantum  cluster algebra} of $\fS$,}\\
    \mathscr{U}_{\omega}(\fS)&:=\mathscr{U}_{\fS,\lambda},\qquad \text{called the \emph{${\rm SL}_n$ quantum upper cluster algebra} of $\fS$,}
\end{align*}
since these algebras are independent of the choice of triangulation $\lambda$ by Theorem~\ref{intro-naturality}.

Assume that each component of $\fS$ has at least two punctures.  
A properly embedded oriented arc in $\fS$ is called an \emph{essential arc}  
if its endpoints lie on two distinct components of $\partial \fS$.  
Lemma~\ref{lem-essential-arc} shows that the algebra $\widetilde{\cS}_\omega(\fS)$ is generated by finitely many stated essential arcs.

The following theorem is the main result of this paper.  
It expresses every stated essential arc as a Weyl-ordered product of an exchangeable cluster variable and a Laurent monomial in frozen cluster variables.  
Consequently,
\[
\widetilde{\cS}_\omega(\fS)\subset \mathscr{A}_{\omega}(\fS).
\]

\begin{figure}[h]
    \centering
    \includegraphics[width=350pt]{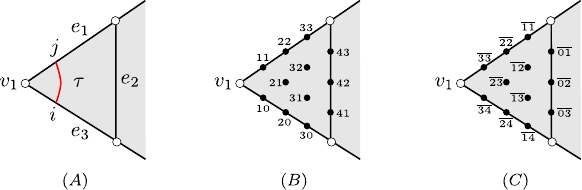}
    \caption{(A) The triangle $\tau$ with edges labeled $e_1,e_2,e_3$, and a distinguished vertex labeled $v_1$ (note that $e_1 \neq e_3$).  
(B) The labeling of the small vertices in $V_\lambda \cap \tau$ for $n=4$.  
(C) An alternative labeling of the small vertices in $V_\lambda \cap \tau$ for $n=4$.
}\label{Fig;tau-v1}
\end{figure}

\begin{theorem}[Theorem \ref{thm-skein-inclusion-A}]\label{intro-thm-skein-inclusion-A}
Let $\fS$ be a pb surface without interior punctures. We require that every component of $\fS$ contains at least two punctures.
    Then we have $$\widetilde{\cS}_\omega(\fS)\subset \mathscr A_{\omega}(\fS).$$
 Moreover, each stated essential arc is the Weyl-ordered product (see \eqref{Weyl-A}) of an exchangeable cluster variable (i.e., a non-frozen cluster variable) and a Laurent monomial in frozen cluster variables (Definition~\ref{def-quan-cluster-algebra}). To be precise, we have the following:
\begin{enumerate}[label={\rm (\alph*)}]\itemsep0,3em
     \item Let $1 \le j \le i \le n$, and let $C_{ij}$ be the stated corner arc represented by the red arc in Figure~\ref{Fig;tau-v1}(A), oriented counterclockwise around $v_1$.  
Let $\lambda$ be a triangulation of $\fS$ that contains the ideal arcs $e_1,e_2,e_3$ shown in Figure~\ref{Fig;tau-v1}(A).  
For any $j,k$ with $1 \le j < k$, define  
\begin{align}\label{intro-eq-mukj}
\mu_{(k;j)}=\mu_{k j}\cdots\mu_{k1},
\end{align}
where the small vertices $i j \in V_\lambda$ are labeled as in Figure~\ref{Fig;tau-v1}(B).
Then we have 
     \begin{align}\label{intro-eq-Cij}
         C_{ij}=\begin{cases}
[A_{i1}\cdot A_{i0}^{-1}\cdot A_{11}^{-1}] & \mbox{ if $j=1$},\\
[A_{i0}^{-1}\cdot A_{i-1,0}] & \mbox{ if $j=i$},\\
[\mu_{(j;j-1)}\cdots \mu_{(i-2;j-1)} \mu_{(i-1;j-1)} (A_{j,j-1})\cdot  A_{i0}^{-1}\cdot A_{jj}^{-1}] & \mbox{ if $1<j<i$},
\end{cases}
     \end{align}
   \item Let $1 \le j \le i \le n$, and let $\overline C_{ij}$ be the stated corner arc represented by the red arc in Figure~\ref{Fig;tau-v1}(A), oriented clockwise around $v_1$.  
Let $\lambda$ be a triangulation of $\fS$ that contains the ideal arcs $e_1,e_2,e_3$ shown in Figure~\ref{Fig;tau-v1}(A).  
For any $k,t$ such that $k+t<n$, we denote 
\begin{align}\label{intro-eq-bar-mukj}
\overline\mu_{(k;t)}=\mu_{\overline{k,k+1}} \cdots \mu_{\overline{k,k+t}},
\end{align}
where the small vertices $\overline{ij} \in V_\lambda$ are labeled as in Figure~\ref{Fig;tau-v1}(C).
Then we have
     \begin{align}\label{intro-eq-bar-Cij}
         \overline C_{ij}=\begin{cases}
[\overline A_{jj}^{-1}\cdot \overline A_{j-1,j}] & \mbox{ if $i=n$},\\
 [\overline A_{in}^{-1}\cdot \overline A_{i-1,n}] & \mbox{ if $i=j$},\\
[\overline \mu_{(j;n-i)}\cdots \overline \mu_{(i-2;n-i)} \overline \mu_{(i-1;n-i)} (\overline A_{j,j+1})\cdot \overline A_{jj}^{-1}\cdot \overline A_{in}^{-1}] & \mbox{ if $1\leq j<i<n$},
\end{cases}
     \end{align}
where $\overline A_{ij} = A_{\overline{ij}}\in\mathcal A_\omega(\fS,\lambda)$.

\item For any $1\leq i,j\leq n$, let $D_{ij}$ be the stated essential arc represented by the red arc in Figure~\ref{Fig;essential-P4}(A).
Let $\lambda$ be a triangulation of $\fS$ that contains the ideal arcs $c_1,c_2,c_3,c_4,c_5$ shown in Figure~\ref{Fig;essential-P4}(A) (we allow $c_1=c_3$).
We label the small vertices in $V_\lambda$
contained in the quadrilateral bounded by $c_1\cup c_2\cup c_3\cup c_4$ as Figure~\ref{Fig;essential-P4}(B).

For any $j>1$, denote 
$$\overline \mu^{\diamondsuit}_j=\bar \mu_{(1;n-j)}\cdots \bar \mu_{(j-2;n-j)}\bar \mu_{(j-1;n-j)},$$
where $\overline{\mu}_{(k;t)}$ is defined as in \eqref{intro-eq-bar-mukj}.
For any $i,j$ with $i\geq j>1$, denote $$\mu^{\diamondsuit}_{(i;j-1)}=\left(\mu_{(2;1)}\mu_{(3;2)}\cdots\mu_{(j-1;j-2)}\right)\circ \left(\mu_{(j;j-1)}\mu_{(j+1;j-1)}\cdots \mu_{(i-1;j-1)}\right),$$
where $\mu_{(k;j)}$ is defined as in
\eqref{intro-eq-mukj}.
Then we have
   \begin{align}\label{into-eq-Dij}
       D_{ij}=
    \begin{cases}
        [A_{i1}\cdot A_{i0}^{-1}\cdot \overline A_{1n}^{-1}]
        & \mbox{ if $i\geq j=1$,}\\
        [\overline \mu^{\diamondsuit}_{j}(\overline A_{12})\cdot A_{10}^{-1}\cdot \overline A_{jn}^{-1} ] & \mbox{ if $j>i=1$,}\vspace{1.5mm}\\
        [\left(\mu_{j-1,j-1}\cdots\mu_{22}\mu_{11}\right)\circ \overline \mu^{\diamondsuit}_j\circ  \mu^{\diamondsuit}_{(i;j-1)}(A_{j-1,j-1}) \cdot A_{i0}^{-1}\cdot\overline A_{jn}^{-1}] & \mbox{ if $i\geq j>1$,}\vspace{1.5mm}\\
        [\left(\mu_{i-1,i-1}\cdots\mu_{22}\mu_{11}\right)\circ \overline \mu^{\diamondsuit}_j\circ \mu^{\diamondsuit}_{(i;i-1)}(A_{i-1,i-1}) \cdot A_{i0}^{-1}\cdot\overline A_{jn}^{-1}] & \mbox{ if $j>i>1$}.
    \end{cases}
   \end{align}
 \end{enumerate}

\end{theorem}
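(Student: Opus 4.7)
\emph{Reduction to polygons.} By Lemma~\ref{lem-essential-arc}, $\widetilde{\cS}_\omega(\fS)$ is generated as an $R$-algebra by finitely many stated essential arcs, so the inclusion $\widetilde{\cS}_\omega(\fS)\subset\mathscr{A}_\omega(\fS)$ will follow as soon as the three explicit formulas (a), (b), (c) are verified. Each of these arcs is supported in a single triangle (for $C_{ij}$ and $\overline{C}_{ij}$) or in the quadrilateral $c_1\cup c_2\cup c_3\cup c_4$ (for $D_{ij}$). Using the splitting homomorphism for $\widetilde{\cS}_\omega$ together with its compatibility with the splitting for $\mathscr U_\omega$ constructed earlier in the paper, any identity local to such a region pulls back from the corresponding identity in $\mathbb P_3$ or $\mathbb P_4$; I may therefore assume $\fS=\mathbb P_3$ for (a), (b) and $\fS=\mathbb P_4$ for (c). In these polygon cases, property~\ref{intro-thm-trace-A-b} guarantees that ${\rm tr}_\lambda^A$ is injective, so equalities in $\widetilde{\cS}_\omega(\fS)$ can be checked directly inside $\mathcal A_\omega(\fS,\lambda)$, and it is only necessary to verify that both sides of each formula have the same image under ${\rm tr}_\lambda^A$. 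A preliminary step is to check that the three Laurent monomial factors on each right-hand side pairwise quasi-commute, so that the Weyl product is well-defined; this follows from a direct computation of $Q_\lambda$ on the relevant small vertices.

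\emph{Corner arcs (a) and (b).} The boundary cases $j=1$ and $j=i$ in (a) (resp.\ $i=n$ and $i=j$ in (b)) are handled by a direct computation: ${\rm tr}_\lambda^A$ applied to the corresponding corner arc of a triangle is an explicit Weyl product of initial cluster variables, which one reads off from the construction of ${\rm tr}_\lambda^A$ in \cite{LY23}. For the generic case $1<j<i$, I would induct on $j$, and for fixed $j$ on the number of elementary mutations $\mu_{kj}$ already applied inside $\mu_{(j;j-1)}\cdots\mu_{(i-1;j-1)}$. Each induction step asserts that a single $\mathcal A$-mutation at a small vertex transforms the intermediate cluster variable into the next one; to check it, translate the step through the compatibility diagram~\eqref{intro-eq-compability-tr-A-X-diag} and Proposition~\ref{prop-comp} into the generalized $\mathcal X$-mutation $\nu^\omega_{kj}$ acting on $\mathcal Z_\omega^{\rm bl}(\mathbb P_3,\lambda)$. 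In the $\mathcal X$-picture both sides are monomials in the Fock--Goncharov coordinates, and the equality can be matched exponent by exponent against the known monomial image of the corner arc. Part (b) is strictly parallel; one may either rerun the induction with the reversed orientation, or deduce it from (a) via the mirror symmetry of the triangle that exchanges the two edges at $v_1$ together with the standard orientation-reversal automorphism of the reduced stated ${\rm SL}_n$-skein algebra.

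\emph{Essential arcs (c).} For $D_{ij}$, the diagonal $c_5$ of the quadrilateral separates it into two triangles in each of which $D_{ij}$ restricts to (after suitable cutting) a stated corner arc near $v_1$; flipping $c_5$ to the opposite diagonal turns $D_{ij}$ into a single corner arc of the flipped triangulation, to which (a) or (b) applies. By Theorem~\ref{intro-naturality} this flip is realized on the cluster side by a definite sequence of $\mathcal A$-mutations, and the composite $\overline{\mu}^{\diamondsuit}_j\circ \mu^{\diamondsuit}_{(i;j-1)}$ in the statement of (c) is precisely this flip sequence, read off from the labeling of small vertices in Figure~\ref{Fig;essential-P4}(B). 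The prefix $\mu_{j-1,j-1}\cdots\mu_{22}\mu_{11}$ records the local mutations near $v_1$ needed to align the frozen-variable correction $A_{i0}^{-1}\overline{A}_{jn}^{-1}$ with the one produced by the flip. Concretely, I would apply (a)/(b) inside the flipped triangulation to express the corner arc as a Weyl product of cluster variables, and then pull this identity back along the flip mutation sequence using the naturality of ${\rm tr}_\lambda^A$ under flips (Corollary~\ref{thm-naturality-sln-cluster}).

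\emph{Main obstacle.} The main difficulty is combinatorial bookkeeping rather than conceptual: the full mutation sequence in (c) has length on the order of $n^3$, and each elementary $\mathcal A$-mutation contributes a nontrivial prefactor in $\omega^{1/2}$ whose cumulative effect must match the Weyl-ordering on the left-hand side exactly. The most efficient route is to carry the verification through the $\mathcal X$-picture inside $\mathcal Z^{\rm bl}_\omega(\fS,\lambda)$, where each ${\rm tr}_\lambda(D_{ij})$ and each ${\rm tr}_\lambda$ of a mutated cluster variable is a single monomial, and then transfer the resulting monomial identity back to $\mathcal A_\omega(\fS,\lambda)$ via $\psi_\lambda$. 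The induction hypothesis must be stated in a form that tracks not only the exponent vector in $\mathcal B_\lambda$ but also the precise quasi-commutation constants appearing at every intermediate stage; this is where almost all the technical effort will be concentrated.
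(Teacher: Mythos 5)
Your reduction framework (use Lemma~\ref{lem-essential-arc} to restrict to essential arcs, then localize to $\mathbb{P}_3$ or $\mathbb{P}_4$ via the splitting homomorphisms of Theorem~\ref{intro-thm:splitU}) agrees with the paper, and the treatment of the boundary cases $j=1$, $j=i$ (resp.\ $i=n$, $i=j$) by direct computation is also correct. However, the core technical steps in your proposed inductions contain genuine gaps.

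For (a) and (b), you claim that after transferring to the $\mathcal X$-picture ``both sides are monomials in the Fock--Goncharov coordinates, and the equality can be matched exponent by exponent against the known monomial image of the corner arc.'' This is false for $1<j<i$: by Theorem~\ref{lem-trace-image-cornerarc-P3}, ${\rm tr}_{\mathbb P_3}(C_{ij})$ is a \emph{sum} $\sum_{p\in\mathsf P(\mathbb P_3,v_1,i,j)} Z^{n\mathbf k^p-\mathbf k}$ over an exponentially large path set, and similarly the mutated cluster variable on the other side is a genuine Laurent \emph{polynomial}, not a monomial. No exponent-by-exponent matching is available; one must show that the polynomials coincide. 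The paper does this by a recursion in $n$: Lemma~\ref{lem:divide2} decomposes $\mathsf P(\mathbb P_3,v_1,i,j)$ as the disjoint union of two path sets for ${\rm SL}_{n-1}$, and the identity \eqref{intro-mutation-Ap} is proved by induction on $n$ together with a detailed quiver analysis (Lemmas~\ref{lem:quiver0}--\ref{lem:mut2}). Your proposed induction on $j$ and on the length of the mutation prefix does not address the key obstacle, namely how the path-indexed sum reorganizes as you apply each $\mu_{kj}$.

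For (c), you claim the mutation sequence $\overline\mu^{\diamondsuit}_j\circ\mu^{\diamondsuit}_{(i;j-1)}$ is ``precisely the flip sequence'' for the flip of $c_5$. This cannot be: the Fock--Goncharov--Shen flip sequence has length $\tfrac{1}{6}(n^3-n)$, while the sequence in (c) has length $O(n^2)$, and it visits vertices in only part of the quadrilateral. The flip-based approach would also not directly give the prefix $\mu_{j-1,j-1}\cdots\mu_{11}$ or the specific frozen-variable normalization. The paper's actual argument is quite different: it cuts $\mathbb P_4$ along $c_5$, observes that $\mathbb S_{c_5}(D_{ij})=\sum_{k=1}^j C_{ik}\otimes\overline C_{jk}$, shows that the subquiver of $\overline\mu^{\diamondsuit}_j\circ\mu^{\diamondsuit}_{(i;j-1)}(Q_\lambda)$ on the vertices $v_{11},\ldots,v_{j-1,j-1}$ is a linear type-$A$ quiver (Lemma~\ref{lem:quiver2}), and then invokes the known expansion formula for type-$A$ quantum cluster algebras (Corollary~\ref{cor:cv}) to reproduce exactly this $j$-term sum. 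This use of the type-$A$ expansion formula is the conceptual crux and is absent from your proposal; without it, the $j$-term expansion of the mutated cluster variable would have to be matched to the split corner-arc sum by hand, which is not a ``bookkeeping'' issue but the main content of the proof.
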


\begin{figure}[h]
    \centering
    \includegraphics[width=220pt]{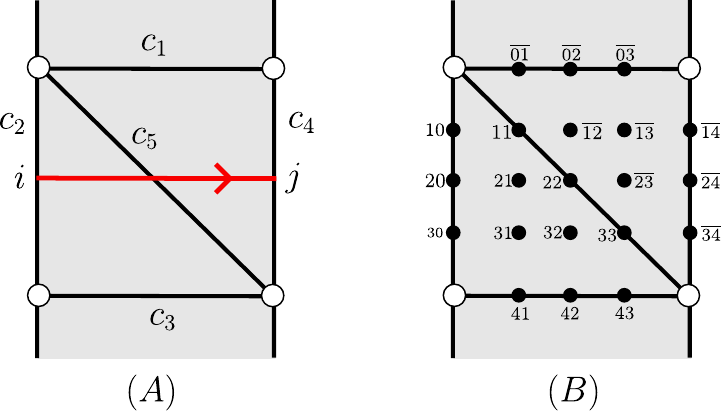}
    \caption{(A) The picture for an essential arc, in red. (B) The labeling for small vertices contained in the quadrilateral bounded by $c_1\cup c_2\cup c_3\cup c_4$ for $n=4$.}\label{Fig;essential-P4}
\end{figure}

Then we outline the proof of
Theorem~\ref{intro-thm-skein-inclusion-A}.

\vspace{0.2cm}

First, we establish the following Theorem, which states a weaker inclusion $\widetilde{\cS}_\omega(\fS)\subset \mathscr U_{\omega}(\fS).$

\begin{theorem}[Theorem~\ref{thm-main-3}]\label{intro-upper-inclusion}
    Let $\fS$ be a triangulable pb surface without interior punctures. Then
    $$
    \dS \subset \mathscr{U}_{\omega}(\fS).
    $$
\end{theorem}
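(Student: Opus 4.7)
The plan is to establish $\dS\subset \mathscr U_\omega(\fS)$ by constructing a splitting homomorphism for $\mathscr U_\omega(\fS)$ compatible with the splitting homomorphism of $\dS$ along an interior ideal arc, and then reducing inductively to the base case $\fS=\mathbb P_3$.

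First fix a triangulation $\lambda$ of $\fS$. By the definition $\dS=\overline{\cS}_\omega(\fS)/\ker{\rm tr}_\lambda^A$ together with property \ref{intro-thm-trace-A-a}, the quantum trace ${\rm tr}_\lambda^A$ embeds $\dS$ into $\mathcal A_\omega(\fS,\lambda)$, which is precisely the quantum torus of the initial seed $(Q_\lambda,\Pi_\lambda,M_\lambda)$. Hence $\dS$ is automatically contained in the Laurent ring of the initial seed; the content of the theorem is the analogous containment in the Laurent ring of every other seed in $\mathsf S_{\fS,\lambda}$. To handle all such seeds uniformly, I would construct, for each interior ideal arc $e$ of $\fS$, a splitting homomorphism $\mathscr U_\omega(\fS)\to\mathscr U_\omega(\fS_e)$, where $\fS_e$ denotes the cut surface. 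Cutting along $e$ doubles the small vertices of $\Gamma_\lambda$ lying on $e$, turning mutable vertices on $e$ into pairs of frozen vertices in the quiver of $\fS_e$. The splitting is defined on the initial seed by substituting each small vertex variable on $e$ with the Weyl-ordered product of the two corresponding frozen variables of $\fS_e$. The crucial step is to verify that this substitution intertwines every quantum $\mathcal A$-mutation, so that Laurent polynomials in each seed of $\fS$ are sent to Laurent polynomials in the corresponding seed of $\fS_e$; this reduces to a direct matrix computation relating $(Q_\lambda,\Pi_\lambda)$ to $(Q_{\lambda_e},\Pi_{\lambda_e})$ across the cut.

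Assuming this splitting is well defined and compatible with the standard skein splitting,
\[
\xymatrix{
\dS\ar[r]\ar[d] & \widetilde{\cS}_\omega(\fS_e)\ar[d] \\
\mathscr U_\omega(\fS)\ar[r] & \mathscr U_\omega(\fS_e),
}
\]
compatibility holding on the initial seed by diagram \eqref{intro-eq-compability-tr-A-X-diag} and extending to every seed via the naturality established in the proof of Theorem~\ref{intro-naturality}, I would induct on the number of interior ideal arcs of a triangulation. Iterated cuts reduce $\fS$ to a disjoint union of triangles, and since both $\dS$ and $\mathscr U_\omega$ split as tensor products across connected components, the inclusion reduces to the base case $\fS=\mathbb P_3$. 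For the triangle, property \ref{intro-thm-trace-A-b} gives $\dS=\overline{\cS}_\omega(\mathbb P_3)$, and a direct computation using the explicit generators of $\overline{\cS}_\omega(\mathbb P_3)$ and the mutation structure of the small-vertex quiver of the (unique) triangulation of $\mathbb P_3$ verifies the required containment.

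The main obstacle will be the construction of the upper cluster algebra splitting itself, specifically showing that the initial-seed substitution extends to a well-defined ring homomorphism on all of $\mathscr U_\omega(\fS)$. This requires verifying that the substitution intertwines every quantum $\mathcal A$-mutation, not merely those appearing in flip sequences, and the combinatorial bookkeeping of how mutable vertices on $e$ become frozen pairs in $\fS_e$ is delicate. Once this is in hand, the compatibility with the skein splitting, the induction on interior arcs, and the base case $\mathbb P_3$ complete the argument.
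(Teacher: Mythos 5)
Your setup is right—invoking \cite[Theorem~5.1]{BZ} (i.e., Theorem~\ref{thm-upper-w-upper}) to reduce the claim to $\dS\subset\mathbb T(\mu_k(w_\lambda))$ for each mutable $k$—but the inductive cut-to-triangles strategy has a genuine gap at exactly the vertices you are trying to control. When you cut $\fS$ along an interior ideal arc $e$, every mutable small vertex lying on $e$ becomes a pair of \emph{frozen} vertices in $\fS_e$. Consequently $\mathscr U_\omega(\fS_e)$ is defined by intersecting fewer one-step-mutated tori: it is a \emph{larger} algebra than what the seeds of $\fS$ demand. So the implication you need—$\mathcal S_e(x)\in\mathscr U_\omega(\fS_e)\Rightarrow x\in\mathbb T(\mu_k(w_\lambda))$—simply fails when $k$ lies on $e$, because in $\fS_e$ there is no mutation at (the images of) $k$ left to constrain $\mathcal S_e(x)$. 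Iterating the cut down to a disjoint union of triangles makes this worse, not better: all vertices on the formerly interior edges are frozen, and your induction says nothing about $\mathbb T(\mu_k(w_\lambda))$ for those $k$.

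There is also a circularity in your compatibility step. The commutative square you draw, with vertical inclusions $\dS\hookrightarrow\mathscr U_\omega(\fS)$ and $\widetilde{\cS}_\omega(\fS_e)\hookrightarrow\mathscr U_\omega(\fS_e)$, is Theorem~\ref{thm:splitU} of the paper, and its very statement presupposes the inclusion $\dS\subset\mathscr U_\omega(\fS)$ that you are proving. In the paper the splitting homomorphism for $\mathscr U_\omega$ is constructed \emph{after} Theorem~\ref{thm-main-3}.

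The paper avoids both problems by choosing the cut adapted to each $k$. For a fixed mutable $k$, it cuts along $E=\lambda\setminus\{e\}$ (where $e$ is the unique edge through $k$, if any) so that $k$ stays in the interior of a single polygon $\mathbb P\in\{\mathbb P_3,\mathbb P_4\}$. On that polygon, Proposition~\ref{prop-key-bal-polynomial} controls $\nu_k^\omega({\rm tr}_\lambda(\beta))$ directly via the explicit quantum trace formulas for corner arcs (Theorem~\ref{lem-trace-image-cornerarc-P3}, Lemma~\ref{lem-quantum-trace-P4} and \cite[Proposition~4.2]{schrader2017continuous}), then pulls the containment back to $\fS$ using Lemma~\ref{lem-frac-subalgebra}, which is the missing "reflection" step your argument would also need. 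Your base case ($\mathbb P_3$) and the underlying intuition of using splitting maps are both on the right track, but you must (i) keep each mutable vertex you care about in the interior after cutting, and (ii) supply a pull-back lemma like Lemma~\ref{lem-frac-subalgebra} to transfer the conclusion from the cut surface back to $\fS$.
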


When $n=2$ or $3$, the algebra $\dS$ admits a generating set of so-called \emph{elementary webs} (this generating set is not known for $n>3$).  
Each elementary web is readily verified—by straightforward calculations—to be a cluster variable.  
Using this generating set, one can establish Theorems~\ref{intro-thm-skein-inclusion-A} and \ref{intro-upper-inclusion} following the arguments of \cite{muller2016skein,ishibashi2023skein}.
For general ${\rm SL}_n$ with $n>4$, this approach is no longer available.  
Instead, we develop new, more constructive methods to prove Theorems~\ref{intro-thm-skein-inclusion-A} and \ref{intro-upper-inclusion},  
which not only yield the desired results but also reveal several interesting phenomena.

Let $e$ be an ideal arc of $\fS$ lying entirely in the interior of $\fS$, and let $\fS'$ be the pb surface obtained by cutting $\fS$ along $e$.  
Following \cite{LS21} (see \S\ref{sub-splitting}), there is an algebra homomorphism, called the \emph{splitting homomorphism},
\[
\mathbb{S}_e : \overline{\mathscr{S}}_\omega(\fS) \longrightarrow \overline{\mathscr{S}}_\omega(\fS'),
\]
which, when $\fS'$ is triangulable, further induces an injective algebra homomorphism (Lemma~\ref{lem-basic-lem}(c)):
\begin{align}\label{intro-splitting-eq}
    \mathbb{S}_e : \widetilde{\mathscr{S}}_\omega(\fS) \longrightarrow \widetilde{\mathscr{S}}_\omega(\fS').
\end{align}

Let $\lambda$ be a triangulation of $\fS$.  
To show $\dS \subset \mathscr{U}_{\omega}(\fS)$,
by Theorem~\ref{intro-upper-inclusion} (\cite[Theorem~5.1]{BZ}), it is enough to prove
\[
    \dS \subset \mathbb{T}_k \qquad \text{for every } k\in \mathcal V_{\rm mut},
\]
where $\mathbb{T}_k$ denotes the quantum torus associated with the quantum seed
\(
\mu_{k,A}\bigl(Q_\lambda,\Pi_\lambda,M_\lambda\bigr)
\)
(see \eqref{eq-T-w}).  
For $n=2$ this inclusion is immediate because
\(
\mathbb{T}_k = \mathcal{A}_\omega(\fS,\lambda'),
\)
where $\lambda'$ is obtained from $\lambda$ by flipping the ideal arc corresponding to $k$.
Combining with established results in \cite{schrader2017continuous,LY23,KimWang},  
we prove the inclusion
\(
\dS \subset \mathbb{T}_k
\)
by cutting out a subsurface of type $\mathbb{P}_3$ or $\mathbb{P}_4$ whose interior contains the vertex $k$.

\vspace{0.2cm}

Then, we construct the splitting homomorphism for $\mathscr{U}_\omega(\fS)$, which is compatible with the one in \eqref{intro-splitting-eq}.

\begin{theorem}[Theorem~\ref{thm:splitU}]\label{intro-thm:splitU}
Let a triangulable pb surface $\fS$ and edge $e$ be an ideal arc in $\lambda$, and let $\fS'$ be the pb surface obtained by cutting $\fS$ along $e'$.
Assume that $\fS$ contains no interior punctures.
Then there exists an injective algebra homomorphism $$\mathbb S_e^U:\mathscr{U}_{\omega}(\fS)\to \mathscr{U}_{\omega}(\fS'),$$ which is compatible with $\mathbb S_e$ in the sense that the following diagram commutes:
    $$\centerline{\xymatrix{
  &\widetilde{\mathscr S}_\omega(\fS) \ar@{^{(}->}[rrr]^{\mathbb S_e}\ar@{^{(}->}[d]  &&& \widetilde{\mathscr S}_\omega(\fS') \ar@{^{(}->}[d]          \\
  &  \mathscr U_{\omega}(\fS) \ar@{^{(}->}[rrr]^{\mathbb S_e^U}  &&& \mathscr U_{\omega}(\fS').}}$$ 
\end{theorem}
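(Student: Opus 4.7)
My plan is to construct $\mathbb S_e^U$ as the restriction of an explicit splitting homomorphism defined first on the $\mathcal A$-version quantum tori and then extended to their skew-fields of fractions, invoking the identification ${\rm Frac}(\widetilde{\cS}_\omega(\fS))={\rm Frac}(\mathcal{A}_\omega(\fS,\lambda))$ of \eqref{intro-identity-Frac}.

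First I would set up the splitting on the initial tori. Cutting along $e$ decomposes $V_\lambda=V^\circ_\lambda\sqcup V^e_\lambda$ and produces $V_{\lambda'}=V^\circ_\lambda\sqcup (V^e_\lambda)^+\sqcup (V^e_\lambda)^-$, where $V^e_\lambda$ denotes the small vertices sitting on $e$ and the $\pm$ copies lie on the two boundary arcs of $\fS'$ inherited from $e$. I would then set
\[
\mathbb S_e^A\colon \mathcal{A}_\omega(\fS,\lambda)\longrightarrow \mathcal{A}_\omega(\fS',\lambda'),\qquad A_v\longmapsto \begin{cases} A_v, & v\in V^\circ_\lambda,\\ [A_{v^+}A_{v^-}], & v\in V^e_\lambda,\end{cases}
\]
where $[\,\cdot\,]$ is the Weyl-ordered product. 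Well-definedness reduces to the decomposition identities $P_\lambda(v,w)=\sum_{\epsilon,\epsilon'\in\{\pm\}} P_{\lambda'}(v^{\epsilon},w^{\epsilon'})$ for $v,w\in V^e_\lambda$, together with the obvious analogues when one or both of $v,w$ lie in $V^\circ_\lambda$; these follow directly from \eqref{eq-anti-matric-P-def}. Since $\mathbb S_e^A$ carries distinct monomials $A^{\bf t}$ to distinct monomials, it is injective between Ore domains and extends to an injective $\widehat{\mathbb S}_e^A\colon {\rm Frac}(\mathcal{A}_\omega(\fS,\lambda))\to {\rm Frac}(\mathcal{A}_\omega(\fS',\lambda'))$. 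Compatibility with the skein splitting, i.e.\ ${\rm tr}_{\lambda'}^A\circ \mathbb S_e=\mathbb S_e^A\circ{\rm tr}_\lambda^A$ on $\widetilde{\cS}_\omega(\fS)$, follows by transporting the analogous $\mathcal X$-side splitting/${\rm tr}_\lambda$ compatibility proved in \cite{LY23,KimWang} across the isomorphism $\psi_\lambda$ of \eqref{intro-eq-compability-tr-A-X-diag}.

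The crux is then to show that $\widehat{\mathbb S}_e^A$ is compatible with quantum $\mathcal A$-mutations at every $k\in\mathring{V}_{\lambda'}$; note that $\mathring{V}_{\lambda'}\subset\mathring{V}_\lambda$, since the interior small vertices of $\fS'$ are exactly those of $\fS$ that do not lie on $e$. Concretely, I aim to prove that for each such $k$,
\[
\widehat{\mathbb S}_e^A\bigl(\mathbb T_{\mu_{k,A}(Q_\lambda,\Pi_\lambda,M_\lambda)}\bigr)\subset \mathbb T_{\mu_{k,A}(Q_{\lambda'},\Pi_{\lambda'},M_{\lambda'})},
\]
and then iterate this by induction over any mutation sequence $v_1,\ldots,v_r\in\mathring{V}_{\lambda'}$ to obtain the analogous inclusion between the corresponding mutated tori. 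I would establish the single-mutation statement by combining Proposition~\ref{prop-comp} (the $\mathcal A$--$\mathcal X$ mutation compatibility) with the $\mathcal X$-side splitting/$\Theta^\omega_{\lambda\lambda'}$ compatibility of \cite{KimWang,LY23}, thereby reducing the binomial $\mathcal A$-mutation identity to a statement on the $\mathcal X$-side that is already available. This is the main obstacle: the quantum $\mathcal A$-mutation formula is a two-term sum whose exponents are governed by $\Pi$, and making each summand transform correctly under the splitting requires a careful combinatorial matching built on the decomposition identities for $P_\lambda$ noted above.

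Granting this, the conclusion is immediate. For any $x\in\mathscr U_\omega(\fS)$ and any $\fS'$-seed $s'$ obtained from $(Q_{\lambda'},\Pi_{\lambda'},M_{\lambda'})$ by mutations $v_1,\ldots,v_r\in\mathring{V}_{\lambda'}\subset\mathring{V}_\lambda$, the $\fS$-seed $s$ obtained by the same mutations in $\fS$ satisfies $x\in\mathbb T_s$, hence $\widehat{\mathbb S}_e^A(x)\in\mathbb T_{s'}$ by the mutation compatibility. Intersecting over all such $s'$ gives $\widehat{\mathbb S}_e^A(\mathscr U_\omega(\fS))\subset\mathscr U_\omega(\fS')$. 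Define $\mathbb S_e^U:=\widehat{\mathbb S}_e^A|_{\mathscr U_\omega(\fS)}$ via the identification of fraction fields; injectivity follows from that of $\widehat{\mathbb S}_e^A$, and the commutativity of the square in the statement follows from the skein–torus compatibility established in the first step.
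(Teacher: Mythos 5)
Your proposed formula for $\mathbb{S}_e^A$ is incorrect, and this is where the argument breaks down. You set $\mathbb{S}_e^A(A_v)=A_v$ for every small vertex $v$ not on $e$, but the actual skein-theoretic splitting does \emph{not} act this way on the generators $\gaa_v$. The element $\gaa_v$ is obtained from the elongated graph $\widetilde{Y}_v$ (\S\ref{sec;A_tori}), whose edges are extended across $\fS$ by ``turning left'' through every triangle they enter; in particular $\widetilde{Y}_v$ generically crosses the ideal arc $e$ even when $v$ lies nowhere near $e$. Cutting along $e$ therefore severs the web $\gaa_v$ and introduces states at the cut points, so that $\mathbb{S}_e(\gaa_v)$ is \emph{not} $\gaa_v$ but rather the Weyl-ordered product $\bigl[\gaa_v\prod_{u\in\mathcal V^1_v}\gaa_{u'}\prod_{u\in\mathcal V^2_v}\gaa_{u''}\bigr]$, with extra factors coming from the small vertices on the two copies $e',e''$ of $e$ (this is exactly what Proposition~\ref{prop:image} in the paper computes). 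Equivalently, on the $\mathcal X$-side the splitting $\mathcal S_e$ of \eqref{cutting_homomorphism_for_Z_omega} does have the simple ``identity on interior vertices'' form that you wrote, but transporting it through $\psi_\lambda$ (Theorem~\ref{thm-transition-LY}) multiplies by the matrix $K_\lambda$, which mixes $v$ with all the vertices that the elongation of $\widetilde{Y}_v$ visits; since $K_\lambda$ and $K_{\lambda_e}$ differ on these vertices after the cut, the naive $A_v\mapsto A_v$ does not intertwine $\psi_\lambda$ and $\psi_{\lambda_e}$. Concretely, $\varphi_{\lambda_e}\circ\mathcal S_e\circ\psi_\lambda(A_v)=A^{\frac1n\widehat{K_\lambda(v,*)}H_{\lambda_e}}$, which is not $A_v$ in general. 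So your claimed square ${\rm tr}_{\lambda'}^A\circ\mathbb{S}_e=\mathbb{S}_e^A\circ{\rm tr}_\lambda^A$ fails, and with it the commutativity of the diagram in the statement.

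Fixing the formula forces the harder part of the argument that your proposal leaves implicit. Once $\mathbb{S}_e^A$ carries each $A_v$ ($v\notin e$) to a Laurent monomial that also involves the new frozen variables $A_{u'},A_{u''}$ living on $e',e''$, your inductive mutation-compatibility step can no longer be ``the same binomial on both sides.'' One must instead show that, for each mutable $k$ of $\fS'$, the image of $\mu_{k,A}(A_k)$ differs from $\mu_{k,A}$ applied to the image by a Laurent monomial in the frozen variables only (the paper's relation $\asymp$), so that the two based quantum tori coincide. This is precisely the combinatorial content of Proposition~\ref{prop:split} (in particular condition~(d) and Lemma~\ref{lem:propotion}/Corollary~\ref{cor:propotion}) and its ${\rm SL}_n$-specific verification in Lemma~\ref{lem:mut}; appealing to the $\mathcal X$-side mutation/splitting compatibility does not by itself supply this cancellation of frozen factors, because that compatibility is stated for the mutable-balanced subalgebras and does not track the proportionality constants that appear after transporting to the $\mathcal A$-side. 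Without this verification the inclusion $\widehat{\mathbb{S}}_e^A(\mathbb T_{\mu_{k,A}(Q_\lambda,\Pi_\lambda,M_\lambda)})\subset\mathbb T_{\mu_{k,A}(Q_{\lambda'},\Pi_{\lambda'},M_{\lambda'})}$ that your intersection argument rests on is not established.
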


We first establish the splitting homomorphism for quantum cluster upper algebras in a general setting (Proposition~\ref{prop:split}).  
We then verify that the ${\rm SL}_n$ quantum upper cluster algebra $\mathscr{U}_{\omega}(\fS)$ satisfies the hypotheses of this general result. Together with Proposition~\ref{prop:image}.  
this proves Theorem~\ref{intro-thm:splitU}.

\vspace{0.2cm}

We are close to prove Theorem~\ref{intro-thm-skein-inclusion-A}. Before that we consider two special cases when $\fS=\mathbb P_3,\mathbb P_4$.

\vspace{0.2cm}

There is a dual quiver $\mathcal N(\mathbb P_3,v_1)$ (see Figure~\ref{dualquiver}) associated with the quiver $\Gamma_{\mathbb P_3}$.  
As illustrated in Figure~\ref{dualquiver}, the endpoints of $\mathcal N(\mathbb P_3,v_1)$ along $e_1$ and $e_3$ are labeled by $1,2,\ldots,n$.  
For any $i,j\in\{1,2,\ldots,n\}$, define the path set $\mathsf P(\mathbb P_3,v_1,i,j)$ to consist of all paths in $\mathcal N(\mathbb P_3,v_1)$ starting at the $i$ lying in $e_3$ and ending at the $j$ lying in $e_1$ (see \eqref{def-path-P3-v1}).  
Assume that $\fS=\mathbb P_3$.  
Then \cite[Theorem~10.5]{LY23} (Theorem~\ref{lem-trace-image-cornerarc-P3}) together with \eqref{intro-eq-compability-tr-A-X-diag} implies
\begin{align}\label{intro-path-sum}
    C_{ij}=\sum_{p\in\mathsf P(\mathbb P_3,v_1,i,j)} A_p \in \mathcal A_{\omega}(\mathbb P_3),
\end{align}
where $A_p$ is the Laurent monomial defined in \eqref{def-path-A-monomial}.  

For $j=1$ or $j=i$, the set $\mathsf P(\mathbb P_3,v_1,i,j)$ contains a single path, and \eqref{intro-eq-Cij} follows directly from \eqref{intro-path-sum}.  
When $1<j<i$, there exist a ``maximum'' path $p_{\max}^{(i,j)}$, containing the largest number of small vertices on its left, and a ``minimum'' path $p_{\min}^{(i,j)}$, containing the fewest such vertices, such that every $p\in \mathsf P(\mathbb P_3,v_1,i,j)$ is between $p_{\max}^{(i,j)}$ and $p_{\min}^{(i,j)}$, denoted as
\[
   p_{\min}^{(i,j)} \le p \le p_{\max}^{(i,j)}.
\]
The paths $p_{\max}^{(i,j)}$ and $p_{\min}^{(i,j)}$ bound a quadrilateral whose two “tails’’ connect to $e_1$ and $e_3$, respectively.  
The small vertices contained in this quadrilateral are
\begin{align}
   v_{j,j-1},\ldots,v_{j,1},\ldots,v_{i-1,1},\ldots,v_{i-1,j-1}.
\end{align}
A promising observation is that
\begin{align}\label{intro-mutation-Ap}
   \sum_{p_{\min}^{(i,j)} \le p \le p_{\max}^{(i,j)}} A_p
   = \bigl[\mu_{j,j-1}\cdots\mu_{j,1}\cdots\mu_{i-1,1}\cdots\mu_{i-1,j-1}(A_{j,j-1}) \cdot A_{i0}^{-1} A_{jj}^{-1}\bigr],
\end{align}
which is precisely the formula in \eqref{intro-eq-Cij}.  

To prove \eqref{intro-mutation-Ap}, note that the set $\mathsf P(\mathbb P_3,v_1,i,j)$ is in one-to-one correspondence with the disjoint union of the sets of paths from $i-1$ to $j-1$ and from $i-1$ to $j$ for ${\rm SL}_{n-1}$ (Lemma~\ref{lem:divide2}).  The result then follows by induction combined with a detailed analysis of the mutations
\(\mu_{(j;j-1)}\cdots \mu_{(i-2;j-1)} \mu_{(i-1;j-1)}\) (Lemmas~\ref{lem:quiver0}--\ref{lem:mut2}).

The equality \eqref{intro-eq-bar-Cij} (Proposition~\ref{prop-bar-Cij}) for $\fS=\mathbb P_3$ can be established in the same way.

Using \eqref{intro-eq-Cij} and \eqref{intro-eq-bar-Cij}, we establish Equation~\eqref{into-eq-Dij} for $\fS=\mathbb P_4$ (Theorem~\ref{thm-P4-Dij}) by cutting $\mathbb P_4$ along $c_5$ into two triangles. More precisely, in light of the injectivity and compatibility of the splitting homomorphisms $\mathbb S_{c_5}$ and $\mathbb S^U_{c_5}$ for the projected ${\rm SL}_n$ skein algebra and the corresponding quantum upper cluster algebra, respectively (Theorem \ref{intro-thm:splitU}), it suffices to verify \eqref{into-eq-Dij} after applying $\mathbb S_{c_5}$ and $\mathbb S^U_{c_5}$ to the left and right of \eqref{into-eq-Dij}, respectively. The cases $i\geq j=1$ and $j>i=1$ follow by direct computation. For the case $i\geq j>1$, a key observation is that the subquiver of $\overline \mu^{\diamondsuit}_j\circ  \mu^{\diamondsuit}_{(i;j-1)}(Q_{\mathbb P_4})$ formed by the vertices $v_{j-1,j-1},\cdots, v_{22},v_{11}$ is of type $A$ of linear orientation (Lemma \ref{lem:quiver2}). Relying on this, we apply the expansion formula for type $A$ quantum cluster algebras \cite{R,CL,H1} to compute the exchangeable cluster variable $\left(\mu_{j-1,j-1}\cdots\mu_{22}\mu_{11}\right)\circ \overline \mu^{\diamondsuit}_j\circ  \mu^{\diamondsuit}_{(i;j-1)}(A_{j-1,j-1})$ (Corollary \ref{cor:cv}). A direct check then confirms that \eqref{into-eq-Dij} holds after applying $\mathbb S_{c_5}$ and $\mathbb S^U_{c_5}$ to both sides. The case $j>i>1$ can be proved in a same manner.

Finally, combining the above results for $\mathbb{P}_3$ and $\mathbb{P}_4$ with detailed calculations for  
$\mathbb{S}^{U}_{e_2}$ and $\mathbb{S}^{U}_{c_1}\circ \mathbb{S}^{U}_{c_3}$  
(Lemmas~\ref{lem:split_e2}--\ref{lem-splitting-essential-arc})  
and invoking Theorem~\ref{intro-thm:splitU},  
we prove Theorem~\ref{intro-thm-skein-inclusion-A} by either cutting out a $\mathbb{P}_3$ along $e_2$  
or cutting out a $\mathbb{P}_4$ along $c_1$ and $c_3$.

\vspace{0.2cm}

We conclude the introduction with the following two equivalent conjectures.

\begin{conjecture}\label{con-equality-Skein-A-U}
      Under the same assumption as Theorem~\ref{intro-thm-skein-inclusion-A},
     we have 
     $$\widetilde{\cS}_\omega(\fS)= \mathscr{A}_{\omega}(\fS)= \mathscr{U}_{\omega}(\fS).$$
\end{conjecture}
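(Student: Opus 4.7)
The plan is to supply the one missing inclusion $\mathscr U_\omega(\fS) \subseteq \widetilde{\cS}_\omega(\fS)$: the other direction $\widetilde{\cS}_\omega(\fS) \subseteq \mathscr A_\omega(\fS) \subseteq \mathscr U_\omega(\fS)$ is already available, via Theorem~\ref{intro-thm-skein-inclusion-A} for the first inclusion and the quantum Laurent phenomenon of \cite{BZ} for the second, so once the reverse containment is established both equalities in Conjecture~\ref{con-equality-Skein-A-U} follow at once.

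The natural strategy, and the one suggested by the abstract, is first to prove the intermediate claim that $\mathscr U_\omega(\fS)$ is generated as an $R$-algebra by the exchangeable cluster variables of all reachable seeds together with the frozen cluster variables and their inverses. Granted this, Theorem~\ref{intro-thm-skein-inclusion-A} expresses each of the distinguished exchangeable cluster variables coming from the triangulation seed as a Weyl-ordered product of a stated essential arc and a Laurent monomial in frozens; since the frozen variables are themselves realized (up to signs and Weyl-ordered frozen corrections) by boundary-parallel stated arcs lying in $\widetilde{\cS}_\omega(\fS)$, the desired containment $\mathscr U_\omega(\fS) \subseteq \widetilde{\cS}_\omega(\fS)$ follows. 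Naturality under flips (Theorem~\ref{intro-naturality}) then propagates the same property to any triangulation-reachable seed, so the question reduces to the purely cluster-theoretic generation statement for $\mathscr U_\omega(\fS)$.

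To prove the generation statement, I would induct on the number of interior ideal arcs in a fixed triangulation $\lambda$ of $\fS$, using the injective, compatible splitting square
\begin{equation*}
\xymatrix{
\widetilde{\cS}_\omega(\fS) \ar@{^{(}->}[rr]^{\mathbb S_e} \ar@{^{(}->}[d] && \widetilde{\cS}_\omega(\fS') \ar@{^{(}->}[d] \\
\mathscr U_\omega(\fS) \ar@{^{(}->}[rr]^{\mathbb S^U_e} && \mathscr U_\omega(\fS')
}
\end{equation*}
of Theorem~\ref{intro-thm:splitU}. Iterating across all interior arcs of $\lambda$ reduces to a disjoint union of triangles, so the base case is $\fS=\mathbb P_3$, with $\fS=\mathbb P_4$ as a useful intermediate case for pairs of adjacent triangles sharing the cut edge. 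For $\mathbb P_3$, the path-sum formula \eqref{intro-path-sum} from \cite[Theorem~10.5]{LY23} combined with Theorem~\ref{intro-thm-skein-inclusion-A}(a)(b) supplies an explicit set of corner-arc generators of the required form for $\mathscr U_\omega(\mathbb P_3)$; for $\mathbb P_4$, the analogous essential-arc description in Theorem~\ref{intro-thm-skein-inclusion-A}(c) handles that base case. The inductive step pulls back a generating family for $\mathscr U_\omega(\fS')$ to $\mathscr U_\omega(\fS)$ through $\mathbb S^U_e$, using the image characterization encoded in Proposition~\ref{prop:image} to identify which pullbacks remain of cluster-variable-plus-frozens form.

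The main obstacle is precisely this inductive pullback: although $\mathbb S^U_e$ is injective and the diagram commutes, the image of $\mathbb S^U_e$ may a priori strictly contain the subalgebra generated by the image of the skein splitting $\mathbb S_e$, so a naive induction does not close the loop. Bridging this gap requires a sharp ``gluing criterion'' on the cluster side --- a Weyl-ordered Laurent analogue of the balanced condition defining $\mathcal Z_\omega^{\rm bl}(\fS,\lambda)$ --- that pins down, in terms of exponent vectors on the cut surface, exactly which elements of $\mathscr U_\omega(\fS')$ descend to $\mathscr U_\omega(\fS)$, and then shows that the descending elements can be written in terms of $\mathbb S^U_e$-pulled-back exchangeable cluster variables and frozens. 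Making this criterion explicit, verifying its compatibility with the non-commutative mutation formulas, and treating simultaneous cuts (needed when $\fS$ has positive genus or several boundary components) is the principal technical challenge, and is the reason the statement remains conjectural in the present paper.
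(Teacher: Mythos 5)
Your proposal is an outline of a strategy for a statement the paper itself leaves open: Conjecture~\ref{con-equality-Skein-A-U} is a conjecture, not a theorem, and the paper supplies no proof of it. What the paper does prove is Corollary~\ref{intro-cor}, asserting that Conjecture~\ref{con-equality-Skein-A-U} is equivalent to the purely cluster-theoretic generation statement Conjecture~\ref{con-intro}; your first two paragraphs reproduce precisely this reduction, via the chain $\dS \subset \mathscr A_\omega(\fS) \subset \mathscr U_\omega(\fS)$ (Theorem~\ref{intro-thm-skein-inclusion-A} and the quantum Laurent phenomenon of \cite{BZ}), the explicit cluster-variable expressions \eqref{intro-eq-Cij}, \eqref{intro-eq-bar-Cij}, \eqref{into-eq-Dij} for stated essential arcs, and the containment $R\langle A_v^{\pm1}\mid v\in\mathcal V\setminus\mathcal V_{\rm mut}\rangle\subset\dS$ coming from the frozen variables. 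So your reduction is correct and agrees with the paper's own.

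Where the argument stops, it stops at exactly the same place the paper's does. You correctly observe that pulling a generating family for $\mathscr U_\omega(\fS')$ back along the injective, compatible splitting $\mathbb S_e^U$ does not close the induction, because injectivity and commutativity of the square in Theorem~\ref{intro-thm:splitU} do not by themselves characterize the image of $\mathbb S_e^U$ sharply enough, and you flag the need for a ``gluing criterion'' (a balanced-type condition on exponent vectors in $\mathscr U_\omega(\fS')$) to control which elements descend. That gap is genuine, and it is exactly the unresolved content of Conjecture~\ref{con-intro}: the paper verifies that conjecture only for $\fS=\mathbb P_3$ with $n=3,4,5$ and, via \cite{muller2016skein,LS21}, for $n=2$ in general. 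Since you acknowledge the gap explicitly and even remark that it ``is the reason the statement remains conjectural,'' your submission is a faithful reformulation of the open problem rather than a proof, and should not be presented as one; the only claim you can safely assert is Corollary~\ref{intro-cor}, which the paper already establishes.
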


\begin{conjecture}\label{con-intro}
      Under the same assumption as Theorem~\ref{intro-thm-skein-inclusion-A}, the 
$R\langle A^{\pm1}_v\mid v\in \mathcal V\setminus \mathcal V_{\mathrm{mut}}\rangle$-algebra
 $\mathscr{U}_{\omega}(\fS)$
 is generated by the exchangeable cluster variables appearing in 
 Equations~\eqref{intro-eq-Cij}, \eqref{intro-eq-bar-Cij}, and \eqref{into-eq-Dij}.
\end{conjecture}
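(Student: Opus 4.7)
My plan is to first show that Conjecture~\ref{con-intro} is equivalent to Conjecture~\ref{con-equality-Skein-A-U}, and then attack the remaining nontrivial inclusion $\mathscr U_\omega(\fS)\subseteq\widetilde{\cS}_\omega(\fS)$ by a reduction to the polygon building blocks $\mathbb P_3$ and $\mathbb P_4$, using the splitting homomorphism of Theorem~\ref{intro-thm:splitU} as the main structural tool. For the forward direction of the equivalence, if Conjecture~\ref{con-equality-Skein-A-U} holds then Lemma~\ref{lem-essential-arc} supplies a generating set of $\mathscr U_\omega(\fS)=\widetilde{\cS}_\omega(\fS)$ by stated essential arcs, and formulas \eqref{intro-eq-Cij}, \eqref{intro-eq-bar-Cij}, \eqref{into-eq-Dij} exhibit each such arc as a Weyl product of one of the listed exchangeable cluster variables with a Laurent monomial in the frozen variables, yielding Conjecture~\ref{con-intro}. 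Conversely, assuming Conjecture~\ref{con-intro}, each listed exchangeable cluster variable equals, up to a Laurent monomial in the frozen variables, a stated corner or essential arc, and so already lies in $\widetilde{\cS}_\omega(\fS)$; since the frozen variables $A_v$ with $v\in\mathcal V\setminus\mathcal V_{\mathrm{mut}}$ correspond under ${\rm tr}_\lambda^A$ to invertible boundary webs that are in $\widetilde{\cS}_\omega(\fS)$, the frozen Laurent ring embeds into $\widetilde{\cS}_\omega(\fS)$, the assumed generation forces $\mathscr U_\omega(\fS)\subseteq\widetilde{\cS}_\omega(\fS)$, and combined with Theorems~\ref{intro-thm-skein-inclusion-A} and~\ref{intro-upper-inclusion} the three algebras collapse.

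Next I would settle the polygon base cases. The triangle $\mathbb P_3$ has empty $\mathcal V_{\mathrm{mut}}$, so both $\mathscr A_\omega(\mathbb P_3)$ and $\mathscr U_\omega(\mathbb P_3)$ coincide with the quantum torus $\mathcal A_\omega(\mathbb P_3,\lambda)$, while properties~\ref{intro-thm-trace-A-a}--\ref{intro-thm-trace-A-b} of ${\rm tr}_\lambda^A$ identify this quantum torus with $\widetilde{\cS}_\omega(\mathbb P_3)$. For the quadrilateral $\mathbb P_4$, Theorem~\ref{thm-P4-Dij} already displays every stated essential arc $D_{ij}$ in the required form, and an explicit check using the $\mathbb P_4$-exchange matrix together with the quantum Laurent characterization of $\mathscr U_\omega(\mathbb P_4)$ from \cite{BZ} should confirm that these essential arcs, together with the frozen variables, generate $\mathscr U_\omega(\mathbb P_4)$ over $R$.

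For a general $\fS$ satisfying the hypotheses, I would fix a triangulation $\lambda$ and argue by induction on the number of interior ideal arcs, cutting along one $e\in\lambda$ at a time: given $u\in\mathscr U_\omega(\fS)$, Theorem~\ref{intro-thm:splitU} sends $u$ to $\mathbb S_e^U(u)\in\mathscr U_\omega(\fS')$, and by induction this image lies in $\widetilde{\cS}_\omega(\fS')$. To conclude $u\in\widetilde{\cS}_\omega(\fS)$ one needs the following global-to-local principle: an element of $\mathscr U_\omega(\fS)$ whose image under the full splitting into $\bigotimes_{\tau\in\mathbb F_\lambda}\mathscr U_\omega(\tau)=\bigotimes_{\tau\in\mathbb F_\lambda}\widetilde{\cS}_\omega(\tau)$ lies in the image of $\widetilde{\cS}_\omega(\fS)$ must itself belong to $\widetilde{\cS}_\omega(\fS)$. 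This is the hardest step, and the chief obstacle: the splitting maps are only known to be injective and no intrinsic description of their image is presently available at the upper-cluster level. Establishing it will most plausibly require an analogue of the balancedness conditions of \cite{LY23} transported to the cluster side, or alternatively the construction of a basis of $\mathscr U_\omega(\fS)$ indexed by $\mathrm{SL}_n$-webs that matches a standard basis of $\widetilde{\cS}_\omega(\fS)$. Once this bridge is in place, Conjecture~\ref{con-intro} follows immediately from the equivalence established in the first step.
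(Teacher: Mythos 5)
The statement you were asked about is a \emph{conjecture}, not a theorem: the paper does not prove Conjecture~\ref{con-intro}. What the paper establishes is only Corollary~\ref{intro-cor}, the equivalence of Conjecture~\ref{con-intro} with Conjecture~\ref{con-equality-Skein-A-U}; neither conjecture is resolved, and the paper explicitly leaves it for future work (``We will investigate this in a future project''), noting only that a computer check confirms Conjecture~\ref{con-intro} for $\mathbb P_3$ with $n=3,4,5$. Your opening step, establishing the equivalence, is in the spirit of Corollary~\ref{intro-cor}; but everything after that is an attempt to prove an open problem, and the paper contains no such argument.

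Beyond the status issue, there is a concrete mathematical error in your polygon base case. You assert that $\mathcal V_{\mathrm{mut}}=\emptyset$ for $\mathbb P_3$, so that $\mathscr A_\omega(\mathbb P_3)=\mathscr U_\omega(\mathbb P_3)=\mathcal A_\omega(\mathbb P_3,\lambda)$ trivially. This is only true for $n=2$. Recall $\mathcal V_{\mathrm{mut}}=\mathring V_\lambda$ is the set of small vertices in the interior of $\fS$; the $n$-triangulation of $\mathbb P_3$ has interior small vertices as soon as $n\geq 3$ (for instance $(1,1,1)$ when $n=3$), so the mutable set is nonempty and the cluster structure on $\mathbb P_3$ is already genuinely nontrivial. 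That is precisely why the paper's verification of Conjecture~\ref{con-intro} for $\mathbb P_3$ is carried out only case-by-case for small $n$. Your claimed identification of $\mathscr U_\omega(\mathbb P_3)$ with the quantum torus therefore fails, and the induction you build on it does not get off the ground. Finally, you yourself flag the ``global-to-local principle'' step as the chief unresolved obstacle and offer no proof of it; with that step missing, the overall argument does not close, which is consistent with the fact that the paper treats the statement as an open conjecture rather than a theorem.
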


One can check that Conjecture \ref{con-intro} holds for $\fS=\mathbb P_3$ and $n=3$, $4$ or $5$.

It was shown in \cite{LY23} that 
$R\langle A^{\pm1}_v\mid v\in \mathcal V\setminus \mathcal V_{\mathrm{mut}}\rangle\subset \dS$ (the case when $j=i$ in \eqref{intro-eq-Cij} also implies this inclusion).
Then Theorem~\ref{intro-thm-skein-inclusion-A} leads to the following result,  
which provides a promising potential approach to proving Theorem~\ref{con-equality-Skein-A-U}.
We will investigate this in a future project. 

\begin{corollary}\label{intro-cor}
Conjectures~\ref{con-equality-Skein-A-U} and \ref{con-intro} are equivalent.
\end{corollary}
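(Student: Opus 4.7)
The plan is to derive both implications directly from Theorem~\ref{intro-thm-skein-inclusion-A} together with the inclusion $R\langle A^{\pm1}_v\mid v\in \mathcal V\setminus \mathcal V_{\mathrm{mut}}\rangle\subset \dS$ recalled just above the corollary. Once these two inputs are in place, the entire argument is bookkeeping: one only needs to rearrange the explicit Weyl-ordered identities \eqref{intro-eq-Cij}, \eqref{intro-eq-bar-Cij}, and \eqref{into-eq-Dij}.

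For the implication \ref{con-equality-Skein-A-U}$\Rightarrow$\ref{con-intro}, I would invoke Lemma~\ref{lem-essential-arc} to present $\dS$ as the $R$-algebra generated by its finitely many stated essential arcs. Under the hypothesis $\dS=\mathscr U_\omega(\fS)$, Theorem~\ref{intro-thm-skein-inclusion-A} writes each such generator as a Weyl-ordered product of one of the listed exchangeable cluster variables with a Laurent monomial in frozen cluster variables. Since frozen cluster variables and their inverses lie in $R\langle A^{\pm1}_v\mid v\in \mathcal V\setminus \mathcal V_{\mathrm{mut}}\rangle$, it follows immediately that $\mathscr U_\omega(\fS)$ is generated over this Laurent ring by precisely the exchangeable cluster variables appearing in \eqref{intro-eq-Cij}, \eqref{intro-eq-bar-Cij}, and \eqref{into-eq-Dij}, which is Conjecture~\ref{con-intro}.

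For the reverse direction, the chain $\dS\subset \mathscr A_\omega(\fS)\subset \mathscr U_\omega(\fS)$ is already supplied by Theorem~\ref{intro-thm-skein-inclusion-A} together with the Laurent phenomenon, so it suffices to show $\mathscr U_\omega(\fS)\subset \dS$. Under Conjecture~\ref{con-intro}, $\mathscr U_\omega(\fS)$ is generated over $R\langle A^{\pm1}_v\mid v\in \mathcal V\setminus \mathcal V_{\mathrm{mut}}\rangle$ by the exchangeable cluster variables listed in the three equations, and the Laurent ring itself is contained in $\dS$ by \cite{LY23}. For each exchangeable cluster variable $X$ appearing in the list, the corresponding identity in Theorem~\ref{intro-thm-skein-inclusion-A} has the shape (stated essential arc) $=[X\cdot F^{-1}]$, where $F$ is a Laurent monomial in frozen variables. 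Solving for $X$ presents it as a Weyl-ordered product of the stated essential arc (which lies in $\dS$) with a Laurent monomial in frozen variables (also in $\dS$), hence $X\in\dS$. This yields $\mathscr U_\omega(\fS)\subset \dS$, and therefore Conjecture~\ref{con-equality-Skein-A-U}.

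I do not expect any serious obstacle. The only minor subtlety is that Weyl-ordering is defined only up to a scalar unit determined by the quasi-commutation matrix, so that when the defining identity is solved for $X$ one picks up a power of $\omega$ from $R^\times$; this is harmless since such units are invertible in $R$ and the resulting expression is again a Weyl-ordered monomial in elements of $\dS$. Consequently, the equivalence of the two conjectures reduces cleanly to Theorem~\ref{intro-thm-skein-inclusion-A} and the inclusion of the frozen Laurent ring into $\dS$, with no genuine difficulty beyond this scalar accounting.
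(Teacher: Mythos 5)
Your argument is correct and is the one the paper intends; the paper does not write out a proof of Corollary~\ref{intro-cor} but instead points to the two inputs you use — Theorem~\ref{intro-thm-skein-inclusion-A} and the inclusion $R\langle A^{\pm1}_v\mid v\in \mathcal V\setminus \mathcal V_{\mathrm{mut}}\rangle\subset \dS$ — and leaves the verification implicit. One cosmetic caution: a few of the formulas (e.g.\ the $j=i$ case of \eqref{intro-eq-Cij}) realize the essential arc as a Weyl-ordered Laurent monomial in frozen variables with no exchangeable cluster variable present, so the phrase ``exchangeable cluster variable times Laurent monomial'' in Theorem~\ref{intro-thm-skein-inclusion-A} is slightly overstated; this does not affect either direction of your argument, since such arcs already lie in the frozen Laurent ring and contribute no additional generators.
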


Conjecture~\ref{con-equality-Skein-A-U} was confirmed for $n=2$ in \cite{muller2016skein,LS21}. Then, it follows from
Corollary~\ref{intro-cor} that Conjecture~\ref{con-intro} holds when $n=2$.

{\bf Acknowledgements:}
This work was partially supported by the National Natural Science Foundation
of China (No.12471023) (M.H.). Part of this research was carried out while Z.W. was a Dual PhD student at the University of Groningen (The Netherlands) and Nanyang Technological University (Singapore), supported by a PhD scholarship from the University of Groningen and a research scholarship from Nanyang Technological University.  
Z.W. was supported by a KIAS Individual Grant (MG104701) at the Korea Institute for Advanced Study.

\section{Quantum trace maps for reduced stated $\SL$-skein algebras}

The quantum trace maps for reduced stated $\SL$-skein algebras were constructed in \cite{LY23}. 
There are two versions of these maps, called the $\mathcal X$-version and the $\mathcal A$-version, which are compatible to each other. 
Both are algebra homomorphisms from the reduced stated $\SL$-skein algebra to a quantum torus. 
The $X$-quantum torus coincides with the $n$-root Fock--Goncharov algebra, which is related to the $X$-cluster variety \cite{KimWang}. 
In \S\ref{sec-seed-structure}, we will use 
the $A$-quantum torus to construct quantum seeds (Definition~\ref{def-quantum-seed}).

\subsection{Reduced stated $\SL$-skein algebras}\label{sub-def-reduced-skein}

\def\Si{\fS}

Let $\fS$ be a pb surface (see \S\ref{intro-sec-reduced}).
Consider the 3-manifold $\Si \times (-1,1)$, the thickened surface. For a point $(x,t) \in \Si \times (-1,1)$, the value $t$ is called the {\bf height} of this point. 
\begin{definition}[\cite{LS21}]\label{def-n-web}
    An {\bf $n$-web} $\alpha$ in $\Si\times(-1,1)$ is a disjoint union of oriented closed curves and a directed finite graph properly embedded into $\Si\times(-1,1)$, satisfying the following requirements:
\begin{enumerate}
    \item $\alpha$ only contains $1$-valent or $n$-valent vertices. Each $n$-valent vertex is a source or a  sink. The set of $1$-valent vertices is denoted as $\partial \alpha$, which are called \textbf{endpoints} of $\alpha$. For any boundary component $c$ of $\Si$, we require that the points of  $\partial\alpha\cap (c\times(-1,1))$ have mutually distinct heights.
    \item Every edge of the graph is an embedded oriented  closed interval  in $\Si\times(-1,1)$.
    \item $\alpha$ is equipped with a transversal \textbf{framing}. 
    \item The set of half-edges at each $n$-valent vertex is equipped with a  cyclic order. 
    \item $\partial \alpha$ is contained in $\partial\Si\times (-1,1)$ and the framing at these endpoints is given by the positive direction of $(-1,1)$.
\end{enumerate}
We will consider $n$-webs up to (ambient) \textbf{isotopy} which are continuous deformations of $n$-webs in their class. 
The empty $n$-web, denoted by $\emptyset$, is also considered as an $n$-web, with the convention that $\emptyset$ is only isotopic to itself. 

A {\bf state} for $\alpha$ is a map $s\colon\partial\alpha\rightarrow \{1,2,\cdots,n\}$. A {\bf stated $n$-web} in $\Si\times(-1,1)$ is an $n$-web equipped with a state.
\end{definition}

By regarding $\fS$ as $\fS\times\{0\}$, there is a projection $\text{pr}\colon\fS\times(-1,1)\rightarrow \fS.$
We say the (stated) $n$-web $\alpha$ is in {\bf vertical position} if 
\begin{enumerate}
    \item the framing at everywhere is given by the positive direction of $(-1,1)$,
    \item $\alpha$ is in general position with respect to the projection  $\text{pr}\colon \Si\times(-1,1)\rightarrow \Si\times\{0\}$,
    \item at every $n$-valent vertex, the cyclic order of half-edges as the image of $\text{pr}$ is given by the positive orientation of $\Si$ (drawn counter-clockwise in pictures).
\end{enumerate}

For every (stated) $n$-web $\alpha$, we can isotope $\alpha$ to be in vertical position. For each boundary component $c$ of $\fS$, the heights of $\partial\alpha\cap (c\times(-1,1))$ determine a linear order on  $c\cap \text{pr}(\alpha)$.
Then a {\bf (stated) $n$-web diagram} of $\alpha$ is $\text{pr}(\alpha)$ equipped with the usual over/underpassing information at each double point (called a crossing) and a linear order on $c\cap \text{pr}(\alpha)$ for each boundary component $c$ of $\fS$.

Let $S_n$ denote the permutation group on the set $\{1,2,\cdots,n\}$. 
For an integer $i\in\{1,2,\cdots,n\}$, we use $\bar{i}$ to denote $n+1-i$.

\def\M {M,\cN}

The \textbf{stated $\SL$-skein algebra} $\cS_{\omega}(\fS)$ of $\fS$ is
the quotient module of the $R$-module (see \S\ref{sec-intro}) freely generated by the set 
 of all isotopy classes of stated 
$n$-webs in $\fS\times (-1,1)$ subject to  relations \eqref{w.cross}-\eqref{wzh.eight} (see \eqref{intro-constants} for involved constants in $R$).

\beq\label{w.cross}
q^{-\frac{1}{n}} 
\raisebox{-.20in}{

\begin{tikzpicture}
\tikzset{->-/.style=

{decoration={markings,mark=at position #1 with

{\arrow{latex}}},postaction={decorate}}}
\filldraw[draw=white,fill=gray!20] (-0,-0.2) rectangle (1, 1.2);
\draw [line width =1pt,decoration={markings, mark=at position 0.5 with {\arrow{>}}},postaction={decorate}](0.6,0.6)--(1,1);
\draw [line width =1pt,decoration={markings, mark=at position 0.5 with {\arrow{>}}},postaction={decorate}](0.6,0.4)--(1,0);
\draw[line width =1pt] (0,0)--(0.4,0.4);
\draw[line width =1pt] (0,1)--(0.4,0.6);
\draw[line width =1pt] (0.4,0.6)--(0.6,0.4);
\end{tikzpicture}
}
- q^{\frac{1}{n}}
\raisebox{-.20in}{
\begin{tikzpicture}
\tikzset{->-/.style=

{decoration={markings,mark=at position #1 with

{\arrow{latex}}},postaction={decorate}}}
\filldraw[draw=white,fill=gray!20] (-0,-0.2) rectangle (1, 1.2);
\draw [line width =1pt,decoration={markings, mark=at position 0.5 with {\arrow{>}}},postaction={decorate}](0.6,0.6)--(1,1);
\draw [line width =1pt,decoration={markings, mark=at position 0.5 with {\arrow{>}}},postaction={decorate}](0.6,0.4)--(1,0);
\draw[line width =1pt] (0,0)--(0.4,0.4);
\draw[line width =1pt] (0,1)--(0.4,0.6);
\draw[line width =1pt] (0.6,0.6)--(0.4,0.4);
\end{tikzpicture}
}
= (q^{-1}-q)
\raisebox{-.20in}{

\begin{tikzpicture}
\tikzset{->-/.style=

{decoration={markings,mark=at position #1 with

{\arrow{latex}}},postaction={decorate}}}
\filldraw[draw=white,fill=gray!20] (-0,-0.2) rectangle (1, 1.2);
\draw [line width =1pt,decoration={markings, mark=at position 0.5 with {\arrow{>}}},postaction={decorate}](0,0.8)--(1,0.8);
\draw [line width =1pt,decoration={markings, mark=at position 0.5 with {\arrow{>}}},postaction={decorate}](0,0.2)--(1,0.2);
\end{tikzpicture}
},
\eeq 
\beq\label{w.twist}
\raisebox{-.15in}{
\begin{tikzpicture}
\tikzset{->-/.style=
{decoration={markings,mark=at position #1 with
{\arrow{latex}}},postaction={decorate}}}
\filldraw[draw=white,fill=gray!20] (-1,-0.35) rectangle (0.6, 0.65);
\draw [line width =1pt,decoration={markings, mark=at position 0.5 with {\arrow{>}}},postaction={decorate}](-1,0)--(-0.25,0);
\draw [color = black, line width =1pt](0,0)--(0.6,0);
\draw [color = black, line width =1pt] (0.166 ,0.08) arc (-37:270:0.2);
\end{tikzpicture}}
= \mathbbm{t}
\raisebox{-.15in}{
\begin{tikzpicture}
\tikzset{->-/.style=
{decoration={markings,mark=at position #1 with
{\arrow{latex}}},postaction={decorate}}}
\filldraw[draw=white,fill=gray!20] (-1,-0.5) rectangle (0.6, 0.5);
\draw [line width =1pt,decoration={markings, mark=at position 0.5 with {\arrow{>}}},postaction={decorate}](-1,0)--(-0.25,0);
\draw [color = black, line width =1pt](-0.25,0)--(0.6,0);
\end{tikzpicture}}
,  
\eeq
\beq\label{w.unknot}
\raisebox{-.20in}{
\begin{tikzpicture}
\tikzset{->-/.style=
{decoration={markings,mark=at position #1 with
{\arrow{latex}}},postaction={decorate}}}
\filldraw[draw=white,fill=gray!20] (0,0) rectangle (1,1);
\draw [line width =1pt,decoration={markings, mark=at position 0.5 with {\arrow{>}}},postaction={decorate}](0.45,0.8)--(0.55,0.8);
\draw[line width =1pt] (0.5 ,0.5) circle (0.3);
\end{tikzpicture}}
= (-1)^{n-1} [n]\ 
\raisebox{-.20in}{
\begin{tikzpicture}
\tikzset{->-/.style=
{decoration={markings,mark=at position #1 with
{\arrow{latex}}},postaction={decorate}}}
\filldraw[draw=white,fill=gray!20] (0,0) rectangle (1,1);
\end{tikzpicture}}
,\ \text{where}\ [n]=\frac{q^n-q^{-n}}{q-q^{-1}},
\eeq
\beq\label{wzh.four}
\raisebox{-.30in}{
\begin{tikzpicture}
\tikzset{->-/.style=
{decoration={markings,mark=at position #1 with
{\arrow{latex}}},postaction={decorate}}}
\filldraw[draw=white,fill=gray!20] (-1,-0.7) rectangle (1.2,1.3);
\draw [line width =1pt,decoration={markings, mark=at position 0.5 with {\arrow{>}}},postaction={decorate}](-1,1)--(0,0);
\draw [line width =1pt,decoration={markings, mark=at position 0.5 with {\arrow{>}}},postaction={decorate}](-1,0)--(0,0);
\draw [line width =1pt,decoration={markings, mark=at position 0.5 with {\arrow{>}}},postaction={decorate}](-1,-0.4)--(0,0);
\draw [line width =1pt,decoration={markings, mark=at position 0.5 with {\arrow{<}}},postaction={decorate}](1.2,1)  --(0.2,0);
\draw [line width =1pt,decoration={markings, mark=at position 0.5 with {\arrow{<}}},postaction={decorate}](1.2,0)  --(0.2,0);
\draw [line width =1pt,decoration={markings, mark=at position 0.5 with {\arrow{<}}},postaction={decorate}](1.2,-0.4)--(0.2,0);
\node  at(-0.8,0.5) {$\vdots$};
\node  at(1,0.5) {$\vdots$};
\end{tikzpicture}}=(-q)^{-\frac{n(n-1)}{2}}\cdot \sum_{\sigma\in \fS_n}
(-q^{-\frac{1-n}n})^{\ell(\sigma)} \raisebox{-.30in}{
\begin{tikzpicture}
\tikzset{->-/.style=
{decoration={markings,mark=at position #1 with
{\arrow{latex}}},postaction={decorate}}}
\filldraw[draw=white,fill=gray!20] (-1,-0.7) rectangle (1.2,1.3);
\draw [line width =1pt,decoration={markings, mark=at position 0.5 with {\arrow{>}}},postaction={decorate}](-1,1)--(0,0);
\draw [line width =1pt,decoration={markings, mark=at position 0.5 with {\arrow{>}}},postaction={decorate}](-1,0)--(0,0);
\draw [line width =1pt,decoration={markings, mark=at position 0.5 with {\arrow{>}}},postaction={decorate}](-1,-0.4)--(0,0);
\draw [line width =1pt,decoration={markings, mark=at position 0.5 with {\arrow{<}}},postaction={decorate}](1.2,1)  --(0.2,0);
\draw [line width =1pt,decoration={markings, mark=at position 0.5 with {\arrow{<}}},postaction={decorate}](1.2,0)  --(0.2,0);
\draw [line width =1pt,decoration={markings, mark=at position 0.5 with {\arrow{<}}},postaction={decorate}](1.2,-0.4)--(0.2,0);
\node  at(-0.8,0.5) {$\vdots$};
\node  at(1,0.5) {$\vdots$};
\filldraw[draw=black,fill=gray!20,line width =1pt]  (0.1,0.3) ellipse (0.4 and 0.7);
\node  at(0.1,0.3){$\sigma_{+}$};
\end{tikzpicture}},
\eeq
where the ellipse enclosing $\sigma_+$  is the minimum crossing positive braid representing a permutation $\sigma\in S_n$ and $\ell(\sigma)=\#\{(i,j)\mid 1\leq i<j\leq n,\ \sigma(i)>\sigma(j)\}$ is the length of $\sigma\in S_n$.

\beq\label{wzh.five}
   \raisebox{-.30in}{
\begin{tikzpicture}
\tikzset{->-/.style=
{decoration={markings,mark=at position #1 with
{\arrow{latex}}},postaction={decorate}}}
\filldraw[draw=white,fill=gray!20] (-1,-0.7) rectangle (0.2,1.3);
\draw [line width =1pt](-1,1)--(0,0);
\draw [line width =1pt](-1,0)--(0,0);
\draw [line width =1pt](-1,-0.4)--(0,0);
\draw [line width =1.5pt](0.2,1.3)--(0.2,-0.7);
\node  at(-0.8,0.5) {$\vdots$};
\filldraw[fill=white,line width =0.8pt] (-0.5 ,0.5) circle (0.07);
\filldraw[fill=white,line width =0.8pt] (-0.5 ,0) circle (0.07);
\filldraw[fill=white,line width =0.8pt] (-0.5 ,-0.2) circle (0.07);
\end{tikzpicture}}
   = 
   \mathbbm{a} \sum_{\sigma \in \fS_n} (-q)^{-\ell(\sigma)}\,  \raisebox{-.30in}{
\begin{tikzpicture}
\tikzset{->-/.style=
{decoration={markings,mark=at position #1 with
{\arrow{latex}}},postaction={decorate}}}
\filldraw[draw=white,fill=gray!20] (-1,-0.7) rectangle (0.2,1.3);
\draw [line width =1pt](-1,1)--(0.2,1);
\draw [line width =1pt](-1,0)--(0.2,0);
\draw [line width =1pt](-1,-0.4)--(0.2,-0.4);
\draw [line width =1.5pt,decoration={markings, mark=at position 1 with {\arrow{>}}},postaction={decorate}](0.2,1.3)--(0.2,-0.7);
\node  at(-0.8,0.5) {$\vdots$};
\filldraw[fill=white,line width =0.8pt] (-0.5 ,1) circle (0.07);
\filldraw[fill=white,line width =0.8pt] (-0.5 ,0) circle (0.07);
\filldraw[fill=white,line width =0.8pt] (-0.5 ,-0.4) circle (0.07);
\node [right] at(0.2,1) {$\sigma(n)$};
\node [right] at(0.2,0) {$\sigma(2)$};
\node [right] at(0.2,-0.4){$\sigma(1)$};
\end{tikzpicture}},
\eeq
\beq \label{wzh.six}
\raisebox{-.20in}{
\begin{tikzpicture}
\tikzset{->-/.style=
{decoration={markings,mark=at position #1 with
{\arrow{latex}}},postaction={decorate}}}
\filldraw[draw=white,fill=gray!20] (-0.7,-0.7) rectangle (0,0.7);
\draw [line width =1.5pt,decoration={markings, mark=at position 1 with {\arrow{>}}},postaction={decorate}](0,0.7)--(0,-0.7);
\draw [color = black, line width =1pt] (0 ,0.3) arc (90:270:0.5 and 0.3);
\node [right]  at(0,0.3) {$i$};
\node [right] at(0,-0.3){$j$};
\filldraw[fill=white,line width =0.8pt] (-0.5 ,0) circle (0.07);
\end{tikzpicture}}   = \delta_{\bar j,i }\,  \mathbbm{c}_{i} \raisebox{-.20in}{
\begin{tikzpicture}
\tikzset{->-/.style=
{decoration={markings,mark=at position #1 with
{\arrow{latex}}},postaction={decorate}}}
\filldraw[draw=white,fill=gray!20] (-0.7,-0.7) rectangle (0,0.7);
\draw [line width =1.5pt](0,0.7)--(0,-0.7);
\end{tikzpicture}},
\eeq
\beq \label{wzh.seven}
\raisebox{-.20in}{
\begin{tikzpicture}
\tikzset{->-/.style=
{decoration={markings,mark=at position #1 with
{\arrow{latex}}},postaction={decorate}}}
\filldraw[draw=white,fill=gray!20] (-0.7,-0.7) rectangle (0,0.7);
\draw [line width =1.5pt](0,0.7)--(0,-0.7);
\draw [color = black, line width =1pt] (-0.7 ,-0.3) arc (-90:90:0.5 and 0.3);
\filldraw[fill=white,line width =0.8pt] (-0.55 ,0.26) circle (0.07);
\end{tikzpicture}}
= \sum_{i=1}^n  (\mathbbm{c}_{\bar i})^{-1}\, \raisebox{-.20in}{
\begin{tikzpicture}
\tikzset{->-/.style=
{decoration={markings,mark=at position #1 with
{\arrow{latex}}},postaction={decorate}}}
\filldraw[draw=white,fill=gray!20] (-0.7,-0.7) rectangle (0,0.7);
\draw [line width =1.5pt,decoration={markings, mark=at position 1 with {\arrow{>}}},postaction={decorate}](0,0.7)--(0,-0.7);
\draw [line width =1pt](-0.7,0.3)--(0,0.3);
\draw [line width =1pt](-0.7,-0.3)--(0,-0.3);
\filldraw[fill=white,line width =0.8pt] (-0.3 ,0.3) circle (0.07);
\filldraw[fill=black,line width =0.8pt] (-0.3 ,-0.3) circle (0.07);
\node [right]  at(0,0.3) {$i$};
\node [right]  at(0,-0.3) {$\bar{i}$};
\end{tikzpicture}},
\eeq
\beq\label{wzh.eight}
\raisebox{-.20in}{

\begin{tikzpicture}
\tikzset{->-/.style=

{decoration={markings,mark=at position #1 with

{\arrow{latex}}},postaction={decorate}}}
\filldraw[draw=white,fill=gray!20] (-0,-0.2) rectangle (1, 1.2);
\draw [line width =1.5pt,decoration={markings, mark=at position 1 with {\arrow{>}}},postaction={decorate}](1,1.2)--(1,-0.2);
\draw [line width =1pt](0.6,0.6)--(1,1);
\draw [line width =1pt](0.6,0.4)--(1,0);
\draw[line width =1pt] (0,0)--(0.4,0.4);
\draw[line width =1pt] (0,1)--(0.4,0.6);
\draw[line width =1pt] (0.4,0.6)--(0.6,0.4);
\filldraw[fill=white,line width =0.8pt] (0.2 ,0.2) circle (0.07);
\filldraw[fill=white,line width =0.8pt] (0.2 ,0.8) circle (0.07);
\node [right]  at(1,1) {$i$};
\node [right]  at(1,0) {$j$};
\end{tikzpicture}
} =q^{\frac{1}{n}}\left(\delta_{{j<i} }(q^{-1}-q)\raisebox{-.20in}{

\begin{tikzpicture}
\tikzset{->-/.style=

{decoration={markings,mark=at position #1 with

{\arrow{latex}}},postaction={decorate}}}
\filldraw[draw=white,fill=gray!20] (-0,-0.2) rectangle (1, 1.2);
\draw [line width =1.5pt,decoration={markings, mark=at position 1 with {\arrow{>}}},postaction={decorate}](1,1.2)--(1,-0.2);
\draw [line width =1pt](0,0.8)--(1,0.8);
\draw [line width =1pt](0,0.2)--(1,0.2);
\filldraw[fill=white,line width =0.8pt] (0.2 ,0.8) circle (0.07);
\filldraw[fill=white,line width =0.8pt] (0.2 ,0.2) circle (0.07);
\node [right]  at(1,0.8) {$i$};
\node [right]  at(1,0.2) {$j$};
\end{tikzpicture}
}+q^{\delta_{i,j}}\raisebox{-.20in}{

\begin{tikzpicture}
\tikzset{->-/.style=

{decoration={markings,mark=at position #1 with

{\arrow{latex}}},postaction={decorate}}}
\filldraw[draw=white,fill=gray!20] (-0,-0.2) rectangle (1, 1.2);
\draw [line width =1.5pt,decoration={markings, mark=at position 1 with {\arrow{>}}},postaction={decorate}](1,1.2)--(1,-0.2);
\draw [line width =1pt](0,0.8)--(1,0.8);
\draw [line width =1pt](0,0.2)--(1,0.2);
\filldraw[fill=white,line width =0.8pt] (0.2 ,0.8) circle (0.07);
\filldraw[fill=white,line width =0.8pt] (0.2 ,0.2) circle (0.07);
\node [right]  at(1,0.8) {$j$};
\node [right]  at(1,0.2) {$i$};
\end{tikzpicture}
}\right),
\eeq
where   
$\delta_{j<i}= 
\begin{cases}
1  & j<i\\
0 & \text{otherwise}
\end{cases},\ 
\delta_{i,j}= 
\begin{cases} 
1  & i=j\\
0  & \text{otherwise}
\end{cases}$, and small white dots represent an arbitrary orientation of the edges (left-to-right or right-to-left), consistent for the entire equation. The black dot represents the opposite orientation. When a boundary edge of a shaded area is directed, the direction indicates the height order of the endpoints of the diagrams on that directed line, where going along the direction increases the height, and the involved endpoints are consecutive in the height order. The height order outside the drawn part can be arbitrary.

Our parameter $\omega^{\frac{1}{2}}$ corresponds to $\hat q^{-1}$ in \cite{LY23} (our setting fits well with the quantum cluster theory). 
Note that $R$ carries a natural $\mathbb{Z}[\omega^{\pm \frac{1}{2}}]$-module structure, making it a 
$\mathbb{Z}[\omega^{\pm \frac{1}{2}}]$-algebra. 
For the remainder of this paper, we will assume $R = \mathbb{Z}[\omega^{\pm \frac{1}{2}}]$. 
All results remain valid for a general $R$ by applying the functor 
$-\otimes_{\mathbb{Z}[\omega^{\pm \frac{1}{2}}]} R$.

 \def \Sv{\cS_n(\Si,\mathbbm{v})}

 The algebra structure for $\cS_{\omega}(\fS)$ is given by stacking the stated $n$-webs, i.e. for any two stated $n$-webs $\alpha,\alpha'
 \subset\fS\times(-1,1)$, the product $\alpha\alpha'$ is defined by stacking $\alpha$ above $\alpha'$. That is, if $\alpha \subset \fS\times(0,1)$ and $\alpha' \subset \fS\times (-1,0)$, we have $\alpha \alpha' = \alpha \cup \alpha'$.

For a boundary puncture $p$ of a pb surface $\fS$, 
corner arcs $C(p)_{ij}$ and $\overline{C}(p)_{ij}$ are stated arcs depicted as in Figure \ref{Fig;badarc}.
For a boundary puncture $p$ which is not on a $\mathbb P_1$ component of $\fS$, set 
$$C_p=\{C(p)_{ij}\mid i<j\},\quad\overline{C}_p=\{\overline{C}(p)_{ij}\mid i<j\}.$$  
Each element of $C_p\cup \overline{C}_p$ is called a \emph{bad arc} at $p$. 
\begin{figure}[h]
    \centering
    \includegraphics[width=150pt]{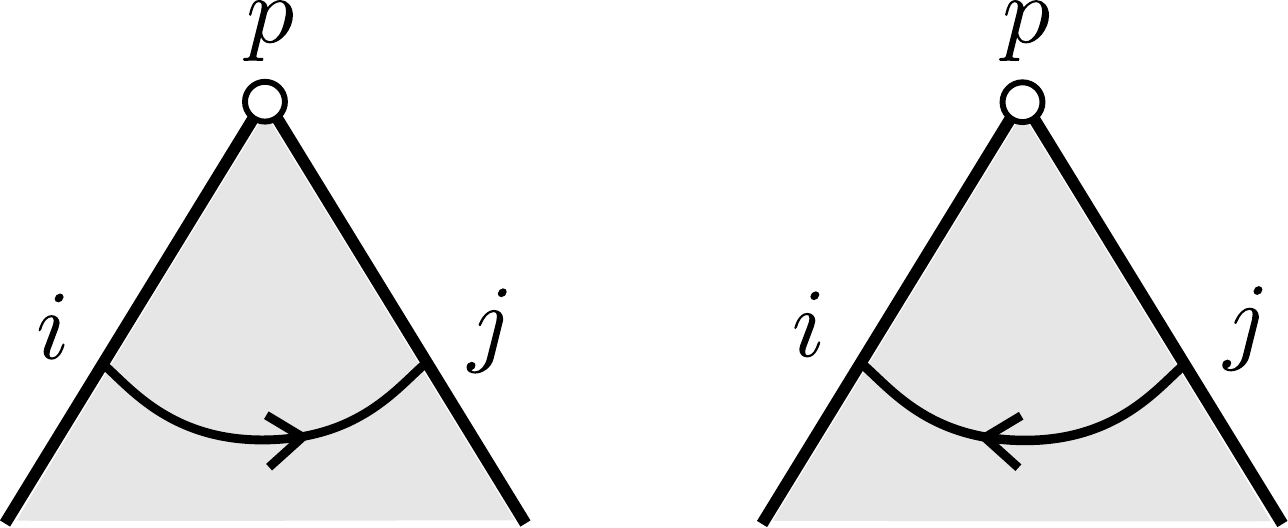}
    \caption{The left is $C(p)_{ij}$ and the right is $\overline{C}(p)_{ij}$.}\label{Fig;badarc}
\end{figure}

For a pb surface $\fS$,  $$\overline \cS_{\omega}(\fS) = \cS_{\omega}(\fS)/I^{\text{bad}}$$ 
is called the \textbf{reduced stated $\SL$-skein algebra}, defined in \cite{LY23}, where $I^{\text{bad}}$ is the two-sided ideal of $\cS_{\omega}(\fS)$ generated by all bad arcs.

\def\al{\alpha}

\subsection{The splitting homomorphism}\label{sub-splitting}
\def\cut{\mathsf{Cut}}
\def\pr{{\bf pr}}

Let $e$ be an interior ideal arc of a pb surface $\fS$ such that it is contained in the interior of $\fS$. After cutting $\fS$ along $e$, we get a new pb surface $\cut_e(\fS)$, which has two copies $e_1,e_2$ for $c$ such that 
${\fS}= \cut_e(\fS)/(e_1=e_2)$. We use $\pr_e$ to denote the projection from $\cut_e(\fS)$ to $\fS$.  Suppose that $\alpha$ is  a stated $n$-web diagram in $\fS$, which is transverse to $e$.
Let $s$ be a map from $e\cap\alpha$ to $\{1,2,\cdots,n\}$, and let $h$ be a linear order on $e\cap\alpha$. Then there is a lift diagram $\alpha(h,s)$ for a stated $n$-web in $\cut_e(\fS)$. 
 For $i=1,2$, the heights of the endpoints of $\alpha(h,s)$ on $e_i$ are induced by $h$ (via $\pr_e$), and the states of the endpoints of $\alpha(h,s)$ on $e_i$ are induced by $s$ (via $\pr_e$).
Then the splitting map 
$$
\mathbb{S}_e : \mathscr{S}_\omega(\fS) \to \mathscr{S}_\omega(\cut_e(\fS))
$$
is defined by 
\begin{align}\label{eq-def-splitting}
    \mathbb S_e(\alpha) =\sum_{s\colon \alpha \cap e \to \{1,\cdots, n\}} \alpha(h, s). 
\end{align}
Furthermore $\mathbb S_e$ is an $R$-algebra homomorphism \cite{LS21}.

It is well-known that \cite{LS21}
\begin{align}\label{com-splitting-skein}
    \mathbb S_{e_1}\circ \mathbb S_{e_2}
    = \mathbb S_{e_2}\circ \mathbb S_{e_1}
\end{align}
for any two disjoint interior ideal arcs $e_1,e_2$ of $\fS$. 

We can observe that $\mathbb S_e$ sends bad arcs to bad arcs.
So it induces an algebra homomorphism from $\rdS$
to $\rS(\cut_e(\fS))$, which is still denoted as $\mathbb S_e$.
When there is no confusion we will omit the subscript $e$ 
from $\mathbb S_e$.

\def\bT{\mathbb T}

\def\bZ{\mathbb Z}

\subsection{Reflection}\label{sub-sec-invariant}
Recall that $R = \bZ[\omega^{\pm\frac{1}{2}}]$. 
An $R$-algebra with reflection is
an $R$-algebra $A$ equipped with a $\mathbb Z$-linear anti-involution ${\bf Rf}$, called the reflection, such that ${\bf Rf}(\omega^{\frac{1}{2}}) = \omega^{-\frac{1}{2}}$. In other words, ${\bf Rf}\colon A\rightarrow A$ is a $\mathbb Z$-linear map such that for all
$x,y\in A$,
$${\bf Rf}(xy) = {\bf Rf}(y){\bf Rf}(x),\quad
{\bf Rf}(\omega^{\frac{1}{2}}x)
=\omega^{-\frac{1}{2}}x,\quad
{\bf Rf}^2={\rm Id}_A.$$
An element $z\in A$ is called reflection invariant if ${\bf Rf}(z)=z$. If $B$ is another $R$-algebra
with reflection ${\bf Rf}'$, then a map $f\colon A\rightarrow B$ is {\bf reflection invariant} if $f\circ {\bf Rf} = {\bf Rf}'\circ f$.

For a pb surface $\fS$, 
it is known that there is a unique reflection $$* \colon \cS_{\omega} (\fS)\to \cS_{\omega} (\fS)$$ such that, for a stated n-web diagram $\alpha$, $*(\alpha)$ is defined from $\alpha$ by switching all the crossings and reversing the height order on each boundary edge \cite[Theorem 4.9]{LS21}. 

A stated $n$-web diagram $\alpha$ in a pb surface $\fS$ is \textbf{reflection-normalizable} if $*(\alpha) = \omega^{k}\alpha$ for $k \in \bZ$. Note that such $k$ is unique if each connected component of $\fS$ contains non-empty boundaries \cite{LS21}. We define the \textbf{reflection-normalization} of $\alpha$ by
$$[\alpha]_{\rm norm} := \omega^{\frac{k}{2}}\alpha.$$
Then we have $*([\alpha]_{\rm norm}) = [\alpha]_{\rm norm}$, i.e., $[\alpha]_{\rm norm}$ is reflection invariant. 

The following lemma holds since the linear order $h$ in \eqref{eq-def-splitting} may be chosen arbitrarily. 

\begin{lemma}\label{lem-reflection}
    Let $\fS$ be a pb surface with an ideal arc $e$. Then the splitting map 
    $\mathbb S_e\colon \cS_{\omega}(\fS)\rightarrow \cS_{\omega} (\mathsf{Cut}_e(\fS))$ is reflection invariant.
\end{lemma}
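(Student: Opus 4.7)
The plan is to exploit the fact that the linear order $h$ on $\alpha \cap e$ used in defining $\mathbb{S}_e$ is an auxiliary choice: the sum over states in \eqref{eq-def-splitting} is independent of $h$. So to verify $*' \circ \mathbb{S}_e = \mathbb{S}_e \circ *$, where $*'$ denotes the reflection on $\mathcal{S}_\omega(\mathsf{Cut}_e(\mathfrak{S}))$, I would compute the two composites on the same stated $n$-web diagram $\alpha$ (transverse to $e$) using two different but compatible choices of linear order on $\alpha \cap e$, namely $h$ on one side and its opposite $h^{\mathrm{op}}$ on the other.

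Fix a stated $n$-web diagram $\alpha$ in $\mathfrak{S}$ transverse to $e$, and pick a linear order $h$ on $\alpha \cap e$. Compute
\[
*'\bigl(\mathbb{S}_e(\alpha)\bigr) \;=\; \sum_{s\colon \alpha\cap e\to \{1,\dots,n\}} *'\bigl(\alpha(h,s)\bigr).
\]
In each term, $*'$ switches all crossings of the underlying diagram and reverses the height order along every boundary component of $\mathsf{Cut}_e(\mathfrak{S})$, including the two new boundary edges $e_1, e_2$ obtained from $e$. The resulting height order on $e_1$ and $e_2$ is therefore the pullback of $h^{\mathrm{op}}$ via $\mathrm{pr}_e$. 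On the other hand, since $e$ lies in the interior of $\mathfrak{S}$, the reflection $*$ on $\mathcal{S}_\omega(\mathfrak{S})$ affects only the crossings of $\alpha$ and the height orders on the boundary components of $\mathfrak{S}$; it does not touch $e$ at all. Hence computing $\mathbb{S}_e(*(\alpha))$ using the linear order $h^{\mathrm{op}}$ yields
\[
\mathbb{S}_e\bigl(*(\alpha)\bigr) \;=\; \sum_{s\colon \alpha\cap e\to \{1,\dots,n\}} (*(\alpha))(h^{\mathrm{op}},s).
\]

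A termwise comparison then shows $*'(\alpha(h,s)) = (*(\alpha))(h^{\mathrm{op}},s)$ for every state $s$: both stated $n$-web diagrams have the same underlying diagram with all crossings switched, the same reversed heights on the original boundary of $\mathfrak{S}$, the same states $s$ on $e_1, e_2$, and the same height order $h^{\mathrm{op}}$ on $e_1, e_2$. Summing over $s$ gives the desired equality $*' \circ \mathbb{S}_e = \mathbb{S}_e \circ *$.

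The only delicate point — and the place where the argument could be botched — is the bookkeeping of heights on the interior arc $e$: one must observe that $*$ does not act on heights along $e$ (because $e$ is not a boundary component of $\mathfrak{S}$), whereas $*'$ does reverse heights along the split copies $e_1$ and $e_2$ (because they are boundary components of $\mathsf{Cut}_e(\mathfrak{S})$). It is precisely this asymmetry that forces the use of two different orders $h$ and $h^{\mathrm{op}}$ on the two sides; the invariance of $\mathbb{S}_e$ under the choice of $h$ is what makes this legitimate.
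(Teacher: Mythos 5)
Your proof is correct and uses the same key observation as the paper, namely that the definition of $\mathbb{S}_e$ is independent of the auxiliary linear order $h$ on $\alpha\cap e$, so one may use $h^{\mathrm{op}}$ when expanding $\mathbb{S}_e(*\alpha)$. The paper records only this one-sentence observation as its proof; your write-up is a careful unpacking of it, and the termwise identity $*'(\alpha(h,s))=(*\alpha)(h^{\mathrm{op}},s)$ is exactly the right check.
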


\subsection{Quantum torus and its $n$-th root of version}
Let $\mathcal V$ be a finite set, and $Q$ be an anti-symmetric matrix $Q\colon \mathcal V\times \mathcal V\rightarrow \frac{1}{2}\mathbb Z$.
Define the quantum torus
\begin{equation*}
\bT_q(Q) = R \langle 
X_v^{\pm 1}, v \in \mathcal V \rangle / (
X_v 
X_{v'}= q^{\, 2 Q(v,v')} 
X_{v'} 
X_v \text{ for } v,v'\in\mathcal  V),
\end{equation*}
and its $n$-th root of version as follows:
\begin{equation*}
\bT_{\omega}(Q) = R \langle 
Z_v^{\pm 1}, v \in \mathcal  V \rangle / (
Z_v 
Z_{v'}= \omega^{\, 2 Q(v,v')} 
Z_{v'} 
Z_v \text{ for } v,v'\in \mathcal  V ).
\end{equation*}
There is an $R$-algebra embedding from $\bT_q(Q)$
to $\bT_\omega(Q)$ defined by $X_v\mapsto Z_v^n$
for $v\in\mathcal  V$.

A useful notion is the Weyl-ordered product, which is a certain normalization of a Laurent monomial defined as follows: for any $v_1,\ldots,v_r \in\mathcal  V$ and $a_1,\ldots,a_r \in \mathbb{Z}$,
\begin{align}
\label{Weyl_ordering-X-1}
\left[ X_{v_1}^{a_1} X_{v_2}^{a_2} \cdots X_{v_r}^{a_r} \right] := q^{-\sum_{i<j} a_i a_j Q(v_i,v_j)} X_{v_1}^{a_1} X_{v_2}^{a_2} \cdots X_{v_r}^{a_r}\\
\label{Weyl_ordering-Z-1}
\left[ Z_{v_1}^{a_1} Z_{v_2}^{a_2} \cdots Z_{v_r}^{a_r} \right] := \omega^{-\sum_{i<j} a_i a_j Q(v_i,v_j)} Z_{v_1}^{a_1} Z_{v_2}^{a_2} \cdots Z_{v_r}^{a_r}.
\end{align}
In particular, for ${\bf t} = (t_v)_{v\in \mathcal{V}} \in \mathbb{Z}^{\mathcal  V}$, the following notation for the corresponding Weyl-ordered Laurent monomial will become handy:
\begin{align}\label{wey-normal-monomial-XZ}
    X^{\bf t} := \left[ \prod_{v\in \mathcal{V}} X_v^{t_v}\right],\quad
Z^{\bf t} := \left[ \prod_{v\in \mathcal{V}} Z_v^{t_v}\right].
\end{align}
Due to the property of Weyl-ordered products, note that these elements are independent of the choice of the order of the product inside the bracket.
The element $X^{\bf t}$ (or $Z^{\bf t}$) is called the {\bf (Laurent) monomial}. It is well-known that $\{X^{\bf t}\mid {\bf t}\in \mathbb Z^{\mathcal  V}\}$ (resp.
$\{Z^{\bf t}\mid {\bf t}\in \mathbb Z^{\mathcal  V}\}$) is an $R$ basis of $\bT_{q}(Q)$ (resp.
$\bT_{\omega}(Q)$), called the {\bf monomial basis}.

For any ${\bf k},{\bf t}\in\mathbb Z^{\mathcal V}$, it is straightforward to verify that
\begin{align*}
    X^{\bf k} X^{\bf t} = q^{2{\bf k}Q{\bf t}^T} X^{\bf t} X^{\bf k} \text{ and }
Z^{\bf k} Z^{\bf t} = \omega^{2{\bf k}Q{\bf t}^T} Z^{\bf t} Z^{\bf k},
\end{align*}
where ${\bf k}$ and ${\bf t}$ are regarded as row vectors. 

There is a reflection
\begin{align}\label{def-eq-ref-torus}
    * \colon \bT_{\omega}(Q)\longrightarrow \bT_{\omega}(Q), \qquad
    Z_v \longmapsto Z_v \quad (v\in \mathcal  V).
\end{align}
It is straightforward to verify that 
$*(Z^{\mathbf k}) = Z^{\mathbf k}$ for all $\mathbf k \in \mathbb Z^{\mathcal  V}$.

\def\bk{{\bf k}}

Let $\mathcal C$ be a subgroup of $\mathbb Z^{\mathcal  V}$. Define 
\begin{align}
    \label{submonoid}
    \bT_\omega(Q;\mathcal C) = \text{span}_R\{Z^\bk\mid \bk\in \mathcal C\}\subset
\bT_\omega(Q).
\end{align}
Then $\bT_\omega(Q;\mathcal C)$ is a subalgebra of $\bT_\omega(Q)$, called the {\bf group subalgebra of $\bT_\omega(Q)$}.

The following lemma will be used in \S\ref{sec-skein-cluster}.

\begin{lemma}\label{lem-frac-subalgebra}
Let $\bT_\omega(Q;\mathcal C)$ be a group subalgebra of $\bT_\omega(Q)$.
Suppose that $X \in \bT_\omega(Q)$ and $0 \neq P \in \bT_{\omega}(Q;\mathcal C)$ satisfy 
$XP \in \bT_{\omega}(Q;\mathcal C)$. Then $X \in \bT_{\omega}(Q;\mathcal C)$.
\end{lemma}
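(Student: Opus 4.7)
The plan is to use the monomial basis together with a term-order argument to pin down an extremal monomial in $XP$ that cannot cancel.

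First, I would expand everything in the monomial basis. Write
\[
X=\sum_{{\bf t}\in S_X} c_{\bf t}\,Z^{\bf t},\qquad P=\sum_{{\bf k}\in S_P} d_{\bf k}\,Z^{\bf k},
\]
with $S_X\subset\mathbb Z^{\mathcal V}$ and $S_P\subset\mathcal C$ finite, and all $c_{\bf t},d_{\bf k}\neq 0$. A direct computation from the defining relations shows that for any ${\bf t},{\bf k}\in\mathbb Z^{\mathcal V}$ one has $Z^{\bf t}Z^{\bf k}=\omega^{\alpha({\bf t},{\bf k})}Z^{{\bf t}+{\bf k}}$ for an integer $\alpha({\bf t},{\bf k})$, so each product $Z^{\bf t}Z^{\bf k}$ is an invertible scalar multiple of the basis vector indexed by ${\bf t}+{\bf k}$.

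Next I would argue by contradiction. Suppose $X\notin\bT_\omega(Q;\mathcal C)$, so that some ${\bf t}_*\in S_X$ lies in a coset $g_*+\mathcal C$ with $g_*\notin\mathcal C$. Partition $S_X$ according to cosets of $\mathcal C$, and let $S_X^{g_*}:=S_X\cap(g_*+\mathcal C)$, which is nonempty. Because $S_P\subset\mathcal C$, the only contributions to basis vectors $Z^{\bf m}$ with ${\bf m}\in g_*+\mathcal C$ in the expansion of $XP$ come from pairs $({\bf t},{\bf k})\in S_X^{g_*}\times S_P$.

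Now fix any total order $\prec$ on $\mathbb Z^{\mathcal V}$ compatible with addition (for instance, the lexicographic order with respect to some enumeration of $\mathcal V$), and set ${\bf t}_0=\max_\prec S_X^{g_*}$ and ${\bf k}_0=\max_\prec S_P$. Compatibility of $\prec$ with addition guarantees that ${\bf t}+{\bf k}={\bf t}_0+{\bf k}_0$ with $({\bf t},{\bf k})\in S_X^{g_*}\times S_P$ forces $({\bf t},{\bf k})=({\bf t}_0,{\bf k}_0)$. Combined with the preceding step, the coefficient of $Z^{{\bf t}_0+{\bf k}_0}$ in $XP$ equals $c_{{\bf t}_0}d_{{\bf k}_0}\omega^{\alpha({\bf t}_0,{\bf k}_0)}\neq 0$. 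Since ${\bf t}_0+{\bf k}_0\in g_*+\mathcal C$ is not in $\mathcal C$, this contradicts $XP\in\bT_\omega(Q;\mathcal C)$, and we conclude $X\in\bT_\omega(Q;\mathcal C)$.

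There is no real obstacle here: the only subtlety worth checking is that the Weyl-ordered products satisfy $Z^{\bf t}Z^{\bf k}\in R^\times\cdot Z^{{\bf t}+{\bf k}}$ so that monomial exponents add, and that the scalar never vanishes. Once this is in hand, the extremal-element argument with respect to any addition-compatible total order on $\mathbb Z^{\mathcal V}$ immediately isolates a non-cancelling monomial.
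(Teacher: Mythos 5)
Your argument is correct and is essentially the same extremal-monomial approach the paper takes: the paper likewise fixes a lexicographic (addition-compatible) order on $\mathbb Z^{\mathcal V}$, observes that the leading exponent is additive under multiplication in the quantum torus, and derives a contradiction from the fact that the leading exponent of the ``out-of-$\mathcal C$'' part of $X$ times $P$ cannot lie in $\mathcal C$. Your coset-by-coset bookkeeping is a harmless reorganization of the same mechanism, where the paper instead first splits $X$ into its $\mathcal C$-supported and non-$\mathcal C$-supported parts and applies the degree argument directly to the latter.
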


\begin{proof}
Choose a linear order on $\mathcal V$, and let $\leq$ denote the induced lexicographic order on $\mathbb Z^{\mathcal  V}$.  
For any nonzero element
$\sum_{1 \leq i \leq m} r_i Z^{{\bf k}_i} \in \bT_\omega(Q),$
with $0 \neq r_i \in R$ and distinct ${\bf k}_i \in \mathbb Z^{\mathcal V}$,  
assume ${\bf k}_i < {\bf k}_1$ for $2 \leq i \leq m$.  
Define 
\[
\deg\!\left(\sum_{1 \leq i \leq m} r_i Z^{{\bf k}_i}\right) = {\bf k}_1.
\]
It is straightforward to verify that for any two nonzero elements 
$X_1, X_2 \in \bT_\omega(Q)$ one has
\[
\deg(X_1 X_2) = \deg(X_1) + \deg(X_2).
\]

Now suppose 
\(
X = X' + \sum_{1 \leq i \leq l} r_i Z^{{\bf k}_i},
\)
where $X' \in \bT_{\omega}(Q;\mathcal C)$,  
$0 \neq r_i \in R$, and ${\bf k}_i \in \mathbb Z^{V} \setminus \mathcal C$ are distinct ($1 \leq i \leq l$).  
Note that $l$ could be $0$.  
By assumption, 
\[
\Bigl(\sum_{1 \leq i \leq l} r_i Z^{{\bf k}_i}\Bigr) P \in \bT_{\omega}(Q;\mathcal C).
\]
To conclude the proof, we need to show that $l=0$, which would imply $X=X' \in \bT_{\omega}(Q;\mathcal C)$.  

Assume, on the contrary, that $l>0$.  
Set 
\(
X'' = \sum_{1 \leq i \leq l} r_i Z^{{\bf k}_i}.
\)
Then 
\[
\deg(X''P) = \deg(X'') + \deg(P).
\]
Since $\deg(X'') \in \mathbb Z^{V} \setminus \mathcal C$ and $\deg(P) \in \mathcal C$, it follows that 
\[
\deg(X''P) \in \mathbb Z^{V} \setminus \mathcal C.
\]
This contradicts the assumption that $X'' P \in \bT_{\omega}(Q;\mathcal C)$.  
Hence $l=0$, and the claim follows.
\end{proof}

\subsection{The $\mathcal X$-version quantum trace map}
\label{sec-traceX}

\begin{figure}[h]
    \centering
    \includegraphics[width=240pt]{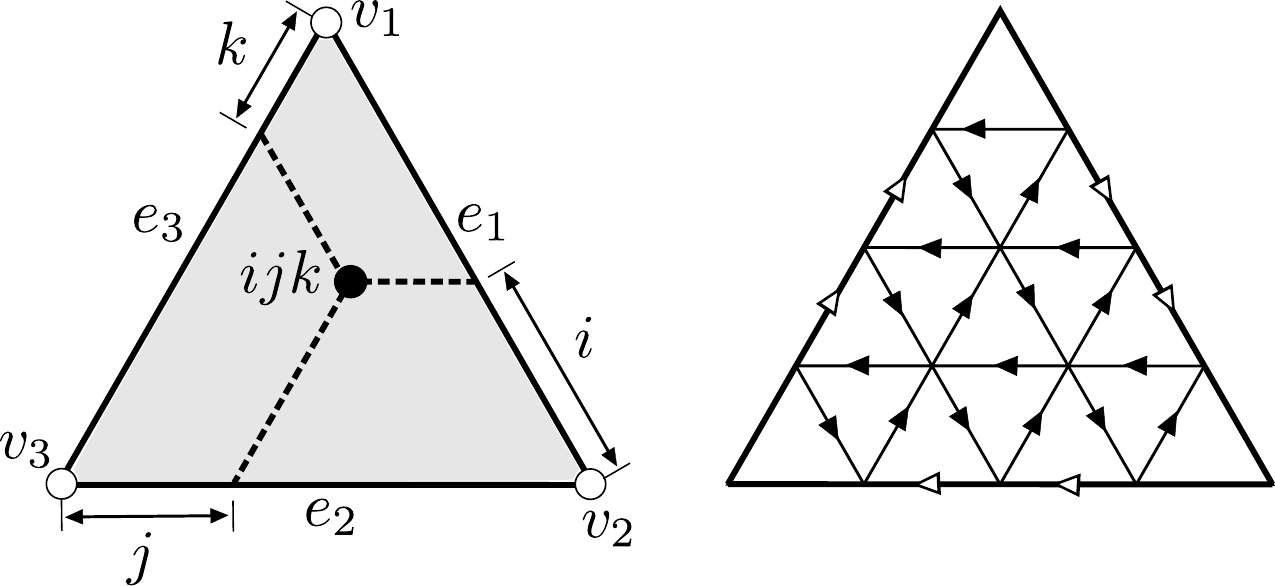}
    \caption{Barycentric coordinates $ijk$ and a $4$-triangulation with its quiver.}\label{Fig;coord_ijk}
\end{figure}

In this subsection we review the $\mathcal X$-version quantum trace map defined in \cite{LY23}, which is an algebra homomorphism from the reduced stated $\SL$-skein algebra of a triangulable pb surface $\fS$ to a quantum torus algebra associated to a special quiver built from any choice of an `(ideal) triangulation' of $\fS$.

To begin with, consider barycentric coordinates for an ideal triangle $\bP_3$ so that
\begin{equation*}
\bP_3=\{(i,j,k)\in\bR^3\mid i,j,k\geq 0,\ i+j+k=n\}\setminus\{(0,0,n),(0,n,0),(n,0,0)\}, 
\end{equation*}
where $(i,j,k)$ (or $ijk$ for simplicity) are the barycentric coordinates. 
Let $v_1=(n,0,0)$, $v_2=(0,n,0)$, $v_3=(0,0,n)$. 
Let $e_i$ denote the edge on $\partial \bP_3$ whose endpoints are $v_i$ and $v_{i+1}$. 
See Figure \ref{Fig;coord_ijk}.

The \textbf{$n$-triangulation} of $\bP_3$ is the subdivision of $\bP_3$ into $n^2$ small triangles using lines $i,j,k=\text{constant integers}$. 
For the $n$-triangulation of $\bP_3$, the vertices and edges of all small triangles, except for $v_1,v_2,v_3$ and the small edges adjacent to them, form a quiver $\Gamma_{\bP_3}$.
An \textbf{arrow} is the direction of a small edge defined as follows. If a small edge $e$ is in the boundary $\partial\bP_3$ then $e$ has the clockwise direction of $\partial \bP_3$. If $e$ is interior then its direction is the same with that of a boundary edge of $\bP_3$ parallel to $e$. Assign weight $\frac{1}{2}$ to any boundary arrow and weight $1$ to any interior arrow.

\def\bZ{\mathbb Z}

Let $V_{\bP_3}$ be the set of all
points with integer barycentric coordinates of $\bP_3$:
\begin{align}\label{def-V-P3}
V_{\bP_3} = \{ijk \in \bP_3 \mid i, j, k \in \bZ\}.
\end{align}
Note that $V_{\bP_3}$ does not contain
$v_1$, $v_2$, or $v_3$.
Elements of $V_{\bP_3}$ are called \textbf{small vertices}, and small vertices on the boundary of $\bP_3$ are called the \textbf{edge vertices}. 

A pb surface $\fS$ is said to be {\bf triangulable} if no connected component of $\fS$ is of the following types:  
\begin{enumerate}[label={\rm (\arabic*)}]
    \item the monogon $\mathbb{P}_1$,  
    \item the bigon $\mathbb{P}_2$,  
    \item a sphere with one or two punctures,  
    \item a closed surface (i.e., a pb surface without punctures).  
\end{enumerate}

A {\bf triangulation} $\lambda$ of $\fS$ is a collection of disjoint ideal arcs in $\fS$ with the following properties: (1) any two arcs in $\lambda$ are not isotopic; (2) $\lambda$ is maximal under condition (1); (3) every puncture is adjacent to at least two ideal arcs.
Our definition of triangulation excludes the so-called self-folded triangles.
We will call each ideal arc in $\lambda$ an {\bf edge} of $\lambda$.
If an edge is isotopic to a boundary component of $\fS$, we call such an edge a {\bf boundary edge}.
We use $\mathbb F_{\lambda}$ to denote the set of faces after we cut $\fS$ along all ideal arcs in $\lambda$.
It is well-known that any triangulable surface admits a triangulation.

Suppose that $\fS$ is a triangulable pb surface with a triangulation $\lambda$.
By cutting $\fS$ along all edges not isotopic to a boundary edge, we have a disjoint union of ideal triangles. Each triangle is called a \textbf{face} of $\lambda$. Then
\begin{equation}\label{eq.glue}
\fS = \Big( \bigsqcup_{\tau\in\mathbb F_\lambda} \tau \Big) /\sim,
\end{equation}
where each face $\tau$ is regarded as a copy of $\bP_3$, and $\sim$ is the identification of edges of the faces to recover $\lambda$. 
Each face $\tau$ is characterized by a \textbf{characteristic map} 
\begin{align}\label{eq-character-map}
    f_\tau\colon \mathbb P_3 \to \fS,
\end{align}
which is a homeomorphism when we restrict $f_\tau$ to $\Int\mathbb P_3$ or the interior of each edge of $\mathbb P_3$.

An \textbf{$n$-triangulation} of $\lambda$ is a collection of $n$-triangulations of the faces $\tau$ which are compatible with the gluing $\sim$,  where compatibility means, for any edges $b$ and $b'$ glued via $\sim$, the vertices on $b$ and $b'$ are identified. Define
$$V_\lambda=\bigcup_{\tau\in\mathbb F_\lambda} V_\tau, \quad V_\tau=f_\tau(V_{\mathbb P_3}).$$
The images of the weighted quivers $\Gamma_{\mathbb P_3}$ by $f_\tau$ form a quiver $\Gamma_\lambda$ on $\fS$.
Note that when edges $b$ and $b'$ are glued, a small edge on $b$ and the corresponding small edge of $b'$ have opposite directions, i.e. the resulting arrows are of weight $0$. 

For any two $v,v'\in V_\lambda$, define
$$
a_\lambda(v,v') = \begin{cases} w \quad & \text{if there is an arrow from $v$ to $v'$ of weight $w$},\\
0 &\text{if there is no arrow between $v$ and $v'$.} 
\end{cases}$$
Let $Q_\lambda\colon V_\lambda\times V_\lambda \to \frac{1}{2}\bZ$ be the signed adjacency matrix of the weighted quiver $\Gamma_\lambda$ defined by 
\begin{equation}\label{eq-def-Q-lambda-re}
Q_\lambda(v,v') = a_\lambda(v,v') - a_\lambda(v',v).
\end{equation}
Especially we use $Q_{\bP_3}$ to denote $Q_\lambda$ when $\fS=\bP_3$.

\begin{remark}\label{rem-Q-LY}
    Note that the arrows in the quiver of Figure~\ref{Fig;coord_ijk} are oriented oppositely to those in \cite[Figure~10]{LY23}, 
and that $Q_\lambda$ in this paper equals 
$-\tfrac{1}{2}\,\overline{\mathsf{Q}}_\lambda$ as defined in \cite{LY23}.  
We adopt this modification because, as will be seen in \S\ref{sec-seed-structure}, our setting aligns more naturally with the framework of quantum cluster theory.
\end{remark}

\def\bT{\mathbb T}

\def\tr{{\rm tr}_{\lambda}}
\def\X{\mathcal X_{\omega}(\fS,\lambda)}
\def\Y{\mathcal Y_{q}(\fS,\lambda)}
\def\bX{\mathcal Z_{\omega}^{\rm bl}(\fS,\lambda)}
\def\bk{{\bf k}}
\def\V{V_\lambda}

The \textbf{Fock-Goncharov algebra} is the quantum torus algebra defined as follows:
\begin{equation}
\mathcal{X}_{q}(\fS,\lambda)= \bT_q(Q_\lambda) = R \langle 
X_v^{\pm 1}, v \in V_\lambda \rangle / (
X_v 
X_{v'}= q^{\, 2 Q_\lambda(v,v')} 
X_{v'} 
X_v \text{ for } v,v'\in V_\lambda ).
\end{equation}
The \textbf{$n$-th root Fock-Goncharov algebra} is the quantum torus algebra defined as follows:
\begin{equation}
\mathcal{Z}_{\omega}(\fS,\lambda)= \bT_{\omega}(Q_\lambda) = R \langle 
Z_v^{\pm 1}, v \in V_\lambda \rangle / (
Z_v 
Z_{v'}= \omega^{\, 2 Q_\lambda(v,v')} 
Z_{v'} 
Z_v \text{ for } v,v'\in V_\lambda ).
\end{equation}

Recall that $w= q^{n^2}$. There is an algebra embedding
from 
$\mathcal{X}_q(\fS,\lambda)$ to 
$\mathcal{Z}_\omega(\fS,\lambda)$ given by 
$$X_v \mapsto Z_v^n$$
for $v\in \V$.
We will regard 
$\mathcal{X}_q(\fS,\lambda)$ as a subalgebra of 
$\mathcal{Z}_\omega(\fS,\lambda)$.

Let ${\bf k}_i\colon V_{\bP_3} \to\bZ\ (i=1,2,3)$ be the functions defined by
\begin{equation}
{\bf k}_1(ijk)=i,\quad {\bf k}_2(ijk)=j,\quad {\bf k}_3(ijk)=k.\label{def:proj}
\end{equation} 
Let $\mathcal B_{\bP_3}\subset\bZ^{V_{\bP_3}}$ be the subgroup generated by ${\bf k}_1,{\bf k}_2,{\bf k}_3$ and $(n\bZ)^{V_{\bP_3}}$. Elements in $\mathcal{B}_{\bP_3}$ are called \textbf{balanced}.

A vector $\bk\in\bZ^{\V}$ is \textbf{balanced} if its pullback ${\bf k}_\tau:=f_\tau^\ast\bk$ to ${\bP_3}$ is balanced for every face of $\lambda$, where for every face $\tau$ and its characteristic map  $f_\tau\colon\mathbb P_3\to\fS$, the pullback $f_\tau^\ast\bk$ is a vector $\V\to\bZ$ given by $f_\tau^\ast\bk(v)=\bk(f_\tau(v))$. 
Let 
\begin{align}
    \label{B_lambda}
    \mathcal{B}_\lambda = \mbox{the subgroup of $\mathbb{Z}^{V_\lambda}$ generated by all the balanced vectors.}
\end{align}

\def\lt{\text{lt}}

The \textbf{balanced Fock-Goncharov algebra} \cite{LY23} is defined as the monomial subalgebra
\begin{align}
    \label{Z_bl_omega}
\bX=\text{span}_R\{Z^\bk\mid \bk\in\mathcal B_\lambda\}\subset
\mathcal{Z}_\omega(\fS,\lambda),
\end{align}
where $Z^{\bf k}$ is defined as in \eqref{wey-normal-monomial-XZ}.
It is easy to verify that $
\mathcal{X}_q(\fS,\lambda) \subset \bX$, and that $\{Z^\bk\mid \bk\in\mathcal B_\lambda\}$ is an $R$-basis of $\bX$.

Suppose $v,v'\in V_\lambda$, If $v$ and $v'$ are not contained in the same boundary edge, define 
\begin{align}\label{def-H-Q-interior}
    H_\lambda(v,v')=Q_\lambda(v,v').
\end{align}
If $v$ and $v'$ are contained in the same boundary edge, define 
\begin{align}\label{eq-def-H-lambda}
    H_\lambda(v,v')=\begin{cases}
    1 & \text{when $v=v'$},\\
    -1 & \text{when there is an arrow from $v’$ to $v$},\\
    0 & \text{otherwise.}
\end{cases}
\end{align}

\begin{lemma}\cite[Proposition~11.10]{LY23}\label{lem-balanced-H}
Suppose that $\fS$ contains no interior punctures. 
    A vector ${\bf k}\in\mathbb Z^{V_\lambda}$ is balanced if and only if ${\bf k} H_\lambda\in (n\mathbb Z)^{V_\lambda}$.
\end{lemma}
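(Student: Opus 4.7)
The plan is to localise the statement onto the triangles of $\lambda$ and then verify the local version by an explicit computation, using the fact that the definition of balancedness is already face-by-face.

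As a first step I would prove the lemma in the single-triangle case $\fS=\mathbb P_3$: a vector ${\bf k}\in\mathbb Z^{V_{\mathbb P_3}}$ is balanced iff ${\bf k}H_{\mathbb P_3}\in(n\mathbb Z)^{V_{\mathbb P_3}}$. Since $\mathcal B_{\mathbb P_3}$ is generated by the three barycentric projections ${\bf k}_1,{\bf k}_2,{\bf k}_3$ together with $(n\mathbb Z)^{V_{\mathbb P_3}}$, the forward direction reduces to checking the three identities ${\bf k}_i H_{\mathbb P_3}\in(n\mathbb Z)^{V_{\mathbb P_3}}$ on the explicit quiver $\Gamma_{\mathbb P_3}$ of Figure~\ref{Fig;coord_ijk}, using the relation ${\bf k}_1+{\bf k}_2+{\bf k}_3 = n\cdot \mathbf 1$ on all of $V_{\mathbb P_3}$. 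For the converse, I would compute the $\mathbb Z/n$-rank of left multiplication by $H_{\mathbb P_3}$ and show that its kernel in $(\mathbb Z/n)^{V_{\mathbb P_3}}$ has exactly $n^2$ elements, so it must coincide with the image of $\mathcal B_{\mathbb P_3}$ modulo $n$.

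Next I would pass from the local to the global statement via the decomposition $\fS=\bigsqcup_{\tau\in\mathbb F_\lambda}\tau/\sim$, comparing $({\bf k}H_\lambda)(v')$ with the local quantities $({\bf k}_\tau H_{\mathbb P_3})(v'_\tau)$ for each $v'\in V_\lambda$. If $v'$ lies in the open interior of a single triangle $\tau_0$ or on a boundary edge of $\fS$ in $\tau_0$, then by the choice of modification \eqref{eq-def-H-lambda} one has $({\bf k}H_\lambda)(v') = ({\bf k}_{\tau_0} H_{\mathbb P_3})(v'_{\tau_0})$ on the nose. If instead $v'$ lies on an interior edge $b$ of $\lambda$ shared by $\tau_1,\tau_2$, the weight-$\tfrac 12$ boundary arrows along $b$ coming from the two sides have opposite orientations and cancel in $Q_\lambda$, and a direct bookkeeping shows that $({\bf k}H_\lambda)(v')$ equals an integer combination of the values $({\bf k}_{\tau_i}H_{\mathbb P_3})(w)$ for $w$ ranging over the small vertices of $\tau_i$ incident to $v'$. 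Both directions of the equivalence then follow from Step~1: if every ${\bf k}_\tau$ is balanced we immediately get ${\bf k}H_\lambda\in(n\mathbb Z)^{V_\lambda}$, and conversely the condition at interior vertices of each $\tau$ directly yields $({\bf k}_\tau H_{\mathbb P_3})\in(n\mathbb Z)$ at those vertices, after which the boundary vertices are handled by inverting the linear relations coming from the interior-edge analysis.

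The main obstacle is the analysis at vertices on interior edges of $\lambda$: the global matrix $H_\lambda$ is not a naive sum $\sum_\tau H_{\mathbb P_3}^\tau$, and keeping track of the $\pm 1$ boundary contributions from each side so that they interact correctly with the modification \eqref{eq-def-H-lambda} requires careful bookkeeping. The hypothesis that $\fS$ has no interior punctures is essential here, as it guarantees that every small vertex is either interior to one triangle or shared by exactly two adjacent triangles, eliminating the more delicate incidences (vertices surrounded by many triangles at an interior puncture) that would otherwise obstruct the local-to-global comparison.
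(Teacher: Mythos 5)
This lemma is not proved in the paper at all: it is quoted verbatim from \cite[Proposition~11.10]{LY23}, so there is no internal proof to compare your sketch against. Evaluating the sketch on its own merits, the overall strategy (localize to $\mathbb P_3$, then glue) is reasonable, but there are two genuine gaps.

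The crux is the converse direction in the $\mathbb P_3$ case, and it is not actually carried out. You propose to ``compute the $\mathbb Z/n$-rank of left multiplication by $H_{\mathbb P_3}$ and show that its kernel in $(\mathbb Z/n)^{V_{\mathbb P_3}}$ has exactly $n^2$ elements.'' When $n$ is composite, $\mathbb Z/n$ is not a field, so there is no rank--nullity count controlling the kernel size; the kernel of an integer matrix modulo $n$ can be any finite abelian group whose order is compatible with the elementary divisors. Equating ``rank over $\mathbb Z/n$'' with the size of the kernel is simply not available. The structural fact you would want is the identity $H_\lambda K_\lambda = nI$ from \cite[Lemma~11.9(c)]{LY23} (recorded here as Lemma~\ref{lem-matrix-id-LY}(a)): it shows that ${\bf k} H_\lambda \in (n\mathbb Z)^{V_\lambda}$ is equivalent to ${\bf k}$ lying in the $\mathbb Z$-row-span of $K_\lambda$ together with $n\mathbb Z^{V_\lambda}$, and the remaining content is identifying that row span with $\mathcal B_\lambda$. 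That bypasses any mod-$n$ rank computation and is almost certainly closer to the proof in \cite{LY23}.

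Your explanation of the no-interior-puncture hypothesis is also wrong, which suggests the gluing step is less controlled than you think. Small vertices never sit at punctures: the punctures are precisely the excluded corner points $(n,0,0)$, $(0,n,0)$, $(0,0,n)$ of each triangle (see \eqref{def-V-P3}), so they are never elements of $V_\lambda$. Every small vertex lies either in the open interior of a single face or on an edge of $\lambda$, and every edge is shared by at most two faces (exactly two if interior; one if a boundary edge). The scenario you describe --- ``vertices surrounded by many triangles at an interior puncture'' --- cannot occur. The hypothesis is needed for other, less local reasons: the skeletons $\widetilde Y_v$ of \S\ref{sec;A_tori}, which underlie the definition of $K_\lambda$, are built by extending arcs that turn left through successive triangles until they reach the boundary, and an interior puncture lets that procedure spiral indefinitely, so $K_\lambda$ (hence the identity $H_\lambda K_\lambda = nI$ driving the argument) is not even defined. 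Because your proposed proof leans on the interior-edge bookkeeping at exactly the place where you have misread the geometry, I am not confident that the ``direct bookkeeping'' you assert for vertices on interior edges would in fact close up without a more precise accounting of how the $\pm 1$ entries in $H_{\mathbb P_3}$ along a shared edge combine across the two faces to reproduce the cancelled arrows in $Q_\lambda$.
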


\begin{theorem}\cite{BW11,Kim20,LY22,LY23}\label{thm.quantum_trace}
    Suppose that $\fS$ is a triangulable pb surface with a triangulation $\lambda$. Then the following hold.

    \begin{enumerate}[label={\rm (\alph*)}]
    \item There is an algebra homomorphism
    $$\tr \colon \rdS\rightarrow
    \mathcal{Z}_\omega(\fS,\lambda),$$
    called the quantum trace map,
    such that $\im\tr\subset \bX.$

    \item  
    $\tr$ is injective when $n=2$ or when $n=3$.

    \item 
    $\tr$ is injective when $\fS$ is a polygon.
    \end{enumerate}
\end{theorem}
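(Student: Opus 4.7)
The plan is to construct $\tr_\lambda$ locally on each triangle of $\lambda$ and then assemble these via the splitting homomorphism of \S\ref{sub-splitting}. For the building block $\fS=\bP_3$ with its unique triangulation, one defines an algebra map ${\rm tr}_{\bP_3}\colon \rS(\bP_3)\to \mathcal Z_\omega(\bP_3,\lambda_{\bP_3})$ by specifying, for each generating stated arc, an explicit Weyl-ordered Laurent polynomial in the variables $Z_v$, $v\in V_{\bP_3}$, built from path sums along the quiver $\Gamma_{\bP_3}$. Each defining relation \eqref{w.cross}--\eqref{wzh.eight} together with the vanishing of bad arcs then reduces to a finite collection of matrix identities in the quantum torus that can be verified by direct computation, and inspection of the formulas shows $\mathrm{im}\, {\rm tr}_{\bP_3} \subset \mathcal Z_\omega^{\rm bl}(\bP_3,\lambda_{\bP_3})$.

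For a general triangulable $\fS$ with triangulation $\lambda$, I would iterate the splitting homomorphism along every interior edge of $\lambda$, obtaining, by \eqref{com-splitting-skein}, a well-defined
$$\mathbb S_\lambda\colon \rdS \longrightarrow \bigotimes_{\tau\in\mathbb F_\lambda} \rS(\tau),$$
and define $\tr_\lambda$ as the composition of $\bigotimes_\tau {\rm tr}_\tau$ with the natural embedding of the glued tensor product $\bigotimes_\tau \mathcal Z_\omega^{\rm bl}(\tau,\lambda_\tau)$ into $\mathcal Z_\omega(\fS,\lambda)$ obtained by identifying small vertices along shared edges. Tracking the state sums at each interior edge and invoking Lemma~\ref{lem-balanced-H} shows that the resulting image lies in $\bX$, proving (a).

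For (b), I would deduce injectivity for $n=2,3$ by exhibiting a basis of $\rdS$ consisting of simple or ``elementary'' stated webs in an upward position along $\lambda$, and verifying that the images under $\tr_\lambda$ have pairwise distinct leading Laurent monomials with respect to a lexicographic order on $\V$; this is the Bonahon--Wong strategy for $n=2$ and its ${\rm SL}_3$ analogue due to Kim and L\^e--Yu. For (c), when $\fS=\bP_k$, I would proceed by induction on $k$: choose an interior diagonal $e$ cutting $\bP_k$ into two smaller polygons, invoke the injectivity of the splitting $\mathbb S_e$ on polygon skein algebras, and combine with the inductive hypothesis and the $k=3$ base case coming from (a) at $\fS=\bP_3$.

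The main obstacle, and the technical core of \cite{LY23}, is twofold. First, verifying that ${\rm tr}_{\bP_3}$ is compatible with the symmetrizer/antisymmetrizer relations \eqref{wzh.four}--\eqref{wzh.five} requires nontrivial combinatorial identities in the quantum torus. Second, the global image is not obviously balanced: the state sums produced by \eqref{eq-def-splitting} a priori yield arbitrary exponent vectors in $\bZ^{\V}$, and it is the fine state-sum cancellation across all interior edges that forces the exponent to lie in $\mathcal B_\lambda$.
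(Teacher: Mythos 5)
This theorem is cited from \cite{BW11,Kim20,LY22,LY23} and is not proved in the present paper, so there is no internal proof to compare against; I can only judge your reconstruction against the approach in those references. Your broad strategy---define the quantum trace on a single triangle by path sums and then glue via the splitting homomorphism, prove (b) by exhibiting a basis of stated webs whose images have pairwise distinct leading Laurent monomials, and prove (c) by cutting a polygon along a diagonal and inducting---is indeed the Bonahon--Wong philosophy that L\^e and Yu carry over to $\mathrm{SL}_n$, so at this level of granularity your reconstruction is reasonable.

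Two points, however, need care. First, the map you call ``the natural embedding of the glued tensor product $\bigotimes_\tau \mathcal Z_\omega^{\rm bl}(\tau,\lambda_\tau)$ into $\mathcal Z_\omega(\fS,\lambda)$'' runs in the wrong direction: there is no embedding of the larger torus algebra (which has two independent generators $Z_{v'},Z_{v''}$ for every small vertex $v$ on an interior edge) into the smaller $\mathcal Z_\omega(\fS,\lambda)$. The correct ingredient is the injective homomorphism $\mathcal S_\lambda\colon \mathcal Z_\omega(\fS,\lambda)\hookrightarrow \bigotimes_{\tau\in\mathbb F_\lambda} \mathcal Z_\omega(\tau,\lambda_\tau)$ sending $Z_v\mapsto Z_{v'}Z_{v''}$ for $v$ on an interior edge (cf.\ \eqref{cutting_homomorphism_for_Z_omega}), and the substantive step is to show that $\bigl(\bigotimes_\tau {\rm tr}_\tau\bigr)\circ \mathbb S_\lambda$ lands in the image of $\mathcal S_\lambda$, so that one may define $\tr_\lambda:=\mathcal S_\lambda^{-1}\circ\bigl(\bigotimes_\tau {\rm tr}_\tau\bigr)\circ \mathbb S_\lambda$. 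This matching condition---that every term in the state sum has equal exponent on $Z_{v'}$ and $Z_{v''}$---is a genuine cancellation to be proved and is prior to, and distinct from, the balancedness assertion about $\mathcal B_\lambda$; your closing paragraph conflates these two. Second, your induction for (c) quietly uses injectivity of $\mathbb S_e$ on the \emph{reduced} skein algebra $\overline{\cS}_\omega$. That is true, but it is itself a nontrivial fact: in the present paper the analogous injectivity on $\widetilde{\cS}_\omega$ (Lemma~\ref{lem-basic-lem}(c)) is \emph{deduced from} the injectivity of the quantum trace, so if you are trying to prove injectivity of $\tr_\lambda$ by this induction you must take the splitting injectivity for $\overline{\cS}_\omega$ as an independently established input from \cite{LS21,LY23}, not derive it from $\tr_\lambda$ along the way.
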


\def\barV{V}
\def\barK{K}
\def\barVl{V_\lambda}
\def\barKt{K_\tau}

\subsection{The $\mathcal A$-version quantum tori} \label{sec;A_tori}
In this subsection, assume $\fS$ contains interior punctures.

For a small vertex $v\in\barV_\lambda$ and an ideal triangle $\tau \in \cF_\lambda$, we define its \textbf{skeleton} $\skeleton_\tau(v)\in \bZ [\barV_\nu]$ as follows.

For a face $\nu\in\cF_\lambda$ containing $v$, suppose $v=(ijk)\in V_\nu$. Draw a weighted directed graph $Y_v$ properly embedded into $\nu$ as in the left of Figure~\ref{Fig;skeleton}, where an edge of $Y_v$ has weight $i$, $j$ or $k$ according as the endpoint lying on the edge $e_1$, $e_2$ or $e_3$ respectively. 
\begin{figure}[h]
    \centering
    \includegraphics[width=350pt]{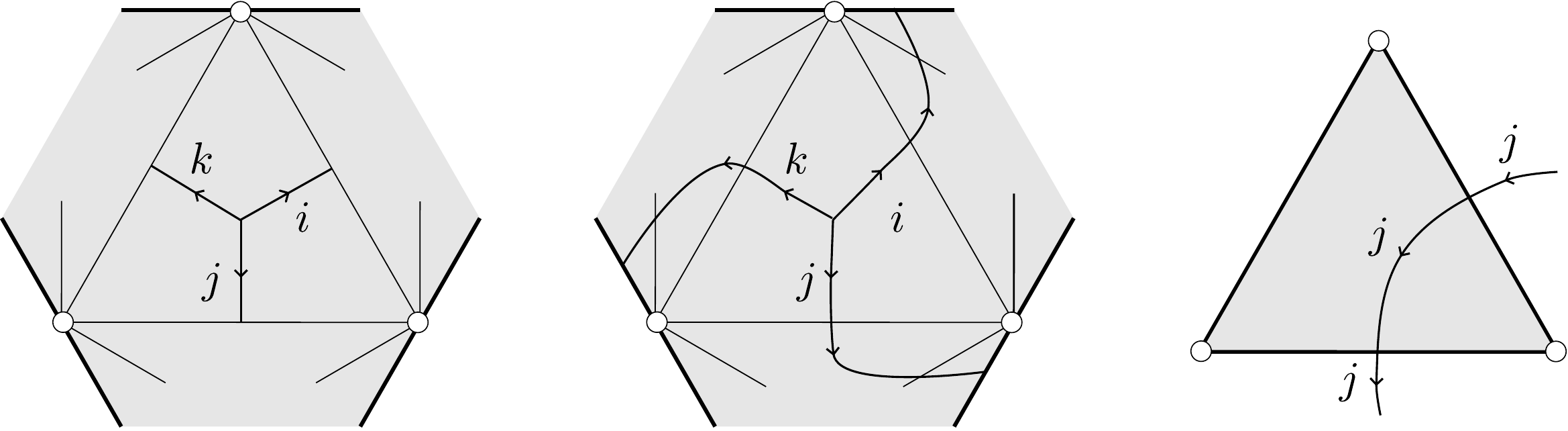}
    \caption{Left: Weighted graph $Y_v$,\quad Middle: Elongation $\widetilde{Y}_v$,\quad Right: Turning left}\label{Fig;skeleton}
\end{figure}

\def\barVt{V_\tau}

Elongate the nonzero-weighted edges of $Y_v$ to have an embedded weighted directed graph $\widetilde{Y}_v$ as drawn in the middle of Figure~\ref{Fig;skeleton}. Here, each edge is elongated by turning left whenever it enters a triangle. 
The part of the elongated edge in a triangle $\tau$ is called a \textbf{(arc) segment} of $\widetilde{Y}_v$ in $\tau$. In addition, we also regard $Y_v$ as a segment of $\widetilde{Y}_v$, called the \textbf{main segment}.

For the main segment $s=Y_v$, define $Y(s) = v \in \barV_\nu$. For an arc segment $s$ in a triangle $\tau$, define $Y(s)\in \barV_\tau$ to be the small vertex of the following weighted graph. 
$$
s=\begin{array}{c}\includegraphics[scale=0.27]{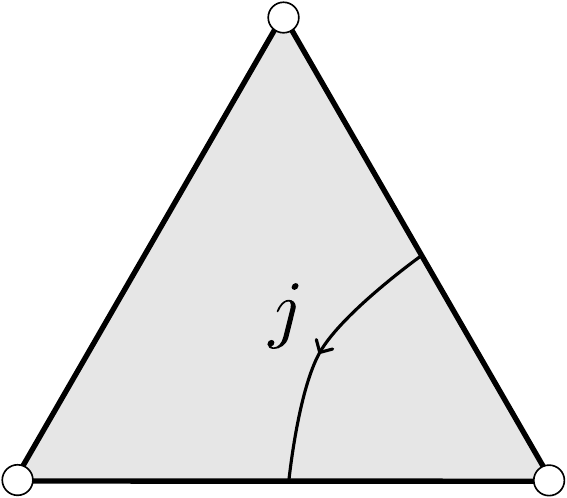}\end{array}
\longrightarrow\quad Y(s):=\begin{array}{c}\includegraphics[scale=0.27]{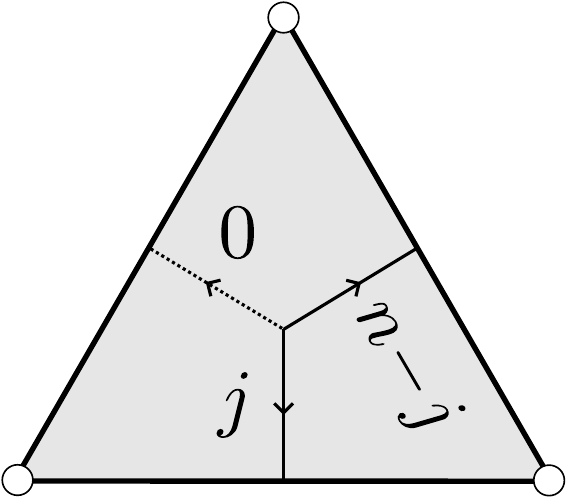}\end{array}
$$
Define $\skeleton_\tau(v)$ by
\begin{equation}
\skeleton_\tau(v) = \sum_{s \subset \tau\cap\tilde{Y}_v} Y(s) \in \bZ [\barVt],
\end{equation}
where the sum is over all the segments of $\widetilde{Y}_v$ in $\tau$.
It is known that $\skeleton_\tau(v)$ is well-defined \cite[Lemma~11.4]{LY23}. 

Define a $\bZ_3$-invariant map
$$\barK_{\bP_3}\colon \barV_{\mathbb P_3}\times\barV_{\mathbb P_3}\to\bZ$$
such that if $v=ijk$ and $v'=i'j'k'$ satisfy $i'\leq i$ and $j'\geq j$ then 
\begin{align}
\barK_{\bP_3}(v,v')=jk'+ki'+i'j.
\end{align}
Under identifying $\bP_3$ and a face $\tau$ of $\lambda$, we also use $\barK_{\tau}$ as $\barK_{\bP_3}$. 

Recall that $\cF_\lambda$ denotes the set of all the faces of the triangulation $\lambda$. 
For $u, v \in \barVl$ and a face $\tau \in \cF_\lambda$ containing $v$,  define 
\begin{equation}\label{eq-surgen-exp}
\barK_{\lambda}(u,v)=\barKt(\skeleton_\tau(u),v)=\sum_{s\subset\tau\cap\tilde{Y}_u}\barKt(Y(s),v).
\end{equation}
It is known that $\barK_{\lambda}$ is well-defined \cite[Lemma~11.5]{LY23}. 
Note that our $K_\lambda$ equals $\overline{\mathsf K}_\lambda$ in \cite{LY23}.

\def\barP{P}

Define the anti-symmetric integer matrices $\barP_{\lambda}$ by 
\begin{align}\label{eq-anti-matric-P-def}
\barP_\lambda:=2\barK_\lambda Q_\lambda\barK^t_\lambda.
\end{align}

\begin{remark}\label{rem-P-P}
    Our $P_\lambda$ is the matrix $-\overline{\mathsf P}_\lambda$ defined in \cite[Equation~(163) and (205)]{LY23}.
    From the definition of $\overline{\mathsf P}_\lambda$ in \cite{LY23}, we know that 
    each entry of $P_\lambda$ is in $n\mathbb Z.$
\end{remark}

\cite[Equation~(214)]{LY23} implies that 
\begin{align}\label{eq-prod-PQ}
    P_\lambda Q_\lambda
    =\begin{pmatrix}
-2n^2(\text{Id}_{\mathring{V}_\lambda\times \mathring{V}_\lambda}) & * \\
O    &  * \\
\end{pmatrix},
\end{align}
where $\mathring{V}_\lambda\subset V_\lambda$ consists of all small vertices contained in the interior of $\fS$.
Equation \eqref{eq-prod-PQ} demonstrates that $(P_\lambda, Q_\lambda)$ forms a compatible pair as defined in \cite[Definition~3.1]{BZ}, which plays a crucial role in \S\ref{sec-skein-cluster} for defining the quantum seed (Definition~\ref{def-quantum-seed}) within the skew-field of the reduced stated ${\rm SL}_n$-skein algebra.

\def\Y{A}

The following is the \textbf{$\mathcal A$-version quantum torus} of $(\fS,\lambda)$ \cite{LY23}:
\begin{equation}
\mathcal{A}_{\omega}(\fS,\lambda) = R \langle 
\Y_v^{\pm 1}, v \in V_\lambda \rangle / (
\Y_v 
\Y_{v'}= \omega^{P_\lambda(v,v')} 
\Y_{v'} 
\Y_v \text{ for } v,v'\in V_\lambda ).
\end{equation}
For any $v_1,\ldots,v_r \in V_\lambda$ and $a_1,\ldots,a_r \in \mathbb{Z}$,
\begin{align}
\label{Weyl_ordering-Y}
\left[ \Y_{v_1}^{a_1} \Y_{v_2}^{a_2} \cdots \Y_{v_r}^{a_r} \right] := \omega^{-\frac{1}{2}\sum_{i<j}a_i a_j P_\lambda(v_i,v_j)} \Y_{v_1}^{a_1} \Y_{v_2}^{a_2} \cdots \Y_{v_r}^{a_r}
\end{align}
In particular, for ${\bf t} = (t_v)_{v\in V_\lambda} \in \mathbb{Z}^{V_\lambda}$, define
\begin{align}\label{def-monomial-for-A}
    \Y^{\bf t} := \left[ \prod_{v\in V_\lambda} \Y_v^{t_v}\right].
\end{align}

\begin{theorem}[{\cite[Theorem~11.7]{LY23}}]\label{thm-transition-LY}
The $R$-linear maps 
\begin{align}\label{def-A-bal-isomor}
 \psi_\lambda &\colon \mathcal{A}_{\omega}(\fS,\lambda)\to
\mathcal{Z}_{\omega}(\fS,\lambda),
\quad  \Y^\mathbf{k}\mapsto Z^{\mathbf{k} K_{\lambda}}, \ ({\mathbf{k}} \in \bZ^{V_{\lambda}})
\end{align}
are $R$-algebra embeddings with $\im \psi_\lambda= \mathcal{Z}^{\rm bl}_{\omega}(\fS,\lambda)$. 
\end{theorem}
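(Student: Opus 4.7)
The plan is to verify three claims: (i) $\psi_\lambda$ is a well-defined $R$-algebra homomorphism, (ii) $\psi_\lambda$ is injective, and (iii) $\im\psi_\lambda = \mathcal{Z}^{\rm bl}_\omega(\fS,\lambda)$. Step (i) is essentially a tautology once the defining equation of $P_\lambda$ is set up correctly. From the Weyl-ordering conventions \eqref{Weyl_ordering-Z-1} and \eqref{Weyl_ordering-Y}, a direct computation yields the product formulas
\[
A^{\mathbf{k}}A^{\mathbf{k}'} = \omega^{\tfrac{1}{2}\mathbf{k} P_\lambda \mathbf{k}'^T}\,A^{\mathbf{k}+\mathbf{k}'}, \qquad Z^{\mathbf{m}}Z^{\mathbf{m}'} = \omega^{\mathbf{m} Q_\lambda \mathbf{m}'^T}\,Z^{\mathbf{m}+\mathbf{m}'}.
\]
Checking $\psi_\lambda(A^{\mathbf{k}}A^{\mathbf{k}'}) = \psi_\lambda(A^{\mathbf{k}})\psi_\lambda(A^{\mathbf{k}'})$ on basis elements therefore collapses to the matrix identity $K_\lambda Q_\lambda K_\lambda^T = \tfrac{1}{2}P_\lambda$, which is precisely the definition \eqref{eq-anti-matric-P-def} of $P_\lambda$.

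Step (ii) reduces, via the $R$-basis $\{Z^{\mathbf{m}}\}$ of $\mathcal{Z}_\omega(\fS,\lambda)$, to the $\mathbb{Z}$-linear injectivity of the map $\mathbf{k} \mapsto \mathbf{k} K_\lambda$. I would extract this from the local structure of the skeleton: since the main segment of $\widetilde{Y}_v$ lies in a single face containing $v$ and contributes a uniquely identifiable term to $K_\lambda(v,\cdot)$, with respect to a suitable ordering on $V_\lambda$ the rows of $K_\lambda$ have linearly independent leading coefficients. The forward inclusion for (iii), namely $\im\psi_\lambda \subseteq \mathcal{Z}^{\rm bl}_\omega(\fS,\lambda)$, is local: it suffices to show that for each $v \in V_\lambda$ the row $\mathbf{e}_v K_\lambda$ is balanced, which by definition means its pullback $f_\tau^*(\mathbf{e}_v K_\lambda) = K_\tau(\skeleton_\tau(v),\cdot)$ to each face lies in $\mathcal{B}_{\mathbb{P}_3}$. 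This is a direct calculation from $K_{\mathbb{P}_3}(w,v') = jk'+ki'+i'j$, expressing the row as a $\mathbb{Z}$-combination of $\mathbf{k}_1,\mathbf{k}_2,\mathbf{k}_3$ modulo $(n\mathbb{Z})^{V_{\mathbb{P}_3}}$.

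The reverse inclusion $\mathcal{B}_\lambda \subseteq \im K_\lambda$ is where the real work lies and is the main obstacle. Under the no-interior-punctures hypothesis, Lemma \ref{lem-balanced-H} characterizes $\mathcal{B}_\lambda$ as $\{\mathbf{m}\in\mathbb{Z}^{V_\lambda} \mid \mathbf{m} H_\lambda \in (n\mathbb{Z})^{V_\lambda}\}$. The cleanest strategy I would pursue is to establish an explicit inversion-type relation between $K_\lambda$ and $H_\lambda$ of the form $K_\lambda\,H_\lambda = n\,\mathrm{Id}$ (or a suitable analogue compensating for edge-vertex contributions) via a triangle-by-triangle computation, using the fact that $H_\lambda$ has entries in $\{-1,0,1\}$ determined by the local quiver structure while $K_\lambda$ has entries of order $n$ coming from the barycentric skeleton formula. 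Given such a relation, $\mathbf{k} K_\lambda \in \mathcal{B}_\lambda$ becomes immediate since $(\mathbf{k} K_\lambda)H_\lambda = n\mathbf{k}$, and conversely any $\mathbf{m}\in\mathcal{B}_\lambda$ is the image under $K_\lambda$ of $\tfrac{1}{n}\mathbf{m} H_\lambda \in \mathbb{Z}^{V_\lambda}$. The delicate part of this local computation is verifying that the contributions from edge vertices shared by two faces cancel in the correct way, which is what ultimately forces the specific form of both $H_\lambda$ in \eqref{def-H-Q-interior}–\eqref{eq-def-H-lambda} and $K_\lambda$ via the skeleton.
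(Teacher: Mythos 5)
This statement is quoted verbatim from \cite[Theorem~11.7]{LY23} and the paper gives no proof, so there is no ``paper's own proof'' to compare against; your proposal is a blind reconstruction of the argument from that reference. Its architecture is sound: well-definedness does collapse to the defining identity $K_\lambda Q_\lambda K_\lambda^T = \tfrac{1}{2}P_\lambda$ of \eqref{eq-anti-matric-P-def}, the forward inclusion $\im\psi_\lambda \subset \mathcal Z^{\rm bl}_\omega(\fS,\lambda)$ reduces to checking face-by-face that each row of $K_\lambda$ is balanced via the formula $K_{\mathbb P_3}(v,v')=jk'+ki'+i'j$, and the reverse inclusion together with injectivity hinge on an inversion relation between $K_\lambda$ and $H_\lambda$.

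The weak point is that the reverse-inclusion step, which you correctly flag as ``the real work,'' remains at the level of a strategy. The identity you need, $H_\lambda K_\lambda = nI$, is precisely \cite[Lemma~11.9(c)]{LY23}, which the paper independently records as Lemma~\ref{lem-matrix-id-LY}(a); you propose to re-derive it by a triangle-by-triangle computation but do not carry it out, and you hedge on whether the statement should be $H_\lambda K_\lambda = nI$ or $K_\lambda H_\lambda = nI$ (for square matrices over $\mathbb Q$ these are equivalent, but that equivalence should be stated rather than elided as ``a suitable analogue''). Once $H_\lambda K_\lambda = nI$ is available, two things simplify: injectivity of $\mathbf k\mapsto\mathbf k K_\lambda$ is immediate because $K_\lambda$ is invertible over $\mathbb Q$, so the leading-coefficient ordering argument you sketch is superfluous; and the reverse inclusion follows exactly as you describe via Lemma~\ref{lem-balanced-H}. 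Note also that Lemma~\ref{lem-balanced-H} requires $\fS$ to have no interior punctures, a standing hypothesis of \S\ref{sec;A_tori} that your sketch uses implicitly but never records as load-bearing for the characterization $\mathcal B_\lambda = \{\mathbf m : \mathbf m H_\lambda \in (n\mathbb Z)^{V_\lambda}\}$.
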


\def\A{\mathcal{A}_{\omega}(\fS,\lambda)}

\def\Ap{\mathcal{A}_{\omega}^{+}(\fS,\lambda)}

It is shown in \cite[Lemma~11.9(c)]{LY23} that
$H_\lambda K_\lambda= n I$, where $I$ is the identity matrix. Then the inverse of 
\begin{align}\label{eq-iso-A-balanced-psi}
    \psi_\lambda \colon \mathcal{A}_{\omega}(\fS,\lambda)\to
\mathcal{Z}_{\omega}^{\rm bl}(\fS,\lambda)
\end{align}
is the following
\begin{align}\label{eq-iso-A-balanced-psi-inv}
    \mathcal{Z}_{\omega}^{\rm bl}(\fS,\lambda)\to
\mathcal{A}_{\omega}(\fS,\lambda),\quad
Z^{\bf k}\mapsto A^{\frac{1}{n} {\bf k}H_\lambda}
\end{align}
for ${\bf k}\in \mathcal B_\lambda$. It follows from Lemma \ref{lem-balanced-H} that
$\frac{1}{n} {\bf k}H_\lambda\in\mathbb Z^{V_\lambda}$.

\def\barV{V}

\def\bZ{\mathbb Z}

\subsection{The $\mathcal A$-version quantum trace map}
\label{sec-traceA}


For $v=(ijk) \in  \barV_\nu\subset \barV_\lambda$ with a triangle $\nu$ of $\lambda$,  consider the graph $\widetilde{Y}_v$ defined in \S\ref{sec;A_tori}. 
By replacing a $k$-labeled edge of $\widetilde{Y}_v$ with $k$-parallel edges, we obtain a stated $n$-web $g''_v$, adjusted by a sign: 
$$\widetilde{Y}_v=
\begin{array}{c}\includegraphics[scale=0.40]{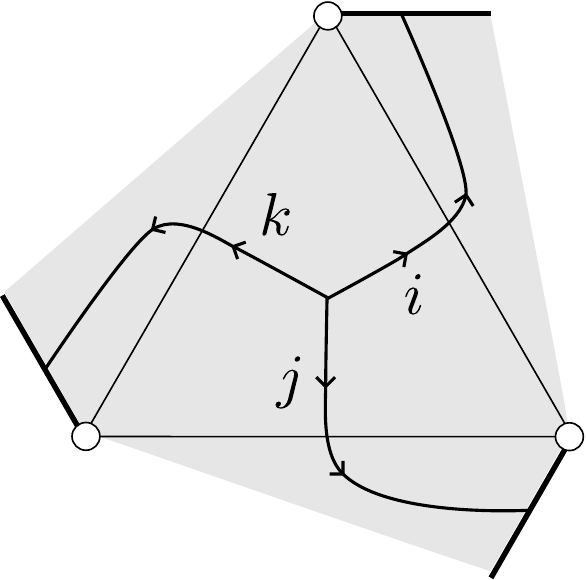}\end{array}\longrightarrow g''_v:=(-1)^{\binom{n}{2}}\begin{array}{c}\includegraphics[scale=0.40]{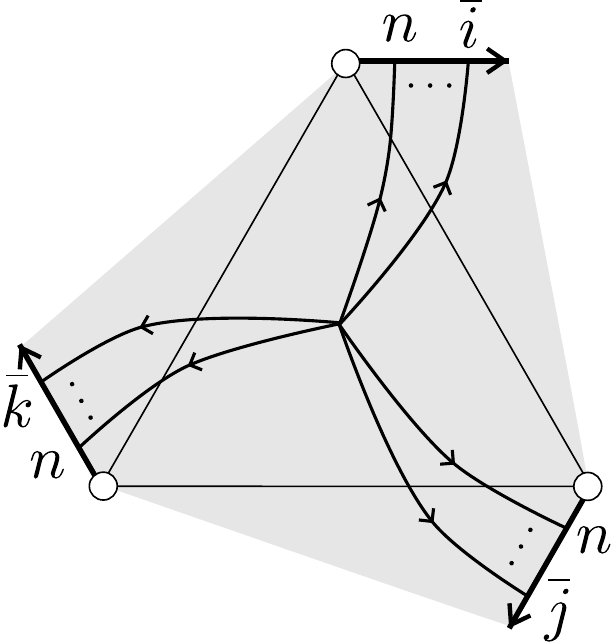}\end{array}. $$
It is known that $g''_v$ is reflection-normalizable \cite[Lemma~4.12]{LY23}.

Define $\gaa_v\in \cS_{\omega}(\fS)$ as the reflection invariant of $\gaa_v''$. 
We still use $\gaa_v$ to denote the image of $\gaa_v$ in $\overline{\cS}_{\omega}(\fS)$ by the projection $\cS_{\omega}(\fS)\to \overline{\cS}_{\omega}(\fS)$. 
The following lemma is shown in \cite{LY23}.

\begin{lemma}\label{gaa-com}
    For any $v,v'\in V_\lambda$, we have
    \begin{align}
    \gaa_v\gaa_{v'} = \omega^{P_\lambda(v,v')
    } \gaa_{v'}\gaa_v.
\end{align}
\end{lemma}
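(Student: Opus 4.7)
The natural strategy is to translate the commutation relation into the ambient quantum torus via the $\mathcal{X}$--version quantum trace map ${\rm tr}_\lambda$ from Theorem~\ref{thm.quantum_trace}. The key intermediate step, which I would establish first, is the identification
\[
   {\rm tr}_\lambda(\gaa_v) \;=\; Z^{{\bf e}_v K_\lambda} \;\in\; \mathcal{Z}^{\rm bl}_\omega(\fS,\lambda),
\]
or equivalently ${\rm tr}_\lambda^A(\gaa_v) = A_v$ under the isomorphism $\psi_\lambda$ of \eqref{eq-iso-A-balanced-psi}. This identification is essentially local: in each face $\tau$ through which the elongation $\widetilde{Y}_v$ passes, each arc segment $s\subset\tau\cap\widetilde{Y}_v$ corresponds to a small vertex $Y(s)\in V_\tau$; replacing the $k$-weighted edges by $k$ parallel strands (with the sign $(-1)^{\binom{n}{2}}$) and reflection-normalizing produces a stated $n$-web whose face-local ${\rm tr}_\lambda$-image is the Weyl-ordered monomial with exponent $\skeleton_\tau(v) = \sum_s Y(s)$. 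Gluing over all faces and invoking the definition of $K_\lambda$ in \eqref{eq-surgen-exp} yields the balanced monomial $Z^{K_\lambda(v,\,\cdot\,)}$. The scalars $\mathbbm{c}_i$, $\mathbbm{t}$, $\mathbbm{a}$ generated by relations \eqref{w.twist} and \eqref{wzh.four}--\eqref{wzh.eight} cancel against the reflection normalization, leaving the clean Weyl-ordered monomial.

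Granted this identification, the commutation relation in $\mathcal{Z}_\omega(\fS,\lambda)$ gives
\[
Z^{{\bf e}_v K_\lambda}\, Z^{{\bf e}_{v'} K_\lambda}
   = \omega^{\,2({\bf e}_v K_\lambda)\, Q_\lambda\,({\bf e}_{v'} K_\lambda)^T}\, Z^{{\bf e}_{v'} K_\lambda}\, Z^{{\bf e}_v K_\lambda}
   = \omega^{P_\lambda(v,v')}\, Z^{{\bf e}_{v'} K_\lambda}\, Z^{{\bf e}_v K_\lambda}
\]
by the definition $P_\lambda = 2 K_\lambda Q_\lambda K_\lambda^T$ in \eqref{eq-anti-matric-P-def}. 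Hence ${\rm tr}_\lambda\bigl(\gaa_v \gaa_{v'} - \omega^{P_\lambda(v,v')}\gaa_{v'}\gaa_v\bigr) = 0$. When $n=2,3$ or $\fS$ is a polygon, Theorem~\ref{thm.quantum_trace}(b)--(c) gives injectivity of ${\rm tr}_\lambda$, so the lemma follows immediately. For general $n$ and $\fS$, I would chain the splitting homomorphisms $\mathbb{S}_e$ from \S\ref{sub-splitting} along a maximal collection of interior ideal arcs so as to cut $\fS$ into triangles $\mathbb{P}_3$; since each $\gaa_v$ is supported by a web transverse to the cutting edges, the splittings act in a controlled way on the commutator, and injectivity of ${\rm tr}_\lambda$ on each $\mathbb{P}_3$ combined with the compatibility of $\mathbb{S}_e$ with ${\rm tr}_\lambda$ forces the commutator to vanish in $\overline{\cS}_\omega(\fS)$.

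The main obstacle is the first identification ${\rm tr}_\lambda(\gaa_v) = Z^{{\bf e}_v K_\lambda}$: the quantum trace is defined by a state-sum that produces many scalars from \eqref{intro-constants} at every turn, and showing that the sign $(-1)^{\binom{n}{2}}$ together with the reflection normalization collapses all of them to precisely the clean Weyl-ordered balanced monomial requires patient bookkeeping---essentially the content of several lemmas in \cite{LY23}. A secondary difficulty, should one instead pursue a direct topological swap of $\gaa_v''$ past $\gaa_{v'}''$ using \eqref{w.cross} and \eqref{wzh.eight}, is organizing the numerous interior crossings and boundary reorderings so that the accumulated scalars sum exactly to the quadratic form defined by $P_\lambda$.
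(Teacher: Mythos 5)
Your primary strategy (compute ${\rm tr}_\lambda(\gaa_v) = Z^{K_\lambda(v,\,*)}$, push the commutation to the quantum torus, then pull it back by injectivity of ${\rm tr}_\lambda$) has a genuine gap for general $n$ and $\fS$. The identity ${\rm tr}_\lambda(\gaa_v) = Z^{K_\lambda(v,\,*)}$ is correct and does show that the commutator $\gaa_v\gaa_{v'} - \omega^{P_\lambda(v,v')}\gaa_{v'}\gaa_v$ lies in $\ker{\rm tr}_\lambda$, but by Theorem~\ref{thm.quantum_trace} the injectivity of ${\rm tr}_\lambda$ on $\overline{\cS}_\omega(\fS)$ is only known for $n=2,3$ or when $\fS$ is a polygon; for a general pb surface and $n>3$ this is precisely the open problem that motivates the paper's introduction of the projected algebra $\widetilde{\cS}_\omega(\fS) = \overline{\cS}_\omega(\fS)/\ker{\rm tr}_\lambda$. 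Your argument would establish the relation in $\widetilde{\cS}_\omega(\fS)$, but the lemma is stated in $\overline{\cS}_\omega(\fS)$.

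The cutting-into-triangles detour does not rescue the general case: what it would require is injectivity of $\mathbb{S}_E\colon \overline{\cS}_\omega(\fS)\to\overline{\cS}_\omega(\bigsqcup\mathbb P_3)$ for the \emph{reduced} algebras, and via the compatibility diagram~\eqref{eq-com-tr-splitting} together with injectivity of ${\rm tr}$ on each $\mathbb{P}_3$, this is equivalent to injectivity of ${\rm tr}_\lambda$ on $\overline{\cS}_\omega(\fS)$ itself. So the detour is circular. (The splitting is known to be injective on the \emph{unreduced} $\cS_\omega$, but that does not help here, because the quantum trace is only defined on $\overline{\cS}_\omega$.) There is also a logical ordering problem: the Weyl-ordered products $[\,\cdot\,]$ for the $\gaa_v$'s, and the construction of $\trA$ with $\trA(\gaa_v)=A_v$, both presuppose that the $\gaa_v$'s $q$-commute with the prescribed exponent, so any route through the $\mathcal{A}$-version trace risks circularity.

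The argument you relegate to a ``secondary difficulty'' at the end is actually the content of the cited result in \cite{LY23}: one proves the relation \emph{directly} in the skein algebra, by isotoping the webs $\gaa_v''$ and $\gaa_{v'}''$ past each other (changing which one sits on top), resolving crossings via \eqref{w.cross} and reordering boundary endpoints via \eqref{wzh.eight}, and tracking the accumulated scalar; the reflection normalization then halves the exponent so that the net scalar is exactly $\omega^{P_\lambda(v,v')}$. This is a local computation that can be reduced to the contribution of each face the two webs jointly traverse, and it is independent of any injectivity assumption on the quantum trace.
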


\def\trA{{\rm tr}_{\lambda}^A}
\def\SK{\overline{\cS}_\omega(\fS)}

We use $\Ap$ to denote the $R$-subalgebra of $\A$ generated by $A^{\bf k}$ for ${\bf k}\in\mathbb N^{V_\lambda}$.

\begin{definition}\label{def-M-vector}
    Suppose $M$ is a complex square matrix with rows and columns labeled by vertices in $V_\lambda$.
For each $v\in V_\lambda$, we use $M(v,*)$ to denote the row vector in $\mathbb C^{V_\lambda}$ such that its entry labeled by $u$ is $M(v,u)$ for $u\in V_\lambda$.
\end{definition}

\begin{theorem}\cite{LY23}\label{thm-trace-A}
    Let $\fS$ be a triangulable pb surface without interior punctures, and let $\lambda$ be an ideal triangulation of $\fS$. 
    There exists an algebra homomorphism 
    $$\trA\colon
    \overline{\cS}_\omega(\fS)\rightarrow\A$$
    with the following properties
    \begin{enumerate}[label={\rm (\alph*)}]\itemsep0,3em

    \item\label{thm-trace-A-a} For each $v\in V_\lambda$, we have $\trA(\gaa_v) =  A_v$.

    \item\label{thm-trace-A-b} We have $\Ap\subset\im\trA\subset\A$.

    \item If $n=2,3$, or $n>3$ and $\fS$ is a polygon, then $\trA$ is injective.

    \item\label{thm-trace-A-d} The following diagram commutes:
    \begin{align}
        \label{eq-compability-tr-A-X-diag}
        \xymatrix{
        & \rdS \ar[dl]_{{\rm tr}_{\lambda}^A} \ar[dr]^{{\rm tr}_\lambda} & \\
        \A \ar[rr]_-{\psi_\lambda} & & \mathcal{Z}_\omega(\fS,\lambda)}.
    \end{align}
    In particular, we have $\tr(\gaa_v)
    =Z^{K_\lambda(v,*)}$ for $v\in V_\lambda$.
    \end{enumerate}
\end{theorem}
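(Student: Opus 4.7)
The natural route is to \emph{define} $\trA$ by the commutative diagram \eqref{eq-compability-tr-A-X-diag} and then deduce all four properties. First I would check that the image of $\tr$ from Theorem~\ref{thm.quantum_trace}(a) lies in the balanced Fock--Goncharov subalgebra $\mathcal Z^{\mathrm{bl}}_\omega(\fS,\lambda)$. Since Theorem~\ref{thm-transition-LY} provides an isomorphism $\psi_\lambda\colon \A \xrightarrow{\sim} \mathcal Z^{\mathrm{bl}}_\omega(\fS,\lambda)$ given by $A^{\mathbf k}\mapsto Z^{\mathbf k K_\lambda}$, the composition
\[
\trA := \psi_\lambda^{-1}\circ \tr \colon \SK \longrightarrow \A
\]
is a well-defined algebra homomorphism, and property \ref{thm-trace-A-d} is built in by construction.

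For property \ref{thm-trace-A-a}, the central computation is
\[
\tr(\gaa_v) = Z^{K_\lambda(v,\ast)} \quad \text{for all } v\in V_\lambda.
\]
This is the main technical obstacle: one must analyze the reflection-normalized stated web $\gaa_v$ built from the elongated graph $\widetilde Y_v$, apply the defining relations \eqref{wzh.four}--\eqref{wzh.eight} across each triangle traversed by the elongation, and check that the state-sum contribution of each segment $s\subset \tau\cap \widetilde Y_v$ assembles into the exponent $K_\tau(Y(s),\ast)$ that enters \eqref{eq-surgen-exp}. The fact that the elongation always turns left aligns with the one-sided form of $K_{\mathbb P_3}$, and the sign $(-1)^{\binom{n}{2}}$ is exactly what is needed to cancel the framing/twist factors from \eqref{w.twist}. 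Once this identity is established, applying $\psi_\lambda^{-1}$ and using $\psi_\lambda(A_v) = Z^{K_\lambda(v,\ast)}$ immediately gives $\trA(\gaa_v)=A_v$.

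From there, property \ref{thm-trace-A-b} is a short consequence. By Lemma~\ref{gaa-com}, the elements $\gaa_v$ satisfy the same $\omega$-commutation relations as the generators $A_v$ of $\A$, so for any $\mathbf t\in \mathbb N^{V_\lambda}$ the Weyl-ordered product $[\prod_v \gaa_v^{t_v}]$ is mapped by $\trA$ to $A^{\mathbf t}$. This shows $\Ap\subset \im\trA$, and the inclusion $\im\trA\subset \A$ is tautological. Property \ref{thm-trace-A-c} is then automatic: in the cases $n=2,3$ and $\fS=\mathbb P_k$, Theorem~\ref{thm.quantum_trace}(b)(c) tells us $\tr$ is injective, and since $\psi_\lambda^{-1}$ is an isomorphism, $\trA = \psi_\lambda^{-1}\circ \tr$ is injective as well.

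The hardest step is clearly the explicit computation $\tr(\gaa_v)=Z^{K_\lambda(v,\ast)}$. The skeleton construction of $\gaa_v$ is designed precisely so that, upon resolving the $n$-parallel edges with relations \eqref{wzh.four}--\eqref{wzh.eight}, the leading (and only surviving) term is the balanced monomial indexed by $K_\lambda(v,\ast)$; nevertheless the bookkeeping of signs, Weyl-ordering corrections, and compatibility across glued edges requires the full machinery of \cite{LY23}. With that identity in hand, the remaining properties follow from general features of the commutative diagram \eqref{eq-compability-tr-A-X-diag}.
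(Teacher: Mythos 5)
This theorem is cited from \cite{LY23} and the paper provides no proof of its own, so there is no internal argument to compare against. Your reconstruction is a plausible account of how the result is established in \cite{LY23}: defining $\trA := \psi_\lambda^{-1}\circ \tr$ makes property \ref{thm-trace-A-d} immediate by construction, reduces property \ref{thm-trace-A-a} to the single identity $\tr(\gaa_v)=Z^{K_\lambda(v,*)}$, and then properties \ref{thm-trace-A-b} and \ref{thm-trace-A-c} follow formally exactly as you say (the Weyl-ordering factors for $A^{\mathbf t}$ and $\gaa^{\mathbf t}$ agree because $\xi^{\Pi_\lambda}=\omega^{n\cdot\frac{1}{n}P_\lambda}=\omega^{P_\lambda}$, so $\gaa^{\mathbf t}\mapsto A^{\mathbf t}$ once $\gaa_v\mapsto A_v$ is known). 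You are right that the entire weight of the theorem sits on that one state-sum identity for $\tr(\gaa_v)$, which requires the full skeleton/elongation machinery of \cite{LY23}; your sketch of how the $n$-parallel resolution of $\widetilde Y_v$ should interact with $K_\tau$ is directionally correct but is not a proof, which is consistent with the paper simply invoking the reference rather than reproving it. If you intended this as a self-contained argument, the gap is precisely that identity; if you intended it as a reconstruction of the cited proof's architecture, it is reasonable.
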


\def\X{\mathcal{Z}_\omega(\fS,\lambda)}

\subsection{Projected stated ${\rm SL}_n$-skein algebra}\label{sub-sec-Kernel-trace}
Let $\fS$ be a triangulable pb surface, and let $\lambda$ be an ideal triangulation of $\fS$.  
As introduced in \S\ref{sec-traceX}, there is an algebra homomorphism  
\[
\tr \colon \rdS \to \mathcal{Z}_\omega(\fS,\lambda).
\]  
According to Theorem~\ref{thm.quantum_trace}, this homomorphism is known to be injective only in certain special cases, and it remains an open question whether $\tr$ is injective in general.  

For the purpose of introducing a quantum cluster algebra structure within the skew-field of $\rdS$, we require the injectivity of the quantum trace map. To address this, we will work with the quotient algebra $\rdS/(\ker \tr)$ in \S\ref{sec-skein-cluster} instead of $\rdS$. To justify this replacement, we will show that $\ker \tr$ is independent of the choice of triangulation $\lambda$.

\begin{lemma}\label{lem-ker-naturality}
    Let $\fS$ be a triangulable pb surface, and let $\lambda,\lambda'$ be two ideal triangulations of $\fS$. We have 
    $\ker \tr = \ker{\rm tr}_{\lambda'}$.
\end{lemma}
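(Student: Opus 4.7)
The plan is to deduce the statement directly from the compatibility diagram \eqref{intro-eq-compability-tr-mutation} established in \cite{KimWang}, which relates the quantum trace maps for two triangulations through the isomorphism $\Theta^\omega_{\lambda\lambda'}$ of skew-fields of fractions.

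First, I would record the elementary observation that since $\mathcal{Z}_\omega^{\rm bl}(\fS,\lambda)$ is an Ore domain embedded in its skew-field of fractions ${\rm Frac}(\mathcal{Z}_\omega^{\rm bl}(\fS,\lambda))$, the kernel of ${\rm tr}_\lambda \colon \rdS \to \mathcal{Z}_\omega^{\rm bl}(\fS,\lambda)$ coincides with the kernel of the composition $\rdS \to \mathcal{Z}_\omega^{\rm bl}(\fS,\lambda) \hookrightarrow {\rm Frac}(\mathcal{Z}_\omega^{\rm bl}(\fS,\lambda))$, and similarly for $\lambda'$. So it suffices to compare these fraction-field-valued versions of the trace maps.

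Second, the commutative diagram \eqref{intro-eq-compability-tr-mutation} (Theorem~\ref{thm-main-compatibility}) gives the identity
\begin{equation*}
{\rm tr}_\lambda = \Theta^\omega_{\lambda\lambda'} \circ {\rm tr}_{\lambda'}
\end{equation*}
viewed as maps $\rdS \to {\rm Frac}(\mathcal{Z}_\omega^{\rm bl}(\fS,\lambda))$. Since $\Theta^\omega_{\lambda\lambda'}$ is an isomorphism of skew-fields (it is built from generalized quantum $\mathcal X$-mutations, each of which is invertible), it is in particular injective, so the kernels of the two sides of this equation coincide. Combining with the previous paragraph yields $\ker {\rm tr}_\lambda = \ker {\rm tr}_{\lambda'}$.

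There is essentially no obstacle here: all the nontrivial content has already been packaged into \eqref{intro-eq-compability-tr-mutation}, i.e.\ the existence of the isomorphism $\Theta^\omega_{\lambda\lambda'}$ and the compatibility of the $\mathcal X$-quantum trace with change of triangulation. The only thing to watch is that one must pass to fraction fields before applying $\Theta^\omega_{\lambda\lambda'}$, because the isomorphism is only defined between the Ore localizations and not between the balanced Fock--Goncharov algebras themselves; however, as noted above, this passage does not alter the kernel.
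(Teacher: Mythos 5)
Your proposal is correct and takes the same route as the paper: the paper's proof (in \S3.6) is exactly the observation that diagram~\eqref{eq-compability-tr-mutation} commutes and $\Theta^\omega_{\lambda\lambda'}$ is an isomorphism, hence injective. Your added remark that one must pass to fraction fields before applying $\Theta^\omega_{\lambda\lambda'}$, and that this does not change the kernel, is a reasonable extra clarification but does not constitute a different argument.
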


We postpone the proof of Lemma~\ref{lem-ker-naturality} to \S\ref{sec-proof-lem}.

\def\dS{\widetilde{\cS}_\omega(\fS)}

\begin{definition}\label{def-key-algebra}
    Let $\fS$ be a triangulable pb surface, and let $\lambda$ be an ideal triangulations of $\fS$. Define 
    $\dS:=\rdS/(\ker \tr)$, called {\bf projected stated ${\rm SL}_n$-skein algebra}.
    Lemma~\ref{lem-ker-naturality} shows that 
    $\dS$ is independent of the choice of $\lambda$.
\end{definition}

\begin{remark}
    It is conjectured in \cite{LY23} that $\dS=\rdS$.
\end{remark}

Let $\fS$ be a triangulable pb surface with a triangulation $\lambda$. 
In principle, one can cut the surface $\fS$ along any ideal arc, but here let us assume that $e$ is a non-boundary edge of $\lambda$. As in \S\ref{sub-splitting}, we denote by  
$\cut_e(\fS)$ 
the pb surface obtained from $\fS$ by cutting along edge $e$, and denote the projection map by
$$
\pr_e : \cut_e(\fS) \to \fS,
$$
so that $\pr_e^{-1}({e})$ consists of two ideal arcs $e',e''$ of $\cut_e(\fS)$. In \S\ref{sub-splitting} we studied the induced splitting homomorphism $\mathbb{S}_e$ between the skein algebras and that between the reduced skein algebras; here we will need the latter
$$
\mathbb{S}_e : \rS(\fS) \to \rS(\cut_e(\fS)).
$$
We further need to investigate a `splitting map' between the ($n$-th root)  Fock-Goncharov algebras. Note that  
$\lambda_e = (\lambda\setminus\{e\})\cup\{e',e''\}$ is a triangulation of
$\cut_e(\fS)$. We say $\lambda_e$ is induced from $\lambda$.
For a small
vertex $v$ in $e$, i.e. $v\in V_\lambda$ lying in $e$, we have $\pr^{-1}_{e}(v) = \{v',v''\}$, where $v'$ and $v''$ are small vertices of $\lambda'$. 
There is an algebra embedding
\begin{align}\label{eq-splitting-torus}
    \mathcal S_e
    \colon
    \mathcal{Z}_\omega(\fS,\lambda)\rightarrow
    \mathcal{Z}_\omega(\cut_e(\fS),\lambda_e)
\end{align}
defined on the generators $Z_v$, $v\in V_\lambda$, by
\begin{align}
\label{cutting_homomorphism_for_Z_omega}
    \mathcal S_e
    (
    Z_v)=
    \begin{cases}
        Z_v & \text{ if $v$ is not contained in $e$},\\
        [
        Z_{v'} Z_{v''}] & \text{ if $v$ is contained in $e$ and 
        $\pr^{-1}_{e}(v) = \{v',v''\}$},
    \end{cases}
\end{align}
where $[\sim]$ is the Weyl-ordered product \eqref{Weyl_ordering-Z-1}; since we are not allowing self-folded triangles in this paper, in fact $Z_{v'}$ commutes with $Z_{v''}$ in the second case, and hence $\mathcal{S}_e(Z_v) = Z_{v'}Z_{v''}$.
It is easy to show that $\mathcal{S}_e$ is injective since it sends the monomial basis of $\mathcal{Z}_\omega(\fS,\lambda)$ to a subset of the monomial basis of $\mathcal{Z}_\omega(\cut_e(\fS),\lambda_e)$.

It is well-known that 
\begin{align}\label{com-splitting-torus}
    \mathcal S_{e_1}\circ \mathcal S_{e_2}
    = \mathcal S_{e_2}\circ \mathcal S_{e_1}
\end{align}
for any $e_1,e_2\in\lambda$.

The following statement is about the compatibility of the quantum trace map ${\rm tr}_\lambda 
   $ (Theorem \ref{thm.quantum_trace}) with the splitting homomorphisms.

\begin{theorem}[\cite{LY23}]\label{thm-trace-cut}
    The following diagram commutes
    \begin{equation}\label{eq-com-tr-splitting}
\begin{tikzcd}
\rdS \arrow[r, "\mathbb S_e"]
\arrow[d, "\tr"]  
&  \overline{\cS}_{\omega}(\cut_e(\fS)) \arrow[d, "{\rm tr}_{\lambda_e}"] \\
 \mathcal{Z}_\omega(\fS,\lambda)
 \arrow[r, "\mathcal S_e"] 
&  
\mathcal{Z}_\omega(\cut_e(\fS),\lambda_e)
\end{tikzcd}
\end{equation}
    where $\mathbb S_e$ is the splitting homomorphism defined in \S\ref{sub-splitting}.
\end{theorem}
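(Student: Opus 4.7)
Since all four maps in the diagram are $R$-algebra homomorphisms, it suffices to verify commutativity on a generating set of $\rdS$. The plan is to test the identity on (classes of) stated $n$-web diagrams $\alpha \subset \fS$ that are transverse to $e$ and in general position with respect to the triangulation $\lambda$.

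For such an $\alpha$, the quantum trace $\tr(\alpha)$ admits a state-sum description: one decomposes $\alpha$ into its restrictions $\alpha_\tau$ to the faces $\tau \in \cF_\lambda$, computes a local quantum trace for each $\alpha_\tau$ together with a choice of states on $\partial\alpha_\tau$, and sums the resulting monomials over all compatible state assignments along every interior edge of $\lambda$. Carrying out this procedure along the single edge $e$ produces exactly the generators $Z_v \in \mathcal{Z}_\omega(\fS,\lambda)$ with $v \in V_\lambda \cap e$, coming from the pairs of generators $Z_{v'}, Z_{v''}$ on the two sides of $e$ in $\cut_e(\fS)$. Thus the composition ${\rm tr}_{\lambda_e} \circ \mathbb{S}_e$ and the composition $\mathcal{S}_e \circ \tr$ differ only in whether the edge-gluing sum along $e$ is performed before or after the local traces on the two triangles adjacent to $e$ are applied: the skein-side gluing is produced by $\mathbb{S}_e(\alpha) = \sum_s \alpha(h,s)$, while the torus-side gluing is produced in reverse by $\mathcal{S}_e(Z_v) = [Z_{v'}Z_{v''}]$. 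Since no self-folded triangles are allowed, $Z_{v'}$ and $Z_{v''}$ commute, so $[Z_{v'}Z_{v''}] = Z_{v'}Z_{v''}$ and the two presentations match term-by-term.

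The main obstacle is careful bookkeeping of heights and $\omega$-factors on $e$. One needs to verify, for each of the elementary local pictures of $\alpha$ meeting $e$ (a strand crossing $e$, a crossing near $e$, and an $n$-valent vertex close to $e$), that the power of $\omega$ produced by the lifts $\alpha(h,s)$ and their local quantum traces agrees with the power of $\omega$ appearing in $[Z_{v'}Z_{v''}]$ for the relevant $v \in V_\lambda \cap e$. This is where the choice of linear height order $h$ on $\alpha \cap e$ is crucial: different choices of $h$ change the stated lifts only by computable $\omega$-factors that match the commutation relations in $\mathcal{Z}_\omega(\cut_e(\fS),\lambda_e)$, so any choice of $h$ produces the same total contribution after the sum over $s$ is taken.

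Once this has been verified for the local pieces, the global statement follows from the locality of $\tr$ with respect to $\lambda$ and from the multiplicativity of both $\mathbb{S}_e$ and $\mathcal{S}_e$. Finally, the claim for $\rdS$ (rather than $\cS_\omega(\fS)$) follows because $\mathbb{S}_e$ sends bad arcs to sums of bad arcs, so that the identity on $\cS_\omega(\fS)$ descends to the reduced setting.
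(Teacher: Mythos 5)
This theorem is cited directly from \cite{LY23} and the paper you are reading does not reprove it, so there is no ``paper's own proof'' to compare your argument against. What you have written is a plausible outline of the argument that does appear (in a more precise form) in \cite{LY23}: the quantum trace is built by a state-sum over the triangulation, and compatibility with cutting along one edge $e$ amounts to exchanging the order in which the state-sum along $e$ and the local traces on the two adjacent triangles are performed, with $\mathcal S_e(Z_v) = Z_{v'}Z_{v''}$ recording that exchange on the torus side.

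However, your proposal leaves the actual content of the theorem unverified. You flag ``careful bookkeeping of heights and $\omega$-factors'' as the main obstacle and then treat it as something that ``needs to be checked'' rather than checking it; that bookkeeping \emph{is} the theorem, and nothing in the sketch establishes that the $\omega$-exponents coming from the choice of $h$ and the local trace formulas on the two triangles adjacent to $e$ assemble into exactly the Weyl-ordered product $[Z_{v'}Z_{v''}]$. There is also a conceptual slip in the last full paragraph: you argue that different choices of the linear order $h$ on $\alpha\cap e$ ``change the stated lifts only by computable $\omega$-factors that match the commutation relations,'' as if the target of the verification were $h$-independence. But $\mathbb S_e(\alpha)=\sum_s\alpha(h,s)$ is already independent of $h$ as part of the definition of the splitting map (this is recalled just above Lemma \ref{lem-reflection}), so that is not what must be shown; what must be shown is that for a \emph{fixed} $h$ the composite $\mathrm{tr}_{\lambda_e}\circ\mathbb S_e$ matches $\mathcal S_e\circ\mathrm{tr}_\lambda$ on the local pictures, and this is the step your sketch does not carry out.
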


\def\bN{\mathbb N}

Let $A$ be an $R$-domain and let $S \subset A$ be a subset of non-zero elements. Let $\mathsf{Pol}(S)$ be the $R$-subalgebra of $A$ generated by $S$ and $\mathsf{LPol}(S)$ be the set of all $a \in A$ such that there is an $S$-monomial $m$ satisfying $am \in \mathsf{Pol}(S)$. If $A = \mathsf{LPol}(S)$ we say $S$ \textbf{weakly generates} $A$.

\begin{definition}
    For an $R$-domain $A$, consider a finite set $S = \{a_1,\cdots,a_r\} \subset A$. We call $S$ a \textbf{quantum torus frame} for $A$ if $S$ satisfies the following conditions; 
\begin{enumerate}
    \item $S$ is $q$-commuting and each element of $S$ is non-zero, 
    \item $S$ weakly generates $A$, 
    \item the set $\{a_1^{n_1}\cdots a_r^{n_r}\mid  n_i \in \bN\}$ is $R$-linearly independent.
\end{enumerate}
\end{definition}

The following result shows that the quantum trace map and the splitting map for $\rdS$ descend to the corresponding maps on $\rS$.

\begin{lemma}\label{lem-basic-lem}
Let $\fS$ be a triangulable pb surface.

    (a) Let $\lambda$ be a triangulation of 
    $\fS$.
    The quantum trace map 
    $\tr\colon \rdS\to \X$
    induces an injective algebra homomorphism
     $$\tr\colon \dS\to \X.$$
    When $\fS$ contains no interior punctures, the quantum trace map 
    $\trA\colon\rdS\to\A$
     induces an injective algebra homomorphism
     $$
    \trA\colon\dS\to\A.$$

    (b) We have $\dS$ is a domain.
    When $\fS$ contains no interior punctures, then $\dS$ is an Ore domain.

    (c) Let $e$ be an ideal arc of $\fS$ such that $\cut_e(\fS)$ is also triangulable (see \S\ref{sub-splitting}).
    Then the splitting map 
$$
\mathbb{S}_e \colon\overline{\mathscr{S}}_\omega(\fS) \to \overline{\mathscr{S}}_\omega(\cut_e(\fS))
$$ induces an injective algebra homomorphism
$$
\mathbb{S}_e \colon\widetilde{\mathscr{S}}_\omega(\fS) \to \widetilde{\mathscr{S}}_\omega(\cut_e(\fS))
$$

(d) The set \(\{\gaa_v\mid v\in V_\lambda\}\) is a quantum torus frame of $\dS$.
\end{lemma}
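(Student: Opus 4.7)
The plan is to deduce all four statements from the injectivity and compatibility of the two quantum trace maps established in Theorems~\ref{thm.quantum_trace}, \ref{thm-transition-LY}, \ref{thm-trace-A}, and \ref{thm-trace-cut}. Part (a) is essentially a bookkeeping matter: by the very definition $\dS = \rdS/\ker\tr$, the trace $\tr$ descends to an injective algebra homomorphism $\dS\hookrightarrow \X$. To obtain the analogous statement for $\trA$ in the no-interior-punctures case, I would invoke the commutative diagram \eqref{eq-compability-tr-A-X-diag}, namely $\tr = \psi_\lambda\circ\trA$, together with the injectivity of $\psi_\lambda$ from Theorem~\ref{thm-transition-LY}. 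These yield $\ker\trA = \ker\tr$, so $\trA$ descends to an injective map $\dS\hookrightarrow \A$.

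For part (d), the commutation relation $\gaa_v\gaa_{v'} = \omega^{P_\lambda(v,v')}\gaa_{v'}\gaa_v$ is Lemma~\ref{gaa-com}, and each $\gaa_v$ is nonzero in $\dS$ because $\trA(\gaa_v) = A_v \neq 0$ by Theorem~\ref{thm-trace-A}\ref{thm-trace-A-a}. Since $\gaa_v$ and $A_v$ satisfy identical commutation relations, the algebra homomorphism $\trA$ sends the Weyl-ordered monomial $\gaa^{\bf n}$ to $A^{\bf n}$; combined with the $R$-linear independence of the family $\{A^{\bf n}\colon{\bf n}\in\mathbb{N}^{V_\lambda}\}$ inside $\A$ and the injectivity of $\trA$, this yields $R$-linear independence of the $\gaa_v$-monomials. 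For weak generation, given $x\in\dS$ I would expand $\trA(x) = \sum_{\bf k} c_{\bf k} A^{\bf k}$ as a finite sum and choose ${\bf t}\in\mathbb{N}^{V_\lambda}$ componentwise large enough that ${\bf k}+{\bf t}\in\mathbb{N}^{V_\lambda}$ for every ${\bf k}$ in the support. Then $\trA(x\gaa^{\bf t}) = \trA(x)A^{\bf t}\in\Ap\subset\im\trA$ by Theorem~\ref{thm-trace-A}\ref{thm-trace-A-b}, and injectivity of $\trA$ identifies $x\gaa^{\bf t}$ with the corresponding polynomial in the $\gaa_v$, completing the verification that $\{\gaa_v : v\in V_\lambda\}$ is a quantum torus frame.

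With (a) and (d) in hand, parts (b) and (c) are short. The injection $\dS\hookrightarrow\X$ exhibits $\dS$ as a subring of a domain, hence a domain. For the Ore property when $\fS$ has no interior punctures, I would apply the standard criterion: a subalgebra $B$ of an Ore domain $A$ is itself Ore provided that for every $a\in A$ there exists $s\in B$ with $as\in B$. Taking $A=\A$ (an Ore domain, as an iterated skew Laurent extension of $R$) and $B=\im\trA\cong\dS$, the same choice of ${\bf t}$ as in (d) shows $aA^{\bf t}\in\Ap\subset\dS$ while $A^{\bf t}\in\Ap\subset\dS$, verifying the criterion. For (c), I would pick a triangulation $\lambda$ of $\fS$ containing $e$ and let $\lambda_e$ be the induced triangulation of $\cut_e(\fS)$. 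The commutativity of \eqref{eq-com-tr-splitting} together with the injectivity of $\mathcal S_e$ supplies both descent and injectivity: if $x\in\ker\tr_\lambda$ then $\tr_{\lambda_e}(\mathbb S_e(x)) = \mathcal S_e(\tr_\lambda(x)) = 0$, so $\mathbb S_e$ descends; and if $\mathbb S_e(x)\in\ker\tr_{\lambda_e}$ then $\mathcal S_e(\tr_\lambda(x))=0$, whence $\tr_\lambda(x)=0$ by injectivity of $\mathcal S_e$, so $x\in\ker\tr_\lambda$. The most delicate ingredient throughout is the Ore step in (b): beyond the elementary subring-Ore criterion, it rests on the skein-theoretic content of the inclusion $\Ap\subset\im\trA$ from Theorem~\ref{thm-trace-A}\ref{thm-trace-A-b}, together with the fact that the quantum torus $\A$ is itself an Ore domain.
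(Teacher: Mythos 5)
Your proposal is correct and follows essentially the same route as the paper: part (a) is definitional plus the compatibility diagram \eqref{eq-compability-tr-A-X-diag} with the isomorphism $\psi_\lambda$; parts (b) and (d) rest on the sandwich $\Ap\subset\dS\subset\A$ coming from Theorem~\ref{thm-trace-A}\ref{thm-trace-A-b}; and part (c) uses the commutative diagram from Theorem~\ref{thm-trace-cut} together with injectivity of $\mathcal S_e$. The one place you go beyond the paper is in (b)--(d): the paper simply cites \cite[Proposition~2.2]{LY22} for the Ore property and treats (d) as immediate from the sandwich, whereas you unpack the quantum-torus-frame axioms and the Ore step (clearing Laurent monomials into $\Ap$ by multiplying by a large enough $A^{\bf t}$), which is exactly the content of the cited proposition.
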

\begin{proof}
    (a) This is trivial from the definition of $\dS$ and Theorem~\ref{thm-trace-A}(d).

    (b) 
    It follows from (a) that $\dS$ is a subalgebra of $\X$, which is a domain \cite{Cohn}. Then $\dS$ is a domain.
    
    From (a) and Theorem~\ref{thm-trace-A}(b), we can regard $\dS$ as an $R$-subalgebra of $\A$ with
    \begin{align}\label{sandwitch-projected}
        \Ap\subset\dS\subset\A
    \end{align}
    when $\fS$ contains no interior punctures. 
    Then \cite[Proposition~2.2]{LY22} shows that
    $\dS$ is an Ore domain.

    (c) Diagram~\eqref{eq-com-tr-splitting} implies that $\mathbb S_e(\ker\tr)
    \subset \ker{\rm tr}_{\lambda_e}$. Then the splitting map 
$
\mathbb{S}_e \colon\overline{\mathscr{S}}_\omega(\fS) \to \overline{\mathscr{S}}_\omega(\cut_e(\fS))
$ induces an algebra homomorphism
$$
\mathbb{S}_e \colon\widetilde{\mathscr{S}}_\omega(\fS) \to \widetilde{\mathscr{S}}_\omega(\cut_e(\fS)).
$$
Diagram \eqref{eq-com-tr-splitting} shows the following diagram commutes 
 \begin{equation*}
\begin{tikzcd}
\dS \arrow[r, "\mathbb S_e"]
\arrow[d, "\tr"]  
&  \widetilde{\cS}_{\omega}(\cut_e(\fS)) \arrow[d, "{\rm tr}_{\lambda_e}"] \\
 \mathcal{Z}_\omega(\fS,\lambda)
 \arrow[r, "\mathcal S_e"] 
&  
\mathcal{Z}_\omega(\cut_e(\fS),\lambda_e)
\end{tikzcd}
\end{equation*}
Since all $\tr$, ${\rm tr}_{\lambda_e}$, and 
$\mathcal S_e$  are injective, then so is 
$
\mathbb{S}_e \colon\widetilde{\mathscr{S}}_\omega(\fS) \to \widetilde{\mathscr{S}}_\omega(\cut_e(\fS)).
$

(d) In the proof of (b), we showed 
$\Ap\subset\dS\subset\A$ with $\gaa_v= A_v$ for $v\in V_\lambda$. This completes the proof. 
\end{proof}

\section{Quantum cluster algebras}\label{sec-quantum-clu-al}

We recall basics of quantum cluster algebras, obtained by gluing quantum torus algebras associated to cluster seeds along quantum mutations. We refer the readers to \cite{BZ,FG09a}. Then we extend the known quantum mutations to balanced subalgebras for $n$-th root variables. Such an extension was studied in \cite{Hiatt,BW11} for $n=2$, and in \cite{Kim21} for $n=3$.

\subsection{Classical cluster $\mathcal A$- and $\mathcal X$-mutations}\label{sec-mutation-classical}

Fix a non-empty set $\mathcal{V}$, and a non-empty subset $\mathcal{V}_{\rm mut}$ of $\mathcal{V}$. The elements of $\mathcal{V}$, $\mathcal{V}_{\rm mut}$ and $\mathcal{V}\setminus \mathcal{V}_{\rm mut}$ are called {\bf vertices}, {\bf mutable vertices} and {\bf frozen vertices}, respectively. By a quiver $\Gamma$ we mean a directed graph whose set of vertices is $\mathcal{V}$ 
and equipped with weight on the edges so that the weight on each edge is $1$ unless the edge connects two frozen vertices, in which case the weight is $\frac{1}{2}$. An edge of weight $1$ will be called an {\bf arrow}, and an edge of weight $\frac{1}{2}$ a {\bf half-arrow}. As before, we denote by $Q = (Q(u,v))_{u,v\in \mathcal{V}}$ to denote the signed adjacency matrix of $\Gamma$. A quiver is considered up to equivalence, where two quivers are equivalent if they yield the same signed adjacency matrix. In particular, a quiver can be represented by a representative quiver without an oriented cycle of length 1 or 2. Let $\mathcal{F}$ be the field of rational functions over $\mathbb{Q}$ with the set of algebraically independent generators enumerated by $\mathcal{V}$, say $\mathcal{F} = \mathbb{Q}(\{y_v\}_{v\in \mathcal{V}})$. 

A {\bf cluster $\mathcal{X}$-seed} (resp. {\bf cluster $\mathcal{A}$-seed}) is a pair $\mathcal{D}_X = (\Gamma,(X_v)_{v\in \mathcal{V}})$ (resp. $\mathcal{D}_A= (\Gamma,(A_v)_{v\in \mathcal{V}})$), where $\Gamma$ is a quiver and $\{X_v\}_{v\in \mathcal{V}}$ (resp. $\{A_v\}_{v\in \mathcal{V}}$) forms an algebraically independent generating set of $\mathcal{F}$ over $\mathbb{Q}$. The signed adjacency matrix $Q$ of $\Gamma$ is called the {\bf exchange matrix} of the seed $\mathcal{D}_X$ (resp. $\mathcal{D}_A$).

\def\sgn{\text{sgn}}

Suppose that $k\in\mathcal V_{
\rm mut}$. The {\bf mutation}
$\mu_{k,X}$ at the 
mutable vertex $k\in\mathcal{V}$ is defined to be the process that transforms an $\mathcal X$-seed $\mathcal{D}_X= (\Gamma,(X_v)_{v\in \mathcal{V}})$ into a new seed
$\mu_{k,X}(\mathcal D_X) = \mathcal D_X' = 
(\Gamma', (
X_v')_{v\in 
\mathcal{V}})
$.
Here 
$$
X_v'= \begin{cases}
    X_v^{-1} & v=k,\\
    X_v (1+ 
    X_k^{-\text{sgn}(Q(v,k)) })^{-Q(v,k)} & v\neq k,
\end{cases}
$$
where 
$$
\sgn(a)=
\begin{cases}
    1 & a>0,\\
    0 & a=0,\\
    -1 & a<0,
\end{cases}
$$
for $a\in\mathbb R$, and $\Gamma'$ is obtained from 
$\Gamma$ by the following procedures:
\begin{enumerate}
    \item for each pair of arrows $i\rightarrow k$ and $k\rightarrow j$ draw an arrow $i\rightarrow j$,
    
    \item reverse all the arrows incident to the vertex $k$,

    \item delete a maximal pairs of arrows $i\rightarrow j$ and $i\rightarrow j$ going in the opposite directions.
\end{enumerate}
We use $Q' = (Q'(u,v))_{u,v\in\mathcal V}$ to denote the adjacency matrix of $\Gamma'$. Then we have 
\begin{align}\label{eq-mutation-Q}
Q'(u,v) = \begin{cases}
    - Q(u,v) & k\in\{u,v\},\\
    Q(u,v) +\frac{1}{2}\Big(Q(u,k)|Q(k,v)|+
    |Q(u,k)|Q(k,v)\Big) & k\notin\{u,v\}.
\end{cases}
\end{align}
We use $\mu_k(Q)$ (resp. $\mu_k(\Gamma)$)
to denote $Q'$ (resp. $\Gamma'$).

The {\bf mutation}
$\mu_{k,A}$ at the 
mutable vertex $k\in\mathcal{V}$ is defined to be the process that transforms an $\mathcal A$-seed $\mathcal{D}_A= (\Gamma,(A_v)_{v\in \mathcal{V}})$ into a new seed
$\mu_{k,A}(\mathcal D_A) = \mathcal D_A' = 
(\Gamma', (
A_v')_{v\in 
\mathcal{V}})
$.
Here 
$$
A_v'= \begin{cases}
    A^{-1}_k(\prod_{j\in\mathcal V} A_j^{[Q(j,k)]_+} + \prod_{j\in\mathcal V} A_j^{[-Q(j,k)]_+}) & v=k,\\
    A_v & v\neq k,
\end{cases}
$$
where $[a]_+$ stands for the positive part of a real number $a$
      $$ [a]_+ := \max\{a,0\} = {\textstyle \frac{1}{2}(a+|a|)},$$
and $\Gamma'$ is obtained from $\Gamma$ by the same procedures introduced for $\mu_{k,X}$.

It is well-known that $\mu_{k,X}(\mu_{k,X}(\mathcal D_X)) = \mathcal D_X$ and 
$\mu_{k,A}(\mu_{k,A}(\mathcal D_A)) = \mathcal D_A$.
\def\Fr{{\rm Frac}}

Two $*$-seeds in $\mathcal F$ are said to be
mutation-equivalent if they are transformed to each other by a finite sequence of seed
mutations. An equivalence class of $*$-seeds is called a $*$-mutation class.
Here $*$ is $\mathcal X$ or $\mathcal A$.

\subsection{Quantum $\mathcal A$-mutations and quantum cluster algebras}\label{sec-mutation-quantum}
Let $\mathcal F$ be a skew-field over $R=\mathbb Z[\omega^{\pm\frac{1}{2}}]$. Recall that $\xi=\omega^n$.
\begin{definition}[\cite{BZ}]\label{def-quantum-seed}
    A {\bf quantum seed} is a triple $(Q,\Pi,M)$, where 
\begin{enumerate}
    \item $Q=(Q(u,v))_{u,v\in\mathcal V}$ is an exchange matrix;

    \item $\Pi=(\Pi(u,v))_{u,v\in\mathcal V}$ is a anti-symmetric matrix with integral entries satisfying the compatibility relation
    $$\sum_{k\in\mathcal V}Q(k,u) \Pi(k,v)=\delta_{u,v} d_u$$
    for all $u\in\mathcal V_{\text{mut}}$
    and $v\in\mathcal V$, where $d_u$ is a positive integer for $u\in\mathcal V_{\text{mut}}$;

    \item $M\colon \mathbb Z^{\mathcal V}\rightarrow\mathcal F\setminus\{0\}$ is a function such that 
    \begin{align}\label{eq-M-relation}
        M({\bf k}) M({\bf t}) = \xi^{\frac{1}{2} {\bf k} \Pi {\bf t}^T} M({\bf k} + {\bf t})
    \end{align}
    for row vectors ${\bf k},{\bf t}\in \mathbb Z^{\mathcal V}$, and $M(\mathbb Z^{\mathcal V})$ is a basis of a quantum torus over $R$ whose skew-field is $\mathcal F$.
\end{enumerate}
\end{definition}

For each $i\in\mathcal V$, define ${\bf e}_i\in\mathbb Z^{\mathcal V}$ with
\begin{align}\label{eq-def-vector-ei-ele}
    {\bf e}_i(v)=\delta_{i,v} \text{ for $v\in\mathcal V$}.
\end{align}
Then the function $M$ is uniquely determined by the values $A_i:=M({\bf e}_i)$, which we call {\bf (quantum) cluster variables}.
A (quantum) cluster variable $A_i$ is called a {\bf frozen (quantum) cluster variable} if $i\in\mathcal V\setminus\mathcal V_{\text{mut}}$, and an {\bf exchangeable (quantum) cluster variable} otherwise.

We label the vertex set $\mathcal V$ as 
$\{v_1,\cdots,v_r\}$.
For any ${\bf k}=(k_v)_{v\in\mathcal V}\in \mathbb Z^{\mathcal V}$, we have 
\begin{align}\label{eq-A-power}
    M({\bf k}) = A^{\bf k}:= \left[\prod_{v\in \mathcal V}A_v^{k_v}\right],
\end{align}
where $\left[\prod_{v\in \mathcal V}A_v^{k_v}\right]$ is the Weyl-ordered product 
\begin{align}\label{Weyl-A}
    \left[\prod_{v\in \mathcal V}A_v^{k_v}\right]= \xi^{-\frac{1}{2}\sum_{i<j} k_{v_i} k_{v_j}\Pi(i,j)} A_{v_1}^{k_{v_1}} A_{v_2}^{k_{v_2}} \cdots A_{v_r}^{k_{v_r}}.
\end{align}
Note that the Weyl-ordered product in \eqref{Weyl-A} is independent of how we label the vertex set $\mathcal V$.

Note that the anti-symmetric matrix $\Pi$ is uniquely determined by the function $M$ because of \eqref{eq-M-relation}.

Given a quantum seed $(Q,\Pi,M)$ in $\mathcal F$ and 
$k\in\mathcal V_{\text{mut}}$, the quantum $\mathcal A$-mutation produces a new quantum seed $(Q',\Pi',M')=\mu_{k,A}(Q,\Pi,M)$ according to the following rules:
\begin{itemize}
    \item $Q'=\mu_k(Q)$;

    \item the mutation on the quantum cluster variables is
    \begin{equation*}
        A_i':=M'({\bf e}_i)=
        \begin{cases}
            M({\bf e}_i) & i\neq k\\
            M(-{\bf e}_k + \sum_{j\in\mathcal V}[Q(j,k)]_{+}{\bf e}_j)
            + M(-{\bf e}_k + \sum_{j\in\mathcal V}[-Q(j,k)]_{+}{\bf e}_j)
            & i=k
        \end{cases}
    \end{equation*}

    \item $\Pi'=(\Pi'(i,j))_{i,j\in\mathcal V}$, where, for each pair $i,j\in\mathcal V$, the entry $\Pi'(i,j)$ is the unique integer such that 
    $$M'({\bf e}_i)M'({\bf e}_j)
    =\xi^{\Pi'(i,j)} M'({\bf e}_j)M'({\bf e}_i).$$
\end{itemize}
There is a formula of $\Pi'(i,j)$ expressed in entries of $\Pi$ \cite[Equation~(3.4)]{BZ}
\begin{align}\label{eq-for-P'}
    \Pi'(i,j)=
    \begin{cases}
        \Pi(i,j) & k\notin\{i,j\},\\
        -\Pi(k,j) + \sum_{v\in\mathcal V} [Q(v,k)]_{+} \Pi(v,j) & j\neq i=k,\\
        -\Pi(i,k) + \sum_{v\in\mathcal V} [Q(v,k)]_{+} \Pi(i,v) & i\neq j=k,\\
        \Pi(k,k)=0 & i=j=k.
    \end{cases}
\end{align}
It is well-known that $\mu_{k,A}(\mu_{k,A}(Q,\Pi,M)) = (Q,\Pi,M)$ \cite[Proposition~4.10]{BZ}.

\begin{definition}\label{def-quantum-class}
    Two quantum seeds in $\mathcal F$ are said to be
mutation-equivalent if they are transformed to each other by a finite sequence of quantum 
$\mathcal A$-mutations. An equivalence class of quantum seeds is called a quantum 
$\mathcal A$-mutation class.
\end{definition}


The relations among the quantum seeds in a given quantum $\mathcal A$-mutation class $\mathsf{S}$ can be encoded in the 
exchange graphs.

\def\Exc{\mathsf{Exch}_\mathsf{S}}

\begin{definition}\cite[page 37]{ishibashi2023skein}\label{def-exch-graph}
    The exchange graph of the quantum $\mathcal A$-mutation class $\mathsf{S}$ is a graph $\mathsf{Exch}_\mathsf{S}$ with vertices $w$ corresponding
to the quantum seeds $s^{w}$
in $\mathsf{S}$, together with edges of the forms
$s^{w} \wideoverunder{\mu_{k,A}}{} s^{w'}$, where $\mu_{k,A}(s^{w})= s^{w'}$ and 
$k\in\mathcal V_{\text{mut}}$.
\end{definition}

Suppose two exchange graphs $\mathsf{Exch}_{\mathsf{S}}$ and 
$\mathsf{Exch}_{\mathsf{S}'}$ have one vertex in common, then it is straightforward to verify that 
$\mathsf{Exch}_{\mathsf{S}}=\mathsf{Exch}_{\mathsf{S}'}$ (or $\mathsf{S}= \mathsf{S}'$).

For any vertex $w$ in $\Exc$ and the quantum seed $s^w = (Q^{w}, \Pi^w, M^w)$ corresponding to $w$, define 
\begin{align}
\textbf{A}^{+}(w)&:=\{M^w({\bf k})\mid {\bf k}\in\mathbb Z_{\geq 0}^{\mathcal V}\}\subset
\textbf{A}(w):=\{M^w({\bf k})\mid {\bf k}\in\mathbb Z^{\mathcal V}\}\subset \mathcal F
\label{eq-A-w}
\\
    \mathbb T(w)&:= \text{span}_R(\textbf{A}(w))\subset \mathcal F.
    \label{eq-T-w}
\end{align}
We call $\mathbb T(w)$ the {\bf based quantum torus} associated to the quantum seed $s^w$.
We use ${\rm Frac}(\mathbb T(w))$ to denote 
 the skew-field of fractions of $\mathbb T(w)$; for the existence of ${\rm Frac}(\mathbb T(w))$, see \cite{Cohn}.

Let $s^w = (Q^{w}, \Pi^w, M^w)$ and $s^{w'} = (Q^{w'}, \Pi^{w'}, M^{w'})$ be two quantum seeds in $\mathsf S$ such that 
$\mu_{k,A}(s^w) = s^{w'}$, where $k\in\mathcal V_{\text{mut}}$. 
For each $i\in\mathcal V$, set 
$$A_i=M^w({\bf e}_i)\text{ and }
A_i'=M^{w'}({\bf e}_i).$$
Then the mutation $\mu_{k,A}$ can be regarded as a skew-field isomorphism 
\begin{align}\label{eq-mut-iso}
    \mu_{k,A}\colon {\rm Frac}(\mathbb T(w'))\rightarrow {\rm Frac}(\mathbb T(w))
\end{align}
defined by 
\begin{equation*}
        \mu_{k,A}(A_i')=
        \begin{cases}
            A_i & i\neq k,\\
            A^{-{\bf e}_k + \sum_{j\in\mathcal V}[Q(j,k)]_{+}{\bf e}_j}
            + A^{-{\bf e}_k + \sum_{j\in\mathcal V}[-Q(j,k)]_{+}{\bf e}_j}
            & i=k,
        \end{cases}
    \end{equation*}
    for $i\in\mathcal V$ (see \eqref{eq-A-power} and \eqref{Weyl-A} for  $A^{*}$).
Note that 
\begin{align}\label{eq-iso-mutation-eqal}
    \mu_{k,A}(A_i') = M^{w'}({\bf e}_i)
\end{align}
for $i\in\mathcal V$.

\begin{definition}\label{def-quan-cluster-algebra}
    The {\bf quantum cluster algebra} associated with a quantum $\mathcal A$-mutation class $\mathsf S$ of
quantum seeds is the $R$-subalgebra $\mathscr{A}_{\mathsf S}\subset \mathcal F$ generated by $\bigcup_{w\in\Exc}{\bf A}^{+}(w)$  (see \eqref{eq-A-w}) and the inverses of frozen quantum
cluster variables. Each exchangeable quantum cluster variable of any $s^w\in \mathsf S$ is called an {\bf exchangeable quantum cluster variable} of $\mathscr{A}_{\mathsf S}$.

The {\bf quantum upper
cluster algebra} is defined to be
$\mathscr{U}_{\mathsf S}:=\bigcap _{w\in\Exc}
\mathbb T(w)\subset \mathcal F$ (see \eqref{eq-T-w}).

For each vertex $w\in\Exc$, the {\bf upper bound} at $w$ is defined to be 
$$\mathscr{U}_{\mathsf S}(w):= \mathbb T(w)\cap \bigcap _{w'}
\mathbb T(w')\subset \mathcal F,$$ where 
$w'\in \Exc$ runs over the vertices adjacent to $w$. 
\end{definition}

The following two theorems describe the relations among the three definitions above.

\begin{theorem}\cite[Corollary~5.2]{BZ}\label{thm-inclusion-quantum}
    We have the inclusion
    $$\mathscr{A}_{\mathsf S}\subset \mathscr{U}_{\mathsf S}.$$
\end{theorem}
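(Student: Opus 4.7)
The approach is to establish the quantum Laurent phenomenon and then deduce the inclusion by a reduction to individual cluster variables.

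First I would reduce the statement to single cluster variables. Since $\mathscr{A}_{\mathsf S}$ is generated as an $R$-algebra by the cluster monomials $\bigcup_{w\in \Exc}\mathbf A^{+}(w)$ together with the inverses of frozen cluster variables, and since frozen variables are preserved under every mutation $\mu_{k,A}$ with $k\in \mathcal V_{\mathrm{mut}}$ (so they coincide across all seeds and their inverses are Weyl monomials in every $\mathbb T(w_0)$), it is enough to show $M^{w}(\mathbf k)\in \mathbb T(w_0)$ for every $w,w_0\in \Exc$ and every $\mathbf k\in \mathbb Z_{\geq 0}^{\mathcal V}$. Moreover, $M^{w}(\mathbf k)$ is, up to a scalar in $R$, an ordered product of the pairwise $q$-commuting cluster variables $A_i^{w}=M^{w}(\mathbf e_i)$ of the single seed $s^{w}$; hence it suffices to prove that each individual cluster variable $A_i^{w}$ belongs to $\mathbb T(w_0)$. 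This is the quantum Laurent phenomenon.

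Second, I would fix $w_0\in \Exc$ and induct on the distance $d=d(w,w_0)$ in $\Exc$. The base case $d=0$ is immediate. For the inductive step, pick $w'$ adjacent to $w$ with $d(w',w_0)=d-1$ and $s^{w}=\mu_{k,A}(s^{w'})$ for some $k\in \mathcal V_{\mathrm{mut}}$. By induction every cluster variable of $s^{w'}$ already lies in $\mathbb T(w_0)$. For $i\neq k$ the equality $A_i^{w}=A_i^{w'}$ settles matters. For $i=k$, the exchange relation
\[
A_k^{w} = M^{w'}\!\Big(-\mathbf e_k + \textstyle\sum_{j\in \mathcal V}[Q^{w'}(j,k)]_+\,\mathbf e_j\Big) + M^{w'}\!\Big(-\mathbf e_k + \textstyle\sum_{j\in \mathcal V}[-Q^{w'}(j,k)]_+\,\mathbf e_j\Big)
\]
displays $A_k^{w}$ as an element of $\mathbb T(w')$, but not a priori in $\mathbb T(w_0)$.

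The hard part will be closing this last gap, which is the genuine content of the theorem. Both exchange monomials carry the factor $(A_k^{w'})^{-1}$, and although $A_k^{w'}\in \mathbb T(w_0)$ by the inductive hypothesis, its inverse need not be a Laurent polynomial in the cluster variables of $s^{w_0}$. My plan is to follow Berenstein--Zelevinsky: introduce the quantum upper bound $\mathscr U_{\mathsf S}(w_0)=\mathbb T(w_0)\cap \bigcap_{w_0'\sim w_0}\mathbb T(w_0')$ at $w_0$ and prove that $\mathscr U_{\mathsf S}(w_0)$ is stable under the mutation isomorphisms \eqref{eq-mut-iso}. The decisive input will be coprimality of the two exchange binomials in each mutable direction inside the quantum torus $\mathbb T(w_0)$; this coprimality is controlled by the compatibility condition $\sum_{k'}Q(k',u)\Pi(k',v)=\delta_{u,v}d_u$ from Definition~\ref{def-quantum-seed}, together with the antisymmetry of $\Pi$. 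Once stability of the upper bound is in hand, the induction places $A_k^{w}\in \mathscr U_{\mathsf S}(w_0)\subset \mathbb T(w_0)$, which after intersecting over all $w_0$ yields $\mathscr{A}_{\mathsf S}\subset \mathscr{U}_{\mathsf S}$.
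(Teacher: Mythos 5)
The paper does not give its own proof of this statement; it simply cites \cite[Corollary~5.2]{BZ}, and your sketch correctly reconstructs the Berenstein--Zelevinsky argument, including the key identification of the upper-bound invariance (Theorem~\ref{thm-upper-w-upper} in this paper) and the role of coprimality of the exchange binomials inside the quantum torus, which is what the compatibility condition in Definition~\ref{def-quantum-seed} controls. One streamlining: once upper-bound invariance under a single mutation is proved, the induction on distance in $\mathsf{Exch}_{\mathsf S}$ is redundant, because each cluster variable $A_k^{w}$ is already seen to lie in $\mathscr U_{\mathsf S}(w)$ (it is unchanged by adjacent mutations $\mu_{l,A}$ with $l\neq k$, and the exchange relation places it in $\mathbb T$ of the seed adjacent in direction $k$), and then $\mathscr U_{\mathsf S}(w)=\mathscr U_{\mathsf S}\subset\mathbb T(w_0)$ for every $w_0$ gives the inclusion directly.
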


\begin{theorem}\cite[Theorem~5.1]{BZ}\label{thm-upper-w-upper}
    For any $w\in\Exc$, we have 
    $$\mathscr{U}_{\mathsf S}(w)=\mathscr{U}_{\mathsf S}.$$
\end{theorem}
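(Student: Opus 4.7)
The plan is to reduce the statement to a local ``starfish'' identity at each mutation edge of the exchange graph $\mathsf{Exch}_{\mathsf S}$. The inclusion $\mathscr{U}_{\mathsf S} \subset \mathscr{U}_{\mathsf S}(w)$ is immediate from Definition~\ref{def-quan-cluster-algebra}, since the intersection defining $\mathscr{U}_{\mathsf S}(w)$ runs over a subset of the vertices used to define $\mathscr{U}_{\mathsf S}$. For the reverse inclusion I would use that $\mathsf{Exch}_{\mathsf S}$ is connected by construction, and aim to prove the edgewise equality $\mathscr{U}_{\mathsf S}(w) = \mathscr{U}_{\mathsf S}(w')$ whenever $w$ and $w'$ are joined by a single edge $\mu_{k,A}(s^w) = s^{w'}$. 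Iterating this along a path then shows that all $\mathscr{U}_{\mathsf S}(w)$ coincide with a single set $U$; since $U \subset \mathbb T(w'')$ for every $w'' \in \mathsf{Exch}_{\mathsf S}$, we get $U \subset \mathscr{U}_{\mathsf S}$, and the opposite inclusion $\mathscr{U}_{\mathsf S}\subset U=\mathscr{U}_{\mathsf S}(w)$ is trivial.

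To carry out the edgewise step, suppose $\mu_{k,A}(s^w) = s^{w'}$. Unpacking the definitions,
\begin{align*}
\mathscr{U}_{\mathsf S}(w) &= \mathbb T(w)\cap \mathbb T(w') \cap \bigcap_{j \in \mathcal V_{\rm mut}\setminus\{k\}} \mathbb T\bigl(\mu_{j,A}(s^w)\bigr),\\
\mathscr{U}_{\mathsf S}(w') &= \mathbb T(w)\cap \mathbb T(w') \cap \bigcap_{j \in \mathcal V_{\rm mut}\setminus\{k\}} \mathbb T\bigl(\mu_{j,A}(s^{w'})\bigr),
\end{align*}
so it suffices to show the starfish identity
\[
\mathbb T(w)\cap \mathbb T(w')\cap \mathbb T\bigl(\mu_{j,A}(s^w)\bigr) \;=\; \mathbb T(w)\cap \mathbb T(w')\cap \mathbb T\bigl(\mu_{j,A}(s^{w'})\bigr)
\]
for every $j \in \mathcal V_{\rm mut}\setminus\{k\}$. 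My strategy is to expand any $x \in \mathbb T(w)\cap \mathbb T(w')$ uniquely as a finite sum $x = \sum_{a,b\geq 0} c_{a,b}\,[A_k^{-a} (A_k')^{-b}\, m_{a,b}]$ with $m_{a,b}$ a monomial in $\{A_v\}_{v \neq k}$, and then to translate the conditions $x \in \mathbb T(\mu_{j,A}(s^w))$ and $x \in \mathbb T(\mu_{j,A}(s^{w'}))$ into constraints on the coefficients $c_{a,b}$. The binomial exchange relation expressing $A_k A_k'$ makes the new cluster variable at $\mu_{j,A}(s^w)$ and at $\mu_{j,A}(s^{w'})$ interchangeable via $w \leftrightarrow w'$ up to monomials in the non-$k$ variables, and a direct check should reveal that the two constraints coincide.

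The main obstacle is the technical input for the starfish identity: the coprimality of $A_k$ and $A_k'$ inside $\mathbb T(w)$, which is needed to guarantee uniqueness of the biexpansion above, together with the precise noncommutative bookkeeping of Weyl orderings. In the quantum setting, the compatibility relation $\sum_{u \in \mathcal V} Q(u,i)\Pi(u,j) = \delta_{i,j}\, d_i$ of Definition~\ref{def-quantum-seed} together with the $\xi$-commutation \eqref{eq-M-relation} is exactly what should ensure that the powers of $\xi$ arising in the quadratic exchange relation and in the biexpansions are consistent, so that mutating at $j$ commutes in a controlled way with the swap $w \leftrightarrow w'$. Verifying this coprimality and the matching of $\xi$-powers---constructing the quantum analogue of the Berenstein--Fomin--Zelevinsky two-variable Laurent argument---is, I expect, the most delicate part of the proof.
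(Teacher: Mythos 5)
The statement is quoted from \cite[Theorem~5.1]{BZ}; this paper does not reprove it, so the only meaningful comparison is against the Berenstein--Zelevinsky argument. Your blueprint reconstructs that argument faithfully at the level of structure: reduce by connectivity of the exchange graph to a single edge $\mu_{k,A}(s^w)=s^{w'}$, rewrite both upper bounds over the common factor $\mathbb T(w)\cap\mathbb T(w')$, and establish the resulting three-way identity at each $j\neq k$ by an expansion argument over the pair $(A_k,A_k')$. Two remarks. First, you frame coprimality as a hypothesis whose verification you fear will be delicate, but one of the main points of \cite{BZ} is that in the quantum setting the required coprimality of the exchange binomials is \emph{automatic} (in contrast to the classical Berenstein--Fomin--Zelevinsky case, where it is a standing hypothesis on the seed): the $\xi$-commutation relations coming from the compatible pair $(Q,\Pi)$ preclude the degeneracies that could spoil the two-variable Laurent phenomenon classically, and this is precisely why Theorem~\ref{thm-upper-w-upper} holds for \emph{every} quantum seed with no extra assumption. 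Second, your sketch stops exactly where the real work begins---the unique biexpansion, the explicit description of $\mathbb T(w)\cap\mathbb T(w')$, and the verification that the $\mathbb T(\mu_{j,A}(\cdot))$-membership constraints match under the swap $w\leftrightarrow w'$ occupy essentially all of \cite[\S5]{BZ} and involve the genuine noncommutative bookkeeping you flag; a blind proof would need to carry those out rather than gesture at them.
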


\begin{remark}
    Let $R'$ be another commutative domain with an invertible element $(\omega')^{\frac{1}{2}}$. Then there is an algebra homomorphism from $R=\mathbb Z[\omega^{\pm\frac{1}{2}}]$ to $R'$ sending $w^{\frac{1}{2}}$ to  $(w')^{\frac{1}{2}}$. This algebra homomorphism induces an $R$-algebra structure for $R'$. 
    Then $\mathcal F\otimes_R R'$ is a skew-field over $R'$. 
    Actually, all arguments in this paper about the quantum cluster work for general ground ring $R'$ due to the functor ``$-\otimes_R R'$".
    In particular, when $R'=\mathbb Q$ and 
    $(\omega')^{\frac{1}{2}}=1$, 
    the quantum $\mathcal A$-mutation introduced in this subsection is equivalent to the classical cluster $\mathcal A$-mutation introduced in \S\ref{sec-mutation-classical}.
\end{remark}

\subsection{A decomposition of the quantum $\mathcal A$-mutation}\label{sec-A-mutation-decom}
In \cite{FG09b}, the authors introduced a decomposition for the classical cluster $\mathcal A$-mutation. They divided the classical cluster $\mathcal A$-mutation into two steps. In this subsection, we will divide the quantum $\mathcal A$-mutation into two steps, which is equivalent to decompose the isomorphism in \eqref{eq-mut-iso} into the combination of two isomorphisms. 
In \S\ref{sec-compatibility}, we will use this decomposition to prove the compatibility between the quantum $\mathcal A$-mutation and the quantum $\mathcal X$-mutation introduced in \S\ref{subsec-mutation-Fock}.

Let $s^w = (Q, \Pi, M)$ and $s^{w'} = (Q', \Pi', M')$ be two quantum seeds in a quantum 
$\mathcal A$-mutation class $\mathsf S$ such that 
$\mu_{k,A}(s^w) = s^{w'}$, where $k\in\mathcal V_{\text{mut}}$. 
For each $i\in\mathcal V$, set 
$A_i=M({\bf e}_i)\text{ and }
A_i'=M'({\bf e}_i).$
Define
\begin{align}\label{eq-def-mu-step-1}
    \mu_{k,A}'\colon {\rm Frac}(\mathbb T(w'))\rightarrow {\rm Frac}(\mathbb T(w))
\end{align}
such that
\begin{equation}\label{eq-def-mu-step1}
        \mu_{k,A}'(A_i')=
        \begin{cases}
            A_i & i\neq k,\\
            A^{-{\bf e}_k + \sum_{j\in\mathcal V}[Q(j,k)]_{+}{\bf e}_j}
            & i=k,
        \end{cases}
    \end{equation}
    for $i\in\mathcal V$ (see \eqref{eq-A-power} and \eqref{Weyl-A} for  $A^{*}$).

\begin{lemma}
    $\mu_{k,A}'$ is a well-defined isomorphism between skew-fields.
\end{lemma}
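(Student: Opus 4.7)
The plan is to show that $\mu'_{k,A}$ is well-defined as a morphism of quantum tori (and hence of their skew-fields of fractions) by verifying the $q$-commutation relations on generators, and then to establish bijectivity by showing that the induced linear map on exponent vectors is unimodular.

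First I would verify that the proposed images of the generators of $\mathbb{T}(w')$ satisfy the defining commutation relations $A'_i A'_j = \xi^{\Pi'(i,j)} A'_j A'_i$. The cases $k\notin\{i,j\}$ and $i=j=k$ are immediate since $\Pi'(i,j)=\Pi(i,j)$ for $k\notin\{i,j\}$ and $\Pi'(k,k)=0$. The only case requiring computation is $i=k$, $j\neq k$. Writing $\mathbf{b} = -\mathbf{e}_k + \sum_{l\in\mathcal V}[Q(l,k)]_+ \mathbf{e}_l$, we have $\mu'_{k,A}(A'_k)=A^{\mathbf{b}}$ and $\mu'_{k,A}(A'_j) = A_j = A^{\mathbf{e}_j}$, so
\[
A^{\mathbf{b}} A^{\mathbf{e}_j} = \xi^{\mathbf{b}\,\Pi\,\mathbf{e}_j^T}\, A^{\mathbf{e}_j} A^{\mathbf{b}},
\]
and $\mathbf{b}\,\Pi\,\mathbf{e}_j^T = -\Pi(k,j) + \sum_l [Q(l,k)]_+ \Pi(l,j)$, which is exactly $\Pi'(k,j)$ by formula \eqref{eq-for-P'}. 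Hence the required commutation relations are satisfied.

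Next I would invoke the universal property of the quantum torus: since the images $\mu'_{k,A}(A'_i)$ lie in the quantum torus $\mathbb{T}(w)\subset \mathrm{Frac}(\mathbb{T}(w))$, are invertible (each is a Laurent monomial in the $A_v$), and satisfy the defining relations of $\mathbb{T}(w')$, the assignment extends uniquely to an $R$-algebra homomorphism $\mathbb{T}(w')\to \mathbb{T}(w)$. Both $\mathbb{T}(w')$ and $\mathbb{T}(w)$ are Ore domains whose skew-fields of fractions are $\mathrm{Frac}(\mathbb{T}(w'))$ and $\mathrm{Frac}(\mathbb{T}(w))$ respectively, so by the universal property of Ore localization this homomorphism extends uniquely to the desired skew-field map $\mu'_{k,A}\colon\mathrm{Frac}(\mathbb{T}(w'))\to\mathrm{Frac}(\mathbb{T}(w))$.

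Finally, to show $\mu'_{k,A}$ is an isomorphism, I would examine the induced $\mathbb{Z}$-linear map $L\colon\mathbb{Z}^{\mathcal V}\to\mathbb{Z}^{\mathcal V}$ on exponent vectors, defined by $L(\mathbf{e}_i)=\mathbf{e}_i$ for $i\neq k$ and $L(\mathbf{e}_k)=\mathbf{b}$. In the basis $\{\mathbf{e}_v\}_{v\in\mathcal V}$, the matrix of $L$ is upper-triangular up to permutation, with diagonal entries all equal to $1$ except the $k$-th which equals $-1$; hence $\det L = -1$ and $L$ is invertible over $\mathbb{Z}$. Since Weyl-ordered Laurent monomials $A^{\mathbf k}$ (resp. $(A')^{\mathbf k}$) form an $R$-basis of $\mathbb{T}(w)$ (resp. $\mathbb{T}(w')$) and $\mu'_{k,A}$ sends $(A')^{\mathbf k}$ to a scalar multiple of $A^{L(\mathbf k)}$, the map $\mu'_{k,A}\colon\mathbb{T}(w')\to\mathbb{T}(w)$ sends a basis bijectively to a basis, so it is an isomorphism of quantum tori; passing to fraction fields completes the proof. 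I do not anticipate any serious obstacle; the only nontrivial verification is the commutation computation matching the formula \eqref{eq-for-P'}, and everything else is formal.
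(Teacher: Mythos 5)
Your verification of the commutation relations is the same as the paper's (the same three-case check against formula \eqref{eq-for-P'}), and that part is correct. Where you diverge is in establishing bijectivity: the paper writes down the candidate inverse explicitly,
\[
A_i \mapsto \begin{cases} A_i' & i\neq k,\\ (A')^{-{\bf e}_k + \sum_{j\in\mathcal V}[-Q'(j,k)]_{+}{\bf e}_j} & i=k, \end{cases}
\]
and asserts that checking it is the inverse is routine (with a remark about a subtle sign choice $[Q'(j,k)]_+$ versus $[Q'(k,j)]_+$). You instead observe that $\mu'_{k,A}$ sends the Weyl-ordered monomial $(A')^{\mathbf k}$ to an $R^\times$-multiple of $A^{L(\mathbf k)}$, where $L$ has matrix equal to the identity except in the $k$-th column, where it is $\mathbf b$; expanding along the $k$-th row (which is $-\mathbf{e}_k$ since $Q(k,k)=0$) gives $\det L=-1$, so $L\in GL_{\mathcal V}(\mathbb Z)$, and hence $\mu'_{k,A}$ carries the monomial basis bijectively to the monomial basis. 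Both arguments are valid. Your unimodularity argument is slightly more conceptual and avoids the sign subtlety the paper flags; the paper's explicit inverse is more concrete and is convenient if one wants the inverse formula later (cf. the paper's remark following the lemma). Either way, passing to skew-fields of fractions via Ore localization is immediate.
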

\begin{proof}
    For any $i,j\in\mathcal V$, we have 
    $A_i'A_j' = \xi^{\Pi'(i,j)} A_j' A_i'\in {\rm Frac}(\mathbb T(w'))$.
    To show the well-definedness of $\mu_{k,A}'$, it suffices to show that 
    \begin{align*}
        \mu_{k,A}'(A_i')\mu_{k,A}'(A_j') = \xi^{\Pi'(i,j)}\mu_{k,A}'( A_j') \mu_{k,A}'(A_i')\in {\rm Frac}(\mathbb T(w)).
    \end{align*}

    {\bf Case 1}: When $i=j=k$ or $k\notin\{i,j\}$, this is trivial from Equations~\eqref{eq-for-P'} and \eqref{eq-def-mu-step1}.

    {\bf Case 2}: When $j\neq i=k$, we have 
    \begin{align*}
        \mu_{k,A}'(A_i')\mu_{k,A}'(A_j')
        =& A^{-{\bf e}_k + \sum_{v\in\mathcal V}[Q(v,k)]_{+}{\bf e}_v} A_j\\
        =& \xi^{(-{\bf e}_k + \sum_{v\in\mathcal V}[Q(v,k)]_{+}{\bf e}_v) \Pi {\bf e}_j^T} A_j A^{-{\bf e}_k + \sum_{v\in\mathcal V}[Q(v,k)]_{+}{\bf e}_v}\\
        =& \xi^{-\Pi(k,j) + \sum_{v\in\mathcal V} [Q(v,k)]_{+} \Pi(v,j)}
        A_j A^{-{\bf e}_k + \sum_{v\in\mathcal V}[Q(v,k)]_{+}{\bf e}_v}\\
        =& \xi^{\Pi'(i,j)}\mu_{k,A}'( A_j') \mu_{k,A}'(A_i').
    \end{align*}

    {\bf Case 3}: When $i\neq j=k$, the proof is similar with the proof of case 2.

    It is a trivial check that the follow map 
    \begin{align}
    {\rm Frac}(\mathbb T(w))&\rightarrow {\rm Frac}(\mathbb T_{w'}) \nonumber
    \\
    A_i&\mapsto
        \begin{cases}
            A_i' & i\neq k, \\
            (A')^{-{\bf e}_k + \sum_{j\in\mathcal V}[-Q'(j,k)]_{+}{\bf e}_j}
            & i=k,
        \end{cases}
        \label{eq-inverse}
\end{align}
is the inverse of $\mu_{k,A}'$.
\end{proof}

\begin{remark}
    Note that we  use $[Q'(k,j)]_{+}$ instead of $[Q'(j,k)]_{+}$ in \eqref{eq-inverse}.
    We can directly check that the map defined in \eqref{eq-inverse} is a well-defined homomorphism between skew-fields using the following equations:
     $$
    \begin{cases}
       \sum_{v\in\mathcal V} [Q'(v,k)]_{+} \Pi'(v,j) =\sum_{v\in\mathcal V} [Q'(k,v)]_{+} \Pi'(v,j) & j\neq i=k,\\
       \sum_{v\in\mathcal V} [Q'(v,k)]_{+} \Pi'(i,v) = \sum_{v\in\mathcal V} [Q'(k,v)]_{+} \Pi'(i,v) & i\neq j=k.
    \end{cases}
    $$    
\end{remark}

\def\ZV{\mathbb Z^{\mathcal V}}

 Define $Q(*,k)\in\ZV$ to be the $k$-th column of $Q$, i.e., the $j$-th entry is $Q(j,k)$ for $j\in\mathcal V$.
Note that 
\begin{align}\label{eq-Q-vector-sum}
    Q(*,k) =\sum_{j\in\mathcal V}[Q(j,k)]_{+}{\bf e}_j - \sum_{j\in\mathcal V}[-Q(j,k)]_{+}{\bf e}_j.
\end{align}
Define
\begin{align}\label{def-mu-sharp}
    \mu_{k,A}^{\sharp}\colon {\rm Frac}(\mathbb T(w))\rightarrow {\rm Frac}(\mathbb T(w))
\end{align}
such that
\begin{equation}\label{eq-def-mu-step2}
        \mu_{k,A}^{\sharp}(A_i)=
        \begin{cases}
            A_i & i\neq k,
            \\
            A_k\left(1 + \xi^{-d_k/2} A^{-Q(*,k)}\right)^{-1}
            & i=k.
        \end{cases}
    \end{equation}

\begin{lemma}\label{lem-decom-A}
    \begin{enumerate}[label={\rm (\alph*)}]\itemsep0,3em

    \item The map $\mu_{k,A}^\sharp$ in \eqref{def-mu-sharp} is a well-defined isomorphism between skew-fields.

    \item\label{lem-decom-A-b} We have $\mu_{k,A} = \mu_{k,A}^{\sharp}\circ \mu_{k,A}'.$

    \end{enumerate}
\end{lemma}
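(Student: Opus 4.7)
The plan is to verify (a) by checking that $\mu_{k,A}^{\sharp}$ preserves all $q$-commutation relations of $\mathbb{T}(w)$ and then extending to the skew-field, and to establish (b) by a direct check on the generators $A_i'$, where the decisive step reduces to a single Weyl-ordering identity.

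For (a), the key input is the compatibility condition of Definition~\ref{def-quantum-seed}: $\sum_{v\in\mathcal V}Q(v,k)\Pi(v,j)=\delta_{k,j}\,d_k$. First I would note that since the $k$-th entry of $-Q(*,k)$ is $-Q(k,k)=0$, the monomial $A^{-Q(*,k)}$ involves only $A_v$ with $v\neq k$, so it is fixed by $\mu_{k,A}^{\sharp}$. Next I would use the Weyl-ordering relation \eqref{eq-M-relation} to derive $A^{\bf u}A^{\bf v}=\xi^{{\bf u}\Pi{\bf v}^T}A^{\bf v}A^{\bf u}$, and apply it with ${\bf u}=-Q(*,k)$, ${\bf v}={\bf e}_j$: the compatibility relation then gives $A^{-Q(*,k)}A_j=A_jA^{-Q(*,k)}$ for every $j\neq k$. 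Hence $1+\xi^{-d_k/2}A^{-Q(*,k)}$ and its inverse commute with every $A_j$ with $j\neq k$, and the defining relation $A_kA_j=\xi^{\Pi(k,j)}A_jA_k$ transports verbatim to $\mu_{k,A}^{\sharp}(A_k)\,A_j=\xi^{\Pi(k,j)}A_j\,\mu_{k,A}^{\sharp}(A_k)$. This yields an algebra homomorphism $\mathbb{T}(w)\to\mathrm{Frac}(\mathbb{T}(w))$ which extends uniquely to $\mathrm{Frac}(\mathbb{T}(w))$. An inverse is produced by the analogous formula $A_k\mapsto A_k\bigl(1+\xi^{-d_k/2}A^{-Q(*,k)}\bigr)$; since $A^{-Q(*,k)}$ is fixed by both maps, the composition on $A_k$ collapses to $A_k$ immediately.

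For (b), since both sides of $\mu_{k,A}^{\sharp}\circ\mu_{k,A}'=\mu_{k,A}$ are skew-field homomorphisms with common domain and codomain, it suffices to verify the equality on each generator $A_i'$. For $i\neq k$ both sides are $A_i$. For $i=k$, set ${\bf w}=-{\bf e}_k+\sum_j[Q(j,k)]_+{\bf e}_j$, so that $\mu_{k,A}'(A_k')=A^{\bf w}$. Split this monomial via \eqref{eq-M-relation} into a scalar times $A_k^{-1}\cdot A^{{\bf w}_2}$ with ${\bf w}_2=\sum_{j\neq k}[Q(j,k)]_+{\bf e}_j$; since $\mu_{k,A}^{\sharp}$ fixes $A^{{\bf w}_2}$ and sends $A_k^{-1}\mapsto (1+\xi^{-d_k/2}A^{-Q(*,k)})A_k^{-1}$, one reassembles the result into $(1+\xi^{-d_k/2}A^{-Q(*,k)})\cdot A^{\bf w}$. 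The decisive computation is then the Weyl-order identity
\begin{equation*}
\xi^{-d_k/2}\,A^{-Q(*,k)}\cdot A^{\bf w}=A^{-{\bf e}_k+\sum_j[-Q(j,k)]_+{\bf e}_j}.
\end{equation*}
The exponent addition on the left gives the right-hand exponent by the elementary identity $[a]_+-a=[-a]_+$. For the normalization coefficient $\xi^{\tfrac12(-Q(*,k))\Pi{\bf w}^T}$, the piece coming from the $-{\bf e}_k$-summand of ${\bf w}$ contributes $\tfrac12\sum_vQ(v,k)\Pi(v,k)=\tfrac12 d_k$, while the piece coming from $\sum_j[Q(j,k)]_+{\bf e}_j$ vanishes because $\sum_vQ(v,k)\Pi(v,j)=0$ for $j\neq k$ and $[Q(k,k)]_+=0$. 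These combine to exactly $\xi^{d_k/2}$, cancelling the prefactor $\xi^{-d_k/2}$; adding the two summands gives $\mu_{k,A}(A_k')$ on the nose.

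There is no conceptual obstacle; the only technical matter is careful bookkeeping of the Weyl-ordering normalizations and a patient use of the compatibility relation. That relation is calibrated precisely so that the $q$-commutation of $A^{-Q(*,k)}$ with $A_j$ is trivial for $j\neq k$ and equals $\xi^{-d_k}$ for $j=k$, which is exactly what the formula \eqref{eq-def-mu-step2} for $\mu_{k,A}^{\sharp}$ is designed to absorb.
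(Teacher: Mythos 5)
Your proposal is correct and follows essentially the same route as the paper's proof: in (a) the compatibility relation is used to show that $A^{-Q(*,k)}$ (equivalently $A^{Q(k,*)}$, by antisymmetry of $Q$) commutes with all $A_j$ for $j\neq k$, from which preservation of the $q$-commutation relations follows; and in (b) the verification is done generator by generator, with the nontrivial case $i=k$ handled by splitting the monomial $A^{\mathbf w}$, expanding $\bigl(1+\xi^{-d_k/2}A^{-Q(*,k)}\bigr)$, and recombining via the Weyl-ordering normalization and the identity $Q(k,*)+\sum_j[Q(j,k)]_+\mathbf e_j=\sum_j[-Q(j,k)]_+\mathbf e_j$. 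One small improvement over the paper's write-up: you explicitly exhibit the inverse $A_k\mapsto A_k\bigl(1+\xi^{-d_k/2}A^{-Q(*,k)}\bigr)$, $A_i\mapsto A_i$ to justify the word ``isomorphism'' in (a), whereas the paper only verifies well-definedness as a homomorphism and leaves invertibility implicit.
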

\begin{proof}
    First we show that
    \begin{align}\label{eq-com-i-neq-k}
        \text{$A_i A^{Q(k,*)} = A^{Q(k,*)} A_i\in {\rm Frac}(\mathbb T(w))$ when $i\neq k.$}
    \end{align}
     It is equivalent to show 
    $Q(k,*)\Pi{\bf e}_i^T=0$. We have 
    $$Q(k,*)\Pi{\bf e}_i^T=\sum_{v\in\mathcal V} Q(k,v) \Pi(v,i)=-\sum_{v\in\mathcal V} Q(v,k) \Pi(v,i)=-\delta_{i,k}d_k=0.$$

        To show the well-definedness of $\mu_{k,A}^{\sharp}$, it suffices to show that 
    \begin{align*}
        \mu_{k,A}^{\sharp}(A_i)\mu_{k,A}^{\sharp}(A_j) = \xi^{\Pi(i,j)}\mu_{k,A}^{\sharp}( A_j) \mu_{k,A}^{\sharp}(A_i)\in {\rm Frac}(\mathbb T(w)).
    \end{align*}

    {\bf Case 1}: When $i=j=k$ or $k\notin\{i,j\}$, this is trivial from equation 
    \eqref{eq-def-mu-step2}.

    {\bf Case 2}: When $j\neq i=k$, we have 
    \begin{align*}
        \mu_{k,A}^{\sharp}(A_i)\mu_{k,A}^{\sharp}(A_j)
        =& A_k\left(1 + \xi^{-d_k/2} A^{Q(k,*)}\right)^{-1} A_j\\
        =& A_k A_j\left(1 + \xi^{-d_k/2} A^{Q(k,*)}\right)^{-1}\\
        =& \xi^{\Pi(k,j)}
        A_j A_k\left(1 + \xi^{-d_k/2} A^{Q(k,*)}\right)^{-1}\\
        =&\xi^{\Pi(i,j)} \mu_{k,A}^{\sharp}(A_j)\mu_{k,A}^{\sharp}(A_i).
    \end{align*}

    {\bf Case 3}: When $i\neq j=k$, the proof is similar with the proof of case 2. 

    Then we will show $\mu_{k,A} = \mu_{k,A}'\circ \mu_{k,A}^\sharp.$ It suffices to show that 
    $\mu_{k,A} (A_i')= \mu_{k,A}^{\sharp}(\mu_{k,A}'(A_i'))$ for $i\in\mathcal V.$ This is trivial when $i\neq k$.
    We know that there exists an integer $m$ such that 
    $$A_k A^{\sum_{j\in\mathcal V}[Q(j,k)]_{+}{\bf e}_j} = \xi^m A^{\sum_{j\in\mathcal V}[Q(j,k)]_{+}{\bf e}_j} A_k.$$
    Then we have 
    \begin{align}\label{eq-mu-k-A-k}
       \mu_{k,A}'(A_k') =A^{-{\bf e}_k+\sum_{j\in\mathcal V}[Q(j,k)]_{+}{\bf e}_j}= \xi^{-\frac{m}{2}} A^{\sum_{j\in\mathcal V}[Q(j,k)]_{+}{\bf e}_j} A_k^{-1}. 
    \end{align}
    It follows that 
    \begin{equation}\label{eq-mu-sharp-mu'}
    \begin{split}
        \mu_{k,A}^{\sharp}(\mu_{k,A}'(A_k'))
        =& \xi^{-\frac{m}{2}} A^{\sum_{j\in\mathcal V}[Q(j,k)]_{+}{\bf e}_j}\left(1 + \xi^{-d_k/2} A^{Q(k,*)}\right) A_k^{-1}\\
        =& \xi^{-\frac{m}{2}} A^{\sum_{j\in\mathcal V}[Q(j,k)]_{+}{\bf e}_j} A_k^{-1} +
        \xi^{-\frac{m}{2}} \xi^{-d_k/2} A^{\sum_{j\in\mathcal V}[Q(j,k)]_{+}{\bf e}_j} A^{Q(k,*)} A_k^{-1}.
        \end{split}
    \end{equation}
    Equation \eqref{eq-com-i-neq-k} shows that $A^{\sum_{j\in\mathcal V}[Q(j,k)]_{+}{\bf e}_j} A^{Q(k,*)}= A^{Q(k,*)} A^{\sum_{j\in\mathcal V}[Q(j,k)]_{+}{\bf e}_j}$.
    We have 
    $$A^{Q(k,*)} A_k^{-1} = \xi^{-\sum_{i\in\mathcal V}Q(k,i)\Pi(i,k)}
    A_k^{-1} A^{Q(k,*)}
    =\xi^{d_k}
    A_k^{-1} A^{Q(k,*)}.$$
    This implies that 
    \begin{equation}\label{eq-mu-sharp-mu'1}
    \begin{split}
        \xi^{-\frac{m}{2}} \xi^{-d_k/2} A^{\sum_{j\in\mathcal V}[Q(j,k)]_{+}{\bf e}_j} A^{Q(k,*)} A_k^{-1}
        &= A^{-{\bf e}_k + Q(k,*) + \sum_{j\in\mathcal V}[Q(j,k)]_{+}{\bf e}_j}\\& = A^{-{\bf e}_k+\sum_{j\in\mathcal V}[Q(k,j)]_{+}{\bf e}_j},
        \end{split}
    \end{equation}
    where the last equality comes from \eqref{eq-Q-vector-sum}.
    Equations \eqref{eq-mu-k-A-k}, \eqref{eq-mu-sharp-mu'}, and \eqref{eq-mu-sharp-mu'1} imply that 
    $$\mu_{k,A}^{\sharp}(\mu_{k,A}'(A_k'))
    =A^{-{\bf e}_k+\sum_{j\in\mathcal V}[Q(j,k)]_{+}{\bf e}_j}+ A^{-{\bf e}_k+\sum_{j\in\mathcal V}[Q(k,j)]_{+}{\bf e}_j}=\mu_{k,A}(A_k').$$

\end{proof}

\subsection{Quantum $\mathcal X$-mutations for Fock-Goncharov algebras}

Suppose that 
$\mathcal{D}_X= (\Gamma,(X_v)_{v\in \mathcal{V}})$ is a cluster $\mathcal X$-seed 
whose 
exchange matrix 
is $Q$.
Define the {\bf Fock-Goncharov algebra} associated to $\mathcal D_X$ to be the quantum torus algebra
\begin{align*}
\mathcal{X}_q(\mathcal{D}_X)  = \bT_q(Q)  = R\langle X_v^{\pm 1}, v\in \mathcal{V} \rangle / (X_v X_{v'} = q^{2Q(v,v')} X_{v'} X_v \mbox{ for } v,v' \in \mathcal{V}).
\end{align*}
We understand $\mathcal{X}_q(\mathcal{D}_X)$ as the quantum algebra associated to the seed $\mathcal{D}_X$, which yields the classical algebra associated to $\mathcal{D}_X$ as $q\to 1$. In this subsection we recall the quantum version of the mutation
formula.

The quantum mutation shall be a non-commutative rational map. So we need to use the skew-field of fractions of $\mathcal{X}_q(\mathcal{D})$, which we denote by ${\rm Frac}(\mathcal{X}_q(\mathcal{D}_X))$; for the existence of ${\rm Frac}(\mathcal{X}_q(\mathcal{D}_X))$, see \cite{Cohn}.

The following special function is a crucial ingredient.
\begin{definition}[compact quantum dilogarithm \cite{F95,FK94}]
    The quantum dilogarithm for a quantum parameter
$q$ is the function
$$\Psi^q(x) = \prod_{r=1}^{+\infty} (1+ q^{2r-1} x)^{-1}.$$
\end{definition}
For our purposes, the infinite product can be understood formally, as what matters for us is only the following:

\def\sgn{{\rm sgn}}

\begin{lemma}\label{lem-F-P}
    Suppose that $xy = q^{2m} yx$ for some $m\in \mathbb{Z}$. Then we have 
    $${\rm Ad}_{\Psi^q(x)}(y) = y F^q(x,m),$$
    where
    \begin{align}\label{eq-def-Fq}
        F^q(x,m)=\prod_{r=1}^{|m|} (1+ q^{(2r-1){\rm sgn}(m)}x)^{\sgn(m)}.
    \end{align}
\end{lemma}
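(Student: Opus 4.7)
The plan is to reduce the assertion to an elementary ``push $y$ through one factor at a time'' computation on the formal infinite product defining $\Psi^q(x)$. The key preliminary identity is that, when $xy=q^{2m}yx$, one has $(1+q^{s}x)\,y = y\,(1+q^{s+2m}x)$ for every integer $s$, hence, after inverting,
\[
(1+q^{s}x)^{-1}\,y \;=\; y\,(1+q^{s+2m}x)^{-1}.
\]
This is a direct verification using only the defining commutation $xy = q^{2m}yx$, so requires no analytic input.

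Next, treating $\Psi^q(x)=\prod_{r=1}^{\infty}(1+q^{2r-1}x)^{-1}$ as a formal product (as the statement of the paper explicitly permits), I would apply the above rule factor-by-factor to obtain
\[
\Psi^q(x)\,y \;=\; y\,\prod_{r=1}^{\infty}(1+q^{2r-1+2m}x)^{-1}.
\]
Multiplying on the right by $\Psi^q(x)^{-1}=\prod_{r=1}^{\infty}(1+q^{2r-1}x)$, the two infinite products telescope: reindexing $s=r+m$ on the left-hand factor produces the product $\prod_{s=m+1}^{\infty}(1+q^{2s-1}x)^{-1}$, which cancels against all but finitely many factors of $\prod_{r=1}^{\infty}(1+q^{2r-1}x)$.

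For $m>0$ only the factors indexed by $r=1,2,\dots,m$ survive, giving
\[
\mathrm{Ad}_{\Psi^q(x)}(y) \;=\; y\,\prod_{r=1}^{m}(1+q^{2r-1}x),
\]
which is precisely $y\,F^q(x,m)$ since $\mathrm{sgn}(m)=1$. For $m<0$, setting $m'=-m>0$ and reindexing the leftover finite product with $r=1-s$, the uncancelled factors come out as $\prod_{r=1}^{m'}(1+q^{-(2r-1)}x)^{-1}$, again matching $F^q(x,m)$ from \eqref{eq-def-Fq}. The case $m=0$ is automatic, since then $x$ and $y$ commute and both sides equal $y$.

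I do not expect any genuine obstacle: once the one-factor identity $(1+q^{s}x)^{-1}y = y(1+q^{s+2m}x)^{-1}$ is in hand, the remainder is bookkeeping of the telescoping index shift and a separate treatment of the sign of $m$. The only point that deserves care is making sure the two infinite tails cancel in the correct order, which is justified by performing the manipulation first at level $N$ (i.e.\ with $\prod_{r=1}^{N}$ rather than $\prod_{r=1}^{\infty}$) and then letting $N\to\infty$ in the sense of formal products.
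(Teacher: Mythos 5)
The paper in fact gives no proof of this lemma: it is stated as a known property of the compact quantum dilogarithm (following \cite{F95,FK94}), and the sentence immediately preceding it --- ``For our purposes, the infinite product can be understood formally, as what matters for us is only the following'' --- signals that the authors are treating the lemma as the \emph{operational definition} of $\mathrm{Ad}_{\Psi^q(x)}$, since in the ground ring $R=\mathbb{Z}[\omega^{\pm 1/2}]$ with $q$ a unit, the infinite product $\prod_{r\ge 1}(1+q^{2r-1}x)^{-1}$ does not actually converge as an element of any of the algebras in play. Your derivation is the standard heuristic that motivates the lemma: the one-factor identity $(1+q^{s}x)^{-1}y=y(1+q^{s+2m}x)^{-1}$, the pushthrough $\Psi^q(x)\,y = y\,\Psi^q(q^{2m}x)$, and then the functional-equation telescope $\Psi^q(q^{2m}x)\Psi^q(x)^{-1}=F^q(x,m)$. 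All of this is correct and is the argument one would give.

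The one place where you are slightly optimistic is the final sentence: you claim that cutting the product at level $N$ and letting $N\to\infty$ is ``justified \ldots in the sense of formal products.'' At finite $N$ and $m>0$, after the reindexing there is a leftover tail $\prod_{s=N+1}^{N+m}(1+q^{2s-1}x)^{-1}$. For that tail to tend to $1$ you need a topology in which $q^{2s-1}\to 0$; this works if $q$ is analytic with $|q|<1$, or if one completes $(q)$-adically, but not in $\mathbb{Z}[\omega^{\pm 1/2}]$ where $q$ is invertible. The paper sidesteps this precisely by declaring the lemma to be the content of the formal notation rather than a consequence of it, so your computation is best read as the (correct, standard) motivation for the paper's convention rather than as a proof inside the paper's algebraic framework.
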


It is easy to check that ${\rm Ad}_{\Psi^q(X_k)}$ extends to a unique skew-field isomorphism from ${\rm Frac}(\mathcal{X}_q(\mathcal{D}_X))$ to itself, which we denote by ${\rm Ad}_{\Psi^q(X_k)}$.

\begin{definition}[quantum $\mathcal X$-mutation for Fock-Goncharov algebras \cite{BZ,FG09a,FG09b}]\label{def.quantum_X-mutation}
     Suppose 
     that $\mathcal{D}_X = (\Gamma,(X_v)_{v\in \mathcal{V}})$ is an $\mathcal X$-seed, 
     $k$$\in \mathcal{V}_{\rm mut}$ is a 
     mutable vertex of $\Gamma$, and $\mathcal D_X'=\mu_k(\mathcal D_X)$.
      The quantum mutation map 
      is the 
      isomorphism between the 
      skew-fields
      $$\mu_{
      \mathcal{D}_X,\mathcal{D}_X'}^q=\mu_k^q \, \colon \, \Fr(
      \mathcal{X}_q(\mathcal D_X'))\rightarrow
      \Fr(
      \mathcal{X}_q(\mathcal D_X))$$
      given by the composition
      $$\mu_k^q= \mu_k^{\sharp q}\circ \mu_k',$$
      where $\mu'_k$ is the isomorphism of skew-fields
      $$\mu_k'\colon \Fr(
      \mathcal{X}_q(\mathcal D_X'))\rightarrow
      \Fr(
      \mathcal{X}_q(\mathcal D_X))$$ 
      given by 
      \begin{align}\label{eq-quantum-mutation}
      \mu_k'(
      X_v') = 
      \begin{cases}
          X_k^{-1}& \mbox{if } v=k,\\
          \left[ X_v X_k^{[Q(v,k)]_+}\right]
          & \mbox{if } v\neq k.
      \end{cases}
      \end{align}
      where $[a]_+$ stands for the positive part of a real number $a$
      $$
      [a]_+ := \max\{a,0\} = {\textstyle \frac{1}{2}(a+|a|)},
$$
and $[\sim]$ is the Weyl-ordered product as in \eqref{Weyl_ordering-X-1},
      while
       $\mu^{\sharp q}_k$ is the following automorphism of skew-field
       $$\mu_k^{\sharp q} = {\rm Ad}_{\Psi^q(
       X_k)}\colon \Fr(
       \mathcal{X}_q(\mathcal D))\rightarrow
      \Fr(
      \mathcal{X}_q(\mathcal D)).$$
\end{definition}

The following lemma will be used in \S\ref{sec-skein-cluster}.

\begin{lemma}\cite[Lemma~3.4]{KimWang}\label{lem-commute}
Let $\mathbb{F} = \mathbb{Z}$ or $\mathbb{Z}/m\mathbb{Z}$ for some positive integer $m$.
    Let $\mathcal V_0\subset\mathcal V$ contain $k$, and let ${\bf t}'=(t_v')_{v\in\mathcal V}\in\bZ ^{\mathcal V}$ 
    satisfy $\sum_{v\in\mathcal V} Q'(u,v) t_v' = 0\in 
    \mathbb{F}$ for all $u\in\mathcal V_0$.
    Suppose that  $\mu_k'((
    X')^{{\bf t}'}) = 
    X^{\bf t}$, where
    ${\bf t}=(
    t_v)_{v\in\mathcal V}\in\bZ ^{\mathcal V}$.
    Then we have 
    $\sum_{v\in\mathcal V} Q(u,v) t_v = 0\in 
    \mathbb{F}$ for all $u\in\mathcal V_0$.
\end{lemma}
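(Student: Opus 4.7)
The plan is to translate the statement into a linear-algebra identity on the exponent lattice. First I would record that $\mu_k'$ sends each Weyl-ordered monomial $(X')^{{\bf t}'}$ to a scalar multiple of a Weyl-ordered monomial $X^{{\bf t}}$, so the relation ${\bf t} = E{\bf t}'$ is given by a linear map $E \colon \mathbb{Z}^{\mathcal V} \to \mathbb{Z}^{\mathcal V}$. Reading $E$ off from the generating images $\mu_k'(X_k') = X_k^{-1}$ and $\mu_k'(X_v') = [X_v X_k^{[Q(v,k)]_+}]$ for $v\neq k$, together with $Q(v,k) = -Q'(v,k) = Q'(k,v)$, one finds $E_{v,v}=1$ for $v\neq k$, $E_{k,k}=-1$, $E_{k,v'}=[Q'(k,v')]_+$ for $v'\neq k$, and all other entries zero.

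In this language, the conclusion becomes $(QE{\bf t}')_u \equiv 0 \pmod{\mathbb F}$ for each $u\in\mathcal V_0$, while the hypothesis is $(Q'{\bf t}')_u \equiv 0 \pmod{\mathbb F}$ for each $u\in\mathcal V_0$. I would therefore compute the entries of $QE$ directly. The case $u=k$ is immediate: since $Q(k,k)=0$ and $Q(k,v)=-Q'(k,v)$, the $v$-th entry of row $k$ of $QE$ is just $-Q'(k,v)$, so $(QE{\bf t}')_k = -\sum_v Q'(k,v) t_v'\equiv 0$ because $k\in\mathcal V_0$.

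For $u\neq k$, I would use the mutation formula \eqref{eq-mutation-Q} (applied to $Q=\mu_k(Q')$) to expand $Q(u,v)$ in terms of $Q'$, and combine the contribution coming from the row-$k$ entries of $E$. Using the elementary identity $\tfrac{1}{2}|x|-[x]_+ = -\tfrac{x}{2}$ to absorb the cross terms, I expect to obtain the clean formula
\begin{equation*}
(QE)_{u,v} \;=\; Q'(u,v) + [-Q'(u,k)]_+\, Q'(k,v),
\end{equation*}
valid for all $v$ (including $v=k$, which degenerates correctly). Granting this,
\begin{equation*}
\sum_v Q(u,v)\, t_v \;=\; \sum_v Q'(u,v)\, t'_v \;+\; [-Q'(u,k)]_+ \sum_v Q'(k,v)\, t'_v \;\equiv\; 0 \pmod{\mathbb F},
\end{equation*}
since both sums on the right vanish modulo $\mathbb F$ by hypothesis (at $u$ and at $k$, respectively, both of which lie in $\mathcal V_0$).

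The main obstacle is the sign bookkeeping in the last computation: namely, identifying the result of combining $\tfrac{1}{2}(Q'(u,k)|Q'(k,v)| + |Q'(u,k)|Q'(k,v))$ with $-[Q'(k,v)]_+\,Q'(u,k)$ as exactly $[-Q'(u,k)]_+\,Q'(k,v)$. Once this simplification is in hand the lemma follows immediately; the rest is direct computation, and the role of the hypothesis $k\in\mathcal V_0$ is precisely to kill the $Q'(k,\cdot)\cdot t'$ residue that remains after recognizing the first term as $(Q'{\bf t}')_u$.
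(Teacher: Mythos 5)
Your proposal is correct, and it is essentially the same computation as the one the paper relies on (the lemma is cited from [KimWang, Lemma 3.4], and the intended argument is exactly this exponent calculation): extract $t_k = -t_k' + \sum_v [Q'(k,v)]_+\, t_v'$ with $t_v = t_v'$ for $v\neq k$, substitute, and use the mutation rule $Q = \mu_k(Q')$ to reduce $\sum_v Q(u,v)t_v$ to $\sum_v Q'(u,v)t_v' + [-Q'(u,k)]_+ \sum_v Q'(k,v)t_v'$, both of whose summands vanish in $\mathbb F$ because $u,k\in\mathcal V_0$. The only cosmetic difference is that the source argument splits into cases on the sign of $Q'(u,k)$, whereas your use of the identity $\tfrac{1}{2}(|a|-a)=[-a]_+$ handles both signs at once; the matrix-form identity $(QE)_{u,v'} = Q'(u,v') + [-Q'(u,k)]_+\,Q'(k,v')$ you derive is a slightly cleaner packaging of the same cancellation.
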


\subsection{Quantum mutations for balanced $n$-th root Fock-Goncharov algebras}\label{subsec-mutation-Fock}
In this subsection, we review the quantum $\mathcal{X}$-mutation for the balanced Fock--Goncharov algebra, as constructed in \cite{KimWang}. In \S\ref{sec-compatibility}, we establish the compatibility between quantum $\mathcal{A}$- and $\mathcal{X}$-mutations via the algebra isomorphism \eqref{eq-iso-A-balanced-psi}, which will be used to prove the naturality of the quantum cluster structure inside ${\rm Frac}(\rdS)$, defined in \S\ref{sec-seed-inside}.

Suppose 
that $\mathcal{D}_X= (\Gamma,(X_v)_{v\in \mathcal{V}})$ is an $\mathcal X$-seed 
whose exchange matrix is $Q$.
Define the $n$-th root Fock-Goncharov algebra associated to $\mathcal D_X$ to be
\begin{align}\label{eq-Fock-Goncharov-n}
\mathcal{Z}_\omega(\mathcal{D}_X)  = \mathbb{T}_\omega(Q)  = R\langle Z_v^{\pm 1}, v\in \mathcal{V} \rangle / (Z_v Z_{v'} = \omega^{2Q(v,v')} Z_{v'} Z_v \mbox{ for } v,v' \in \mathcal{V}),
\end{align}
into which the Fock-Goncharov algebra $\mathcal{X}_q(\mathcal{D}_X)$ embeds as
$$
X_v \mapsto Z_v^n, \quad \forall v\in \mathcal{V}.
$$

We use $\Fr(
\mathcal{Z}_\omega(\mathcal{D}_X))$ to denote the 
skew-field of fractions of $
\mathcal{Z}_\omega(\mathcal D_X)$. 
Naturally, ${\rm Frac}(\mathcal{X}_q(\mathcal{D}_X))$ embeds into ${\rm Frac}(\mathcal{Z}_\omega(\mathcal{D}_X))$.

Let $k$ be a 
mutable vertex of $\Gamma$ and let 
$$
\mathcal D_X' =
\mu_k(\mathcal D_X).
$$
Define the 
     skew-field isomorphism $$\nu_k'\colon \Fr(
     \mathcal{Z}_\omega(\mathcal D_X'))\rightarrow
      \Fr(
      \mathcal{Z}_\omega(\mathcal D_X))$$
      such that the action of $\nu_k'$ on the generators is given by 
      the formula \eqref{eq-quantum-mutation} for $\mu'_k$ with $
      X_{v}$ replaced by $
      Z_{v}$:
     \begin{align}\label{eq-quantum-mutation_Z}
      \nu_k'(Z_v') = 
      \begin{cases}
          Z_k^{-1}& \mbox{if } v=k,\\
          \left[ Z_v Z_k^{[Q(v,k)]_+}\right]
          & \mbox{if } v\neq k.
      \end{cases}
      \end{align}
      It is easy to check that $\nu'_k$ extends $\mu'_k$.

      For an $n$-root extension of the automorphism part $\mu^{\sharp q}_k$, we consider the following subalgebra of $\mathcal{Z}_\omega(\mathcal{D}_X)$.
   \begin{definition}[\cite{KimWang}]\label{def-mut-Z}
   Define  
       $$\mathcal B_{\mathcal D_X}=\{{\bf t}=(
       t_v)_{v\in\mathcal V}\in\mathbb Z^{
       \mathcal{V}}\mid \sum_{v\in 
       \mathcal{V}} Q(u,v)
       t_v=0\in
       \mathbb{Z}/n \mathbb{Z}\text{ for all }u\in\mathcal V_{
       {\rm mut}}\},$$
    and define 
    $\mathcal{Z}^{\rm mbl}_\omega(\mathcal{D}_X)$
    to be the $R$-submodule of $
    \mathcal{Z}_\omega(\mathcal D_X)$ spanned by $
    Z^{{\bf t}}$ for ${\bf t}\in \mathcal B_{\mathcal D_X}$. We call $\mathcal{Z}^{\rm mbl}_\omega(\mathcal{D}_X)$ the {\bf mutable-balanced subalgebra} of $\mathcal{Z}_\omega(\mathcal{D}_X)$.
   \end{definition}

\def\ZM{\mathcal{Z}^{\rm mbl}_\omega}

Note that $\mathcal B_{\mathcal D_X}$ is a subgroup of $\mathbb Z^{
\mathcal{V}}$. So 
$\mathcal{Z}^{\rm mbl}_\omega(\mathcal{D}_X)$
    is 
    indeed an $R$-subalgebra of $
    \mathcal{Z}_\omega(\mathcal D_X)$.
    Since $n \mathbb Z^{
    \mathcal{V}}\subset\mathcal B_{\mathcal D_X}\subset \mathbb Z^{
    \mathcal{V}}$, we have 
    $$\mathcal{X}_q(\mathcal{D}_X) \subset \mathcal{Z}^{\rm mbl}_\omega(\mathcal{D}_X) \subset \mathcal{Z}_\omega(\mathcal{D}_X).$$
    The mutable-balanced subalgebra $\mathcal{Z}^{\rm mbl}_\omega(\mathcal{D}_X)$ is precisely the subalgebra of the $n$-th root Fock-Goncharov algebra $\mathcal{Z}_\omega(\mathcal{D}_X)$ to which we can extend the automorphism-part map $\mu^{\sharp q}_k$ for $\mathcal{X}_q(\mathcal{D}_X)$.
    

\begin{lemma}\cite[Lemma~3.7]{KimWang}\label{lem.nu_sharp_well-defined}
There exists a unique skew-field isomorphism
    \begin{align}\label{lem-def-nu-sharp}
        \nu_k^{\sharp \omega}:={\rm Ad}_{\Psi^q(
    X_k)}\colon \Fr(
    \mathcal{Z}^{\rm mbl}_\omega(\mathcal{D}_X))\rightarrow
      \Fr(
      \mathcal{Z}^{\rm mbl}_\omega(\mathcal{D}_X))
    \end{align}
      such that for each ${\bf t} = (t_v)_{v\in \mathcal{V}} \in \mathcal{B}_{\mathcal{D}_X}$, we have
      $$
      \nu^{\sharp \omega}_k(Z^{\bf t}) = Z^{\bf t} \, F^q(X_k, m),
$$
where $m = \frac{1}{n} \sum_{v\in \mathcal{V}} Q(k,v) t_v$.
\end{lemma}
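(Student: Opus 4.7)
The plan is to interpret the formal symbol $\mathrm{Ad}_{\Psi^q(X_k)}(Z^{\bf t})$ concretely via Lemma~\ref{lem-F-P}, and then verify that the resulting assignment extends to a skew-field automorphism of $\mathrm{Frac}(\mathcal{Z}^{\rm mbl}_\omega(\mathcal{D}_X))$.

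First I would check that the integer $m_{\bf t} := \tfrac{1}{n}\sum_v Q(k,v) t_v$ is well-defined for every ${\bf t} \in \mathcal{B}_{\mathcal{D}_X}$: taking $u = k \in \mathcal{V}_{\rm mut}$ in the defining congruence of $\mathcal{B}_{\mathcal{D}_X}$ gives $\sum_v Q(k,v) t_v \in n\mathbb{Z}$. Iterating $Z_v Z_k = \omega^{2 Q(v,k)} Z_k Z_v$ and using $X_k = Z_k^n$ together with $\omega^{n^2} = q$ yields the $q$-commutation
\[
X_k \, Z^{\bf t} \;=\; \omega^{2n \sum_v Q(k,v) t_v} \, Z^{\bf t} \, X_k \;=\; q^{2 m_{\bf t}} \, Z^{\bf t} \, X_k .
\]
Lemma~\ref{lem-F-P} then forces the definition $\nu_k^{\sharp\omega}(Z^{\bf t}) := Z^{\bf t} F^q(X_k, m_{\bf t})$; since each factor $1 + q^{\pm(2r-1)} X_k$ is a non-zero element of the domain $\mathcal{Z}^{\rm mbl}_\omega(\mathcal{D}_X)$, the right-hand side is an invertible element of $\mathrm{Frac}(\mathcal{Z}^{\rm mbl}_\omega(\mathcal{D}_X))$.

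Next I would extend this assignment $R$-linearly, using that $\{Z^{\bf t} : {\bf t} \in \mathcal{B}_{\mathcal{D}_X}\}$ is an $R$-basis of $\mathcal{Z}^{\rm mbl}_\omega(\mathcal{D}_X)$, and verify multiplicativity. The key identity is
\[
F^q(X_k, m_{\bf s}) \, Z^{\bf t} \, F^q(X_k, m_{\bf t}) \;=\; Z^{\bf t} \, F^q(X_k, m_{\bf s} + m_{\bf t}),
\]
which follows by pushing $Z^{\bf t}$ past the finite product $F^q(X_k, m_{\bf s})$ via $(1 + q^{2r-1} X_k) Z^{\bf t} = Z^{\bf t}(1 + q^{2r - 1 + 2 m_{\bf t}} X_k)$ and a short reindexing that concatenates the two finite products into $F^q(X_k, m_{\bf s} + m_{\bf t})$. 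Combined with the linearity of ${\bf t} \mapsto m_{\bf t}$ and with the Weyl-normalization identity $Z^{\bf s} Z^{\bf t} = \omega^{\ast} Z^{{\bf s}+{\bf t}}$ (producing the same scalar on both sides of the desired identity $\nu_k^{\sharp\omega}(Z^{\bf s})\nu_k^{\sharp\omega}(Z^{\bf t}) = \nu_k^{\sharp\omega}(Z^{\bf s} Z^{\bf t})$), this closes the multiplicativity check and gives an $R$-algebra endomorphism of $\mathcal{Z}^{\rm mbl}_\omega(\mathcal{D}_X)$.

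Finally, I would extend this homomorphism to $\mathrm{Frac}(\mathcal{Z}^{\rm mbl}_\omega(\mathcal{D}_X))$ by the universal property of localization. Its two-sided inverse is obtained by the parallel formula $Z^{\bf t} \mapsto Z^{\bf t} F^q(X_k, m_{\bf t})^{-1}$, which corresponds formally to conjugation by $\Psi^q(X_k)^{-1}$ and satisfies the analogous concatenation identity, so $\nu_k^{\sharp\omega}$ is an automorphism. Uniqueness is automatic, since the monomials $Z^{\bf t}$ generate $\mathcal{Z}^{\rm mbl}_\omega(\mathcal{D}_X)$ as an $R$-module. The only non-routine step I anticipate is the concatenation identity above; beyond that, everything reduces to bookkeeping with Weyl-normalization constants, and the substantial conceptual content is absorbed by Lemma~\ref{lem-F-P}.
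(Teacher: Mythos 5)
This lemma is cited from \cite[Lemma~3.7]{KimWang} and the present paper contains no proof of it, so there is no internal argument to compare against. Your reconstruction is correct and is the natural argument: you correctly deduce that $m_{\bf t}\in\mathbb Z$ from the $u=k$ instance of the defining congruence of $\mathcal B_{\mathcal D_X}$, correctly compute $X_k Z^{\bf t}=q^{2m_{\bf t}}Z^{\bf t}X_k$ (using $X_k=Z_k^n$, $q=\omega^{n^2}$), and the concatenation identity $F^q(X_k,m_{\bf s})\,Z^{\bf t}\,F^q(X_k,m_{\bf t})=Z^{\bf t}F^q(X_k,m_{\bf s}+m_{\bf t})$ is exactly the identity that makes $\nu^{\sharp\omega}_k$ multiplicative, and it does hold for all sign combinations of $m_{\bf s}, m_{\bf t}$ since the factors $1+q^{2r-1}X_k$ mutually commute and the shift $r\mapsto r+m_{\bf t}$ telescopes against $F^q(X_k,m_{\bf t})$.

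The one point you gloss over slightly is the passage to $\Fr(\mathcal Z^{\rm mbl}_\omega(\mathcal D_X))$: the universal property of Ore localization requires that $\nu^{\sharp\omega}_k$ kill no nonzero element of the algebra, and this is not immediate from the formula alone because for $m_{\bf t}<0$ the image $Z^{\bf t}F^q(X_k,m_{\bf t})$ lies outside the algebra. The cleanest repair is exactly the remark you make at the end: $\mathrm{Ad}_{\Psi^q(X_k)}$ is genuine conjugation by an invertible element of the $X_k$-adic completion and therefore injective, and Lemma~\ref{lem-F-P} shows this conjugation restricts to the map you wrote on $\mathcal Z^{\rm mbl}_\omega(\mathcal D_X)$; injectivity then feeds into the localization argument. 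With that observation made explicit, the proof is complete, and uniqueness follows as you say from the monomials spanning the algebra together with the uniqueness clause of the universal property.
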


Lemma~\ref{lem-F-P} implies that $\nu^{\sharp \omega}_k$ extends $\mu^{\sharp q}_k$ (Definition~\ref{def.quantum_X-mutation}).

The following lemma says that the skew-fields of fractions of the mutable-balanced subalgebras are compatible with the monomial-transformation isomorphism $\nu'_k$.

\begin{lemma}\cite[Lemma~3.8]{KimWang}\label{lem.nu_prime_restricts}
    The map $\Fr(    \mathcal{Z}_\omega(\mathcal{D}_X'))\xrightarrow{\nu_k'}
      \Fr(
      \mathcal{Z}_\omega(\mathcal{D}_X))$ defined in \eqref{eq-quantum-mutation_Z} restricts to a  
      skew-field isomorphism
      $\Fr(
      \mathcal{Z}^{\rm mbl}_\omega(\mathcal{D}_X'))\xrightarrow{\nu_k'}
      \Fr(
      \mathcal{Z}^{\rm mbl}_\omega(\mathcal{D}_X)).$ \qed
\end{lemma}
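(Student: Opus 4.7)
The plan is to reduce the statement to Lemma~\ref{lem-commute}, applied with $\mathbb{F} = \mathbb{Z}/n\mathbb{Z}$ and $\mathcal{V}_0 = \mathcal{V}_{\rm mut}$ (which contains $k$ by assumption). First, I note that $\mathcal{Z}^{\rm mbl}_\omega(\mathcal{D}_X)$ and $\mathcal{Z}^{\rm mbl}_\omega(\mathcal{D}_X')$ are group subalgebras of the respective quantum tori in the sense of \eqref{submonoid}, attached to the subgroups $\mathcal{B}_{\mathcal{D}_X}, \mathcal{B}_{\mathcal{D}_X'} \subset \mathbb{Z}^{\mathcal{V}}$. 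Hence they are Ore domains with skew-fields of fractions sitting inside $\Fr(\mathcal{Z}_\omega(\mathcal{D}_X))$ and $\Fr(\mathcal{Z}_\omega(\mathcal{D}_X'))$ respectively.

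A direct computation from \eqref{eq-quantum-mutation_Z} together with the Weyl-ordering convention \eqref{Weyl_ordering-Z-1} shows that for every ${\bf t}' = (t'_v)_{v \in \mathcal{V}} \in \mathbb{Z}^{\mathcal{V}}$,
$$\nu_k'\bigl((Z')^{{\bf t}'}\bigr) \;=\; \omega^{\alpha({\bf t}')}\, Z^{{\bf t}}$$
for some $\alpha({\bf t}') \in \mathbb{Z}$, where ${\bf t}=(t_v)_{v\in\mathcal{V}}$ is given by $t_v = t'_v$ for $v \neq k$ and $t_k = -t'_k + \sum_{v\in\mathcal{V}}[Q(v,k)]_+ t'_v$. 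This is exactly the linear transformation of exponents recorded in Lemma~\ref{lem-commute}. Consequently, if ${\bf t}' \in \mathcal{B}_{\mathcal{D}_X'}$, i.e.\ $\sum_v Q'(u,v) t'_v \equiv 0 \pmod n$ for every $u \in \mathcal{V}_{\rm mut}$, then the lemma immediately produces $\sum_v Q(u,v) t_v \equiv 0 \pmod n$ for every $u \in \mathcal{V}_{\rm mut}$, so ${\bf t} \in \mathcal{B}_{\mathcal{D}_X}$. Therefore $\nu_k'$ sends the $R$-basis $\{(Z')^{{\bf t}'} : {\bf t}' \in \mathcal{B}_{\mathcal{D}_X'}\}$ of $\mathcal{Z}^{\rm mbl}_\omega(\mathcal{D}_X')$ into $\mathcal{Z}^{\rm mbl}_\omega(\mathcal{D}_X)$ up to $\omega$-scalars, restricting to an injective $R$-algebra homomorphism that extends uniquely to a map $\Fr(\mathcal{Z}^{\rm mbl}_\omega(\mathcal{D}_X')) \to \Fr(\mathcal{Z}^{\rm mbl}_\omega(\mathcal{D}_X))$.

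For surjectivity on the Fracs, I would treat $(\nu_k')^{-1}$ in the same fashion. Solving the generator formulas yields, on exponent vectors, $t'_v = t_v$ for $v \neq k$ and $t'_k = -t_k + \sum_{v \in \mathcal{V}}[Q(v,k)]_+ t_v$. A direct modular calculation using the mutation rule \eqref{eq-mutation-Q} and the identity $[a]_+ = \frac{1}{2}(a+|a|)$ --- separately for $u=k$, which reduces by antisymmetry to the $k$-th row condition in the hypothesis, and for $u\neq k$, where expansion of $Q'(u,v)$ causes the cross terms to collapse, leaving only $\sum_v Q(u,v)\,t_v + [Q(u,k)]_+ \sum_v Q(k,v)\,t_v$, both $\equiv 0 \pmod n$ by hypothesis --- confirms that ${\bf t}\in\mathcal{B}_{\mathcal{D}_X}$ forces ${\bf t}'\in\mathcal{B}_{\mathcal{D}_X'}$. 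Combined with the previous inclusion, this gives the required skew-field isomorphism.

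The principal subtlety is that $(\nu_k')^{-1}$ is \emph{not} the ``backward'' mutation $\nu_k'$ from $\mathcal{D}_X$ to $\mathcal{D}_X'$ (even though $\mathcal{D}_X = \mu_k(\mathcal{D}_X')$, since the $n$-th root formulas use only the positive parts $[Q(v,k)]_+$), so Lemma~\ref{lem-commute} does not apply verbatim in reverse. However, the direct modular computation sketched above is of exactly the same character as the proof of Lemma~\ref{lem-commute} itself and presents no real difficulty.
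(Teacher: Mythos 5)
The paper does not give a proof of this statement; it is imported verbatim from \cite[Lemma~3.8]{KimWang}, as the citation and the terminal \textit{q.e.d.}~symbol indicate. There is therefore no in-text proof to compare against, but your argument can be assessed on its own terms, and it is correct.

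Your two-step structure is the right one. For the forward direction, $\nu_k'$ acts on Weyl-ordered monomials by the affine exponent map $t_v=t'_v$ ($v\neq k$), $t_k=-t'_k+\sum_v [Q(v,k)]_+\,t'_v$, and this is precisely the setting of Lemma~\ref{lem-commute} (with $\mathbb{F}=\mathbb{Z}/n\mathbb{Z}$, $\mathcal{V}_0=\mathcal{V}_{\rm mut}$), which yields $\nu_k'\bigl(\mathcal{Z}^{\rm mbl}_\omega(\mathcal{D}_X')\bigr)\subset \mathcal{Z}^{\rm mbl}_\omega(\mathcal{D}_X)$ and hence an injective skew-field homomorphism on the fraction fields. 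Your flagged subtlety in the reverse direction is genuine and important: because both the forward map $\nu_k'\colon\Fr(\mathcal{Z}_\omega(\mathcal{D}_X'))\to\Fr(\mathcal{Z}_\omega(\mathcal{D}_X))$ and the ``backward'' map $\nu_k'\colon\Fr(\mathcal{Z}_\omega(\mathcal{D}_X))\to\Fr(\mathcal{Z}_\omega(\mathcal{D}_X'))$ use $[\cdot]_+$ of the ambient quiver's own entries, and $[Q'(v,k)]_+=[-Q(v,k)]_+\neq[Q(v,k)]_+$ in general, the two maps are not inverses, so Lemma~\ref{lem-commute} cannot simply be quoted with the roles of $Q$ and $Q'$ exchanged. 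One also cannot shortcut this via Lemma~\ref{lem:nu_involutation}, since that involution is for $\nu_k^\omega=\nu_k^{\sharp\omega}\circ\nu_k'$ and its proof already presupposes the present lemma. Your direct modular calculation closes the gap: expanding $\sum_v Q'(u,v)t'_v$ with $t'_k=-t_k+\sum_v[Q(v,k)]_+t_v$ and the mutation rule \eqref{eq-mutation-Q}, the half-integer cross terms collapse to
\[
\sum_{v} Q'(u,v)\,t'_v \;=\; \sum_{v} Q(u,v)\,t_v \;+\; [Q(u,k)]_+\!\sum_{v} Q(k,v)\,t_v,
\]
which is $\equiv 0\pmod n$ by the hypothesis on ${\bf t}$ and $u,k\in\mathcal{V}_{\rm mut}$; the case $u=k$ reduces to $-\sum_v Q(k,v)t_v$ and is likewise $0\bmod n$. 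This is exactly the calculation carried out inside the proof of Lemma~\ref{lem-commute} and presents no obstruction. Since the underlying exponent transformation is an integral bijection of $\mathbb{Z}^{\mathcal{V}}$ that sends the $\omega$-monomial basis to the $\omega$-monomial basis, the two inclusions assemble into a skew-field isomorphism, as required.

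A minor stylistic alternative you could note: because the exponent map is a $\mathbb{Z}$-linear bijection that preserves $n\mathbb{Z}^{\mathcal{V}}$, one could instead invoke Lemma~\ref{lem-commute} a second time for the \emph{backward} mutation $\nu_k'\colon\Fr(\mathcal{Z}_\omega(\mathcal{D}_X))\to\Fr(\mathcal{Z}_\omega(\mathcal{D}_X'))$ to get the reverse inclusion on cardinalities of the finite groups $\mathcal{B}_{\mathcal{D}_X}/n\mathbb{Z}^{\mathcal{V}}$ and $\mathcal{B}_{\mathcal{D}_X'}/n\mathbb{Z}^{\mathcal{V}}$, and then conclude by a counting argument. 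Your explicit calculation is just as short and has the advantage of being entirely self-contained, so there is no reason to prefer the counting route.
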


Define the quantum mutation $\nu_k^{\omega}$ for the $n$-th root Fock-Goncharov algebras, or more precisely for the skew-fields of fractions of their mutable-balanced subalgebras,  
to be the composition 
\begin{align}\label{eq-def-quantum-mutation-X}
\nu^\omega_k := \nu^{\sharp\omega}_k \circ \nu'_k ~:~ 
\Fr(
\mathcal{Z}^{\rm mbl}_\omega(\mathcal{D}_X'))\xrightarrow{\nu_k'}
      \Fr(
      \mathcal{Z}^{\rm mbl}_\omega(\mathcal{D}_X))\xrightarrow{\nu_k^{\sharp \omega}}
      \Fr(
      \mathcal{Z}^{\rm mbl}_\omega(\mathcal{D}_X)).
\end{align}

\begin{lemma}\cite[Lem~3.9]{KimWang}\label{lem:nu_extends_mu}
    The map $\nu^\omega_k$ extends $\mu^q_k$.
\end{lemma}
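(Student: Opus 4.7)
The plan is to decompose both maps according to their definitions
\[
\mu^q_k = \mu^{\sharp q}_k \circ \mu'_k, \qquad \nu^\omega_k = \nu^{\sharp\omega}_k \circ \nu'_k,
\]
and verify separately that (i) $\nu'_k$ extends $\mu'_k$ under the embedding $\mathcal{X}_q(\mathcal{D}_X') \hookrightarrow \mathcal{Z}^{\rm mbl}_\omega(\mathcal{D}_X')$, $X_v' \mapsto (Z_v')^n$, and (ii) $\nu^{\sharp\omega}_k$ extends $\mu^{\sharp q}_k$ under the embedding $\mathcal{X}_q(\mathcal{D}_X) \hookrightarrow \mathcal{Z}^{\rm mbl}_\omega(\mathcal{D}_X)$, $X_v \mapsto Z_v^n$. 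Putting these two compatibilities together then immediately yields the claim.

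For step (i), it suffices to check the identity $\nu'_k((Z_v')^n) = \mu'_k(X_v')$ on the algebra generators $X_v'$ of $\mathcal{X}_q(\mathcal{D}_X')$. When $v = k$ this is immediate since $\nu'_k((Z_k')^n) = Z_k^{-n} = X_k^{-1} = \mu'_k(X_k')$. When $v \neq k$, set $m = [Q(v,k)]_+$. Then $\nu'_k((Z_v')^n) = [Z_v Z_k^{m}]^n$ in $\mathcal{Z}_\omega(\mathcal{D}_X)$, and a short computation using $Z_v Z_k^m = \omega^{2mQ(v,k)} Z_k^m Z_v$ together with the Weyl-normalization factor from \eqref{Weyl_ordering-Z-1} shows that this equals $q^{-mQ(v,k)} X_v X_k^m = [X_v X_k^m]$, where the last expression is the Weyl-ordered product in $\mathcal{X}_q(\mathcal{D}_X)$. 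This matches $\mu'_k(X_v')$ from \eqref{eq-quantum-mutation}.

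For step (ii), I would use the characterizing formula of $\nu^{\sharp\omega}_k$ in Lemma~\ref{lem.nu_sharp_well-defined} applied to vectors of the form $n{\bf t}$ with ${\bf t} \in \mathbb{Z}^{\mathcal{V}}$. Such vectors automatically lie in $\mathcal{B}_{\mathcal{D}_X}$, because $\sum_v Q(u,v) (n t_v) \equiv 0 \pmod n$ for every $u$, so the lemma applies and gives
\[
\nu^{\sharp\omega}_k(Z^{n{\bf t}}) = Z^{n{\bf t}} \, F^q\!\bigl(X_k,\, {\textstyle\tfrac{1}{n}}\sum_{v\in\mathcal{V}} Q(k,v)\, n t_v\bigr) = X^{{\bf t}} \, F^q\!\bigl(X_k,\, {\textstyle\sum_{v\in\mathcal{V}}} Q(k,v) t_v\bigr).
\]
On the other hand, since $X_k X^{\bf t} = q^{2\sum_v Q(k,v) t_v} X^{\bf t} X_k$, Lemma~\ref{lem-F-P} gives $\mu^{\sharp q}_k(X^{\bf t}) = \mathrm{Ad}_{\Psi^q(X_k)}(X^{\bf t}) = X^{\bf t} F^q(X_k, \sum_v Q(k,v) t_v)$, which coincides with the above. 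Since the monomials $\{X^{\bf t}\}_{{\bf t}\in\mathbb{Z}^{\mathcal{V}}}$ form an $R$-basis of $\mathcal{X}_q(\mathcal{D}_X)$, this proves that $\nu^{\sharp\omega}_k$ agrees with $\mu^{\sharp q}_k$ on $\Fr(\mathcal{X}_q(\mathcal{D}_X))$.

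There is no real obstacle; the only care needed is in the Weyl-order bookkeeping in step (i), ensuring the factor $\omega^{-nmQ(v,k)}$ from the outer Weyl bracket combines correctly with the $\omega^{-mQ(v,k)n(n-1)}$ generated by reordering $n$ copies of $Z_v Z_k^m$, to yield $\omega^{-mQ(v,k) n^2} = q^{-mQ(v,k)}$, the correct Weyl factor for $[X_v X_k^m]$.
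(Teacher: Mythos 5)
Your proof is correct. Note that this paper does not supply its own proof of the lemma; it simply cites \cite[Lem.~3.9]{KimWang}. Your argument — decomposing $\mu^q_k = \mu^{\sharp q}_k \circ \mu'_k$ and $\nu^\omega_k = \nu^{\sharp\omega}_k \circ \nu'_k$, checking $\nu'_k|_{\mathcal X_q} = \mu'_k$ on generators (with the Weyl-factor bookkeeping $\omega^{-nmQ(v,k)}\cdot\omega^{-2mQ(v,k)\binom{n}{2}} = \omega^{-n^2 m Q(v,k)} = q^{-mQ(v,k)}$ coming out right), and then checking $\nu^{\sharp\omega}_k|_{\Fr(\mathcal X_q)} = \mu^{\sharp q}_k$ by comparing the formula of Lemma~\ref{lem.nu_sharp_well-defined} on vectors $n\mathbf t$ (which lie in $\mathcal B_{\mathcal D_X}$ for trivial reasons) against Lemma~\ref{lem-F-P} applied to $y = X^{\mathbf t}$ — is exactly the verification that the preliminary lemmas in \S\ref{subsec-mutation-Fock} are designed to make immediate, and is surely the proof in the cited reference.
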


\begin{lemma}\label{lem:nu_involutation}
    The map $\nu^\omega_k\circ \nu^\omega_k$ equals identity.
\end{lemma}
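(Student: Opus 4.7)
The plan is to verify the involution identity on the $R$-spanning set $\{Z^{{\bf t}} : {\bf t} \in \mathcal{B}_{\mathcal{D}_X}\}$ of $\ZM(\mathcal{D}_X)$. Since $\nu_k^\omega$ is a skew-field isomorphism (Lemmas~\ref{lem.nu_sharp_well-defined} and \ref{lem.nu_prime_restricts}) and $\mu_k$ is a classical involution on quivers, the composition $\nu_k^\omega \circ \nu_k^\omega$ is a well-defined skew-field endomorphism of $\Fr(\ZM(\mathcal{D}_X))$, and it suffices to check $\nu_k^\omega(\nu_k^\omega(Z^{{\bf t}})) = Z^{{\bf t}}$ on each such monomial. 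I would do this by a direct computation using the decomposition $\nu_k^\omega = \nu_k^{\sharp\omega} \circ \nu_k'$ from \eqref{eq-def-quantum-mutation-X}.

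First, I would apply $\nu_k^\omega$ once. The monomial map $\nu_k'$ sends $Z^{{\bf t}}$ to a Weyl-ordered monomial $(Z')^{{\bf t}^{(1)}}$ with ${\bf t}^{(1)}_v = t_v$ for $v \neq k$ and ${\bf t}^{(1)}_k = -t_k + \sum_v t_v [Q'(v,k)]_+$. Lemma~\ref{lem.nu_sharp_well-defined} then gives
\[
\nu_k^\omega(Z^{{\bf t}}) = (Z')^{{\bf t}^{(1)}} F^q(X'_k, m_1), \qquad m_1 = \tfrac{1}{n}\sum_v Q'(k,v)\, {\bf t}^{(1)}_v = -\tfrac{1}{n}\sum_v Q(k,v) t_v,
\]
the last equality using $Q'(k,v) = -Q(k,v)$ and $Q'(k,k)=0$. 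For the second application (now from $\mathcal{D}_X'$ to $\mathcal{D}_X$), I would use $\nu_k'(X'_k) = X_k^{-1}$ together with the elementary identity $[a]_+ - [-a]_+ = a$ to verify that the new exponent vector is ${\bf t}^{(2)} = {\bf t} + n m_1 {\bf e}_k$ and that $m_2 := \tfrac{1}{n}\sum_v Q(k,v) {\bf t}^{(2)}_v = -m_1$. Combined with the fact that $\nu_k^{\sharp\omega}$ fixes $X_k$, this produces
\[
\nu_k^\omega\bigl(\nu_k^\omega(Z^{{\bf t}})\bigr) = Z^{{\bf t}^{(2)}} F^q(X_k, -m_1) F^q(X_k^{-1}, m_1).
\]

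The conclusion follows from two scalar identities that I would establish in parallel. The Weyl-ordering relation $Z^{{\bf a}} Z^{{\bf b}} = \omega^{{\bf a} Q {\bf b}^T} Z^{{\bf a} + {\bf b}}$ with ${\bf a} = {\bf t}$ and ${\bf b} = n m_1 {\bf e}_k$ yields ${\bf a}Q{\bf b}^T = n^2 m_1^2$, hence $Z^{{\bf t}^{(2)}} = q^{-m_1^2} Z^{{\bf t}} X_k^{m_1}$. Since $X_k^{m_1}$, $F^q(X_k, -m_1)$ and $F^q(X_k^{-1}, m_1)$ are all rational functions of $X_k$ and therefore commute, the telescoping factorization
\[
(1 + q^{-(2r-1)} X_k)^{-1}(1 + q^{2r-1} X_k^{-1}) = q^{2r-1} X_k^{-1}
\]
(with the analogous manipulation for $m_1 < 0$), combined with $\sum_{r=1}^{|m_1|}(2r-1) = m_1^2$, gives
\[
X_k^{m_1} F^q(X_k, -m_1) F^q(X_k^{-1}, m_1) = q^{m_1^2}.
\]
The scalar $q^{-m_1^2}$ therefore cancels $q^{m_1^2}$, leaving exactly $Z^{{\bf t}}$.

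The main technical hurdle is the exact bookkeeping of these two scalars. Their cancellation depends on $n m_1 \in \mathbb{Z}$, which is precisely the condition ${\bf t} \in \mathcal{B}_{\mathcal{D}_X}$; in this sense the involutivity is built into the mutable-balanced framework, and the same condition is what makes $\nu_k^{\sharp\omega}$ well-defined on $\Fr(\ZM(\mathcal{D}_X))$ in the first place. Outside the mutable-balanced subalgebra both halves of the argument break, so keeping the calculation inside $\mathcal{B}_{\mathcal{D}_X}$ is essential.
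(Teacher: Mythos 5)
Your proof is correct and follows the same line of reasoning that the paper's proof gestures at: the paper simply cites the well-known classical involutivity $\mu_k^q\circ\mu_k^q=\mathrm{id}$ and asserts that a ``similar argument'' works for $\nu_k^\omega$, whereas you carry out that argument explicitly. I verified the bookkeeping: the exponent update ${\bf t}^{(2)}={\bf t}+nm_1{\bf e}_k$ via $[a]_+-[-a]_+=a$, the sign flip $m_2=-m_1$, the Weyl-ordering scalar ${\bf t}Q{\bf e}_k^T=nm_1$ giving $Z^{{\bf t}^{(2)}}=q^{-m_1^2}Z^{\bf t}X_k^{m_1}$, and the telescoping identity $X_k^{m_1}F^q(X_k,-m_1)F^q(X_k^{-1},m_1)=q^{m_1^2}$ all check out, and the noncommutativity is handled correctly (the factor multiplying $Z^{\bf t}$ on the right is a central scalar).

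One small inaccuracy in the closing remark: you write that the cancellation depends on $nm_1\in\mathbb Z$, but $nm_1=-\sum_v Q(k,v)t_v$ is automatically an integer since $k$ is mutable. What balancedness actually buys you is $m_1\in\mathbb Z$ (the condition $\sum_v Q(k,v)t_v\equiv 0\pmod n$ from Definition~\ref{def-mut-Z} at $u=k$), which is what makes the argument $m$ in $F^q(\cdot,m)$ a well-defined integer; that is the essential constraint. Also ``precisely the condition ${\bf t}\in\mathcal B_{\mathcal D_X}$'' overstates it slightly---balancedness imposes the mod-$n$ condition at every mutable vertex, while your computation only uses it at $k$. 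Neither point affects the validity of the proof.
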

\begin{proof}
For any ${\bf k}\in \mathbb Z^{\mathcal V}$, it is well-known that
$$\mu^q_k(\mu^q_k(X^{\bf k}))=X^{\bf k}\in  \Fr(\mathcal{X}_q(\mathcal{D}_X)).$$
 A similar argument shows that
 $$\nu^\omega_k(\nu^\omega_k(Z^{\bf t}))=Z^{\bf t}\in  \Fr(\mathcal{Z}^{\rm mbl}_\omega
 (\mathcal{D}_X))$$
  for any ${\bf k}\in \mathcal B_{\mathcal D_X}$.
\end{proof}

    In order to apply to our situation, consider a triangulable pb surface $\frak{S}$ with a triangulation $\lambda$.
    Letting $\mathcal{V} = V_\lambda$ and $\mathcal{V}_{\rm mut}$ be the subset of $V_\lambda$ consisting of the vertices contained in the interior of $\frak{S}$, one obtains a cluster $\mathcal{X}$-seed $\mathcal{D}_{\lambda} := (\Gamma_\lambda,(X_v)_{v\in V_\lambda})$.
    We denote the $n$-root Fock-Goncharov algebra and its mutable-balanced subalgebra as
    $$
    \mathcal{Z}_\omega(\fS,\lambda) = \mathcal{Z}_\omega(\mathcal{D}_{\lambda}), \qquad
    \mathcal{Z}_\omega^{\rm mbl}(\fS,\lambda) = \mathcal{Z}^{\rm mbl}_\omega(\mathcal{D}_{\lambda}).
$$
On the other hand, recall the balanced subalgebra $\mathcal{Z}^{\rm bl}_\omega(\fS,\lambda)$ of $\mathcal{Z}_\omega(\frak{S}, \lambda)$ defined in \eqref{Z_bl_omega}.

\begin{lemma}\cite[Lemma~3.11]{KimWang}
\label{lem:bl_in_mbl}
  We have $\mathcal{Z}^{\rm bl}_\omega(\fS,\lambda) \subset \mathcal{Z}^{\rm mbl}_\omega(\fS,\lambda)$.
\end{lemma}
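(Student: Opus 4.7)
The plan is to reduce the algebra containment to the corresponding subgroup containment $\mathcal{B}_\lambda \subseteq \mathcal{B}_{\mathcal{D}_\lambda}$ inside $\mathbb{Z}^{V_\lambda}$. By the definition \eqref{Z_bl_omega} of $\mathcal{Z}^{\rm bl}_\omega(\fS,\lambda)$ and by Definition \ref{def-mut-Z}, both algebras are $R$-spans of Weyl-ordered monomials $Z^{\bf k}$ indexed by the respective subgroups; since the family $\{Z^{\bf k}\mid {\bf k}\in\mathbb{Z}^{V_\lambda}\}$ is an $R$-basis of the ambient quantum torus $\mathcal{Z}_\omega(\fS,\lambda)$, the algebra inclusion is equivalent to the subgroup inclusion. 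The mutable vertex set is $\mathcal{V}_{\rm mut}=\mathring V_\lambda$, the small vertices lying in the interior of $\fS$.

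For the subgroup inclusion, I would invoke Lemma~\ref{lem-balanced-H} (under the no-interior-punctures hypothesis that is in force throughout the paper): for ${\bf k}\in\mathcal{B}_\lambda$ we have ${\bf k}H_\lambda \in (n\mathbb Z)^{V_\lambda}$. Pick any $u\in\mathcal{V}_{\rm mut}$. Since $u$ does not lie on any boundary edge of $\lambda$, for every $v\in V_\lambda$ the pair $(v,u)$ is not contained in a common boundary edge, so the first case of the definition of $H_\lambda$ (equation \eqref{def-H-Q-interior}) applies and gives $H_\lambda(v,u)=Q_\lambda(v,u)$. Using antisymmetry of $Q_\lambda$, I then compute
\begin{align*}
({\bf k}H_\lambda)_u \;=\; \sum_{v\in V_\lambda} k_v\, H_\lambda(v,u) \;=\; \sum_{v\in V_\lambda} k_v\, Q_\lambda(v,u) \;=\; -\sum_{v\in V_\lambda} Q_\lambda(u,v)\, k_v.
\end{align*}
The left-hand side lies in $n\mathbb Z$, hence $\sum_{v\in V_\lambda} Q_\lambda(u,v)k_v \equiv 0\pmod n$, which is exactly the condition ${\bf k}\in\mathcal{B}_{\mathcal{D}_\lambda}$ from Definition~\ref{def-mut-Z}. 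This completes the argument.

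The main obstacle, should one wish to avoid Lemma~\ref{lem-balanced-H} or to treat the case with interior punctures, is to verify the mod-$n$ divisibility of $\sum_v Q_\lambda(u,v)k_v$ directly on a generating set of $\mathcal{B}_\lambda$. The trivial generators $n\mathbf{e}_v$ pose no difficulty. For a generator supported on a single face $\tau$ whose restriction to $\tau$ is a barycentric coordinate function ${\bf k}_i\in\mathcal{B}_{\mathbb{P}_3}$, one would split into two cases: if $u$ lies in the interior of a single face $\tau$, the sum reduces to the analogous computation on $\Gamma_{\mathbb{P}_3}$; if $u$ lies on an internal ideal arc shared by faces $\tau_1,\tau_2$, the boundary-arrow contributions along that arc cancel (two weight-$\tfrac12$ arrows in opposite directions), and one gets $\sum_v Q_\lambda(u,v)k_v = \sum_{i=1,2}\sum_{v\in V_{\tau_i}} Q_{\tau_i}(u,v)k_v$, which is then handled face-by-face through a direct inspection of how the arrows out of $u$ interact with the three barycentric functions.
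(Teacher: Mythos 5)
The paper does not supply its own argument for Lemma~\ref{lem:bl_in_mbl}: it is stated as a citation to \cite[Lemma~3.11]{KimWang} with no proof, so there is nothing internal to compare against line by line. Evaluating your proof on its own merits: the reduction to the containment $\mathcal{B}_\lambda\subseteq\mathcal{B}_{\mathcal{D}_\lambda}$ is the right move, since both sides are by definition the $R$-span of the monomials $Z^{\bf k}$ over the respective subgroups and these monomials form a basis of $\mathcal{Z}_\omega(\fS,\lambda)$. Your main computation is also correct: for $u\in\mathcal{V}_{\rm mut}=\mathring V_\lambda$ the vertex $u$ is off the boundary, so \eqref{def-H-Q-interior} gives $H_\lambda(v,u)=Q_\lambda(v,u)$ for every $v$; combined with antisymmetry of $Q_\lambda$ this turns the hypothesis ${\bf k}H_\lambda\in(n\mathbb Z)^{V_\lambda}$ from Lemma~\ref{lem-balanced-H} into exactly the defining congruence $\sum_v Q_\lambda(u,v)k_v\equiv 0\pmod n$ of $\mathcal{B}_{\mathcal{D}_\lambda}$.

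The one gap you should be more explicit about is scope, not logic. Lemma~\ref{lem-balanced-H} is stated in the paper only under the hypothesis that $\fS$ has no interior punctures, whereas Lemma~\ref{lem:bl_in_mbl} as written carries no such restriction, and the original \cite[Lemma~3.11]{KimWang} presumably does not either. You flag this in passing ("under the no-interior-punctures hypothesis that is in force throughout the paper"), but at the point in \S\ref{subsec-mutation-Fock} where the lemma actually appears there is no blanket no-interior-punctures standing assumption; that assumption only enters globally from \S\ref{sec-seed-structure} onward. So as stated your argument proves a slightly weaker statement than the lemma claims. Your concluding sketch — checking the congruence face-by-face on the generators ${\bf k}_i$ of $\mathcal{B}_{\mathbb P_3}$ and $(n\mathbb Z)^{V_{\mathbb P_3}}$, with the observation that the two weight-$\tfrac12$ arrows on an internal arc cancel so the sum over $\Gamma_\lambda$ decomposes into sums over the incident triangles — is the right way to make the argument unconditional, and it has the added benefit of not routing through $H_\lambda$ (which is itself only set up in the no-interior-punctures context in this paper). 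I would promote that paragraph from a remark to the actual proof if you want the statement at the generality it is quoted.
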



\subsection{The naturality of the quantum trace map}\label{sec-naturality-trace}

Let $\fS$ be a triangulable pb surface with a triangulation $\lambda$. Suppose $e\in\lambda$ is not a boundary edge. 
There exists a unique idea arc $e'$ such that $e'\neq e$ and $\lambda':=(\lambda\setminus\{e\})\cup\{e'\}$ is a triangulation of $\fS$.
We call that $\lambda$ and $\lambda'$ are obtained from each other by a {\bf flip}.
In \cite{FG06,GS19}, the authors showed that $\Gamma_{\lambda'}$ and $\Gamma_\lambda$ are related to each
other by a special sequence of mutations.

Let $V_{\lambda;e}$ be the set of all vertices of $V_\lambda$ that either lie in the interior of one of these two triangles or lie on $e$; so $V_{\lambda;e}$ consists of $(n-1)^2$ vertices. For each $i=0,1,2,\ldots,n-2$, we will define a subset $V_{\lambda;e}^{(i)}$ of $V_{\lambda;e}$. Draw this ideal quadrilateral for $\lambda$ as in Figure \ref{Fig;mutation_sequence_for_flip}, so that $e$ is the `middle vertical line', and after flipping at $e$, the flipped $e$ would be the `middle horizontal line'. As in Figure \ref{Fig;mutation_sequence_for_flip}, for each vertex $v$ in $V_{\lambda,e}$ one can define the vertical distance $d_{\rm vert}(v) \in \mathbb{N}$ from the middle horizontal line, and the horizontal distance $d_{\rm hori}(v) \in \mathbb{N}$ from the middle vertical line. Define
$$
V_{\lambda;e}^{(i)} := \left\{v\in V_{\lambda;e} \, \left| \, \begin{array}{ll} d_{\rm vert}(v) \le i, &  d_{\rm vert}(v) \equiv i \, (\mbox{mod } 2), \\ d_{\rm hori}(v) \le n-2-i, & d_{\rm hori}(v) \equiv n-2-i (\mbox{mod } 2) \end{array} \right. \right\}.
$$
See Figure \ref{Fig;mutation_sequence_for_flip}; so each $V^{(i)}_{\lambda;e}$ can be viewed as forming the grid points of a quadrilateral, consisting of $(i+1)(n-i-1)$ points. Notice that these sets for different $i$ are not necessarily disjoint with each other. For example, if $n\ge 4$, then $V_{\lambda;e}^{(0)}$ and $V_{\lambda;e}^{(2)}$ have $n-3$ vertices in common.

\begin{figure}[h]
    \centering
    \scalebox{1.0}{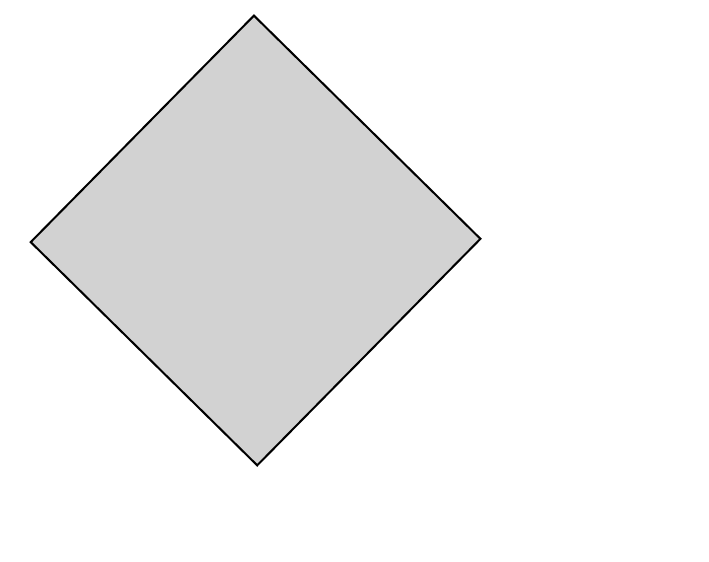}
    \caption{Vertices of the $n$-triangulation quiver involved in the flip of triangulations; the above example picture is for the case $n=4$}\label{Fig;mutation_sequence_for_flip}
\end{figure}

The sought-for mutation sequence of Fock, Goncharov and Shen consists of first mutations at all vertices of $V_{\lambda;e}^{(0)}$ in any order, then mutations at all vertices of $V_{\lambda;e}^{(1)}$ in any order, etc., then lastly mutations at all vertices of $V_{\lambda;e}^{(n-2)}$ in any order. The length of this mutation sequence is $\sum_{i=0}^{n-2} (i+1)(n-1-i) = \sum_{j=1}^{n-1} j(n-j) = \frac{1}{6}(n^3-n) =:r$. Let us denote the corresponding sequence of vertices as
$$
v_1,v_2,\ldots,v_r.
$$
So, the first $n-1$ of them would be the elements of $V^{(0)}_{\lambda;e}$, and we have
\begin{align}
\label{two_seeds_connected_by_sequence_of_mutations}
\mathcal{D}_{{\lambda'}} = \mu_{v_r} \cdots \mu_{v_2} \mu_{v_1} (\mathcal{D}_{\lambda})   \end{align}
and in particular,
\begin{align}\label{eq-mutation-flip-quiver}
    \Gamma_{\lambda'} = \mu_{v_r} \cdots \mu_{v_2} \mu_{v_1} (\Gamma_\lambda),  \quad
Q_{\lambda'} = \mu_{v_r} \cdots \mu_{v_2} \mu_{v_1} (Q_\lambda).
\end{align}

Note that, at each mutation step $\mu_{v_i}$ for $1\leq i\leq r$, the number of arrows between any two vertices on the same boundary component of $\fS$ never changes (see \cite[Figure~10.3]{FG06}). 
For any $0\leq i\leq r$, define 
$Q_\lambda^{(i)}:=
\mu_{v_i} \cdots \mu_{v_2} \mu_{v_1} (Q_\lambda).$
Then 
\begin{align}\label{def-equal-QiQ}
    Q_\lambda^{(i)}(u,v) = Q_\lambda(u,v)
\end{align}
for any two vertices $u,v$ on the same boundary component of $\fS$.

 The $n$-root balanced version of the quantum coordinate change isomorphism is defined to be
\begin{equation}\label{eq-Theta2}
\Theta_{
\lambda \lambda'}^{\omega}:=\nu_{v_1}^{\omega}\circ\cdots\circ\nu_{v_r}^{\omega}\colon\Fr(\mathcal Z_\omega^{\rm mbl}(\fS,\lambda'))\rightarrow 
\Fr(\mathcal Z_\omega^{\rm mbl}(\fS,\lambda)).
\end{equation}

Suppose now that $\lambda$ and $\lambda'$ are any two triangulations of $\fS$. A {\bf triangulation sweep} connecting 
$\lambda$ and $\lambda'$ is a sequence of triangulations
$\Lambda=(\lambda_1,\cdots,\lambda_m)$ such that 
$\lambda_1=\lambda$, $\lambda_m=\lambda'$, and $\lambda_{i+1}$ is obtained from $\lambda_i$ by a flip for each $1\leq i\leq m-1$. It is well known that for any $\lambda$ and $\lambda'$, there exists a triangulation sweep $\Lambda$ connecting $\lambda$ and $\lambda'$ (\cite{Lab09}). For any such triangulation sweep $\Lambda$, we define the corresponding $\mathcal{X}$-version quantum coordinate change isomorphism
\begin{equation}\label{eq-Theta-change2}
\Theta_{\Lambda}^{\omega}  :=\Theta_{
\lambda_1\lambda_2}^{\omega}\circ\cdots\circ\Theta_{
\lambda_{m-1}\lambda_m}^{\omega}\colon\Fr(
\mathcal{Z}^{\rm mbl}_\omega(\fS,\lambda'))\rightarrow 
\Fr(
\mathcal{Z}^{\rm mbl}_\omega(\fS,\lambda)).
\end{equation}

\begin{proposition}\cite[Proposition~3.14]{KimWang}
\label{prop:Theta_omega_consistency}
    $\Theta^\omega_\Lambda$ depends only on $\lambda$ and $\lambda'$. 
\end{proposition}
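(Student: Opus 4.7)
The plan is to reduce to an invariance check under the two types of elementary moves that generate all relations among triangulation sweeps, and then verify each move by lifting the corresponding identity from the ordinary Fock--Goncharov algebra $\mathcal{X}_q$ to its mutable-balanced $n$-th root extension $\mathcal{Z}^{\rm mbl}_\omega$.

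First, I would recall (from work of Penner, Hatcher--Thurston in the surface setting, and its extension to the $\mathrm{SL}_n$ flip graph by Fock--Goncharov and Goncharov--Shen) that any two triangulation sweeps connecting $\lambda$ and $\lambda'$ can be transformed into each other by finitely many elementary moves of two kinds: the \emph{pentagon move}, involving five consecutive flips inside a pentagonal region, and the \emph{commutation move}, where two flips at disjoint ideal arcs are performed in either order. Accordingly, it suffices to establish that $\Theta^\omega_\Lambda = \Theta^\omega_{\Lambda'}$ whenever $\Lambda$ and $\Lambda'$ differ by a single such elementary move.

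Second, I would invoke the classical analogue of the statement at the level of the Fock--Goncharov algebra: the composition $\Theta^q_\Lambda$ built from the $\mu^q_k$ (in place of the $\nu^\omega_k$) depends only on $\lambda$ and $\lambda'$. The commutation case is immediate because $\mu^q_k$ and $\mu^q_{k'}$ commute when the vertices $k$ and $k'$ are mutually non-adjacent throughout the process, which is precisely the disjoint-arc case. The pentagon case reduces, after packaging each flip into its length-$\tfrac{1}{6}(n^3-n)$ sequence of mutations $v_1,\dots,v_r$, to the Faddeev--Kashaev quantum pentagon identity
\[
\Psi^q(Y)\,\Psi^q(X) \;=\; \Psi^q(X)\,\Psi^q(q^{-1}XY)\,\Psi^q(Y), \qquad XY = q^2 YX,
\]
together with the matching identity of the intertwining monomial transformations. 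This classical computation is carried out in Fock--Goncharov \cite{FG09a} in the cluster setting.

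Third, I would transport each elementary identity from $\mathcal{X}_q$ to $\mathcal{Z}^{\rm mbl}_\omega$. Writing $\nu^\omega_k = \nu^{\sharp\omega}_k \circ \nu'_k$ per \eqref{eq-def-quantum-mutation-X}, observe that the automorphism part $\nu^{\sharp\omega}_k = \mathrm{Ad}_{\Psi^q(X_k)}$ is inner conjugation by \emph{the same} element $\Psi^q(X_k) \in \mathcal{X}_q \subset \mathcal{Z}^{\rm mbl}_\omega$ that defines $\mu^{\sharp q}_k$, while the monomial part $\nu'_k$ is given on generators by \emph{the same formula} \eqref{eq-quantum-mutation_Z} as $\mu'_k$. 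Consequently, any equality between two compositions of the $\mu^q_k$'s, expressed as $\mathrm{Ad}_{\Phi_1} \circ T_1 = \mathrm{Ad}_{\Phi_2} \circ T_2$ with $\Phi_i$ a product of quantum dilogarithms in $\mathcal{X}_q$ and $T_i$ a monomial transformation, lifts verbatim to the corresponding equality between the compositions of the $\nu^\omega_k$'s on $\mathrm{Frac}(\mathcal{Z}^{\rm mbl}_\omega)$, provided the intermediate stages remain well-defined there. This well-definedness is exactly what Lemmas~\ref{lem.nu_sharp_well-defined} and \ref{lem.nu_prime_restricts} guarantee.

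The main obstacle I anticipate is the bookkeeping required to track the factorized form $\mathrm{Ad}_{\Phi_\Lambda} \circ T_\Lambda$ of $\Theta^\omega_\Lambda$ across a full sweep, since each flip contributes $\tfrac{1}{6}(n^3-n)$ individual mutations and the quantum dilogarithm factors do not commute in general. The resolution is precisely the lifting principle above: one never needs to do this bookkeeping at the $n$-th root level, only once on $\mathcal{X}_q$, where the classical pentagon and commutation identities suffice.
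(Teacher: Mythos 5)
This statement is cited from Kim--Wang (\cite[Proposition~3.14]{KimWang}); the present paper does not reprove it, so there is no proof in this paper to compare against. Your strategy --- reduce to pentagon and commutation relations among triangulation sweeps, invoke the classical consistency of $\Theta^q_\Lambda$ on $\mathcal{X}_q$ due to Fock--Goncharov, and lift to $\mathcal{Z}^{\rm mbl}_\omega$ via the fact that $\nu^\omega_k$ extends $\mu^q_k$ --- is the natural one and very plausibly the route taken in Kim--Wang.

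There is, however, a gap in the lifting step as you state it. You assert that an equality $\mathrm{Ad}_{\Phi_1}\circ T_1 = \mathrm{Ad}_{\Phi_2}\circ T_2$ on $\mathrm{Frac}(\mathcal{X}_q)$ "lifts verbatim" to $\mathrm{Frac}(\mathcal{Z}^{\rm mbl}_\omega)$. But equality of two skew-field automorphisms restricted to the subalgebra $\mathcal{X}_q \subset \mathcal{Z}^{\rm mbl}_\omega$ does not by itself force equality on the larger algebra: the $n$-th roots $Z_v$ are not determined by their $n$-th powers $X_v = Z_v^n$, so a priori $\Theta^\omega_\Lambda$ and $\Theta^\omega_{\Lambda'}$ could agree on $\mathcal{X}_q$ and differ elsewhere. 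What actually makes the lift go through is the \emph{stronger} classical fact: for the pentagon and commutation moves one has equality of the quantum dilogarithm factors themselves, $\Phi_\Lambda = \Phi_{\Lambda'}$ (Faddeev--Kashaev pentagon identity), and equality of the associated integer monomial matrices, $E_\Lambda = E_{\Lambda'}$, separately --- not merely equality of the composed maps on $\mathcal{X}_q$. Since $\nu'_k$ acts on the $Z$-generators via the same integer matrix $E_k$ with which $\mu'_k$ acts on $X$-generators (compare \eqref{eq-quantum-mutation_Z} with \eqref{eq-quantum-mutation}), and since $\nu^{\sharp\omega}_k = \mathrm{Ad}_{\Psi^q(X_k)}$ conjugates by the same element of $\mathcal{X}_q$ as $\mu^{\sharp q}_k$, these two separate equalities do transport to $\mathcal{Z}^{\rm mbl}_\omega$. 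You should reformulate the lifting principle to invoke the separate equalities of $\Phi$'s and of integer matrices, and also check explicitly that when reorganizing the composition into (automorphism)$\circ$(monomial), the conjugating elements stay in $\mathcal{X}_q$ --- which holds because the image of $X_{k} = Z_{k}^n$ under any monomial transformation built from the $E_k$'s is again an $n$-th power of a $Z$-monomial, hence lies in $\mathcal{X}_q$.
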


Therefore, when we do not have to keep track of a specific triangulation sweep, we can write
$$
\Theta^\omega_{\lambda\lambda'} = \Theta^\omega_\Lambda.
$$

\begin{proposition}\cite[Proposition~4.9]{KimWang}\label{Prop-rest-bal}
    The isomorphism $$\Theta_{\lambda\lambda'}^{\omega}\colon\Fr(
\mathcal{Z}^{\rm mbl}_\omega(\fS,\lambda'))\rightarrow 
\Fr(
\mathcal{Z}^{\rm mbl}_\omega(\fS,\lambda))$$ restricts to an isomorphism 
$$\Theta_{\lambda\lambda'}^{\omega}\colon\Fr(
\mathcal{Z}^{\rm bl}_\omega(\fS,\lambda'))\rightarrow 
\Fr(
\mathcal{Z}^{\rm bl}_\omega(\fS,\lambda)).$$
\end{proposition}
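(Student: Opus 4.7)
The plan is to reduce the claim to a local verification after a single flip. By Proposition~\ref{prop:Theta_omega_consistency}, since $\Theta^\omega_{\lambda\lambda'}$ depends only on the pair $(\lambda,\lambda')$, I may assume $\lambda'$ is obtained from $\lambda$ by a single flip at an interior edge $e$. The defining mutation sequence $v_1,\ldots,v_r$ acts only at vertices of $V_{\lambda;e}$ inside the flip quadrilateral $Q$, and by \eqref{def-equal-QiQ} the arrows among vertices on the boundary of $Q$ remain unchanged throughout. Since balancedness is a face-local condition (via pullbacks $f_\tau^{*}$), the pullback to any face outside $Q$ is untouched; hence the proposition reduces to the local case $\fS=\mathbb P_4$.

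In the local setting, I decompose each mutation as $\nu^\omega_{v_i}=\nu^{\sharp\omega}_{v_i}\circ\nu'_{v_i}$ via \eqref{eq-def-quantum-mutation-X}. The automorphism part acts on $Z^{\bf t}$ by multiplication by $F^q(X_{v_i},m)$, and because $X_{v_i}=Z_{v_i}^n$, this only adds integer multiples of $n$ to the $v_i$-coordinate of the exponent vector; such shifts lie in $(n\mathbb Z)^{V_\lambda}\subset\mathcal B_\lambda$ and hence trivially preserve balancedness. Consequently, the substantive content is to show that the monomial composition $\nu'_{v_1}\circ\cdots\circ\nu'_{v_r}$ carries the subgroup $\mathcal B_{\lambda'}$ into $\mathcal B_\lambda$.

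For this purely combinatorial statement on $\mathbb P_4$, I would use Lemma~\ref{lem-balanced-H} (applicable since $\mathbb P_4$ has no interior punctures) to recast balancedness as the kernel condition ${\bf t}H_\lambda\in(n\mathbb Z)^{V_\lambda}$. Each $\nu'_{v_i}$ acts on exponent vectors by the explicit piecewise-linear transformation of \eqref{eq-quantum-mutation_Z}, so the claim becomes a congruence-mod-$n$ check following the FGS sequence of Figure~\ref{Fig;mutation_sequence_for_flip}. The main obstacle will be the bookkeeping through the $r=\tfrac{1}{6}(n^3-n)$ steps. I would handle this by induction on the vertical levels $V^{(0)}_{\lambda;e},V^{(1)}_{\lambda;e},\ldots,V^{(n-2)}_{\lambda;e}$: the mutations within a single level mutually commute, and the effect of moving from one level to the next can be tracked by a uniform local rule coming from the symmetry of the quadrilateral and from the constancy of boundary arrows noted above. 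Applying the same argument to the inverse flip produces the reverse inclusion, so the two restrictions are mutually inverse and together give the asserted isomorphism.
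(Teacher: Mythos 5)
Your reduction to a single flip is fine, and your intuition that the automorphism parts $\nu^{\sharp\omega}_{v_i}$ are ``harmless'' for balancedness (since $F^q(X_{v_i},m)$ lives in $\mathcal X_q\subset\mathcal Z^{\rm bl}_\omega$ and only shifts exponents by multiples of $n$ at the vertex $v_i$) is essentially the same observation the paper uses. However, the step ``Consequently, the substantive content is to show that the monomial composition $\nu'_{v_1}\circ\cdots\circ\nu'_{v_r}$ carries the subgroup $\mathcal B_{\lambda'}$ into $\mathcal B_\lambda$'' misstates what must be verified. The maps $\nu'_{v_i}$ and $\nu^{\sharp\omega}_{v_i}$ are interleaved in $\Theta^\omega_{\lambda\lambda'}$, and each $\nu^{\sharp\omega}_{v_i}$ is defined (and needs to preserve something) on the skew-field of a specific \emph{intermediate} mutable-balanced subalgebra at that stage of the sequence. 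To conclude that the whole composition restricts, you need to produce, for each intermediate stage, a lattice containing $(n\mathbb Z)^{V_\lambda}$ that sits inside the relevant $\mathcal B_{\mathcal D^{\bv}}$, and then show that each $\nu'_{v_i}$ carries one such intermediate lattice to the next. Knowing only that the first lattice is $\mathcal B_{\lambda'}$ and the last is $\mathcal B_\lambda$ does not control the intermediate stages.

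This scaffolding is exactly what the paper's alternative proof (distinct from \cite{KimWang}) constructs: the mutated matrix $H^{\bv}_\lambda$ in \eqref{eq-def-DH-v}, the intermediate lattice $\mathcal B^{\bv}_\lambda$ and subalgebra $\mathcal Z^{\rm bl}_\omega(\mathcal D^{\bv}_\lambda)$ in \eqref{def-Z-D-v-lambda}, the inclusion $\mathcal Z^{\rm bl}_\omega(\mathcal D^{\bv}_\lambda)\subset\mathcal Z^{\rm mbl}_\omega(\mathcal D^{\bv}_\lambda)$ (Lemma~\ref{eq-bl-inclusion-mut-vu}, needed so that $\nu^{\sharp\omega}_{v_i}$ is even applicable), the single-step restriction (Lemma~\ref{lem-rest-iso-to-balanced}), and the identification of the terminal $H$-matrix with $H_{\lambda'}$ (Lemma~\ref{lem-Hlambda-H}). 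You wave precisely this layer of structure away as ``bookkeeping,'' asserting it will follow from ``a uniform local rule coming from the symmetry of the quadrilateral'' in an induction over the levels $V^{(i)}_{\lambda;e}$; but that bookkeeping is the content of the proof, and your proposal does not carry it out or even name the intermediate lattices. Note also that the paper's argument works globally and needs no reduction to $\mathbb P_4$ (which, were you to insist on it, would also have to address possibly non-embedded flip quadrilaterals). Your outline is not wrong in spirit, but as written it omits the essential construction.
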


\def\wl{\widetilde\lambda}

\begin{theorem}\cite[Theorem~4.1]{KimWang}\label{thm-main-compatibility}
    Let $\fS$ be a triangulable pb surface. For any two triangulations $\lambda$ and $\lambda'$ of $\fS$, the following diagram commutes:
    \begin{align}
        \label{eq-compability-tr-mutation}
        \xymatrix{
        & \rdS \ar[dl]_{{\rm tr}_{\lambda'}} \ar[dr]^{{\rm tr}_\lambda} & \\
        {\rm Frac}(\mathcal{Z}^{\rm bl}_\omega(\fS,\lambda')) \ar[rr]_-{\Theta^\omega_{\lambda\lambda'}} & & {\rm Frac}(\mathcal{Z}^{\rm bl}_\omega(\fS,\lambda))}.
    \end{align}
    That is, we have
    \begin{align}
    \label{compatibility_equation}
        {\rm tr}_\lambda = \Theta^\omega_{\lambda\lambda'} \circ {\rm tr}_{\lambda'}.
    \end{align}
\end{theorem}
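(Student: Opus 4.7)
The plan is to reduce the statement to the case of a single flip, localize to a quadrilateral via the splitting homomorphisms, and then verify the compatibility on $\mathbb{P}_4$ directly. First, combining Proposition~\ref{prop:Theta_omega_consistency} with the sweep definition \eqref{eq-Theta-change2} and the fact that any two triangulations are connected by a finite sequence of flips, it suffices to prove $\tr_\lambda = \Theta^\omega_{\lambda\lambda'} \circ \tr_{\lambda'}$ when $\lambda'$ is obtained from $\lambda$ by a single flip of an interior edge $e \in \lambda$. In that case $\Theta^\omega_{\lambda\lambda'} = \nu^\omega_{v_1}\circ\cdots\circ\nu^\omega_{v_r}$ for the Fock--Goncharov--Shen sequence indexed by the vertices $v_1,\ldots,v_r \in V_{\lambda;e}$ lying in the quadrilateral $\mathbb{P}_4 = \tau_1\cup \tau_2$ formed by the two triangles of $\lambda$ adjacent to $e$.

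Second, I would localize by the splitting homomorphisms. Let $B \subset \lambda$ denote the set of non-boundary edges lying on the boundary of $\mathbb{P}_4$ and let $\mathbb{S}_B$ be the iterated splitting along $B$, well-defined by \eqref{com-splitting-skein}; this cuts $\fS$ into $\mathbb{P}_4$ together with a complement whose triangulation is unaffected by the flip. By Theorem~\ref{thm-trace-cut}, both $\tr_\lambda$ and $\tr_{\lambda'}$ intertwine $\mathbb{S}_B$ with the corresponding torus splitting $\mathcal{S}_B$ defined via \eqref{cutting_homomorphism_for_Z_omega}. Since the mutation vertices $v_1,\ldots,v_r$ all lie in the interior of $\mathbb{P}_4$ or on $e$, and none lie on the cutting arcs of $B$, direct inspection of the formulas \eqref{eq-quantum-mutation_Z} and the construction of $\nu^{\sharp\omega}_k$ in Lemma~\ref{lem.nu_sharp_well-defined} shows that $\Theta^\omega_{\lambda\lambda'}$ commutes with $\mathcal{S}_B$: it acts trivially on the complement tensor factor and as the $\mathbb{P}_4$-mutation sequence on the $\mathbb{P}_4$-factor. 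Since $\mathcal{S}_B$ is injective, the compatibility for general $\fS$ then follows from the compatibility for $\fS = \mathbb{P}_4$.

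For $\fS = \mathbb{P}_4$, the quantum trace $\tr_\lambda$ is injective by Theorem~\ref{thm.quantum_trace}(c), and $\overline{\cS}_\omega(\mathbb{P}_4)$ is generated as an $R$-algebra by stated arcs whose endpoints lie on the four boundary edges. It thus suffices to verify $\tr_\lambda(\alpha) = (\nu^\omega_{v_1}\circ\cdots\circ\nu^\omega_{v_r})(\tr_{\lambda'}(\alpha))$ for each such generator $\alpha$. Both sides are explicit $R$-linear combinations of Laurent monomials in the $n$-root Fock--Goncharov generators, coming from the skeleton/path-sum formulas underlying the construction of the $\mathcal{X}$-trace. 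The mutation sequence can be organized layerwise through $V^{(0)}_{\lambda;e},\ldots,V^{(n-2)}_{\lambda;e}$, whose vertices within each layer are pairwise non-adjacent and hence can be mutated in any order, and the decomposition $\nu^\omega_k = \nu^{\sharp\omega}_k \circ \nu'_k$ allows the monomial-shift and quantum dilogarithm contributions to be tracked separately against the path-sum combinatorics.

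The main obstacle is this final verification on $\mathbb{P}_4$: the length $r = \tfrac{1}{6}(n^3-n)$ of the mutation sequence rules out any brute-force computation for general $n$. The strategy I would pursue is to lift the classical Fock--Goncharov--Shen compatibility for $\mathcal{X}_q$ (implemented by $\mu^q_k$, and extended to the $n$-root setting by $\nu^\omega_k$ via Lemma~\ref{lem:nu_extends_mu}) to the balanced $n$-root level, using Lemma~\ref{lem.nu_sharp_well-defined} to certify at each intermediate step that the relevant monomials lie in the mutable-balanced subalgebra $\mathcal{Z}^{\rm mbl}_\omega$, and showing that the quantum dilogarithm factors produced by the successive automorphism parts $\nu^{\sharp\omega}_{v_i}$ telescope into exactly the coefficients that reorganize the path-sum for $\lambda'$ into that for $\lambda$. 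The delicate point is that path-sums at intermediate triangulations (after mutating only the first few layers) are not themselves quantum traces of any stated web, so one must instead match them via a purely combinatorial bijection between path-collections passing through different layers of the $n$-triangulation, controlled by the layerwise structure of $V^{(i)}_{\lambda;e}$.
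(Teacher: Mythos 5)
This statement is not proved in the paper at all: it is cited verbatim from \cite[Theorem~4.1]{KimWang}, and the present paper only uses it. So there is no ``paper's own proof'' to compare against; what I can do is assess your proposed argument on its own merits, and there your proposal has two genuine gaps.

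Your overall plan is reasonable at the top level: reduce to a single flip, cut out the quadrilateral $\mathbb{P}_4$ spanned by the flipping diagonal, and verify the compatibility there. The reduction to a single flip is fine. The problems start with the localization step. You claim that ``direct inspection of the formulas \eqref{eq-quantum-mutation_Z} and the construction of $\nu^{\sharp\omega}_k$ in Lemma~\ref{lem.nu_sharp_well-defined} shows that $\Theta^\omega_{\lambda\lambda'}$ commutes with $\mathcal{S}_B$.'' That is not a direct inspection; it is a substantive statement whose proof is essentially \cite[Equation~(43)]{KimWang}, which the present paper itself invokes (in the proof of Proposition~\ref{prop-key-bal-polynomial}) precisely because it is nontrivial. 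One concrete obstruction to ``direct inspection'': the cutting homomorphism does \emph{not} carry $\mathcal{Z}^{\rm bl}_\omega(\fS,\lambda)$ into $\mathcal{Z}^{\rm bl}_\omega$ of the cut surface, only into the larger $\mathcal{Z}^{\rm mbl}_\omega$ (this is exactly why the proof of Proposition~\ref{prop-key-bal-polynomial} has to go through $\mathcal{Z}^{\rm mbl}_\omega$ and then invoke Lemma~\ref{lem-frac-subalgebra} to descend). Your argument therefore needs to track, at each of the $r=\tfrac{1}{6}(n^3-n)$ intermediate steps, in which subalgebra each expression lives and whether $\mathcal{S}_B$ actually commutes with the quantum dilogarithm conjugations $\nu^{\sharp\omega}_{v_i}$ on those subalgebras. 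None of that is carried out.

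The larger gap is the $\mathbb{P}_4$ verification itself, which you explicitly flag as ``the main obstacle'' and then describe only as a strategy. You propose to show ``that the quantum dilogarithm factors produced by the successive automorphism parts $\nu^{\sharp\omega}_{v_i}$ telescope into exactly the coefficients that reorganize the path-sum for $\lambda'$ into that for $\lambda$,'' mediated by ``a purely combinatorial bijection between path-collections.'' That is precisely the hard content of the theorem; asserting that such a telescoping and such a bijection exist, without constructing them or giving any mechanism by which they must exist for general $n$, is not a proof. Until this is filled in, the argument does not close. (To be clear: the reduction scaffolding you describe may well be in the spirit of what \cite{KimWang} does --- the current paper borrows both the splitting-compatibility formulas from that reference and the layerwise Fock--Goncharov--Shen mutation sequence --- but the proposal as written stops short of establishing the theorem.)
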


\subsection{Proof of Lemma \ref{lem-ker-naturality}}\label{sec-proof-lem} 
\begin{proof}[Proof of Lemma \ref{lem-ker-naturality}]
    The commutativity of diagram~\eqref{eq-compability-tr-mutation}, together with the fact that $\Theta^\omega_{\lambda\lambda'}$ is an isomorphism, proves Lemma~\ref{lem-ker-naturality}.
\end{proof}

\section{Quantum cluster structure inside $\text{Frac}(\dS)$}\label{sec-seed-structure}
In this section, we assume throughout that the pb surface $\fS$ is triangulable and has no interior punctures.  

Recall that the algebra $\dS$ (Definition~\ref{def-key-algebra}) was introduced as a quotient of the reduced stated ${\rm SL}_n$-skein algebra $\rdS$, chosen so that the quantum trace maps become injective (Lemma~\ref{lem-basic-lem}).  
Moreover, Lemma~\ref{lem-basic-lem}(b) guarantees the existence of the skew-field $\mathrm{Frac}(\dS)$ associated with $\dS$.  
Our aim in this section is to construct a quantum cluster structure in $\mathrm{Frac}(\dS)$ (Lemma~\ref{lem-seed-skein} and Theorem~\ref{thm-main-1}) and to define the ${\rm SL}_n$ quantum (upper) cluster algebra inside $\mathrm{Frac}(\dS)$ (Definition~\ref{def-sln-quantum-cluster-al}).  
In \S\ref{sec-skein-cluster} and \S\ref{sec-skein-inclusion-cluster}, we will further show that $\dS$ is contained in this ${\rm SL}_n$ quantum (upper) cluster algebra when every connected component of $\fS$ contains at least two boundary components.  

For the cases $n=2,3$, related results are available in \cite{ishibashi2023skein,LY22,muller2016skein}.

\subsection{The quantum seeds inside $\mathrm{Frac}(\dS)$}\label{sec-seed-inside}
Let $\lambda$ be a triangulation of $\fS$.  
As in \S\ref{subsec-mutation-Fock}, set 
$\mathcal V = V_\lambda$ and $\mathcal V_{\text{mut}} = \mathring{V}_\lambda$, 
where $\mathring{V}_\lambda$ denotes the set of small vertices lying in the interior of $\fS$.  

Recall the anti-symmetric matrices $Q_\lambda$ (see \S\ref{sec-traceX}) and $P_\lambda$ (see \S\ref{sec;A_tori}).  
Define  
\begin{align}\label{eq-prod-Qpi}
    \Pi_\lambda := \tfrac{1}{n} P_\lambda.
\end{align}
Equation~\eqref{eq-prod-PQ} implies that  
\begin{align}\label{eq-prod-PiQ}
    \sum_{k\in V_\lambda}
    Q_\lambda(k,u)\,\Pi_\lambda(k,v)
    = 2n \,\delta_{u,v},
\end{align}
for all $u\in \mathcal V_{\text{mut}}$ and $v\in \mathcal V$.  

For each $v\in \mathcal V$, we defined an element $\gaa_v \in \rdS$ (see \S\ref{sec-traceA}), and we use the same notation $\gaa_v$ for its image under the projection $\rdS \twoheadrightarrow \dS$.  
Recall that $\xi = \omega^n$.  
Then Lemma~\ref{gaa-com} shows that  
\begin{align}\label{eq-gaa-com}
    \gaa_v \gaa_{v'} = \xi^{\Pi_\lambda(v,v')} \gaa_{v'} \gaa_v
    \quad\in \dS.
\end{align}

For any $v_1,\ldots,v_r \in \mathcal V$ and $a_1,\ldots,a_r \in \mathbb{Z}$, define the Weyl-ordered product by  
\begin{align}\label{Weyl_ordering-gaa}
    \left[ \gaa_{v_1}^{a_1} \gaa_{v_2}^{a_2} \cdots \gaa_{v_r}^{a_r} \right] 
    := \xi^{-\frac{1}{2}\sum_{i<j} a_i a_j \Pi_\lambda(v_i,v_j)} 
       \gaa_{v_1}^{a_1} \gaa_{v_2}^{a_2} \cdots \gaa_{v_r}^{a_r}.
\end{align}
For ${\bf t} = (t_v)_{v\in V_\lambda} \in \mathbb{Z}^{\mathcal V}$, set  
\begin{align}\label{def-gaa-monomial}
    \gaa^{\bf t} := \left[ \prod_{v\in V_\lambda} \gaa_v^{t_v}\right].
\end{align}

Finally, define the map  
\begin{align}\label{def-M-lambda}
    M_\lambda \colon \mathbb Z^{\mathcal V} \longrightarrow \Fr(\dS), 
    \quad {\bf t} \longmapsto \gaa^{\bf t}.
\end{align}
Then we have the following.

\begin{lemma}\label{lem-seed-skein}
Let $\fS$ be a triangulable pb surface and has no interior punctures, and let $\lambda$ be a triangulation of $\fS$.
    Then $(Q_\lambda,\Pi_\lambda,M_\lambda)$ is a quantum seed (Definition~\ref{def-quantum-seed}) inside the skew-field $\Fr(\dS)$.
\end{lemma}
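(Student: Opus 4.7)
The plan is to verify the three conditions of Definition~\ref{def-quantum-seed} one by one for the triple $(Q_\lambda,\Pi_\lambda,M_\lambda)$, working inside the skew-field $\mathrm{Frac}(\dS)$, which exists by Lemma~\ref{lem-basic-lem}(b).

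First, $Q_\lambda$ is antisymmetric by its definition \eqref{eq-def-Q-lambda-re}, and the weight convention built into the construction of $\Gamma_\lambda$ in \S\ref{sec-traceX} makes $Q_\lambda$ an exchange matrix (weight $\tfrac{1}{2}$ only between frozen vertices). Next, $\Pi_\lambda=\tfrac{1}{n}P_\lambda$ is antisymmetric since $P_\lambda$ is, and it has \emph{integer} entries because, by Remark~\ref{rem-P-P}, every entry of $P_\lambda$ lies in $n\mathbb{Z}$. The required compatibility relation is exactly \eqref{eq-prod-PiQ}: for all $u\in\mathcal V_{\mathrm{mut}}$ and $v\in\mathcal V$ one has $\sum_k Q_\lambda(k,u)\,\Pi_\lambda(k,v)=2n\,\delta_{u,v}$, so condition (2) of Definition~\ref{def-quantum-seed} holds with $d_u=2n$ for every $u\in\mathcal V_{\mathrm{mut}}$.

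It remains to check condition (3) for the map $M_\lambda$ defined in \eqref{def-M-lambda}. The commutation relation \eqref{eq-gaa-com} combined with the standard rearrangement identity for Weyl-ordered monomials (exactly as in \eqref{Weyl_ordering-gaa}) yields
\[
M_\lambda(\mathbf k)\,M_\lambda(\mathbf t)=\gaa^{\mathbf k}\gaa^{\mathbf t}=\xi^{\frac{1}{2}\mathbf k\,\Pi_\lambda\,\mathbf t^{T}}\,\gaa^{\mathbf k+\mathbf t}=\xi^{\frac{1}{2}\mathbf k\,\Pi_\lambda\,\mathbf t^{T}}M_\lambda(\mathbf k+\mathbf t),
\]
which is \eqref{eq-M-relation}. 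For the basis/skew-field statement, I would use Lemma~\ref{lem-basic-lem}(a) to regard $\dS$ as a subalgebra of $\mathcal{A}_\omega(\fS,\lambda)$ via $\mathrm{tr}_\lambda^A$, under which $\gaa_v\mapsto A_v$ by Theorem~\ref{thm-trace-A}\ref{thm-trace-A-a}. Thus $\gaa^{\mathbf t}=A^{\mathbf t}$ (both Weyl products are taken with respect to the same antisymmetric form $P_\lambda$), and $M_\lambda(\mathbb Z^{\mathcal V})=\{A^{\mathbf t}\mid \mathbf t\in\mathbb Z^{\mathcal V}\}$ is precisely the monomial basis of the quantum torus $\mathcal{A}_\omega(\fS,\lambda)$.

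Finally, I need that the skew-field of $\mathcal{A}_\omega(\fS,\lambda)$ is $\mathrm{Frac}(\dS)$. This is where the sandwich \eqref{sandwitch-projected}, namely $\mathcal{A}_\omega^{+}(\fS,\lambda)\subset\dS\subset\mathcal{A}_\omega(\fS,\lambda)$, comes in: since $\mathcal{A}_\omega(\fS,\lambda)$ is the Ore localization of $\mathcal{A}_\omega^{+}(\fS,\lambda)$ at its generators, the two outer algebras share the same skew-field, forcing $\mathrm{Frac}(\dS)=\mathrm{Frac}(\mathcal{A}_\omega(\fS,\lambda))$. No step here poses a genuine difficulty; the only point that requires a moment of care is that $\Pi_\lambda$ actually has integer (not merely rational) entries, which is ensured by Remark~\ref{rem-P-P}, and that the normalization factor $\xi=\omega^{n}$ in \eqref{eq-M-relation} matches the $\omega^{P_\lambda(v,v')}=\xi^{\Pi_\lambda(v,v')}$ appearing in \eqref{eq-gaa-com}.
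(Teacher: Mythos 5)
Your proof is correct and follows essentially the same route as the paper: verify the three conditions of Definition~\ref{def-quantum-seed} using \eqref{eq-prod-PiQ} and Remark~\ref{rem-P-P} for the matrices, \eqref{eq-gaa-com} for the $M$-relation, and the sandwich \eqref{sandwitch-projected} via Lemma~\ref{lem-basic-lem}(a) and Theorem~\ref{thm-trace-A} to identify $\mathrm{Frac}(\dS)$ with $\mathrm{Frac}(\mathcal{A}_\omega(\fS,\lambda))$ and recognize $M_\lambda(\mathbb Z^{\mathcal V})$ as the monomial basis of that quantum torus. (As a minor aside, the paper's written proof has a small typo writing $P$ where $\Pi$ is meant in the displayed $M$-relation; your version is the correct one.)
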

\begin{proof}
By definition, $Q_\lambda$ is an exchange matrix.  
Since $P_\lambda$ is anti-symmetric with entries in $n\mathbb{Z}$ (Remark~\ref{rem-P-P}), it follows that $\Pi_\lambda$ is an integral anti-symmetric matrix, and equation~\eqref{eq-prod-PiQ} holds.  

Equation~\eqref{eq-gaa-com} implies
\[
    M_\lambda({\bf k})\, M_\lambda({\bf t}) 
    = \xi^{\frac{1}{2}\,{\bf k} P {\bf t}^T} 
      M_\lambda({\bf k} + {\bf t}).
\]

By Lemma~\ref{lem-basic-lem}(a), the map $\trA\colon\dS \to \A$ is injective, so we may regard $\dS$ as a subalgebra of $\A$.  
Lemma~\ref{thm-trace-A} then shows that $\gaa_v = A_v$ for all $v \in \mathcal V$, and moreover
\[
    \Ap \subset \dS \subset \A.
\]
Hence $M_\lambda(\mathbb Z^{\mathcal V})$ coincides with the monomials of the quantum torus $\A$, and we obtain 
\[
    \Fr(\A) = \Fr(\dS).
\]

\end{proof}

Given a triangulation $\lambda$ of $\fS$, we identify $\Fr(\A)$ (resp. $A_v \in \Fr(\A)$ for $v \in V_\lambda$) with $\Fr(\dS)$ (resp. $\gaa_v \in \Fr(\dS)$ for $v \in V_\lambda$).  
With this identification, the triple $(Q_\lambda,\Pi_\lambda,M_\lambda)$ forms a quantum seed in the skew-field $\Fr(\A)$, where  
\begin{align}\label{def-M-lambda-A}
    M_\lambda \colon \mathbb Z^{V_\lambda} \to \Fr(\A), 
    \quad {\bf t} \mapsto A^{\bf t}.
\end{align}

\begin{definition}\label{def-tri-quantum}
    Let $\mathsf{S}_{\fS,\lambda}$ denote the quantum $\mathcal A$-mutation class (Definition~\ref{def-quantum-class}) containing the quantum seed $(Q_\lambda,\Pi_\lambda,M_\lambda)$.  

    Define $\mathscr{A}_{\fS,\lambda}$ (resp.~$\mathscr{U}_{\fS,\lambda}$) as $\mathscr{A}_{\mathsf S_{\fS,\lambda}}$ (resp.~$\mathscr{U}_{\mathsf S_{\fS,\lambda}}$) in the sense of Definition~\ref{def-quan-cluster-algebra}.
\end{definition}

By Theorem~\ref{thm-inclusion-quantum}, we obtain the following:

\begin{lemma}
    There are inclusions
    \[
        \mathscr{A}_{\mathsf S_{\fS,\lambda}}
        \subset 
        \mathscr{U}_{\mathsf S_{\fS,\lambda}}
        \subset 
        \Fr(\dS).
    \]
\end{lemma}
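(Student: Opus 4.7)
The plan is to read off both inclusions directly from what has already been established, since the lemma is essentially a bookkeeping statement.

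For the first inclusion $\mathscr{A}_{\mathsf S_{\fS,\lambda}} \subset \mathscr{U}_{\mathsf S_{\fS,\lambda}}$, I would simply invoke Theorem~\ref{thm-inclusion-quantum} (the quantum Laurent phenomenon of Berenstein--Zelevinsky). The class $\mathsf S_{\fS,\lambda}$ is a quantum $\mathcal{A}$-mutation class in the sense of Definition~\ref{def-quantum-class}, and $\mathscr{A}_{\mathsf S_{\fS,\lambda}}$, $\mathscr{U}_{\mathsf S_{\fS,\lambda}}$ are defined exactly as $\mathscr{A}_{\mathsf S}$ and $\mathscr{U}_{\mathsf S}$ in Definition~\ref{def-quan-cluster-algebra}, so Theorem~\ref{thm-inclusion-quantum} applies verbatim.

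For the second inclusion $\mathscr{U}_{\mathsf S_{\fS,\lambda}} \subset \Fr(\dS)$, I would use Lemma~\ref{lem-seed-skein} to observe that the initial seed $(Q_\lambda, \Pi_\lambda, M_\lambda)$ lives inside the skew-field $\Fr(\dS)$. Every quantum $\mathcal{A}$-mutation is realized as a skew-field automorphism of $\Fr(\dS)$ (cf.\ \eqref{eq-mut-iso}), so each quantum seed in $\mathsf S_{\fS,\lambda}$ and, in particular, each based quantum torus $\mathbb T(w)$ for $w \in \mathsf{Exch}_{\mathsf S_{\fS,\lambda}}$ is a subset of $\Fr(\dS)$. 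Since $\mathscr{U}_{\mathsf S_{\fS,\lambda}} = \bigcap_w \mathbb T(w)$ by Definition~\ref{def-quan-cluster-algebra}, the inclusion in $\Fr(\dS)$ is immediate.

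There is no real obstacle here; the lemma is an immediate consequence of Theorem~\ref{thm-inclusion-quantum} combined with Lemma~\ref{lem-seed-skein}. The only thing worth being careful about is confirming that the ambient skew-field in which $\mathsf{S}_{\fS,\lambda}$ is defined is indeed $\Fr(\dS)$ (rather than, say, $\Fr(\mathcal A_\omega(\fS,\lambda))$), but this identification was already made via the equality $\Fr(\A) = \Fr(\dS)$ established in the proof of Lemma~\ref{lem-seed-skein}.
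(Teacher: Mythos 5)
Your proposal is correct and matches the paper's approach: the paper states the lemma immediately after the sentence ``By Theorem~\ref{thm-inclusion-quantum}, we obtain the following,'' relying on the Berenstein--Zelevinsky Laurent phenomenon for the first inclusion and on the construction of the quantum seed inside $\Fr(\dS)$ (Lemma~\ref{lem-seed-skein}) for the second. Your extra care in noting the identification $\Fr(\A) = \Fr(\dS)$ is exactly the right point to double-check, and it is indeed supplied by the proof of Lemma~\ref{lem-seed-skein}.
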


The constructions of $\mathsf{S}_{\fS,\lambda}$, $\mathscr{A}_{\mathsf S_{\fS,\lambda}}$, and $\mathscr{U}_{\mathsf S_{\fS,\lambda}}$ all depend a priori on the triangulation $\lambda$.  
In the remainder of this section, we will prove that the definitions in Definition~\ref{def-tri-quantum} are, in fact, independent of the choice of $\lambda$.

\subsection{The compatibility between quantum $\mathcal A$ and $\mathcal X$-mutations}\label{sec-compatibility}

In this subsection, we will show that the 
quantum $\mathcal A$ and $\mathcal X$-mutations introduced in \S\ref{sec-quantum-clu-al} are compatible to each other, which will be used in \S\ref{subsec-naturality} to show the naturality of Definition~\ref{def-tri-quantum}.

\def\Xb{\mathcal Z_{\omega}^{\rm bl}(\fS,\lambda)}

For every cluster $\mathcal X$-seed $\mathcal D_X$,
 we defined the $n$-th root Fock-Goncharov algebra $\mathcal{Z}_\omega(\mathcal{D}_X)$ (Equation~\eqref{eq-Fock-Goncharov-n}) and a subalgebra $\mathcal{Z}^{\rm mbl}_\omega(\mathcal{D}_X)$ of $\mathcal{Z}_\omega(\mathcal{D}_X)$ (Definition~\ref{def-mut-Z}).
Let $k\in\Vm$, and let $\mu_{k,X}(\mathcal D_X)=\mathcal D_X'$. We introduced the following mutation (Equation~\eqref{eq-def-quantum-mutation-X})
$$
\nu^\omega_k \colon
\Fr(
\mathcal{Z}^{\rm mbl}_\omega(\mathcal{D}_X'))\rightarrow
      \Fr(
      \mathcal{Z}^{\rm mbl}_\omega(\mathcal{D}_X)).$$

 As discussed in \S\ref{subsec-mutation-Fock}, we have $\mathcal D_{\lambda}=(\Gamma_\lambda,(X_v)_{v\in V_\lambda})$ is a cluster $\mathcal X$-seed and $\mathcal{Z}^{\rm bl}_\omega(\fS,\lambda) \subset \mathcal{Z}^{\rm mbl}_\omega(\mathcal D_{\lambda})$ (Lemma~\ref{lem:bl_in_mbl}).

\def\bv{\mathbbm{v}}

As introduced in \S\ref{sec-seed-inside}, the triple
\begin{align}\label{def-qum-seed-w}
    w_\lambda:=(Q_\lambda,\Pi_\lambda,M_\lambda)
\end{align}
 is a quantum seed inside the skew-field $\Fr(\A)$.
Theorem \ref{thm-transition-LY} shows that there is an algebra isomorphism from $\A$ to $\Xb$.
For a sequence $\mathbbm{v}=(v_1,\cdots,v_m)$ of mutable vertices in $V_\lambda$, define
\begin{align}\label{eq-def-wD}
    \text{$w_\lambda^{\mathbbm{v}}=\mu_{v_m,A}(\cdots \mu_{v_1,A}(w_\lambda))\text{ and }
\mathcal D_\lambda^{\mathbbm{v}}=\mu_{v_m,X}(\cdots \mu_{v_1,X}(\mathcal D_{\lambda}))
$}
\end{align}
Note that we allow $\bv$ to be an empty sequence and define 
$$\text{$w_\lambda^{\emptyset}=w_\lambda$, and $\mathcal D_\lambda^{\emptyset}=\mathcal D_\lambda$.}$$
Suppose ${w_\lambda^{\mathbbm{v}}}=
(Q_\lambda',\Pi_\lambda',M_\lambda')$, then define
\begin{align}\label{eq-def-piv}
    \Pi_\lambda^{\bv}=\Pi_\lambda'.
\end{align}

It is natural to ask whether there exists a subalgebra 
$\mathcal Z_\omega^{\mathrm{bl}}(\mathcal D_\lambda^{\mathbbm{v}}) \subset \ZM(\mathcal D_\lambda^{\mathbbm{v}})$ 
that is isomorphic to $\mathbb T(w_\lambda^{\mathbbm{v}})$, where $\mathbb T(w_\lambda^{\mathbbm{v}})$ 
is defined in~\eqref{eq-T-w}. In the remainder of this subsection, we shall construct 
$\mathcal Z_\omega^{\mathrm{bl}}(\mathcal D_\lambda^{\mathbbm{v}})$ and show that the isomorphism 
$\nu_k^\omega$ in~\eqref{eq-def-quantum-mutation-X} descends to 
$\mathcal Z_\omega^{\mathrm{bl}}$. This construction will be used in 
\S\ref{subsec-naturality} to establish the main result of this section 
(Theorem~\ref{thm-main-1}).

Recall that $\Xb$ is the subalgebra of $\X$ generated by elements $Z^{\bf k}$ with 
${\bf k}\in \mathbb Z^{V_\lambda}$ satisfying ${\bf k}H_\lambda \in n\mathbb Z^{V_\lambda}$, 
where $H_\lambda$ is defined in~\eqref{eq-def-H-lambda}. 
The key step in defining 
$\mathcal Z_\omega^{\mathrm{bl}}(\mathcal D_\lambda^{\mathbbm{v}})$ 
is therefore to introduce an appropriate matrix $H_\lambda^\bv$ 
associated with $\mathcal Z(\mathcal D_{\lambda}^\bv)$.

\def\bv{\mathbbm{v}}

Note that the matrix mutation defined in \eqref{eq-mutation-Q} can be generalized to any square matrix $Q=(Q(u,v))_{u,v\in\mathcal V}$ with entries in $\mathbb C$, which is still denoted as $\mu_k$.
For a sequence $\mathbbm{v}=(v_1,\cdots,v_m)$ of mutable vertices in $V_\lambda$, define 
\begin{align}\label{eq-def-DH-v}
    Q_\lambda^\bv=\mu_{v_m}(\cdots \mu_{v_1}(Q_\lambda))\text{ and }
 H_\lambda^\bv=\mu_{v_m}(\cdots \mu_{v_1}(H_\lambda)).
\end{align}
When $\bv=\emptyset$, define 
$$\text{$Q_\lambda^{\emptyset}=Q_\lambda$, and $H_\lambda^{\emptyset}=H_\lambda$.}$$

The following lemma shows that $H_\lambda^\bv$ coincides with $Q_\lambda^\bv$ away from the boundary, thereby establishing the naturality of the definition in \eqref{eq-def-DH-v} by comparison with \eqref{def-H-Q-interior}.

\begin{lemma}\label{lem-matrix-HQ}
  Let $\bv=(v_1,\ldots,v_m)$ be a sequence of mutable vertices in $V_\lambda$.  
  For $u,v\in V_\lambda$, if $u$ and $v$ are not contained in the same boundary edge, then
  \[
    H_\lambda^\bv(u,v) = Q_\lambda^\bv(u,v).
  \]
\end{lemma}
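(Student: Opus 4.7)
The plan is to prove Lemma~\ref{lem-matrix-HQ} by induction on the length $m$ of the sequence $\bv=(v_1,\ldots,v_m)$. The base case $m=0$ is immediate from the definitions: by \eqref{def-H-Q-interior}, $H_\lambda(u,v)=Q_\lambda(u,v)$ whenever $u,v$ are not contained in the same boundary edge. So we only need to show that this agreement is preserved under a single matrix mutation $\mu_k$ where $k\in\mathcal V_{\text{mut}}=\mathring V_\lambda$.

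The crucial structural observation, which I would state up front, is that a mutable vertex $k$ lies in the interior of $\fS$, and therefore $k$ is not contained in any boundary edge. Consequently, for every $u\in V_\lambda$, the pair $(u,k)$ automatically fails the ``same boundary edge'' condition, so the inductive hypothesis guarantees $H_\lambda^{\bv'}(u,k)=Q_\lambda^{\bv'}(u,k)$ and $H_\lambda^{\bv'}(k,u)=Q_\lambda^{\bv'}(k,u)$ for the partial sequence $\bv'=(v_1,\dots,v_{m-1})$.

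With this in hand the inductive step splits into the two cases of \eqref{eq-mutation-Q}. If $k\in\{u,v\}$, then since $k$ is interior and $u,v$ are not on the same boundary edge (hypothesis), the pair $(u,v)$ itself satisfies the inductive hypothesis, so $H_\lambda^{\bv'}(u,v)=Q_\lambda^{\bv'}(u,v)$, and the negation in the mutation formula preserves equality. If $k\notin\{u,v\}$, the correction term
\[
\tfrac{1}{2}\bigl(M(u,k)|M(k,v)|+|M(u,k)|M(k,v)\bigr)
\]
involves only entries of the form $(u,k)$ and $(k,v)$, which by the structural observation coincide for $M=H_\lambda^{\bv'}$ and $M=Q_\lambda^{\bv'}$. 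Combined with the inductive equality at $(u,v)$, this yields $H_\lambda^{\bv}(u,v)=Q_\lambda^{\bv}(u,v)$ and completes the induction.

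I do not anticipate any serious obstacle; the argument is essentially bookkeeping on the matrix mutation formula, where the fact that mutable vertices are interior does all the work. The only subtle point worth stating explicitly is that, although $H_\lambda$ and $Q_\lambda$ differ on entries whose indices both lie on a common boundary edge, the mutation process can never ``propagate'' this discrepancy into other entries, because each correction term passes through an interior vertex $k$, where the two matrices already coincide.
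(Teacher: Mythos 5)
Your proposal is correct and takes essentially the same approach as the paper: induction on $m$, with the two cases of the matrix mutation rule and the observation that the correction term in the $k\notin\{u,v\}$ case only involves entries indexed by the interior vertex $k$, where $H$ and $Q$ already agree by the inductive hypothesis. If anything, you are slightly more explicit than the paper in flagging up front that mutable vertices lie in the interior of $\fS$, which is the fact that makes the whole argument go.
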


\begin{proof}
We proceed by induction on $m$.  

When $m=0$, the claim follows directly from \eqref{def-H-Q-interior}.  

For $1\leq i\leq m$, define
\[
  Q_\lambda^{(i)} := \mu_{v_i}\!\bigl(\cdots \mu_{v_1}(Q_\lambda)\bigr),
  \qquad
  H_\lambda^{(i)} := \mu_{v_i}\!\bigl(\cdots \mu_{v_1}(H_\lambda)\bigr),
\]
with initial terms $Q_\lambda^{(0)} = Q_\lambda$ and $H_\lambda^{(0)} = H_\lambda$.  

Assume that for some $s\geq 1$ we have  
\[
   H_\lambda^{(s-1)}(u,v) = Q_\lambda^{(s-1)}(u,v),
\]
whenever $u$ and $v$ do not lie on the same boundary edge.  

- **Case 1: $v_s \in \{u,v\}$.**  
  In this situation,
  \[
    H_\lambda^{(s)}(u,v) 
      = - H_\lambda^{(s-1)}(u,v)
      = - Q_\lambda^{(s-1)}(u,v)
      = Q_\lambda^{(s)}(u,v).
  \]

- **Case 2: $v_s \notin \{u,v\}$.**  
  Then by the mutation rule,
  \begin{align*}
    H_\lambda^{(s)}(u,v) 
      &= H_\lambda^{(s-1)}(u,v) 
        + \tfrac{1}{2}\Bigl(
           H_\lambda^{(s-1)}(u,v_s)\,\bigl|H_\lambda^{(s-1)}(v_s,v)\bigr|
          +\bigl|H_\lambda^{(s-1)}(u,v_s)\bigr|\,H_\lambda^{(s-1)}(v_s,v)
        \Bigr) \\
      &= Q_\lambda^{(s-1)}(u,v) 
        + \tfrac{1}{2}\Bigl(
           Q_\lambda^{(s-1)}(u,v_s)\,\bigl|Q_\lambda^{(s-1)}(v_s,v)\bigr|
          +\bigl|Q_\lambda^{(s-1)}(u,v_s)\bigr|\,Q_\lambda^{(s-1)}(v_s,v)
        \Bigr) \\
      &= Q_\lambda^{(s)}(u,v).
  \end{align*}

Thus the claim holds for $s$, completing the induction.
\end{proof}

For a sequence $\mathbbm{v}=(v_1,\cdots,v_m)$ of mutable vertices in $V_\lambda$, $k\in\mathring{V}_\lambda$, and $\epsilon\in\{-,+\}$, define 
\begin{align*}
    E_{k,\epsilon,\bv} (i,j)&=
    \begin{cases}
        \delta_{i,j} & j\neq k,\\
        -1 & i=j=k,\\
        [-\epsilon Q_\lambda^{\bv}(i,k)]_+
        & i\neq k=j,
    \end{cases}\\
     F_{k,\epsilon,\bv} (i,j)&=
    \begin{cases}
        \delta_{i,j} & i\neq k,\\
        -1 & i=j=k,\\
        [-\epsilon Q_\lambda^{\bv}(j,k)]_+
        & j\neq k=i,
    \end{cases}
\end{align*}
for $i,j\in V_\lambda.$
Note that 
\begin{align}\label{eq-tran-EF}
    E_{k,\epsilon,\bv}^T= F_{k,\epsilon,\bv}.
\end{align}

\begin{lemma}\cite{BZ}\label{lem-matrix-mul}
    For a sequence $\mathbbm{v}=(v_1,\cdots,v_m)$ of mutable vertices in $V_\lambda$ and $k\in\mathring{V}_\lambda$, set $\mathbbm{u}=(v_1,\cdots,v_m,k)$.

\begin{enumerate}[label={\rm (\alph*)}]\itemsep0,3em

\item\label{lem-matrix-mul-a} $Q_\lambda^{\mathbbm{u}}
=E_{k,\epsilon,\bv}Q_\lambda^{\mathbbm{v}} F_{k,\epsilon,\bv},\quad
\Pi_\lambda^{\mathbbm{u}}
=E_{k,\epsilon,\bv}^T \Pi_\lambda^{\mathbbm{v}} E_{k,\epsilon,\bv}.$

\item\label{lem-matrix-mul-b} $E_{k,\epsilon,\bv}^2= F_{k,\epsilon,\bv}^2=I.$

\item\label{lem-matrix-mul-c} $(E_{k,-,\bv} E_{k,+,\bv})^T \Pi_\lambda^{\mathbbm{v}} (E_{k,-,\bv} E_{k,+,\bv})= \Pi_\lambda^{\mathbbm{v}}.$

\item\label{lem-matrix-mul-d} $Q_\lambda^{\mathbbm{v}} \Pi_\lambda^{\mathbbm{v}} = Q_\lambda \Pi_\lambda.$

\end{enumerate}
    
\end{lemma}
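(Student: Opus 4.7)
My plan is to treat all four parts as direct matrix identities, following the framework of \cite{BZ}, and to prove them in the order (b) $\to$ (a) $\to$ (c) $\to$ (d) so that each part is available for the next. For (b), I would compute $E_{k,\epsilon,\bv}^2$ column by column. Since $E_{k,\epsilon,\bv}$ agrees with $I$ outside column $k$, only the $k$-th column of $E_{k,\epsilon,\bv}^2$ is nontrivial: for $i\neq k$ the $(i,k)$-entry equals $E(i,k)\cdot E(k,k)+E(i,k)\cdot 1=-E(i,k)+E(i,k)=0$, while the $(k,k)$-entry equals $(-1)^2=1$. The identity $F_{k,\epsilon,\bv}^2=I$ then follows from $F=E^T$ given by \eqref{eq-tran-EF}, via $(E^T)^2=(E^2)^T=I$.

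For (a), the identity $Q_\lambda^{\mathbbm{u}}=E_{k,\epsilon,\bv}\,Q_\lambda^{\bv}\,F_{k,\epsilon,\bv}$ is the matrix reformulation of the entrywise rule \eqref{eq-mutation-Q}. Expanding $(E Q_\lambda^{\bv} F)(i,j)$ for $i,j\neq k$ yields $Q_\lambda^{\bv}(i,j)+Q_\lambda^{\bv}(i,k)[-\epsilon Q_\lambda^{\bv}(j,k)]_+ + [-\epsilon Q_\lambda^{\bv}(i,k)]_+ Q_\lambda^{\bv}(k,j)$, which agrees with \eqref{eq-mutation-Q} via the elementary identity $\tfrac12(a|b|+|a|b)=[a]_+[b]_+-[-a]_+[-b]_+$; the row and column $k$ entries come from the $-1$'s at $(k,k)$ in $E$ and $F$. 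The identity $\Pi_\lambda^{\mathbbm{u}}=E^T\,\Pi_\lambda^{\bv}\,E$ is an analogous repackaging of \eqref{eq-for-P'}, where the independence from the choice of $\epsilon$ reflects the compatibility relation \eqref{eq-prod-PiQ} (preserved under mutation) which forces the two candidate expressions to coincide. For (c), applying (a) with both signs gives $E_{k,-,\bv}^T\Pi_\lambda^{\bv} E_{k,-,\bv}=E_{k,+,\bv}^T\Pi_\lambda^{\bv} E_{k,+,\bv}$; multiplying on the left by $E_{k,-,\bv}^T$ and on the right by $E_{k,+,\bv}$, and simplifying via $(E^T)^2=I$ from (b), produces the desired identity.

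For (d), I would proceed by induction on $|\bv|$. The base case is immediate. For the inductive step, (a) together with $F=E^T$ yields
\[
Q_\lambda^{\mathbbm{u}}\Pi_\lambda^{\mathbbm{u}}=(EQ_\lambda^{\bv}F)(E^T\Pi_\lambda^{\bv}E)=EQ_\lambda^{\bv}(FE^T)\Pi_\lambda^{\bv}E=E(Q_\lambda^{\bv}\Pi_\lambda^{\bv})E,
\]
using $FE^T=(E^T)^2=I$ from (b). By the inductive hypothesis this equals $E(Q_\lambda\Pi_\lambda)E$, so the task reduces to verifying that $E_{k,\epsilon,\bv}$ commutes with $Q_\lambda\Pi_\lambda$ (since $E^{-1}=E$). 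Writing $E=I+N$ with $N$ supported in the mutable column $k$, the commutator $[N,Q_\lambda\Pi_\lambda]$ can be computed entry by entry; the compatibility \eqref{eq-prod-PiQ} pins the mutable-row block of $Q_\lambda\Pi_\lambda$ to $-2nI$ extended by zero on the frozen columns, and the geometric form $\Pi_\lambda=(2/n)K_\lambda Q_\lambda K_\lambda^T$ together with the relation $H_\lambda K_\lambda=nI$ from \cite[Lemma~11.9]{LY23} controls the remaining cross-terms so that they cancel.

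The main obstacle is the last verification in (d): parts (a)-(c) are essentially formal matrix manipulations encoding the mutation rules, but (d) requires more than the abstract compatibility of $(Q_\lambda,\Pi_\lambda)$, since compatibility alone only preserves the mutable-row block of the product. Propagating the equality to the frozen-row block is where the specific geometric origin of $\Pi_\lambda$ must be invoked.
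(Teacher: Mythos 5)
Your treatment of parts (a), (b), (c) is sound and essentially reproduces the argument from \cite{BZ}: (b) is a direct column-by-column check, (a) is the repackaging of the entrywise mutation rule via the identity $\tfrac12(a|b|+|a|b)=[-\epsilon a]_+ b + a[\epsilon b]_+$, and (c) follows formally from (a) applied with both signs together with $E^2=I$. (In (c), multiplying on the left by $E_{k,-,\bv}^T$ and on the right by $E_{k,+,\bv}$ as you write yields $\Pi E_- E_+ = (E_+E_-)^T\Pi$, which is equivalent to the claim but requires one more conjugation to reach the stated form; multiplying instead on the left by $E_{k,+,\bv}^T$ and on the right by $E_{k,+,\bv}$ gives the identity directly. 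Minor.)

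For (d) your reduction is correct: since $F=E^T$ and $(E^T)^2=I$, one gets $Q^{\bu}\Pi^{\bu}=E\,(Q^{\bv}\Pi^{\bv})\,E$, and with $E^{-1}=E$ the claim is equivalent to $[E,\,Q\Pi]=0$. You also correctly identify the crux: the compatibility relation \eqref{eq-prod-PiQ} pins down only the $\mathcal V_{\rm mut}$-indexed rows of $Q\Pi$ (they equal $-2n\,{\bf e}_u$), and this alone forces $[E,Q\Pi]$ to vanish on the mutable-row block (with the uniform $d_u=2n$ here), but says nothing about the frozen rows. The gap is that you do not actually carry out the verification for frozen $i$: the needed identity
\[
\sum_{q\neq k}(Q\Pi)(i,q)\,[-\epsilon Q^{\bv}(q,k)]_+ \;+\; 2n\,[-\epsilon Q^{\bv}(i,k)]_+ \;=\; 2\,(Q\Pi)(i,k)
\]
is nontrivial and does not follow from invoking $\Pi_\lambda=\tfrac2n K_\lambda Q_\lambda K_\lambda^T$ and $H_\lambda K_\lambda=nI$ without further computation. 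Stating that these relations ``control the remaining cross-terms so that they cancel'' is a placeholder, not a proof. Worth flagging as well: the cited result in \cite{BZ} (their exchange matrix $\tilde B$ is $m\times n$ with columns indexed only by exchangeable vertices) establishes invariance of $\tilde B^T\Lambda$, which corresponds precisely to the mutable-row block of $Q\Pi$; the full square-matrix equality asserted in (d) is strictly stronger than what \cite{BZ} prove. Notice, however, that every use of (d) in the paper (in the proofs of Lemma~\ref{lem-decom-A} and Proposition~\ref{prop-comp}) only reads off entries $(Q^{\bv}\Pi^{\bv})(k,\cdot)$ with $k$ mutable, so the mutable-row form is all that is actually needed; if you restrict your commutator verification to mutable $i$, which you have in fact already done, your proof of (d) is complete for everything the paper requires.
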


\def\bu{\mathbbm{u}}

\begin{lemma}\label{lem-mutation-H}
 Under the same assumptions as Lemma~\ref{lem-matrix-mul}, we have 
  \begin{enumerate}[label={\rm (\alph*)}]\itemsep0.3em
    \item\label{lem-mutation-H-a} 
    $H_\lambda^{\mathbbm{u}}
    =E_{k,\epsilon,\bv}\,H_\lambda^{\mathbbm{v}}\,F_{k,\epsilon,\bv}
    =E_{k,\epsilon,\bv}\,H_\lambda^{\mathbbm{v}}\,E_{k,\epsilon,\bv}^T$.
    
    \item\label{lem-mutation-H-b} 
    $\mu_k\!\bigl(\mu_k(H_\lambda^{\mathbbm{v}})\bigr)
    = H_\lambda^{\mathbbm{v}}$.
  \end{enumerate}
\end{lemma}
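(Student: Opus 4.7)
For part (a), the second equality $E_{k,\epsilon,\bv} H_\lambda^{\mathbbm{v}} F_{k,\epsilon,\bv} = E_{k,\epsilon,\bv} H_\lambda^{\mathbbm{v}} E_{k,\epsilon,\bv}^T$ is immediate from \eqref{eq-tran-EF}, so the real content is the first equality. My plan is to reduce it to a direct matrix computation. The crucial observation is that $k \in \mathring{V}_\lambda$ is a mutable (hence interior) vertex, so for every $u \in V_\lambda$ the pair $(u,k)$ cannot both lie on a common boundary edge; thus Lemma~\ref{lem-matrix-HQ} yields
\[
H_\lambda^{\mathbbm{v}}(u,k) = Q_\lambda^{\mathbbm{v}}(u,k), \qquad H_\lambda^{\mathbbm{v}}(k,v) = Q_\lambda^{\mathbbm{v}}(k,v), \qquad H_\lambda^{\mathbbm{v}}(k,k)=0.
\]
Consequently, writing out the mutation rule \eqref{eq-mutation-Q} applied to $H_\lambda^{\mathbbm{v}}$ at $k$, the half-terms of the form $H_\lambda^{\mathbbm{v}}(u,k)\cdot|H_\lambda^{\mathbbm{v}}(k,v)|$ can be replaced by $Q_\lambda^{\mathbbm{v}}(u,k)\cdot|Q_\lambda^{\mathbbm{v}}(k,v)|$, so that
\[
H_\lambda^{\mathbbm{u}}(u,v) - H_\lambda^{\mathbbm{v}}(u,v) \;=\; Q_\lambda^{\mathbbm{u}}(u,v) - Q_\lambda^{\mathbbm{v}}(u,v) \quad (k\notin\{u,v\}),
\]
while $H_\lambda^{\mathbbm{u}}(u,v) = -H_\lambda^{\mathbbm{v}}(u,v)$ when $k\in\{u,v\}$.

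Now I will expand $(E_{k,\epsilon,\bv} H_\lambda^{\mathbbm{v}} F_{k,\epsilon,\bv})(u,v)$ directly from the definitions of $E_{k,\epsilon,\bv}$ and $F_{k,\epsilon,\bv}$. In the case $k\notin\{u,v\}$, the only surviving terms are the diagonal contribution $H_\lambda^{\mathbbm{v}}(u,v)$ together with two cross terms
\[
[-\epsilon Q_\lambda^{\mathbbm{v}}(u,k)]_+\, H_\lambda^{\mathbbm{v}}(k,v) + H_\lambda^{\mathbbm{v}}(u,k)\,[-\epsilon Q_\lambda^{\mathbbm{v}}(v,k)]_+,
\]
plus a term containing $H_\lambda^{\mathbbm{v}}(k,k) = 0$. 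Using the identifications above, this sum becomes exactly $Q_\lambda^{\mathbbm{u}}(u,v)-Q_\lambda^{\mathbbm{v}}(u,v)$ (which is how $E_{k,\epsilon,\bv}Q_\lambda^{\mathbbm{v}}F_{k,\epsilon,\bv}$ recovers $Q_\lambda^{\mathbbm{u}}$ in Lemma~\ref{lem-matrix-mul}\ref{lem-matrix-mul-a}), so the full expression equals $H_\lambda^{\mathbbm{u}}(u,v)$ by the displayed identity above. The case $k\in\{u,v\}$ is a direct check: only the $i=k$ row (or $j=k$ column) of $E$ (or $F$) contributes, and one obtains $-H_\lambda^{\mathbbm{v}}(u,v) = H_\lambda^{\mathbbm{u}}(u,v)$.

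For part (b), I will deduce it quickly from part (a). Set $\mathbbm{u}=(\bv,k)$ and note that $Q_\lambda^{\mathbbm{u}}(i,k) = -Q_\lambda^{\mathbbm{v}}(i,k)$ for all $i$. Hence choosing the opposite sign, $E_{k,-\epsilon,\mathbbm{u}} = E_{k,\epsilon,\mathbbm{v}}$ and $F_{k,-\epsilon,\mathbbm{u}} = F_{k,\epsilon,\mathbbm{v}}$ directly from the definitions. Applying part (a) twice therefore gives
\[
\mu_k\bigl(\mu_k(H_\lambda^{\mathbbm{v}})\bigr) \;=\; E_{k,-\epsilon,\mathbbm{u}}\bigl(E_{k,\epsilon,\mathbbm{v}} H_\lambda^{\mathbbm{v}} F_{k,\epsilon,\mathbbm{v}}\bigr) F_{k,-\epsilon,\mathbbm{u}} \;=\; E_{k,\epsilon,\mathbbm{v}}^2\, H_\lambda^{\mathbbm{v}}\, F_{k,\epsilon,\mathbbm{v}}^2 \;=\; H_\lambda^{\mathbbm{v}}
\]
by Lemma~\ref{lem-matrix-mul}\ref{lem-matrix-mul-b}. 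The only genuinely delicate point in the whole argument is the book-keeping in the case analysis of part (a), and in particular verifying that $H_\lambda^{\mathbbm{v}}(k,k)=0$ so that the $(i,j)=(k,k)$ term of the matrix product vanishes; everything else is a mechanical consequence of Lemmas~\ref{lem-matrix-HQ} and~\ref{lem-matrix-mul}.
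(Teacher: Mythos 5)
Your proposal is correct and follows essentially the same route as the paper: the authors dispose of (a) by citing Lemma~\ref{lem-matrix-HQ} together with Lemma~\ref{lem-matrix-mul}(a) ``with $Q$ replaced by $H$,'' and you have simply unpacked that terse citation into the explicit entry-by-entry check (correctly flagging that $k\in\mathring{V}_\lambda$ forces $H_\lambda^\bv(\cdot,k)=Q_\lambda^\bv(\cdot,k)$ and $H_\lambda^\bv(k,k)=0$, which is exactly what makes the $E,F$-conjugation identity survive the replacement). Your part (b) is the paper's argument verbatim, with the sign bookkeeping $E_{k,-\epsilon,\mathbbm{u}}=E_{k,\epsilon,\mathbbm{v}}$ matching the paper's $E_{k,+,\bu}=E_{k,-,\bv}$.
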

\begin{proof}
    (a) It follows from Lemma~\ref{lem-matrix-HQ} and Lemma~\ref{lem-matrix-mul}(a) (with $Q$ replaced by $H$).

    (b) From part (a), we have
  \begin{align*}   \mu_k\!\bigl(\mu_k(H_\lambda^{\mathbbm{v}})\bigr)
    &= \mu_k(H_\lambda^{\mathbbm{u}}) \\
    &= E_{k,+,\bu}\,H_\lambda^{\mathbbm{u}}\,F_{k,+,\bu} \\
    &= E_{k,+,\bu}\,E_{k,-,\bv}\,H_\lambda^{\mathbbm{v}}\,F_{k,-,\bv}\,F_{k,+,\bu}.
  \end{align*}
  It is straightforward to check that 
  $E_{k,+,\bu}=E_{k,-,\bv}$ and 
  $F_{k,+,\bu}=F_{k,-,\bv}$. 
  Then Lemma~\ref{lem-matrix-mul}\,(b) gives
  \begin{align*}
    \mu_k\!\bigl(\mu_k(H_\lambda^{\mathbbm{v}})\bigr)
    &= E_{k,-,\bv}\,E_{k,-,\bv}\,H_\lambda^{\mathbbm{v}}\,F_{k,-,\bv}\,F_{k,-,\bv} \\
    &= H_\lambda^{\mathbbm{v}}.
  \end{align*}
\end{proof}

Recall that we defined the matrices $K_\lambda$ (Equation~\eqref{eq-surgen-exp}) and $P_\lambda$ (Equation~\ref{eq-anti-matric-P-def}). Remarks~\ref{rem-Q-LY}, \ref{rem-P-P}, and \cite[Lemma~11.9]{LY23} imply the following.

\begin{lemma}\label{lem-matrix-id-LY}
We have the following identities
\begin{enumerate}[label={\rm (\alph*)}]\itemsep0,3em

\item\label{lem-matrix-id-LY-a} $H_\lambda K_\lambda= n I$.

\item\label{lem-matrix-id-LY-b} $K_\lambda Q_\lambda K_\lambda^T =\frac{1}{2} P_\lambda.$

\end{enumerate}
    
\end{lemma}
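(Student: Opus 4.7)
Both identities are essentially bookkeeping translations of results already established in \cite{LY23}, together with the definition of $P_\lambda$ adopted here, so the plan is mostly to track the normalisations carefully.

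For part \ref{lem-matrix-id-LY-b}, the identity $K_\lambda Q_\lambda K_\lambda^T=\tfrac{1}{2}P_\lambda$ is immediate from the definition \eqref{eq-anti-matric-P-def}, namely $P_\lambda:=2K_\lambda Q_\lambda K_\lambda^T$. As a sanity check, one should verify compatibility with \cite{LY23}: writing $\overline{\mathsf{Q}}_\lambda=-2Q_\lambda$, $\overline{\mathsf{K}}_\lambda=K_\lambda$, and $\overline{\mathsf{P}}_\lambda=-P_\lambda$ (Remarks \ref{rem-Q-LY}, \ref{rem-P-P}), the corresponding formula of \cite{LY23} reads $\overline{\mathsf{K}}_\lambda\overline{\mathsf{Q}}_\lambda\overline{\mathsf{K}}_\lambda^T=\overline{\mathsf{P}}_\lambda$, which under these substitutions becomes exactly \eqref{eq-anti-matric-P-def}.

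For part \ref{lem-matrix-id-LY-a}, I would invoke \cite[Lemma~11.9(c)]{LY23}, which asserts the analogous equality for the pair of matrices used there; this is the content already quoted in the paragraph following Theorem \ref{thm-transition-LY}. The only thing to verify is that our $H_\lambda$ (defined in \eqref{def-H-Q-interior}--\eqref{eq-def-H-lambda}) coincides with the $H$-matrix of \cite{LY23}. Off the boundary this is Lemma \ref{lem-matrix-HQ}-style: both matrices agree with $Q_\lambda$ whenever $v,v'$ do not lie on a common boundary edge. On the boundary, one compares entry by entry with the explicit definition of the $H$-matrix in \cite{LY23}; the formula \eqref{eq-def-H-lambda} is precisely the one used there up to the same sign/$\tfrac{1}{2}$ convention that converts $\overline{\mathsf{Q}}_\lambda$ into $Q_\lambda$. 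Having matched the two matrices, the equality $H_\lambda K_\lambda=nI$ is exactly \cite[Lemma~11.9(c)]{LY23}.

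The only nontrivial step is the boundary check in part \ref{lem-matrix-id-LY-a}, since the interior portion is a direct translation and part \ref{lem-matrix-id-LY-b} is definitional. In practice, this check reduces to comparing the local model on a single triangle $\mathbb{P}_3$, where the skeleton construction defining $K_\lambda$ (\S\ref{sec;A_tori}) and the formula for $H_\lambda$ on edge vertices can be written out explicitly and multiplied; the resulting matrix is $nI$ by a short combinatorial argument on segment contributions, and the global statement then follows by assembling the triangles of $\lambda$ via the gluing \eqref{eq.glue}.
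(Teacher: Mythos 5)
Your proof is correct and follows the same route as the paper: part~\ref{lem-matrix-id-LY-b} is immediate from the defining equation~\eqref{eq-anti-matric-P-def}, and part~\ref{lem-matrix-id-LY-a} is a direct citation of \cite[Lemma~11.9(c)]{LY23} once the notational dictionary of Remarks~\ref{rem-Q-LY} and~\ref{rem-P-P} (together with $K_\lambda=\overline{\mathsf K}_\lambda$) is taken into account, which is precisely the paper's one-line justification. Your extra discussion of a local $\mathbb{P}_3$ check and of reconciling the $H$-matrix conventions at boundary vertices is a reasonable precaution but goes beyond what the paper does for this lemma.
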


\begin{lemma}\label{lem-key-com-mutation}
  Let $\bv=(v_1,\cdots,v_m)$ be a sequence of mutable vertices in $V_\lambda$.
\begin{enumerate}[label={\rm (\alph*)}]\itemsep0,3em

\item\label{lem-key-com-mutation-a} There exists a unique matrix $K_\lambda^{\bv}$ such that 
$H_\lambda^{\mathbbm{v}} K_\lambda^{\mathbbm{v}}=nI$.

\item\label{lem-key-com-mutation-b} $H_\lambda^{\mathbbm{v}} \Pi_\lambda^{\mathbbm{v}} (H_\lambda^{\mathbbm{v}})^T=2n Q_\lambda^{\mathbbm{v}} $.

\end{enumerate}
\end{lemma}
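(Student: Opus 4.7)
\medskip

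\noindent\textbf{Proof plan.} Both claims will be proved simultaneously by induction on the length $m$ of the sequence $\bv=(v_1,\ldots,v_m)$, using the mutation formulas in Lemmas~\ref{lem-matrix-mul} and~\ref{lem-mutation-H} together with the algebraic identity $E_{k,\epsilon,\bv}^{T}=F_{k,\epsilon,\bv}$ and the involutivity $E_{k,\epsilon,\bv}^{2}=F_{k,\epsilon,\bv}^{2}=I$.

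\medskip

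\noindent\textbf{Base case $m=0$.} We have $H_\lambda^{\emptyset}=H_\lambda$, $Q_\lambda^{\emptyset}=Q_\lambda$ and $\Pi_\lambda^{\emptyset}=\Pi_\lambda=\tfrac{1}{n}P_\lambda$. Lemma~\ref{lem-matrix-id-LY}\ref{lem-matrix-id-LY-a} asserts $H_\lambda K_\lambda=nI$, which together with the finite-dimensional one-sided inverse argument gives that $H_\lambda$ is invertible with $K_\lambda=n\,H_\lambda^{-1}$, establishing (a) (with uniqueness automatic since a two-sided inverse of a square matrix is unique). For (b), Lemma~\ref{lem-matrix-id-LY}\ref{lem-matrix-id-LY-b} rewrites as $P_\lambda=2K_\lambda Q_\lambda K_\lambda^{T}$, whence
\[
H_\lambda\,\Pi_\lambda\,H_\lambda^{T}
=\tfrac{2}{n}H_\lambda K_\lambda\,Q_\lambda\,K_\lambda^{T}H_\lambda^{T}
=\tfrac{2}{n}\,(nI)\,Q_\lambda\,(nI)
=2n\,Q_\lambda,
\]
where $K_\lambda^{T}H_\lambda^{T}=nI$ follows by transposing $H_\lambda K_\lambda=nI$.

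\medskip

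\noindent\textbf{Inductive step.} Assume both (a) and (b) hold for $\bv$, and let $\bu=(v_1,\ldots,v_m,k)$ with $k\in\mathring{V}_\lambda$. Write $E=E_{k,\epsilon,\bv}$ and $F=F_{k,\epsilon,\bv}$. For (a): by Lemma~\ref{lem-mutation-H}\ref{lem-mutation-H-a}, $H_\lambda^{\bu}=E\,H_\lambda^{\bv}\,F$, and since $E$ and $F$ are involutions they are invertible, so $H_\lambda^{\bu}$ is invertible by the inductive hypothesis; hence we may set $K_\lambda^{\bu}:=n(H_\lambda^{\bu})^{-1}$, which is the unique matrix satisfying $H_\lambda^{\bu} K_\lambda^{\bu}=nI$. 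For (b): using $E^{T}=F$, $F^{T}=E$, Lemma~\ref{lem-mutation-H}\ref{lem-mutation-H-a} giving $H_\lambda^{\bu}=E H_\lambda^{\bv}F=E H_\lambda^{\bv}E^{T}$, and Lemma~\ref{lem-matrix-mul}\ref{lem-matrix-mul-a} giving $\Pi_\lambda^{\bu}=E^{T}\Pi_\lambda^{\bv}E$, one computes
\[
H_\lambda^{\bu}\,\Pi_\lambda^{\bu}\,(H_\lambda^{\bu})^{T}
=E\,H_\lambda^{\bv}E^{T}\cdot E^{T}\Pi_\lambda^{\bv}E\cdot E\,(H_\lambda^{\bv})^{T}E^{T}
=E\,H_\lambda^{\bv}\,\Pi_\lambda^{\bv}\,(H_\lambda^{\bv})^{T}\,E^{T},
\]
where we used $E^{T}E^{T}=F^{2}=I$ and $E\,E=I$. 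The inductive hypothesis (b) and Lemma~\ref{lem-matrix-mul}\ref{lem-matrix-mul-a} then yield
\[
H_\lambda^{\bu}\,\Pi_\lambda^{\bu}\,(H_\lambda^{\bu})^{T}
=E\,(2n\,Q_\lambda^{\bv})\,E^{T}
=2n\,E\,Q_\lambda^{\bv}\,F
=2n\,Q_\lambda^{\bu},
\]
completing the induction. The step is essentially routine once one notes the crucial bookkeeping $E^{T}=F$, which makes the conjugation pattern for $H_\lambda^{\bv}\Pi_\lambda^{\bv}(H_\lambda^{\bv})^{T}$ match that for $Q_\lambda^{\bv}$; there is no serious obstacle.
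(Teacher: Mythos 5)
Your proof is correct and follows the same inductive skeleton as the paper's, but the inductive step for part (b) is slightly streamlined. The paper conjugates $\Pi_\lambda^{\bv}$ with sign $\epsilon=-$ and $H_\lambda^{\bv}$ with sign $\epsilon=+$, producing the middle product $E_{k,+,\bv}^{T}E_{k,-,\bv}^{T}\,\Pi_\lambda^{\bv}\,E_{k,-,\bv}E_{k,+,\bv}$, and must then invoke the nontrivial identity of Lemma~\ref{lem-matrix-mul}\ref{lem-matrix-mul-c} to collapse it. You instead use the same $\epsilon$ in both of the mutation formulas (which is legitimate, since Lemma~\ref{lem-mutation-H}\ref{lem-mutation-H-a} and Lemma~\ref{lem-matrix-mul}\ref{lem-matrix-mul-a} are stated for arbitrary $\epsilon$), so the middle product becomes $E^{T}E^{T}\,\Pi_\lambda^{\bv}\,EE=F^{2}\Pi_\lambda^{\bv}E^{2}=\Pi_\lambda^{\bv}$, which collapses using only the involutivity $E^{2}=F^{2}=I$ from Lemma~\ref{lem-matrix-mul}\ref{lem-matrix-mul-b}. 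This trades the use of Lemma~\ref{lem-matrix-mul}\ref{lem-matrix-mul-c} for a purely routine cancellation. Your treatment of part (a) via invertibility of $H_\lambda^{\bu}$ (and setting $K_\lambda^{\bu}=n(H_\lambda^{\bu})^{-1}$) is also equivalent to the paper's explicit construction $K_\lambda^{\bu}=F_{k,\epsilon,\bv}K_\lambda^{\bv}E_{k,\epsilon,\bv}$, just phrased abstractly rather than giving the closed form. Both routes are sound; yours is marginally more economical in its dependencies.
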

\begin{proof}
    We prove the lemma using induction on $m$. When $m=0$, Lemma~\ref{lem-matrix-id-LY}(a) implies (a). 
    Lemma~\ref{lem-matrix-id-LY}(a) and (b) show that 
    $H_\lambda P_\lambda H_\lambda^T=2n^2 Q_\lambda$. Since $\Pi_\lambda=\frac{1}{n} P_\lambda$, then 
    $H_\lambda \Pi_\lambda H_\lambda^T=2n Q_\lambda$.
    Assume that the lemma holds for $m$ ($m\geq 0$). Let $v_{m+1}$ be a mutable vertex, let $k=v_{m+1}$, and let $\mathbbm{u}=(v_1,\cdots,v_m,k)$.
    From the inductive hypothesis, there exists a unique matrix $K_\lambda^{\bv}$ such that 
$H_\lambda^{\mathbbm{v}} K_\lambda^{\mathbbm{v}}=nI$.
    Define 
    $$K_\lambda^{\mathbbm{u}}
=F_{k,\epsilon,\bv}K_\lambda^{\mathbbm{v}} E_{k,\epsilon,\bv}.$$

We have 
\begin{align*}
H_\lambda^{\mathbbm{u}} K_\lambda^{\mathbbm{u}}&=
E_{k,\epsilon,\bv}H_\lambda^{\mathbbm{v}} F_{k,\epsilon,\bv} F_{k,\epsilon,\bv}K_\lambda^{\mathbbm{v}} E_{k,\epsilon,\bv}
     \quad (\mbox{by Lemma~\ref{lem-mutation-H}})\\
     &= E_{k,\epsilon,\bv}H_\lambda^{\mathbbm{v}} K_\lambda^{\mathbbm{v}} E_{k,\epsilon,\bv}
     \quad ( \mbox{by Lemma~\ref{lem-matrix-mul}(b)})\\
     & = nE_{k,\epsilon,\bv}E_{k,\epsilon,\bv}
     \quad ( \mbox{by inductive hypothesis})\\
     & = nI \quad ( \mbox{by Lemma~\ref{lem-matrix-mul}(b)}).
\end{align*}
This proves part (a) for $\bu$.  

  For part (b), we compute
\begin{align*}
H_\lambda^{\mathbbm{u}} \Pi_\lambda^{\mathbbm{u}} (H_\lambda^{\mathbbm{u}})^T&=H_\lambda^{\mathbbm{u}} 
E_{k,-,\bv}^T \Pi_\lambda^{\mathbbm{v}} E_{k,-,\bv} (H_\lambda^{\mathbbm{u}})^T 
     \quad ( \mbox{by Lemma~\ref{lem-matrix-mul}(a)})\\
     &= E_{k,+,\bv}H_\lambda^{\mathbbm{v}} E_{k,+,\bv}^T 
E_{k,-,\bv}^T \Pi_\lambda^{\mathbbm{v}} E_{k,-,\bv} E_{k,+,\bv}(H_\lambda^{\mathbbm{v}})^T E_{k,+,\bv}^T
     \quad ( \mbox{by Lemma~\ref{lem-mutation-H}})\\
     &= E_{k,+,\bv}H_\lambda^{\mathbbm{v}} \Pi_\lambda^{\mathbbm{v}}(H_\lambda^{\mathbbm{v}})^T E_{k,+,\bv}^T
     \quad ( \mbox{by Lemma~\ref{lem-matrix-mul}(c)})\\
     &=2n E_{k,+,\bv}Q_\lambda^{\mathbbm{v}}  E_{k,+,\bv}^T
     \quad ( \mbox{by inductive hypothesis})\\
     &=2n Q_\lambda^{\mathbbm{u}}  
     \quad ( \mbox{by Lemma~\ref{lem-matrix-mul}(a) and \eqref{eq-tran-EF}}).
\end{align*}
Thus (b) also holds for $\bu$, completing the induction.
\end{proof}

For a sequence $\mathbbm{v}=(v_1,\cdots,v_m)$ of mutable vertices in $V_\lambda$, define 
\begin{align}
&\mathcal B_\lambda^{\bv}:=\{Z^{\bf k}\mid {\bf k} H_\lambda^{\bv}\in (n\mathbb Z)^{V_\lambda}, {\bf k}\in \mathbb Z^{V_\lambda}\}\\
\label{def-Z-D-v-lambda}
    &\mathcal Z_\omega^{\rm bl}(\mathcal D_\lambda^{\bv}):=
    \text{span}_R\{Z^{\bf k}\mid {\bf k} \in \mathcal B_\lambda^{\bv}\}\subset \mathcal Z_\omega(\mathcal D_\lambda^{\bv}),
\end{align}
where $\mathcal D_\lambda^{\bv}$ is defined in \eqref{eq-def-wD}.
It is easy to show that $\mathcal B_\lambda^{\bv}$ is a subgroup of $\mathbb Z^{V_\lambda}$ and $\mathcal Z_\omega^{\rm bl}(\mathcal D_\lambda^{\bv})$ is a subalgebra of $\mathcal Z_\omega(\mathcal D_\lambda^{\bv})$ with 
$\mathcal X_\omega(\mathcal D_\lambda^{\bv})\subset \mathcal Z_\omega^{\rm bl}(\mathcal D_\lambda^{\bv})$.

\def\Zbv{\mathcal Z_\omega^{\rm bl}(\mathcal D_\lambda^{\bv})}
\def\Zmv{\mathcal Z_\omega^{\rm mbl}(\mathcal D_\lambda^{\bv})}
\def\Tv{\mathbb T({w_\lambda^{\mathbbm{v}}})}
\def\Hv{H_\lambda^{\bv}}
\def\Kv{K_\lambda^{\bv}}
\def\Qv{Q_\lambda^{\bv}}
\def\Pv{\Pi_\lambda^{\bv}}
\def\Bv{\mathcal B_\lambda^{\bv}}

\begin{lemma}\label{lem-com-bal-T}
  Let $\bv=(v_1,\ldots,v_m)$ be a sequence of mutable vertices in $V_\lambda$.  
  The map
  \[
    \varphi_\lambda^{\bv}\colon \Zbv \longrightarrow \mathbb T(w_\lambda^{\mathbbm{v}}), 
    \qquad 
    Z^{\bf k}\longmapsto A^{\tfrac{1}{n}{\bf k} H_\lambda^{\bv}}
    \quad \text{for } {\bf k}\in\Bv,
  \]
  where $\mathbb T(w_\lambda^{\mathbbm{v}})$ is defined in~\eqref{eq-T-w}, is an algebra isomorphism.
\end{lemma}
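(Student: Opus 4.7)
The plan is to verify that $\varphi_\lambda^{\bv}$ is a well-defined, bijective algebra homomorphism, with the core of the computation resting on the matrix identities in Lemma~\ref{lem-key-com-mutation}. I will first define $\varphi_\lambda^{\bv}$ on the $R$-basis $\{Z^{\bf k} : {\bf k}\in\Bv\}$ of $\Zbv$ and extend $R$-linearly. Well-definedness on each basis element follows immediately from the defining condition of $\Bv$: if ${\bf k}\in\Bv$ then ${\bf k}H_\lambda^{\bv}\in(n\mathbb Z)^{V_\lambda}$, so $\tfrac{1}{n}{\bf k}H_\lambda^{\bv}\in\mathbb Z^{V_\lambda}$ and $A^{\tfrac{1}{n}{\bf k}H_\lambda^{\bv}}\in\mathbb T(w_\lambda^{\bv})$.

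For the algebra homomorphism property, it suffices to compare $\varphi(Z^{\bf k})\varphi(Z^{\bf t})$ with $\varphi(Z^{\bf k}Z^{\bf t})$ for ${\bf k},{\bf t}\in\Bv$. Using the standard Weyl-ordering identity $A^{\bf s}A^{\bf r}=\xi^{\tfrac{1}{2}{\bf s}\Pi_\lambda^{\bv}{\bf r}^T}A^{{\bf s}+{\bf r}}$ (which comes from \eqref{eq-M-relation}), the left-hand side equals
\begin{equation*}
\xi^{\tfrac{1}{2n^{2}}\,{\bf k}\,H_\lambda^{\bv}\,\Pi_\lambda^{\bv}\,(H_\lambda^{\bv})^{T}{\bf t}^{T}}\,A^{\tfrac{1}{n}({\bf k}+{\bf t})H_\lambda^{\bv}}.
\end{equation*}
The key identity Lemma~\ref{lem-key-com-mutation}\ref{lem-key-com-mutation-b} gives $H_\lambda^{\bv}\Pi_\lambda^{\bv}(H_\lambda^{\bv})^{T}=2nQ_\lambda^{\bv}$, so the exponent simplifies to $\tfrac{1}{n}{\bf k}Q_\lambda^{\bv}{\bf t}^{T}$; since $\xi=\omega^{n}$, the prefactor becomes $\omega^{{\bf k}Q_\lambda^{\bv}{\bf t}^{T}}$. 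On the other hand, from the commutation $Z_uZ_{v}=\omega^{2Q_\lambda^{\bv}(u,v)}Z_vZ_u$ one derives $Z^{\bf k}Z^{\bf t}=\omega^{{\bf k}Q_\lambda^{\bv}{\bf t}^{T}}Z^{{\bf k}+{\bf t}}$, and applying $\varphi$ yields precisely the same expression, establishing multiplicativity.

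For bijectivity, I will use Lemma~\ref{lem-key-com-mutation}\ref{lem-key-com-mutation-a}, which provides an integer matrix $K_\lambda^{\bv}$ satisfying $H_\lambda^{\bv}K_\lambda^{\bv}=nI$ (integrality of $K_\lambda^{\bv}$ follows by induction from the recursive formula $K_\lambda^{\bu}=F_{k,\epsilon,\bv}K_\lambda^{\bv}E_{k,\epsilon,\bv}$, since $E$ and $F$ have integer entries when $k$ is interior). Because both matrices are square, one also has $K_\lambda^{\bv}H_\lambda^{\bv}=nI$. This lets me define the candidate inverse $\psi\colon\mathbb T(w_\lambda^{\bv})\to\Zbv$ by $A^{\bf s}\mapsto Z^{{\bf s}K_\lambda^{\bv}}$; for any ${\bf s}\in\mathbb Z^{V_\lambda}$, the vector ${\bf s}K_\lambda^{\bv}$ lies in $\mathbb Z^{V_\lambda}$ with $({\bf s}K_\lambda^{\bv})H_\lambda^{\bv}=n{\bf s}\in(n\mathbb Z)^{V_\lambda}$, so it lies in $\Bv$. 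The two compositions $\varphi\circ\psi$ and $\psi\circ\varphi$ reduce on the basis to the scalar identities $\tfrac{1}{n}({\bf s}K_\lambda^{\bv})H_\lambda^{\bv}={\bf s}$ and $\tfrac{1}{n}({\bf k}H_\lambda^{\bv})K_\lambda^{\bv}={\bf k}$, proving $\varphi_\lambda^{\bv}$ is a bijection.

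There is no serious obstacle here: once Lemma~\ref{lem-key-com-mutation} is in hand, the argument is a direct translation between quantum tori via the matrix $\tfrac{1}{n}H_\lambda^{\bv}$. The only subtle point worth flagging is the integrality of $K_\lambda^{\bv}$, which is needed so that the inverse $\psi$ actually lands in the integer lattice defining $\Zbv$; this integrality is what justifies writing $K_\lambda^{\bv}$ rather than merely $n(H_\lambda^{\bv})^{-1}$ in the formula for $\psi$.
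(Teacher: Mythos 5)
Your proof is correct and takes essentially the same route as the paper: the homomorphism property is reduced to the matrix identity $H_\lambda^{\bv}\Pi_\lambda^{\bv}(H_\lambda^{\bv})^T=2nQ_\lambda^{\bv}$ from Lemma~\ref{lem-key-com-mutation}(b), and the inverse is built from $K_\lambda^{\bv}$ via Lemma~\ref{lem-key-com-mutation}(a). Your explicit remark about the integrality of $K_\lambda^{\bv}$ (inherited inductively from $K_\lambda^{\emptyset}=K_\lambda$ through the integer matrices $E,F$) is a point the paper uses implicitly but does not spell out, and it is indeed the step that guarantees the candidate inverse lands in $\Zbv$ rather than a larger lattice.
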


\begin{proof}
  For any ${\bf k}, {\bf t}\in \Bv$, we have 
  \[
    Z^{\bf k} Z^{\bf t}= \omega^{2 {\bf k}\Qv {\bf t}^T}\, Z^{\bf t} Z^{\bf k}\in\Zbv.
  \]
  To show that $\varphi_\lambda^{\bv}$ is a well-defined algebra homomorphism, it suffices to check that
  \[
    \varphi_\lambda^{\bv}(Z^{\bf k})\,\varphi_\lambda^{\bv}(Z^{\bf t})
    = \omega^{2 {\bf k}\Qv {\bf t}^T}\,
      \varphi_\lambda^{\bv}(Z^{\bf t})\,\varphi_\lambda^{\bv}(Z^{\bf k})
    \in\Tv.
  \]

  Indeed,
  \begin{align*}
    \varphi_\lambda^{\bv}(Z^{\bf k})\,
    \varphi_\lambda^{\bv}(Z^{\bf t})
    &= A^{\tfrac{1}{n}{\bf k} H_\lambda^{\bv}}
       A^{\tfrac{1}{n}{\bf t} H_\lambda^{\bv}} \\
    &= \xi^{\tfrac{1}{n^2}\,{\bf k}\Hv\Pv(\Hv)^T {\bf t}^T}\,
       A^{\tfrac{1}{n}{\bf t} H_\lambda^{\bv}}
       A^{\tfrac{1}{n}{\bf k} H_\lambda^{\bv}} \\
    &= \xi^{\tfrac{2}{n}\,{\bf k}\Qv {\bf t}^T}\,
       A^{\tfrac{1}{n}{\bf t} H_\lambda^{\bv}}
       A^{\tfrac{1}{n}{\bf k} H_\lambda^{\bv}}
       \qquad \text{(by Lemma~\ref{lem-key-com-mutation}(b))} \\
    &= \omega^{2 {\bf k}\Qv {\bf t}^T}\,
       \varphi_\lambda^{\bv}(Z^{\bf t})\,\varphi_\lambda^{\bv}(Z^{\bf k})
       \qquad \text{(by\;$\omega=\xi^{1/n}$)}.
  \end{align*}
  Hence $\varphi_\lambda^{\bv}$ is an algebra homomorphism.  

  \smallskip
  Finally, Lemma~\ref{lem-key-com-mutation}(a) provides a matrix 
  $K_\lambda^{\bv}$ such that $\Hv \Kv = nI$.  
  Then the map
  \[
    \Tv \longrightarrow \Zbv, 
    \qquad 
    A^{\bf k}\longmapsto Z^{{\bf k} K_\lambda^{\bv}}
    \quad \text{for } {\bf k}\in\mathbb Z^{V_\lambda},
  \]
  is the inverse of $\varphi_\lambda^{\bv}$.  
  Thus $\varphi_\lambda^{\bv}$ is an algebra isomorphism.
\end{proof}

\begin{lemma}\label{eq-bl-inclusion-mut-vu}
  For any sequence $\mathbbm{v}=(v_1,\ldots,v_m)$ of mutable vertices in $V_\lambda$, we have 
  \[
    \Zbv \subset \Zmv.
  \]
\end{lemma}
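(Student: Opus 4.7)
\medskip

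\noindent\textbf{Proof proposal.} The plan is to reduce the inclusion to the corresponding inclusion of index sets, i.e.\ to prove
\[
\mathcal B_\lambda^{\bv}\;\subset\;\mathcal B_{\mathcal D_\lambda^\bv}\;\subset\;\mathbb Z^{V_\lambda},
\]
which by definition immediately yields the desired inclusion of $R$-spans $\Zbv\subset \Zmv$. Recall that $\mathcal B_\lambda^\bv$ consists of those ${\bf k}\in\mathbb Z^{V_\lambda}$ with ${\bf k}\,H_\lambda^\bv\in (n\mathbb Z)^{V_\lambda}$, while, with the present setup $\mathcal V_{\rm mut}=\mathring V_\lambda$, the group $\mathcal B_{\mathcal D_\lambda^\bv}$ consists of those ${\bf k}$ satisfying $\sum_{v\in V_\lambda}Q_\lambda^\bv(u,v)\,k_v\equiv 0\pmod n$ for every mutable vertex $u\in\mathring V_\lambda$. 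So the point is to pass from the stronger-looking condition on ${\bf k}H_\lambda^\bv$ (which involves every coordinate, including frozen ones) to the weaker $Q_\lambda^\bv$-condition read off only at the mutable coordinates.

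The main observation is that on mutable rows/columns the matrices $H_\lambda^\bv$ and $Q_\lambda^\bv$ simply coincide. Indeed, fix ${\bf k}\in\mathcal B_\lambda^\bv$ and $u\in\mathring V_\lambda$. Since $u$ lies in the interior of $\fS$, it does not belong to any boundary edge, so for every $v\in V_\lambda$ the vertices $u$ and $v$ never share a common boundary edge. Lemma~\ref{lem-matrix-HQ} then gives $H_\lambda^\bv(v,u)=Q_\lambda^\bv(v,u)$ for all $v$, hence
\[
({\bf k}\,H_\lambda^\bv)_u \;=\; \sum_{v\in V_\lambda} k_v\,H_\lambda^\bv(v,u) \;=\; \sum_{v\in V_\lambda} k_v\,Q_\lambda^\bv(v,u).
\]
Combining this with the antisymmetry of $Q_\lambda^\bv$, we obtain
\[
\sum_{v\in V_\lambda} Q_\lambda^\bv(u,v)\,k_v \;=\; -({\bf k}\,H_\lambda^\bv)_u \;\in\; n\mathbb Z,
\]
since by assumption ${\bf k}H_\lambda^\bv\in (n\mathbb Z)^{V_\lambda}$. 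As $u\in\mathring V_\lambda$ was arbitrary, this shows ${\bf k}\in \mathcal B_{\mathcal D_\lambda^\bv}$, and hence $\mathcal B_\lambda^\bv\subset \mathcal B_{\mathcal D_\lambda^\bv}$, which finishes the proof.

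There is essentially no obstacle: the only non-trivial ingredient is the matching of $H_\lambda^\bv$ and $Q_\lambda^\bv$ on rows indexed by interior vertices, and that is precisely the content of Lemma~\ref{lem-matrix-HQ}. The deeper matrix identities of Lemma~\ref{lem-matrix-mul} and Lemma~\ref{lem-key-com-mutation} are not needed here; they will, however, be essential when comparing the two algebras through the isomorphism $\varphi_\lambda^\bv$ of Lemma~\ref{lem-com-bal-T} and when establishing compatibility with $\nu_k^\omega$ in the subsequent subsection.
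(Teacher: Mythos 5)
Your proof is correct and takes essentially the same route as the paper's: reduce to the inclusion of index sets, then use Lemma~\ref{lem-matrix-HQ} to identify $H_\lambda^{\bv}$ and $Q_\lambda^{\bv}$ on rows and columns indexed by interior (mutable) vertices, which translates the ${\bf k}H_\lambda^{\bv}\in(n\mathbb Z)^{V_\lambda}$ condition into the $Q_\lambda^{\bv}$-condition defining $\mathcal B_{\mathcal D_\lambda^{\bv}}$. If anything, your version is slightly more careful: the paper simply asserts $\sum_{v} Q_\lambda^{\bv}(u,v)t_v = \sum_v H_\lambda^{\bv}(u,v)t_v = 0$ in $\mathbb Z/n\mathbb Z$, leaving implicit the passage from the balancing condition $({\bf t}H_\lambda^{\bv})_u = \sum_v t_v H_\lambda^{\bv}(v,u)$ (a column condition) to the row sum $\sum_v H_\lambda^{\bv}(u,v)t_v$, which requires antisymmetry of $Q_\lambda^{\bv}$; you spell out that sign flip explicitly.
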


\begin{proof}
  Let ${\bf t}=(t_v)_{v\in V_\lambda}\in \mathcal B_\lambda^{\bv}$.  
  For any mutable vertex $u$, Lemma~\ref{lem-matrix-HQ} implies
  \[
    \sum_{v\in V_\lambda} Q_\lambda^{\bv}(u,v)\,t_v
    = \sum_{v\in V_\lambda} H_\lambda^{\bv}(u,v)\,t_v
    = 0 \in \mathbb{Z}/n\mathbb{Z}.
  \]
  By the definition of $\Zmv$ (Definition~\ref{def-mut-Z}), this condition ensures that 
  $Z^{\bf t}\in \Zmv$.  
  Hence every generator of $\Zbv$ lies in $\Zmv$, proving the inclusion.
\end{proof}

\def\bvi{\bv(i)}
\def\bu{\mathbbm{u}}

Let $\bv=(v_1,\cdots,v_m)$ be a sequence of mutable vertices in $V_\lambda$, and let $k$ be a mutable vertex in $V_\lambda$.
Set $\mathbbm u= (v_1,\cdots,v_m,k)$.
In \S\ref{subsec-mutation-Fock}, we introduced the following
 skew-field isomorphisms
\begin{align*}
   \nu_{k}'&\colon \Fr(
      \mathcal{Z}^{\rm mbl}_\omega(\mathcal{D}_\lambda^{\bu}))\rightarrow
      \Fr(
      \mathcal{Z}^{\rm mbl}_\omega(\mathcal{D}_\lambda^{\bv})),\\
      \nu_{k}^{\sharp\omega}&\colon \Fr(
      \mathcal{Z}^{\rm mbl}_\omega(\mathcal{D}_\lambda^{\bv}))\rightarrow
      \Fr(
      \mathcal{Z}^{\rm mbl}_\omega(\mathcal{D}_\lambda^{\bv})),\\      \nu_{k}^\omega&=\nu_{k}^{\sharp\omega}\circ\nu_{k}' \colon \Fr(
      \mathcal{Z}^{\rm mbl}_\omega(\mathcal{D}_\lambda^{\bu}))\rightarrow
      \Fr(
      \mathcal{Z}^{\rm mbl}_\omega(\mathcal{D}_\lambda^{\bv})).
\end{align*}

We have the following.
\begin{lemma}\label{lem-rest-iso-to-balanced}
    The skew-field isomorphism $\nu_{k}^\omega \colon \Fr(
      \mathcal{Z}^{\rm mbl}_\omega(\mathcal{D}_\lambda^{\bu}))\rightarrow
      \Fr(
      \mathcal{Z}^{\rm mbl}_\omega(\mathcal{D}_\lambda^{\bv}))$ restricts to a skew-field isomorphism 
      $$\nu_{k}^\omega \colon \Fr(
      \mathcal{Z}^{\rm bl}_\omega(\mathcal{D}_\lambda^{\bu}))\rightarrow
      \Fr(
      \mathcal{Z}^{\rm bl}_\omega(\mathcal{D}_\lambda^{\bv})).$$
\end{lemma}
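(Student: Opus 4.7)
My plan is to use the factorization $\nu_k^\omega=\nu_k^{\sharp\omega}\circ\nu_k'$ from \eqref{eq-def-quantum-mutation-X} and to show that each factor restricts separately to the balanced fraction fields. The restriction of $\nu_k^\omega$ will then be immediate; the inverse direction will follow by running the same argument after swapping $\bv$ and $\bu$ and appealing to Lemma~\ref{lem:nu_involutation}.

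For the automorphism piece $\nu_k^{\sharp\omega}$, I would take any ${\bf t}\in\Bv$, use Lemma~\ref{eq-bl-inclusion-mut-vu} to regard ${\bf t}$ as mutable-balanced, and apply Lemma~\ref{lem.nu_sharp_well-defined} to obtain
\[
\nu_k^{\sharp\omega}(Z^{\bf t})=Z^{\bf t}\,F^q(X_k,m)\qquad\text{with}\qquad m\in\mathbb Z.
\]
The factor $F^q(X_k,m)$ is a rational function of $X_k=Z_k^n$; since the vector $n\mathbf{e}_k$ obviously belongs to $\Bv$ (every entry of $n\mathbf{e}_k\,\Hv$ is divisible by $n$), we have $Z_k^n\in\Zbv$ and hence $F^q(X_k,m)\in\Fr(\Zbv)$. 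Therefore $\nu_k^{\sharp\omega}(Z^{\bf t})\in\Fr(\Zbv)$, and $\nu_k^{\sharp\omega}$ restricts to an automorphism of $\Fr(\Zbv)$.

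For the monomial piece $\nu_k'$, I would take ${\bf k}'\in\mathcal B_\lambda^{\bu}$ and read off from \eqref{eq-quantum-mutation_Z} that $\nu_k'((Z')^{{\bf k}'})=\omega^{c}Z^{\bf k}$ where $k_v=k'_v$ for $v\neq k$ and $k_k=-k'_k+\sum_{v\neq k}[\Qv(v,k)]_+\,k'_v$. The task reduces to verifying ${\bf k}\in\Bv$. Because $k\in\mathring V_\lambda$ is interior, Lemma~\ref{lem-matrix-HQ} gives $\Hv(v,k)=\Qv(v,k)$ for every $v\neq k$ and $\Hv(k,k)=0$, so the exponent $k_k$ may equally be written using $\Hv$-entries. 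Moreover \eqref{eq-def-DH-v} says $H_\lambda^{\bu}=\mu_k(\Hv)$, so the pair $\Hv,H_\lambda^{\bu}$ satisfies exactly the matrix mutation relation \eqref{eq-mutation-Q} that governs $\Qv,Q_\lambda^{\bu}$. The proof of Lemma~\ref{lem-commute}, which invokes only this mutation relation between the two matrices and the prescribed formula for the exponent ${\bf k}$, then transports verbatim with $Q,Q'$ replaced by $\Hv,H_\lambda^{\bu}$ (taking $\mathbb F=\mathbb Z/n\mathbb Z$ and $\mathcal V_0=V_\lambda$), yielding $({\bf k}\Hv)_u\equiv 0\pmod n$ for every $u\in V_\lambda$. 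Hence $\nu_k'$ carries $\mathcal Z_\omega^{\rm bl}(\mathcal D_\lambda^{\bu})$ into $\Zbv$, and the induced map on fraction fields restricts.

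I expect the main delicate step to be the transfer of Lemma~\ref{lem-commute} from $Q$ to $H$ in the monomial part; everything hinges on the fact that the matrix mutation rule \eqref{eq-mutation-Q} is purely formal and insensitive to antisymmetry, together with Lemma~\ref{lem-matrix-HQ}, which guarantees that $H$ and $Q$ agree in the mutable row and column---the only entries appearing in the formula. All other steps reduce to routine bookkeeping with Weyl-ordered monomials.
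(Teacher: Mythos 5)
Your proposal is correct and follows essentially the same route as the paper: factor $\nu_k^\omega=\nu_k^{\sharp\omega}\circ\nu_k'$, handle the automorphism part $\nu_k^{\sharp\omega}$ via the observation that $X_k=Z_k^n$ lies in $\mathcal X_q(\mathcal D_\lambda^{\bv})\subset\mathcal Z_\omega^{\rm bl}(\mathcal D_\lambda^{\bv})$ (the paper makes exactly this remark), handle the monomial part $\nu_k'$ by checking the $H$-balance condition on exponent vectors, and finish with the involutivity Lemma~\ref{lem:nu_involutation}. For $\nu_k'$, the paper writes out the case analysis directly with $H_\lambda^{\bv},H_\lambda^{\bu}$ rather than invoking Lemma~\ref{lem-commute}, but the algebraic content is identical, and both arguments hinge on the same ingredient — Lemma~\ref{lem-matrix-HQ}, which says the $k$-th row and column of $H$ coincide with those of $Q$ since $k$ is interior. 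Your phrase ``transports verbatim'' slightly glosses over the point that the computation uses the antisymmetry of the $k$-th row and column (e.g.\ the identity $[Q(w,k)]_+=[Q'(k,w)]_+$); this is not automatic for $H$ globally, but it does hold for the $k$-th row and column precisely because $k$ is a mutable (interior) vertex, so those entries agree with the antisymmetric $Q$ by Lemma~\ref{lem-matrix-HQ}. Since you explicitly cite that lemma and note the agreement on ``the mutable row and column,'' you have the right justification — it would just be cleaner to say explicitly that this agreement gives the needed antisymmetry for the $k$-th row/column of $H$, rather than claiming the transport is verbatim.
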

\begin{proof}
  We first show that 
  $\nu_{k}'$ (resp. $\nu_{k}^{\sharp\omega}$) restricts to a skew-field homomorphism
  \[
    \nu_{k}' \colon \Fr(
      \mathcal{Z}^{\rm bl}_\omega(\mathcal{D}_\lambda^{\bu}))\longrightarrow
      \Fr(
      \mathcal{Z}^{\rm bl}_\omega(\mathcal{D}_\lambda^{\bv})),
      \quad 
      \text{resp. }\;
      \nu_{k}^{\sharp\omega} \colon \Fr(
      \mathcal{Z}^{\rm bl}_\omega(\mathcal{D}_\lambda^{\bv}))\longrightarrow
      \Fr(
      \mathcal{Z}^{\rm bl}_\omega(\mathcal{D}_\lambda^{\bv})).
  \]
  The claim for $\nu_{k}^{\sharp\omega}$ is immediate because of Lemma~\ref{lem.nu_sharp_well-defined} and $\mathcal{X}_\omega(\mathcal{D}_\lambda^{\bv})\subset 
  \mathcal{Z}^{\rm bl}_\omega(\mathcal{D}_\lambda^{\bv})$.

  To simplify notation, write
  \[
    Q = Q_\lambda^{\bv}, \quad H = H_\lambda^{\bv}, \qquad 
    Q' = Q_\lambda^{\bu}, \quad H' = H_\lambda^{\bu}.
  \]
  Then $Q'=\mu_k(Q)$ and $H'=\mu_k(H)$.

  Let ${\bf t}' = (t_v')_{v\in V_\lambda}$ be such that ${\bf t}' H'\in (n\mathbb Z)^{V_\lambda}$. 
  By definition of $\nu_k'$ (see Equation~\eqref{eq-quantum-mutation_Z}), we have
  \[
    \nu_k'\bigl((Z')^{{\bf t}'}\bigr) = Z^{\bf t},
  \]
  where ${\bf t} = (t_v)_{v\in V_\lambda}$ is given by
  \[
    t_v =
    \begin{cases}
      t_v' & v\neq k,\\[4pt]
      -t_k' + \sum_{i\in V_\lambda} [Q(i,k)]_+\,t_i' & v=k.
    \end{cases}
  \]

  To show $Z^{\bf t}\in
      \mathcal{Z}^{\rm bl}_\omega(\mathcal{D}_\lambda^{\bv})$, it suffices to show that
  \[
    \sum_{v\in V_\lambda} H(u,v)\,t_v = 0 \in \mathbb Z/n\mathbb Z
    \qquad \text{for all } u\in V_\lambda.
  \]

  \medskip
  \noindent
  **Case 1: $u=k$.**  
  Lemma~\ref{lem-matrix-HQ} gives $H(k,k)=Q(k,k)=0$ and $H'(k,k)=Q'(k,k)=0$.  
  Hence
  \[
    \sum_{v\in V_\lambda} H(u,v)\,t_v 
    = \sum_{\substack{v\in V_\lambda\\ v\neq k}} H(u,v)\,t_v
    = \sum_{\substack{v\in V_\lambda\\ v\neq k}} -H'(u,v)\,t_v'
    = -\sum_{v\in V_\lambda} H'(u,v)\,t_v' = 0.
  \]

  \medskip
  \noindent
  **Case 2: $u\neq k$.**  
  We compute:
  \begin{align*}
    \sum_{v\in V_\lambda} H(u,v)\,t_v 
    &= \sum_{\substack{v\in V_\lambda\\ v\neq k}} H(u,v)\,t_v + H(u,k)\,t_k \\[4pt]
    &= \sum_{\substack{v\in V_\lambda\\ v\neq k}} H'(u,v)\,t_v'
      + \tfrac{1}{2}\sum_{\substack{v\in V_\lambda\\ v\neq k}}
        \bigl(H'(u,k)|H'(k,v)| + |H'(u,k)|H'(k,v)\bigr)t_v' \\
    &\quad - H'(u,k)\Bigl(-t_k' + \sum_{v\in V_\lambda}[Q'(k,v)]_+\,t_v'\Bigr).
  \end{align*}
  Using Lemma~\ref{lem-matrix-HQ}, this simplifies to
  \[
    \sum_{v\in V_\lambda} H(u,v)\,t_v =
    \begin{cases}
      \sum_{v\in V_\lambda} H'(u,v)\,t_v' = 0\in \mathbb Z/n\mathbb Z & \text{if } H'(u,k)\geq 0,\\[6pt]
      \sum_{v\in V_\lambda} H'(u,v)\,t_v' - H'(u,k)\!\!\sum_{v\in V_\lambda} H'(k,v)\,t_v' = 0\in \mathbb Z/n\mathbb Z & \text{if } H'(u,k)<0.
    \end{cases}
  \]

This shows that $\nu_{k}^\omega \colon \Fr(
      \mathcal{Z}^{\rm mbl}_\omega(\mathcal{D}_\lambda^{\bu}))\rightarrow
      \Fr(
      \mathcal{Z}^{\rm mbl}_\omega(\mathcal{D}_\lambda^{\bv}))$ restricts to a skew-field homomorphism 
      $$\nu_{k}^\omega \colon \Fr(
      \mathcal{Z}^{\rm bl}_\omega(\mathcal{D}_\lambda^{\bu}))\rightarrow
      \Fr(
      \mathcal{Z}^{\rm bl}_\omega(\mathcal{D}_\lambda^{\bv})).$$
Similarly, we have 
$\nu_{k}^\omega \colon \Fr(
      \mathcal{Z}^{\rm mbl}_\omega(\mathcal{D}_\lambda^{\bv}))\rightarrow
      \Fr(
      \mathcal{Z}^{\rm mbl}_\omega(\mathcal{D}_\lambda^{\bu}))$ restricts to a skew-field homomorphism 
      $$\nu_{k}^\omega \colon \Fr(
      \mathcal{Z}^{\rm bl}_\omega(\mathcal{D}_\lambda^{\bv}))\rightarrow
      \Fr(
      \mathcal{Z}^{\rm bl}_\omega(\mathcal{D}_\lambda^{\bu})).$$
This completes the proof because $\nu_{k}^\omega\circ \nu_{k}^\omega$ is the identity (Lemma~\ref{lem:nu_involutation}).
\end{proof}

In Equations~\eqref{eq-mut-iso}, \eqref{eq-def-mu-step1}, and \eqref{def-mu-sharp}, we introduced the following skew-field isomorphisms
\begin{align*}
\mu_{k,A}&\colon {\rm Frac}(\mathbb T(w_\lambda^\bu))\rightarrow {\rm Frac}(\mathbb T(w_\lambda^{\bv})),\\
    \mu_{k,A}'&\colon {\rm Frac}(\mathbb T(w_\lambda^{\bu}))\rightarrow {\rm Frac}(\mathbb T(w_\lambda^{\bv})),\\
\mu_{k,A}^{\sharp} &\colon {\rm Frac}(\mathbb T(w^{\bv}_\lambda))\rightarrow {\rm Frac}(\mathbb T(w_\lambda^{\bv})).
\end{align*}
Lemma \ref{lem-decom-A}(b)
shows $\mu_{k,A}=\mu_{k,A}^{\sharp}\circ \mu_{k,A}'$.
The algebra isomorphisms in Lemma \ref{lem-com-bal-T}
$$\varphi_\lambda^{\bv}\colon\Zbv\rightarrow \mathbb T({w_\lambda^{\mathbbm{v}}}), \quad \varphi_\lambda^{\bu}\colon 
      \mathcal{Z}^{\rm bl}_\omega(\mathcal{D}_\lambda^{\bu})\rightarrow \mathbb T({w_\lambda^{\mathbbm{u}}})$$
induce the following skew-field isomorphisms 
$$\varphi_\lambda^{\bv}\colon\Fr(\Zbv)\rightarrow \mathbb \Fr(\mathbb T({w_\lambda^{\mathbbm{v}}})), \quad \varphi_\lambda^{\bu}\colon 
     \Fr( \mathcal{Z}^{\rm bl}_\omega(\mathcal{D}_\lambda^{\bu}))\rightarrow \Fr(\mathbb T({w_\lambda^{\mathbbm{u}}})).$$

The following proposition shows the compatibility between mutations $\mu_{k,A}$
and $\nu_{k}^\omega$.

\begin{proposition}\label{prop-comp}
Let $\bv=(v_1,\ldots,v_m)$ be a sequence of mutable vertices in $V_\lambda$, and let $k$ be another mutable vertex in $V_\lambda$.
Set $\mathbbm{u}=(v_1,\ldots,v_m,k)$.
Then the following diagram commutes:
\begin{equation*}
\begin{tikzcd}
\Fr\bigl(\mathcal{Z}^{\rm bl}_\omega(\mathcal{D}_\lambda^{\bu})\bigr)
  \arrow[r, "\nu_k^\omega"]
  \arrow[d, "\varphi_\lambda^{\bu}"']  
& \Fr\bigl(\mathcal{Z}^{\rm bl}_\omega(\mathcal{D}_\lambda^{\bv})\bigr) 
  \arrow[d, "\varphi_\lambda^{\bv}"]  \\
\Fr\bigl(\mathbb T({w_\lambda^{\mathbbm{u}}})\bigr)
  \arrow[r, "\mu_{k,A}"] 
& \Fr\bigl(\mathbb T({w_\lambda^{\mathbbm{v}}})\bigr)
\end{tikzcd}.
\end{equation*}
\end{proposition}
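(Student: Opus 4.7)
The plan is to decompose both mutation maps into two steps and verify the two resulting subdiagrams commute separately. By Lemma~\ref{lem-decom-A}\ref{lem-decom-A-b} one has $\mu_{k,A} = \mu_{k,A}^{\sharp} \circ \mu_{k,A}'$, and by construction $\nu_k^{\omega} = \nu_k^{\sharp\omega} \circ \nu_k'$ (see \eqref{eq-def-quantum-mutation-X}); together with Lemma~\ref{lem-rest-iso-to-balanced}, which restricts $\nu_k^\omega$ to the balanced skew-fields, it therefore suffices to establish the following two identities:
\begin{enumerate}
\item[(I)] $\mu_{k,A}' \circ \varphi_\lambda^{\bu} = \varphi_\lambda^{\bv} \circ \nu_k'$ as maps $\Fr(\mathcal{Z}^{\rm bl}_\omega(\mathcal{D}_\lambda^{\bu})) \to \Fr(\mathbb{T}(w_\lambda^{\bv}))$;
\item[(II)] $\mu_{k,A}^{\sharp} \circ \varphi_\lambda^{\bv} = \varphi_\lambda^{\bv} \circ \nu_k^{\sharp\omega}$ as maps $\Fr(\mathcal{Z}^{\rm bl}_\omega(\mathcal{D}_\lambda^{\bv})) \to \Fr(\mathbb{T}(w_\lambda^{\bv}))$.
\end{enumerate}

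For (I), I will check commutativity on the monomial basis $\{(Z')^{\mathbf{t}'} : \mathbf{t}' \in \mathcal{B}_\lambda^{\bu}\}$. A direct unwinding of \eqref{eq-quantum-mutation_Z} shows $\nu_k'((Z')^{\mathbf{t}'}) = \omega^{c_1}\, Z^{\mathbf{t}' E_{k,-,\bv}}$ for an explicit scalar $c_1$, while the definition of $\mu_{k,A}'$ in \eqref{eq-def-mu-step1} gives $\mu_{k,A}'((A')^{(1/n)\mathbf{t}'H_\lambda^{\bu}}) = \xi^{c_2}\, A^{((1/n)\mathbf{t}'H_\lambda^{\bu}) F_{k,-,\bv}}$. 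The exponent vectors on the two sides agree by Lemma~\ref{lem-mutation-H}\ref{lem-mutation-H-a} applied with $\epsilon = -$, together with Lemma~\ref{lem-matrix-mul}\ref{lem-matrix-mul-b}:
\[
H_\lambda^{\bu} F_{k,-,\bv} = E_{k,-,\bv}\, H_\lambda^{\bv}\, F_{k,-,\bv}\, F_{k,-,\bv} = E_{k,-,\bv}\, H_\lambda^{\bv}.
\]
The equality of the scalar factors reduces, via Lemma~\ref{lem-matrix-mul}\ref{lem-matrix-mul-a} (which relates $\Pi_\lambda^{\bu}$ to $\Pi_\lambda^{\bv}$) and Lemma~\ref{lem-key-com-mutation}\ref{lem-key-com-mutation-b} (which converts $H\Pi H^T$ into $Q$), to a routine bookkeeping identity between quadratic forms in $\Pi_\lambda^{\bu}$, $\Pi_\lambda^{\bv}$, and $Q_\lambda^{\bv}$.

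For (II), I will check on $Z^{\mathbf{t}}$ with $\mathbf{t} \in \mathcal{B}_\lambda^{\bv}$. By Lemma~\ref{lem.nu_sharp_well-defined}, $\nu_k^{\sharp\omega}(Z^{\mathbf{t}}) = Z^{\mathbf{t}}\, F^q(X_k, m)$ with $m = \tfrac{1}{n}\sum_v Q_\lambda^{\bv}(k,v)\,t_v$. Since $k$ is mutable, Lemma~\ref{lem-matrix-HQ} gives $H_\lambda^{\bv}(k,v) = Q_\lambda^{\bv}(k,v)$ for all $v$, so $\varphi_\lambda^{\bv}(X_k) = A^{H_\lambda^{\bv}(k,\ast)} = A^{-Q_\lambda^{\bv}(\ast,k)}$ by antisymmetry of $Q_\lambda^{\bv}$, and writing $\mathbf{r} = \tfrac{1}{n}\mathbf{t}H_\lambda^{\bv}$ one finds $r_k = -m$. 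Thus the right-hand composition produces $A^{\mathbf{r}}\,F^q(A^{-Q_\lambda^{\bv}(\ast,k)}, -r_k)$. On the left, the compatibility relation \eqref{eq-prod-PiQ} forces $d_k = 2n$, so the defining factor $(1 + \xi^{-d_k/2}A^{-Q_\lambda^{\bv}(\ast,k)})^{-1}$ in \eqref{eq-def-mu-step2} becomes $(1 + q^{-1}A^{-Q_\lambda^{\bv}(\ast,k)})^{-1}$, and $A_k$ quasi-commutes with $A^{-Q_\lambda^{\bv}(\ast,k)}$ with scalar $\xi^{d_k} = q^{2}$. Iterating this quasi-commutation to push all powers of $A_k$ past the dilogarithm factor, and using \eqref{eq-com-i-neq-k} to commute the block $A^{\mathbf{r} - r_k\mathbf{e}_k}$ through $A^{-Q_\lambda^{\bv}(\ast,k)}$, a direct telescoping computation yields
\[
\mu_{k,A}^{\sharp}(A^{\mathbf{r}}) = A^{\mathbf{r}}\,F^q(A^{-Q_\lambda^{\bv}(\ast,k)}, -r_k)
\]
for every sign of $r_k$ (the cases $r_k > 0$, $r_k = 0$, $r_k < 0$ are handled separately but identically in spirit).

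The main obstacle is the verification of (II): the careful bookkeeping of the scalars $q^{2j}$ arising when successive powers of $A_k$ are moved through the quasi-commuting factor $(1 + q^{-1}A^{-Q_\lambda^{\bv}(\ast,k)})^{-1}$, and matching the resulting $|r_k|$-fold product exactly with the finite product defining $F^q(\cdot,-r_k)$ from \eqref{eq-def-Fq}. The arithmetic coincidence making everything align is the identity $d_k = 2n$, equivalently $\xi^{d_k/2} = q$, which is precisely the compatibility of $\Pi_\lambda$ and $Q_\lambda$ that originally allowed $(Q_\lambda,\Pi_\lambda)$ to form a compatible pair; the remaining scalar identities in (I) are routine but tedious, and are handled uniformly by Lemmas~\ref{lem-matrix-mul} and \ref{lem-key-com-mutation}.
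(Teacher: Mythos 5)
Your proposal follows the paper's own strategy closely: decompose $\mu_{k,A}$ and $\nu_k^\omega$ into the monomial part and the dilogarithm-conjugation part (Lemma~\ref{lem-decom-A}), and verify the two resulting subdiagrams (the paper's \eqref{eq-com-step-one} and \eqref{eq-com-step-two}) separately. Your Step (II) is the same telescoping computation the paper carries out in \eqref{eq-mu-varphi-Z}--\eqref{eq-m-small-zero}, keyed on $d_k=2n$ and the quasi-commutation $A^{Q(k,*)}A_k=q^{-2}A_kA^{Q(k,*)}$.

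Where you genuinely depart is the presentation of Step (I). The paper verifies the exponent identity $n\mathbf b = n\mathbf c$ component-by-component, splitting into the cases $v=k$ and $v\neq k$; you instead package the same content as the single matrix identity
\[
H_\lambda^{\bu}\,F_{k,-,\bv} \;=\; E_{k,-,\bv}\,H_\lambda^{\bv}\,F_{k,-,\bv}^{\,2} \;=\; E_{k,-,\bv}\,H_\lambda^{\bv},
\]
from Lemma~\ref{lem-mutation-H}\ref{lem-mutation-H-a} and Lemma~\ref{lem-matrix-mul}\ref{lem-matrix-mul-b}, having observed that $\nu_k'$ and $\mu_{k,A}'$ act on exponent vectors by right-multiplication by $E_{k,-,\bv}$ and $F_{k,-,\bv}$ respectively. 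This is cleaner and more conceptual; it trades a two-case calculation for two already-proved matrix lemmas. The one place you over-hedge is the worry about tracking scalars $\omega^{c_1}$, $\xi^{c_2}$ in (I). In fact $c_1=c_2=0$ automatically: all four maps in the square ($\nu_k'$, $\mu_{k,A}'$, $\varphi_\lambda^{\bu}$, $\varphi_\lambda^{\bv}$) are algebra isomorphisms sending each generator to a Weyl-ordered monomial, and any such monomial isomorphism between quantum tori sends Weyl-ordered monomials to Weyl-ordered monomials (the normalization constants cancel precisely because the map is a homomorphism). This is implicitly how the paper proceeds --- it writes $\nu_k'((Z')^{\mathbf t'}) = Z^{\mathbf t}$ and $\mu_{k,A}'(A^{\mathbf b'})=A^{\mathbf b}$ with no stray scalars and checks only the exponents --- and observing it explicitly would have let you drop the ``routine bookkeeping between quadratic forms'' remark entirely, which is not needed.
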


\begin{proof}
It suffices to show the commutativity of the following two diagrams
\begin{equation}\label{eq-com-step-one}
\begin{tikzcd}
\Fr\bigl(\mathcal{Z}^{\rm bl}_\omega(\mathcal{D}_\lambda^{\bu})\bigr)
  \arrow[r, "\nu_k'"]
  \arrow[d, "\varphi_\lambda^{\bu}"']  
& \Fr\bigl(\mathcal{Z}^{\rm bl}_\omega(\mathcal{D}_\lambda^{\bv})\bigr) 
  \arrow[d, "\varphi_\lambda^{\bv}"]  \\
\Fr\bigl(\mathbb T({w_\lambda^{\mathbbm{u}}})\bigr)
  \arrow[r, "\mu_{k,A}'"] 
& \Fr\bigl(\mathbb T({w_\lambda^{\mathbbm{v}}})\bigr)
\end{tikzcd}
\end{equation}
and
\begin{equation}\label{eq-com-step-two}
\begin{tikzcd}
\Fr\bigl(\mathcal{Z}^{\rm bl}_\omega(\mathcal{D}_\lambda^{\bv})\bigr)
  \arrow[r, "\nu_k^{\sharp\omega}"]
  \arrow[d, "\varphi_\lambda^{\bv}"']  
& \Fr\bigl(\mathcal{Z}^{\rm bl}_\omega(\mathcal{D}_\lambda^{\bv})\bigr) 
  \arrow[d, "\varphi_\lambda^{\bv}"]  \\
\Fr\bigl(\mathbb T({w_\lambda^{\mathbbm{v}}})\bigr)
  \arrow[r, "\mu_{k,A}^{\sharp}"] 
& \Fr\bigl(\mathbb T({w_\lambda^{\mathbbm{v}}})\bigr)
\end{tikzcd}.
\end{equation}

\smallskip
\noindent
**Step 1: Commutativity of \eqref{eq-com-step-one}.**  
For notational simplicity, write
\[
Q=Q_\lambda^{\bv}, \quad H=H_\lambda^{\bv}, 
\qquad 
Q'=Q_\lambda^{\bu}, \quad H'=H_\lambda^{\bu}.
\]

Let ${\bf t}'=(t_v')_{v\in V_\lambda}$ with ${\bf t}'H'\in (n\mathbb Z)^{V_\lambda}$.  
By definition of $\nu_k'$ (Equation~\eqref{eq-quantum-mutation_Z}), 
\[
\nu_k'\bigl((Z')^{{\bf t}'}\bigr) = Z^{\bf t},
\]
where ${\bf t}=(t_v)_{v\in V_\lambda}$ is given by
\[
t_v=
\begin{cases}
t_v' & v\neq k,\\[4pt]
-t_k' + \sum_{i\in V_\lambda}[Q(i,k)]_+\,t_i' & v=k.
\end{cases}
\]

Set ${\bf b}'=\tfrac{1}{n}{\bf t}'H'$. Then
\[
\mu_{k,A}'\!\bigl(\varphi_\lambda^{\bu}((Z')^{{\bf t}'})\bigr)
= \mu_{k,A}'(A^{{\bf b}'})
= A^{\bf b},
\]
where ${\bf b}=(b_v)_{v\in V_\lambda}$ with
\[
b_v=
\begin{cases}
b_v' + b_k'[Q(v,k)]_+ & v\neq k,\\[4pt]
-b_k' & v=k.
\end{cases}
\]
On the other hand, set ${\bf c}=\tfrac{1}{n}{\bf t}H$. Then
\[
\varphi_\lambda^{\bv}\!\bigl(\nu_k'((Z')^{{\bf t}'})\bigr) = A^{\bf c}.
\]
Thus it suffices to show $n{\bf b}=n{\bf c}$.

   When $v\neq k$, we have 
    \begin{align*}
        nb_v =&  nb_v' + nb_k'[Q(v,k)]_{+}\\
        =& \left(\sum_{u\in V_\lambda} t_u' H'(u,v)\right)
        + ([H(v,k)]_{+}) \sum_{u\in V_\lambda} t_u' H'(u,k)\\
        =& \sum_{k\neq u\in V_\lambda} t_u'  H(u,v) +\frac{1}{2}\sum_{k\neq u\in V_\lambda} t_u'\left(H(u,k)|H(k,v)|+
    |H(u,k)|H(k,v)\right)\\
    & -t_k' H(k,v) - ([H(v,k)]_{+}) \sum_{u\in V_\lambda} t_u' H(u,k)\\
    =&-t_k' H(k,v)+ \sum_{k\neq u\in V_\lambda} t_u'  H(u,v) \\
    &  +\frac{1}{2}\sum_{k\neq u\in V_\lambda} t_u'\left(H(u,k)(|H(k,v)|-2[H(v,k)]_{+})+
    |H(u,k)|H(k,v)\right) \\
     =& -t_k' H(k,v)+\sum_{k\neq u\in V_\lambda} t_u'  H(u,v)  +\frac{1}{2}\sum_{k\neq u\in V_\lambda} t_u'\left(H(u,k) H(k,v)+
    |H(u,k)|H(k,v)\right) \\
    =& -t_k' H(k,v)+\sum_{k\neq u\in V_\lambda} t_u'  H(u,v)  + \sum_{k\neq u\in V_\lambda} t_u'[H(u,k)]_{+} H(k,v),
    \end{align*}
    and 
    \begin{align*}
        nc_v =& \sum_{u\in V_\lambda} t_u H(u,v)\\
        =&(-t_k' +\sum_{u\in \mathcal V} [Q(u,k)]_+ t_u') H(k,v) + \sum_{k\neq u\in V_\lambda} t_u' H(u,v)\\
        =& -t_k' H(k,v)+\sum_{k\neq u\in V_\lambda} t_u'  H(u,v)  + \sum_{k\neq u\in V_\lambda} t_u'[H(u,k)]_{+} H(k,v)\\
        =&nb_v.
    \end{align*}

     When $v= k$, we have 
    \begin{align*}
        nb_k = -nb_k'
        = -\sum_{k\neq u\in V_\lambda} t_u'  H(u,k)'
        =\sum_{k\neq u\in V_\lambda} t_u'  H(u,k)
        =\sum_{k\neq u\in V_\lambda} t_u  H(u,k)
        =nc_k.
    \end{align*}
   This shows the commutativity of \eqref{eq-com-step-one}.

\smallskip
\noindent
**Step 2: Commutativity of \eqref{eq-com-step-two}.**  
Let ${\bf f}=(f_v)_{v\in V_\lambda}$ with ${\bf f}H\in(n\mathbb Z)^{V_\lambda}$. 
Write $H(k,*)$ for the $k$-th row of $H$.  
By Lemma~\ref{lem.nu_sharp_well-defined},
\begin{align}\label{varphi-nu-Z-f}
\varphi_\lambda^{\bv}(\nu_k^{\sharp}(Z^{\bf f}))
= \varphi_\lambda^{\bv}(Z^{\bf f} F^q(X_k,m))
= A^{\tfrac{1}{n}{\bf f}H} F^q(A^{H(k,*)},m),
\end{align}
where $m=\tfrac{1}{n}\sum_{v\in V_\lambda} Q(k,v)f_v$
and $X_k=Z_k^n$.

Let ${\bf s}=\tfrac{1}{n}{\bf f}H$
and decompose ${\bf s}={\bf s}'+s_k{\bf e}_k$ with ${\bf s}'(k)=0$, where ${\bf e}_k$ is defined as in \eqref{eq-def-vector-ei-ele}.  
Suppose
\begin{align}\label{eq-vector-s-A-j}
   A^{\bf s} = \xi^{\frac{j}{2}} A^{{\bf s}'} A_k^{s_k}
\end{align}
 for some integer $j$.
 Equation \eqref{eq-prod-PiQ} implies that the
    $d_k$ in \eqref{eq-def-mu-step2} is $2n$. 
    Then 
    \begin{align}\label{eq-mu-varphi-Z}
        \mu_{k,A}^{\sharp}(\varphi_\lambda^{\bv}(Z^{\bf f}))
        =\xi^{\frac{j}{2}}\mu_{k,A}^{\sharp}(A^{{\bf s}'})
        \mu_{k,A}^{\sharp}(A_k)^{s_k}
        = \xi^{\frac{j}{2}} A^{{\bf s}'} \left(A_k\left(1 + q^{-1}A^{Q(k,*)}\right)^{-1}\right)^{s_k}.
    \end{align}
    
    We use $\Pi$ to denote $\Pi_\lambda^{\bv}$.
    Then
    $$ Q(k,*) \Pi {\bf e}_k^T
    =\sum_{u\in \V} Q(k,u) \Pi(u,k) = -2n,$$
    where the last equality comes from \eqref{eq-prod-PiQ} and Lemma \ref{lem-matrix-mul}(d).
    This implies that 
    \begin{align}\label{eq-commu-AAk}
        A^{Q(k,*)} A_k =\xi^{-2n} A_k A^{Q(k,*)}
    =q^{-2} A_k A^{Q(k,*)}.
    \end{align}
    Note that $$s_k=\frac{1}{n}\sum_{v\in\V} H(v,k) f_v=-\frac{1}{n}\sum_{v\in\V} Q(k,v) f_v =-m.$$
    When $m\geq 0$, we have 
    \begin{equation}\label{eq-m-big-zero}
    \begin{split}
        &\left(A_k\left(1 + q^{-1} A^{Q(k,*)}\right)^{-1}\right)^{s_k}\\
        =& \left(1 + q^{-1} A^{Q(k,*)}\right) A_k^{-1}
        \cdots \left(1 + q^{-1} A^{Q(k,*)}\right) A_k^{-1}\\
        =& A_k^{-m} \left(1 + q^{2m-1} A^{Q(k,*)}\right)\cdots  \left(1 + q^{3} A^{Q(k,*)}\right) \left(1 + q A^{Q(k,*)}\right)
        \quad(\mbox{by \eqref{eq-commu-AAk}})\\
        =& A_k^{s_k} F^q(A^{H(k,*)}, m)
        \quad (\mbox{by \eqref{eq-def-Fq}}).
    \end{split}
    \end{equation}
    Similarly, when $m\leq 0$, we have 
    \begin{equation}\label{eq-m-small-zero}
    \begin{split}
        &\left(A_k\left(1 + q^{-1} A^{Q(k,*)}\right)^{-1}\right)^{s_k}\\
        =& A_k\left(1 + q^{-1}A^{Q(k,*)}\right)^{-1}
        \cdots A_k\left(1 + q^{-1} A^{Q(k,*)}\right)^{-1}\\
        =& A_k^{-m} \left(1 + q^{-(2m-1)} A^{Q(k,*)}\right)^{-1}\cdots  \left(1 + q^{-3}A^{Q(k,*)}\right)^{-1} \left(1 + q^{-1}A^{Q(k,*)}\right)^{-1}\\
        =& A_k^{s_k} F^q(A^{H(k,*)}, m).
    \end{split}
    \end{equation}
    Therefore we have 
    \begin{align*}
        \mu_{k,A}^{\sharp}(\varphi_\lambda^{\bv}(Z^{\bf d}))
        =& \xi^{\frac{j}{2}} A^{{\bf s}'} \left(A_k\left(1 + q^{-1}A^{Q(k,*)}\right)^{-1}\right)^{s_k}
        \quad(\mbox{by \eqref{eq-mu-varphi-Z}})\\
        = & \xi^{\frac{j}{2}} A^{{\bf s}'}
         A_k^{s_k} F^q(A^{H(k,*)}, m)
         \quad(\mbox{by \eqref{eq-m-big-zero} and \eqref{eq-m-small-zero}})\\
        =& A^{{\bf s}}
          F^q(A^{H(k,*)}, m)\quad(\mbox{by \eqref{eq-vector-s-A-j}})\\
         =&\varphi_\lambda^{\bv} (\nu_k^{\sharp} (Z^{\bf d}))\quad(\mbox{by \eqref{varphi-nu-Z-f}}).
    \end{align*}
      This shows the commutativity of \eqref{eq-com-step-two}. 

\smallskip
Together, Steps~1 and~2 show that the full diagram commutes.
\end{proof}

\subsection{The naturality of the quantum cluster algebra structure inside the ${\rm SL}_n$-skein theory}
\label{subsec-naturality}

The main result of this section is the following theorem, which establishes that the quantum $\mathcal A$-mutation class $\mathsf{S}_{\fS,\lambda}$, defined in Definition~\ref{def-tri-quantum}, is independent of the choice of triangulation $\lambda$.  
Consequently, there exists a natural quantum cluster structure inside $\text{Frac}(\dS)$.  
The proof relies on the compatibility between quantum $\mathcal A$- and $\mathcal X$-mutations, shown in \S\ref{sec-compatibility}, together with the compatibility between $\mathcal X$-mutations and the $\mathcal X$-version of the quantum trace maps (Theorem~\ref{thm-main-compatibility}), established in \cite{KimWang}.

\begin{theorem}[Naturality of the quantum cluster structure inside $\text{Frac}(\dS)$]\label{thm-main-1}
    Let $\fS$ be a triangulable pb surface without interior punctures, and let $\lambda$, $\lambda'$ be two triangulations of $\fS$. Then we have   $$\mathsf{S}_{\fS,\lambda}=\mathsf{S}_{\fS,\lambda'},\;
    \mathscr{A}_{\fS,\lambda}=
    \mathscr{A}_{\fS,\lambda'},\text{ and }
    \mathscr{U}_{\fS,\lambda}=
    \mathscr{U}_{\fS,\lambda'}\quad \text{(see Definition~\ref{def-tri-quantum}).}$$
\end{theorem}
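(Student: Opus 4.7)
The plan is to reduce the claim to the case where $\lambda'$ is obtained from $\lambda$ by a single flip of an interior edge $e$, since any two triangulations of $\fS$ are connected by a finite sequence of flips (\cite{Lab09}), and Definition~\ref{def-quantum-class} guarantees that mutation-equivalence is transitive. Once that reduction is made, I will take the distinguished mutation sequence $\bv=(v_1,\dots,v_r)$ with $r=\tfrac{1}{6}(n^3-n)$ from \cite{FG06,GS19} (see Figure~\ref{Fig;mutation_sequence_for_flip}), which satisfies $Q_\lambda^{\bv}=Q_{\lambda'}$ by \eqref{eq-mutation-flip-quiver}. The goal then becomes to show the stronger equality of quantum seeds
$$w_\lambda^{\bv}=(Q_\lambda^{\bv},\Pi_\lambda^{\bv},M_\lambda^{\bv})=(Q_{\lambda'},\Pi_{\lambda'},M_{\lambda'})=w_{\lambda'}\quad\text{inside }\Fr(\dS),$$
since this identifies $\mathsf S_{\fS,\lambda}$ with $\mathsf S_{\fS,\lambda'}$, and the equalities $\mathscr A_{\fS,\lambda}=\mathscr A_{\fS,\lambda'}$ and $\mathscr U_{\fS,\lambda}=\mathscr U_{\fS,\lambda'}$ are immediate from Definition~\ref{def-quan-cluster-algebra}.

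Next, I will verify the matrix identity $H_\lambda^{\bv}=H_{\lambda'}$. For $u,v$ not lying on the same boundary edge this follows from Lemma~\ref{lem-matrix-HQ} together with $Q_\lambda^{\bv}=Q_{\lambda'}$. For $u,v$ on the same boundary edge, \eqref{def-equal-QiQ} forces the off-diagonal correction terms in the matrix mutation to vanish step-by-step, so $H_\lambda^{\bv}(u,v)=H_\lambda(u,v)$, and $H_\lambda(u,v)=H_{\lambda'}(u,v)$ because $H$ on the boundary depends only on the small-vertex order along $\partial\fS$, which is unaffected by the flip. Consequently $\mathcal Z_\omega^{\rm bl}(\mathcal D_\lambda^{\bv})$ and $\mathcal Z_\omega^{\rm bl}(\fS,\lambda')$ coincide, and the isomorphisms $\varphi_\lambda^{\bv}$ of Lemma~\ref{lem-com-bal-T} and $\psi_{\lambda'}^{-1}$ are given by the same formula $Z^{\bf k}\mapsto(\,\cdot\,)^{\frac{1}{n}{\bf k}H_{\lambda'}}$.

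The heart of the argument is to combine the three compatibilities already proved in this section and in \cite{KimWang,LY23}. Iterating Proposition~\ref{prop-comp} along $\bv$ yields the commutative square
$$\mu^{\bv}_A\circ \varphi_\lambda^{\bv}=\varphi_\lambda\circ\Theta^\omega_{\lambda\lambda'},\qquad \mu^{\bv}_A:=\mu_{v_1,A}\circ\cdots\circ\mu_{v_r,A}.$$
Inside $\Fr(\dS)$ the abstract skew-field isomorphism $\mu^{\bv}_A$ is the identity, since each mutation $\mu_{k,A}$ is defined in \eqref{eq-iso-mutation-eqal} by the very formula that also defines $M'({\bf e}_k)$ in terms of $M({\bf e}_i)$'s. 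Hence applying this diagram to the balanced monomial $Z^{K_{\lambda'}(v,*)}$, which equals ${\rm tr}_{\lambda'}(\gaa_v^{\lambda'})$ by Theorem~\ref{thm-trace-A}\ref{thm-trace-A-d}, produces on the left $\varphi_\lambda^{\bv}(Z^{K_{\lambda'}(v,*)})=A^{{\bf e}_v}=M_\lambda^{\bv}({\bf e}_v)$ (using $K_{\lambda'}H_{\lambda'}=nI$), and on the right $\psi_\lambda^{-1}(\Theta^\omega_{\lambda\lambda'}({\rm tr}_{\lambda'}(\gaa_v^{\lambda'})))=\psi_\lambda^{-1}({\rm tr}_\lambda(\gaa_v^{\lambda'}))={\rm tr}_\lambda^A(\gaa_v^{\lambda'})=\gaa_v^{\lambda'}=M_{\lambda'}({\bf e}_v)$, where Theorem~\ref{thm-main-compatibility} and Theorem~\ref{thm-trace-A}\ref{thm-trace-A-d} are used in the second and third equalities. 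Thus $M_\lambda^{\bv}({\bf e}_v)=M_{\lambda'}({\bf e}_v)$ for every $v\in\mathcal V$, which determines $M_\lambda^{\bv}=M_{\lambda'}$ and consequently $\Pi_\lambda^{\bv}=\Pi_{\lambda'}$.

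The main obstacle is conceptual rather than computational: one must carefully track the identification of the abstract mutated skew-field $\Fr(\mathbb T(w_\lambda^{\bv}))$ with its image in $\Fr(\dS)$, and then match the two independent descriptions of the same subalgebra, one coming from $r\sim n^3$ successive quantum cluster mutations and the other from the skein-theoretic quantum trace map associated to $\lambda'$. A direct verification of the equality $M_\lambda^{\bv}=M_{\lambda'}$ would require computing an astronomically long mutation composition, which is infeasible for general $n$; the proof therefore proceeds by routing the verification through the $\mathcal X$-side, where the coordinate change $\Theta^\omega_{\lambda\lambda'}$ is directly linked to the $\mathcal X$-quantum trace by Theorem~\ref{thm-main-compatibility}. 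Once $w_\lambda^{\bv}=w_{\lambda'}$ is established, $\mathsf S_{\fS,\lambda}=\mathsf S_{\fS,\lambda'}$ follows, and the remaining equalities for $\mathscr A$ and $\mathscr U$ are automatic.
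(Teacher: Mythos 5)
Your proposal is correct and follows essentially the same route as the paper: reduction to a single flip, the Fock–Goncharov–Shen mutation sequence, the matrix identity $H_\lambda^{\bv}=H_{\lambda'}$ (the paper's Lemma~\ref{lem-Hlambda-H} and Lemma~\ref{lem-Hi-H}), iterating Proposition~\ref{prop-comp} to obtain the commutative square, and then chaining through Theorem~\ref{thm-main-compatibility} and Theorem~\ref{thm-trace-A}\ref{thm-trace-A-d} applied to $Z^{K_{\lambda'}(v,*)}$ to conclude $M_\lambda^{\bv}({\bf e}_v)=M_{\lambda'}({\bf e}_v)$. Your observation that $\mu^{\bv}_A$ acts as the identity on $\Fr(\dS)$ is exactly the content of \eqref{eq-iso-mutation-eqal} as the paper deploys it, and the final step implicitly uses $\tr^A_\lambda(x)=x$ on $\dS$, which is the paper's equation \eqref{eq-iden-key-step} justified by Lemma~\ref{lem-basic-lem}(d).
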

\begin{proof}
   We only need to show that $\mathsf{S}_{\fS,\lambda}=\mathsf{S}_{\fS,\lambda'}$.  
It suffices to prove that $\mathsf{S}_{\fS,\lambda}$ contains the quantum seed $w_{\lambda'}=(Q_{\lambda'},\Pi_{\lambda'},M_{\lambda'})$.  
Since any two triangulations can be connected by a sequence of flips, we may assume that $\lambda'$ is obtained from $\lambda$ by a single flip.  
As discussed in \S\ref{sec-naturality-trace}, there exist mutable vertices
$v_1,v_2,\ldots,v_r$ such that 
\begin{align*}
\mathcal{D}_{{\lambda'}} = \mu_{v_r} \cdots \mu_{v_2} \mu_{v_1} (\mathcal{D}_{\lambda}). 
\end{align*}

For each $1\leq i\leq r$, define 
\[
Q_\lambda^{(i)} :=
\mu_{v_i} \cdots \mu_{v_2} \mu_{v_1}(Q_\lambda), \qquad
H_\lambda^{(i)} :=
\mu_{v_i} \cdots \mu_{v_2} \mu_{v_1}(H_\lambda),
\]
with the convention that 
\[
Q_\lambda^{(0)} := Q_\lambda, 
\qquad 
H_\lambda^{(0)} := H_\lambda.
\]

Before proceeding with the proof of Theorem~\ref{thm-main-1}, we state the following useful lemma, which will be proved later.

\begin{lemma}\label{lem-Hlambda-H}
    We have $H_\lambda^{(r)}=H_{\lambda'}$.
\end{lemma}


Set $\bv=(v_1,\ldots,v_r)$. 
By Lemma~\ref{lem-Hlambda-H} we have
\[
\mathcal{Z}^{\mathrm{bl}}_\omega(\mathcal{D}_\lambda^{\bv})
=\mathcal{Z}^{\mathrm{bl}}_\omega(\mathcal{D}_{\lambda'})
=\mathcal{Z}^{\mathrm{bl}}_\omega(\fS,\lambda').
\]
Proposition~\ref{prop-comp} therefore yields the commutative diagram
\begin{equation}\label{proof-thm-diag-1}
\begin{tikzcd}[row sep=3em, column sep=5em]
\Fr(\mathcal{Z}^{\mathrm{bl}}_\omega(\fS,\lambda'))\arrow[r, "\nu_{v_1}^\omega\circ\cdots\circ \nu_{v_r}^\omega"]
\arrow[d, "\varphi_{\lambda'}"]  
& \Fr(\mathcal{Z}^{\mathrm{bl}}_\omega(\fS,\lambda)) \arrow[d, "\varphi_\lambda"]  \\
 \Fr(\mathbb T(w_\lambda^{\mathbbm{v}}))
 \arrow[r, "\mu_{v_1,A}\circ\cdots\circ \mu_{v_r,A}"] 
&  \Fr(\mathbb T(w_\lambda))
\end{tikzcd},
\end{equation}
where $\varphi_\lambda(Z^{\mathbf k})= A^{\frac{1}{n}\mathbf k H_\lambda}$ and 
$\varphi_{\lambda'}((Z')^{\mathbf k'})= (A')^{\frac{1}{n}\mathbf k' H_{\lambda'}}$
for $\mathbf k,\mathbf k'\in\mathbb Z^{V_\lambda}$ with 
$\mathbf k H_\lambda,\mathbf k' H_{\lambda'}\in (n\mathbb Z)^{V_\lambda}$.  
To simplify notation we write $\nu$ (resp.\ $\mu$) for 
$\nu_{v_1}^\omega\circ\cdots\circ \nu_{v_r}^\omega$ (resp.\ $\mu_{v_1,A}\circ\cdots\circ \mu_{v_r,A}$).
Note that $\nu$ is the restriction of $\Theta_{\lambda\lambda'}$ (see \eqref{eq-Theta2}).
Diagram \eqref{eq-compability-tr-mutation} gives the commutative triangle
\begin{align}\label{proof-thm-diag-2}
        \xymatrix{
        & \dS \ar[dl]_{{\rm tr}_{\lambda'}} \ar[dr]^{{\rm tr}_\lambda} & \\
        {\rm Frac}(\mathcal{Z}^{\mathrm{bl}}_\omega(\fS,\lambda')) \ar[rr]_-{\nu} & & {\rm Frac}(\mathcal{Z}^{\mathrm{bl}}_\omega(\fS,\lambda)) }.
\end{align}

Recall from \S\ref{sec-traceA} that for each $v\in\V$ we defined an element $\gaa_v\in\dS$, the definition depending on the triangulation.  In the sequel we write $\gaa_{v,\lambda}$ (resp.\ $\gaa_{v,\lambda'}$) for the element defined with respect to $\lambda$ (resp.\ $\lambda'$).  
Identify $\Fr(\mathbb T(w_\lambda))$ with $\Fr(\dS)$ so that $\gaa_{v,\lambda}=A_v$ for $v\in V_\lambda$.  
Then, for each $v\in\V$, we have
\[
\varphi_\lambda\bigl(\tr(\gaa_{v,\lambda})\bigr)
=\varphi_\lambda\bigl(Z^{K_\lambda(v,*)}\bigr)=A_v=\gaa_{v,\lambda},
\]
where $K_\lambda(v,*)$ is as in Definition~\ref{def-M-vector}; the first equality is from Lemma~\ref{thm-trace-A}(d) and the second is from Lemma~\ref{lem-matrix-id-LY}(a).
Lemma~\ref{lem-basic-lem}(d) implies the identity
\begin{equation}\label{eq-iden-key-step}
    \varphi_\lambda\bigl(\tr(x)\bigr)=x\qquad\text{for all }x\in\dS.
\end{equation}

Write $w_\lambda^{\bv}=(Q',\Pi',M')$. For $v\in\V$ we compute
\begin{align*}
    M'({\bf e}_v)&=\mu(A_v') \quad ( \mbox{by \eqref{eq-iso-mutation-eqal}})\\
    &=\mu(\varphi_{\lambda'} (Z^{K_{\lambda'}(v,*)})) \quad ( \mbox{by Lemma~\ref{lem-matrix-id-LY}(a)}) \\
    &=\mu(\varphi_{\lambda'} (\text{tr}_{\lambda'}(\gaa_{v,\lambda'}))) \quad ( \mbox{by Lemma~\ref{thm-trace-A}(d)})\\
    &=\varphi_{\lambda}(\nu (\text{tr}_{\lambda'}(\gaa_{v,\lambda'}))) \quad ( \mbox{by \eqref{proof-thm-diag-1}})\\
     &=\varphi_{\lambda}(\tr(\gaa_{v,\lambda'})) \quad (\mbox{by \eqref{proof-thm-diag-2}})\\
     &=\gaa_{v,\lambda'} \quad (\mbox{by \eqref{eq-iden-key-step}})\\
     &=M_{\lambda'}({\bf e}_v)
     \quad (\mbox{by \eqref{def-M-lambda}})
\end{align*}
Hence $M'=M_{\lambda'}$.

Equation~\eqref{eq-mutation-flip-quiver} yields $Q'=\mu(Q_\lambda)=Q_{\lambda'}$.  Since $\Pi_{\lambda'}$ (resp.\ $\Pi'$) is uniquely determined by $M_{\lambda'}$ (resp.\ $M'$), we conclude $w_\lambda^{\bv}=\mu(w_\lambda)=w_{\lambda'}$.  
Therefore $\mathsf{S}_{\fS,\lambda}$ contains the quantum seed $w_{\lambda'}=(Q_{\lambda'},\Pi_{\lambda'},M_{\lambda'})$, as required.

\end{proof}

Before we prove Lemma~\ref{lem-Hlambda-H}, we state the following.

\begin{lemma}\label{lem-Hi-H}
For any two vertices $u,v$ lying on the same boundary component of $\fS$, we have 
\[
    H_\lambda^{(i)}(u,v) = H_\lambda(u,v),
\]
where $0\leq i\leq r$.
\end{lemma}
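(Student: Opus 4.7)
The plan is to prove the lemma by induction on $i$, exploiting the fact that all mutation vertices $v_1,\ldots,v_r$ are mutable, hence lie in the interior of $\fS$, and therefore cannot coincide with boundary vertices $u,v$. The base case $i=0$ is immediate from the definitions. For the inductive step, since $v_i \notin \{u,v\}$ (as $u,v$ are on the boundary while $v_i$ is interior), the mutation formula applied to $H$ gives
\begin{align*}
H_\lambda^{(i)}(u,v) = H_\lambda^{(i-1)}(u,v) + \tfrac{1}{2}\Bigl(H_\lambda^{(i-1)}(u,v_i)\,|H_\lambda^{(i-1)}(v_i,v)| + |H_\lambda^{(i-1)}(u,v_i)|\,H_\lambda^{(i-1)}(v_i,v)\Bigr).
\end{align*}

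Next I would invoke Lemma~\ref{lem-matrix-HQ}: since $v_i$ lies in the interior of $\fS$, the pairs $(u,v_i)$ and $(v_i,v)$ do not share a common boundary edge, so $H_\lambda^{(i-1)}(u,v_i)=Q_\lambda^{(i-1)}(u,v_i)$ and $H_\lambda^{(i-1)}(v_i,v)=Q_\lambda^{(i-1)}(v_i,v)$. Substituting this into the mutation formula above, the increment to $H$ matches exactly the mutation increment for $Q$, namely $Q_\lambda^{(i)}(u,v)-Q_\lambda^{(i-1)}(u,v)$. But by \eqref{def-equal-QiQ}, that increment vanishes for $u,v$ on the same boundary component. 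Hence $H_\lambda^{(i)}(u,v)=H_\lambda^{(i-1)}(u,v)$, and induction closes the argument.

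There is no real obstacle here once one carefully separates two sub-cases of the hypothesis: if $u,v$ lie on the same boundary component but on different boundary edges of $\lambda$, the lemma is even more direct, since Lemma~\ref{lem-matrix-HQ} together with \eqref{def-equal-QiQ} gives $H_\lambda^{(i)}(u,v)=Q_\lambda^{(i)}(u,v)=Q_\lambda(u,v)=H_\lambda(u,v)$. Only the subcase where $u,v$ share a boundary edge requires the inductive argument above, and the only subtlety is to notice that the formal mutation rule produces identical increments for $H$ and for $Q$ away from the mutating vertex, a fact that is already implicit in the proof of Lemma~\ref{lem-matrix-HQ}.
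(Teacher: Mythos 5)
Your proof is correct and follows essentially the same route as the paper: induction on $i$, observing that the mutation vertex $v_i$ is interior and hence distinct from the boundary vertices $u,v$, then using Lemma~\ref{lem-matrix-HQ} to replace $H$ by $Q$ in the mutation increment and \eqref{def-equal-QiQ} to see that the increment vanishes. The final remark about the sub-case of different boundary edges is harmless but vacuous, since small vertices on a given boundary component all lie on the single boundary edge of $\lambda$ isotopic to that component.
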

\begin{proof}
We argue by induction on $i$.  
For $i=0$, the claim is immediate, since 
$H_\lambda^{(0)} = H_\lambda.$

Assume the statement holds for $i-1\geq 0$, i.e.,
\begin{align}\label{eq-H-i-1-H-l}
    H_\lambda^{(i-1)}(u_1,v_1) = H_\lambda(u_1,v_1)\text{ whenever $u_1,v_1$ lie on the same boundary component of $\fS$}.
\end{align}
Then, by the mutation formula, we obtain
\begin{equation}\label{eq-Hi-i-1}
\begin{split}
H_\lambda^{(i)}(u,v) 
&= H_\lambda^{(i-1)}(u,v) 
 + \tfrac{1}{2}\bigl( H_\lambda^{(i-1)}(u,k)\,|H_\lambda^{(i-1)}(k,v)|
 + |H_\lambda^{(i-1)}(u,k)|\,H_\lambda^{(i-1)}(k,v) \bigr) \\
&= H_\lambda^{(i-1)}(u,v) 
 + \tfrac{1}{2}\bigl( Q_\lambda^{(i-1)}(u,k)\,|Q_\lambda^{(i-1)}(k,v)|
 + |Q_\lambda^{(i-1)}(u,k)|\,Q_\lambda^{(i-1)}(k,v) \bigr),
 \end{split}
\end{equation}
where the second equality comes from Lemma~\ref{lem-matrix-HQ}.

On the other hand, we have
\[
Q_\lambda^{(i)}(u,v) 
= Q_\lambda^{(i-1)}(u,v) 
+ \tfrac{1}{2}\bigl( Q_\lambda^{(i-1)}(u,k)\,|Q_\lambda^{(i-1)}(k,v)|
+ |Q_\lambda^{(i-1)}(u,k)|\,Q_\lambda^{(i-1)}(k,v) \bigr).
\]

By Equation~\eqref{def-equal-QiQ}, we know that 
$Q_\lambda^{(i)}(u,v)=Q_\lambda^{(i-1)}(u,v)$.  
Therefore,
\[
Q_\lambda^{(i-1)}(u,v) 
+ \tfrac{1}{2}\bigl( Q_\lambda^{(i-1)}(u,k)\,|Q_\lambda^{(i-1)}(k,v)|
+ |Q_\lambda^{(i-1)}(u,k)|\,Q_\lambda^{(i-1)}(k,v) \bigr) = 0.
\]
Together with \eqref{eq-H-i-1-H-l} and \eqref{eq-Hi-i-1}, this shows that
\[
H_\lambda^{(i)}(u,v) = H_\lambda^{(i-1)}(u,v) = H_\lambda(u,v).
\]
\end{proof}

\begin{proof}[Proof of Lemma~\ref{lem-Hlambda-H}]
    Lemma \ref{lem-matrix-HQ} implies that
    \begin{align*}
        H_\lambda^{(r)}(u,v)=
        Q_\lambda^{(r)}(u,v)=
        Q_{\lambda'}(u,v)=
        H_{\lambda'}(u,v)
    \end{align*}
for any two vertices $u,v$ not on the same boundary component of $\fS$.

Suppose $u,v$ lie on the same boundary component of $\fS$.
Since $Q_\lambda(u_1,v_1)=Q_{\lambda'}(u_1,v_1)$ for any vertices $u_1,v_1$ on a common boundary component of $\fS$, Equation~\eqref{eq-def-H-lambda} implies
$H_\lambda(u,v)=H_{\lambda'}(u,v)$.
By Lemma~\ref{lem-Hi-H},
\[
H_\lambda^{(r)}(u,v)=H_\lambda(u,v)=H_{\lambda'}(u,v).
\]
This completes the proof.
 
\end{proof}

\begin{definition}\label{def-sln-quantum-cluster-al}
    Let $\fS$ be a triangulable pb surface without interior punctures.
    Define   $$\mathsf{S}(\fS):=\mathsf{S}_{\fS,\lambda},\;
    \mathscr{A}_{\omega}(\fS):=
    \mathscr{A}_{\mathsf{S}(\fS)},\text{ and }
    \mathscr{U}_{\omega}({\fS}):=
    \mathscr{U}_{\mathsf{S}(\fS)}\quad \text{(Definition~\ref{def-tri-quantum}),}$$
    where $\lambda$ is a triangulation of $\fS$.
    We call $\mathscr{A}_{\omega}(\fS)$
    (resp. $\mathscr{U}_{\omega}({\fS})$)
    the {\bf ${\rm SL}_n$ quantum cluster algebra} (resp. {\bf ${\rm SL}_n$ quantum upper cluster algebra}) of $\fS$ 
\end{definition}

Theorem \ref{thm-main-1} implies that  
$\mathsf{S}(\fS)$, $\mathscr{A}_{\omega}(\fS)$, and $\mathscr{U}_{\omega}(\fS)$
are independent of the choice of $\lambda$.

\begin{remark}
  Theorem~\ref{thm-main-1} was proved in \cite{muller2016skein} and \cite{ishibashi2023skein} for $n=2$ and $n=3$, respectively, using direct computations.  
For general $n$, such a direct approach is impractical: a single flip involves too many mutation steps, and the resulting products of the elements $\{\gaa_{v,\lambda},\gaa_{v,\lambda'}\mid v\in V_\lambda\}$  are extremely difficult to compute in $\dS$ (we need to check some equalities in $\dS$, each side of such an equality is a product of $\{\gaa_{v,\lambda},\gaa_{v,\lambda'}\mid v\in V_\lambda\}$).  
Our proof of Theorem~\ref{thm-main-1}, by contrast, avoids these heavy computations and provides a more constructive argument.

\end{remark}

We conclude this section by introducing the $\mathcal A$-version quantum coordinate change isomorphism and demonstrating its compatibility with the $\mathcal A$-version quantum trace, a result of independent interest.

The proof of Theorem \ref{thm-main-1} shows that the isomorphism
$$\Theta_{
\lambda \lambda'}^{\omega}\colon\Fr(\mathcal Z_\omega^{\rm mbl}(\fS,\lambda'))\rightarrow 
\Fr(\mathcal Z_\omega^{\rm mbl}(\fS,\lambda))$$
restricts to an isomorphism 
$$\Theta_{
\lambda \lambda'}^{\omega}\colon\Fr(\mathcal Z_\omega^{\rm bl}(\fS,\lambda'))\rightarrow 
\Fr(\mathcal Z_\omega^{\rm bl}(\fS,\lambda))$$
for any two triangulations $\lambda,\lambda'$ of $\fS$,
which has been proved in \cite{KimWang} using a different technique.
Then commutative diagram \eqref{eq-compability-tr-mutation} implies that
the following diagram commutes (see also \cite{KimWang}):
    \begin{align}
        \label{eq-compability-tr-mutation-new}
        \xymatrix{
        & \rdS \ar[dl]_{{\rm tr}_{\lambda'}} \ar[dr]^{{\rm tr}_\lambda} & \\
        {\rm Frac}(\mathcal{Z}^{\rm bl}_\omega(\fS,\lambda')) \ar[rr]_-{\Theta^\omega_{\lambda\lambda'}} & & {\rm Frac}(\mathcal{Z}^{\rm bl}_\omega(\fS,\lambda))}.
    \end{align}
The proof of Theorem \ref{thm-main-1} also shows that the following diagram commutes
\begin{equation}\label{proof-thm-diag-1-new}
\begin{tikzcd}[row sep=3em, column sep=5em]
\Fr(\mathcal{Z}^{\rm bl}_\omega(\fS,\lambda'))\arrow[r, "\Theta_{\lambda\lambda'}^\omega"]
\arrow[d, "\varphi_{\lambda'}"]  
& \Fr(\mathcal{Z}^{\rm bl}_\omega(\fS,\lambda)) \arrow[d, "\varphi_\lambda"]  \\
 \Fr(\mathcal A_{\omega} (\fS,\lambda'))
 \arrow[r, "\mu_{v_1,A}\circ\cdots\circ \mu_{v_r,A}"] 
&  \Fr(\A)
\end{tikzcd},
\end{equation}
for any two triangulations $\lambda,\lambda'$ of $\fS$ related to each other by a flip.
Note that the isomorphisms $\varphi_\lambda$
and $\varphi_{\lambda'}$ are inverse to isomorphisms 
$\psi_\lambda$
and $\psi_{\lambda'}$ respectively (Lemma~\ref{thm-transition-LY}).

Recall that there is more than one way to order \( v_1, \ldots, v_r \) to obtain \eqref{eq-mutation-flip-quiver} when \( n > 2 \). Diagram \eqref{proof-thm-diag-1-new} demonstrates that \( \mu_{v_1,A} \circ \cdots \circ \mu_{v_r,A} \) is independent of the ordering of \( v_1, \ldots, v_r \) because \( \varphi_\lambda \) and \( \varphi_{\lambda'} \) are isomorphisms, and \( \Theta^\omega_{\lambda\lambda'} \) is independent of the ordering of \( v_1, \ldots, v_r \).
Then we define 
$$\Phi_{\lambda\lambda'}^\omega:=
\mu_{v_1,A}\circ\cdots\circ \mu_{v_r,A}\colon
\Fr(\mathcal A_{\omega} (\fS,\lambda'))\rightarrow
\Fr(\mathcal A_{\omega} (\fS,\lambda)).$$

Suppose that $\lambda$ and $\lambda'$ are any two triangulations of $\fS$ (see \S\ref{sec-naturality-trace}). 
Let $\Lambda=(\lambda_1,\cdots,\lambda_m)$ be a triangulation sweep 
connecting 
$\lambda$ and $\lambda'$.
Define the corresponding $\mathcal A$-version quantum coordinate change isomorphism 
\begin{equation}\label{eq-Phi-change}
\Phi_{\Lambda}^{\omega}  :=\Phi_{
\lambda_1\lambda_2}^{\omega}\circ\cdots\circ\Phi_{
\lambda_{m-1}\lambda_m}^{\omega}\colon\Fr(\mathcal A_{\omega} (\fS,\lambda'))\rightarrow 
\Fr(\mathcal A_{\omega} (\fS,\lambda)).
\end{equation}

The following shows that the definition of $\Phi_{\Lambda}^{\omega}$ is independent of the choice of $\Lambda$ and $\Phi_{\Lambda}^{\omega}$ relates the two quantum trace maps $\text{tr}_\lambda^A$
and $\text{tr}_{\lambda'}^A$. It is the $\mathcal A$-version for Proposition \ref{prop:Theta_omega_consistency} and Theorem \ref{thm-main-compatibility}.

\begin{corollary}\label{thm-naturality-sln-cluster}
    Let $\fS$ be a triangulable pb surface without interior punctures,  let $\lambda$, $\lambda'$ be two triangulations of $\fS$, and let $\Lambda$
    a triangulation sweep 
connecting 
$\lambda$ and $\lambda'$.
We have the following:

\begin{enumerate}[label={\rm (\alph*)}]\itemsep0,3em

\item $\Phi_\Lambda^\omega$ only depends on $\lambda$ and $\lambda'$.

\item We denote $\Phi_\Lambda^\omega$ as $\Phi_{\lambda\lambda'}^\omega$ because of (a). Then the following diagram commutes
\begin{equation}\label{eq-com-coord-change-A-X}
\begin{tikzcd}
 \Fr(\mathcal A_{\omega} (\fS,\lambda'))\arrow[r, "\Phi_{\lambda\lambda'}^\omega"]
\arrow[d, "\psi_{\lambda'}"]  
&  \Fr(\A) \arrow[d, "\psi_\lambda"]  \\
\Fr(\mathcal{Z}^{\rm bl}_\omega(\fS,\lambda'))
 \arrow[r, "\Theta_{\lambda\lambda'}^\omega"] 
& \Fr(\mathcal{Z}^{\rm bl}_\omega(\fS,\lambda))
\end{tikzcd},
\end{equation}
where $\psi_\lambda$ and $\psi_{\lambda'}$ are defined in \eqref{eq-iso-A-balanced-psi}.

\item The following diagram commutes
\begin{align}
       \label{eq-compability-tr-mutation-A}
        \xymatrix{
        & \rdS \ar[dl]_{{\rm tr}_{\lambda'}^A} \ar[dr]^{{\rm tr}_\lambda^A} & \\
        \Fr(\mathcal A_{\omega} (\fS,\lambda')) \ar[rr]_-{\Phi^\omega_{\lambda\lambda'}} & & \Fr(\mathcal A_{\omega} (\fS,\lambda))}.
\end{align}

\end{enumerate}
\end{corollary}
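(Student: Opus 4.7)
The plan is to deduce all three statements from the single-flip compatibility already recorded in diagram~\eqref{proof-thm-diag-1-new}, together with the naturality of $\Theta^\omega_{\lambda\lambda'}$ (Proposition~\ref{prop:Theta_omega_consistency}), the $\mathcal X$-version compatibility~\eqref{eq-compability-tr-mutation-new}, and the one-triangulation $\mathcal A$/$\mathcal X$-compatibility~\eqref{intro-eq-compability-tr-A-X-diag}. The whole argument is a sequence of diagram chases; no new computation with mutations is needed because the heavy lifting was done in Proposition~\ref{prop-comp} and in the proof of Theorem~\ref{thm-main-1}.

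First I would settle part~(b). For a single flip $\lambda\rightsquigarrow\lambda'$, diagram~\eqref{proof-thm-diag-1-new} is precisely the identity $\psi_\lambda\circ\Phi^\omega_{\lambda\lambda'}=\Theta^\omega_{\lambda\lambda'}\circ\psi_{\lambda'}$. For a general triangulation sweep $\Lambda=(\lambda_1,\ldots,\lambda_m)$, stack the single-flip squares associated to the consecutive flips $\lambda_i\rightsquigarrow\lambda_{i+1}$; since $\Phi^\omega_\Lambda$ and $\Theta^\omega_\Lambda$ are defined in \eqref{eq-Phi-change} and \eqref{eq-Theta-change2} as the corresponding compositions, commutativity of each square yields commutativity of the outer rectangle, which is \eqref{eq-com-coord-change-A-X}.

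Part~(a) follows directly from part~(b) and Proposition~\ref{prop:Theta_omega_consistency}. Indeed, since $\psi_\lambda$ and $\psi_{\lambda'}$ are isomorphisms (Theorem~\ref{thm-transition-LY}), part~(b) gives
\[
\Phi^\omega_\Lambda=\psi_\lambda^{-1}\circ\Theta^\omega_\Lambda\circ\psi_{\lambda'},
\]
and the right-hand side depends only on $\lambda,\lambda'$ because $\Theta^\omega_\Lambda$ does.

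Finally, for part~(c), writing $i\colon\mathcal A_\omega(\fS,\lambda)\hookrightarrow\mathrm{Frac}(\mathcal A_\omega(\fS,\lambda))$ for the natural inclusion and using the one-triangulation commuting triangle~\eqref{intro-eq-compability-tr-A-X-diag} and the $\mathcal X$-version naturality \eqref{eq-compability-tr-mutation-new}, I compute in $\mathrm{Frac}(\mathcal Z^{\rm bl}_\omega(\fS,\lambda))$:
\[
\psi_\lambda\circ\Phi^\omega_{\lambda\lambda'}\circ{\rm tr}^A_{\lambda'}
=\Theta^\omega_{\lambda\lambda'}\circ\psi_{\lambda'}\circ{\rm tr}^A_{\lambda'}
=\Theta^\omega_{\lambda\lambda'}\circ{\rm tr}_{\lambda'}
={\rm tr}_\lambda
=\psi_\lambda\circ{\rm tr}^A_\lambda.
\]
Since $\psi_\lambda$ is an isomorphism, this yields ${\rm tr}^A_\lambda=\Phi^\omega_{\lambda\lambda'}\circ{\rm tr}^A_{\lambda'}$, which is the commutativity of \eqref{eq-compability-tr-mutation-A}. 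The main subtlety to be careful about is that ${\rm tr}^A_\lambda$ is here composed with the inclusion into the skew-field so that $\Phi^\omega_{\lambda\lambda'}$ is applicable; the inclusion is automatic from the Ore property of $\mathcal A_\omega(\fS,\lambda)$ and from Lemma~\ref{lem-basic-lem}(a), so no obstacle arises.
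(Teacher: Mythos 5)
Your proposal is correct and follows essentially the same route as the paper: both reduce everything to the single-flip diagram~\eqref{proof-thm-diag-1-new}, invoke Proposition~\ref{prop:Theta_omega_consistency} for the well-definedness, and deduce~(c) from~\eqref{eq-compability-tr-mutation-new} together with $\psi_\lambda\circ{\rm tr}^A_\lambda={\rm tr}_\lambda$. The only cosmetic difference is that you establish~(b) before~(a) (the paper does the reverse), and your closing remark cites Lemma~\ref{lem-basic-lem}(a) where the natural inclusion $\mathcal A_\omega(\fS,\lambda)\hookrightarrow\Fr(\mathcal A_\omega(\fS,\lambda))$ really just needs that a quantum torus is an Ore domain (as in \cite{Cohn}); this is a harmless overcitation, not a gap.
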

\begin{proof}
(a)    The diagram \eqref{proof-thm-diag-1-new} shows the following diagram commutes
    \begin{equation}\label{proof-thm-2-diag-1}
\begin{tikzcd}
\Fr(\mathcal{Z}^{\rm bl}_\omega(\fS,\lambda'))\arrow[r, "\Theta_{\Lambda}^\omega"]
\arrow[d, "\varphi_{\lambda'}"]  
& \Fr(\mathcal{Z}^{\rm bl}_\omega(\fS,\lambda)) \arrow[d, "\varphi_\lambda"]  \\
 \Fr(\mathcal A_{\omega} (\fS,\lambda'))
 \arrow[r, "\Phi_{\Lambda}^\omega"] 
&  \Fr(\A)
\end{tikzcd}.
\end{equation}
Since $\Theta_{\Lambda}^\omega$ depends only on $\lambda$ and $\lambda'$ (Proposition~\ref{prop:Theta_omega_consistency}), then 
so does $\Phi_{\Lambda}^\omega$.

(b) The commutative diagram \eqref{proof-thm-2-diag-1}, together with the identities \( (\psi_\lambda)^{-1} = \varphi_\lambda \) and \( (\psi_{\lambda'})^{-1} = \varphi_{\lambda'} \), implies the commutativity of diagram 
\eqref{eq-com-coord-change-A-X}.

(c) Lemma \ref{thm-trace-A}(d) demonstrates that
$$\psi_\lambda\circ \text{tr}_\lambda^A=\tr,\quad
\psi_{\lambda'}\circ \text{tr}_{\lambda'}^A= \text{tr}_{\lambda'}.$$
Then commutative diagrams \eqref{eq-compability-tr-mutation-new} and \eqref{eq-com-coord-change-A-X} imply the commutativity of diagram
\eqref{eq-compability-tr-mutation-A}.

\end{proof}

\begin{remark}
    In \cite{LY23}, the authors defined an isomorphism
    $$
    \overline{\Psi}_{\lambda\lambda'}^A \colon \Fr(\mathcal A_{\omega} (\fS,\lambda')) \longrightarrow
    \Fr(\mathcal A_{\omega} (\fS,\lambda))
    $$
    such that the diagram \eqref{eq-compability-tr-mutation-A} commutes when $\Phi_{\lambda\lambda'}^\omega$ is replaced by $\overline{\Psi}_{\lambda\lambda'}^A$.
    L{\^e} and Yu also proved the uniqueness of $\overline{\Psi}_{\lambda\lambda'}^A$ that makes this diagram commute \cite{LY23}. 
    It follows that
    $$
    \overline{\Psi}_{\lambda\lambda'}^A = \Phi_{\lambda\lambda'}^\omega.
    $$
\end{remark}

\section{The inclusion of skein algebras into quantum upper cluster algebras}\label{sec-skein-cluster}

The following theorem is the main result of this section. It asserts that the projected stated ${\rm SL}_n$-skein algebra $\dS$ (Definition~\ref{def-key-algebra}) is contained in the ${\rm SL}_n$ quantum cluster upper algebra $\mathscr{U}_{\omega}(\fS)$ (Definition~\ref{def-sln-quantum-cluster-al}) whenever $\fS$ is a triangulable pb surface without interior punctures.

\begin{theorem}\label{thm-main-3}
    Let $\fS$ be a triangulable pb surface without interior punctures. Then
    $$
    \dS \subset \mathscr{U}_{\omega}(\fS).
    $$
\end{theorem}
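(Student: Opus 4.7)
The plan is to reduce the global inclusion $\dS \subset \mathscr{U}_\omega(\fS)$ to local inclusions on $\mathbb{P}_3$ and $\mathbb{P}_4$, by combining the single-vertex description of the upper cluster algebra (Theorem \ref{thm-upper-w-upper}) with the compatibility of the splitting homomorphism with the quantum trace map. Fix a triangulation $\lambda$, and let $w_\lambda = (Q_\lambda, \Pi_\lambda, M_\lambda)$ be the initial quantum seed from Lemma \ref{lem-seed-skein}. By Lemma \ref{lem-basic-lem}(a), the inclusion $\dS \subset \mathbb{T}(w_\lambda) = \mathcal{A}_\omega(\fS, \lambda)$ is automatic, so Theorem \ref{thm-upper-w-upper} applied at $w_\lambda$ reduces the problem to proving
$$
\dS \subset \mathbb{T}_k \qquad \text{for every } k \in \mathcal{V}_{\rm mut} = \mathring{V}_\lambda,
$$
where $\mathbb{T}_k := \mathbb{T}(\mu_{k,A}(w_\lambda))$.

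Next I would fix such a $k$ and localize near it. Each mutable vertex lies either in the interior of a single triangle $\tau \in \mathbb{F}_\lambda$, or on the interior of an edge $e$ shared by two triangles $\tau_1, \tau_2$. In the first case I would cut $\fS$ along the non-boundary edges of $\tau$ to separate $\tau \cong \mathbb{P}_3$ from its (triangulable) complement $\fS''$; in the second case I would cut along the four non-$e$ edges bounding $\tau_1 \cup \tau_2$ to isolate a $\mathbb{P}_4$ containing $e$ in its interior. By Lemma \ref{lem-basic-lem}(c), the resulting splitting
$$
\mathbb{S} \colon \dS \hookrightarrow \widetilde{\cS}_\omega(\mathbb{P}_i) \otimes_R \widetilde{\cS}_\omega(\fS'')
$$
is injective (with $i \in \{3, 4\}$), and Theorem \ref{thm-trace-cut} together with Corollary \ref{thm-naturality-sln-cluster} ensures that $\mathbb{S}$ is compatible with the corresponding splittings of the $\mathcal{A}$-tori. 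Consequently, if the $\mathbb{S}$-image of $\dS$ lies inside $\mathbb{T}_k^{(i)} \otimes_R \mathcal{A}_\omega(\fS'', \lambda'')$, where $\mathbb{T}_k^{(i)}$ denotes the quantum torus at the mutation of $k$ inside the local seed of $\mathbb{P}_i$, then a descent argument (along the lines of Lemma \ref{lem-frac-subalgebra}) will give $\dS \subset \mathbb{T}_k$. The task thus reduces to the local inclusions $\widetilde{\cS}_\omega(\mathbb{P}_i) \subset \mathbb{T}_k^{(i)}$ for $i \in \{3, 4\}$.

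For $\mathbb{P}_3$, Theorem \ref{thm-trace-A}(b) gives $\widetilde{\cS}_\omega(\mathbb{P}_3) = \overline{\cS}_\omega(\mathbb{P}_3)$, which is generated by stated corner arcs; combined with the explicit path-sum formula for ${\rm tr}^A$ on such arcs (from \cite{LY23}), the local inclusion becomes a direct calculation using the quantum-mutation rule. The main obstacle is the $\mathbb{P}_4$ case: after mutating at a vertex on the diagonal edge $e$, the new cluster variable $A_k'$ does not obviously correspond to a distinguished stated web, so a generator-by-generator computation is infeasible. I plan to bypass this by combining the continuous cluster results of Schrader--Shapiro with Proposition \ref{prop-comp} and the $\mathcal{A}$-version coordinate change $\Phi^\omega_{\lambda\lambda'}$ from Corollary \ref{thm-naturality-sln-cluster}, which translate the ${\rm tr}^A$-image of every stated arc of $\mathbb{P}_4$ into a Laurent polynomial in the mutated $\mathcal{A}$-variables; verifying that this translation in fact produces a Laurent (rather than merely rational) expression is where I expect the bulk of the technical work to lie, and this should then assemble with the descent argument of the previous step to yield the theorem.
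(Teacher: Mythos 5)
Your overall strategy matches the paper's exactly: invoke Theorem \ref{thm-upper-w-upper} at the initial seed $w_\lambda$ to reduce to the one-mutation-step inclusions $\dS\subset\mathbb T(\mu_{k,A}(w_\lambda))$, localize the problem by cutting out a $\mathbb P_3$ (if $k$ is interior to a triangle) or a $\mathbb P_4$ (if $k$ lies on an internal edge), use the compatibility of the splitting homomorphism with the quantum trace (Theorem \ref{thm-trace-cut}) and with one-step mutations, and descend via Lemma \ref{lem-frac-subalgebra}. Your handling of the $\mathbb P_3$ case — reduction to stated corner arcs, the path-sum formula, and a one-step Laurent computation via Schrader--Shapiro — is essentially Lemma \ref{lem-key-mutation-polynomial-P3}.

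Where you go astray is the $\mathbb P_4$ case. You assert that a generator-by-generator computation is ``infeasible'' because a mutated cluster variable does not correspond to a distinguished stated web, and you propose to circumvent this via the coordinate change $\Phi^\omega_{\lambda\lambda'}$. Two problems. First, the asserted infeasibility is false: $\overline{\cS}_\omega(\mathbb P_4)$ is generated by the three families of corner arcs $a_{ij},b_{ij},c_{ij}$ (Lemma \ref{lem-satuared-rd}), and these have explicit path-sum formulas under ${\rm tr}_{\lambda_4}$ (Lemma \ref{lem-quantum-trace-P4}, from \cite{KimWang}). The quadrilateral case is therefore handled by exactly the same argument as $\mathbb P_3$ — the point is not to describe what $\mu_{k,A}(A_k)$ is geometrically but to verify that ${\rm tr}^A$-images of the generators remain Laurent after one mutation $\nu_k^\omega$, which again follows from the Schrader--Shapiro positivity of network matrices plus the balancedness argument (Lemma \ref{lem-commute}). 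Second, invoking $\Phi^\omega_{\lambda\lambda'}$ (Corollary \ref{thm-naturality-sln-cluster}) does not help: that map is the composition of the full flip sequence of $r=\tfrac16(n^3-n)$ mutations, whereas Theorem \ref{thm-upper-w-upper} demands Laurentness after a single mutation $\mu_{k,A}$ at each interior vertex $k$, including the intermediate steps of the flip sequence — information $\Phi^\omega_{\lambda\lambda'}$ does not furnish. You should drop the $\Phi$-based detour and instead mimic the $\mathbb P_3$ argument with the $\mathbb P_4$ trace formula; the rest of your plan (splitting, descent via Lemma \ref{lem-frac-subalgebra}) then goes through as you described.
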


In \S\ref{sec-splitting-upper}, we will construct the splitting homomorphism for $\mathscr{U}_{\omega}(\fS)$ and prove that it is compatible with the splitting homomorphism of $\dS$ introduced in \S\ref{sub-splitting}.  
Finally, in \S\ref{sec-skein-inclusion-cluster}, we establish the stronger inclusion 
$\dS \subset \mathscr{A}_{\omega}(\fS)$ when every connected component of $\fS$ contains at least two punctures, where $\mathscr{A}_{\omega}(\fS)$ denotes the ${\rm SL}_n$ quantum cluster algebra defined in Definition~\ref{def-sln-quantum-cluster-al}.  

Although it is well known that $\mathscr{A}_{\omega}(\fS) \subset \mathscr{U}_{\omega}(\fS)$, it is necessary to first show $\dS \subset \mathscr{U}_{\omega}(\fS)$. The reason is that our proof of $\dS \subset \mathscr{A}_{\omega}(\fS)$ relies on cutting $\fS$ into a new pb surface and a triangle or quadrilateral, which in turn requires $\dS \subset \mathscr{U}_{\omega}(\fS)$ and the compatibility between the splitting homomorphisms of $\dS$ and $\mathscr{U}_{\omega}(\fS)$.

\subsection{Quantum trace maps for triangles and quadrilaterals}

Recall that, for each positive integer $k$, we use $\mathbb P_k$ to denote the pb surface obtained from the closed disk by removing $k$ punctures on its boundary.

We label the three vertices and three edges of $\mathbb P_3$ as $v_1,v_2,v_3$ and $e_1,e_2,e_3$, respectively, as shown in Figure~\ref{Fig;coord_ijk}.  
Note that once $v_1$ is fixed, the labeling of all other vertices and edges is uniquely determined.
Recall that the `$n$-triangulation' of $\mathbb P_3$ is the subdivision of $\bP_3$ into $n^2$ small triangles (Figure~\ref{Fig;coord_ijk}).
 Consider a graph dual to this $n$-triangulation, and give orientations on the edges as in Figure \ref{dualquiver}, presented as blue arrows there. The resulting oriented graph, consisting of $3n$ vertices on the boundary of $\mathbb{P}_3$, $n^2$ vertices in the interior, and $\frac{3n(n+1)}{2}$ oriented edges, is referred to as a `network' in \cite{schrader2017continuous}; let's denote it as $\mathcal N({\mathbb P_3},v_1)$.
 The vertices of $\mathcal N({\mathbb P_3},v_1)$  contained in $e_1$ (and $e_3$)
 are labeled by $1,2,\cdots,n$, as shown in Figure \ref{dualquiver}.

\begin{figure}[h]
	\centering
	\includegraphics[width=4.2cm]{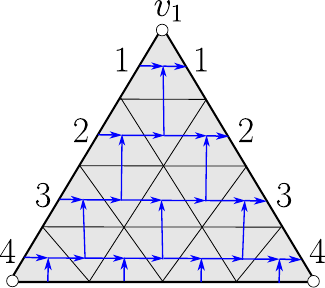}
	\caption{The dual quiver for the one in Figure \ref{Fig;coord_ijk} when $n=4$.}\label{dualquiver}
\end{figure}

 For any $i,j\in\{1,2,\cdots,n\}$, 
 define
 \begin{align}\label{def-path-P3-v1}
     \mathsf P(\mathbb P_3,v_1,i,j):=
     \{\text{paths in $\mathcal N({\mathbb P_3},v_1)$ going from $i$ contained in $e_3$ to $j$ contained in $e_1$}\},
 \end{align}
  where a path is a concatenation of edges of $\mathcal N({\mathbb P_3},v_1)$ respecting the orientations.

 Note that $\mathsf P(\mathbb P_3,v_1,i,j)=\emptyset$ when $i<j$, and 
 $\mathsf P(\mathbb P_3,v_1,i,j)$ consists of one element when $i=j$, which we may denote by $\gamma_{ii}$.
 Recall that $V_{\mathbb P_3}$ is the vertex set of the quiver in Figure \ref{Fig;coord_ijk}.
 For each path $\gamma\in \mathsf P(\mathbb P_3,v_1,i,j)$, we define an integer-valued vector
 ${\bf k}^\gamma  =(k^\gamma_v)_{v\in V_{\mathbb{P}_3}} \in\mathbb Z^{V_{\mathbb P_3}}$ 
 as
 \begin{align}\label{eq-def-k-gamma}
 k^\gamma_v =
 \begin{cases}
     1 & \text{if $v$ is on your left when you walk along $\gamma$,}\\
     0 & \text{otherwise}.
 \end{cases}
 \end{align}
 For example, for the unique path $\gamma$ from $2$ to $1$, there is exactly one $v \in V_{\mathbb{P}_3}$ with $k^\gamma_v=1$, for the vertex $v$ between 1 and 2 contained in $e_3$. For the unique path $\gamma_{22}$ from $2$ to $2$, there are two $v' \in V_{\mathbb{P}_3}$ with $k^{\gamma_{22}}_{v'}=1$.

Define 
\begin{align}\label{eq-k-sum-k1}
    {\bf k}=
    \sum_{i=\{1,2,\cdots,n\}} \, \sum_{\gamma\in \mathsf P(\mathbb P_3,v_1,i,i)} 
    {\bf k}^\gamma \, \in \mathbb{Z}^{V_{\mathbb{P}_3}}.
\end{align}
Note that ${\bf k}^{\gamma_{11}}=0$ and
$${\bf k} = \sum_{i=1}^n {\bf k}^{\gamma_{ii}} = {\bf k}^{\gamma_{22}} +\cdots + {\bf k}^{\gamma_{nn}}={\bf k}_1,$$
where ${\bf k}_1$ is defined in \eqref{def:proj}.

Let $v\in\{v_1,v_2,v_3\}$, and let $\alpha(v)$ be a counterclockwise corner arc in $\mathbb{P}_3$, surrounding the puncture $p$.
For $i,j\in\{1,2,3\}$, let $\alpha(v)_{ij} \in \overline{\cS}_{\omega}(\mathbb{P}_3)$ be the stated corner arc whose state values at the initial and terminal endpoints are $i$ and $j$, respectively.

The following lemma gives a neat description of the value of the quantum trace of a stated corner arc in a triangle.
\begin{theorem}[{\cite[Theorem~10.5]{LY23}}]\label{lem-trace-image-cornerarc-P3}
    We have
    \begin{align}\label{eq-trace-P3-arc}
{\rm tr}_{\mathbb P_3}(\alpha(v_1)_{ij})
=\sum_{\gamma\in\mathsf{P}(\mathbb P_3,v_1,i,j)}
Z^{n{\bf k}^\gamma -{\bf k}},
\end{align}
where ${\bf k}^\gamma$ and ${\bf k}$ are defined in \eqref{eq-def-k-gamma} and \eqref{eq-k-sum-k1} respectively. 
\end{theorem}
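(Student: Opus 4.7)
The plan is to compute $\mathrm{tr}_{\mathbb P_3}(\alpha(v_1)_{ij})$ by passing through the $\mathcal A$-version quantum trace and then by induction on the states. First I would dispose of the case $i<j$: in that range $\alpha(v_1)_{ij}$ is (a scalar multiple of) a bad arc and hence vanishes in $\overline{\cS}_\omega(\mathbb P_3)$, while $\mathsf P(\mathbb P_3,v_1,i,j)$ is empty, so both sides of \eqref{eq-trace-P3-arc} are zero. It therefore remains to treat $i\ge j$.

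The core idea is to exploit the commuting triangle $\psi_{\mathbb P_3}\circ \mathrm{tr}_{\mathbb P_3}^{A}=\mathrm{tr}_{\mathbb P_3}$ from Theorem~\ref{thm-trace-A}, which reduces the problem to computing $\mathrm{tr}_{\mathbb P_3}^{A}(\alpha(v_1)_{ij})\in \mathcal A_\omega(\mathbb P_3)$ and then applying $\psi_{\mathbb P_3}(A^{\mathbf t})=Z^{\mathbf t K_{\mathbb P_3}}$ from Theorem~\ref{thm-transition-LY}. I would then expand $\alpha(v_1)_{ij}$ into elementary $n$-webs via the $\mathrm{SL}_n$ skein relations \eqref{wzh.four}--\eqref{wzh.eight}, so that each resolution of the corner arc is visibly represented by a collection of strands turning left around $v_1$ and then traversing the $n$-triangulation of $\mathbb P_3$. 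The anticipated bijection with $\mathsf P(\mathbb P_3,v_1,i,j)$ would then come from recording, for each such resolution, the sequence of small triangles crossed, which is exactly the combinatorial data of a dual-network path from $i\in e_3$ to $j\in e_1$.

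The main inductive step would run on the difference $i-j\ge 0$. For $j=i$ the path set is a singleton $\{\gamma_{ii}\}$, and a direct comparison with the skeleton construction of Section~\ref{sec;A_tori} (see Figure~\ref{Fig;skeleton}) together with Lemma~\ref{gaa-com} and Theorem~\ref{thm-trace-A} would identify $\mathrm{tr}_{\mathbb P_3}(\alpha(v_1)_{ii})$ with the single monomial $Z^{n\mathbf k^{\gamma_{ii}}-\mathbf k}$, because the ``turning-left'' recipe defining $\gaa_v$ matches precisely the weighted graph $Y$ underlying $\gamma_{ii}$. For $i>j$, I would trade a corner arc of states $(i,j)$ for a linear combination of a corner arc of states $(i,j+1)$ together with configurations in which a strand crosses an adjacent small triangle; by the inductive hypothesis this produces a sum over paths in $\mathsf P(\mathbb P_3,v_1,i,j)$ split according to their first edge, reproducing the full decomposition of the path set.

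The main obstacle I anticipate is the precise matching of Weyl-ordering scalars: each skein resolution contributes a power of $\omega$ that must conspire with the monomial $Z^{n\mathbf k^\gamma-\mathbf k}$ already in play to yield another Weyl-ordered monomial of the same form. This is where the normalization constants $\mathbbm{c}_i$, $\mathbbm{t}$, $\mathbbm{a}$ from \eqref{intro-constants} and, crucially, the shift $-\mathbf k$ in the exponent play their role: the diagonal contribution $\sum_i \mathbf k^{\gamma_{ii}}=\mathbf k$ is exactly what forces $n\mathbf k^\gamma-\mathbf k$ to lie in the balanced subgroup $\mathcal B_{\mathbb P_3}$, so that the output lands in $\mathcal Z_\omega^{\mathrm{bl}}(\mathbb P_3)$ as required by Theorem~\ref{thm.quantum_trace}. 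Verifying this cancellation of scalars at each inductive step, while never leaving the balanced subalgebra, is the most delicate part of the argument.
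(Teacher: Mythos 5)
The paper does not prove this theorem: the bracketed citation \cite[Theorem~10.5]{LY23} in the theorem header indicates it is imported from the source without argument, so there is no internal proof to compare your sketch against.

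Evaluating your proposal on its own terms, there are two genuine gaps. First, routing the computation through ${\rm tr}_{\mathbb P_3}^A$ and $\psi_{\mathbb P_3}$ inverts the order of dependency in the underlying construction: the $\mathcal A$-version trace ${\rm tr}_\lambda^A$ is defined as $\psi_\lambda^{-1}\circ{\rm tr}_\lambda$ (see Theorems~\ref{thm-transition-LY} and~\ref{thm-trace-A}) only \emph{after} the $\mathcal X$-version ${\rm tr}_\lambda$ has been built and its values on corner arcs computed. Unless you supply an independent means of computing ${\rm tr}_{\mathbb P_3}^A(\alpha(v_1)_{ij})$, the plan is circular, and you give no such means. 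Second, the inductive mechanism---``trade a corner arc of states $(i,j)$ for a corner arc of states $(i,j+1)$ plus configurations crossing an adjacent small triangle''---is not realized by any skein relation among \eqref{w.cross}--\eqref{wzh.eight}. The state-mixing relations \eqref{wzh.six}--\eqref{wzh.eight} act on specific local pictures at a boundary wall (returning cup, returning cap, crossing at the wall); none of them converts a lone corner arc with fixed endpoint states into a shifted corner arc plus new strands inside the triangle. The derivation in [LY23] proceeds not by skein reduction in $\overline{\cS}_\omega(\mathbb P_3)$ but by unwinding the definition of ${\rm tr}_\lambda$ across the $n$-triangulated triangle: the arc is pushed through the small triangles, each transit contributes a $Z$-monomial, and summing over all admissible transits produces exactly the sum over dual-network paths $\gamma$. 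Finally, your base-case identification of $\alpha(v_1)_{ii}$ with one of the $\gaa_v$ is off: the $\gaa_v$ are $n$-strand stated webs built from the skeleton $\widetilde Y_v$, not corner arcs, and ${\rm tr}_{\mathbb P_3}(\alpha(v_1)_{ii})$ is a ratio of $K_\lambda$-monomials rather than a single $Z^{K_\lambda(v,*)}$ (compare Lemma~\ref{lem-Cij-bar-Cij}(b), where $C_{ii}=[A_{i0}^{-1}\cdot A_{i-1,0}]$). The balancedness observation you close with is true, but it is a consistency check on the exponent lattice, not a mechanism for matching the Weyl-normalizing scalars.
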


\def\Pt{\mathbb P_3}

We use $\mathcal D_{\Pt}$ to denote the $\mathcal X$-seed $(\Gamma_{\Pt},(X_v)_{v\in V_{\Pt}})$. 
Recall that, for any mutable vertex $k$, we defined $\mathcal D_{\Pt}^{(k)}$ (see \eqref{eq-def-wD}) and $\mathcal Z_{\omega}^{\rm bl}(\mathcal D_{\Pt}^{(k)})$ (see \eqref{def-Z-D-v-lambda}). 
Note that $\mathcal D_{\Pt}=\mathcal D_{\Pt}^{(k,k)}$ and $\mathcal Z_{\omega}^{\rm bl}(\mathcal D_{\Pt})=\mathcal Z_{\omega}^{\rm bl}(\mathcal D_{\Pt}^{(k,k)})$, since 
$\mu_k(\mu_k(H_{\Pt}))=H_{\Pt}$ (Lemma~\ref{lem-mutation-H}(b)).
Therefore, by Lemma~\ref{lem-rest-iso-to-balanced}, there exists a skew-field isomorphism
\[
\nu_{k}^{\omega}\colon \Fr(
      \mathcal{Z}^{\rm bl}_\omega(\mathcal{D}_{\Pt}))\;\longrightarrow\;
      \Fr(
      \mathcal{Z}^{\rm bl}_\omega(\mathcal{D}_{\mathbb P_3}^{(k)})),
\]
which extends the map
\[
\mu_k^q\colon \Fr(\mathcal{X}_q(\mathcal{D}_{\Pt}))\;\longrightarrow\;    \Fr(\mathcal{X}_q(\mathcal{D}_{\lambda}^{(k)})),
\]
as established in Lemma~\ref{lem:nu_extends_mu}.

\begin{lemma}\label{lem-key-mutation-polynomial-P3}
    For any element $\beta\in \overline{\cS}_\omega(\mathbb P_3)$, we have 
    $$\nu_k^{\omega}({\rm tr}_{\mathbb P_3}(\beta))\in 
      \mathcal{Z}^{\rm bl}_\omega(\mathcal{D}_{\mathbb P_3}^{(k)}).$$
\end{lemma}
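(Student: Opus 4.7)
The plan is to reduce the statement to a small generating set and then verify it using the explicit quantum-trace formula of Theorem~\ref{lem-trace-image-cornerarc-P3}. Since both $\nu_k^{\omega}$ and ${\rm tr}_{\Pt}$ are algebra homomorphisms and $\mathcal{Z}_\omega^{\rm bl}(\mathcal{D}_{\Pt}^{(k)})$ is a subalgebra of $\Fr(\mathcal{Z}_\omega^{\rm mbl}(\mathcal{D}_{\Pt}^{(k)}))$, it is enough to establish the inclusion when $\beta$ runs over an algebra generating set of $\overline{\cS}_\omega(\Pt)$. The algebra $\overline{\cS}_\omega(\Pt)$ is generated by the stated corner arcs $\alpha(v)_{ij}$ and $\overline{\alpha}(v)_{ij}$ attached to the three punctures $v_1,v_2,v_3$. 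The $\mathbb{Z}_3$-rotational symmetry of $\Pt$ cyclically permutes the punctures together with the mutable vertices of $\Gamma_{\Pt}$, and a reflection symmetry interchanges the two orientations of corner arcs, so after relabeling $k$ accordingly we may restrict to $\beta = \alpha(v_1)_{ij}$ for arbitrary $i, j \in \{1,\ldots,n\}$.

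For such $\beta$, Theorem~\ref{lem-trace-image-cornerarc-P3} gives the explicit expansion
\[
{\rm tr}_{\Pt}(\alpha(v_1)_{ij}) = \sum_{\gamma \in \mathsf{P}(\Pt, v_1, i, j)} Z^{n{\bf k}^\gamma - {\bf k}}.
\]
Using the decomposition $\nu_k^{\omega} = \nu_k^{\sharp\omega} \circ \nu_k'$ from \S\ref{subsec-mutation-Fock}, the monomial change $\nu_k'$ preserves balanced Weyl-ordered monomials (Lemma~\ref{lem-rest-iso-to-balanced}), so that the only possible source of denominators is the dilogarithm part $\nu_k^{\sharp\omega}$, which multiplies each $Z^{\bf t}$ by the factor $F^q(X_k, m)$ with $m = \tfrac{1}{n}\sum_{v} Q_{\Pt}(k,v)\,t_v$ (Lemma~\ref{lem.nu_sharp_well-defined}). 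When some $m$ is negative the factor $F^q(X_k, m)$ contributes inverse terms $(1+q^{\ast}X_k)^{-1}$, and the essential content of the lemma is that these inverse factors cancel once one sums over all paths.

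To produce this cancellation, partition $\mathsf{P}(\Pt, v_1, i, j)$ into classes of paths that agree outside a small neighborhood of the vertex $k$ and differ only in how they turn locally near $k$. Summing $Z^{n{\bf k}^\gamma - {\bf k}}$ over a single class factors as a common Weyl-ordered monomial times a polynomial of the form $\prod_r(1+q^{2r-1} X_k)$, which is precisely the factor that cancels the denominator introduced by $\nu_k^{\sharp\omega}$. Consequently, each class contributes a balanced Weyl-ordered monomial in $\mathcal{Z}_\omega^{\rm bl}(\mathcal{D}_{\Pt}^{(k)})$, completing the proof. The main obstacle is the combinatorial bookkeeping required to describe the path-classes uniformly in $k$ and to match each class with the correct dilogarithm factor; this cancellation lies at the heart of the quantum Laurent phenomenon and closely parallels the network-path arguments of Schrader--Shapiro \cite{schrader2017continuous}.

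An equivalent route is to pass to the $\mathcal{A}$-side: via Proposition~\ref{prop-comp} the conclusion is the same as $\mu_{k,A}({\rm tr}^A_{\Pt}(\beta)) \in \mathbb{T}(w_{\Pt}^{(k)})$, i.e.\ a Laurent-type property for the $\mathcal{A}$-version quantum trace at a single $\mathcal{A}$-mutation; this reformulation can be verified directly from the explicit formulas of Theorem~\ref{thm-trace-A} together with the transition isomorphism of Theorem~\ref{thm-transition-LY}, and the combinatorial cancellation takes the same form as in the $Z$-picture above.
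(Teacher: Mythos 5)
Your overall strategy—reduce to corner arcs, invoke Theorem~\ref{lem-trace-image-cornerarc-P3}, decompose $\nu_k^\omega = \nu_k^{\sharp\omega}\circ \nu_k'$, and look for a cancellation of dilogarithm denominators against the path sum—matches the skeleton of the paper's argument, but there is a real gap in the cancellation step.

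The issue is that you work directly with the $n$-th-root monomials $Z^{n\mathbf{k}^\gamma - \mathbf{k}}$ and propose a bespoke path-class cancellation, whereas the Schrader--Shapiro result (\cite[Proposition~4.2]{schrader2017continuous}) you want to lean on is a statement about the $X$-level Fock--Goncharov torus, i.e.\ about sums of the form $\sum_\gamma X^{\mathbf{k}^\gamma}$ under $\mu_k^q$. Before that result becomes applicable you need the factorization
\[
{\rm tr}_{\mathbb P_3}\bigl(\alpha(v_1)_{ij}\bigr) = \omega^{m_{ij}}\, Z^{-\mathbf{k}} \sum_{\gamma\in\mathsf{P}(\mathbb P_3,v_1,i,j)} Z^{n\mathbf{k}^\gamma},
\]
with $m_{ij}$ independent of $\gamma$. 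That factorization is justified by two observations you do not make: (i) $\mathbf{k} = \mathbf{k}_1$ satisfies $\sum_u \mathbf{k}(u) Q_{\mathbb P_3}(u,u') = 0$ for every interior $u'$, so $Z^{\mathbf{k}}$ commutes with $Z_{u'}$ and in particular with $X_k$; and (ii) the boundary-vertex entries of $\mathbf{k}^\gamma$ are the same for every path $\gamma$ from $i$ to $j$, so the Weyl-ordering correction when pulling $Z^{-\mathbf{k}}$ out to the left is a single power $\omega^{m_{ij}}$ common to all terms. Once extracted, $\sum_\gamma Z^{n\mathbf{k}^\gamma} = \sum_\gamma X^{\mathbf{k}^\gamma} \in \mathcal{X}_q(\mathcal{D}_{\mathbb P_3})$, and Schrader--Shapiro can be cited as a black box. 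The leftover $\nu_k^\omega(Z^{-\mathbf{k}})$ is then a single balanced monomial precisely because of observation (i): $\nu_k'(Z^{-\mathbf{k}})$ commutes with the new $X_k'$ (via Lemma~\ref{lem-commute}), so the dilogarithm part $\nu_k^{\sharp\omega}$ fixes it. Without the factorization, your claim that the $F^q$ contributions depend only on $\mathbf{k}^\gamma$ (and hence can be absorbed by grouping paths) is not justified, and your plan to partition paths into ``local classes near $k$'' and factor out $\prod_r(1+q^{2r-1}X_k)$ is effectively a reproof of Schrader--Shapiro's theorem with no details supplied.

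In short: you correctly identify the generating set, the path-sum formula, and the decomposition of $\nu_k^\omega$, but you are missing the intermediate factorization that separates the $X$-level from the $n$-th-root level, and without it the appeal to network-path cancellation is not on solid ground. The final-paragraph alternative of moving to the $\mathcal{A}$-side via Proposition~\ref{prop-comp} is a valid reformulation but does not relocate the combinatorial burden.
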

\begin{proof}
 \cite[Theorem~6.1(c)]{LY23} implies that 
$\overline{\cS}_\omega(\mathbb P_3)$ is generated by 
\[
\{\alpha(v)_{ij}\mid 1\leq j\leq i\leq n,\; v=v_1,v_2,v_3\}.
\]
Since any vertex of $\mathbb P_3$ may be chosen as $v_1$, it suffices to show that 
\[
\nu_k^{\omega}\bigl({\rm tr}_{\mathbb P_3}(\alpha(v_1)_{ij})\bigr)\in 
      \mathcal{Z}^{\rm bl}_\omega(\mathcal{D}_{\mathbb P_3}^{(k)})
      \quad \text{for } 1\leq j\leq i\leq n.
\]

By Theorem~\ref{lem-trace-image-cornerarc-P3}, we have
\[
{\rm tr}_{\mathbb P_3}(\alpha(v_1)_{ij})
=\sum_{\gamma\in\mathsf{P}(\mathbb P_3,v_1,i,j)}
Z^{n{\bf k}^\gamma -{\bf k}}.
\]
It is straightforward to check that
\begin{align}\label{eq-commute-Zk-interior}
\sum_{u\in V_{\mathbb P_3}}{\bf k}(u)\, Q_{\mathbb P_3}(u,u')=0,
\quad\text{and}\quad
Z^{{\bf k}} Z_{u'} = Z_{u'} Z^{{\bf k}},
\end{align}
whenever $u'\in V_{\mathbb P_3}$ lies in the interior of $\mathbb P_3$.
Moreover, for any two paths $\gamma,\gamma'\in \mathsf{P}(\mathbb P_3,v_1,i,j)$ and any $u\in V_{\mathbb P_3}$ contained in the boundary, we have 
${\bf k}^\gamma(u) = {\bf k}^{\gamma'}(u)$. 
Thus
\[
Z^{n{\bf k}^\gamma -{\bf k}}
=\omega^{m_{ij}}Z^{-{\bf k}} Z^{n{\bf k}^\gamma},
\]
for some $m_{ij}\in\mathbb Z$ independent of $\gamma$. Hence,
\[
{\rm tr}_{\mathbb P_3}(\alpha(v_1)_{ij})
=\omega^{m_{ij}} Z^{-{\bf k}}
\sum_{\gamma\in\mathsf{P}(\mathbb P_3,v_1,i,j)}
Z^{n{\bf k}^\gamma}.
\]

It follows that
\begin{align*}
    \nu_k^{\omega}\bigl({\rm tr}_{\mathbb P_3}(\alpha(v_1)_{ij})\bigr) 
    &= \omega^{m_{ij}}  \nu_k^{\omega}(Z^{-{\bf k}})\,
    \nu_k^{\omega}\!\left(\sum_{\gamma\in\mathsf{P}(\mathbb P_3,v_1,i,j)}
Z^{n{\bf k}^\gamma}\right)\\
&= \omega^{m_{ij}}  \nu_k^{\omega}(Z^{-{\bf k}})\,
    \mu_k^{q}\!\left(\sum_{\gamma\in\mathsf{P}(\mathbb P_3,v_1,i,j)}
X^{{\bf k}^\gamma}\right).
\end{align*}

By \cite[Proposition~4.2]{schrader2017continuous}, we know that 
\[
\mu_k^{q}\!\left(\sum_{\gamma\in\mathsf{P}(\mathbb P_3,v_1,i,j)}
X^{{\bf k}^\gamma}\right)\in \mathcal{Z}^{\rm bl}_\omega(\mathcal{D}_{\mathbb P_3}^{(k)}).
\]
On the other hand, by definition of $\nu_k'$, we have
\[
\nu_k'(Z^{-{\bf k}})=(Z')^{{\bf k}'}
\quad\text{for some }{\bf k}'\in \mathbb Z^{V_\lambda}\qquad \text{(see \eqref{eq-quantum-mutation_Z})}.
\]
Then Lemma~\ref{lem-commute} and \eqref{eq-commute-Zk-interior} imply
\begin{align}\label{eq-proof-P3-zero-Q}
\sum_{u\in V_{\mathbb P_3}}{\bf k}'(u) Q_{\mathbb P_3}'(u,k)=0,
\end{align}
where $Q_{\mathbb P_3}'=\nu_k(Q_{\mathbb P_3})$.
Recall that $\nu_k^{\omega}= \nu_k^{\omega\sharp}\circ \nu_k'$ (see \eqref{eq-def-quantum-mutation-X}).
Therefore,
\[
\nu_k^{\omega}(Z^{-{\bf k}})
=\nu_k^{\omega\sharp}(\nu_k'(Z^{-{\bf k}}))
=\nu_k^{\omega\sharp}\bigl((Z')^{{\bf k}'}\bigr)
= (Z')^{{\bf k}'}
\;\in\; \mathcal{Z}^{\rm bl}_\omega(\mathcal{D}_{\mathbb P_3}^{(k)}),
\]
where the last equality comes from \eqref{eq-proof-P3-zero-Q} and Lemma~\ref{lem.nu_sharp_well-defined}.

This completes the proof.

\end{proof}

We now state a result for $\mathbb P_4$ analogous to Lemma~\ref{lem-key-mutation-polynomial-P3}.  
Note that $\mathbb P_4$ admits two triangulations $\lambda_4,\lambda_4'$; see Figure~\ref{P4_two_triangulations}.

\begin{figure}[h]
    \centering \includegraphics{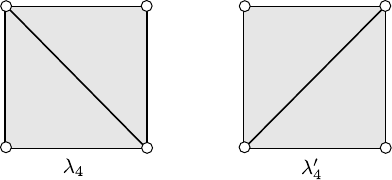}
    \caption{Two triangulations $\lambda_4$ and $\lambda_4'$ of $\mathbb{P}_4$.}\label{P4_two_triangulations}
\end{figure}

Consider the three corner arcs $a,b,c$ in $\mathbb P_4$, shown in Figure~\ref{P4_corner_arcs}. 
For $i,j\in\{1,2,\ldots,n\}$, let $a_{ij}$ denote the stated web diagram whose starting point (resp.\ endpoint) of $a$ is labeled by $i$ (resp.\ $j$). 
Similarly, we define $b_{ij}$ and $c_{ij}$. 
We then have the following.

\begin{figure}[h]
    \centering
\begingroup%
  \makeatletter%
  \providecommand\color[2][]{%
    \errmessage{(Inkscape) Color is used for the text in Inkscape, but the package 'color.sty' is not loaded}%
    \renewcommand\color[2][]{}%
  }%
  \providecommand\transparent[1]{%
    \errmessage{(Inkscape) Transparency is used (non-zero) for the text in Inkscape, but the package 'transparent.sty' is not loaded}%
    \renewcommand\transparent[1]{}%
  }%
  \providecommand\rotatebox[2]{#2}%
  \newcommand*\fsize{\dimexpr\f@size pt\relax}%
  \newcommand*\lineheight[1]{\fontsize{\fsize}{#1\fsize}\selectfont}%
  \ifx\svgwidth\undefined%
    \setlength{\unitlength}{96.37795276bp}%
    \ifx\svgscale\undefined%
      \relax%
    \else%
      \setlength{\unitlength}{\unitlength * \real{\svgscale}}%
    \fi%
  \else%
    \setlength{\unitlength}{\svgwidth}%
  \fi%
  \global\let\svgwidth\undefined%
  \global\let\svgscale\undefined%
  \makeatother%
  \begin{picture}(1,1)%
    \lineheight{1}%
    \setlength\tabcolsep{0pt}%
    \put(0,0){\includegraphics[width=\unitlength,page=1]{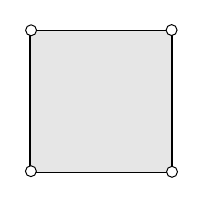}}%
    \put(0.03084271,0.49939851){\color[rgb]{0,0,0}\makebox(0,0)[lt]{\lineheight{1.25}\smash{\begin{tabular}[t]{l}$e_1$\end{tabular}}}}%
    \put(0.40414908,0.27322287){\color[rgb]{0,0,0}\makebox(0,0)[lt]{\lineheight{1.25}\smash{\begin{tabular}[t]{l}$a$\end{tabular}}}}%
    \put(0.33709571,0.56961649){\color[rgb]{0,0,0}\makebox(0,0)[lt]{\lineheight{1.25}\smash{\begin{tabular}[t]{l}$b$\end{tabular}}}}%
    \put(0.57763257,0.60308036){\color[rgb]{0,0,0}\makebox(0,0)[lt]{\lineheight{1.25}\smash{\begin{tabular}[t]{l}$c$\end{tabular}}}}%
    \put(0.87128392,0.49939851){\color[rgb]{0,0,0}\makebox(0,0)[lt]{\lineheight{1.25}\smash{\begin{tabular}[t]{l}$e_4$\end{tabular}}}}%
    \put(0.46050854,0.89131787){\color[rgb]{0,0,0}\makebox(0,0)[lt]{\lineheight{1.25}\smash{\begin{tabular}[t]{l}$e_2$\end{tabular}}}}%
    \put(0.46050854,0.05087684){\color[rgb]{0,0,0}\makebox(0,0)[lt]{\lineheight{1.25}\smash{\begin{tabular}[t]{l}$e_5$\end{tabular}}}}%
    \put(0,0){\includegraphics[width=\unitlength,page=2]{P4_corner_arcs.pdf}}%
  \end{picture}%
\endgroup%

    \caption{Three corner arcs $a,b,c$ in $\mathbb P_4$. The four edges of $\mathbb P_4$ are labeled by $e_1,e_2,e_4,e_5$.}\label{P4_corner_arcs}
\end{figure}

\begin{lemma}\cite{LY23,KimWang}\label{lem-satuared-rd}
    The reduced stated skein algebra $\rdP$ is generated by
    $a_{ij},b_{ij},c_{ij}$ for $i,j \in \{1,\ldots,n\}$ with $i\geq j$.
\end{lemma}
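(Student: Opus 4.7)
The plan is to cut $\mathbb{P}_4$ along the interior edge $e_3$ of the triangulation $\lambda_4$ into two triangles $T_1, T_2$, each homeomorphic to $\mathbb{P}_3$, and then reduce to the already-known generating set of $\rS(\mathbb{P}_3)$ by stated corner arcs \cite[Theorem~6.1(c)]{LY23}.

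First, I would apply the splitting homomorphism $\mathbb{S}_{e_3}\colon \rdP \to \rS(\mathrm{Cut}_{e_3}(\mathbb{P}_4))$ from \S\ref{sub-splitting}. Since $\mathbb{P}_4$ is a polygon, Theorem~\ref{thm-trace-A}(b) gives $\widetilde{\cS}_\omega(\mathbb{P}_4) = \rdP$, and similarly for each $T_i$; hence Lemma~\ref{lem-basic-lem}(c) shows $\mathbb{S}_{e_3}$ is injective. By \cite[Theorem~6.1(c)]{LY23}, each $\rS(T_i)$ is generated by its stated corner arcs $\alpha(v)_{ij}$ for $1 \leq j \leq i \leq n$, where $v$ ranges over the three vertices of $T_i$: two of these are identified with punctures of $\mathbb{P}_4$, and one lies on the interior diagonal $e_3$.

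Next, I would combine these facts to conclude that $\rdP$ is generated by stated arcs that either are corner arcs at the four boundary punctures of $\mathbb{P}_4$ or are arcs crossing the diagonal $e_3$ once. The crossing arcs can be reduced, via the height-exchange relation \eqref{wzh.eight} and the cross relation \eqref{w.cross} together with the defining stacking product, to sums of products of the four boundary corner arcs, leaving only these four as generators.

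The final and most technical step is to show that the fourth boundary corner arc $d_{ij}$ (at the puncture bounded by $e_4$ and $e_5$, which is not among $a,b,c$) already lies in the $R$-subalgebra generated by $\{a_{ij}, b_{ij}, c_{ij} \mid 1\leq j\leq i\leq n\}$. The cleanest route uses the injective $\mathcal{A}$-version quantum trace ${\rm tr}_{\lambda_4}^A$ (Theorem~\ref{thm-trace-A}(b)) together with the $\mathcal{A}$-version of the splitting-trace compatibility (Theorem~\ref{thm-trace-cut} combined with \eqref{eq-compability-tr-A-X-diag}): the image of $d_{ij}$ in $\mathcal{A}_\omega(\mathbb{P}_4,\lambda_4)$ can be written explicitly via the path-sum formula of Theorem~\ref{lem-trace-image-cornerarc-P3} applied to each of the two triangles, and the resulting Laurent expression matches, after state-summation across $e_3$, a polynomial in the traces of $a_{ij}, b_{ij}, c_{ij}$. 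This bookkeeping of state-sums across $e_3$ is the main obstacle; the verification is carried out in \cite{LY23, KimWang}, to which I would appeal for the complete argument.
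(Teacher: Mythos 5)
Your proposed route has a genuine gap at the transfer step. Knowing that $\mathbb{S}_{e_3}$ is injective tells you nothing about a generating set of its source: an injection $A\hookrightarrow B$ of algebras does not let you pull back generators of $B$ to a generating set of $A$, and the image $\mathbb{S}_{e_3}(\rdP)$ is a proper subalgebra of $\rS(T_1)\otimes\rS(T_2)$. In particular, the corner arcs of $T_1$ and $T_2$ at the vertex lying on the diagonal are generators of the target that do not come from $\rdP$ at all, so the step where you ``combine these facts to conclude'' that $\rdP$ is generated by corner arcs at the four punctures plus arcs crossing the diagonal once is unjustified; that is precisely the nontrivial claim, and the splitting map gives no mechanism to prove it.

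The paper does not prove this lemma (it is imported from \cite{LY23,KimWang}), but the correct argument is already encoded in the paper's own machinery and is considerably simpler than what you sketch. The collection $\{a,b,c\}$ is a saturated system for $\mathbb{P}_4$: they are disjoint corner arcs at three of the four boundary punctures, cutting along $a\cup b\cup c$ yields four components each containing exactly one ideal point, and the collection is maximal with this property (any further disjoint arc would create a puncture-free component). By Lemma~\ref{lem-saturated}, i.e.\ \cite[Theorem~6.2(3)]{LY23}, the unreduced algebra $\cS_\omega(\mathbb{P}_4)$ is generated by $\{a_{ij},b_{ij},c_{ij}\mid 1\le i,j\le n\}$. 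Passing to the quotient $\rdP$, this set still generates, and every generator with $i<j$ is a bad arc (Figure~\ref{Fig;badarc}) and hence vanishes. This gives exactly the claimed generating set $\{a_{ij},b_{ij},c_{ij}\mid i\ge j\}$, with no fourth corner arc to eliminate, so your proposed ``most technical step'' is moot. Cutting via $\mathbb{S}_e$ is the right tool elsewhere in the paper for computing the images of known elements (e.g.\ Proposition~\ref{prop-key-bal-polynomial}, Theorem~\ref{thm-P4-Dij}), but it cannot produce a generating set for the algebra being cut.
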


Recall that the $n$-triangulation of $\lambda_4$ in $\mathbb P_4$ is the subdivision of $\mathbb P_4$ into $2n^2$ small triangles.
Consider three graphs $\mathcal N_a$, $\mathcal N_b$, and $\mathcal N_c$ dual to this $n$-triangulation, and orient their edges as in Figure~\ref{network1}, where the orientations are drawn in blue. 
Each oriented graph consists of $4n$ vertices on the boundary of $\mathbb{P}_4$ ($2n$ of them are labeled as in Figure~\ref{network1}), $2n^2-n$ interior vertices, and $3n^2+2n$ oriented edges.  
Note that, disregarding edge orientations, these three graphs are identical: each is the dual graph of the $n$-triangulation of $\lambda_4$.


\begin{figure}
	\centering
    \includegraphics[width=15cm]{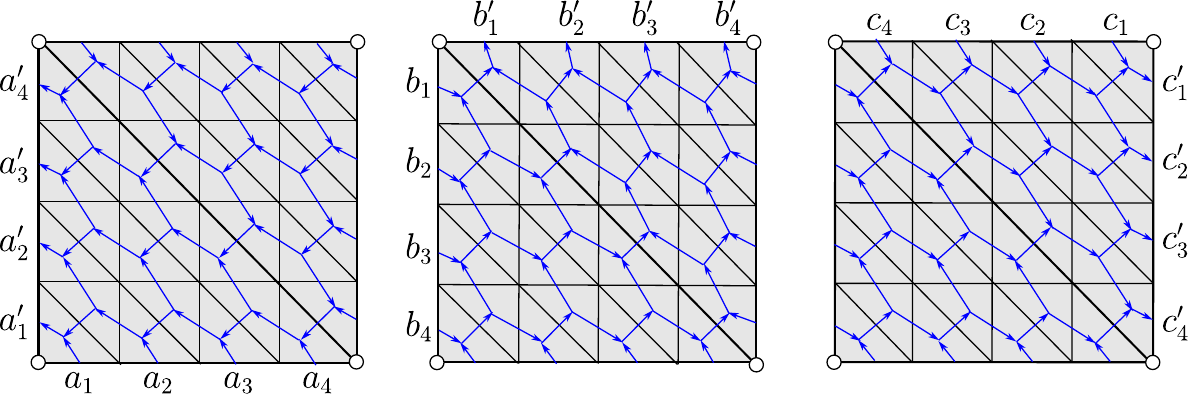}
	\caption{For $n=4$, the left, middle, and right dual graphs correspond to $\mathcal N_a$, $\mathcal N_b$, and $\mathcal N_c$, respectively.}\label{network1}
\end{figure}

For $i,j\in\{1,2,\ldots,n\}$ and $x\in\{a,b,c\}$, let 
$\mathsf P(x,\lambda_4,i,j)$ denote the set of oriented paths in $\mathcal N_x$ from $x_i$ to $x_j'$, where a path is a concatenation of edges in $\mathcal N_x$ respecting their orientations.  
Note that $\mathsf P(x,\lambda_4,i,j)=\emptyset$ if $i<j$, and $\mathsf P(x,\lambda_4,i,i)$ consists of a unique path $\gamma_{ii}^x$.

For each $\gamma\in \mathsf P(x,\lambda_4,i,j)$, define a vector ${\bf k}^\gamma_x=(k^\gamma_v)_{v\in V_{\lambda_{4}}}\in\mathbb Z^{V_{\lambda_{4}}}$ by
\begin{align}
 k^\gamma_v =
 \begin{cases}
     1 & \text{if $v$ lies to the left of $\gamma$,}\\
     0 & \text{otherwise}.
 \end{cases}
\end{align}

For each $x\in\{a,b,c\}$, set
\begin{align}
    {\bf k}_x=
    \sum_{i=1}^n \;\sum_{\gamma\in \mathsf P(x,\lambda_{4},i,i)} 
    {\bf k}^\gamma_x \;\in \mathbb{Z}^{V_{\lambda_{4}}}.
\end{align}
Note that 
${\bf k}_x = \sum_{i=1}^n {\bf k}^{\gamma_{ii}^x}_x = {\bf k}^{\gamma_{22}^x}_x + \cdots + {\bf k}^{\gamma_{nn}^x}_x$, since ${\bf k}^{\gamma_{11}^x}_x=0$.

\begin{lemma}\cite{KimWang}\label{lem-quantum-trace-P4}
     For any $i,j\in\{1,2,\ldots,n\}$ and $x\in\{a,b,c\}$, we have 
     \[
     {\rm tr}_{\lambda_4}(x_{ij}) = \sum_{\gamma\in\mathsf{P}(x,\lambda_{4},i,j)}
     Z^{\,n{\bf k}^\gamma_x -{\bf k}_x}.
     \]
\end{lemma}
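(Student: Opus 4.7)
The plan is to reduce the statement to the known $\mathbb{P}_3$ formula (Theorem~\ref{lem-trace-image-cornerarc-P3}) by cutting $\mathbb{P}_4$ along the diagonal of the triangulation $\lambda_4$. Let $e$ denote this diagonal edge, so that $\mathsf{Cut}_e(\mathbb{P}_4) = \tau_1 \sqcup \tau_2$ where $\tau_1, \tau_2 \cong \mathbb{P}_3$, and the induced triangulation $(\lambda_4)_e$ consists of the two $n$-triangulations of $\tau_1$ and $\tau_2$. Inspecting Figure~\ref{P4_corner_arcs}, each corner arc $x \in \{a,b,c\}$ is contained in exactly one of these sub-triangles, which we call $\tau_x$; hence the splitting homomorphism $\mathbb{S}_e\colon \overline{\mathscr{S}}_\omega(\mathbb{P}_4) \to \overline{\mathscr{S}}_\omega(\tau_1) \otimes \overline{\mathscr{S}}_\omega(\tau_2)$ sends $x_{ij}$ simply to its image in $\overline{\mathscr{S}}_\omega(\tau_x)$ (no state summation arises, since $x_{ij}$ is disjoint from $e$). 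Viewed in $\tau_x$, the stated arc $x_{ij}$ is a stated corner arc at one of the three vertices of $\tau_x$, so Theorem~\ref{lem-trace-image-cornerarc-P3} applied to $\tau_x$ (with the appropriate choice of distinguished vertex $v_1$) yields
\[
{\rm tr}_{\tau_x}(x_{ij}) \;=\; \sum_{\gamma \in \mathsf{P}(\tau_x, v, i, j)} Z^{\, n {\bf k}^\gamma - {\bf k}},
\]
for the dual network and path set inside $\tau_x$.

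Next, using the commutative diagram \eqref{eq-com-tr-splitting} of Theorem~\ref{thm-trace-cut}, we have
\[
\mathcal{S}_e\bigl({\rm tr}_{\lambda_4}(x_{ij})\bigr) \;=\; {\rm tr}_{(\lambda_4)_e}\bigl(\mathbb{S}_e(x_{ij})\bigr) \;=\; {\rm tr}_{\tau_x}(x_{ij}) \otimes 1 \quad \text{(or the symmetric variant)},
\]
and since $\mathcal{S}_e$ is injective, ${\rm tr}_{\lambda_4}(x_{ij})$ is determined uniquely by its image under $\mathcal{S}_e$. The proof then amounts to exhibiting a bijection
\[
\mathsf{P}(x,\lambda_4,i,j) \;\longleftrightarrow\; \mathsf{P}(\tau_x, v, i, j)
\]
such that a path $\gamma$ in the $\mathbb{P}_4$-network $\mathcal{N}_x$ corresponds to its restriction to the triangle $\tau_x$. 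The crucial observation is that the orientations of $\mathcal{N}_a$, $\mathcal{N}_b$, $\mathcal{N}_c$ depicted in Figure~\ref{network1} are specifically chosen so that every path between the source $x_i$ and sink $x_j'$ is confined to $\tau_x$: the arrows in the other triangle $\tau_{x}^{\rm c}$ are oriented in a way that blocks any path from entering, so the bijection is just the inclusion.

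The main technical step — and the place where care is required — is to verify the exponent identification: we must check that the Weyl-ordered monomial $Z^{n{\bf k}^\gamma_x - {\bf k}_x}$ in $\mathcal{Z}_\omega(\mathbb{P}_4,\lambda_4)$ maps under $\mathcal{S}_e$ (which acts on edge vertices on $e$ by the splitting formula \eqref{cutting_homomorphism_for_Z_omega}) to $[Z^{n{\bf k}^\gamma - {\bf k}}]_{\tau_x} \otimes (\text{monomial in } \tau_{x}^{\rm c})$, where the second factor collects contributions from those vertices of $V_{\lambda_4}$ lying in $\tau_x^{\rm c}$ (which contribute only through ${\bf k}_x$, since ${\bf k}^\gamma_x$ is supported in $\tau_x$). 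This reduces to a direct bookkeeping check: the vertices of $V_{\lambda_4}$ split into those in $\tau_x$, those in $\tau_x^{\rm c}$, and those on $e$; the first set carries the path-dependent contribution, the second set gives a fixed Laurent monomial independent of $\gamma$, and the third is handled by the Weyl-ordered splitting on edge vertices. By the injectivity of $\mathcal{S}_e$, matching both sides through the bijection completes the proof. The cases $x=b$ and $x=c$ are handled symmetrically (by rotational symmetry of $\mathbb{P}_4$), so it suffices to carry out the argument for $x=a$.
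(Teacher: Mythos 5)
The lemma you are proving is cited from \cite{KimWang}; the paper itself does not supply a proof, so there is no ``paper's own proof'' to compare against. That said, your argument has a genuine geometric gap.

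Your proof hinges on the claim that ``each corner arc $x \in \{a,b,c\}$ is contained in exactly one of these sub-triangles'' so that $\mathbb{S}_e(x_{ij})$ produces no state sum and the paths of $\mathcal{N}_x$ are confined to $\tau_x$. This cannot hold for all three arcs simultaneously. The diagonal $e$ of the triangulation $\lambda_4$ is incident to exactly two of the four corners of $\mathbb{P}_4$. A corner arc at a corner lying on $e$ necessarily intersects $e$: the arc separates a small neighbourhood of that corner from the rest of the surface, while $e$ emanates from that corner and must exit the neighbourhood by crossing the arc exactly once. Since $a$, $b$, $c$ sit at three distinct corners of $\mathbb{P}_4$, at least one of them (and, given that only two corners are diagonal-free, in fact exactly one or two) crosses $e$. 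For any such arc, $\mathbb{S}_e(x_{ij})$ is a sum over $n$ states at the cut point rather than a single tensor, and the corresponding paths in $\mathcal{N}_x$ genuinely cross the diagonal; the claim that the orientations in Figure~\ref{network1} ``block any path from entering'' the other triangle is exactly backwards for this arc. (That arcs crossing $e$ yield state sums is consistent with how the paper later treats the essential arc $D_{ij}$, whose image under $\mathbb{S}_{c_5}$ is $\sum_{k} C_{ik}\otimes \overline{C}_{jk}$ in the proof of Theorem~\ref{thm-P4-Dij}.) Your reduction therefore only handles the ``confined'' corner arc(s); for the arc(s) crossing $e$ you would need to match the full state-sum expansion of $\mathbb{S}_e(x_{ij})$ against a splitting of the $\mathbb{P}_4$ path set into paths concatenated across the cut, together with a careful exponent computation along $e$. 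This is not addressed in the proposal and constitutes the missing core of the argument.
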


We use $\mathcal D_{\lambda_{4}}$ to denote the $\mathcal X$-seed $(\Gamma_{\lambda_{4}},(X_v)_{v\in V_{\lambda_{4}}})$ associated to a triangulation $\lambda_{4}$ of $\mathbb P_4$. 
Recall from \S\ref{sec-compatibility} that, for any mutable vertex $k$, we defined $\mathcal D_{\lambda_{4}}^{(k)}$ and $\mathcal Z_{\omega}^{\rm bl}(\mathcal D_{\lambda_{4}}^{(k)})$. 
Note that $\mathcal D_{\lambda_{4}}=\mathcal D_{\lambda_{4}}^{(k,k)}$ and $\mathcal Z_{\omega}^{\rm bl}(\mathcal D_{\lambda_{4}})=\mathcal Z_{\omega}^{\rm bl}(\mathcal D_{\lambda_{4}}^{(k,k)})$.
By Lemma~\ref{lem-rest-iso-to-balanced}, there exists a skew-field isomorphism
\[
\nu_{k}^{\omega}\colon \Fr (
      \mathcal{Z}^{\rm bl}_\omega(\mathcal{D}_{\lambda_{4}}))\longrightarrow\;
      \Fr(
      \mathcal{Z}^{\rm bl}_\omega(\mathcal{D}_{\lambda_{4}}^{(k)})).
\]

\begin{lemma}\label{lem-key-mutation-polynomial-P4}
    For any element $\beta\in \overline{\cS}_\omega(\mathbb P_4)$, we have 
    \[
    \nu_k^{\omega}({\rm tr}_{\lambda_{4}}(\beta))\in 
      \mathcal{Z}^{\rm bl}_\omega(\mathcal{D}_{\lambda_{4}}^{(k)}).
    \]
\end{lemma}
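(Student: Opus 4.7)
The plan is to mirror the proof of Lemma~\ref{lem-key-mutation-polynomial-P3} closely, replacing its triangle-specific ingredients with their $\mathbb{P}_4$ counterparts. By Lemma~\ref{lem-satuared-rd}, the algebra $\overline{\cS}_\omega(\mathbb{P}_4)$ is generated by the corner arcs $\{x_{ij}:x\in\{a,b,c\},\; 1\leq j\leq i\leq n\}$, so it suffices to handle $\beta=x_{ij}$. Lemma~\ref{lem-quantum-trace-P4} then gives
\[
{\rm tr}_{\lambda_4}(x_{ij}) \;=\; \sum_{\gamma\in\mathsf{P}(x,\lambda_4,i,j)} Z^{\,n{\bf k}^\gamma_x - {\bf k}_x}.
\]

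First I would establish the two combinatorial facts that play the role of those recorded in \eqref{eq-commute-Zk-interior}: (i) $\sum_{u\in V_{\lambda_4}} {\bf k}_x(u)\, Q_{\lambda_4}(u,u')=0$ for every interior vertex $u'\in V_{\lambda_4}$ (equivalently, ${\bf k}_x$ is balanced in the sense of Lemma~\ref{lem-balanced-H}), and (ii) ${\bf k}^\gamma_x(u)={\bf k}^{\gamma'}_x(u)$ whenever $u$ lies on $\partial\mathbb{P}_4$ and $\gamma,\gamma'\in\mathsf{P}(x,\lambda_4,i,j)$. Both are straightforward combinatorial checks on the oriented networks $\mathcal{N}_a,\mathcal{N}_b,\mathcal{N}_c$ of Figure~\ref{network1}, entirely analogous to the $\mathbb{P}_3$ case. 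Granting these, I factor out the boundary contribution to write
\[
{\rm tr}_{\lambda_4}(x_{ij}) \;=\; \omega^{m_{ij}}\, Z^{-{\bf k}_x}\sum_{\gamma} Z^{n{\bf k}^\gamma_x}
\]
for some $m_{ij}\in\mathbb{Z}$, apply $\nu_k^\omega=\nu_k^{\sharp\omega}\circ\nu_k'$, and invoke Lemma~\ref{lem:nu_extends_mu} to obtain
\[
\nu_k^\omega\bigl({\rm tr}_{\lambda_4}(x_{ij})\bigr) \;=\; \omega^{m_{ij}}\,\nu_k^\omega(Z^{-{\bf k}_x})\cdot \mu_k^q\!\Bigl(\sum_{\gamma} X^{{\bf k}^\gamma_x}\Bigr).
\]
Repeating verbatim the argument from the $\mathbb{P}_3$ proof, combining Lemma~\ref{lem-commute} with (i), then shows that $\nu_k^\omega(Z^{-{\bf k}_x})\in\mathcal{Z}_\omega^{\rm bl}(\mathcal{D}_{\lambda_4}^{(k)})$.

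The main obstacle is to verify that $\mu_k^q\bigl(\sum_\gamma X^{{\bf k}^\gamma_x}\bigr)$ remains in $\mathcal{X}_q(\mathcal{D}_{\lambda_4}^{(k)})\subset \mathcal{Z}_\omega^{\rm bl}(\mathcal{D}_{\lambda_4}^{(k)})$; this is where the geometry of $\mathbb{P}_4$ really intervenes, since mutating at a vertex $k$ on the interior edge $e_3$ of $\lambda_4$ couples the two triangles and can in principle produce nontrivial denominators. My primary approach is to appeal directly to the polygonal version of \cite[Proposition~4.2]{schrader2017continuous}, which asserts that network-indexed sums of the form $\sum_\gamma X^{{\bf k}^\gamma_x}$ remain Laurent polynomials in the mutated cluster with no denominators surviving. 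As a back-up I would split $\mathbb{P}_4$ along $e_3$: each path $\gamma\in\mathsf{P}(x,\lambda_4,i,j)$ is either contained in one triangle or crosses $e_3$ exactly once, so the path sum decomposes as a sum indexed by the crossing state $s\in\{1,\dots,n\}$ of Weyl-ordered products of $\mathbb{P}_3$ path sums. When $k$ lies in the interior of a single triangle, $\mu_k^q$ acts only on the cluster variables of that triangle and Lemma~\ref{lem-key-mutation-polynomial-P3} applied to that $\mathbb{P}_3$ delivers the regularity. The genuinely new case, and the principal technical hurdle, is $k\in e_3$; here I would combine the splitting homomorphism $\mathcal{S}_{e_3}$ with Theorem~\ref{thm-trace-cut}, its injectivity, and an explicit cancellation analysis to show that the denominators apparently produced from each triangle cancel globally.
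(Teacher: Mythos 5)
Your primary approach is exactly the paper's: the paper's proof of Lemma~\ref{lem-key-mutation-polynomial-P4} is the one-line statement ``Using Lemma~\ref{lem-quantum-trace-P4}, the proof proceeds analogously to Lemma~\ref{lem-key-mutation-polynomial-P3},'' which unpacks to the steps you describe — reduce to the generators $a_{ij},b_{ij},c_{ij}$ via Lemma~\ref{lem-satuared-rd}, apply Lemma~\ref{lem-quantum-trace-P4}, peel off the balanced boundary monomial $Z^{-\mathbf k_x}$, handle it by Lemma~\ref{lem-commute} and Lemma~\ref{lem.nu_sharp_well-defined}, and quote \cite[Proposition~4.2]{schrader2017continuous} for the path sum.

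Your worry about whether \cite[Proposition~4.2]{schrader2017continuous} covers the quadrilateral is unfounded: that result is formulated for networks in $n$-triangulated polygons generally, not just for a single triangle, and it applies to mutations at any interior vertex of the quiver, including vertices on the internal diagonal $c_5$. There is therefore no need for a back-up argument. Your proposed back-up is in fact problematic: if $k$ lies on $c_5$, then after cutting along $c_5$ the two copies of $k$ become boundary (hence frozen) vertices of the cut surface, so the mutation $\mu_k$ does not descend to a mutation of the cut seed and the $\mathbb P_3$ lemma cannot be invoked. Even when $k$ is interior to one triangle, paths $\gamma\in\mathsf P(x,\lambda_4,i,j)$ can cross $c_5$, so the path sum is not a product of two single-triangle path sums, and the ``cancellation analysis'' you gesture at would require an argument not present in the paper. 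Stick with the primary route and drop the back-up.
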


\begin{proof}
Using Lemma~\ref{lem-quantum-trace-P4}, the proof proceeds analogously to Lemma~\ref{lem-key-mutation-polynomial-P3}.
\end{proof}

The following proposition extends Lemmas~\ref{lem-key-mutation-polynomial-P3} and \ref{lem-key-mutation-polynomial-P4} to arbitrary triangulable pb surfaces without interior punctures, serving as a key step in the proof of Theorem~\ref{thm-main-3}.

\begin{proposition}\label{prop-key-bal-polynomial}
    Let $\fS$ be a triangulable pb surface without interior punctures, let $\lambda$ be a triangulation of $\fS$, and let $k\in V_\lambda$ be a mutable vertex. 
    For any element $\beta\in \overline{\cS}_\omega(\fS)$, we have 
    $$\nu_k^{\omega}({\rm tr}_{\lambda}(\beta))\in 
      \mathcal{Z}^{\rm bl}_\omega(\mathcal{D}_{\lambda}^{(k)}),$$
    where the isomorphism $\nu_k^{\omega} \colon \Fr (
      \mathcal{Z}^{\rm bl}_\omega(\mathcal{D}_{\lambda}))\longrightarrow\;
      \Fr(
      \mathcal{Z}^{\rm bl}_\omega(\mathcal{D}_{\lambda}^{(k)}))$ is 
      defined using the mutation $\mu_k(\mathcal{D}_{\lambda}^{(k)})=\mathcal{D}_{\lambda}^{(k)}$.
    
\end{proposition}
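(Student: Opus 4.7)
The plan is to reduce the statement to the two already-established cases $\fS=\mathbb P_3$ (Lemma~\ref{lem-key-mutation-polynomial-P3}) and $\fS=\mathbb P_4$ (Lemma~\ref{lem-key-mutation-polynomial-P4}) via a cutting argument. Since $k\in\mathring V_\lambda$ is mutable, exactly one of the following holds: either (Case~I) $k$ lies in the interior of a unique triangle $\tau$ of $\lambda$, or (Case~II) $k$ lies on a non-boundary edge $e\in\lambda$ separating two distinct triangles $\tau_1,\tau_2$, whose union $\mathcal Q:=\tau_1\cup_e\tau_2$ is an ideal quadrilateral. Set $\fS_0:=\tau\cong\mathbb P_3$ in Case~I and $\fS_0:=\mathcal Q\cong\mathbb P_4$ in Case~II.

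First I would let $\mathcal E$ be the set of non-boundary edges of $\lambda$ bounding $\fS_0$ in $\fS$, and cut $\fS$ successively along each edge of $\mathcal E$. Iterating \eqref{com-splitting-skein}, \eqref{com-splitting-torus}, and Theorem~\ref{thm-trace-cut} produces a pb surface $\fS'=\fS_0\sqcup\fS_1$ with induced triangulation $\lambda'$, a splitting homomorphism $\mathbb S_{\mathcal E}\colon \rdS\to\rS(\fS')$, an injective quantum-torus embedding $\mathcal S_{\mathcal E}\colon \mathcal Z_\omega(\fS,\lambda)\hookrightarrow \mathcal Z_\omega(\fS',\lambda')$, and the compatibility ${\rm tr}_{\lambda'}\circ\mathbb S_{\mathcal E}=\mathcal S_{\mathcal E}\circ{\rm tr}_\lambda$. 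Because $k$ is disjoint from every edge in $\mathcal E$, formula \eqref{cutting_homomorphism_for_Z_omega} shows that $\mathcal S_{\mathcal E}$ fixes $Z_k$ and every $Z_v$ for $v$ a quiver neighbor of $k$. Unwinding the definitions of $\nu_k'$ in \eqref{eq-quantum-mutation_Z} and $\nu_k^{\sharp\omega}$ in Lemma~\ref{lem.nu_sharp_well-defined} then yields the intertwining relation $\mathcal S_{\mathcal E}\circ \nu_k^\omega=(\nu_k^\omega\otimes\mathrm{id})\circ \mathcal S_{\mathcal E}$ on the appropriate mutable-balanced fraction fields, where the right-hand $\nu_k^\omega$ acts only on the $\fS_0$-factor. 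Applying Lemma~\ref{lem-key-mutation-polynomial-P3} or Lemma~\ref{lem-key-mutation-polynomial-P4} to the $\fS_0$-factor of each summand of $\mathbb S_{\mathcal E}(\beta)$ then gives
\begin{align*}
\mathcal S_{\mathcal E}\bigl(\nu_k^\omega({\rm tr}_\lambda(\beta))\bigr)\in \mathcal Z_\omega^{\rm bl}\bigl(\mathcal D_{\lambda_0}^{(k)}\bigr)\otimes_R\mathcal Z_\omega^{\rm bl}(\fS_1,\lambda_1),
\end{align*}
where $\lambda_i$ denotes the restriction of $\lambda'$ to $\fS_i$.

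The main obstacle is the final descent: deducing from $\mathcal S_{\mathcal E}(y)$ lying in the above tensor product of balanced subalgebras that $y\in \mathcal Z_\omega^{\rm bl}(\mathcal D_\lambda^{(k)})$. Since $\mathcal S_{\mathcal E}$ sends each monomial $Z^{\mathbf k}$ to a monomial of $\mathcal Z_\omega(\fS',\lambda')$, it is a monomial-basis-preserving injection, so it suffices to match balance lattices. Using the construction of $H$ in \eqref{eq-def-H-lambda} together with Lemma~\ref{lem-matrix-HQ}, and observing that $H_\lambda^{(k)}$ differs from $H_\lambda$ only in entries indexed by quiver neighbors of $k$---all of which lie in the strict interior of $\fS_0$ and are therefore unaffected by $\mathcal E$-cutting---I expect to identify the balance lattice of $\mathcal D_\lambda^{(k)}$ with the $\mathcal S_{\mathcal E}$-pullback of the disjoint-union balance lattice of $\mathcal D_{\lambda_0}^{(k)}$ and $\mathcal D_{\lambda_1}$. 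Combining this lattice identification with Lemma~\ref{lem-frac-subalgebra} (to first place $y$ inside $\mathcal Z_\omega(\mathcal D_\lambda^{(k)})$) completes the descent and proves the proposition.
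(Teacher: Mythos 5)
Your overall strategy matches the paper's: cut out a triangle or quadrilateral $\fS_0$ containing $k$, transport the problem via the splitting maps and the trace–splitting compatibility (Theorem~\ref{thm-trace-cut}), invoke Lemmas~\ref{lem-key-mutation-polynomial-P3} and \ref{lem-key-mutation-polynomial-P4} for the $\fS_0$-factor, and descend using the group-subalgebra structure of $\mathcal S_{\mathcal E}^{(k)}$'s image together with Lemma~\ref{lem-frac-subalgebra}. The only cosmetic difference is that you cut only along the boundary edges of $\fS_0$, whereas the paper cuts along $E$ (essentially all of $\lambda$) so that the complement decomposes into triangles; both work, and yours is arguably leaner.

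However, there is a genuine gap in your derivation of the intertwining relation $\mathcal S_{\mathcal E}^{(k)}\circ\nu_k^\omega=(\nu_k^\omega\otimes\mathrm{id})\circ\mathcal S_{\mathcal E}$. You justify it by claiming that $\mathcal S_{\mathcal E}$ fixes $Z_k$ \emph{and every $Z_v$ for $v$ a quiver neighbor of $k$}. The first part is fine, but the second is false: a mutable $k$ in the interior of $\fS_0$ typically has quiver neighbors $v$ lying on the cut edges (for instance, when $n=2$ and $\fS_0=\mathbb P_4$, the central vertex $k$ is adjacent to all four boundary-edge vertices of the quadrilateral), and for such $v$ one has $\mathcal S_{\mathcal E}(Z_v)=Z_{v'}Z_{v''}\ne Z_v$. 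The intertwining is still true, but for a subtler reason: one checks that for each split vertex $v\mapsto\{v',v''\}$, at most one of $Q_{\lambda_{\mathcal E}}(v',k),Q_{\lambda_{\mathcal E}}(v'',k)$ is nonzero and they sum to $Q_\lambda(v,k)$, from which $[Q(v,k)]_+=[Q(v',k)]_++[Q(v'',k)]_+$ and the monomial maps $\nu_k'$ intertwine; one also checks $\nu_k^{\sharp\omega}$ separately via $X_k=Z_k^n$ and the balance constraint on exponents. The paper avoids redoing this by citing the established commutativity \cite[Equation~(43)]{KimWang}, then restricting it from the $\mathcal Z^{\rm mbl}$-targets to $\mathcal Z^{\rm bl}$-targets. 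If you intend to reprove the intertwining rather than cite it, your argument needs this additivity-of-arrows analysis in place of the false fixity claim.

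A second, smaller point: your final descent step is asserted but not carried out. One must (i) invoke Lemma~\ref{lem-frac-subalgebra} against the group subalgebra $\mathcal S_{\mathcal E}^{(k)}\bigl(\mathcal Z_\omega(\mathcal D_\lambda^{(k)})\bigr)$ to first conclude $y\in\mathcal Z_\omega(\mathcal D_\lambda^{(k)})$, and then (ii) verify that a Laurent monomial $Z^{\mathbf t}$ of $\mathcal Z_\omega(\mathcal D_\lambda^{(k)})$ whose image under $\mathcal S_{\mathcal E}^{(k)}$ is balanced for $\lambda_{\mathcal E}^{(k)}$ is itself balanced for $\lambda^{(k)}$. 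Step (ii) is precisely the ``balance lattice identification'' you gesture at; it does hold and can be read off from the relation $H_{\lambda_{\mathcal E}}^{(k)}$ vs.\ $H_\lambda^{(k)}$ combined with the defining property $\mathcal S_{\mathcal E}^{(k)}(Z_v)=Z_{v'}Z_{v''}$ on split vertices, but it should be spelled out.
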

\begin{proof}
    Define a subset $E$ of $\lambda$ such that $$E=\begin{cases}
        \lambda\setminus\{e\} & \text{$k$ is contained in $e$ for some $e\in\lambda$},\\
        \lambda & \text{otherwise}.
    \end{cases}$$
The surface obtained from $\fS$ by cutting along ideal arcs in $E$ is the disjoint union of a pb surface $\fS'$ consisting of $\mathbb P_3$ and a polygon $\mathbb P$ containing $k$. Note that $\mathbb P=\mathbb P_3$ or $\mathbb P_4.$
The triangulation $\lambda$ of $\fS$ induces a triangulation $\lambda_E$ of $\fS'\sqcup \mathbb P$, which induces a triangulation $\lambda'$ for $\fS'$ and a triangulation $\lambda_{\mathbb P}$ for $\mathbb P$. 
Then $$\mathcal{Z}^{\rm bl}_\omega(\mathcal{D}_{\lambda_E})=
\mathcal{Z}^{\rm bl}_\omega(\mathcal{D}_{\lambda'})\otimes_R \mathcal{Z}^{\rm bl}_\omega(\mathcal{D}_{\lambda_{\mathbb P}})\text{ and }
\overline{\cS}_{\omega}(\fS'\cup\mathbb P)=\overline{\cS}_{\omega}(\fS')\otimes_R \overline{\cS}_{\omega}(\mathbb P).$$
Define 
\begin{align*}
    \mathcal S_E&:=\prod_{e\in E}\mathcal S_e\colon
\mathcal{Z}^{\rm bl}_\omega(\mathcal{D}_{\lambda})\rightarrow
\mathcal{Z}^{\rm bl}_\omega(\mathcal{D}_{\lambda_E}).\\
 \mathbb S_E&:=\prod_{e\in E}\mathbb S_e\colon
\overline{\cS}_{\omega}(\fS) \rightarrow
\overline{\cS}_{\omega}(\fS'\cup\mathbb P). 
\end{align*}
 Note that $\mathcal S_E$ and $\mathbb S_E$ are well-defined because of the commutativity of the splitting homomorphisms (see \eqref{com-splitting-skein} and \eqref{com-splitting-torus}).

Let $e\in E$, we use $\lambda_e$ to denote the triangulation of $\mathsf{Cut}_e(\fS)$ induced by $\lambda$. 
Since $k$ is contained in the interior of $\mathbb P$, we can define an algebra embedding $\mathcal S_e^{(k)} \colon
\mathcal{Z}_\omega(\mathcal{D}_{\lambda}^{(k)})\rightarrow
\mathcal{Z}_\omega(\mathcal{D}_{\lambda_e}^{(k)})$ using \eqref{com-splitting-torus}.
Similarly as $\mathcal S_E$, define
$$\mathcal S_E^{(k)}:=\prod_{e\in E}\mathcal S_e^{(k)}\colon
\mathcal{Z}_\omega(\mathcal{D}_{\lambda}^{(k)})\rightarrow
\mathcal{Z}_\omega(\mathcal{D}_{\lambda_E}^{(k)}).$$
Lemma~\ref{eq-bl-inclusion-mut-vu} and \cite[Equation~(39)]{KimWang} imply that the above algebra embedding restricts to the following one:
$$\mathcal S_E^{(k)}\colon
\mathcal{Z}^{\rm bl}_\omega(\mathcal{D}_{\lambda}^{(k)})\rightarrow
\mathcal{Z}^{\rm mbl}_\omega(\mathcal{D}_{\lambda_E}^{(k)}),$$
\cite[Equation~(43)]{KimWang} implies that the following diagram commutes
\begin{align}\label{eq-com-S-vi-new}
\xymatrix{
{\rm Frac}(\mathcal{Z}^{\rm bl}_\omega(\mathcal{D}_{\lambda})) \ar[r]^{\mathcal{S}_E} \ar[d]_{\nu^\omega_{k}} & {\rm Frac}(\mathcal{Z}^{\rm bl}_\omega(\mathcal{D}_{\lambda_E})) \ar[d]^{\nu^\omega_{k}} \\
{\rm Frac}(\mathcal{Z}^{\rm bl}_\omega(\mathcal{D}_{\lambda}^{(k)})) \ar[r]^{\mathcal{S}_E^{(k)}} & {\rm Frac}(\mathcal{Z}^{\rm mbl}_\omega(\mathcal{D}_{\lambda_E}^{(k)}))
}.
\end{align}

Since $\nu^\omega_k\big({\rm Frac}(\mathcal{Z}^{\rm bl}_\omega(\mathcal{D}_{\lambda_E}))\big)\subset {\rm Frac}(\mathcal{Z}^{\rm bl}_\omega(\mathcal{D}_{\lambda_E}^{(k)}))$ and $\nu^\omega_k\colon {\rm Frac}(\mathcal{Z}^{\rm bl}_\omega(\mathcal{D}_{\lambda}))\rightarrow {\rm Frac}(\mathcal{Z}^{\rm bl}_\omega(\mathcal{D}_{\lambda}^{(k)}))$ is an isomorphism, we have 
$$\mathcal S_E^{(k)}\big(
{\rm Frac}(\mathcal{Z}^{\rm bl}_\omega(\mathcal{D}_{\lambda}^{(k)}))\big)
\subset 
{\rm Frac}(\mathcal{Z}^{\rm bl}_\omega(\mathcal{D}_{\lambda_E}^{(k)})).$$
Then the commutative diagram in \eqref{eq-com-S-vi-new} implies the following commutative diagram:
\begin{align}\label{eq-com-S-vi}
\xymatrix{
{\rm Frac}(\mathcal{Z}^{\rm bl}_\omega(\mathcal{D}_{\lambda})) \ar[r]^{\mathcal{S}_E} \ar[d]_{\nu^\omega_{k}} & {\rm Frac}(\mathcal{Z}^{\rm bl}_\omega(\mathcal{D}_{\lambda_E})) \ar[d]^{\nu^\omega_{k}} \\
{\rm Frac}(\mathcal{Z}^{\rm bl}_\omega(\mathcal{D}_{\lambda}^{(k)})) \ar[r]^{\mathcal{S}_E^{(k)}} & {\rm Frac}(\mathcal{Z}^{\rm bl}_\omega(\mathcal{D}_{\lambda_E}^{(k)}))
}.
\end{align}

Theorem \ref{thm-trace-cut} implies that the following diagram commutes
\begin{equation}\label{eq-com-tr-splitting-E}
\begin{tikzcd}
\rdS \arrow[r, "\mathbb S_E"]
\arrow[d, "\tr"]  
&  \overline{\cS}_{\omega}(\fS'\cup\mathbb P) \arrow[d, "{\rm tr}_{\lambda_E}"] \\
 \mathcal{Z}_\omega(\mathcal D_{\lambda})
 \arrow[r, "\mathcal S_E"] 
&  
\mathcal{Z}_\omega(\mathcal D_{\lambda_E})
\end{tikzcd}.
\end{equation}
Then
we have 
\begin{align*}
  \mathcal S_E ^{(k)} (\nu_k^{\omega}({\rm tr}_{\lambda}(\beta)))=
  \nu_k^{\omega}(\mathcal S_E({\rm tr}_{\lambda}(\beta)))
  = \nu_k^{\omega}({\rm tr}_{\lambda_E}(\mathbb S_E(\beta))).
\end{align*}
Lemmas \ref{lem-key-mutation-polynomial-P3} and \ref{lem-key-mutation-polynomial-P4} show that $\mathcal S_E ^{(k)} (\nu_k^{\omega}({\rm tr}_{\lambda}(\beta)))
  = \nu_k^{\omega}({\rm tr}_{\lambda_E}(\mathbb S_E(\beta)))\in \mathcal{Z}^{\rm bl}_\omega(\mathcal{D}_{\lambda_E}^{(k)}).$

The definition of $\mathcal S_E^{(k)}$ (also \eqref{cutting_homomorphism_for_Z_omega}) implies that 
$\mathcal S_E^{(k)}\big(\mathcal{Z}_\omega(\mathcal D_{\lambda}^{(k)})\big)$ 
is a group subalgebra of the quantum torus 
$\mathcal{Z}_\omega(\mathcal D_{\lambda_E}^{(k)})$ (see \eqref{submonoid}).  
Together with 
$\mathcal S_E^{(k)}\big(\nu_k^{\omega}({\rm tr}_{\lambda}(\beta))\big)
\in \mathcal{Z}^{\rm bl}_\omega(\mathcal{D}_{\lambda_E}^{(k)})$,  
Lemma~\ref{lem-frac-subalgebra} then implies that 
\[
\nu_k^{\omega}({\rm tr}_{\lambda}(\beta)) \in 
\mathcal{Z}^{\rm bl}_\omega(\mathcal{D}_{\lambda}^{(k)}).
\]

\end{proof}

\begin{remark}
    Let $\fS$ be a triangulable pb surface with two triangulations $\lambda$ and $\lambda'$. Suppose $e\in\lambda$ is not a boundary edge, and 
 that $\lambda$ and $\lambda'$ are obtained from each other by a flip on the ideal arc $e$.
Recall that there is a sequence of mutations 
$\mu_{v_1},\cdots,\mu_{v_r}$ (see Figure \ref{Fig;mutation_sequence_for_flip}) such that 
$$\mathcal D_{\lambda'} = \mu_{v_r}\cdots \mu_{v_1}(\mathcal D_\lambda).$$
Define $\mathbbm{v}:=(v_1,\cdots,v_i)$.
Let $\mathbb P_4$ be the quadrilateral containing $e$ as a diagonal ideal arc.
Using the same technique, we can easily generalize Lemma \ref{lem-key-mutation-polynomial-P4} to the following:

For any element $\beta\in \overline{\cS}_\omega(\mathbb P_4)$, we have 
\begin{align}\label{eq-P4-sequence}
    \nu_{v_i}^\omega\cdots \nu_{v_1}^{\omega}({\rm tr}_{\lambda_4}(\beta))\in 
       \mathcal{Z}^{\rm mbl}_\omega(\mathcal{D}_{\lambda_4}^{\mathbbm{v}}).
\end{align}
    
Using \eqref{eq-P4-sequence} and the method of Proposition~\ref{prop-key-bal-polynomial} (with every occurrence of $\mathcal Z^{\rm bl}$ replaced by $\mathcal Z^{\rm mbl}$), we obtain the following:

 For any element $\beta\in \overline{\cS}_\omega(\fS)$, we have 
\begin{align}\label{eq-fS-sequence}
    \nu_{v_i}^\omega\cdots \nu_{v_1}^{\omega}({\rm tr}_{\lambda}(\beta))\in 
       \mathcal{Z}^{\rm mbl}_\omega(\mathcal{D}_{\lambda}^{\mathbbm{v}}).
\end{align}
Note that there is no restriction that $\fS$ has no interior punctures for \eqref{eq-fS-sequence}.

\end{remark}

\subsection{Proof of Theorem \ref{thm-main-3}}
\begin{proof}[Proof of Theorem \ref{thm-main-3}]
Let $\lambda$ be a triangulation of $\fS$. By Theorem~\ref{thm-upper-w-upper}, it suffices to show that 
$\dS \subset \mathbb T(w_\lambda)$ and 
$\dS \subset \mathbb T(w_\lambda^{(k)})$ for any mutable vertex $k \in V_\lambda$, where $w_\lambda$ is defined in \eqref{def-qum-seed-w}, $w_\lambda^{(k)}$ is defined in \eqref{eq-def-wD}, and $\mathbb T(-)$ is defined in \eqref{eq-T-w}.  
Since $\mathbb T(w_\lambda) = \A$, Lemma~\ref{lem-basic-lem}(a) implies that 
$\dS \subset \mathbb T(w_\lambda)$.  
Moreover, the commutative diagram~\eqref{eq-compability-tr-A-X-diag}, together with Propositions~\ref{prop-comp} and \ref{prop-key-bal-polynomial}, shows that 
$\dS \subset \mathbb T(w_\lambda^{(k)})$ for every mutable vertex $v \in V_\lambda$.
\end{proof}

\section{Splitting homomorphisms for (upper) quantum cluster algebras}\label{sec-splitting-upper}
In this section, we first construct a splitting homomorphism for quantum upper cluster algebras in general (Proposition~\ref{prop:split}). 
We then specialize to the ${\rm SL}_n$ quantum upper cluster algebra and show that this splitting homomorphism is compatible with the splitting homomorphism for the projected ${\rm SL}_n$-skein algebra via the inclusion in Theorem~\ref{thm-main-3} (Theorem~\ref{thm:splitU}). 
Theorem~\ref{thm:splitU} will be used in \S\ref{sec-skein-inclusion-cluster} to prove that the projected ${\rm SL}_n$-skein algebra is contained in the ${\rm SL}_n$ quantum cluster algebra. 
Moreover, it may be employed in the future to approach Conjecture~\ref{con-equality-Skein-A-U} by cutting the pb surface $\fS$ into smaller pieces.

\subsection{Construction for the general case}
Let $\mathscr U$ be an upper quantum cluster algebra. For two elements $x,x'\in \mathscr U$, we say that $x$ and $x'$ are \textbf{proportional}, denoted by $x\asymp x'$ if $x^{-1}x'\in R\langle A_v^{\pm 1}\mid v\in \mathcal V\setminus \mathcal{V}_{\rm mut}\rangle$.

\begin{definition}
Let $\mathscr R, \mathscr R'$ be (upper) quantum cluster algebras with initial seeds $s$ and $s'$, respectively. For a subset $I\subset \mathcal{V}_{\rm mut}$, we say that an $R$-algebra homomorphism $f:\mathscr R\to \mathscr R'$ a \emph{splitting map} on $I$ if the following hold:
    \begin{itemize}
        \item for any $v\in \mathcal{V}_{\rm mut}\setminus I$, $f(A_v)\asymp A_{v'}$ and $f(\mu_{v}(A_v))\asymp \mu_{v'}(A_{v'})$ for some $v'\in \mathcal V_{\rm mut}'$;
        \item for any $v\in I$, $f(A_v)\asymp A_{v'_1}A_{v'_2}$ for some $v'_1,v'_2\in \mathcal V'\setminus \mathcal{V'}_{\rm mut}$; 
        \item for any $v\in \mathcal V\setminus \mathcal{V}_{\rm mut}$, $f(A_v)\asymp 1$.
    \end{itemize}
\end{definition}

\begin{definition}\label{def-splitting-matrix}
Let $Q$ be the exchange matrix with vertex set $\mathcal V$. For a subset $\mathcal{V}_{\rm split}\subset \mathcal{V}_{\rm mut}$,
denote $\mathcal V'_{\rm split}=\{k'\mid k\in \mathcal{V}_{\rm split}\}$ and $\mathcal V''_{\rm split}=\{k''\mid k\in \mathcal{V}_{\rm split}\}$ as two copies of $\mathcal{V}_{\rm split}$, 

$(1)$ we say that an exchange matrix $Q_{\rm split}=(Q_{\rm split}(u,v))$ with unfrozen vertex set $\mathcal{V}_{\rm mut}\setminus \mathcal{V}_{\rm split}$ and frozen vertex set $\mathcal{V}_{\rm frozen}\cup \mathcal V'_{\rm split}\cup \mathcal V''_{\rm split}$ is a \emph{splitting} of $Q$ on vertices $\mathcal{V}_{\rm split}$ if 
    \begin{itemize}
        \item for any $u\in \mathcal V\setminus \mathcal{V}_{\rm split}$ and $v\in \mathcal{V}_{\rm split}$, we have $Q_{\rm split}(u,v')Q_{\rm split}(u,v'')\geq 0$ and $Q(u,v)=Q_{\rm split}(u,v')+Q_{\rm split}(u,v'')$,
        \item for any $u,v\in \mathcal V\setminus \mathcal{V}_{\rm split}$, we have $Q_{\rm split}(u,v)=Q(u,v)$.
    \end{itemize}

$(2)$ We say that a quantum seed $\mathbf s_{split}=(Q_{split}, \Pi_{split}, M_{split})$ is a \emph{splitting} of seed $\mathbf s=(Q,\Pi,M)$ on vertices $\mathcal{V}_{\rm split}$ if $Q_{\rm split}$ is a splitting of $Q$ on vertices $\mathcal{V}_{\rm split}$.
\end{definition}



In particular, for any triangulation $\lambda$ of a triangulable pb surface $\fS$ and edge $e\in \lambda$, the seed $\mathsf s(\lambda_e)$ is a splitting of $\mathsf s(\lambda)$ on the vertices lying on $e$.

For an $R$-algebra $\mathscr R$, we say that two elements $x,y\in \mathscr R$ are \emph{quasi-commutative} if $xy=\xi^myx$ ($\xi=\omega^n$) for some $m\in \mathbb Z$, and we denote $\Lambda(x,y)=m$. 

\begin{lemma}\label{lem:propotion}
Let $A,A',B_1,B_2,C_1,C_2,C_1',C_2'$ be pairwise quasi-commutative invertible elements. Let $X=[AB_1C_1]+[AB_2C_2]$ and $Y=[AA'B_1C_1']+[AA'B_2C_2']$. Assume that $[C_1^{-1}C'_1]=[C_2^{-1}C'_2]$ and $\Lambda([B_1C_1],[C_1^{-1}C'_1A'])=\Lambda([B_2C_2],[C_1^{-1}C'_1A'])$, then $X$ and $C_1^{-1}C'_1A'$ are quasi-commutative and $Y=[XC_1^{-1}C'_1A']$.
\end{lemma}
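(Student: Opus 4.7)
The plan is to work directly with the Weyl-ordered bracket formula for pairwise quasi-commutative invertible elements, using two basic facts: (i) the bracket $[x_1\cdots x_r]$ is independent of the order of its factors, and (ii) for any element $y$ that quasi-commutes with each $x_j$, one has $\Lambda([x_1\cdots x_r], y) = \sum_j \Lambda(x_j, y)$. Two consequences will be used repeatedly: a telescoping identity $[x\,y\,x^{-1}\,z_1\cdots z_m] = [y\,z_1\cdots z_m]$, which is immediate from the defining formula of the bracket; and the observation that if two elements are equal in the algebra, then they share the same $\Lambda$-commutation factor with every other quasi-commuting element.

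Set $D := [C_1^{-1}C_1'A']$. Property (ii) applied to each summand of $X$ yields
\begin{equation*}
\Lambda([AB_iC_i], D) = \Lambda(A, D) + \Lambda([B_iC_i], D) \qquad (i=1,2),
\end{equation*}
which by the second hypothesis of the lemma is independent of $i$. Hence the two summands of $X$ share the same commutation factor with $D$, so $X$ is quasi-commutative with $D$, and the Weyl product $[XD]$ is well defined and equals $[AB_1C_1\cdot D] + [AB_2C_2\cdot D]$. For the $i=1$ summand the factors $C_1$ and $C_1^{-1}$ telescope, giving $[AB_1C_1\cdot C_1^{-1}C_1'A'] = [AB_1C_1'A'] = [AA'B_1C_1']$. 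For $i=2$ we first invoke the hypothesis $[C_1^{-1}C_1'] = [C_2^{-1}C_2']$ to substitute inside the bracket, then telescope $C_2$ against $C_2^{-1}$, yielding $[AB_2C_2\cdot C_1^{-1}C_1'A'] = [AB_2C_2\cdot C_2^{-1}C_2'A'] = [AA'B_2C_2']$.

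The main subtlety is justifying the substitution of $[C_1^{-1}C_1']$ by $[C_2^{-1}C_2']$ inside a larger Weyl bracket: one must verify that the $\xi$-normalization of $[AB_2C_2\cdot C_1^{-1}C_1'A']$ matches that of $[AB_2C_2\cdot C_2^{-1}C_2'A']$. This reduces to the identity $\Lambda(u, [C_1^{-1}C_1']) = \Lambda(u, [C_2^{-1}C_2'])$ for each $u \in \{A, A', B_2, C_2\}$, which follows from the hypothesis $[C_1^{-1}C_1'] = [C_2^{-1}C_2']$ together with the general fact that the commutation factor $\Lambda(\,\cdot\,, y)$ is determined by the element $y$. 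Summing the two cases gives $[XD] = Y$, completing the proof.
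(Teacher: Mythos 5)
Your proof is correct and follows essentially the same route as the paper: both establish quasi-commutativity of $X$ with $D=[C_1^{-1}C_1'A']$ by additivity of $\Lambda$ over Weyl-bracket factors, then distribute $[X\cdot D]$ over the two summands, substitute the hypothesis $[C_1^{-1}C_1']=[C_2^{-1}C_2']$ in the second summand, and telescope $C_i$ against $C_i^{-1}$. The only difference is cosmetic — you spell out the telescoping identity and justify the substitution step explicitly, whereas the paper leaves those mechanics implicit.
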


\begin{proof}
We have
    \begin{align*}
        \Lambda([AB_1C_1],[C_1^{-1}C'_1A'])=\Lambda(A,[C_1^{-1}C'_1A'])+\Lambda([B_1C_1],[C_1^{-1}C'_1A']),\\
        \Lambda([AB_2C_2],[C_1^{-1}C'_1A'])=\Lambda(A,[C_1^{-1}C'_1A'])+\Lambda([B_2C_2],[C_1^{-1}C'_1A']).
    \end{align*}

Thus, $\Lambda([AB_1C_1],[C_1^{-1}C'_1A'])=\Lambda([AB_2C_2],[C_1^{-1}C'_1A'])$, and $X, C_1^{-1}C'_1A'$ are quasi-commutative.

Therefore,
\begin{align*}
  [XC_1^{-1}C_1'A']  &= [X\cdot [C_1^{-1}C_1'A']]  \\
  & = [([AB_1C_1]+[AB_2C_2])\cdot [C_1^{-1}C_1'A']]\\
  & = [AB_1C_1C_1^{-1}C_1'A']+[AB_2C_2C_1^{-1}C_1'A']\\
  & = [AB_1C_1C_1^{-1}C_1'A']+[AB_2C_2C_2^{-1}C_2'A']\\
  & = Y.
\end{align*}
The proof is complete.
\end{proof}


Let $v\in\mathcal V_{\rm mut}$. The symbol 
``$\prod_{u\to v\in Q}$'' (resp. ``$\prod_{u\leftarrow v\in Q}$'') denotes the product over the multiset 
$\{\,u\in\mathcal V \mid Q(u,v)>0\,\}$
(resp. $\{\,u\in\mathcal V \mid Q(u,v)<0\,\}$), where each element $u$ occurs with multiplicity $|Q(u,v)|$ 
(for example, $\prod_{u\to v\in Q} A_u = A_{u'}A_{u''}^2$ if $u',u''\in\mathcal V$ are the only vertices with $Q(u,v)>0$, with $Q(u',v)=1$ and $Q(u'',v)=2$).

Define 
$$\text{$Q(-,v)=\{w\in \mathcal V\mid Q(w,v)>0\} \text{ and } Q(v,-)=\{w\in \mathcal V\mid Q(v,w)>0\}$.}$$

Let \(S_1\) be a set and \(S_2\) a multiset.  
Denote by \(F(S_2)\) the underlying set of \(S_2\), obtained by forgetting multiplicities.  
We define the intersection \(S_1 \cap S_2\) to be the multiset whose underlying set is
\(S_1 \cap F(S_2)\), and in which each element \(s \in S_1 \cap F(S_2)\) has the same
multiplicity as \(s\) does in \(S_2\).

\begin{proposition}[Splitting homomorphism for (upper) quantum cluster algebras]\label{prop:split} 
Let $\mathbf s$ be a quantum seed and $\mathbf s_{\rm split}$ be a splitting of $\mathbf s$ on vertices $\mathcal{V}_{\rm split}$. Suppose that 
we associate each $u\in \mathcal{V}_{\rm split}$ with two muti-subsets $\mathcal{V}'_u, \mathcal{V}''_u\subset \mathcal V\setminus \mathcal{V}_{\rm split}$ (may intersect) such that

$(a)$ for any $v_1,v_2\in \mathcal V\setminus \mathcal{V}_{\rm split}$,
$$\Lambda(A_{v_1},A_{v_2})=\Lambda([A_{v_1}\prod\limits_{
u\in \mathcal{V}^1_{v_1}} A_{u'}\prod\limits_{
u\in \mathcal{V}^2_{v_1}}A_{u''}],[A_{v_2}\prod\limits_{u\in \mathcal{V}^1_{v_2}} A_{u'}\prod\limits_{u\in \mathcal{V}_{v_2}^2}A_{u''}]),$$ 

$(b)$ for any $v_1\in V\setminus \mathcal{V}_{\rm split}$ and $u\in \mathcal{V}_{\rm split}$,
$$\Lambda(A_{v_1},A_{u})=\Lambda([A_{v_1}\prod\limits_{
v\in \mathcal{V}^1_{v_1}} A_{v'}\prod\limits_{
v\in \mathcal{V}^2_{v_1}}A_{v''}],[A_{u'}A_{u''}]),$$ 

$(c)$ for any $u_1,u_2\in \mathcal{V}_{\rm split}$, $$\Lambda(A_{u_1},A_{u_s})=\Lambda([A_{u'_1}A_{u''_1}],[A_{u'_2}A_{u''_2}]),$$

$(d)$ for any $v\in \mathcal{V}_{\rm mut}\setminus \mathcal{V}_{\rm split}$ and  $u\in \mathcal{V}_{\rm split}$, 
$$[Q_{\rm split}(u'',v)]_+\,+ \sum\limits_{v_1\in Q(-,v)\cap \mathcal{V}'_{u}}Q(v_1,v)=
[Q_{\rm split}(v,u'')]_+\,+ \sum\limits_{v_1\in Q(v,-)\cap \mathcal{V}'_{u}}Q(v,v_1)$$ 

$$[Q_{\rm split}(u',v)]_+\,+ \sum\limits_{v_1\in Q(-,v)\cap \mathcal{V}''_{u}}Q(v_1,v)=
[Q_{\rm split}(v,u')]_+\,+ \sum\limits_{v_1\in Q(v,-)\cap \mathcal{V}''_{u}}Q(v,v_1),$$

where
$$\mathcal V^1_w:=\{u\in \mathcal{V}_{\rm split}\mid
w\in \mathcal V_u'\}\text{ and }
\mathcal V^2_w:=\{u\in \mathcal{V}_{\rm split}\mid
w\in \mathcal V_u''\},
$$ for each $w\in \mathcal V$. In particular,  
$\mathcal V^1_u=\mathcal V^2_u=\emptyset$ for each $u\in \mathcal{V}_{\rm split}$.
Note that both $\mathcal V^1_w$ and $\mathcal V^2_w$ are multisets, and the multiplicity of $u\in \mathcal V^1_w$ (resp. $u\in \mathcal V^2_w$) is the multiplicity of $w$ in $\mathcal V_u'$ (resp. $\mathcal V_u''$).

Then the assignments 
    $$A_v\mapsto 
    \begin{cases}
    [A_v\prod\limits_{u\in  \mathcal{V}^1_{v}} A_{u'}\prod\limits_{u\in  \mathcal{V}^2_{v}}A_{u''}]
    & \text{ if } v\in \mathcal V\setminus \mathcal{V}_{\rm split},\vspace{2mm}\\
    [A_{v'}A_{v''}] & \text{ if } v\in \mathcal{V}_{\rm split},
    \end{cases}$$ 
    give an injective reflection-invariant (see \eqref{def-eq-ref-torus} and \S\ref{sub-sec-invariant}) splitting homomorphism $\iota:\mathscr{U}_s\to \mathscr{U}_{s_{\rm split}}$ on $\mathcal{V}_{\rm split}$. 
\end{proposition}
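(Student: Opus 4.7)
The plan is to construct $\iota$ as a ring homomorphism between quantum tori and then show its image is contained in the upper cluster algebra by a mutation-compatibility argument. Conditions~(a)--(c) will ensure well-definedness, while condition~(d) will govern compatibility with mutations at non-split mutable vertices.

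First I would verify that the prescribed formulas extend uniquely to a ring homomorphism $\iota:\mathbb{T}(\mathbf{s})\to\mathbb{T}(\mathbf{s}_{\rm split})$ by checking, for each pair $v_1,v_2\in\mathcal V$, that the quasi-commutation exponent $\Lambda(\iota(A_{v_1}),\iota(A_{v_2}))$ equals $\Pi(v_1,v_2)$. By bilinearity of $\Lambda$ this reduces to exactly the three cases covered by conditions~(a), (b), (c). Reflection invariance is automatic because Weyl-ordered monomials are $*$-fixed (see \eqref{def-eq-ref-torus}). The splitting properties required of $\iota$ follow directly from its definition together with $\mathcal V^1_u=\mathcal V^2_u=\emptyset$ for $u\in\mathcal V_{\rm split}$ and the fact that the $u'$, $u''$ are frozen in $\mathbf{s}_{\rm split}$; in particular, when $v\in\mathcal V\setminus\mathcal V_{\rm mut}$ is frozen in $\mathbf s$, every factor in $\iota(A_v)$ is frozen in $\mathbf{s}_{\rm split}$, giving $\iota(A_v)\asymp 1$.

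The key step is compatibility with mutations at each $v\in\mathcal V_{\rm mut}\setminus\mathcal V_{\rm split}$. Starting from the exchange relation $[\mu_v(A_v)\cdot A_v]=[\prod_{u\to v}A_u]+[\prod_{u\leftarrow v}A_u]$ and applying $\iota$, each monomial summand expands into a Weyl-ordered product that naturally factorises as the corresponding exchange summand for $\mathbf{s}_{\rm split}$ times a Laurent monomial in the frozen variables $A_{u'},A_{u''}$ of $\mathbf{s}_{\rm split}$. The two identities in condition~(d), applied in turn to $u'$ and $u''$ for every $u\in\mathcal V_{\rm split}$, are precisely what is needed for the frozen-monomial factors produced by the two summands to agree and to quasi-commute identically with the exchange pieces. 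Lemma~\ref{lem:propotion} then bundles the sum, yielding $\iota(\mu_v(A_v))=[\mu_v^{\rm split}(A_v^{\rm split})\cdot F]$ for a Laurent monomial $F$ in frozen variables of $\mathbf{s}_{\rm split}$, and in particular $\iota(\mu_v(A_v))\asymp \mu_v^{\rm split}(A_v^{\rm split})$, completing the splitting property at $v\in\mathcal V_{\rm mut}\setminus\mathcal V_{\rm split}$.

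By Theorem~\ref{thm-upper-w-upper}, $\mathscr U_{\mathbf{s}_{\rm split}}$ equals the upper bound at $\mathbf{s}_{\rm split}$, so to show $\iota(\mathscr U_{\mathbf s})\subset\mathscr U_{\mathbf{s}_{\rm split}}$ it suffices to check that $\iota(\mathscr U_{\mathbf s})\subset\mathbb T(\mathbf{s}_{\rm split})$ (immediate from the previous step, after extending $\iota$ to skew-fields) and that $\iota(\mathscr U_{\mathbf s})\subset\mathbb T(\mu_v\mathbf{s}_{\rm split})$ for each $v\in\mathcal V_{\rm mut}\setminus\mathcal V_{\rm split}$. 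The latter follows by expanding any $x\in\mathscr U_{\mathbf s}\subset\mathbb T(\mu_v\mathbf s)$ in the $\mu_v$-mutated cluster of $\mathbf s$, applying $\iota$ termwise, and using the identity $\iota(\mu_v(A_v))=[\mu_v^{\rm split}(A_v^{\rm split})\cdot F]$ from the previous paragraph. Injectivity is then clear: $\iota$ extends to a homomorphism between skew-fields $\mathrm{Frac}(\mathbb T(\mathbf s))\to\mathrm{Frac}(\mathbb T(\mathbf{s}_{\rm split}))$, and any nonzero skew-field homomorphism is injective. The main obstacle will be the mutation-compatibility bookkeeping: because $\mathcal V'_u$ and $\mathcal V''_u$ are multisets and the entries $Q_{\rm split}(u',v), Q_{\rm split}(u'',v)$ can have mixed signs, matching the two frozen contributions to each exchange summand — one arising from the direct expansion $A_u\mapsto [A_{u'}A_{u''}]$ for $u\in\mathcal V_{\rm split}$ adjacent to $v$, the other from the Weyl-tails $\prod A_{u'}\prod A_{u''}$ in the images of the remaining factors — requires a careful case split whose eventual cancellation is exactly what condition~(d) enforces.
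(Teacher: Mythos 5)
Your proposal is correct and follows the same route as the paper's own proof: verify via (a)--(c) that $\iota$ is a quantum-torus homomorphism $\mathbb T(\mathbf s)\hookrightarrow\mathbb T(\mathbf s_{\rm split})$, use condition (d) together with Lemma~\ref{lem:propotion} to obtain $\iota(\mu_v^{\mathbf s}(A_v))\asymp\mu_v^{\mathbf s_{\rm split}}(A_v)$ for each $v\in\mathcal V_{\rm mut}\setminus\mathcal V_{\rm split}$, and then land in $\mathscr U_{\mathbf s_{\rm split}}$ via Theorem~\ref{thm-upper-w-upper}. One minor polish: your injectivity remark is slightly circular as stated (extending to skew-fields already presupposes injectivity); it is cleaner to first observe that $\iota$ sends the Laurent-monomial basis of $\mathbb T(\mathbf s)$ injectively, because the induced map on exponent lattices is injective, and then extend to skew-fields.
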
 

\begin{proof}

The assignments give an injective $R$-algebra homomorphism $\iota:\mathbb T(\mathbf s)\hookrightarrow \mathbb T(\mathbf s_{split})$ by $(a),(b)$ and $(c)$. 
It follows from the definition of $\iota$ that it is reflection-invariant.

For any $v\in \mathcal{V}_{\rm mut}\setminus \mathcal{V}_{\rm split}$, in $\mathscr U_{\mathbf s}$, we have 
\begin{align*}
 \iota(\mu^{\mathbf s}_v(A_v)) 
 &= \iota([A_v^{-1}\prod_{v_1\to v\in Q}A_{v_1}]+[A_v^{-1}\prod_{v_2\leftarrow v\in Q}A_{v_2}]) \\  
 &= \Bigg [A^{-1}_v\prod\limits_{u\in \mathcal{V}^1_{v}} A^{-1}_{u'}
 \prod\limits_{u\in \mathcal{V}^2_{v}} A^{-1}_{u''}\\
 & \cdot
 \prod_{v_1\to v\in Q, v_1\notin \mathcal{V}_{\rm split}}\left(A_{v_1}
 \prod\limits_{u\in \mathcal{V}^1_{v_1}} A_{u'}
 \prod\limits_{u\in \mathcal{V}^2_{v_1}}A_{u''}\right) \cdot \prod_{u\to v\in Q, u\in \mathcal{V}_{\rm split}} A_{u'}A_{u''}\Bigg ]  \\
 &+ \Bigg [A^{-1}_v\prod\limits_{u\in \mathcal{V}^1_{v}} A^{-1}_{u'}
 \prod\limits_{u\in \mathcal{V}^2_{v}} A^{-1}_{u''}\\
 & \cdot
 \prod_{v_2\leftarrow v\in Q, v_2\notin \mathcal{V}_{\rm split}}\left(A_{v_2}
 \prod\limits_{u\in \mathcal{V}^1_{v_2}} A_{u'}
 \prod\limits_{u\in \mathcal{V}^2_{v_2}}A_{u''}\right) \cdot \prod_{u\leftarrow v\in Q, u\in \mathcal{V}_{\rm split}} A_{u'}x_{u''}\Bigg ]  \\
&=\Bigg [A^{-1}_v \cdot \prod\limits_{u\in \mathcal{V}^1_v} A^{-1}_{u'}
 \prod\limits_{u\in \mathcal{V}^2_v} A^{-1}_{u''}\cdot \prod_{v_1\to v\in Q, v_1\notin \mathcal{V}_{\rm split}} A_{v_1}
\\
 &\cdot  \prod_{u\to v\in Q, u\in \mathcal{V}_{\rm split}}A_{u'}A_{u''}
 \prod_{v_1\to v\in Q, v_1\notin \mathcal{V}_{\rm split}}\left(
 \prod\limits_{u\in \mathcal{V}^1_{v_1}} A_{u'}
 \prod\limits_{u\in \mathcal{V}^2_{v_1}}A_{u''}\right) \Bigg ]  \\
 &+\Bigg [A^{-1}_v \cdot\prod\limits_{u\in \mathcal{V}^1_v} A^{-1}_{u'}
 \prod\limits_{u\in \mathcal{V}^2_v} A^{-1}_{u''}\cdot 
 \prod_{v_2\leftarrow v\in Q, v_2\notin \mathcal{V}_{\rm split}} A_{v_2}
\\
 &\cdot \prod_{v\to u\in Q, u\in \mathcal{V}_{\rm split}}A_{u'}A_{u''}
 \prod_{v_2\leftarrow v\in Q, v_2\notin \mathcal{V}_{\rm split}}\left(
 \prod\limits_{u\in \mathcal{V}^1_{v_2}} A_{u'}
 \prod\limits_{u\in \mathcal{V}^2_{v_2}}A_{u''}\right) \Bigg ].
\end{align*}

In $\mathscr U_{\mathbf s_{\rm split}}$, we have 
\begin{align*}
  \mu_v^{\mathbf s_{split}}(A_v) &=  
  [A_v^{-1}\prod_{v_1\to v\in Q,v_1\notin \mathcal{V}_{\rm split}}A_{v_1}\prod_{u\in \mathcal{V}_{\rm split}, u'\to v\in Q_{\rm split}}A_{u'}\prod_{u\in \mathcal{V}_{\rm split}, u''\to v\in Q_{\rm split}}A_{u''}]\\
  &+[A_v^{-1}\prod_{v_2\leftarrow v\in Q,v_2\notin \mathcal{V}_{\rm split}}A_{v_2}\prod_{u\in \mathcal{V}_{\rm split}, u'\leftarrow v\in Q_{\rm split}}A_{u'}\prod_{u\in \mathcal{V}_{\rm split}, u''\leftarrow v\in Q_{\rm split}}A_{u''}].
\end{align*}

Denote 
$$A=A_v^{-1}, A'= \prod\limits_{u\in \mathcal{V}^1_v} A^{-1}_{u'}
 \prod\limits_{u\in \mathcal{V}^2_v} A^{-1}_{u''},$$ 
 
$$B_1=\prod_{v_1\to v\in Q,v_1\notin \mathcal{V}_{\rm split}}A_{v_1}, \qquad B_2=\prod_{v_2\leftarrow v\in Q,v_2\notin \mathcal{V}_{\rm split}}A_{v_2},$$
 
$$C_1=\prod_{u\in \mathcal{V}_{\rm split}, u'\to v\in Q_{\rm split}}A_{u'}\prod_{u\in \mathcal{V}_{\rm split}, u''\to v\in Q_{\rm split}}A_{u''},$$ 

$$C_2=\prod_{u\in \mathcal{V}_{\rm split}, u'\leftarrow v\in Q_{\rm split}}A_{u'}\prod_{u\in \mathcal{V}_{\rm split}, u''\leftarrow v\in Q_{\rm split}}A_{u''},$$ 

$$C'_1=\prod_{u\to v\in Q, u\in \mathcal{V}_{\rm split}}A_{u'}A_{u''}
 \prod_{v_1\to v\in Q, v_1\notin \mathcal{V}_{\rm split}}\left(
 \prod\limits_{u\in \mathcal{V}^1_{v_1}} A_{u'}
 \prod\limits_{u\in \mathcal{V}^2_{v_1}}A_{u''}\right),$$

$$C'_2=\prod_{u\leftarrow v\in Q, u\in \mathcal{V}_{\rm split}}A_{u'}A_{u''}
 \prod_{v_2\leftarrow v\in Q, v_2\notin \mathcal{V}_{\rm split}}\left(
 \prod\limits_{u\in \mathcal{V}^1_{v_2}} A_{u'}
 \prod\limits_{u\in \mathcal{V}^2_{v_2}}A_{u''}\right).$$

Then we have 
\begin{equation*}
   \iota(\mu^{\mathbf s}_v(A_v))=[AA'B_1C'_1]+[AA'B_2C'_2], \hspace{5mm} \mu_v^{\mathbf s_{split}}(A_v)=[AB_1C_1]+[AB_2C_2], 
\end{equation*}

$$[C_1^{-1}C_1']=
\prod_{u\in\mathcal V_{\rm split}}\left[A_{u'}^{[Q_{\rm split}(u'',v)]_+\,+ \sum\limits_{v_1\in Q(-,v)\cap \mathcal{V}'_{u}}Q(v_1,v)}
A_{u''}^{[Q_{\rm split}(u',v)]_+\, + \sum\limits_{v_1\in Q(-,v)\cap \mathcal{V}''_{u}}Q(v_1,v)}
\right],$$

$$[C_2^{-1}C_2']=
\prod_{u\in\mathcal V_{\rm split}}\left[A_{u'}^{[Q_{\rm split}(v,u'')]_{+}\, +\sum\limits_{v_1\in Q(v,-)\cap \mathcal{V}'_{u}}Q(v,v_1)}
A_{u''}^{[Q_{\rm split}(v,u')]_{+}\,+ \sum\limits_{v_1\in Q(v,-)\cap \mathcal{V}''_{u}}Q(v,v_1)}
\right].$$

Thus, we obtain $[C_1^{-1}C'_1]=[C_2^{-1}C'_2]$ by $(d)$.

As $\mathbf s_{\rm split}$ is a quantum seed and $\mu_v^{\mathbf s_{split}}(A_v)=[AB_1C_1]+[AB_2C_2]$, for any $u\in \mathcal{V}_{\rm split}$ we have $\Lambda(B_1C_1,A_{u'})=\Lambda(B_2C_2,A_{u'})$ and $\Lambda(B_1C_1,A_{u''})=\Lambda(B_2C_2,A_{u''})$. It follows that $$\Lambda(B_1C_1,C_1^{-1}C'_1A')=\Lambda(B_2C_2,C_1^{-1}C'_1A').$$

Therefore, by Lemma \ref{lem:propotion}, we have $\iota(\mu^{\mathbf s}_v(A_v))\asymp \mu_v^{\mathbf s_{split}}(A_v)$.
Note that $\iota\colon \mathbb T(\mathbf s)\to \mathbb T(\mathbf s_{\rm split})$ induces a skew-field embedding $\iota\colon \Fr(\mathbb T(\mathbf s))\rightarrow \Fr(\mathbb T(\mathbf s_{\rm split}))$. The fact that $\iota(\mu^{\mathbf s}_v(A_v))\asymp \mu_v^{\mathbf s_{split}}(A_v)$ implies that $\iota$ induces an $R$-algebra embedding $\iota: \mathbb T(\mu_v(\mathbf s))\to \mathbb T(\mu_v(\mathbf s_{\rm split}))$ for any $v\in \mathcal{V}_{\rm mut}\setminus \mathcal{V}_{\rm split}$. We thus obtain an $R$-algebra embedding
$$\iota: \mathbb T(\mathbf s)\cap \bigcap\limits_{v\in \mathcal{V}_{\rm mut}\setminus \mathcal{V}_{\rm split}} \mathbb T(\mu_v(\mathbf s))\to \mathbb T(\mathbf s_{\rm split})\cap \bigcap\limits_{v\in \mathcal{V}_{\rm mut}\setminus \mathcal{V}_{\rm split}} \mathbb T(\mu_v(\mathbf s_{\rm split}))=\mathscr U(\mathbf s_{\rm split}).$$

Combing the natural embedding $\mathscr U(\mathbf s)\hookrightarrow \mathbb T(\mathbf s)\cap \bigcap\limits_{v\in \mathcal{V}_{\rm mut}\setminus \mathcal{V}_{\rm split}} \mathbb T(\mu_v(\mathbf s))$, we obtain an injective splitting homomorphism $\iota: \mathscr U(\mathbf s)\to \mathscr U(\mathbf s_{\rm split})$.

The proof is complete.
\end{proof}


The following is an immediate corollary of the proof of Proposition \ref{prop:split} and Lemma \ref{lem:propotion}.

\begin{corollary}\label{cor:propotion}
    For any $v\in \mathcal{V}_{\rm mut}\setminus \mathcal{V}_{\rm split}$, we have
   \begin{align*}
     \iota(\mu^{\mathbf s}_v(A_v))   &= \Bigg[\mu_v^{\mathbf s_{split}}(A_v)\cdot \prod\limits_{u\in \mathcal{V}^1_v} A^{-1}_{u'}
 \prod\limits_{u\in \mathcal{V}^2_v} A^{-1}_{u''} \\
        & \cdot \prod\limits_{u\in \mathcal{V}_{\rm split}}(A^{-[Q_{\rm split}(u',v)]_+}_{u'}\cdot \prod\limits_{v_1\to v\in Q, u\in \mathcal{V}^1_{v_1}} A_{u'}\cdot A^{-[Q_{\rm split}(u'',v)]_+}_{u''}\cdot \prod\limits_{v_1\to v\in Q, u\in \mathcal{V}^2_{v_1}} A_{u''})\Bigg].
   \end{align*}
\end{corollary}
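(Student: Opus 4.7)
The plan is to lift the key identity already established inside the proof of Proposition~\ref{prop:split} and rewrite the relevant Laurent monomial explicitly. Concretely, from that proof we have at hand the two factorizations
\[
\iota\bigl(\mu^{\mathbf s}_v(A_v)\bigr) \;=\; [AA'B_1C_1']+[AA'B_2C_2'],
\qquad
\mu^{\mathbf s_{\rm split}}_v(A_v) \;=\; [AB_1C_1]+[AB_2C_2],
\]
together with the identity $[C_1^{-1}C_1']=[C_2^{-1}C_2']$ (established from hypothesis~(d)) and the equality of quasi-commutators $\Lambda(B_1C_1,\,C_1^{-1}C_1'A')=\Lambda(B_2C_2,\,C_1^{-1}C_1'A')$ (which followed from the fact that $\mathbf s_{\rm split}$ is a quantum seed, so $B_iC_i$ quasi-commutes in the same way with each frozen variable). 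These are exactly the hypotheses of Lemma~\ref{lem:propotion}.

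Applying Lemma~\ref{lem:propotion} directly then gives
\[
\iota\bigl(\mu^{\mathbf s}_v(A_v)\bigr) \;=\; \bigl[\,\mu^{\mathbf s_{\rm split}}_v(A_v)\cdot C_1^{-1}C_1'\,A'\,\bigr].
\]
What remains is a purely combinatorial calculation: unpack the definitions of $A'$, $C_1$, and $C_1'$ recalled in the proof of Proposition~\ref{prop:split} and compare the resulting multi-exponent with the expression appearing in the statement. The factor $A'$ contributes precisely $\prod_{u\in\mathcal V^1_v}A^{-1}_{u'}\prod_{u\in\mathcal V^2_v}A^{-1}_{u''}$; the factor $C_1^{-1}$ contributes $\prod_{u\in\mathcal V_{\rm split}}A^{-[Q_{\rm split}(u',v)]_+}_{u'}A^{-[Q_{\rm split}(u'',v)]_+}_{u''}$; and the factor $C_1'$ is re-indexed by swapping the order of the double product, grouping by $u\in\mathcal V_{\rm split}$ and recording the vertices $v_1\to v$ with $u\in\mathcal V^1_{v_1}$ or $u\in\mathcal V^2_{v_1}$.

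The main (and essentially only) obstacle is bookkeeping: one must carefully track the multiset structure of $\mathcal V^1_{v_1}$ and $\mathcal V^2_{v_1}$, check that the implicit restriction $v_1\notin\mathcal V_{\rm split}$ inside $C_1'$ is automatic because $\mathcal V^1_u=\mathcal V^2_u=\emptyset$ for $u\in\mathcal V_{\rm split}$, and confirm that the Weyl orderings match (which is automatic since every factor outside $\mu^{\mathbf s_{\rm split}}_v(A_v)$ lies in the frozen sub-torus and the whole expression is bracketed). Once these conventions are aligned, the identity becomes a direct comparison of exponents, finishing the proof.
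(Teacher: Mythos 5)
Your strategy—cite the factorizations $\iota(\mu^{\mathbf s}_v(A_v))=[AA'B_1C_1']+[AA'B_2C_2']$ and $\mu^{\mathbf s_{\rm split}}_v(A_v)=[AB_1C_1]+[AB_2C_2]$, verify the hypotheses of Lemma~\ref{lem:propotion}, and conclude $\iota(\mu^{\mathbf s}_v(A_v))=[\mu^{\mathbf s_{\rm split}}_v(A_v)\cdot C_1^{-1}C_1'A']$—is exactly how the paper intends the corollary to be read; this part is fine.

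The gap is in your final bookkeeping step. You claim that $C_1'$ ``is re-indexed by swapping the order of the double product, grouping by $u\in\mathcal V_{\rm split}$ and recording the vertices $v_1\to v$ with $u\in\mathcal V^1_{v_1}$ or $u\in\mathcal V^2_{v_1}$.'' But $C_1'$ has \emph{two} factors:
\[
C_1' = \underbrace{\prod_{u\to v\in Q,\ u\in\mathcal V_{\rm split}}A_{u'}A_{u''}}_{\text{first factor}}\ \cdot\ \underbrace{\prod_{v_1\to v\in Q,\ v_1\notin\mathcal V_{\rm split}}\Bigl(\prod_{u\in\mathcal V^1_{v_1}}A_{u'}\prod_{u\in\mathcal V^2_{v_1}}A_{u''}\Bigr)}_{\text{second factor}},
\]
and your re-indexing $\prod_{u}\prod_{v_1\to v,\ u\in\mathcal V^1_{v_1}}$ captures \emph{only the second factor}, since $\mathcal V^1_{v_1}=\mathcal V^2_{v_1}=\emptyset$ whenever $v_1\in\mathcal V_{\rm split}$. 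The first factor contributes $\prod_{u\in\mathcal V_{\rm split}}(A_{u'}A_{u''})^{[Q(u,v)]_+}$ and is simply dropped in your reading. Once you include it, the total $A_{u'}$-exponent coming from $C_1^{-1}C_1'$ is $-[Q_{\rm split}(u',v)]_+ + [Q(u,v)]_+ + \sum_{v_1\in Q(-,v)\cap\mathcal V'_u}Q(v_1,v) = [Q_{\rm split}(u'',v)]_+ + \sum_{v_1\in Q(-,v)\cap\mathcal V'_u}Q(v_1,v)$ (using $Q_{\rm split}(u',v)Q_{\rm split}(u'',v)\ge 0$ and $Q(u,v)=Q_{\rm split}(u',v)+Q_{\rm split}(u'',v)$), which agrees with the explicit formula for $[C_1^{-1}C_1']$ displayed in the proof of Proposition~\ref{prop:split}. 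That exponent is \emph{not} the $-[Q_{\rm split}(u',v)]_+$ that appears in the corollary as stated and that your argument reproduces, so ``a direct comparison of exponents'' does not close the proof: either the corollary's exponents should read $[Q_{\rm split}(u'',v)]_+$ (resp.\ $[Q_{\rm split}(u',v)]_+$) on $A_{u'}$ (resp.\ $A_{u''}$), or an extra factor $\prod_{u\in\mathcal V_{\rm split}}(A_{u'}A_{u''})^{[Q(u,v)]_+}$ must be inserted. You should carry the first factor of $C_1'$ through the computation explicitly and then reconcile the resulting exponents with the statement, rather than asserting the match.
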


\subsection{The splitting homomorphism for $\mathscr U_\omega(\fS)$}
For any triangulation $\lambda$ of a triangulable pb surface $\fS$ without interior punctures and any edge $e \in \lambda$, assume that $e$ is the common side of two ideal triangles $\Delta_1$ and $\Delta_2$, as shown in Figure~\ref{Fig;cutting-e}. 
Suppose that the punctures adjacent to $e$ are labeled $p_1$ and $p_2$ (where it is possible that $p_1 = p_2$), as illustrated in Figure~\ref{Fig;cutting-e}.
The small vertices on $e$ are labeled $s_1, \dots, s_{n-1}$, also indicated in Figure~\ref{Fig;cutting-e}. 
In the figure, there are two oriented curves, one drawn in blue and the other in red. 
Suppose that, as we follow the red (resp.\ blue) curve from its starting point to its endpoint, we encounter the sequence of triangles \( \tau_1', \dots, \tau_k' \) (resp.\ \( \tau_1'', \dots, \tau_m'' \)) consecutively, as labeled in Figure~\ref{Fig;cutting-e}. 
Note that $\tau_1''=\Delta_1$ and $\tau_1'=\Delta_2$.



We use $\mathcal V$ to denote $V_\lambda$. Then $\mathcal V_{\rm mut}$ consists of all small vertices contained in the interior of $\fS$.
For each $1\leq i\leq n-1$,
we construct two muti-subsets $\mathcal{V}'_{s_i}, \mathcal{V}''_{s_i}\subset \mathcal V$ as follows:
\begin{enumerate}
    \item Each vertex in $\mathcal{V}'_{s_i}$ (resp. $\mathcal{V}_{s_i}''$) is contained in one of the triangles $\tau_1',\cdots,\tau_k'$ (resp. $\tau_1'',\cdots,\tau_m''$).
    \item For each $1\leq j\leq k$ (resp. $1\leq j\leq m$), we choose a barycentric coordinate for $\tau_j'$ (resp. $\tau_j''$) such that $p_1=(n,0,0)$ (resp. $p_2=(n,0,0)$).
    Then $\mathcal{V}'_{s_i}\cap \tau_j'$ consists of small vertices, other than $s_i$, whose first entry is $i$ (resp. $\mathcal{V}''_{s_i}\cap \tau_j''$ consists of small vertices, other than $s_i$, whose first entry is $n-i$).
\end{enumerate}

We provide a geometric interpretation of $\mathcal{V}'_{s_i}$ and $\mathcal{V}_{s_i}''$.
All the vertices in $\mathcal{V}'_{s_i}$ (resp. $\mathcal{V}_{s_i}''$) are contained in the curve $c_i'$ (resp. $c_i''$), see the right picture in Figure~\ref{Fig;cutting-e}. 

\begin{figure}[h]
    \centering
    \includegraphics[width=150mm]{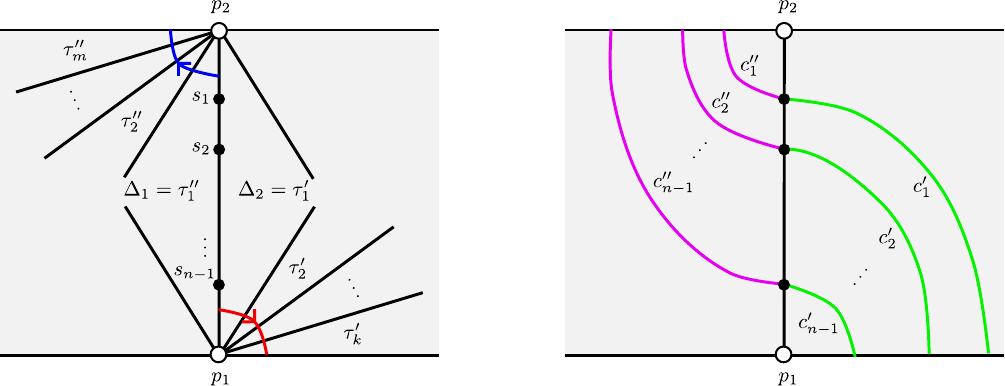}
    \caption{Left: the labelings of the triangles $\tau_1', \dots, \tau_k'$ and $\tau_1'', \dots, \tau_m''$. \\
    Right: schematic illustrations of the curves $c_1', \dots, c_{n-1}'$ (in green) and $c_1'', \dots, c_{n-1}''$ (in purple).}\label{Fig;cutting-e}
\end{figure}

\def\eqq{\overset{\omega}{=}}

Define 
\begin{equation}
\label{def-V-split-skein}
    \begin{split}
    &\text{set $\mathcal{V}_{\rm split}=\{s_1,\cdots,s_{n-1}\}$,}\\
    &\text{multiset $\mathcal{V}^1_v=\{u\in \mathcal{V}_{\rm split}\mid v\in \mathcal{V}'_u\}$ for any $v\in \mathcal V\setminus \mathcal{V}_{\rm split}$,}\\
    &\text{and multiset $\mathcal{V}^2_v=\{u\in \mathcal{V}_{\rm split}\mid v\in \mathcal{V}''_u\}$ for any $v\in \mathcal V\setminus \mathcal{V}_{\rm split}$.}
    \end{split}
\end{equation}
The multiplicity of $u\in \mathcal V^1_v$ (resp. $u\in \mathcal V^2_v$) is the multiplicity of $v$ in $\mathcal V_u'$ (resp. $\mathcal V_u''$).
After we cut $\fS$ along $e$, the small vertices $s_1',\cdots,s_{n-1}'$ (resp. $s_1'',\cdots,s_{n-1}''$) are contained in the triangle $\Delta_1$ (resp. $\Delta_2$).

For two elements $x,y$ in an $R$-algebra $\mathscr R$, we denote $x\eqq y$ if $x=\omega^{\frac{k}{2}}y\in \mathscr R$ for an integer $k$.

\begin{proposition}\label{prop:image}
Let $\fS$ be a triangulable pb surface with a triangulation $\lambda$, and assume that $\fS$ has no interior punctures.  
Then we have the following:
\begin{enumerate}[label={\rm (\alph*)}]\itemsep0,3em

\item For each $v \in \mathcal V=V_\lambda$, in $\widetilde{\mathscr{S}}_\omega(\cut_e (\fS))$ (and likewise in $\overline{\mathscr{S}}_\omega(\cut_e (\fS))$) we have  
\begin{align}\label{eq-splitting-gv}
   \mathbb S_e(\gaa_v)=
   \begin{cases}
   \big[\gaa_v \prod\limits_{u \in \mathcal{V}^1_v} \gaa_{u'} \prod\limits_{u \in \mathcal{V}^2_v} \gaa_{u''}\big] & \text{if } v \in \mathcal V \setminus \mathcal{V}_{\rm split},\vspace{2mm}\\
   [\gaa_{v'} \gaa_{v''}] & \text{if } v \in \mathcal{V}_{\rm split},
   \end{cases}
\end{align}
where $\cut_e (\fS)$ and $\mathbb S_e$ are defined in \S\ref{sub-splitting}.  

\item In particular, the algebra embedding  
\[
\mathbb{S}_e\colon \widetilde{\cS}_{\omega}(\fS) \longrightarrow \widetilde{\mathscr{S}}_\omega(\cut_e (\fS))
\]
extends uniquely to an algebra embedding  
\[
\mathbb{S}_e\colon \mathcal{A}_{\omega}(\fS, \lambda) \longrightarrow \mathcal{A}_{\omega}(\cut_e (\fS), \lambda_e),
\]
defined as in \eqref{eq-splitting-gv}, with $\gaa_v$ replaced by $A_v$, where $\lambda_e$ is defined in \S\ref{sub-sec-Kernel-trace}.
\end{enumerate}
\end{proposition}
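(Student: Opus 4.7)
The plan is to prove part (a) directly, by computing the splitting homomorphism on the explicit stated $n$-web representing $\gaa_v$, and then to deduce part (b) by a universal-property argument for the quantum torus $\A$. The essential tool is the geometric description of $\gaa_v$ through the elongated skeleton $\widetilde Y_v$ from \S\ref{sec;A_tori}--\S\ref{sec-traceA}.

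First I would analyze how the underlying web $\gaa_v''$ of $\gaa_v$ meets the ideal arc $e$. The key identification is that the curves $c'_i$ and $c''_i$ sketched in Figure~\ref{Fig;cutting-e} are precisely the trajectories of the turn-left elongation rule on the two sides of $e$; consequently, for $v \in V_\lambda \setminus \mathcal V_{\rm split}$, the collection of split vertices $s_i$ whose elongation curves pass through $v$ is exactly $\mathcal V^1_v \cup \mathcal V^2_v$, counted with the multiplicities specified in \eqref{def-V-split-skein}. For $v = s_i \in \mathcal V_{\rm split}$, the graph $\widetilde Y_{s_i}$ meets $e$ only at $s_i$, so no extra vertices contribute. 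Once this matching is in place, applying the splitting formula \eqref{eq-def-splitting} to $\gaa_v''$ yields a sum over states at every intersection point of $\gaa_v''$ with $e$; using the $\SL_n$ relations \eqref{wzh.four}--\eqref{wzh.six} the state-sums simplify and the pieces of $\gaa_v''$ on either side of $e$ reassemble, up to scalars, into the product of webs representing $\gaa_v$, $\gaa_{u'}$ for $u \in \mathcal V^1_v$, and $\gaa_{u''}$ for $u \in \mathcal V^2_v$ (respectively $\gaa_{v'}\gaa_{v''}$ in the split case). The $\omega^{1/2}$-factors from reflection normalization are pinned down by invoking Lemma~\ref{lem-reflection}: both sides of \eqref{eq-splitting-gv} are reflection-invariant, so they agree up to an integer power of $\omega^{1/2}$, which is fixed by comparing a single monomial. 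The formula in $\widetilde{\cS}_\omega(\cut_e(\fS))$ then descends from the one in $\overline{\cS}_\omega(\cut_e(\fS))$ via the projection to the quotient.

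Part (b) follows almost formally. Under the identification $\dS \subset \A$ from Lemma~\ref{lem-basic-lem}(a), $\gaa_v = A_v$ for $v \in V_\lambda$, and the $A_v$ generate $\A$ as a Laurent polynomial algebra, so part (a) prescribes the image of every generator of $\A$ in the target $\mathcal{A}_\omega(\cut_e(\fS),\lambda_e)$. The defining $q$-commutation relations of $\A$ are automatically preserved because the images already $q$-commute correctly in $\widetilde{\cS}_\omega(\cut_e(\fS)) \subset \mathcal{A}_\omega(\cut_e(\fS),\lambda_e)$, being the splittings (under the algebra homomorphism $\mathbb S_e$ of Lemma~\ref{lem-basic-lem}(c)) of the $q$-commuting elements $\gaa_v$. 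Invertibility of each image is transparent from its explicit Weyl-ordered monomial form. Uniqueness of the extension then follows from the universal property of the quantum torus, and injectivity from the fact that the extension sends the monomial basis $\{A^{\bf k}\}_{\bf k \in \mathbb Z^{V_\lambda}}$ of $\A$ injectively into the monomial basis of $\mathcal{A}_\omega(\cut_e(\fS),\lambda_e)$.

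The main obstacle I anticipate is the combinatorial identification in the first step: establishing that the multisets $\mathcal V^1_v$ and $\mathcal V^2_v$ account precisely, with correct multiplicities, for the segments of $\widetilde Y_v$ that appear on each side of $e$ after cutting. This depends delicately on the turn-left elongation rule together with the barycentric coordinate convention chosen to define $\mathcal V'_{s_i}, \mathcal V''_{s_i}$. Once that bookkeeping is under control, the $\SL_n$ skein relations and reflection invariance carry the rest of the proof through.
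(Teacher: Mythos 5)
Your proof takes the same route as the paper's: a picture-level computation of $\mathbb S_e(\gaa_v'')$ via the splitting formula \eqref{eq-def-splitting} and the geometry of the elongated skeleton, with reflection invariance of $\mathbb S_e$ (Lemma~\ref{lem-reflection}) pinning the $\omega^{1/2}$-scalar, followed by the formal extension to $\mathcal A_\omega(\fS,\lambda)$ using the generators $A_v=\gaa_v$ together with $\Ap\subset\dS\subset\A$. One small point of logic: once the geometric computation gives proportionality by some $\omega^{k/2}$, reflection invariance of \emph{both} sides by itself forces $k=0$ (since $*(\omega^{k/2}x)=\omega^{-k/2}x$ for $x$ reflection-invariant), so the appeal to ``comparing a single monomial'' is redundant rather than an additional step.
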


\begin{proof}
(a) For any $1\leq i\leq n-1$, we have that
\begin{align*}
\mathbb S_e(\gaa_{s_i})& \eqq (-1)^{\binom{n}{2}}
    \mathbb S_e\left(
\begin{array}{c}\includegraphics[scale=1]{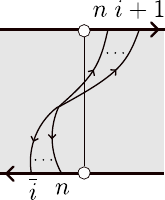}\end{array}\right)
\\&\eqq (-1)^{\binom{n}{2}}\begin{array}{c}\includegraphics[scale=1]{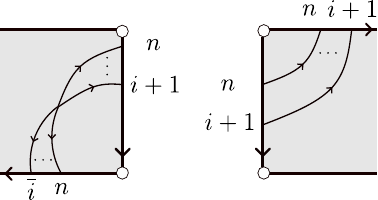}\end{array}
\\&\eqq\begin{array}{c}\includegraphics[scale=1]{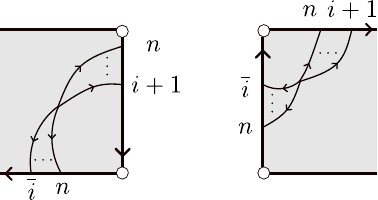}\end{array}
\\&\eqq[\gaa_{s_i'} \gaa_{s_i''}]
\end{align*}    
Lemma \ref{lem-reflection} implies that
$\mathbb S_e(\gaa_{s_i})=[\gaa_{s_i'} \gaa_{s_i''}].$

Let $v\in \mathcal V\setminus \mathcal{V}_{\rm split}$. 
Note that $|\mathcal V_v^1|, |\mathcal V_v^2|=0,1,2,3$, and $|\mathcal V_v^1|+|\mathcal V_v^2|\leq 3$.
We prove only the case where $|\mathcal V_v^1| = |\mathcal V_v^2| = 1$, since the same argument applies to the other cases as well.
Suppose that $\mathcal V_v^1=\{s_i\}$
and $\mathcal V_v^2=\{s_j\}$.
Then we have that
\begin{align*}
    \mathbb S_e(\gaa_v)
    &\eqq \gaa_v \left( 
    \begin{array}{c}\includegraphics[scale=1]{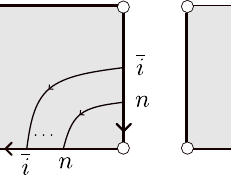}\end{array}\right)
    \left(\begin{array}{c}\includegraphics[scale=1]{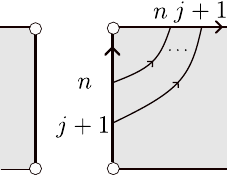}\end{array}\right)
    \\&\eqq
    \gaa_v \left( 
    \begin{array}{c}\includegraphics[scale=1]{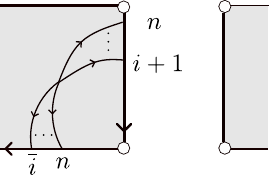}\end{array}\right)
    \left(\begin{array}{c}\includegraphics[scale=1]{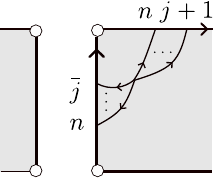}\end{array}\right)
    \\&\eqq [\gaa_v\gaa_{s_i'} \gaa_{s_j''}].
\end{align*}
Lemma \ref{lem-reflection} implies that
$\mathbb S_e(\gaa_{v})=[\gaa_v\gaa_{s_i'} \gaa_{s_j''}].$

(b) It follows from (a) and \eqref{sandwitch-projected}.
\end{proof}

Under the same assumption as Proposition \ref{prop:image},
we have the following immediate corollary.

\begin{corollary} \label{cor:quasi-com}
\begin{enumerate}[label={\rm (\alph*)}]\itemsep0,3em

\item For any $v_1,v_2\in \mathcal V\setminus \mathcal{V}_{\rm split}$,
$$\Lambda(A_{v_1},A_{v_2})=\Lambda([A_{v_1}\prod\limits_{
u\in \mathcal{V}^1_{v_1}} A_{u'}\prod\limits_{
u\in \mathcal{V}^2_{v_1}}A_{u''}],[A_{v_2}\prod\limits_{u\in \mathcal{V}^1_{v_2}} A_{u'}\prod\limits_{u\in \mathcal{V}_{v_2}^2}A_{u''}]).$$ 

\item For any $v_1\in \mathcal V\setminus \mathcal{V}_{\rm split}$ and $u\in \mathcal{V}_{\rm split}$,
$$\Lambda(A_{v_1},A_{u})=\Lambda([A_{v_1}\prod\limits_{
v\in \mathcal{V}^1_{v_1}} A_{v'}\prod\limits_{
v\in \mathcal{V}^2_{v_1}}A_{v''}],[A_{u'}A_{u''}]).$$ 

\item For any $u_1,u_2\in \mathcal{V}_{\rm split}$, $\Lambda(A_{u_1},A_{u_s})=\Lambda([A_{u'_1}A_{u''_1}],[A_{u'_2}A_{u''_2}]).$

\end{enumerate}
\end{corollary}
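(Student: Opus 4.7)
The plan is to deduce the three identities directly from Proposition~\ref{prop:image}(b), which provides the injective algebra embedding
\[
\mathbb{S}_e \colon \mathcal{A}_{\omega}(\fS,\lambda) \longrightarrow \mathcal{A}_{\omega}(\cut_e(\fS),\lambda_e),
\]
with explicit formulas on the generators $A_v$. The key observation is that any algebra homomorphism between quantum tori preserves quasi-commutation exponents: if $xy = \xi^{m} yx$ in the source, then applying $\mathbb{S}_e$ gives $\mathbb{S}_e(x)\mathbb{S}_e(y)=\xi^{m}\mathbb{S}_e(y)\mathbb{S}_e(x)$ in the target, and since the target is itself a quantum torus (so its elements that are Weyl-ordered Laurent monomials are units), the exponent $m=\Lambda(x,y)$ is recovered as $\Lambda(\mathbb{S}_e(x),\mathbb{S}_e(y))$.

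First I would apply this observation to the pair $x=A_{v_1}$, $y=A_{v_2}$ for the three cases $(v_1,v_2)$ ranging over $(\mathcal V\setminus\mathcal V_{\rm split})\times(\mathcal V\setminus\mathcal V_{\rm split})$, $(\mathcal V\setminus\mathcal V_{\rm split})\times\mathcal V_{\rm split}$, and $\mathcal V_{\rm split}\times\mathcal V_{\rm split}$. In each case the left-hand side is $\Lambda(A_{v_1},A_{v_2})$, which by the preceding observation equals $\Lambda(\mathbb{S}_e(A_{v_1}),\mathbb{S}_e(A_{v_2}))$. Then I would substitute the explicit formulas from Proposition~\ref{prop:image}(a) (and its extension in (b)):
\[
\mathbb{S}_e(A_v)=
\begin{cases}
\bigl[\,A_v\!\!\prod\limits_{u\in \mathcal V^{1}_v}\!\!A_{u'}\!\!\prod\limits_{u\in \mathcal V^{2}_v}\!\!A_{u''}\bigr] & v\in \mathcal V\setminus \mathcal V_{\rm split},\\[1mm]
[A_{v'}A_{v''}] & v\in \mathcal V_{\rm split},
\end{cases}
\]
which yields precisely the three right-hand sides appearing in statements (a), (b), and (c).

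There is no real obstacle here: the only thing to check is that each image $\mathbb{S}_e(A_v)$ is indeed a (nonzero) Weyl-ordered Laurent monomial in $\mathcal{A}_{\omega}(\cut_e(\fS),\lambda_e)$, so that its quasi-commutation with any other such monomial is well defined. This is immediate from the explicit formulas, since the $A_{u'}$, $A_{u''}$, and $A_v$ appearing on the right-hand side are generators of the quantum torus $\mathcal{A}_{\omega}(\cut_e(\fS),\lambda_e)$ and the bracket $[\,\cdot\,]$ denotes the Weyl-ordered product of invertible generators. Hence the corollary reduces to invoking Proposition~\ref{prop:image}(b) and reading off the three formulas; no further computation is required.
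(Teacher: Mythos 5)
Your argument is correct and is precisely what the paper has in mind when it states the corollary is an immediate consequence of Proposition~\ref{prop:image}: since $\mathbb{S}_e$ is an injective algebra homomorphism between quantum tori sending each generator to a nonzero (invertible) Weyl-ordered monomial, the relation $A_{v_1}A_{v_2}=\xi^{\Lambda(A_{v_1},A_{v_2})}A_{v_2}A_{v_1}$ pushes forward and the exponent is read off in the target. No differences from the intended proof; nothing to add.
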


We use $Q$ (resp.\ $Q_{\rm split}$) to denote $Q_\lambda$ (resp.\ $Q_{\lambda_e}$).  
The following lemma is immediate.

\begin{lemma}\label{lem-splitting-Q}
$Q_{\rm split}$ is a splitting of $Q$ along the vertices $\mathcal V_{\rm split}$ (Definition~\ref{def-splitting-matrix}).
\end{lemma}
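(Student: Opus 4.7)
The plan is to verify the two defining conditions in Definition~\ref{def-splitting-matrix} by direct inspection of the quivers $\Gamma_\lambda$ and $\Gamma_{\lambda_e}$. The key structural fact I will use throughout is that each arrow of $\Gamma_\lambda$ is a small edge of the $n$-triangulation lying within a unique face of $\lambda$, and that cutting $\fS$ along $e$ affects only the small vertices sitting on $e$: each such vertex $s_i$ gets replaced by two copies $s_i' \in \Delta_1$ and $s_i'' \in \Delta_2$.

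First I would verify condition (2). Let $u, v \in \mathcal V \setminus \mathcal V_{\rm split}$, so neither $u$ nor $v$ lies on $e$, and both persist unchanged as vertices in $\Gamma_{\lambda_e}$. Any arrow contributing to $Q(u,v)$ is a small edge in a unique face $\tau \in \mathbb F_\lambda$; if $\tau \notin \{\Delta_1,\Delta_2\}$ then $\tau$ is untouched by the cut, and if $\tau \in \{\Delta_1,\Delta_2\}$ the small-edge arrow between $u$ and $v$ still lies inside the same face after the cut since neither endpoint is moved. Summing contributions (and using that glued boundary arrows cancel in $\Gamma_\lambda$ only when they sit on edges other than $e$, which is unaffected by the cut along $e$) gives $Q_{\rm split}(u,v) = Q(u,v)$.

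Next I would verify condition (1). For $u \in \mathcal V \setminus \mathcal V_{\rm split}$ and $s_i \in \mathcal V_{\rm split}$, every small edge of the $n$-triangulation incident to $s_i$ lies in a face containing $s_i$, hence in $\Delta_1$ or in $\Delta_2$. These two collections of small edges are disjoint, and after cutting, the $\Delta_1$-side arrows are reattached to $s_i'$ while the $\Delta_2$-side arrows are reattached to $s_i''$. Since no cancellation between $\Delta_1$- and $\Delta_2$-contributions can occur in $\Gamma_\lambda$ (the cancellation rule in the definition of $\Gamma_\lambda$ concerns only arrows of equal weight and opposite direction supported on an identified boundary, whereas here the arrows live in the interior of their respective faces), we obtain
\[
Q(u, s_i) \;=\; Q_{\rm split}(u, s_i') + Q_{\rm split}(u, s_i'').
\]

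The remaining point is the sign condition $Q_{\rm split}(u, s_i')\, Q_{\rm split}(u, s_i'') \geq 0$. Since $u \notin e$ and self-folded triangles are excluded, $u$ is generically a small vertex of at most one of $\Delta_1, \Delta_2$, so one of the two factors vanishes and the product is $0$. The only potential subtlety, which I expect to be the main (though minor) obstacle, arises if $\Delta_1$ and $\Delta_2$ share a second edge $e' \neq e$ through $u$, forced only by non-trivial topology; in that degenerate case one checks, using the convention that in each triangle all boundary arrows are clockwise, that the two $\Delta_1$- and $\Delta_2$-contributions at $u$ have the same sign (essentially because they are both induced from boundary orientations consistent with the orientation of $\fS$), so their product is again non-negative.
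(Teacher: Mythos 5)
Your proof takes the same overall route as the paper's: the two ``sum'' conditions in Definition~\ref{def-splitting-matrix} are immediate from the local construction of $\Gamma_\lambda$ and $\Gamma_{\lambda_e}$, and the real content is the sign condition $Q_{\rm split}(u,s_i')\,Q_{\rm split}(u,s_i'')\geq 0$, which you reduce to the degenerate situation where $\Delta_1,\Delta_2$ share a second edge $e_0\ni u$ (the cylinder case). That reduction is correct, and the conclusion is correct.

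The one substantive point you gloss over is the role of $n$. The paper observes that for $n\geq 3$ the degenerate case is in fact harmless for a sharper reason: even when $u$ lies on a second shared edge $e_0$, tracking the orientation-reversing identification of $\Delta_2$'s barycentric coordinates shows that $u$ cannot be adjacent in the $n$-triangulation to \emph{both} lifts $s_i'$ and $s_i''$ of the same $s_i$ — one of the two factors is already $0$. Only when $n=2$ can both factors be nonzero, and there the paper checks (as you do, somewhat more heuristically) that both factors carry the same sign, so the product is positive. Your argument, which claims uniformly that ``the two contributions have the same sign,'' does reach the correct conclusion in both regimes (vacuously for $n\geq 3$, and genuinely for $n=2$), but the mechanism is slightly misattributed: the arrows involved are interior arrows of their respective faces, not boundary arrows, so invoking ``all boundary arrows are clockwise'' is a red herring. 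The honest justification is that both arrows are parallel translates of the small edge in the corner triangle at the shared puncture, with direction determined by the $\fS$-orientation, hence consistent — and for $n\geq 3$ the $\Delta_2$-side small edge lands on the opposite end of $e_0$ and misses $s_i''$ entirely. You would do well to separate $n=2$ from $n\geq 3$ as the paper does, since it makes the final step an explicit finite check rather than an appeal to a heuristic.
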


\begin{proof}
It is clear that  
\[
   Q_{\rm split}(u,v) = Q(u,v) \qquad 
   \text{for all } u,v \in \mathcal V \setminus \mathcal V_{\rm split}.
\]

Let $w \in \mathcal V \setminus \mathcal V_{\rm split}$ and $s \in \mathcal V_{\rm split}$.  
Trivially,
\[
   Q(w,s) = Q_{\rm split}(w,s') + Q_{\rm split}(w,s'').
\]
If $n \ge 3$, then $Q_{\rm split}(w,s') \, Q_{\rm split}(w,s'') = 0$.

When $n = 2$, the only nontrivial case occurs when $w$ and $s'$ lie in the same triangle
and $w$ and $s''$ also lie in the same triangle.  
In this situation it is straightforward to check that
\[
   Q_{\rm split}(u,s') \, Q_{\rm split}(u,s'') > 0.
\]

The proof is compelete.
\end{proof}

\begin{lemma}\label{lem:mut}
    For any $v\in \mathcal{V}_{\rm mut}\setminus \mathcal{V}_{\rm split}$ and  $u\in \mathcal{V}_{\rm split}$, 
    we have
\begin{align}
\label{lem-eq-Q-split-1}
    [Q_{\rm split}(u'',v)]_+\,+ \sum\limits_{v_1\in Q(-,v)\cap \mathcal{V}'_{u}}Q(v_1,v)=
[Q_{\rm split}(v,u'')]_+\,+ \sum\limits_{v_1\in Q(v,-)\cap \mathcal{V}'_{u}}Q(v,v_1),\\
\label{lem-eq-Q-split-2}
[Q_{\rm split}(u',v)]_+\,+ \sum\limits_{v_1\in Q(-,v)\cap \mathcal{V}''_{u}}Q(v_1,v)=
[Q_{\rm split}(v,u')]_+\,+ \sum\limits_{v_1\in Q(v,-)\cap \mathcal{V}''_{u}}Q(v,v_1).
\end{align}
\end{lemma}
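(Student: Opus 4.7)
The plan is to first simplify both identities into a single cleaner equivalent form, then reduce the verification to a local computation inside each individual triangle of $\lambda$, and finally perform case analysis on the face of $\lambda$ containing $v$.

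\emph{Step 1: Reformulation.} Using the elementary identity $[a]_{+}-[-a]_{+}=a$ for $a\in\mathbb Z$, together with the antisymmetry of $Q$ and $Q_{\rm split}$, the identity \eqref{lem-eq-Q-split-1} is equivalent to
\[
Q_{\rm split}(u'',v)=\sum_{v_1\in\mathcal{V}'_u}Q(v,v_1),
\]
and similarly \eqref{lem-eq-Q-split-2} is equivalent to
\[
Q_{\rm split}(u',v)=\sum_{v_1\in\mathcal{V}''_u}Q(v,v_1).
\]
Here the sums are over the multisets $\mathcal V'_u$ and $\mathcal V''_u$ with the appropriate multiplicities. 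By the symmetry of the construction (interchanging the roles of $\Delta_1,\Delta_2$ and of $p_1,p_2$), it suffices to prove the first reformulated identity.

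\emph{Step 2: Reduction to local contributions.} Because arrows of $\Gamma_\lambda$ only connect small vertices lying in a common face of $\lambda$, one has $Q(v,v_1)=0$ unless $v$ and $v_1$ share a face. Since $\mathcal{V}'_u$ decomposes as the (multiset) union $\bigsqcup_{j=1}^{k}\bigl(\mathcal{V}'_u\cap\tau_j'\bigr)$, the right-hand side equals
\[
\sum_{j\,:\,v\in\tau_j'}\ \sum_{v_1\in\mathcal{V}'_u\cap\tau_j'}Q(v,v_1).
\]
Similarly, $Q_{\rm split}(u'',v)$ is nonzero only when $v\in\Delta_1=\tau_1''$, in which case its value is read off from the local quiver of a single triangle.

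\emph{Step 3: The local claim on a single triangle.} The heart of the argument is the following local identity in $\mathbb{P}_3$. Fix barycentric coordinates with the relevant puncture at $(n,0,0)$, and let $L_i=\{(i,j,k)\in V_{\mathbb{P}_3}:j+k=n-i\}$ denote the ``$i$-th horizontal row'' of small vertices. Using the explicit weighted quiver of Figure \ref{Fig;coord_ijk}, one computes
\[
\sum_{w'\in L_i}Q_{\mathbb{P}_3}(w,w')
\]
directly: by the translation invariance of the interior arrows parallel to $L_i$, essentially all arrows into and out of $L_i$ from a given $w$ cancel in pairs, and the only surviving contributions come from arrows in the two small triangles adjacent to the endpoints of $L_i$ on the boundary $\partial\mathbb{P}_3$. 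One thereby obtains an explicit formula for this sum depending only on where $w$ sits relative to the two boundary edges meeting the row $L_i$.

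\emph{Step 4: Assembling the triangles along $c_i'$.} Summing the local formula over the triangles $\tau_1',\ldots,\tau_k'$ traversed by the curve $c_i'$, all intermediate boundary contributions telescope: the contribution at the edge shared by $\tau_j'$ and $\tau_{j+1}'$ from the viewpoint of $\tau_j'$ exactly cancels the contribution from $\tau_{j+1}'$, because $\mathcal V'_u\cap\tau_j'$ and $\mathcal V'_u\cap\tau_{j+1}'$ meet consistently at that shared edge by the definition via the curve $c_i'$. Thus only the two extremal contributions survive: one at the end of $c_i'$ lying on a boundary component of $\fS$ (which vanishes because $v$ is interior and by the choice of $\mathcal V'_u$ at the boundary), and one at the end of $c_i'$ on $e$, which contributes precisely $Q_{\rm split}(u'',v)$ by construction of $Q_{\rm split}$. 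Case analysis on whether $v\in\Delta_1$, $v\in\tau_j'$ for some $j$, or neither, confirms the equality in each situation.

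\emph{Main difficulty.} The main obstacle will be the bookkeeping in Step 3 and Step 4: verifying carefully that the local sums on a single triangle really do produce the claimed boundary contribution (especially paying attention to the half-weight of boundary arrows, and to the case $n=2$ where two vertices of $\mathcal V_{\rm split}$ on $e$ can be simultaneously adjacent to the same small vertex inside a triangle), and then checking that when $c_i'$ either ends at a puncture of $\fS$ or loops back and terminates on $e$ at another vertex of $\mathcal V_{\rm split}$, the telescoping cancellation produces the correct right-hand side. The case where several of the curves $c_i'$ traverse the same triangle, so that multiplicities in the multisets $\mathcal V^1_v$ and $\mathcal V^2_v$ become relevant, requires special care.
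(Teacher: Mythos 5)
Your Step~1 reformulation is correct: using antisymmetry and $[a]_+ - [-a]_+ = a$, identity~\eqref{lem-eq-Q-split-1} is equivalent to
\[
Q_{\rm split}(u'',v) \;=\; \sum_{v_1\in\mathcal V'_u} Q(v,v_1),
\]
and similarly for~\eqref{lem-eq-Q-split-2}, which is indeed the cleaner statement that both the paper's proof and any proof must ultimately verify (the paper's displayed equations in the non-exceptional case say exactly this after rearranging). However, be careful with your claimed symmetry reducing~\eqref{lem-eq-Q-split-2} to~\eqref{lem-eq-Q-split-1}: swapping $\Delta_1\leftrightarrow\Delta_2$ and $p_1\leftrightarrow p_2$ also swaps the index $i$ of $s_i$ with $n-i$ (since $\mathcal V'_{s_i}$ is built from ``distance $i$ from $p_1$'' while $\mathcal V''_{s_i}$ uses ``distance $n-i$ from $p_2$''), and you should verify the symmetry actually closes up before invoking it; the paper does not use such a symmetry and proves both identities in parallel.

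Beyond that, your Steps 3--4 are a different route from the paper's, but as written they contain a genuine gap rather than a proof. The paper's argument proceeds by a case split on $\mathrm{sgn}\,Q(u,v)$ and on whether $Q_{\rm split}(u',v)\,Q_{\rm split}(u'',v)$ vanishes; using Lemma~\ref{lem-splitting-Q} to conclude $Q_{\rm split}(u',v)=Q(u,v)$ (say), isolating the one exceptional configuration where the product is nonzero ($n=2$ with $\tau_1\cup\tau_2$ a cylinder, which it handles by a small explicit calculation), and asserting the sum identities from the definitions in the remaining case. You instead want to prove the reformulated identity by computing the contribution of each triangle $\tau_j'$ to $\sum_{v_1\in\mathcal V'_u}Q(v,v_1)$ and arguing that intermediate contributions telescope along the curve $c_i'$. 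This could in principle work, but you never state the ``local identity'' on a single triangle precisely, never verify the telescoping (in particular, when $\tau_j'$ and $\tau_{j+1}'$ are glued, the boundary arrows of weight $\tfrac12$ cancel in $Q_\lambda$, so the ``contribution at a shared edge'' requires a careful bookkeeping of which half-weight arrows survive and which cancel), and never treat the case where $c_i'$ revisits a triangle (so that a vertex appears with multiplicity $>1$ in $\mathcal V'_u$). You explicitly flag these as ``main difficulties,'' which is honest, but they are precisely the content of the lemma; with them unresolved the proposal is a plan, not a proof. If you want to turn it into one, I'd suggest: state and prove a precise single-triangle identity for $\sum_{w'\in L_i}Q_{\mathbb P_3}(w,w')$ (with the boundary half-weights made explicit), then show the gluing cancellation and the two endpoint contributions produce exactly $Q_{\rm split}(u'',v)$, and handle separately the two-triangle cylinder case for $n=2$, which is the one place the paper found a genuine exception to the generic behavior.
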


\begin{proof}
First consider the case \(Q(u,v)=0\).  
By the construction of \(\mathcal V'_u\), \(\mathcal V''_u\), and \(Q\), both
\eqref{lem-eq-Q-split-1} and \eqref{lem-eq-Q-split-2} follow immediately.

It remains to treat the case \(Q(u,v)>0\) (the same argument works when \(Q(u,v)<0\)).
Let \(\tau_1,\tau_2\) be the triangles of \(\lambda\) containing \(v\)
(with \(\tau_1=\tau_2\) if \(v\) lies in only one triangle).
Observe that
\(Q_{\mathrm{split}}(u'',v)\,Q_{\mathrm{split}}(u',v)=0\)
except when \(n=2\), \(\tau_1\neq\tau_2\), and the subsurface
\(\tau_1\cup\tau_2\) is a cylinder.
In this exceptional situation a direct computation yields
\begin{align*}
[Q_{\mathrm{split}}(u'',v)]_+
  &=\sum_{v_1\in Q(v,-)\cap \mathcal V'_u} Q(v,v_1)
   = [Q_{\mathrm{split}}(u',v)]_+
   =\sum_{v_1\in Q(v,-)\cap \mathcal V''_u} Q(v,v_1) = 1,\\
\sum_{v_1\in Q(-,v)\cap \mathcal V'_u} Q(v_1,v)
  &= [Q_{\mathrm{split}}(v,u'')]_+
   =\sum_{v_1\in Q(-,v)\cap \mathcal V''_u} Q(v_1,v)
   = [Q_{\mathrm{split}}(v,u')]_+ = 0.
\end{align*}
Hence \eqref{lem-eq-Q-split-1} and \eqref{lem-eq-Q-split-2} hold in this case as well.

Next suppose \(Q_{\mathrm{split}}(u'',v)\,Q_{\mathrm{split}}(u',v)=0\).
If \(Q_{\mathrm{split}}(u'',v)=0\), then
\(Q_{\mathrm{split}}(u',v)=Q(u,v)>0\) (Lemma~\ref{lem-splitting-Q}).
From the definitions of \(\mathcal V'_u\), \(\mathcal V''_u\), and \(Q\),
we obtain
\begin{align}
\label{lem-eq-Q-split-3}
\sum_{v_1\in Q(-,v)\cap \mathcal V'_u} Q(v_1,v)
 &= \sum_{v_1\in Q(v,-)\cap \mathcal V'_u} Q(v,v_1),\\[4pt]
\label{lem-eq-Q-split-4}
Q(u,v)+\sum_{v_1\in Q(-,v)\cap \mathcal V''_u} Q(v_1,v)
 &= \sum_{v_1\in Q(v,-)\cap \mathcal V''_u} Q(v,v_1).
\end{align}
Equation \eqref{lem-eq-Q-split-1} follows from
\eqref{lem-eq-Q-split-3} together with \(Q_{\mathrm{split}}(u'',v)=0\),
and \eqref{lem-eq-Q-split-2} follows from
\eqref{lem-eq-Q-split-4} and the identity
\(Q_{\mathrm{split}}(u',v)=Q(u,v)\).
The case \(Q_{\mathrm{split}}(u',v)=0\) is analogous.

The proof is complete.
\end{proof}

The following theorem—the main result of this section—establishes a splitting homomorphism of the ${\rm SL}_n$ quantum upper cluster algebra, compatible with the splitting homomorphism for the projected stated ${\rm SL}_n$-skein algebra.
In \S\ref{sec-skein-inclusion-cluster}, we will use this map to prove the inclusion $\dS \subset \mathscr A_{\omega}(\fS)$ by decomposing $\fS$ into triangles and quadrilaterals.
Beyond this application, the theorem also offers a general tool for reducing other problems—such as Conjecture~\ref{con-equality-Skein-A-U}—to the case of triangles and quadrilaterals.

\begin{theorem}\label{thm:splitU}
Let $\lambda$ be a triangulation of a triangulable pb surface $\fS$ and edge $e$ be an ideal arc in $\lambda$. 
Assume that $\fS$ contains no interior punctures.
The assignments 
    $$A_v\mapsto 
    \begin{cases}
    [A_v\prod\limits_{u\in\mathcal V_v^1} A_{u'}\prod\limits_{u\in\mathcal V_v^2}A_{u''}] & \text{ if } v\in \mathcal V\setminus \mathcal{V}_{\rm split},\vspace{2mm}\\
    [A_{u'}A_{u''}] & \text{ if } u\in \mathcal{V}_{\rm split},
    \end{cases}
    $$ define an injective reflection-invariant (see \eqref{def-eq-ref-torus} and \S\ref{sub-sec-invariant}) splitting homomorphism $\mathbb S_e^U:\mathscr{U}_{\omega}(\fS)\to \mathscr{U}_{\omega}(\cut_e(\fS))$ on $\mathcal{V}_{\rm split}$, moreover, which is compatible with the splitting homomorphism for the projected stated ${\rm SL}_n$-skein algebras in the sense that the following diagram commutes:
    $$\centerline{\xymatrix{
  &\widetilde{\mathscr S}_\omega(\fS) \ar@{^{(}->}[rrr]^{\mathbb S_e}\ar@{^{(}->}[d]  &&& \widetilde{\mathscr S}_\omega(\cut_e(\fS)) \ar@{^{(}->}[d]          \\
  &  \mathscr U_{\omega}(\fS) \ar@{^{(}->}[rrr]^{\mathbb S_e^U}  &&& \mathscr U_{\omega}(\cut_e(\fS)).}}$$ 
  Here $\mathcal V=V_\lambda$, and $\mathcal V_{\rm split}$,  $\mathcal V_{v}^1$, and  $\mathcal V_{v}^2$ are defined in \eqref{def-V-split-skein}
\end{theorem}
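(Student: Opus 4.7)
The plan is to apply Proposition~\ref{prop:split} to the quantum seeds $w_\lambda$ and $w_{\lambda_e}$ associated with the triangulations $\lambda$ of $\fS$ and $\lambda_e$ of $\cut_e(\fS)$, using the multi-subsets $\mathcal V'_u, \mathcal V''_u \subset \mathcal V \setminus \mathcal V_{\rm split}$ constructed geometrically (via the curves $c_i', c_i''$) in the paragraph preceding the theorem. The key input is that the data $(w_\lambda, w_{\lambda_e}, \mathcal V_{\rm split}, \{\mathcal V'_u, \mathcal V''_u\})$ satisfies the hypotheses of Proposition~\ref{prop:split}, so that the proposition produces an injective reflection-invariant splitting homomorphism $\mathbb S_e^U \colon \mathscr{U}_\omega(\fS) \to \mathscr{U}_\omega(\cut_e(\fS))$ whose formulas on initial cluster variables match those in the statement.

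The four hypotheses of Proposition~\ref{prop:split} will be verified as follows. The fact that $w_{\lambda_e}$ is a splitting of $w_\lambda$ on $\mathcal V_{\rm split}$ in the sense of Definition~\ref{def-splitting-matrix} is precisely Lemma~\ref{lem-splitting-Q}. Conditions (a), (b), (c) on the quasi-commutation of Laurent monomials are exactly the content of Corollary~\ref{cor:quasi-com}, which in turn was derived from the fact that the formulas \eqref{eq-splitting-gv} in Proposition~\ref{prop:image} extend to an algebra homomorphism $\mathbb S_e \colon \mathcal A_\omega(\fS,\lambda) \to \mathcal A_\omega(\cut_e(\fS),\lambda_e)$: preservation of quasi-commutation by this homomorphism forces the three Weyl-ordered identities. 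Condition (d) is Lemma~\ref{lem:mut}, established by case analysis on the positions of $v$ and $u$ with respect to the two triangles adjacent to $e$. Once all four conditions are in place, Proposition~\ref{prop:split} directly delivers $\mathbb S_e^U$, together with its injectivity, reflection-invariance, and the prescribed action on cluster variables.

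It remains to verify commutativity of the diagram. By Theorem~\ref{thm-main-3}, we have $\widetilde{\mathscr S}_\omega(\fS) \subset \mathscr U_\omega(\fS)$, and the inclusion is given by $\gaa_v \mapsto A_v$ under the identification from Lemma~\ref{lem-basic-lem}(a),(d). Since the stated arcs and webs generate $\widetilde{\mathscr S}_\omega(\fS)$, and both $\mathbb S_e$ and $\mathbb S_e^U$ are algebra homomorphisms, it suffices to check that $\mathbb S_e(\gaa_v) = \mathbb S_e^U(A_v)$ for every $v \in V_\lambda$ inside $\widetilde{\mathscr S}_\omega(\cut_e(\fS)) \subset \mathscr U_\omega(\cut_e(\fS))$. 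This is immediate by comparing \eqref{eq-splitting-gv} in Proposition~\ref{prop:image}(a) with the defining formulas for $\mathbb S_e^U$ given in the theorem. The main obstacle in this program is the verification of condition (d) of Proposition~\ref{prop:split} — this is where the geometric definition of the multi-subsets $\mathcal V'_u, \mathcal V''_u$ via the curves $c_i', c_i''$ must match the combinatorial splitting $Q_{\rm split} = Q_{\lambda_e}$, including the exceptional $n=2$ cylinder case noted in the proof of Lemma~\ref{lem:mut} — but this has already been handled, so the proof of the theorem reduces to assembling the pieces listed above.
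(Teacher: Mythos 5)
Your proof takes essentially the same route as the paper: apply Proposition~\ref{prop:split} with the hypotheses supplied by Lemma~\ref{lem-splitting-Q}, Corollary~\ref{cor:quasi-com}, and Lemma~\ref{lem:mut} to produce $\mathbb S_e^U$, then invoke Proposition~\ref{prop:image} for compatibility with $\mathbb S_e$. One small imprecision in your commutativity step: the $\gaa_v$ form a quantum torus frame for $\widetilde{\mathscr S}_\omega(\fS)$ (Lemma~\ref{lem-basic-lem}(d)), not a generating set, so ``stated arcs and webs generate, hence it suffices to check on $\gaa_v$'' is not a valid reduction as stated. The correct reduction, already available from Proposition~\ref{prop:image}(b), is that $\mathbb S_e$ extends (uniquely) to an algebra homomorphism $\mathcal A_\omega(\fS,\lambda)\to\mathcal A_\omega(\cut_e(\fS),\lambda_e)$ given by the same formulas as $\mathbb S_e^U$ on the torus generators $A_v^{\pm1}$; since both $\mathbb S_e$ and $\mathbb S_e^U$ are restrictions of this single quantum-torus homomorphism, they agree on $\widetilde{\mathscr S}_\omega(\fS)\subset\mathscr U_\omega(\fS)\subset\mathcal A_\omega(\fS,\lambda)$.
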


\begin{proof}

The splitting homomorphism $\mathbb S_e^U:\mathscr{U}_{\omega}(\fS)\to \mathscr{U}_{\omega}(\cut_e(\fS))$ at $\mathcal{V}_{\rm split}$ is followed by Proposition~\ref{prop:split}, Corollary~\ref{cor:quasi-com} and Lemma~\ref{lem:mut}.
By Proposition~\ref{prop:image}, the diagram commutes.

The proof is complete.
\end{proof}



    





\section{The inclusion of skein algebras into quantum cluster algebras}
\label{sec-skein-inclusion-cluster}

In this section we prove the main result of the paper, Theorem~\ref{thm-skein-inclusion-A}, which shows that  
\[
\dS \subset \mathscr A_{\omega}(\fS)
\]
whenever every connected component of $\fS$ has at least two boundary components.  
A properly embedded oriented arc in $\fS$ is called an \emph{essential arc}  
if its endpoints lie on two distinct components of $\partial \fS$.  
Lemma~\ref{lem-essential-arc} asserts that, as an $R$-algebra,  
$\dS$ is generated by stated essential arcs.  
 Theorem~\ref{thm-skein-inclusion-A} provides an explicit expression for each stated essential arc in $\fS$ as the Weyl-ordered product (see \eqref{Weyl-A}) of an exchangeable cluster variable (Definition~\ref{def-quan-cluster-algebra}) and a monomial in frozen cluster variables. 
 
 If one can further show that  
$\mathscr U_{\omega}(\fS)$ is generated by these exchangeable cluster variables as an $R\langle A^{\pm1}_v\mid v\in \mathcal V\setminus \mathcal V_{\mathrm{mut}}\rangle$-algebra, it would follow that  
\[
\dS = \mathscr A_{\omega}(\fS) = \mathscr U_{\omega}(\fS),
\]
offering a promising path toward establishing this equality.

The main result of this section is the following.

\begin{theorem}\label{thm-skein-inclusion-A}
Let $\fS$ be a pb surface without interior punctures. We require that every component of $\fS$ contains at least two punctures.
    Then we have $$\dS\subset \mathscr A_{\omega}(\fS).$$
 Moreover, each stated essential arc is 
 the Weyl-ordered product (see \eqref{Weyl-A}) of an exchangeable cluster variable and a Laurent monomial in frozen cluster variables as stated in Theorem~\ref{intro-thm-skein-inclusion-A}(a), (b), and (c).
\end{theorem}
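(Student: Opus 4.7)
The plan is to first reduce the statement to the two basic cases $\fS=\mathbb P_3$ and $\fS=\mathbb P_4$, and then to propagate the formulas to an arbitrary $\fS$ via the compatibility of the splitting homomorphism for projected skein algebras with the splitting homomorphism $\mathbb S_e^U$ for $\mathscr U_\omega(\fS)$ established in Theorem~\ref{thm:splitU}. The first step is to invoke Lemma~\ref{lem-essential-arc}, which under the two-puncture hypothesis exhibits $\dS$ as an $R$-algebra generated by the stated essential arcs $C_{ij},\overline C_{ij},D_{ij}$ listed in the statement. Granted this, the inclusion $\dS\subset\mathscr A_\omega(\fS)$ follows at once from the three explicit formulas \eqref{intro-eq-Cij}, \eqref{intro-eq-bar-Cij}, \eqref{into-eq-Dij}, since each right-hand side is, by construction, a Weyl-ordered product of an exchangeable cluster variable (produced by a specific mutation sequence from an initial cluster variable) with a Laurent monomial in frozen cluster variables, hence lies in $\mathscr A_\omega(\fS)$.

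For $\fS=\mathbb P_3$ (parts (a) and (b)) the starting point is the path-sum formula $C_{ij}=\sum_{p\in\mathsf P(\mathbb P_3,v_1,i,j)}A_p$ obtained by combining Theorem~\ref{lem-trace-image-cornerarc-P3} with the commutativity of \eqref{intro-eq-compability-tr-A-X-diag}. The degenerate cases $j=1$ and $j=i$ involve a single path and are settled by direct computation. For $1<j<i$, I would argue by induction on $n$, exploiting the bijection between $\mathsf P(\mathbb P_3,v_1,i,j)$ and the disjoint union of the $(i-1,j-1)$- and $(i-1,j)$-path sets for ${\rm SL}_{n-1}$ (obtained by tracking whether a path turns left at its first encounter with the interior); this is the content of the planned Lemma~\ref{lem:divide2}. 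To match this recursion with the cluster-variable side, I would analyse the mutation sequence $\mu_{(j;j-1)}\cdots\mu_{(i-1;j-1)}$ on the quiver $\Gamma_{\mathbb P_3}$, identifying the ``maximum'' and ``minimum'' paths $p_{\max}^{(i,j)},p_{\min}^{(i,j)}$ as bounding a rectangular sub-quiver whose small vertices are exactly $v_{j,j-1},\ldots,v_{i-1,j-1}$. The inductive step then identifies the path sum with the claimed Weyl-ordered mutation of $A_{j,j-1}$ times the frozen Laurent monomial $A_{i0}^{-1}A_{jj}^{-1}$. The formula for $\overline C_{ij}$ follows by the symmetric argument reflecting through $v_1$.

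For $\fS=\mathbb P_4$ (part (c)) I would cut along the diagonal $c_5$ of Figure~\ref{Fig;essential-P4}(A) into two triangles. By the injectivity and compatibility of $\mathbb S_{c_5}$ and $\mathbb S_{c_5}^U$ (Theorem~\ref{thm:splitU} together with Lemma~\ref{lem-basic-lem}(c)), identity \eqref{into-eq-Dij} is equivalent to its image under these maps; after splitting, both sides become products of corner arcs in the two triangles, which are controlled by the formulas from the $\mathbb P_3$ case. The boundary cases $i\geq j=1$ and $j>i=1$ then follow directly. For $i\geq j>1$ (and symmetrically $j>i>1$), the crucial structural input is the planned Lemma~\ref{lem:quiver2}, which shows that the sub-quiver on $v_{j-1,j-1},\ldots,v_{22},v_{11}$ obtained from $\overline\mu^{\diamondsuit}_j\circ\mu^{\diamondsuit}_{(i;j-1)}(Q_{\mathbb P_4})$ is of type $A$ with linear orientation. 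This places the exchangeable cluster variable $(\mu_{j-1,j-1}\cdots\mu_{11})\circ\overline\mu^{\diamondsuit}_j\circ\mu^{\diamondsuit}_{(i;j-1)}(A_{j-1,j-1})$ inside a type-$A$ quantum cluster sub-algebra, so I can apply the explicit type-$A$ expansion formulas of \cite{R,CL,H1} (via the planned Corollary~\ref{cor:cv}) to write it as an explicit sum of Weyl-ordered monomials. A termwise check then matches this expansion with the product of two corner-arc sums coming from the two triangles.

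Finally, for a general triangulable $\fS$, I would lift the $\mathbb P_3$- and $\mathbb P_4$-formulas by cutting. For a corner arc $C_{ij}$ or $\overline C_{ij}$, cut $\fS$ along $e_2$ (Figure~\ref{Fig;tau-v1}(A)) to extract a $\mathbb P_3$; for an essential arc $D_{ij}$, cut along $c_1$ and $c_3$ (Figure~\ref{Fig;essential-P4}(A)) to extract a $\mathbb P_4$. Applying Theorem~\ref{thm:splitU} reduces the identity on $\fS$ to an identity after $\mathbb S_{e_2}^U$ (respectively $\mathbb S_{c_1}^U\circ\mathbb S_{c_3}^U$), and the effect of these upper-cluster splittings on the frozen variables can be computed directly (planned Lemmas~\ref{lem:split_e2}--\ref{lem-splitting-essential-arc}). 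I expect the main obstacle to lie in this program to be the quiver/mutation combinatorics of the previous two paragraphs, specifically the verification that the sub-quiver produced by the long mutation sequence $\overline\mu^{\diamondsuit}_j\circ\mu^{\diamondsuit}_{(i;j-1)}$ on $\Gamma_{\mathbb P_4}$ is indeed type~$A$ with linear orientation, and the corresponding detailed analysis of $\mu_{(j;j-1)}\cdots\mu_{(i-1;j-1)}$ on $\Gamma_{\mathbb P_3}$; everything else in the argument is bookkeeping of Weyl orderings or a direct appeal to the already-established path-sum and splitting-compatibility results.
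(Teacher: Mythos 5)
Your proposal follows the paper's proof essentially step for step: reduce to stated essential arcs via Lemma~\ref{lem-essential-arc}, establish the $\mathbb P_3$ corner-arc formulas by combining the path-sum trace formula (Theorem~\ref{lem-trace-image-cornerarc-P3}) with an ${\rm SL}_{n-1}$ induction (Lemma~\ref{lem:divide2}), handle $\mathbb P_4$ by splitting along $c_5$ and invoking the type-$A$ sub-quiver (Lemma~\ref{lem:quiver2}) together with the type-$A$ expansion formula (Corollary~\ref{cor:cv}), and then lift to general $\fS$ via $\mathbb S_{e_2}^U$ or $\mathbb S_{c_1}^U\circ\mathbb S_{c_3}^U$ and the splitting compatibility of Theorem~\ref{thm:splitU}. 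This is the same decomposition and the same sequence of key lemmas as the paper; the only unverified detail in your sketch is the precise combinatorial description of the bijection in Lemma~\ref{lem:divide2}, which the paper also leaves as a routine check.
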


We will prove Theorem~\ref{thm-skein-inclusion-A} in three steps: 
\begin{enumerate}
    \item  Compute explicit expressions for the corner arcs of $\mathbb P_3$ in terms of the cluster variables in $\mathscr{A}_\omega(\mathbb P_3)$  
   (Lemma \ref{lem-Cij-bar-Cij}, Propositions~\ref{prop-Cij-bar-Cij} and \ref{prop-bar-Cij}).  

   \item Determine the cluster-variable expression for a stated arc connecting two opposite edges of $\mathbb P_4$  
   (Theorem~\ref{thm-P4-Dij}). 

  \item 
To prove Theorem~\ref{thm-skein-inclusion-A}, cut a copy of $\mathbb P_3$ or $\mathbb P_4$ out of $\fS$. Then apply the formulas established in Steps (1) and (2), together with the splitting homomorphisms for the projected ${\rm SL}_n$ skein algebra and upper cluster algebras, and their compatibility as stated in Theorem~\ref{thm:splitU}.
\end{enumerate}

\subsection{Triangle case}\label{sub-triangle-case}
Recall that we defined the barycentric coordinates for the (ideal) triangle 
\(\mathbb{P}_3\), whose three vertices are labeled \(v_1, v_2, v_3\) (see Figure~\ref{Fig;coord_ijk}), in 
\S\ref{sec-traceX}. 
Here, we introduce two different labelings of the vertices 
in $\{(i,j,k)\mid i,j,k\in \mathbb Z_{\geq 0}, i+j+k=n\}$, which apply to two different corner arcs cases (Lemma~\ref{lem-Cij-bar-Cij}, Propositions~\ref{prop-Cij-bar-Cij} and \ref{prop-bar-Cij}).
For any $i,j$ with $0\leq j\leq i\leq n$, we abbreviate $v_{ij}=(n-i,j,i-j)$ and $\overline v_{ji}=(j,n-i,i-j)$ and denote $A_{ij}$ and $\overline A_{ij}$ the quantum cluster variable associated to $v_{ij}$ and $\overline v_{ij}$, respectively. In Particular, we set $A_{00}=A_{n0}=A_{nn}=\overline A_{00}=\overline A_{0n}=\overline A_{nn}=1$.


For $1\leq j\leq i\leq n$, we use $C_{ij}\in\widetilde{\cS}_\omega(\mathbb P_3)\subset \mathscr U_{\omega}(\mathbb P_3)$ to denote the stated arc in Figure~\ref{Cij-P3}. Define $\overline{C}_{ij}$ as the stated arc in $\widetilde{\cS}$ obtained from $C_{ij}$ by reversing its orientation.

\begin{figure}[h]
    \centering
    \includegraphics[width=75pt]{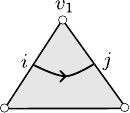}
    \caption{The picture for $C_{ij}$}\label{Cij-P3}
\end{figure}

Using Theorem~\ref{thm-trace-A}(d) and Theorem~\ref{lem-trace-image-cornerarc-P3}, the following is immediate by calculation.

\begin{lemma}\label{lem-Cij-bar-Cij}
\begin{enumerate}[label={\rm (\alph*)}]\itemsep0,3em
For any $i$, we have

\item  $C_{i1}=[A_{i1}\cdot A_{i0}^{-1}\cdot A_{11}^{-1}]\in \mathscr A_{\omega}(\mathbb P_3)$.

\item $C_{ii}=[A_{i0}^{-1}\cdot A_{i-1,0}] \in \mathscr A_{\omega}(\mathbb P_3)$.

\item  $\overline{C}_{ni}=
    [\overline A_{ii}^{-1}\cdot \overline A_{i-1,i}] \in \mathscr A_{\omega}(\mathbb P_3).$

\item $\overline{C}_{ii}=
     [\overline A_{in}^{-1}\cdot \overline A_{i-1,n}] \in \mathscr A_{\omega}(\mathbb P_3).$

\end{enumerate} 
\end{lemma}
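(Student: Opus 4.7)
The strategy is to transfer each of the four claimed identities, which live in $\mathscr A_{\omega}(\mathbb P_3)\subset \Fr(\widetilde{\cS}_\omega(\mathbb P_3))=\Fr(\mathcal A_{\omega}(\mathbb P_3,\lambda))$, into the balanced Fock--Goncharov algebra $\mathcal Z^{\rm bl}_\omega(\mathbb P_3,\lambda)$ via the isomorphism $\psi_\lambda$ of Theorem~\ref{thm-transition-LY}, and then verify each one as a single-monomial identity using Theorem~\ref{lem-trace-image-cornerarc-P3}. By Theorem~\ref{thm-trace-A}\ref{thm-trace-A-d} the diagram $\tr_{\mathbb P_3}=\psi_\lambda\circ\tr_{\mathbb P_3}^A$ commutes and $\psi_\lambda$ is injective, so each identity $C_{ij}=[A^{\bf t}]$ in $\mathscr A_\omega(\mathbb P_3)$ reduces to the corresponding identity
\[
    \tr_{\mathbb P_3}(C_{ij})\;=\;\psi_\lambda\bigl([A^{\bf t}]\bigr)\;=\;Z^{{\bf t}H_\lambda/n\cdot K_\lambda \cdot(\ldots)}
\]
in $\mathcal Z^{\rm bl}_\omega(\mathbb P_3,\lambda)$; concretely, using \eqref{eq-iso-A-balanced-psi} one has $\psi_\lambda(A^{\bf t})=Z^{{\bf t}K_\lambda}$, so the task becomes verifying the vector equation ${\bf t}K_\lambda = n{\bf k}^{\gamma}-{\bf k}$ for the relevant $\gamma$ and ${\bf k}$.

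The key simplification is that in each of the four cases the path set $\mathsf P(\mathbb P_3,v_1,i,j)$ (or its analogue at $v_2$) contains exactly one element:
for $j=1$, the unique path $\gamma$ in $\mathcal N(\mathbb P_3,v_1)$ goes from the point labeled $i$ on $e_3$ straight toward the corner $v_1$ and then exits at the point labeled $1$ on $e_1$; for $j=i$, the unique path $\gamma_{ii}$ goes from $i$ on $e_3$ directly across to $i$ on $e_1$, hugging the edge $e_2$. In both cases Theorem~\ref{lem-trace-image-cornerarc-P3} gives
\[
    \tr_{\mathbb P_3}(C_{ij})\;=\;Z^{n{\bf k}^{\gamma}-{\bf k}},
\]
a single monomial. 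Parts (c) and (d) are handled identically, using the rotational symmetry of the triangle to read off the analogous unique paths at $v_2$; here the barycentric labeling $\overline v_{ji}=(j,n-i,i-j)$ is exactly the relabeling compatible with choosing $v_2$ as the distinguished corner, so the argument for (a) and (b) carries over verbatim with $A_{ij}$ replaced by $\overline A_{ij}$.

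It remains to compare the two sides of ${\bf t}K_\lambda=n{\bf k}^\gamma-{\bf k}$. The right-hand side is read off directly from the picture of $\gamma$ and the definition \eqref{eq-def-k-gamma}: the small vertices lying to the left of $\gamma$ (those strictly on the $v_2$-side of the unique path) contribute the vector ${\bf k}^\gamma$, and ${\bf k}={\bf k}_1=\sum_{i}{\bf k}^{\gamma_{ii}}$ is fixed. The left-hand side is computed using the definition of $K_\lambda$ in \eqref{eq-surgen-exp}: for each small vertex $v=(abc)$, the row $K_\lambda(v,\cdot)$ is given by the skeleton $\operatorname{sk}_\tau(v)$, which in the single-triangle case $\fS=\mathbb P_3$ is a concrete combinatorial object drawn directly on $\mathbb P_3$. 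Plugging ${\bf t}={\bf e}_{v_{i1}}-{\bf e}_{v_{i0}}-{\bf e}_{v_{11}}$ (for case (a)) or ${\bf t}=-{\bf e}_{v_{i0}}+{\bf e}_{v_{i-1,0}}$ (for case (b)) yields a linear combination of skeletons which, after cancellation of edge segments coming from the three frozen vertices on $e_1,e_3$, collapses to exactly $n{\bf k}^\gamma-{\bf k}$.

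The only nontrivial part of this program is the bookkeeping of the skeleton computation in step three: each frozen vertex on the boundary forces a cancellation, and one must check that the resulting vector matches the geometric description of $\gamma$. However, because the relevant monomial is a product of at most three of the $A$'s, and because the paths involved hug a single boundary edge of $\mathbb P_3$, the required cancellations are entirely local and can be read off from Figure~\ref{Fig;coord_ijk}.
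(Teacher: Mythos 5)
Your proposal is correct and follows exactly the route the paper intends by the phrase ``immediate by calculation'': transfer to $\mathcal Z^{\rm bl}_\omega(\mathbb P_3,\lambda)$ via $\psi_\lambda$ and the commutativity $\tr_{\mathbb P_3}=\psi_\lambda\circ\tr^A_{\mathbb P_3}$, invoke Theorem~\ref{lem-trace-image-cornerarc-P3} together with the observation that the path set is a singleton when $j=1$ or $j=i$ (so the quantum trace is a single balanced monomial), and then match exponent vectors $\mathbf t K_\lambda = n\mathbf k^\gamma - \mathbf k$ using the skeleton description of $K_\lambda$. Two small cosmetic issues, neither of which affects correctness: the intermediate display $\psi_\lambda([A^{\mathbf t}]) = Z^{\mathbf t H_\lambda/n\cdot K_\lambda\cdot(\ldots)}$ is garbled (you correct it immediately after to $\psi_\lambda(A^{\mathbf t}) = Z^{\mathbf t K_\lambda}$, which is what is actually used), and the description of (c),(d) as ``rotational symmetry \ldots at $v_2$'' is imprecise --- $\overline C_{ij}$ is the same corner arc at $v_1$ with \emph{reversed} orientation, and the relevant ingredient is the orientation-reversed analogue of the path formula (\cite[Theorem~10.5, Figure~13(B)]{LY23}), for which the alternate barycentric labeling $\overline v_{ji}=(j,n-i,i-j)$ plays the role of the reindexing rather than a move to a different corner.
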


For any $i,j$, we abbreviate $\mu_{v_{ij}}=\mu_{ij}$. For any $j,k$ with $1\leq j<k$, we denote 
\begin{align}\label{def-eq-mu-k;j}
\mu_{(k;j)}=\mu_{kj}\cdots\mu_{k2}\mu_{k1}.
\end{align}

The following is the main result of this subsection.

\begin{proposition}\label{prop-Cij-bar-Cij}
For any $1<j<i$, we have 
$$C_{ij}=[\mu_{(j;j-1)}\cdots \mu_{(i-2;j-1)} \mu_{(i-1;j-1)} (A_{j,j-1})\cdot  A_{i0}^{-1}\cdot A_{jj}^{-1}] \in \mathscr A_{\omega}(\mathbb P_3).$$
\end{proposition}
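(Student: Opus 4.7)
The plan is to combine the path-sum formula of Theorem~\ref{lem-trace-image-cornerarc-P3} with induction on $n$. First, applying Theorem~\ref{thm-trace-A}(d) together with Theorem~\ref{lem-trace-image-cornerarc-P3}, I would identify $C_{ij}$ inside $\mathcal{A}_\omega(\mathbb{P}_3)$ with the sum of Weyl-ordered Laurent monomials $\sum_{p \in \mathsf{P}(\mathbb{P}_3, v_1, i, j)} A_p$, where each $A_p$ records, via the vector $\mathbf{k}^p$, the small vertices lying to the left of $p$. The key geometric observation to exploit is that for $1<j<i$ every path in $\mathsf{P}(\mathbb{P}_3, v_1, i, j)$ is sandwiched between a distinguished minimal path $p_{\min}^{(i,j)}$ and a maximal path $p_{\max}^{(i,j)}$; together these two paths enclose a quadrilateral whose interior small vertices are exactly $v_{j,j-1},\ldots,v_{j,1},\ldots,v_{i-1,1},\ldots,v_{i-1,j-1}$, which are precisely the vertices on which the composite mutation $\mu_{(j;j-1)}\cdots \mu_{(i-2;j-1)}\mu_{(i-1;j-1)}$ acts. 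This coincidence motivates the whole identity.

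Next, I would proceed by induction on $n$, using the bijection of Lemma~\ref{lem:divide2} between $\mathsf{P}(\mathbb{P}_3, v_1, i, j)$ and the disjoint union of the two analogous path-sets for ${\rm SL}_{n-1}$ (namely paths from $i-1$ to $j-1$ and from $i-1$ to $j$ in a smaller triangle obtained by peeling off the outer layer of $\mathcal{N}(\mathbb{P}_3,v_1)$). Under this bijection the sum $\sum_p A_p$ decomposes into two pieces, and each piece should match—up to Weyl-ordering prefactors and frozen monomials—the expression produced by the inductive hypothesis applied to the corner arcs $C_{i-1,j-1}$ and $C_{i-1,j}$ of ${\rm SL}_{n-1}$. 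The outermost mutation $\mu_{(i-1;j-1)}$ is then expected to paste the two pieces together, via the binomial exchange relation, into the single new cluster variable on the right-hand side of the claimed identity. The base cases $j-1=1$ and $j=i-1$ reduce directly to Lemma~\ref{lem-Cij-bar-Cij}.

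The main obstacle will be the detailed quiver analysis needed to justify the pasting step: one must know exactly how the subquiver on $v_{i-1,1},\ldots,v_{i-1,j-1}$ and its neighbours evolves after applying the earlier composite mutations, so that at each step of $\mu_{(i-1;j-1)}$ the exchange polynomial $\prod_u A_u^{[Q(u,k)]_+} + \prod_u A_u^{[-Q(u,k)]_+}$ contributes precisely the two monomials that encode whether a path turns left or goes straight at the corresponding box. This is the content of Lemmas~\ref{lem:quiver0}--\ref{lem:mut2}, and verifying them amounts to a careful but combinatorial tracking of arrow multiplicities across the sequence of mutations. A subsidiary but nontrivial task is to carry all Weyl-ordering scalars through the mutation formula and to check that the global prefactor $Z^{-\mathbf{k}}$ appearing in Theorem~\ref{lem-trace-image-cornerarc-P3} corresponds exactly to the frozen Laurent monomial $A_{i0}^{-1}A_{jj}^{-1}$ under the isomorphism of Theorem~\ref{thm-transition-LY} and Lemma~\ref{lem-matrix-id-LY}, so that the frozen parts match on both sides of the identity. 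Once these two ingredients—the quiver mutation analysis and the scalar bookkeeping—are in place, the induction step closes and the proposition follows.
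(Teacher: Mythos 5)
Your outline matches the paper's proof strategy: identify $C_{ij}$ with the path sum $\sum_p A_p$ via Theorems~\ref{thm-trace-A}(d) and \ref{lem-trace-image-cornerarc-P3} (Lemma~\ref{lem-Cij-path-sum-A}), then induct via the ${\rm SL}_{n-1}$ decomposition of Lemma~\ref{lem:divide2} together with the embedding Lemma~\ref{lem:emb1} and the quiver analysis Lemmas~\ref{lem:quiver0}--\ref{lem:mut2}. One small correction to your description: the binomial exchange that matches the two path-set pieces is produced by the \emph{last} mutation applied, namely $\mu_{j,j-1}$ inside $\mu_{(j;j-1)}$ (the leftmost block), not by $\mu_{(i-1;j-1)}$, which is applied first; this is exactly what Lemmas~\ref{lem:quiver1} and \ref{lem:mut2} set up.
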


The rest of this section is devoted to prove Proposition  \ref{prop-Cij-bar-Cij}.

Recall that, for $1\leq j\leq i\leq n$, we defined the path set $\mathsf P(\mathbb P_3,v_1,i,j)$ in \eqref{def-path-P3-v1}. For each $p\in \mathsf P(\mathbb P_3,v_1,i,j)$, define
\begin{align}\label{def-path-A-monomial}
    A_p= \psi^{-1}_{\mathbb P_3}(Z^{n{\bf k}^p - {\bf k}_1})\in\mathcal A_\omega(\mathbb P_3),
\end{align}
where $\psi^{-1}_{\mathbb P_3}$ is defined in \eqref{eq-iso-A-balanced-psi-inv}, ${\bf k}_1$ is defined in \eqref{def:proj}, and ${\bf k}^p$ is defined in \eqref{eq-def-k-gamma}.
It follows from the definition of $\psi^{-1}_{\mathbb P_3}$ that
$A_p$ is a monomial.

Theorem~\ref{thm-trace-A}(d) and Theorem~\ref{lem-trace-image-cornerarc-P3} imply the following 
\begin{lemma}\label{lem-Cij-path-sum-A}
    For any $1\leq j\leq i$, we have 
    $$C_{ij} = \sum_{p\in\mathsf{P}(\mathbb P_3,v_1,i,j)} A_p \in \mathcal A_{\omega}(\mathbb P_3).$$
\end{lemma}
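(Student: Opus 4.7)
The statement is essentially a direct transport of Theorem~\ref{lem-trace-image-cornerarc-P3} (the $\mathcal{X}$-version trace formula for a corner arc on $\mathbb{P}_3$) through the isomorphism $\psi_{\mathbb{P}_3}$ to the $\mathcal{A}$-side. My plan is to invoke the commutativity of diagram~\eqref{eq-compability-tr-A-X-diag} to reduce to a computation already done.

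First I would identify $C_{ij}$ with the counterclockwise stated corner arc $\alpha(v_1)_{ij}$ of Figure~\ref{Fig;coord_ijk}/Figure~\ref{Cij-P3}. Since $\mathbb{P}_3$ is a polygon, Theorem~\ref{thm-trace-A}\ref{intro-thm-trace-A-b} guarantees that ${\rm tr}_{\mathbb{P}_3}^A$ is injective, so $\widetilde{\cS}_\omega(\mathbb{P}_3)=\overline{\cS}_\omega(\mathbb{P}_3)$, and we may regard $C_{ij}$ as the element ${\rm tr}_{\mathbb{P}_3}^A(C_{ij})\in \mathcal{A}_\omega(\mathbb{P}_3)$ under the identification fixed in \S\ref{sec-seed-inside}.

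Next, by the commutative triangle \eqref{eq-compability-tr-A-X-diag}, we have
\[
\psi_{\mathbb{P}_3}\bigl({\rm tr}_{\mathbb{P}_3}^A(C_{ij})\bigr) \;=\; {\rm tr}_{\mathbb{P}_3}(C_{ij}) \;=\; \sum_{p\in \mathsf{P}(\mathbb{P}_3,v_1,i,j)} Z^{\,n{\bf k}^p - {\bf k}_1},
\]
where the second equality is Theorem~\ref{lem-trace-image-cornerarc-P3} (noting that the ${\bf k}$ appearing there coincides with ${\bf k}_1$ as observed just after \eqref{eq-k-sum-k1}). On the other hand, the definition \eqref{def-path-A-monomial} of $A_p$ via $\psi_{\mathbb{P}_3}^{-1}$ gives $\psi_{\mathbb{P}_3}(A_p)=Z^{\,n{\bf k}^p-{\bf k}_1}$, so
\[
\psi_{\mathbb{P}_3}\Bigl(\sum_{p\in\mathsf{P}(\mathbb{P}_3,v_1,i,j)} A_p\Bigr)\;=\;\sum_{p\in\mathsf{P}(\mathbb{P}_3,v_1,i,j)} Z^{\,n{\bf k}^p-{\bf k}_1}\;=\;\psi_{\mathbb{P}_3}\bigl({\rm tr}_{\mathbb{P}_3}^A(C_{ij})\bigr).
\]
Since $\psi_{\mathbb{P}_3}$ is an algebra isomorphism onto $\mathcal{Z}^{\rm bl}_\omega(\mathbb{P}_3)$ (Theorem~\ref{thm-transition-LY}), and in particular injective, we conclude that ${\rm tr}_{\mathbb{P}_3}^A(C_{ij})=\sum_p A_p$ in $\mathcal{A}_\omega(\mathbb{P}_3)$, which under our identification is the desired equality $C_{ij}=\sum_p A_p$.

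There is no real obstacle here: the only subtleties are (i) checking that the vector called ${\bf k}$ in Theorem~\ref{lem-trace-image-cornerarc-P3} agrees with our ${\bf k}_1=\sum_i {\bf k}^{\gamma_{ii}}$ (already flagged in the discussion around \eqref{eq-k-sum-k1}), and (ii) verifying that the counterclockwise orientation convention for $C_{ij}$ matches that used to state Theorem~\ref{lem-trace-image-cornerarc-P3}, so that the path set $\mathsf{P}(\mathbb{P}_3,v_1,i,j)$ (going from $i\in e_3$ to $j\in e_1$) is indeed the one indexing the expansion.
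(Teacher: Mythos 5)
Your proof is correct and is precisely the argument the paper has in mind when it says Theorem~\ref{thm-trace-A}(d) and Theorem~\ref{lem-trace-image-cornerarc-P3} ``imply'' the lemma: transport the $\mathcal{X}$-version path formula across the isomorphism $\psi_{\mathbb{P}_3}$ using the commutative triangle \eqref{eq-compability-tr-A-X-diag}, the definition \eqref{def-path-A-monomial}, and the identity ${\bf k}={\bf k}_1$. You have simply spelled out the details the paper leaves implicit.
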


For any $n\geq 3$, consider the  vertices sets $V_{\mathbb P_3}^n=\{(i,j,k)\mid i,j,k\in \mathbb Z_{\geq 0}, i+j+k=n\}=V_{\mathbb P_3}\sqcup\{v_1,v_2,v_3\}$ and $V_{\mathbb P_3}^{n-1}=\{(i,j,k)\mid i,j,k\in \mathbb Z_{\geq 0}, i+j+k=n-1\}$ associated with ${\rm SL}_n$ and ${\rm SL}_{n-1}$, respectively. 
The map defined by $(i,j,k)\mapsto (i,j,k+1)$ gives a natural embedding $\iota: V_{\mathbb P_3}^{n-1}\hookrightarrow V_{\mathbb P_3}^{n}$. This embedding naturally induces an $R$-module inclusion of quantum tori (see \eqref{def-iota-from-n-1-to-n})
$$\iota: \mathcal{A}_{\omega}(\mathbb P_3,n-1)\hookrightarrow \mathcal{A}_{\omega}(\mathbb P_3,n),$$ 
where $\mathcal{A}_{\omega}(\mathbb P_3,n-1)$ and $\mathcal{A}_{\omega}(\mathbb P_3,n)$ denote the quantum tori associated with the triangle $\mathbb P_3$ for ${\rm SL}_{n-1}$ and ${\rm SL}_n$, respectively. 
Note that $\iota$ is not an algebra embedding.

For any $i,j$ with $1\leq j\leq i\leq n-1$, denote by $A^{(n-1)}_{n-1-i,j}$ the cluster variable in $\mathcal{A}_{\omega}(\mathbb P_3,n-1)$ associated with the vertex $(i,j,n-1-i-j)\in V_{\mathbb P_3}^{n-1}$ with the convention that $A^{(n-1)}_{0,0}=
A^{(n-1)}_{n-1,0}=A^{(n-1)}_{0,n-1}=1$.
For any sequence of integers $(k_{ij}),0\leq j\leq i\leq n-1$, then 
\begin{align}\label{def-iota-from-n-1-to-n}
    \iota\left(\Big[ \prod_{0\leq j\leq i\leq n-1} \big(A_{ij}^{(n-1)}\big)^{k_{ij}}\Big] \right) = \Big[ \prod_{0\leq j\leq i\leq n-1} (A_{i+1,j})^{k_{ij}}\Big].
\end{align}

Denote
\(
v_1^{(n-1)} = (n-1,0,0) \in V_{\mathbb P_3}^{\,n-1},
\)
regarded as the vertex \(v_1\) of \(\mathbb P_3\) for \({\rm SL}_{n-1}\).
For any \(1 \le j \le i \le n-1\), let
\(
\mathsf P^{\,n-1}\!\bigl(\mathbb P_3, v_1^{(n-1)}, i, j\bigr)
\)
denote the associated path set for \({\rm SL}_{n-1}\).
Moreover, for any \(1 \le j \le i \le n-1\) and any path
\(p \in \mathsf P^{\,n-1}\!\bigl(\mathbb P_3, v_1^{(n-1)}, i, j\bigr)\),
write
\(
A_p^{(n-1)}
\)
for the monomial in
\(\mathcal A_\omega(\mathbb P_3, n-1)\)
defined via \eqref{def-path-A-monomial} for \({\rm SL}_{n-1}\).

Notation defaults to ${\rm SL}_n$ unless otherwise specified for ${\rm SL}_{n-1}$.


\begin{lemma}\label{lem:divide2}
Under the embedding $\iota: V_{\mathbb P_3}^{n-1}\hookrightarrow V_{\mathbb P_3}^{n}$, for any $i,j$ with $2\leq j<i\leq n$, there is a one-to-one correspondence 
$$\mathsf P^{n-1}(\mathbb P_3,v_1^{(n-1)},i-1,j-1)\sqcup \mathsf P^{n-1}(\mathbb P_3,v_1^{(n-1)},i-1,j)\xrightarrow[1:1]{\iota} \mathsf P(\mathbb P_3,v_1,i,j)
.$$
Moreover, under this bijection,

\begin{enumerate}[label={\rm (\alph*)}]\itemsep0,3em

\item for any $p\in \mathsf P^{n-1}(\mathbb P_3,v^{(n-1)}_1,i-1,j-1)$, we have 
$$A_{\iota(p)}=\begin{cases}
\iota(A_{p}^{(n-1)}) & \mbox{ if $j\geq 3$,}\\
[A_{10}\cdot \iota(A_{p}^{(n-1)})] & \mbox{ if $j=2$.}
\end{cases}$$
 \item for any $p\in \mathsf P^{n-1}(\mathbb P_3,v'_1,i-1,j)$, we have 
 $$A_{\iota(p)}=[A_{j+1,j} \cdot A_{jj}^{-1} \cdot A_{j,j-1}^{-1}\cdot A_{j-1,j-1}\cdot \iota(A_{p}^{(n-1)})].$$
 \end{enumerate}
\end{lemma}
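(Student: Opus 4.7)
The plan is to realise the network $\mathcal{N}^{n-1}(\mathbb{P}_3,v_1^{(n-1)})$ for $\mathrm{SL}_{n-1}$ as the sub-network of $\mathcal{N}(\mathbb{P}_3,v_1)$ for $\mathrm{SL}_n$ cut out by the coordinate shift $\iota\colon(i,j,k)\mapsto(i,j,k+1)$. This identifies the $(n-1)$-triangulation with the sub-region $\{k\geq 1\}$ of the $n$-triangulation, so $\iota$ excludes precisely the strip of $2n-1$ small triangles adjacent to $e_1$, and the small vertices in $V_{\mathbb{P}_3}^{n}$ left out of the image are exactly $v_{11},v_{22},\dots,v_{n-1,n-1}$ on $e_1$. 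Under this identification $e_2^{(n-1)}$ and $e_3^{(n-1)}$ lie on the corresponding boundary edges of $\mathbb{P}_3$, while $e_1^{(n-1)}$ becomes the internal row of $\mathcal{N}(\mathbb{P}_3,v_1)$ at $k=1$, whose $n-1$ vertices we number $1,\dots,n-1$ following the convention of Figure~\ref{dualquiver}.

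To build the bijection, I would analyse a path $p\in\mathsf{P}(\mathbb{P}_3,v_1,i,j)$ (with $2\leq j<i\leq n$) at the unique moment it leaves $\{k\geq 1\}$ and enters the excluded strip. The orientation convention near $e_1$ forces this exit to occur at one of the two inner-row vertices labelled $j-1$ or $j$, and inside the strip the endpoint $j$ on $e_1$ is then reached by one of two combinatorially distinct descent patterns. Truncating the descent produces a well-defined path in $\mathcal{N}^{n-1}$ starting at $i-1$ on $e_3^{(n-1)}$ and ending at either $j-1$ or $j$ on $e_1^{(n-1)}$, and the construction is manifestly reversible, giving the decomposition claimed in the lemma.

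Next I would identify the monomial $A_{\iota(p)}$ by computing its exponent vector $n\mathbf{k}^{\iota(p)}-\mathbf{k}_1$ directly and comparing it with $n\iota(\mathbf{k}^{p})-\iota(\mathbf{k}_1^{(n-1)})$, which is the exponent vector of $\iota(A_p^{(n-1)})$ under the inclusion \eqref{def-iota-from-n-1-to-n}. Their difference splits into two contributions: the extra left-vertices picked up by the descent inside the $n$-triangulation, and the shift $\mathbf{k}_1-\iota(\mathbf{k}_1^{(n-1)})$, which is supported on the excluded row $\{v_{11},\ldots,v_{n-1,n-1}\}$ and takes value $n-i$ on each $v_{ii}$. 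A case-by-case inspection of the descent patterns from the previous paragraph then shows that these two contributions combine to produce: no correction in case~(a) with $j\geq 3$; the single extra factor $A_{10}$ in case~(a) with $j=2$; and the four extra factors $A_{j+1,j},A_{jj}^{-1},A_{j,j-1}^{-1},A_{j-1,j-1}$ (Weyl-ordered together with $\iota(A_p^{(n-1)})$) in case~(b).

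The main obstacle is the careful bookkeeping of the $\omega^{1/2}$-phases in the Weyl orderings: after matching the monomial supports vertex by vertex, one must still verify that the sum of the pairwise Weyl phases on the right-hand side (obtained by commuting the extra factors past each other and past $\iota(A_p^{(n-1)})$) reproduces the single phase coming from $A_{\iota(p)}=\psi_{\mathbb{P}_3}^{-1}(Z^{n\mathbf{k}^{\iota(p)}-\mathbf{k}_1})$. This reduces to a finite identity among entries of the anti-symmetric form $\Pi_{\mathbb{P}_3}$ restricted to the corner-strip near $v_1$, which I would handle by combining the explicit values of $\Pi_{\mathbb{P}_3}$ on the relevant interior/boundary pairs with the compatibility \eqref{eq-prod-PiQ} between $\Pi_{\mathbb{P}_3}$ and $Q_{\mathbb{P}_3}$.
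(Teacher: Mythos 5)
Your approach fills in exactly what the paper's one-line proof (``the one-to-one correspondence is immediate; (a) and (b) follow by direct calculation'') leaves implicit: identify the $\mathrm{SL}_{n-1}$ network with the $\{k\geq 1\}$ sub-region of the $\mathrm{SL}_n$ network, split each path according to how it crosses the bottom strip, and compare the exponent vectors $n\mathbf k^{\iota(p)}-\mathbf k_1$ against $n\iota(\mathbf k^{p})-\iota(\mathbf k_1^{(n-1)})$ together with the extra supporting vertices. The one simplification you should make is that the $\omega^{1/2}$-phase bookkeeping you single out as the main obstacle is automatic and needs no appeal to $\Pi_{\mathbb P_3}$ or \eqref{eq-prod-PiQ}: both $A_{\iota(p)}$ and $A_p^{(n-1)}$ are Weyl-ordered monomials $A^{\mathbf t}$ by \eqref{def-path-A-monomial} and \eqref{eq-iso-A-balanced-psi-inv}, the inclusion \eqref{def-iota-from-n-1-to-n} sends Weyl-ordered monomials to Weyl-ordered monomials, and the outer bracket $[\,\cdot\,]$ in (a)--(b) merely re-normalises a product of $q$-commuting Weyl-ordered monomials, so equality of exponent vectors already forces equality of the elements.
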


\begin{proof}
    The one-to-one correspondence is immediate. (a) and (b) follow by direct calculation.
\end{proof}

We denote by $Q_{\mathbb P_3}^{(n-1)}$ the corresponding anti-symmetric matrix for ${\rm SL}_{n-1}$. Let $k_1,\dots,k_m$ (with $m\ge 0$) be mutable vertices in $V_{{\mathbb P}_3}^{(n-1)}$, and set
\[
Q^{(n-1)'} = \mu_{k_1}\cdots \mu_{k_m}\bigl(Q_{\mathbb P_3}^{(n-1)}\bigr), 
\qquad
Q' = \mu_{\iota(k_1)}\cdots \mu_{\iota(k_m)}\bigl(Q_{\mathbb P_3}\bigr).
\]
Then, for any small vertices $u,v \in V_{{\mathbb P}_3}^{(n-1)}$ with 
$\{u,v\} \not\subset \{(i,j,k)\in V_{{\mathbb P}_3}^{(n-1)} \mid k=0\}$, we have
\(
Q^{(n-1)'}(u,v) = Q'(\iota(u),\iota(v)).
\)
Moreover, for any 
$w \in \{v_{kk} \mid k=1,\dots,n-1\}$ and 
$w' \in \{v_{st} \mid 3 \le s \le n,\, 1 \le t \le s-2\}$, we have
\(
Q'(w,w') = 0.
\)

From these observations, we obtain the following lemma.

\begin{lemma}\label{lem:emb1}
Under the embedding $\iota: \mathcal{A}_{\omega}(\mathbb P_3,n-1)\hookrightarrow \mathcal{A}_{\omega}(\mathbb P_3,n)$, for any $i,j$ with $3\leq j< i\leq n$, we have
\begin{align*}
    &\iota\Big(\Big[\mu_{(j-1;j-2)}\cdots \mu_{(i-3;j-2)} \mu_{(i-2;j-2)} (A^{(n-1)}_{j-1,j-2})\cdot (A_{i-1,0}^{(n-1)})^{-1}\cdot (A_{j-1,j-1}^{(n-1)})^{-1}\Big] \Big)\\
    =&\Big[\mu_{(j;j-2)}\cdots \mu_{(i-2;j-2)} \mu_{(i-1;j-2)} (A_{j,j-2})\cdot
    A_{i,0}^{-1}\cdot A_{j,j-1}^{-1}\Big], \text{ and}\\
    &\iota\Big(\Big[\mu_{(j;j-1)}\cdots \mu_{(i-3;j-1)} \mu_{(i-2;j-1)} (A^{(n-1)}_{j,j-1})\cdot  (A_{i-1,0}^{(n-1)})^{-1}\cdot (A_{j,j}^{(n-1)})^{-1}\Big]\Big)\\
    =&\Big[\mu_{(j+1;j-1)}\cdots \mu_{(i-2;j-1)} \mu_{(i-1;j-1)} (A_{j+1,j-1})
    \cdot A_{i,0}^{-1}\cdot A_{j+1,j}^{-1}\Big]
    \quad \text{when $i> j+1$}.
\end{align*}
   
\end{lemma}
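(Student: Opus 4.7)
The plan is to prove both identities simultaneously by induction on the length of the mutation sequences, exploiting the compatibility of the embedding $\iota$ with mutations at interior vertices. The crucial input is the $Q$-matrix compatibility already recorded just before the lemma: after any sequence of ``parallel'' mutations at the two levels, one has $Q^{(n-1)'}(u,v) = Q'(\iota(u),\iota(v))$ for all pairs $\{u,v\}$ not entirely contained in the boundary edge $k=0$ of the $(n-1)$-triangulation, and $Q'(v_{kk}, v_{st}) = 0$ for $1 \le k \le n-1$, $3 \le s \le n$, $1 \le t \le s-2$. The mutations appearing in our sequences act on vertices $v_{k+1,s}$ with $3 \le k+1 \le n-1$ and $1 \le s \le j-2 \le k-1$, which all lie in the range $\{v_{st} \mid 3 \le s \le n,\, 1 \le t \le s-2\}$. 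The vanishing above therefore guarantees that the vertices $v_{kk}$---which are frozen at level $n-1$ but become \emph{new} mutable interior vertices at level $n$---remain disconnected from the working vertices throughout, and contribute only as multiplicative constants.

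I would induct on the number of mutations applied. The base case reduces to the direct identifications provided by the index-shift formula \eqref{def-iota-from-n-1-to-n}: namely, $\iota(A^{(n-1)}_{j-1,j-2}) = A_{j,j-2}$, $\iota(A^{(n-1)}_{i-1,0}) = A_{i,0}$, and $\iota(A^{(n-1)}_{j-1,j-1}) = A_{j,j-1}$ for identity (1), and similarly $\iota(A^{(n-1)}_{j,j-1}) = A_{j+1,j-1}$, $\iota(A^{(n-1)}_{j,j}) = A_{j+1,j}$ for identity (2). For the inductive step, I would apply the next mutation on each side simultaneously: using the exchange formula \eqref{eq-def-mu-step2} together with the $Q$-matrix compatibilities recalled above, the numerators and denominators of the two exchange polynomials match under $\iota$, because the new mutable vertices $v_{kk}$ contribute no exchange terms by the disconnection property.

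The main obstacle lies in the bookkeeping of the Weyl-ordering scalars. Although the combinatorics of the mutations transports faithfully under $\iota$, the normalizing scalars $\xi^{?}$ appearing in \eqref{Weyl-A} depend on the full anti-symmetric matrices $\Pi_{\mathbb P_3}^{(n-1)}$ and $\Pi_{\mathbb P_3}$, which differ in how frozen variables on the $k=0$ edge interact with the interior. The statement of the lemma has been engineered so that the prescribed frozen correction factors ($A_{i,0}^{-1} A_{j,j-1}^{-1}$ in identity (1), $A_{i,0}^{-1} A_{j+1,j}^{-1}$ in identity (2)) precisely absorb this discrepancy. Verifying that this absorption persists through the entire mutation sequence requires careful use of the matrix mutation formula \eqref{eq-for-P'} and of Lemma~\ref{lem-key-com-mutation}, and constitutes the most tedious part of the argument; in practice one fixes a convenient ordering of the mutations (for instance, performing $\mu_{(j;j-2)}$ first, then $\mu_{(j+1;j-2)}$, and so on) so that the scalar corrections telescope along the sequence.
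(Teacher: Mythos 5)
Your core strategy is right and matches what the paper implicitly invokes: the two $Q$-matrix observations stated immediately before the lemma (the compatibility $Q^{(n-1)\prime}(u,v)=Q'(\iota(u),\iota(v))$ away from the $k=0$ boundary, and the vanishing $Q'(v_{kk},v_{st})=0$) are exactly what makes the exchange exponent vectors at level $n-1$ match their images under $\iota$ at level $n$, at every step of the mutation sequence. The paper does not spell out the induction, so your expansion of that step is the correct fleshing-out.

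However, your third paragraph misidentifies the mechanism for the Weyl scalars, and the references to Lemma~\ref{lem-key-com-mutation} and \eqref{eq-for-P'} are off target. There is no ``discrepancy'' between $\Pi^{(n-1)}_{\mathbb P_3}$ and $\Pi_{\mathbb P_3}$ that the frozen factors absorb: the frozen correction factors on the two sides correspond \emph{exactly} under $\iota$ (by \eqref{def-iota-from-n-1-to-n}, $\iota\bigl((A^{(n-1)}_{i-1,0})^{-1}(A^{(n-1)}_{j-1,j-1})^{-1}\bigr)=A_{i,0}^{-1}A_{j,j-1}^{-1}$, and likewise for identity (2)). They are simply carried along; they are not normalizing compensators. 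The reason the scalars require no separate bookkeeping is that $\iota$ is \emph{defined}, via \eqref{def-iota-from-n-1-to-n}, to carry Weyl-ordered monomials (in the initial variables) to Weyl-ordered monomials. The mutated cluster variable, by the quantum Laurent phenomenon, is a finite $R$-linear combination of such Weyl-ordered monomials, and the bracket in the lemma distributes over that sum: if $P=\sum_m A^{\mathbf k_m}$ quasi-commutes uniformly with a monomial $B$, then $[P\cdot B]=\sum_m A^{\mathbf k_m + \mathbf b}$, whence $\iota([P\cdot B])=\sum_m A^{\iota(\mathbf k_m+\mathbf b)}=[\iota(P)\cdot\iota(B)]$. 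The scalar normalization is therefore automatic once the exponent vectors transport; neither \eqref{eq-for-P'} (which governs $\Pi$-mutation within a fixed level) nor Lemma~\ref{lem-key-com-mutation} (which relates $H$, $K$, and $\Pi$ at a fixed level) plays a role in this cross-level comparison.

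So: correct skeleton, correct identification of the two $Q$-facts as the load-bearing input, but replace the ``absorption'' picture with the observation that $\iota$ preserves Weyl-ordered monomials by fiat and that both sides of the identity, and every intermediate mutation, are sums of such monomials with matching exponent vectors.
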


 Lemmas \ref{lem:quiver0} and \ref{lem:quiver3} follow directly from a computation of the quiver mutation.

\begin{lemma}\label{lem:quiver0}
For $i$ with $i>3$, in the quiver $Q'=\mu_{31}\cdots \mu_{i-2,1} \mu_{i-1,1}(Q_{\mathbb P_3})$, we have $$Q'(v_{21},v)=\begin{cases}
        -1 & \mbox{ if $v=v_{10},v_{22}$ or $v_{i1}$,}\\
        1 & \mbox{ if $v=v_{11}$ or $v_{31}$,}\\
        0 & \mbox{ otherwise.}
    \end{cases}$$
\end{lemma}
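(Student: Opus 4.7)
The plan is to track the mutation sequence $\mu_{31}\mu_{41}\cdots\mu_{i-1,1}$ step by step starting from the explicit quiver $Q_{\mathbb{P}_3}$ of Figure~\ref{Fig;coord_ijk}, applying the rightmost mutation $\mu_{i-1,1}$ first. The starting point is to record the initial adjacency of $v_{21}$ in $Q_{\mathbb{P}_3}$: as an interior vertex of the $n$-triangulation, $v_{21}$ has exactly six neighbors $v_{10},v_{11},v_{20},v_{22},v_{31},v_{32}$, each of weight $\pm 1$, whose orientations are read off from the arrows of Figure~\ref{Fig;coord_ijk} via \eqref{eq-def-Q-lambda-re}. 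In particular, the arrows $v_{21}\leftrightarrow v_{20}$ and $v_{21}\leftrightarrow v_{32}$ are present initially, so these must be cancelled during the mutation process to match the statement of the lemma (which lists no entry at $v_{20}$ or $v_{32}$).

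The key observation, to be established by induction on $m=1,2,\ldots,i-3$, is that the partial product $\mu_{i-m,1}\circ\cdots\circ\mu_{i-1,1}$ leaves the arrows incident to $v_{21}$ unchanged, while propagating a single ``long'' arrow from $v_{i-m,1}$ to $v_{i,1}$ down the column of $v_{k,1}$'s. The base case $m=1$ is immediate, since $v_{21}$ is not adjacent to $v_{i-1,1}$ in $Q_{\mathbb{P}_3}$ for $i-1\ge 4$. The inductive step proceeds by applying the mutation rule \eqref{eq-mutation-Q} locally at $v_{i-m,1}$: the only newly created arrows are between pairs of vertices already adjacent to $v_{i-m,1}$, and direct inspection shows that these pairs all lie in a small neighborhood of $v_{i-m,1}$ (involving $v_{i-m-1,1}, v_{i-m,0}, v_{i-m,2}, v_{i-m+1,0}$, and the transported vertex $v_{i,1}$), none of which touches $v_{21}$ while $i-m\ge 4$. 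In parallel one must verify that the zigzag of arrows among the $v_{k,1}$'s and their immediate neighbors $v_{k,0}, v_{k,2}$ evolves in the controlled manner needed for the next mutation to behave similarly.

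Once the induction reaches $m=i-3$, the local neighborhood of $v_{31}$ in the intermediate quiver $\mu_{41}\cdots\mu_{i-1,1}(Q_{\mathbb{P}_3})$ can be written down explicitly; in particular, $v_{31}$ acquires an arrow to $v_{i,1}$ from the cumulative propagation. Applying the final mutation $\mu_{31}$ is then a purely local computation via \eqref{eq-mutation-Q}: the sign at $v_{31}$ flips, giving $Q'(v_{21},v_{31})=+1$; the composed path $v_{21}\to v_{31}\to v_{i,1}$ produces the new arrow $Q'(v_{21},v_{i,1})=-1$; and the newly created arrows through $v_{31}$ into $v_{20}$ and $v_{32}$ cancel exactly the original arrows $v_{21}\leftrightarrow v_{20}$ and $v_{21}\leftrightarrow v_{32}$, leaving $Q'(v_{21},v_{20})=Q'(v_{21},v_{32})=0$. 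The remaining entries at $v_{10},v_{11},v_{22}$ are unaffected (these vertices are not both adjacent to $v_{31}$ in any pertinent way) and so retain their initial values from $Q_{\mathbb{P}_3}$, yielding the listed signs.

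The main obstacle will be the combinatorial bookkeeping in the inductive step: one must confirm that iterated mutations at $v_{k,1}$ for $k\ge 4$ never create, via cascading compositions, a new arrow between $v_{21}$ and a distant vertex, and that the ``long arrow'' to $v_{i,1}$ really does propagate cleanly from $v_{i-1,1}$ down to $v_{31}$ without splintering into other arrows that interfere with $v_{21}$. Once this local-propagation picture is made precise, the lemma reduces to the single explicit application of $\mu_{31}$ described above.
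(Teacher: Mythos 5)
Your proposal is correct and follows essentially the same route the paper takes: the lemma coincides with Lemma~\ref{lem:mutation3}(b) specialized to $j=2$, $s=2$, whose proof in the appendix carries out exactly the induction on the number of applied mutations that you sketch (built up from the local computations in Lemmas~\ref{lem:mutation1} and \ref{lem:mutation2}). One small slip in the write-up: the composed path through $v_{31}$ that produces the arrow to $v_{i1}$ actually runs $v_{i1}\to v_{31}\to v_{21}$ rather than $v_{21}\to v_{31}\to v_{i,1}$ (since $Q(v_{31},v_{21})=1$ and $Q(v_{31},v_{i1})=-1$ in $\mu_{41}\cdots\mu_{i-1,1}(Q_{\mathbb P_3})$), though this direction reversal does not affect your (correct) conclusion $Q'(v_{21},v_{i1})=-1$.
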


\begin{lemma}\label{lem:quiver3}
  For any $i,j$ with $i=j+1>3$, in the quiver $Q'=\mu_{(j;j-1)}(Q_{\mathbb P_3})$, we have $$Q'(v_{j,j-1},v)=\begin{cases}
        1 & \mbox{ if $v=v_{j,j}$ or $v_{j,j-2}$,}\\
        -1 & \mbox{ if $v=v_{j0},v_{j-1,j-1}$ or $v_{j+1,j}$,}\\
        0 & \mbox{ otherwise.}
    \end{cases}$$
\end{lemma}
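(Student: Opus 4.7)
The lemma is a direct finite quiver-mutation calculation, analogous in spirit to Lemma~\ref{lem:quiver0} but for the ``horizontal'' mutation sequence $\mu_{(j;j-1)}=\mu_{j,j-1}\cdots\mu_{j,2}\mu_{j,1}$ along the row at height $j$ in the barycentric $n$-triangulation of $\mathbb P_3$.

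The strategy is to apply the mutations one at a time and track the quiver in a neighborhood of the row $\{v_{j,1},\ldots,v_{j,j-1}\}$. First I would read off, directly from Figure~\ref{Fig;coord_ijk}, the initial arrows in $Q_{\mathbb P_3}$: each interior vertex $v_{j,t}$ with $1\le t\le j-1$ is joined by weight-$1$ arrows to its row neighbors $v_{j,t\pm 1}$ and to the four diagonal vertices $v_{j-1,t-1},v_{j-1,t},v_{j+1,t},v_{j+1,t+1}$, with signs prescribed by the orientation convention. Crucially, in $Q_{\mathbb P_3}$ distinct row vertices $v_{j,t},v_{j,t'}$ with $|t-t'|\ge 2$ are not adjacent, and no row vertex is initially adjacent to the edge-$e_3$ vertex $v_{j,0}$ except $v_{j,1}$.

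Next I would establish, by induction on $s\in\{1,\ldots,j-2\}$, a description of the intermediate quiver $Q^{(s)}:=\mu_{j,s}\cdots\mu_{j,1}(Q_{\mathbb P_3})$ near the not-yet-mutated vertex $v_{j,s+1}$. The inductive hypothesis is that in $Q^{(s)}$ the vertex $v_{j,s+1}$ is joined only to $v_{j,s}$, $v_{j,s+2}$, $v_{j-1,s+1}$, $v_{j+1,s+2}$, and $v_{j,0}$, with prescribed signs, and that the diagonal neighbors $v_{j-1,s}$ and $v_{j+1,s+1}$ have been disconnected from $v_{j,s+1}$ by cancellation against new arrows produced by $\mu_{j,s}$ through~\eqref{eq-mutation-Q}. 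Specializing to $s=j-2$ yields the arrow configuration at $v_{j,j-1}$ just before the final mutation, and the last mutation $\mu_{j,j-1}$ reverses all of them, producing the five signs stated.

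The main obstacle is bookkeeping: each intermediate mutation $\mu_{j,s}$ creates shortcut arrows between many previously unconnected pairs of vertices, and one must verify that the precise cancellations predicted by the induction actually occur. To control this I would separate the relevant neighbors of the row into three independent propagation tracks---the row neighbors, the diagonal neighbors, and the boundary-edge vertex $v_{j,0}$---each evolving under its own rule, and carry out an explicit base-case calculation for $j=3$ (where the sequence has only two steps) to pin down the signs before feeding them into the induction.
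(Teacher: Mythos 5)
Your proposal is correct, and it coincides with what the paper intends: the paper offers no proof for this lemma, stating only that it ``follows directly from a computation of the quiver mutation,'' and your induction on $s$ tracking the five-vertex neighborhood of the frontier vertex $v_{j,s+1}$ in $Q^{(s)}=\mu_{j,s}\cdots\mu_{j,1}(Q_{\mathbb P_3})$ is exactly the natural way to organize that computation. I spot-checked your inductive hypothesis: with the orientation read off from Figure~\ref{Fig;coord_ijk} (in-arrows to $v_{it}$ from the NW, S, E neighbors, out-arrows to SE, N, W), the signs in $Q^{(s)}$ at $v_{j,s+1}$ come out as $-1$ on $v_{j,s},v_{j,s+2}$ and $+1$ on $v_{j-1,s+1},v_{j+1,s+2},v_{j,0}$; the inductive step kills the $v_{j-1,s+1}$ and $v_{j+1,s+2}$ legs of $v_{j,s+2}$ by cancellation, propagates the $v_{j,0}$ connection one step to the right, and the 2-paths through $v_{j,s+1}$ to $v_{j,s}$ have mismatched signs so produce nothing; finally $\mu_{j,j-1}$ negates the row, giving exactly the stated values. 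One small presentational point: the induction variable should be $s\in\{1,\dots,j-2\}$ with base case $s=1$ (the calculation at $v_{j,2}$ after $\mu_{j,1}$), and that base case is independent of $j$; pinning it down via the $j=3$ instance, as you suggest, is fine since the two coincide there.
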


\begin{lemma}\label{lem:quiver1}
  For any $i,j$ with $i>j+1>3$, in the quiver $Q'=\mu_{(j;j-1)}\cdots \mu_{(i-2;j-1)} \mu_{(i-1;j-1)}(Q_{\mathbb P_3})$, we have $$Q'(v_{j,j-1},v)=\begin{cases}
        -1 & \mbox{ if $v=v_{j+1,j-1}$ or $v_{j-1,j-1}$,}\\
        1 & \mbox{ if $v=v_{j,j-2}$ or $v_{j,j}$,}\\
        0 & \mbox{ otherwise.}
    \end{cases}$$
\end{lemma}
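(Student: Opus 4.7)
The approach is induction on $i-j$, built around the crucial observation that $v_{j,j-1}$ is mutated exactly once during the whole composition $\mu_{(j;j-1)}\cdots \mu_{(i-1;j-1)}$: namely, as the leftmost factor $\mu_{j,j-1}$ inside $\mu_{(j;j-1)}=\mu_{j,j-1}\mu_{j,j-2}\cdots\mu_{j,1}$, which is the very last mutation applied to $Q_{\mathbb P_3}$. Denoting by $Q''$ the intermediate quiver obtained just before this final mutation, the standard mutation rule gives $Q'(v_{j,j-1},v)=-Q''(v_{j,j-1},v)$ for every $v$. Hence the task reduces to showing that $Q''(v_{j,j-1},v)$ equals $-1$ for $v\in\{v_{j+1,j-1},v_{j-1,j-1}\}$, equals $1$ for $v\in\{v_{j,j-2},v_{j,j}\}$, and vanishes otherwise.

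To analyse $Q''$, I would work layer by layer. The earlier layers $\mu_{(k;j-1)}$ with $k\ge j+1$ mutate only vertices $v_{k,*}$ with row index $k\ge j+1$. Among such vertices, only $v_{j+1,j-1}$ and $v_{j+1,j}$ are neighbours of $v_{j,j-1}$ in the initial quiver $Q_{\mathbb P_3}$, so the arrows incident to $v_{j,j-1}$ can be affected only through mutations performed at these two vertices, at each layer. The base case $i=j+2$ consists of computing $\mu_{(j+1;j-1)}(Q_{\mathbb P_3})$ in a small window around $v_{j,j-1}$ and then tracking how the mutations $\mu_{j,1},\ldots,\mu_{j,j-2}$ inside $\mu_{(j;j-1)}$ modify this window before the final $\mu_{j,j-1}$. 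Both computations are local and closely parallel the arguments for Lemmas~\ref{lem:quiver0} and~\ref{lem:quiver3}.

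For the inductive step, assume the result for $i-1$ and prepend the extra layer $\mu_{(i-1;j-1)}$ at the start of the sequence. Since $i-1\ge j+2>j+1$, no vertex of row $i-1$ is a neighbour of $v_{j,j-1}$ in $Q_{\mathbb P_3}$, and the inductive bookkeeping shows that none of them becomes a neighbour of $v_{j,j-1}$ after the subsequent layers either; in particular, the mutations in $\mu_{(i-1;j-1)}$ create no new arrows incident to $v_{j,j-1}$. Consequently the local picture around $v_{j,j-1}$ in $\mu_{(j;j-1)}\cdots\mu_{(i-2;j-1)}\mu_{(i-1;j-1)}(Q_{\mathbb P_3})$ agrees with the one produced by the shorter sequence in the inductive hypothesis, and the lemma follows after the final reversal by $\mu_{j,j-1}$.

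The main obstacle is purely combinatorial bookkeeping: one must verify that the many intermediate mutations never propagate arrows from the higher rows down to row $j$ in a way that contaminates the two-step neighbourhood of $v_{j,j-1}$. The hypothesis $i>j+1>3$ provides enough separation between the row of $v_{j,j-1}$ and the rows being mutated first, so that the quadratic terms in the mutation formula cancel layer by layer. This is essentially the same type of analysis carried out in Lemma~\ref{lem:quiver0} for the simplest case (row $2$ as target) and in Lemma~\ref{lem:quiver3} for the adjacent-layer case; Lemma~\ref{lem:quiver1} packages these together into a uniform statement valid for all $j\ge 3$.
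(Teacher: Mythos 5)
Your opening observation is clean and worth keeping: since $v_{j,j-1}$ is mutated exactly once in the whole sequence, namely as the final step $\mu_{j,j-1}$, the task does reduce to computing the arrows at $v_{j,j-1}$ in the penultimate quiver and then reversing signs. This is a valid simplification, and conceptually you are heading in the same direction as the paper, which also proves the lemma by induction over the mutation layers.

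However, the inductive step as you have set it up has a genuine gap. You ``assume the result for $i-1$,'' i.e.\ knowledge of $Q'(v_{j,j-1},v)$ for the sequence $\mu_{(j;j-1)}\cdots\mu_{(i-2;j-1)}$ applied to $Q_{\mathbb P_3}$. But for $i$, you must apply the same sequence to $\mu_{(i-1;j-1)}(Q_{\mathbb P_3})$, a different starting quiver. Your argument for why this does not matter is that ``no vertex of row $i-1$ ever becomes a neighbour of $v_{j,j-1}$.'' That rules out \emph{direct} contact, but it misses the indirect propagation: $\mu_{(i-1;j-1)}$ changes the arrows between rows $i-2$ and $i-1$; this alters what $\mu_{(i-2;j-1)}$ does to rows $i-3$ and $i-2$; and so on down to row $j+1$. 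Crucially, the arrows at $v_{j,j-1}$ \emph{are} modified during the layer $\mu_{(j+1;j-1)}$, because $\mu_{j+1,j-1}$ is a mutation at a neighbor of $v_{j,j-1}$, and the resulting change to $Q(v_{j,j-1},\cdot)$ depends on the arrows incident to $v_{j+1,j-1}$, which in turn depend on the earlier layers. So your assertion that the intermediate mutations ``never propagate arrows from the higher rows down to row $j$'' is not correct in spirit; they do propagate, just in a stable, repeating pattern. Knowing only $Q'(v_{j,j-1},\cdot)$ in the $(i-1)$-case is therefore not enough to run the induction: the hypothesis must be strengthened to track the full local three-row window, including the arrows at $v_{j+1,j-1}$ and its neighbors after each layer. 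This stronger invariant is precisely what the paper's Lemmas~\ref{lem:mutation1} and~\ref{lem:mutation2} establish, and it is the missing ingredient in your sketch. Without it, the ``combinatorial bookkeeping'' you flag is not merely tedious but structurally underspecified.
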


\begin{lemma}\label{lem:mut2}
For any $n$ and $i,j$ with $3\leq j< i\leq n$, we have
   
\begin{enumerate}[label={\rm (\alph*)}]\itemsep0,3em

\item $\mu_{(j;j-1)}\cdots \mu_{(i-2;j-1)} \mu_{(i-1;j-1)} (A_{j,j-2})=\mu_{(j;j-2)}\cdots \mu_{(i-2;j-2)} \mu_{(i-1;j-2)} (A_{j,j-2})$, i.e., the quantum cluster variables associated to the vertex $v_{j,j-2}$ are the same in the quantum seeds $\mu_{(j;j-1)}\cdots \mu_{(i-2;j-1)} \mu_{(i-1;j-1)}(w_{\mathbb P_3})$ and $\mu_{(j;j-2)}\cdots \mu_{(i-2;j-2)} \mu_{(i-1;j-2)}(w_{\mathbb P_3})$ (see \eqref{def-qum-seed-w}).

\item $\mu_{(j;j-1)}\cdots \mu_{(i-2;j-1)} \mu_{(i-1;j-1)} (A_{j+1,j-1})=\mu_{(j+1;j-1)}\cdots \mu_{(i-2;j-1)} \mu_{(i-1;j-1)} (A_{j+1,j-1})$, i.e., the quantum cluster variables associated to the vertex $v_{j+1,j-1}$ are the same in the quantum seeds $\mu_{(j;j-1)}\cdots \mu_{(i-2;j-1)} \mu_{(i-1;j-1)}(w_{\mathbb P_3})$ and $\mu_{(j+1;j-1)}\cdots \mu_{(i-2;j-1)} \mu_{(i-1;j-1)}(w_{\mathbb P_3})$ (see \eqref{def-qum-seed-w}).  
\end{enumerate}
\end{lemma}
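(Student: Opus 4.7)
The plan is to prove both parts by the same strategy: each asserts that two mutation sequences, differing only by the insertion of some auxiliary mutations, produce the same cluster variable at a prescribed vertex. For part (a), first expand both $S_1=\mu_{(j;j-1)}\cdots\mu_{(i-1;j-1)}$ and $S_2=\mu_{(j;j-2)}\cdots\mu_{(i-1;j-2)}$ as ordered lists of single mutations in the order of application:
\begin{align*}
S_1 &=\;\mu_{i-1,1},\dots,\mu_{i-1,j-1},\;\mu_{i-2,1},\dots,\mu_{i-2,j-1},\;\dots,\;\mu_{j,1},\dots,\mu_{j,j-1},\\
S_2 &=\;\mu_{i-1,1},\dots,\mu_{i-1,j-2},\;\mu_{i-2,1},\dots,\mu_{i-2,j-2},\;\dots,\;\mu_{j,1},\dots,\mu_{j,j-2}.
\end{align*}
Observe that $v_{j,j-2}$ is mutated exactly once in each sequence, at the common step $\mu_{j,j-2}$, and that the subsequent $\mu_{j,j-1}$ in $S_1$ does not affect $A_{j,j-2}$. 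Hence it suffices to show that the exchange relation applied at $v_{j,j-2}$ produces the same element of $\mathrm{Frac}(\mathcal A_\omega(\mathbb P_3))$ in both sequences, which reduces to checking that (i) the signed adjacencies $Q(-,v_{j,j-2})$ in the intermediate quiver just before that step coincide, and (ii) the cluster variables at the vertices adjacent to $v_{j,j-2}$ (in that intermediate quiver) coincide.

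To establish (i) and (ii), I will track the quiver and cluster variables inside the six-vertex neighborhood
\[
\mathcal N(v_{j,j-2})=\{v_{j-1,j-3},\,v_{j-1,j-2},\,v_{j,j-3},\,v_{j,j-1},\,v_{j+1,j-2},\,v_{j+1,j-1}\}
\]
of $v_{j,j-2}$ in $Q_{\mathbb P_3}$ through both sequences, block-by-block. The extra mutations distinguishing $S_1$ from $S_2$ occur at the vertices $v_{k,j-1}$ for $k=j,j+1,\dots,i-1$; of these, only $v_{j+1,j-1}$ lies in $\mathcal N(v_{j,j-2})$ initially, while $v_{k,j-1}$ for $k\geq j+2$ is at grid-distance $\geq 2$ from $v_{j,j-2}$. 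An inductive application of the mutation rule \eqref{eq-mutation-Q}, combined with the explicit quiver formulas in Lemmas~\ref{lem:quiver0}, \ref{lem:quiver3}, and \ref{lem:quiver1}, should show that none of the preceding row-blocks produces an arrow between $v_{j,j-2}$ and any $v_{k,j-1}$ with $k\geq j+2$, so those extra mutations have no effect on the exchange data at $v_{j,j-2}$. For the delicate case $k=j+1$, an explicit computation in the sub-quiver supported on $\{v_{a,b}:j-1\leq a\leq j+1,\,j-3\leq b\leq j-1\}$ will show that the cumulative effect of the row-$(j+1)$ block of $S_1$ on $\mathcal N(v_{j,j-2})$ equals that of the corresponding block of $S_2$, because the extra mutation $\mu_{j+1,j-1}$ is absorbed by the subsequent row-$j$ sub-sequence $\mu_{j,1},\dots,\mu_{j,j-3}$. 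Together with the fact that cluster variables at the elements of $\mathcal N(v_{j,j-2})$ outside column $j-1$ are updated identically in both sequences, this yields (i) and (ii).

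Part (b) is proved by an entirely parallel argument, tracking the vertex $v_{j+1,j-1}$ in its initial six-vertex neighborhood: it is mutated exactly once in each sequence (at the step $\mu_{j+1,j-1}$ within the block $\mu_{(j+1;j-1)}$), and the two sequences differ by the inclusion or omission of the final block $\mu_{(j;j-1)}$, whose mutations all lie in row $j$. The same block-by-block quiver analysis, together with Lemmas~\ref{lem:quiver0}-\ref{lem:quiver1}, will show that these omitted row-$j$ mutations do not alter the exchange data at $v_{j+1,j-1}$ at the critical moment. The main obstacle in both parts is the combinatorial bookkeeping: since cluster mutation can create arrows between previously non-adjacent vertices, one must carefully control arrow propagation at each step rather than appealing to initial non-adjacency. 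The regular triangular-grid structure of $Q_{\mathbb P_3}$ and the row-block structure of the sequences localize the problem to a bounded strip around the tracked vertex, and Lemmas~\ref{lem:quiver0}-\ref{lem:quiver1} supply the bulk of the required computation, reducing the remaining verification to a finite case analysis.
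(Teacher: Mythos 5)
Your strategy — track the exchange data at $v_{j,j-2}$ through both sequences and show it coincides at the single step $\mu_{j,j-2}$ — is different from the paper's, and it has a genuine gap in how it handles the cluster variables (your condition (ii)).

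You claim that none of the preceding row-blocks produces an arrow between $v_{j,j-2}$ and any $v_{k,j-1}$ with $k\geq j+2$, and conclude ``those extra mutations have no effect on the exchange data at $v_{j,j-2}$.'' The first claim controls the \emph{quiver} locally, but it does not control the \emph{cluster variables} at the neighbors of $v_{j,j-2}$. Concretely, $v_{j+1,j-2}$ is adjacent to $v_{j,j-2}$, and in both sequences it is mutated once (at $\mu_{j+1,j-2}$). In the original triangular quiver $v_{j+1,j-2}$ is adjacent to $v_{j+2,j-1}$ and $v_{j+2,j-2}$, which lie outside your six-vertex strip. When $\mu_{j+1,j-2}$ is applied, the first sequence $\mu_{(j;j-1)}\cdots\mu_{(i-1;j-1)}$ has already mutated at $v_{k,j-1}$ for $j+2\leq k\leq i-1$, while the second sequence $\mu_{(j;j-2)}\cdots\mu_{(i-1;j-2)}$ has not; hence the exchange relation at $v_{j+1,j-2}$ is applied to genuinely different surrounding seeds. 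That the resulting cluster variable at $v_{j+1,j-2}$ nonetheless agrees at the moment $\mu_{j,j-2}$ fires (or that the relevant adjacency has disappeared by then) is exactly what needs proving — it is not a consequence of the quiver computation you cite, and your bounded-strip localization does not even see the vertices $v_{j+2,j-1}$, $v_{j+2,j-2}$, $v_{j+1,j-3}$ that this comparison requires.

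The paper avoids this recursion entirely by arguing at the level of seeds rather than of a single vertex. Using Lemmas~\ref{lem:mutation1} and~\ref{lem:mutation2}, it shows that in the intermediate quiver $\mu_{(j+1;j-2)}\mu_{(j+2;j-1)}\cdots\mu_{(i-1;j-1)}(Q_{\mathbb P_3})$ the vertex $v_{j+1,j-1}$ has no arrow to any $v_{jt}$ with $t\leq j-2$. Since mutations at non-adjacent vertices commute \emph{as seed operations}, $\mu_{j+1,j-1}$ can be pulled to the left past the entire block $\mu_{(j;j-2)}$; once it is a final mutation it does not touch $v_{j,j-2}$, so it (together with $\mu_{j,j-1}$) can be dropped. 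Iterating removes all the extra $\mu_{k,j-1}$. Because the commutation holds at the seed level, both the quiver and all cluster variables transform identically — there is nothing to track by hand. If you want to pursue your approach you would need, at minimum, to prove a seed-level statement (or an expansion of the adjacent cluster variables in terms of initial data) that controls the propagation you currently treat as negligible; as it stands condition (ii) is asserted, not established.
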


We left the proofs of Lemmas~\ref{lem:quiver1} and \ref{lem:mut2} to Appendix~\ref{sec:proof of Lemmas}. 

\medskip

\begin{proof}[Proof of Proposition \ref{prop-Cij-bar-Cij}]:

For $j=2$, if $i=3$, then
$$\mu_{21}(A_{21})=[A_{10}\cdot A_{22}\cdot A_{31}\cdot A_{21}^{-1}]+[A_{11}\cdot A_{20}\cdot A_{32}\cdot A_{21}^{-1}].$$

By Lemmas~\ref{lem-Cij-bar-Cij}, \ref{lem-Cij-path-sum-A}, and \ref{lem:divide2}, we have $$C_{32}=[A_{10}\cdot A_{31}\cdot A_{30}^{-1}\cdot A_{21}^{-1}]+[A_{32}\cdot A_{22}^{-1}\cdot A_{21}^{-1}\cdot A_{11}\cdot A_{30}^{-1}\cdot A_{20}].$$

Thus we have $C_{32}=[\mu_{21}(A_{21})\cdot A_{30}^{-1}\cdot A_{22}^{-1}]$ and Proposition \ref{prop-Cij-bar-Cij} holds for the case that $j=2,i=3$.

We then consider the case that $j=2$ and $i>3$. By Lemma~\ref{lem:quiver0}, we have 
 \begin{equation*}
    \begin{array}{rcl}
& & \mu_{21}\cdots \mu_{i-2,1} \mu_{i-1,1} (A_{21})\\
&=&[\mu_{31}\cdots \mu_{i-2,1} \mu_{i-1,1} (A_{31}) \cdot A_{11}\cdot A_{21}^{-1}]\\
&+& [A_{10}\cdot A_{22}\cdot A_{i1}\cdot A_{21}^{-1}].\\
&:=& I+[A_{10}\cdot A_{22}\cdot A_{i1}\cdot A_{21}^{-1}].
\end{array}
\end{equation*}
By induction on $i$, and Lemmas~\ref{lem-Cij-bar-Cij}(a),
\ref{lem-Cij-path-sum-A}, \ref{lem:divide2}, and \ref{lem:emb1}, we have
$$\begin{cases}
I=[\sum_{p\in \mathsf P^{n-1}(\mathbb P_3,v_1^{(n-1)},i-1,2)} \iota(A_{p}^{(n-1)})\cdot A_{i0}\cdot A_{32}\cdot A_{11}\cdot A_{21}^{-1}]=[\sum_{p\in \mathsf P^{n-1}(\mathbb P_3,v_1^{(n-1)},i-1,j-1)} A_{\iota(p)}\cdot A_{i0}\cdot A_{22}],\\
C_{i2}=\sum_{p\in \mathsf P^{n-1}(\mathbb P_3,v_1^{(n-1)},i-1,2)} A_{\iota(p)}+\sum_{p\in \mathsf P^{n-1}(\mathbb P_3,v_1^{(n-1)},i-1,1)} A_{\iota(p)},\\
\sum_{p\in \mathsf P^{n-1}(\mathbb P_3,v_1^{(n-1)},i-1,1)} A_{\iota(p)}=[A_{10}\cdot A_{i0}^{-1}\cdot A_{i1}\cdot A_{21}^{-1}].
\end{cases}$$

The induction hypothesis together with Lemma~\ref{lem:emb1} yields
\[
\Big(
  \sum_{p \in \mathsf P^{n-1}\!\bigl(\mathbb P_3,
      v_1^{(n-1)}, i-1, 2\bigr)}
      \iota(A_{p}^{(n-1)})\Big)
A_{i0}\,A_{32}\,A_{11}\,A_{21}^{-1}
=
\xi^{m}\,
A_{i0}\,A_{32}\,A_{11}\,A_{21}^{-1}
  \sum_{p \in \mathsf P^{n-1}\!\bigl(\mathbb P_3,
      v_1^{(n-1)}, i-1, 2\bigr)}
      \iota(A_{p}^{(n-1)})
\]
for some integer \(m\), where \(\xi = \omega^{n}\).
Hence the Weyl normalization notation \([\,\sim\,]\) used in the above identities are well defined.  
Moreover, every occurrence of \([\,\sim\,]\) in the subsequent proof is justified for the similar reason.

Thus, we obtain $C_{i2}=[\mu_{i-1,1}\cdots \mu_{31} \mu_{21}(A_{21})\cdot A_{i0}^{-1}\cdot A_{22}^{-1}]$. It follows that Proposition \ref{prop-Cij-bar-Cij} holds for $j=2$.

For $j\geq 3$, if $i=j+1$, the proof is similar as above by using Lemmas~\ref{lem-Cij-bar-Cij}(b) and \ref{lem:quiver3}.

For $j\geq 3$, if $i>j+1$, then by Lemmas \ref{lem:quiver1} and \ref{lem:mut2}, we have 

 \begin{equation*}
    \begin{array}{rcl}
& & \mu_{(j;j-1)}\cdots \mu_{(i-2;j-1)} \mu_{(i-1;j-1)} (A_{j,j-1})\\
&=&[\mu_{(j;j-2)}\cdots \mu_{(i-2;j-2)} \mu_{(i-1;j-2)} (A_{j,j-2}) \cdot A_{jj}\cdot A_{j,j-1}^{-1}]\\
&+& [\mu_{(j+1;j-1)}\cdots \mu_{(i-2;j-1)} \mu_{(i-1;j-1)} (A_{j+1,j-1})\cdot A_{j-1,j-1}\cdot A_{j,j-1}^{-1}]\\
&:=& I_1+I_2.
\end{array}
\end{equation*}
By induction on $i+j$, and Lemmas~\ref{lem-Cij-path-sum-A}, \ref{lem:divide2}, and \ref{lem:emb1}, we have 
$$\begin{cases}
I_1=[\sum_{p\in \mathsf P^{n-1}(\mathbb P_3,v_1^{(n-1)},i-1,j-1)} \iota(A_{p}^{(n-1)})\cdot A_{i0}\cdot A_{jj}]=[\sum_{p\in \mathsf P^{n-1}(\mathbb P_3,v_1^{(n-1)},i-1,j-1)} A_{\iota(p)}\cdot A_{i0}\cdot A_{jj}],\\
I_2=[\sum_{p\in \mathsf P^{n-1}(\mathbb P_3,v_1^{(n-1)},i-1,j)} \iota(A_{p}^{(n-1)})\cdot A_{i0}\cdot A_{j+1,j}\cdot  A_{j-1,j-1}\cdot A_{j,j-1}^{-1}]\\
\hspace{0.41cm}=[\sum_{p\in \mathsf P^{n-1}(\mathbb P_3,v_1^{(n-1)},i-1,j)} A_{\iota(p)}\cdot A_{i0}\cdot A_{jj}],\\
C_{ij}=\sum_{p\in \mathsf P^{n-1}(\mathbb P_3,v_1^{(n-1)},i-1,j)} A_{\iota(p)}+\sum_{p\in \mathsf P^{n-1}(\mathbb P_3,v_1^{(n-1)},i-1,j-1)} A_{\iota(p)}.
\end{cases}$$

Therefore, we obtain $$C_{ij}=[\mu_{(j;j-1)}\cdots \mu_{(i-2;j-1)} \mu_{(i-1;j-1)} (A_{j,j-1})\cdot A_{i0}^{-1}\cdot A_{jj}^{-1}].$$

The proof is complete.
\end{proof}

We abbreviate $\mu_{\overline v_{ij}}=\overline\mu_{ij}$. 
For any $k,t$ such that $k+t<n$, we denote 
\begin{align}\label{eq-def-bar-mu-k;t}
  \overline\mu_{(k;t)}=\overline \mu_{k,k+1} \overline \mu_{k,k+2}\cdots \overline \mu_{k,k+t}.
\end{align}

Recall that $\overline{C}_{ij}$ is the stated arc in $\mathbb P_3$ obtained from $C_{ij}$ by reversing its orientation.
A parallel statement to Theorem~\ref{lem-trace-image-cornerarc-P3} also holds for $\overline{C}_{ij}$ (see \cite[Theorem~10.5 and Figure~13(B)]{LY23}).
The following proposition is similar to Proposition \ref{prop-Cij-bar-Cij}, we omit its proof.

\begin{proposition}\label{prop-bar-Cij} For any $i,j$ with $1\leq j<i<n$, we have 
$$\overline C_{ij}=[\overline \mu_{(j;n-i)}\cdots \overline \mu_{(i-2;n-i)} \overline \mu_{(i-1;n-i)} (\overline A_{j,j+1})\cdot \overline A_{jj}^{-1}\cdot \overline A_{in}^{-1}]\in \mathscr A_{\omega}(\mathbb P_3).$$
\end{proposition}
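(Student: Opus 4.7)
The plan is to mirror the proof of Proposition \ref{prop-Cij-bar-Cij} verbatim, exploiting the reflection symmetry of $\mathbb P_3$ that fixes $v_1$ and swaps the two orientations of corner arcs at $v_1$. First I would establish the path-sum analogue of Lemma \ref{lem-Cij-path-sum-A}: the mirror statement to Theorem \ref{lem-trace-image-cornerarc-P3}, available from \cite[Theorem~10.5 and Figure~13(B)]{LY23} combined with Theorem \ref{thm-trace-A}(d), yields an expansion
$$\overline C_{ij} = \sum_{p \in \overline{\mathsf P}(\mathbb P_3, v_1, i, j)} \overline A_p \in \mathcal A_\omega(\mathbb P_3),$$
where $\overline{\mathsf P}(\mathbb P_3, v_1, i, j)$ denotes the dual path set for the clockwise network (the mirror image of the network in Figure \ref{dualquiver}), and each $\overline A_p$ is the Laurent monomial defined by the analogue of \eqref{def-path-A-monomial} using ${\bf k}_2$ in place of ${\bf k}_1$.

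Next I would set up the embedding $\overline \iota : V^{n-1}_{\mathbb P_3} \hookrightarrow V^n_{\mathbb P_3}$ given by $(i,j,k) \mapsto (i, j+1, k)$, which is the symmetric counterpart of the embedding $\iota$ of \S\ref{sub-triangle-case} and is compatible with the labeling $\overline v_{ji} = (j, n-i, i-j)$. The induced $R$-linear inclusion $\overline \iota : \mathcal A_\omega(\mathbb P_3, n-1) \hookrightarrow \mathcal A_\omega(\mathbb P_3, n)$ should satisfy an analogue of Lemma \ref{lem:divide2}, giving a bijection
$$\overline{\mathsf P}^{\,n-1}(\mathbb P_3, v_1^{(n-1)}, i-1, j) \sqcup \overline{\mathsf P}^{\,n-1}(\mathbb P_3, v_1^{(n-1)}, i-1, j-1) \xrightarrow[1:1]{\overline \iota} \overline{\mathsf P}(\mathbb P_3, v_1, i, j),$$
together with an explicit rule expressing each $\overline A_{\overline \iota(p)}$ as a Weyl-ordered product of $\overline \iota(\overline A_p^{(n-1)})$ with a Laurent monomial in frozen variables. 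This plays the role of Lemma \ref{lem:emb1}, transporting the mutated cluster variables through $\overline \iota$.

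The bulk of the argument then proceeds by induction on $(i-j)+(n-i)$. The quiver-mutation analogues of Lemmas \ref{lem:quiver0}, \ref{lem:quiver3}, and \ref{lem:quiver1} must be verified: after applying the sequence $\overline\mu_{(j; n-i)}\cdots \overline\mu_{(i-1; n-i)}$ to the initial quiver $Q_{\mathbb P_3}$, the local arrow pattern at the vertex $\overline v_{j, j+1}$ has exactly two positive and two negative neighbors, of the shape needed for the quantum exchange relation to split into two monomials whose indices match the dual bijection. The corresponding analogue of Lemma \ref{lem:mut2} then identifies cluster variables obtained along two different mutation sequences. The base cases $i = j+1$ and $i = n$ (or $j = i$) reduce directly to Lemma \ref{lem-Cij-bar-Cij}(c), (d) by a direct expansion of $\overline\mu_{(j; n-i)}(\overline A_{j,j+1})$; in the inductive step we write $\overline\mu_{(j;n-i)}\cdots\overline\mu_{(i-1;n-i)}(\overline A_{j,j+1}) = \overline I_1 + \overline I_2$, each of which is handled via the inductive hypothesis applied across $\overline \iota$, giving back the desired path-sum.

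The main obstacle is purely bookkeeping, not any new mathematical input: one must track the clockwise (rather than counterclockwise) orientation of the dual network consistently so that the arrows in the analogues of Lemmas \ref{lem:quiver1} and \ref{lem:mut2} have the correct signs, and so that the frozen prefactors $\overline A_{jj}^{-1}$ and $\overline A_{in}^{-1}$ appear in place of $A_{i0}^{-1}$ and $A_{jj}^{-1}$. Once this is set up, each step of the proof of Proposition \ref{prop-Cij-bar-Cij} has a literal counterpart, and the conclusion follows.
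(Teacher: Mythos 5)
Your proposal is correct and matches the paper's intent exactly: the paper explicitly omits the proof of Proposition~\ref{prop-bar-Cij}, stating only that it is "similar to Proposition~\ref{prop-Cij-bar-Cij}," and your outline makes this similarity precise via the reflection $(i,j,k)\mapsto(i,k,j)$ of $\mathbb P_3$ fixing $v_1$ (which swaps $e_1\leftrightarrow e_3$, reverses orientation, and transports the counterclockwise network to the clockwise one, $\iota$ to $\overline\iota$, $\mathsf P$ to $\overline{\mathsf P}$, and the $v_{ij}$ labeling to the $\overline v_{ji}$ labeling). The base cases $i=n$ and $j=i$ lie outside the stated range $1\le j<i<n$ but are exactly Lemma~\ref{lem-Cij-bar-Cij}(c),(d), as you note, and the induction and quiver-mutation lemmas transport verbatim.
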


\subsection{Quadrilateral case}
\label{Quadrilateral-case}

\begin{figure}[h]
    \centering
    \includegraphics[width=75pt]{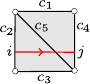}
    \caption{The picture for $D_{ij}$}\label{Fig:P4}
\end{figure}
Choose a triangulation $\lambda$ of the quadrilateral $\mathbb P_4$, we label the ideal arcs in $\lambda$ by $c_1,c_2,c_3,c_4$ and $c_5$, see Figure \ref{Fig:P4}. Denote by $\mathbb P_3^1$ and $\mathbb P_3^2$ the triangles formed by $c_2,c_3,c_5$ and $c_1,c_4,c_5$, respectively. Denote by $v_1$ and $\overline v_1$ the common vertices of $c_2, c_5$ and $c_4,c_5$, respectively. We label the vertices in $V_{\mathbb P_3^1}^n$ by $\{v_{ij}\mid 0\leq j\leq i\leq n\}$ and the cluster variables by $A_{ij}$ (i.e., $A_{ij}=A_{v_{ij}}\in\mathcal A_\omega(\mathbb P_4,\lambda)$ with the convention that $A_{00}= A_{n0}=A_{nn}=1$) with respect to $v_1$ (see \S\ref{sub-triangle-case}). We label the vertices in $V_{\mathbb P_3^2}^n$ by $\{\overline v_{ji}\mid 0\leq j\leq i\leq n\}$ and the cluster variables by $\overline A_{ij}$ (i.e., $\overline{A}_{ij}=A_{\overline v_{ij}}\in\mathcal A_\omega(\mathbb P_4,\lambda)$ with the convention that $\overline A_{00}= \overline A_{0n}=\overline A_{nn}=1$) with respect to $\overline v_1$ (see \S\ref{sub-triangle-case}). In particular, we have $v_{ii}=\overline v_{ii}$ and $A_{ii}=\overline A_{ii}$ for any $i$.

For any $j>1$, denote 
\begin{align}\label{sec7-def-mu-square-j}
    \overline \mu^{\diamondsuit}_j=\bar \mu_{(1;n-j)}\cdots \bar \mu_{(j-2;n-j)}\bar \mu_{(j-1;n-j)},
\end{align}
where $\overline{\mu}_{(k;t)}$ is defined as in \eqref{eq-def-bar-mu-k;t}.
Note that these mutations are applied to the triangulation of $\mathbb P_4$.

For any $i,j$ with $i\geq j>1$, denote 
\begin{align}\label{sec7-def-mu-square-ij}
    \mu^{\diamondsuit}_{(i;j-1)}=\left(\mu_{(2;1)}\mu_{(3;2)}\cdots\mu_{(j-1;j-2)}\right)\circ \left(\mu_{(j;j-1)}\mu_{(j+1;j-1)}\cdots \mu_{(i-1;j-1)}\right),
\end{align}
where $\mu_{(k;j)}$ is defined as in
\eqref{def-eq-mu-k;j}.
Note that these mutations are applied to the triangulation of $\mathbb P_4$.


For any $1\leq i,j\leq n$, we use $D_{ij}\in\widetilde{\cS}_\omega(\mathbb P_4)\subset \mathscr U_{\omega}(\mathbb P_4)$ to denote the stated arc in Figure~\ref{Fig:P4}.
The following is the main result of this subsection.

\begin{theorem}\label{thm-P4-Dij}
    In $\mathscr A_{\omega}(\mathbb P_4)$, for any $1\leq i,j\leq n$, we have 
    $$D_{ij}=
    \begin{cases}
        [A_{i1}\cdot A_{i0}^{-1}\cdot \overline A_{1n}^{-1}]
        & \mbox{ if $i\geq j=1$,}\\
        [\overline \mu^{\diamondsuit}_{j}(\overline A_{12})\cdot A_{10}^{-1}\cdot \overline A_{jn}^{-1} ] & \mbox{ if $j>i=1$,}\vspace{1.5mm}\\
        [\left(\mu_{j-1,j-1}\cdots\mu_{22}\mu_{11}\right)\circ \overline \mu^{\diamondsuit}_j\circ  \mu^{\diamondsuit}_{(i;j-1)}(A_{j-1,j-1}) \cdot A_{i0}^{-1}\cdot\overline A_{jn}^{-1}] & \mbox{ if $i\geq j>1$,}\vspace{1.5mm}\\
        [\left(\mu_{i-1,i-1}\cdots\mu_{22}\mu_{11}\right)\circ \overline \mu^{\diamondsuit}_j\circ \mu^{\diamondsuit}_{(i;i-1)}(A_{i-1,i-1}) \cdot A_{i0}^{-1}\cdot\overline A_{jn}^{-1}] & \mbox{ if $j>i>1$},
    \end{cases}$$
    where $\mu_{k,k}=\mu_{v_{kk}}$
    for $1\leq k\leq n-1$.
\end{theorem}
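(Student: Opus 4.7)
The plan is to reduce Theorem~\ref{thm-P4-Dij} to the triangle computations already established in \S\ref{sub-triangle-case} by cutting $\mathbb{P}_4$ along the diagonal $c_5$. By Theorem~\ref{intro-thm:splitU} (Theorem~\ref{thm:splitU}), the splitting homomorphism $\mathbb{S}^U_{c_5}\colon \mathscr{U}_\omega(\mathbb{P}_4)\to \mathscr{U}_\omega(\mathbb{P}_3\sqcup\mathbb{P}_3)$ is injective and compatible, through the inclusion $\widetilde{\mathscr{S}}_\omega\subset \mathscr{U}_\omega$ of Theorem~\ref{thm-main-3}, with the topological splitting $\mathbb{S}_{c_5}\colon \widetilde{\mathscr{S}}_\omega(\mathbb{P}_4)\to \widetilde{\mathscr{S}}_\omega(\mathbb{P}_3\sqcup \mathbb{P}_3)$. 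Therefore, to verify each claimed identity in $\mathscr{A}_\omega(\mathbb{P}_4)\subset \mathscr{U}_\omega(\mathbb{P}_4)$, it suffices to apply $\mathbb{S}_{c_5}$ to the left-hand side (a stated arc) and $\mathbb{S}^U_{c_5}$ to the right-hand side (a Weyl-ordered product of mutated cluster variables) and check equality in $\mathscr{U}_\omega(\mathbb{P}_3\sqcup\mathbb{P}_3)$, where each cluster variable of $\mathbb{P}_4$ is sent either to a cluster variable of one triangle (if the associated small vertex does not lie on $c_5$) or to a Weyl-ordered product of cluster variables, one on each triangle (if it does).

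For the easy cases $i\geq j=1$ and $j>i=1$, the state-sum formula \eqref{eq-def-splitting} for $\mathbb{S}_{c_5}(D_{ij})$ collapses to a single term of the form $[C \cdot \overline{C}']$ for a corner arc $C$ in $\mathbb{P}_3^1$ and a corner arc $\overline{C}'$ in $\mathbb{P}_3^2$, because one of the two extreme states forces the other. Substituting the expressions from Lemma~\ref{lem-Cij-bar-Cij} and Proposition~\ref{prop-bar-Cij} into this single term, and comparing with the image under $\mathbb{S}^U_{c_5}$ of the proposed right-hand side, yields the identity after a bounded computation tracking only the frozen monomials $A_{i0}^{-1}$, $\overline{A}_{1n}^{-1}$, $A_{10}^{-1}$, $\overline{A}_{jn}^{-1}$.

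For the main case $i\geq j>1$, the splitting expands as
\[
\mathbb{S}_{c_5}(D_{ij}) \;=\; \sum_{s=1}^{n}\, [\,C_{is}\cdot \overline{C}_{sj}\,],
\]
and Proposition~\ref{prop-Cij-bar-Cij}, Proposition~\ref{prop-bar-Cij}, and Lemma~\ref{lem-Cij-bar-Cij} rewrite each summand as a Laurent monomial in the cluster variables of the two triangles. The key structural input on the cluster side is Lemma~\ref{lem:quiver2}: the subquiver of $\overline{\mu}^\diamondsuit_j\circ \mu^\diamondsuit_{(i;j-1)}(Q_{\mathbb{P}_4})$ supported on $v_{11},v_{22},\ldots,v_{j-1,j-1}$ is an $A_{j-1}$ quiver with linear orientation, which is precisely the setting in which the explicit Laurent expansion formula for type $A$ quantum cluster algebras from \cite{R,CL,H1} applies to the composed mutation $\mu_{j-1,j-1}\cdots\mu_{22}\mu_{11}$. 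Corollary~\ref{cor:cv} will give a closed-form sum for the resulting exchangeable cluster variable $(\mu_{j-1,j-1}\cdots\mu_{11})\circ\overline{\mu}^\diamondsuit_j\circ \mu^\diamondsuit_{(i;j-1)}(A_{j-1,j-1})$. Applying $\mathbb{S}^U_{c_5}$ to this sum and matching it term-by-term with the $s$-sum obtained from splitting $D_{ij}$ completes the verification. The case $j>i>1$ is proved in an entirely parallel manner with the roles of the two triangles interchanged and the type $A$ subquiver now supported on $v_{11},\ldots,v_{i-1,i-1}$.

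The main obstacle is precisely this final matching: one must set up an explicit bijection between the intermediate states $s\in\{1,\ldots,n\}$ in the state sum of $\mathbb{S}_{c_5}(D_{ij})$ and the monomials in the type $A$ expansion formula for $(\mu_{j-1,j-1}\cdots\mu_{11})\circ\overline{\mu}^\diamondsuit_j\circ \mu^\diamondsuit_{(i;j-1)}(A_{j-1,j-1})$, and then track, uniformly in $(i,j,n)$, the powers of $\omega$ that arise from the Weyl orderings on both sides and the Laurent monomials in frozen variables produced by the mutation sequences. Once the combinatorial dictionary is pinned down via the network paths of Theorem~\ref{lem-trace-image-cornerarc-P3} for each triangle, the equality follows; but the careful bookkeeping of the Weyl-ordering scalars and the frozen-variable contributions across the splitting is where the real work lies.
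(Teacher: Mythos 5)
Your proposal matches the paper's proof essentially step for step: cut along $c_5$, use the injectivity and compatibility of $\mathbb{S}_{c_5}$ and $\mathbb{S}^U_{c_5}$ (Theorem~\ref{thm:splitU}), expand the split arc as a state sum $\sum_k C_{ik}\otimes\overline C_{jk}$ of corner arcs, invoke Lemma~\ref{lem:quiver2} to recognize the type $A_{j-1}$ subquiver, apply the type~$A$ quantum expansion formula (Corollary~\ref{cor:cv}), and match terms. The bookkeeping of Weyl-ordering scalars and frozen-monomial factors that you flag as the remaining work is exactly what the paper carries out via Lemmas~\ref{lem:splitk}, \ref{lem:same}, and \ref{lem:spitP_4}, so the argument is sound.
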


The rest of this section is devoted to prove Theorem \ref{thm-P4-Dij}.

Recall that we defined the splitting homomorphism 
$$\mathbb S_{c_5}^U\colon \mathscr{U}_\omega(\mathbb P_4)\rightarrow \mathscr{U}_\omega(\mathbb P_3^1)\otimes \mathscr{U}_\omega(\mathbb P_3^2)\quad \text{(Theorem~\ref{thm:splitU}).}$$
For each pair $i,j$ with $0\leq j\leq i\leq n$, we can naturally regard $A_{ij}$ (resp. $\overline{A}_{ji}$) as an element in $\mathscr{U}_\omega(\mathbb P_3^1)$ (resp. $\mathscr{U}_\omega(\mathbb P_3^2)$). Then we have the following.

\begin{lemma}\label{lem:splitk}
  For any $i,j$ with $j\leq i$ and any $k$ with $2\leq k\leq j$, in $\mathscr{U}_\omega(\mathbb P_3^1)\otimes \mathscr{U}_\omega(\mathbb P_3^2)$ we have 

\begin{enumerate}[label={\rm (\alph*)}]\itemsep0,3em

\item
 \begin{equation*}
    \begin{array}{rcl}
&&\mathbb S_{c_5}^U(\mu_{(k;k-1)}\cdots \mu_{(i-2;k-1)} \mu_{(i-1;k-1)} (A_{k,k-1}))\vspace{1mm}\\& =& 
[\mu_{(k;k-1)}\cdots \mu_{(i-2;k-1)} \mu_{(i-1;k-1)} (A_{k,k-1})\otimes \overline A_{k-1,k-1}\cdot \overline A_{ii}],
\end{array}
\end{equation*} 

\item  
 \begin{equation*}
    \begin{array}{rcl}
&&\mathbb S_{c_5}^U(\overline \mu_{(k;n-j)}\cdots \overline \mu_{(j-2;n-j)} \overline \mu_{(j-1;n-j)} (\overline A_{k,k+1}))\vspace{1mm}\\& =& 
[ A_{k-1,k-1}\cdot A_{jj}\otimes \overline \mu_{(k;n-j)}\cdots \overline \mu_{(j-2;n-j)} \overline \mu_{(j-1;n-j)} (\overline A_{k,k+1})].
\end{array}
\end{equation*} 
\end{enumerate}  
\end{lemma}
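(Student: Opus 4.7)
The plan is to prove part~(a); part~(b) will then follow by an entirely analogous argument in which the roles of $\mathbb{P}_3^1$ and $\mathbb{P}_3^2$, and of primes and double primes, are interchanged. The guiding observation is that every mutation appearing in the sequence $\mu_{(k;k-1)}\cdots\mu_{(i-1;k-1)}$ is performed at some vertex $v_{a,b}$ with $k\le a\le i-1$ and $1\le b\le k-1$, and each such vertex lies in the interior of $\mathbb{P}_3^1$; hence none of these mutations is at a split vertex. Consequently, Theorem~\ref{thm:splitU} together with Corollary~\ref{cor:propotion} will permit commuting $\mathbb{S}^U_{c_5}$ past every individual mutation in the sequence, at the cost of an explicit monomial correction in the split frozen variables attached to $c_5$.

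I intend to proceed by induction on $i-k$. The base case $i=k$ corresponds to the empty mutation sequence and therefore reduces to verifying
\[
\mathbb{S}^U_{c_5}(A_{k,k-1}) = [A_{k,k-1}\otimes \overline{A}_{k-1,k-1}\cdot \overline{A}_{kk}].
\]
This identity will be read off from Proposition~\ref{prop:image} once the multisets $\mathcal{V}^1_{v_{k,k-1}}$ and $\mathcal{V}^2_{v_{k,k-1}}$ defined in \eqref{def-V-split-skein} are computed directly. The barycentric rule characterizing which small vertex belongs to which $\mathcal{V}'_{s_\ell}$ or $\mathcal{V}''_{s_\ell}$ singles out the two split vertices $s_{k-1}=v_{k-1,k-1}$ and $s_k=v_{kk}$ as precisely the ones containing $v_{k,k-1}$ in their associated curves; a careful matching of primes with the two triangles then places both resulting factors on the $\mathbb{P}_3^2$-side of the tensor decomposition, yielding $\overline{A}_{k-1,k-1}\cdot \overline{A}_{kk}$.

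For the inductive step, suppose the identity holds for a given $i$. Add to the sequence the extra block $\mu_{(i;k-1)}=\mu_{i,k-1}\circ\cdots\circ\mu_{i,1}$, whose individual factors $\mu_{i,j}$ are all at interior vertices of $\mathbb{P}_3^1$. Applying Corollary~\ref{cor:propotion} to each $\mu_{i,j}$ in turn and combining with the inductive hypothesis should produce
\[
\mathbb{S}^U_{c_5}\bigl(\mu_{(k;k-1)}\cdots\mu_{(i;k-1)}(A_{k,k-1})\bigr) = \bigl[\mu_{(k;k-1)}\cdots\mu_{(i;k-1)}(A_{k,k-1})\otimes \overline{A}_{k-1,k-1}\cdot \overline{A}_{ii}\cdot M_i\bigr],
\]
where $M_i\in\mathscr{U}_\omega(\mathbb{P}_3^2)$ is the accumulated correction contributed by the new block $\mu_{(i;k-1)}$. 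The desired inductive conclusion is then equivalent to the telescoping identity $M_i=\overline{A}_{ii}^{-1}\overline{A}_{i+1,i+1}$.

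The principal obstacle will be the explicit verification of this telescoping identity for $M_i$. Carrying it out requires tracking, for each factor $\mu_{i,j}$ in the block $\mu_{(i;k-1)}$, precisely which split vertices $s_\ell$ satisfy $v_{i,j}\in\mathcal{V}^1_{s_\ell}\cup\mathcal{V}^2_{s_\ell}$, together with the sign patterns of the entries $Q_{\mathrm{split}}(s_\ell',v_{i,j})$ and $Q_{\mathrm{split}}(s_\ell'',v_{i,j})$ in the successively mutated quivers $\mu_{(k;k-1)}\cdots\mu_{(i-1;k-1)}(Q_{\mathbb{P}_4})$ encountered along the way. This parallels in spirit the detailed quiver analysis already carried out in Lemmas~\ref{lem:quiver0}--\ref{lem:quiver1} and will account for most of the computational labor. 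Once the telescoping for $M_i$ is verified, part~(b) will follow from the mirror-image version of the entire argument.
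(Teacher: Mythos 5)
Your proposal takes a genuinely different route from the paper's, but the inductive skeleton has a gap that I don't think you can patch as stated.

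The paper proves Lemma~\ref{lem:splitk}(a) as the special case $s=k$, $t=k-1$ of Proposition~\ref{lem:imageofsplit1}, and that proposition is proved by a double induction on $(i-1-s,\,t)$ that peels off the \emph{leftmost} mutation $\mu_{s,t}$, i.e.\ the one applied \emph{last}. At each step, the exchange relation at $v_{s,t}$ in the seed obtained by all the earlier mutations is written out explicitly using the quiver data tracked in Lemma~\ref{lem:mutation3}; each of the two exchange terms involves only initial variables at unmutated vertices together with one shorter mutated cluster variable, which is covered by the inductive hypothesis. So the recursion is built precisely so that the IH applies.

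Your induction instead goes on $i-k$ and adds the block $\mu_{(i;k-1)}$ on the \emph{right}, i.e.\ \emph{first} in the order of application to the seed. This is where the structure breaks. The IH at $i$ concerns $\mu_{(k;k-1)}\cdots\mu_{(i-1;k-1)}(A_{k,k-1})$, which is the cluster variable at $v_{k,k-1}$ obtained by running the shorter sequence starting from the \emph{initial} seed $w_\lambda$. For $i+1$, that same shorter sequence must instead be run starting from the already-mutated seed $\mu_{(i;k-1)}(w_\lambda)$. These are not the same computation: by Lemma~\ref{lem:mutation3}(b) the quiver around the vertices $v_{m,1}$ with $m\leq i-1$ acquires arrows to $v_{i,1}$ after the block $\mu_{(i;k-1)}$, so the exchange relations at the subsequently mutated vertices genuinely change, and the IH does not feed into the step. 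Relatedly, the correction $M_i$ you introduce has no clear origin: Corollary~\ref{cor:propotion} computes the frozen monomial in $\mathbb{S}^U_e(\mu_{i,j}(A_{i,j}))$, i.e.\ for the variable being mutated, and does not by itself tell you anything about $\mathbb{S}^U_e$ of the cluster variable sitting at the distinct vertex $v_{k,k-1}$ in a deeper mutated seed. The way the $\overline{A}_{ii}\to\overline{A}_{i+1,i+1}$ shift actually materializes is through the exchange relation at $v_{k,k-1}$ (or more generally at $v_{s,t}$) whose terms involve the already-mutated variables at $v_{i,\ast}$; extracting it requires the full quiver-evolution bookkeeping you acknowledge as ``the principal obstacle,'' but with the IH organized around the last-applied mutation, not around whole blocks prepended on the right. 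Your base case $\mathbb{S}^U_{c_5}(A_{k,k-1})=[A_{k,k-1}\otimes\overline{A}_{k-1,k-1}\cdot\overline{A}_{kk}]$ is fine via Proposition~\ref{prop:image}, and the observation that no mutation in the sequence is at a split vertex is correct and important; but the inductive step needs to be restructured along the lines of Proposition~\ref{lem:imageofsplit1}'s $(s,t)$ recursion for the argument to close. (One notational slip: you write $v_{i,j}\in\mathcal{V}^1_{s_\ell}\cup\mathcal{V}^2_{s_\ell}$, but $\mathcal{V}^1_{s_\ell}=\mathcal{V}^2_{s_\ell}=\emptyset$ for a split vertex $s_\ell$; you mean $s_\ell\in\mathcal{V}^1_{v_{i,j}}\cup\mathcal{V}^2_{v_{i,j}}$, equivalently $v_{i,j}\in\mathcal{V}'_{s_\ell}\cup\mathcal{V}''_{s_\ell}$.)
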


\begin{lemma}\label{lem:same}
For any $i,j$ with $j\leq i$ and any $k$ with $2\leq k\leq j$, in $\mathscr{U}_\omega(\mathbb P_4)$ we have 

\begin{enumerate}[label={\rm (\alph*)}]\itemsep0,3em

\item
 $\overline \mu^{\diamondsuit}_j\circ  \mu^{\diamondsuit}_{(i;j-1)}(A_{k,k-1})=\mu_{(k;k-1)}\cdots \mu_{(i-2;k-1)} \mu_{(i-1;k-1)} (A_{k,k-1}),$

\item  $\overline \mu^{\diamondsuit}_j\circ  \mu^{\diamondsuit}_{(i;j-1)}(\overline A_{k,k+1})=
\overline \mu_{(k;n-j)}\cdots \overline \mu_{(j-2;n-j)} \overline \mu_{(j-1;n-j)} (\overline A_{k,k+1})$.
\end{enumerate}
\end{lemma}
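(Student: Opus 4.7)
The plan is to reduce Lemma \ref{lem:same} to two basic principles about cluster mutations: (i) a mutation at vertex $u$ leaves the cluster variable at any other vertex $v$ unchanged, and (ii) the value of a cluster variable after a mutation sequence at a vertex $v$ depends only on the state of the quiver and the cluster variables at the neighbors of $v$ at the moment of the (unique) mutation that actually mutates $v$.

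First I would carry out the "geographic" reduction. For part (a), the map $\overline \mu^{\diamondsuit}_j$ consists of mutations at vertices $\overline v_{a,b}$ with $a<b$, all lying in the interior of $\mathbb P_3^2$; these are neither $v_{k,k-1}$ itself nor neighbors of $v_{k,k-1}$ in the $\mathbb P_4$ quiver (since $v_{k,k-1}$ sits in the interior of $\mathbb P_3^1$, away from $c_5$). Hence $\overline \mu^{\diamondsuit}_j$ leaves the cluster variable at $v_{k,k-1}$ unchanged, and the claim reduces to
\[
\mu^{\diamondsuit}_{(i;j-1)}(A_{k,k-1})
= \mu_{(k;k-1)}\cdots \mu_{(i-1;k-1)}(A_{k,k-1}).
\]
For part (b), the symmetric observation shows that $\mu^{\diamondsuit}_{(i;j-1)}$ (whose mutations all lie in $\mathbb P_3^1$) does not affect $\overline v_{k,k+1}\in\mathbb P_3^2$, reducing the claim to a statement entirely inside $\mathbb P_3^2$.

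Next I would unwind the composition order of $\overline \mu^{\diamondsuit}_j=\overline\mu_{(1;n-j)}\cdots\overline\mu_{(j-1;n-j)}$ and locate the unique mutation affecting $\overline v_{k,k+1}$, namely $\overline\mu_{k,k+1}$, which appears as the last factor of $\overline\mu_{(k;n-j)}$. All subsequent composed factors $\overline\mu_{(s;n-j)}$ with $s\le k-1$ mutate only at $\overline v_{s,t}$ with $s<k$, hence do not touch $\overline v_{k,k+1}$. Moreover, the state of the seed just before $\overline\mu_{k,k+1}$ is produced in both the LHS and the RHS sequences by the same suffix $\overline\mu_{k,k+2}\cdots \overline\mu_{k,k+(n-j)} \circ \overline\mu_{(k+1;n-j)}\cdots \overline\mu_{(j-1;n-j)}$, so the exchange at $\overline v_{k,k+1}$ yields the same element of the skew-field. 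This proves (b). The subcase $k=j$ of (a) is entirely analogous: the unique mutation $\mu_{j,j-1}$ occurs as the last factor of $\mu_{(j;j-1)}$ inside the second factor of $\mu^{\diamondsuit}_{(i;j-1)}$, and all subsequent mutations in the first factor lie at vertices $v_{s,t}$ with $s\le j-1$, leaving $v_{j,j-1}$ untouched; a direct comparison of the pre-$\mu_{j,j-1}$ state with the RHS closes the argument.

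The main obstacle is the subcase $k<j$ of (a). Here the LHS sequence contains genuine "extra" mutations relative to the RHS, located at vertices $v_{s,t}$ with $s\ge k+1$ and $t\ge k$ (in both the second factor and the portion of the first factor preceding $\mu_{(k;k-1)}$); some of these extras -- notably $\mu_{k+1,k}$ -- mutate neighbors of $v_{k,k-1}$, and thus alter both the local quiver and the cluster variables that enter the exchange relation for $\mu_{k,k-1}$. I expect the resolution to parallel the inductive quiver analysis already used in proving Propositions \ref{prop-Cij-bar-Cij} and \ref{prop-bar-Cij}: one argues by induction on $j-k$ (alternatively on $i-k$), using a suitable refinement of Lemmas \ref{lem:quiver0}--\ref{lem:mut2}, to show that at the moment $\mu_{k,k-1}$ is applied in the LHS the arrows incident to $v_{k,k-1}$ and the Laurent expressions at its then-current neighbors coincide with those produced by $\mu_{k,k-2}\cdots \mu_{k,1}\circ\mu_{(k+1;k-1)}\cdots \mu_{(i-1;k-1)}$. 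The principal technical difficulty is the combinatorial bookkeeping: the extras are scattered over two factors, the intermediate quivers are complicated, and one has to verify that each extra mutation either commutes past the relevant successors or contributes cancelling changes to the neighborhood of $v_{k,k-1}$.
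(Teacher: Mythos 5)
Your overall strategy mirrors the paper's, which treats Lemma~\ref{lem:same} as analogous to Lemma~\ref{lem:mut2}: peel off mutations applied after the unique mutation at the target vertex (which change nothing there), then commute the remaining extras past the core sequence using disconnectedness in the intermediate quivers. For part~(a) you correctly single out the subcase $k<j$ as the only nontrivial one and recognize that the required bookkeeping should follow the quiver analysis of Lemmas~\ref{lem:quiver0}--\ref{lem:mut2}; you acknowledge that you have not carried it out.

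Your argument for part~(b), however, contains a concrete flaw. You assert that ``the state of the seed just before $\overline\mu_{k,k+1}$ is produced in both the LHS and the RHS sequences by the same suffix,'' but this is false as stated: the LHS seed has been pre-mutated by $\mu^{\diamondsuit}_{(i;j-1)}$, which the RHS lacks, so the two intermediate seeds are not equal. The two parts are not ``symmetric'' in the way you suggest. In part~(a) the foreign factor $\overline\mu^{\diamondsuit}_j$ is applied \emph{after} the unique mutation at $v_{k,k-1}$, so it vanishes immediately by your principle~(i). In part~(b) the foreign factor $\mu^{\diamondsuit}_{(i;j-1)}$ is applied \emph{first}, and dropping it requires a commutation argument that you omit: every mutation in $\mu^{\diamondsuit}_{(i;j-1)}$ occurs at an interior vertex of $\mathbb P_3^1$ (for $v_{s,t}=(n-s,t,s-t)$ with $1\le t\le s-1$ all three barycentric coordinates are positive), every mutation in $\overline\mu^{\diamondsuit}_j$ occurs at an interior vertex of $\mathbb P_3^2$, and since neither sequence ever mutates a $c_5$-vertex, interior vertices of the two triangles remain non-adjacent throughout, so the two mutation sequences commute. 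Only after moving $\mu^{\diamondsuit}_{(i;j-1)}$ to the end of the composite can you invoke principle~(i) to drop it. This commutation step is the missing ingredient that your ``reduction to a statement entirely inside $\mathbb P_3^2$'' silently presupposes.
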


\begin{lemma}\label{lem:quiver2}
For any $i\geq j> 1$,
    in the quiver $Q'=\overline \mu^{\diamondsuit}_j\circ  \mu^{\diamondsuit}_{(i;j-1)}(Q_\lambda)$, we have the following: 
\begin{enumerate}[label={\rm (\alph*)}]\itemsep0,3em

\item The subquiver formed by the vertices $v_{j-1,j-1},\cdots, v_{22}, v_{11}$ is the type $A_{j-1}$ quiver with linear orientation.

\item For any $v\neq v_{11},v_{22},\cdots,v_{j-1,j-1}$, we have
$$Q'(v_{11},v)=\begin{cases}
    1 &  \mbox{ if $v=v_{21}, \bar v_{23}$,}\\
    -1 & \mbox{ if $v=v_{i1},\bar v_{12}$,}\\
    0 & \mbox{ otherwise,}
\end{cases}\hspace{5mm}
Q'(v_{j-1,j-1},v)=\begin{cases}
    1 &  \mbox{ if $v=v_{j,j-1}, \bar v_{j-1,n}$,}\\
    -1 & \mbox{ if $v=v_{j-1,j-2},\bar v_{j-1,j+1}$,}\\
    0 & \mbox{ otherwise}.
\end{cases}$$
$$Q'(v_{kk},v)=\begin{cases}
    1 &  \mbox{ if $v=v_{k+1,k}, \bar v_{k+1,k+2}$,}\\
    -1 & \mbox{ if $v=v_{k,k-1},\bar v_{k,k+1}$,}\\
    0 & \mbox{ otherwise,}
\end{cases}$$
for $1<k<j-1$.

\end{enumerate}
\end{lemma}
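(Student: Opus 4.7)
The plan is to analyze the two halves of the mutation sequence separately, exploiting the fact that $\mu^{\diamondsuit}_{(i;j-1)}$ only mutates at vertices of the triangle $\mathbb{P}_3^1$ while $\overline\mu^{\diamondsuit}_{j}$ only mutates at vertices of $\mathbb{P}_3^2$. Both, however, can affect arrows incident to the ``diagonal'' vertices $v_{11},\ldots,v_{j-1,j-1}$ (which equal $\overline v_{11},\ldots,\overline v_{j-1,j-1}$) lying on the shared edge $c_5$, and it is precisely the arrows between these diagonal vertices that claim (a) concerns. I would first set up a decomposition $Q_{\lambda}=Q_{\mathbb{P}_3^1}\sqcup_{c_5} Q_{\mathbb{P}_3^2}$ so that the single-triangle results of Lemmas~\ref{lem:quiver0}, \ref{lem:quiver1}, \ref{lem:quiver3} can be quoted locally, and so that arrows between the two halves of the diagonal can be treated additively.

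Next I would compute the intermediate quiver $Q'':=\mu^{\diamondsuit}_{(i;j-1)}(Q_{\lambda})$. Recall that $\mu^{\diamondsuit}_{(i;j-1)}$ is an outer product $\mu_{(2;1)}\mu_{(3;2)}\cdots\mu_{(j-1;j-2)}$ composed with an inner product $\mu_{(j;j-1)}\mu_{(j+1;j-1)}\cdots\mu_{(i-1;j-1)}$. The inner block is exactly the sequence whose effect on $v_{j-1,j-1}$ is controlled by Lemmas~\ref{lem:quiver1} (case $i>j+1$) and \ref{lem:quiver3} (case $i=j+1$). For the outer block I would induct on $j$: the base $j=2$ is empty and yields the one-vertex $A_1$-quiver; for the inductive step, after performing $\mu_{(j-1;j-2)}$ the configuration of arrows around $v_{j-1,j-1},\ldots,v_{11}$ becomes exactly the one predicted for the quadrilateral with parameter $j-1$ in place of $j$ (this is where Lemma~\ref{lem:mut2} is useful, since it identifies cluster variables — and hence local subquivers — obtained from different orderings of these mutations, letting us reshuffle the product into a form the induction hypothesis applies to).

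Then I would apply $\overline\mu^{\diamondsuit}_{j}$ to $Q''$. By the obvious $\mathbb{Z}_3$-symmetry of $\mathbb{P}_3$, the analogues of Lemmas~\ref{lem:quiver0}--\ref{lem:quiver3} hold for the bar-mutations inside $\mathbb{P}_3^2$. Combined with Step~2 this yields explicit formulas for all arrows touching $v_{11},\ldots,v_{j-1,j-1}$ in $Q'$. Reading these off, the arrows claimed in (b) appear directly, and the arrows between successive $v_{kk}$ and $v_{k+1,k+1}$ produce the linearly oriented $A_{j-1}$ quiver of (a), with the orientation determined by the sign conventions in $\mu^{\diamondsuit}_{(i;j-1)}$.

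The main obstacle is the bookkeeping in Step~2: each mutation in $\mu_{(k;j-1)}$ interacts with subsequent mutations through the triangular arrangement of the $n$-triangulation quiver, and an unstructured arrow chase grows rapidly. I plan to manage this by choosing the induction variable to be $j$ (not $i$), so that the inductive hypothesis controls the entire block $\mu_{(2;1)}\cdots\mu_{(j-2;j-3)}$ at once, and by using Lemma~\ref{lem:mut2} to commute individual $\mu_{k\ell}$'s past unrelated blocks without changing the relevant cluster variables or arrows. If this reshuffling works, the verification of (b) reduces to tracking just the four neighbors of each $v_{kk}$, which Lemmas~\ref{lem:quiver0}, \ref{lem:quiver1}, \ref{lem:quiver3} already provide in their respective local settings.
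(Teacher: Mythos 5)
Your high-level decomposition — split $Q_\lambda$ for $\mathbb P_4$ along $c_5$, compute the effect of $\mu^{\diamondsuit}_{(i;j-1)}$ and $\overline\mu^{\diamondsuit}_j$ on the arrows touching the diagonal vertices $v_{kk}$ independently, and then combine the contributions — is exactly the strategy the paper uses. The paper's one-line proof invokes Corollary~\ref{cor:quiver1} (the effect of $\mu^{\diamondsuit}_{(i;j-1)}$ on the full subquiver of $Q_\lambda$ over the $v$-vertices) and Lemma~\ref{lem:quiver5} (the corresponding statement for $\overline\mu^{\diamondsuit}_j$), both of which are built in the appendix from the dedicated arrow-chasing Lemmas~\ref{lem:mutation1}--\ref{lem:mutation4}.

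The gap is in your choice of input lemmas. Lemmas~\ref{lem:quiver0}, \ref{lem:quiver1}, and \ref{lem:quiver3} record the arrows emanating from a \emph{single} interior vertex ($v_{21}$ or $v_{j,j-1}$), not the arrows among the diagonal vertices $v_{11},\dots,v_{j-1,j-1}$, which is precisely what part~(a) is about. After $\mu^{\diamondsuit}_{(i;j-1)}$ the diagonal acquires new arrows (in $Q_\lambda$ the diagonal starts with none, since the two half-arrows from $\mathbb P_3^1$ and $\mathbb P_3^2$ cancel on the interior ideal arc $c_5$), and to see that they form an $A_{j-1}$ quiver you need to track the entire block of vertices simultaneously through each round of the outer product $\mu_{(2;1)}\cdots\mu_{(j-1;j-2)}$. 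This is what Lemmas~\ref{lem:mutation1}, \ref{lem:mutation2}, \ref{lem:mutation4} do: they describe the mutation of a whole triple-row pattern at once, so the induction on the outer index closes. With only the single-vertex Lemmas~\ref{lem:quiver0}--\ref{lem:quiver3} you would have to re-derive these global patterns anyway. Two smaller issues: (i) Lemma~\ref{lem:mut2} is a statement about cluster variables, not quivers — it does give the needed commutativity of mutations as a byproduct of its proof, but that requires argument, not just citation; (ii) the bar-sequence $\overline\mu^{\diamondsuit}_j$ has a \emph{different} mutation pattern from $\mu^{\diamondsuit}_{(i;j-1)}$ (it runs over vertices $\overline v_{k,k+\ell}$ with a different range), so the $\mathbb Z_3$-symmetry of $\Gamma_{\mathbb P_3}$ alone does not transport the $v$-side computation to the $\overline v$-side; that side must be computed separately, as the paper does in Lemma~\ref{lem:quiver5}.

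In short: keep the split along $c_5$ and the "treat each triangle's mutations locally, then add the diagonal contributions" framework — that is correct and is the paper's argument. But replace the single-vertex Lemmas~\ref{lem:quiver0}--\ref{lem:quiver3} with arrow-chasing lemmas that track the whole triangular block (or directly prove analogues of Corollary~\ref{cor:quiver1} and Lemma~\ref{lem:quiver5}), and carry out the $\overline v$-side computation independently rather than by symmetry.
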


We will prove Lemmas \ref{lem:splitk},  \ref{lem:same}, and  \ref{lem:quiver2} in Appendix~\ref{sec:proof of Lemmas}.

The following is an immediate corollary of Lemma \ref{lem:quiver2} and the expansion formula for quantum cluster algebras of type $A$, as seen, for example, in \cite{R,CL,H1,H2,H3}.

\begin{corollary}\label{cor:cv}
If $i\geq j>1$, then, in $\mathscr{U}_\omega(\mathbb P_4)$, we have 
 \begin{equation*}
    \begin{array}{rcl}
&&\left(\mu_{j-1,j-1}\cdots\mu_{22}\mu_{11}\right)\circ \overline \mu^{\diamondsuit}_j\circ  \mu^{\diamondsuit}_{(i;j-1)}(A_{j-1,j-1})\vspace{1mm}\\& =& [A_{11}^{-1}\cdot A_{i1}\cdot \overline \mu^{\diamondsuit}_j\circ  \mu^{\diamondsuit}_{(i;j-1)}(\overline A_{12})]+[A_{j-1,j-1}^{-1}\cdot  \overline \mu^{\diamondsuit}_j\circ  \mu^{\diamondsuit}_{(i;j-1)}(A_{j,j-1})\cdot \overline A_{j-1,n}] \vspace{1mm}\\
&+&\sum_{k=2}^{j-1}[A_{k-1,k-1}^{-1}\cdot A_{kk}^{-1}\cdot \overline \mu^{\diamondsuit}_j\circ  \mu^{\diamondsuit}_{(i;j-1)}(A_{k,k-1})\cdot \overline \mu^{\diamondsuit}_j\circ  \mu^{\diamondsuit}_{(i;j-1)}(\overline A_{k,k+1})].
\end{array}
\end{equation*}

\end{corollary}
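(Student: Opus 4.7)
The plan is to reduce the claim to the known expansion formula for quantum cluster variables in type $A$ cluster algebras.  Lemma~\ref{lem:quiver2}(a) guarantees that in the quiver $Q'=\overline\mu^\diamondsuit_j\circ\mu^\diamondsuit_{(i;j-1)}(Q_\lambda)$ the mutable vertices $v_{11},v_{22},\ldots,v_{j-1,j-1}$ span a linearly oriented type $A_{j-1}$ subquiver.  Hence the successive mutations $\mu_{11},\mu_{22},\ldots,\mu_{j-1,j-1}$ applied to the initial cluster variable $A_{j-1,j-1}$ compute precisely the cluster variable obtained by the corresponding end-to-end mutation sequence in a rank-$(j-1)$ type $A$ quantum cluster algebra, with the remaining vertices of $Q'$ playing the role of frozen coefficients.

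First I would invoke the expansion formula of \cite{R,CL,H1,H2,H3} for this type $A$ situation.  In the linearly oriented $A_{j-1}$ case the resulting cluster variable is a sum of exactly $j$ Laurent monomials, one for each ``cut'' of the chain (equivalently, the perfect matchings of the associated snake graph).  Each monomial is a Weyl-ordered product whose exchangeable factors are determined by the cut and whose coefficient factors are read off from the external arrows of the $A_{j-1}$ chain.

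Second, I would read these external arrows from Lemma~\ref{lem:quiver2}(b): for $1<k<j-1$ the vertex $v_{kk}$ connects outside the chain to $v_{k+1,k}$, $\overline v_{k+1,k+2}$, $v_{k,k-1}$, $\overline v_{k,k+1}$, while at the endpoints $k=1$ and $k=j-1$ the relevant external neighbors become $v_{i1},\overline v_{12}$ and $v_{j,j-1},\overline v_{j-1,n}$ respectively.  Substituting these into the $j$ summands of the type $A$ expansion formula is expected to produce precisely the left-endpoint term $[A_{11}^{-1}\cdot A_{i1}\cdot\overline\mu^\diamondsuit_j\circ\mu^\diamondsuit_{(i;j-1)}(\overline A_{12})]$, the right-endpoint term $[A_{j-1,j-1}^{-1}\cdot\overline\mu^\diamondsuit_j\circ\mu^\diamondsuit_{(i;j-1)}(A_{j,j-1})\cdot\overline A_{j-1,n}]$, and the interior terms for $2\le k\le j-1$ in the stated form.

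The main technical point will be to verify that the coefficient monomials produced by the type $A$ expansion formula indeed agree with the displayed Weyl-ordered expressions; in particular, the external neighbors appearing in each summand must be interpreted as the mutated cluster variables $\overline\mu^\diamondsuit_j\circ\mu^\diamondsuit_{(i;j-1)}(A_{k,k-1})$ and $\overline\mu^\diamondsuit_j\circ\mu^\diamondsuit_{(i;j-1)}(\overline A_{k,k+1})$ rather than their initial forms $A_{k,k-1}$ and $\overline A_{k,k+1}$.  Since the mutation sequence $\overline\mu^\diamondsuit_j\circ\mu^\diamondsuit_{(i;j-1)}$ never touches the corresponding vertices (so these really are cluster variables of the mutated seed), this reduces to careful bookkeeping in the quantum torus of the mutated seed, using the quasi-commutation relations and the definition of the Weyl-ordered product in $\mathscr U_\omega(\mathbb P_4)$.
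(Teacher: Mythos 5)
Your proposal is correct and follows exactly the paper's route: the paper's proof consists of precisely the single sentence invoking Lemma~\ref{lem:quiver2} together with the type~$A$ expansion formula from \cite{R,CL,H1}, and your outline simply unpacks that citation—identifying the linearly oriented $A_{j-1}$ chain via part~(a), reading the external coefficient arrows from part~(b), and noting that those external vertices are never mutated by $\overline\mu^\diamondsuit_j\circ\mu^\diamondsuit_{(i;j-1)}$ and hence persist as cluster variables of the mutated seed.
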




\begin{proof}[Proof of Theorem \ref{thm-P4-Dij}:]

{\bf Case 1}: $i\geq j=1$.
We have $$\mathbb S_{c_5}^U([A_{i1}\cdot A_{i0}^{-1}\cdot \overline A_{1n}^{-1}])=[A_{i1}\cdot A_{i0}^{-1}\cdot A_{11}^{-1}\otimes \overline A_{1n}^{-1}]$$ and by Theorem~\ref{thm:splitU} and Lemma \ref{lem-Cij-bar-Cij}, we have
$$
\mathbb S_{c_5}^U(D_{i1})=\mathbb S_{c_5}(D_{i1})=C_{i1}\otimes \overline{C}_{11}=[A_{i1}\cdot A_{i0}^{-1}\cdot A_{11}^{-1}\otimes \overline A_{1n}^{-1}].$$

As $\mathbb S_e^U$ is injective, we obtain $D_{i1}=[A_{i1}\cdot A_{i0}^{-1}\cdot \overline A_{1n}^{-1}]$.

{\bf Case 2}: $j>i=1$.
By Lemma \ref{lem:splitk}, we have
\begin{equation*}
    \begin{array}{rcl}
& &\mathbb S_{c_5}^U\left([\overline \mu_{(1;n-j)}\cdots \overline \mu_{(j-2;n-j)} \overline \mu_{(j-1;n-j)} (\overline A_{12})\cdot A_{10}^{-1}\cdot \overline A_{jn}^{-1} ]\right)\vspace{1mm}\\& =& 
[A_{10}^{-1}\otimes \overline \mu_{(1;n-j)}\cdots \overline \mu_{(j-2;n-j)} \overline \mu_{(j-1;n-j)} (\overline A_{12})\cdot \overline A_{11}^{-1}\cdot \overline A_{jn}^{-1}].
\end{array}
\end{equation*}

By Theorem~\ref{thm:splitU}, Lemma \ref{lem-Cij-bar-Cij}, and Proposition \ref{prop-bar-Cij}, we have
$$S_{c_5}^U(D_{ij})=\mathbb S_{c_5}(D_{1j})=C_{11}\otimes \overline{C}_{j1}=[A_{10}^{-1}\otimes \overline \mu_{(1;n-j)}\cdots \overline \mu_{(j-2;n-j)} \overline \mu_{(j-1;n-j)} (\overline A_{12})\cdot \overline A_{11}^{-1}\cdot \overline A_{jn}^{-1}].$$

As $\mathbb S_{c_5}^U$ is injective, we obtain $D_{1j}=[\overline \mu_{(1;n-j)}\cdots \overline \mu_{(j-2;n-j)} \overline \mu_{(j-1;n-j)} (\overline A_{12})\cdot A_{10}^{-1}\cdot \overline A_{jn}^{-1} ]$.

{\bf Case 3}: $i\geq j>1$. 
By Corollary \ref{cor:cv}, Lemmas \ref{lem:same}, and \ref{lem:splitk}, we have 
 \begin{equation}\label{eq:su1}
    \begin{array}{rcl}
& &\mathbb S_{c_5}^U\left(\left(\mu_{j-1,j-1}\cdots\mu_{22}\mu_{11}\right)\circ \overline \mu^{\diamondsuit}_j\circ  \mu^{\diamondsuit}_{(i;j-1)}(A_{j-1,j-1})\right)\vspace{1mm}\\
& =& [A_{i1}\cdot A_{jj}\cdot A_{11}^{-1}\otimes \overline A_{ii}\cdot \overline A_{11}^{-1}\cdot \overline \mu_{(1;n-j)}\cdots \overline \mu_{(j-2;n-j)} \overline \mu_{(j-1;n-j)} (\overline A_{12})]\vspace{1mm}\\& +&
[\mu_{(j;j-1)}\cdots \mu_{(i-2;j-1)} \mu_{(i-1;j-1)} (A_{j,j-1})\otimes \overline A_{ii}\cdot \overline A_{j-1,n}]\vspace{1mm} \\
&+&\sum_{k=2}^{j-1}[A_{jj}\cdot A_{kk}^{-1}\cdot \mu_{(k;k-1)}\cdots \mu_{(i-2;k-1)} \mu_{(i-1;k-1)} (A_{k,k-1})\vspace{1mm}\\&\otimes& \overline A_{ii} \cdot \overline A_{kk}^{-1}\cdot\overline \mu_{(k;n-j)}\cdots \overline \mu_{(j-2;n-j)} \overline \mu_{(j-1;n-j)} (\overline A_{k,k+1})]\vspace{1mm}
\end{array}
\end{equation}

On the other hand, by Theorem~\ref{thm:splitU}, Lemmas \ref{lem:same},  \ref{lem-Cij-bar-Cij}, and Propositions \ref{prop-Cij-bar-Cij}, \ref{prop-bar-Cij}, we have 
 \begin{equation}\label{eq:su2}
    \begin{array}{rcl}
& &
S_{c_5}^U(D_{ij})=
\mathbb S_e(D_{ij})=\sum_{k=1}^j C_{ik}\otimes \overline C_{jk}\vspace{1mm}\\& =& [A_{i1}\cdot A_{i0}^{-1}\cdot A_{11}^{-1}\otimes \overline A_{jn}^{-1}\cdot \overline A_{11}^{-1}\cdot \overline \mu_{(1;n-j)}\cdots \overline \mu_{(j-2;n-j)} \overline \mu_{(j-1;n-j)} (\overline A_{12})]\vspace{1mm}\\& +&
[A_{i0}^{-1}\cdot A_{jj}^{-1}\cdot\mu_{(j;j-1)}\cdots \mu_{(i-2;j-1)} \mu_{(i-1;j-1)} (A_{j,j-1})\otimes \overline A_{jn}^{-1}\cdot \overline A_{j-1,n}]\vspace{1mm} \\
&+&\sum_{k=2}^{j-1}[A_{i0}^{-1}\cdot A_{kk}^{-1}\cdot \mu_{(k;k-1)}\cdots \mu_{(i-2;k-1)} \mu_{(i-1;k-1)} (A_{k,k-1})]\vspace{1mm}\\&\otimes& [\overline A_{jn}^{-1} \cdot \overline A_{kk}^{-1}\cdot\overline \mu_{(k;n-j)}\cdots \overline \mu_{(j-2;n-j)} \overline \mu_{(j-1;n-j)} (\overline A_{k,k+1})].
\end{array}
\end{equation}

It follows by (\ref{eq:su1}) and (\ref{eq:su2}) that 
$$\mathbb S_{c_5}^U(D_{ij})=\mathbb S_{c_5}^U\left([\left(\mu_{j-1,j-1}\cdots\mu_{22}\mu_{11}\right)\circ \overline \mu^{\diamondsuit}_j\circ  \mu^{\diamondsuit}_{(i;j-1)}(A_{j-1,j-1})\cdot A_{i0}^{-1}\cdot \overline A_{jn}^{-1}]\right).$$

As $\mathbb S_e^U$ is injective, we obtain 
$$D_{ij}=[\left(\mu_{j-1,j-1}\cdots\mu_{22}\mu_{11}\right)\circ \overline \mu^{\diamondsuit}_j\circ  \mu^{\diamondsuit}_{(i;j-1)}(A_{j-1,j-1})\cdot A_{i0}^{-1}\cdot \overline A_{jn}^{-1}]$$

The case that $j>i>1$ is similar to the case that $i\geq j>1$.

The proof is complete.
\end{proof}

\subsection{General case}
Let $\fS$ be a pb surface such that each connected component has non-empty boundary. 
 Let $B$ be a collection of properly embedded, disjoint, oriented arcs in $\fS$. 
We say that $B$ is a \emph{saturated system} if the following hold:
\begin{enumerate}
    \item After cutting $\fS$ along $B$, every component of the resulting surface contains exactly one ideal point.
    \item $B$ is maximal with respect to condition (1).
\end{enumerate}
For any $b\in B$, and $1\leq i,j\leq n$, define $b_{ij}$ to be the stated arc such that the state of the starting (resp. ending) point of $b$ is equipped with the state $i$ (resp. $j$).

\begin{lemma}\cite[Theorem 6.2(3)]{LY23}\label{lem-saturated}
Let $\fS$ be a pb surface such that each connected component has non-empty boundary, and
 let $B$ be a saturated system of $\fS$.
    The algebra $\cS_{\omega}(\fS)$ is generated by
$$\{b_{ij}\mid b\in B, 1\leq i,j\leq n\}.$$
\end{lemma}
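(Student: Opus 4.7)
The plan is a two-stage reduction: first reduce any stated $n$-web to a linear combination of products of stated arcs, then reduce any stated arc to stated arcs in $B$ by induction on the geometric intersection number with $B$. The saturation hypothesis, namely that $\text{Cut}_B(\fS)$ is a disjoint union of monogons $\mathbb{P}_1$, will be what makes the base case of the induction trivial.

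For the first stage, I would place an $n$-web in vertical position and apply the $n$-valent vertex relation \eqref{wzh.four} at every source and sink to rewrite the web as a sum indexed by positive braids; then iteratively resolve crossings via \eqref{w.cross} and absorb contractible loops and framing twists into scalars using \eqref{w.twist} and \eqref{w.unknot}. This reduces any stated $n$-web to a linear combination of stacked products of embedded stated arcs and non-contractible closed curves, and the latter can be isotoped to have a small arc very close to $\partial\fS$ and then cut open using the boundary relations \eqref{wzh.six}--\eqref{wzh.eight} to express them as $R$-linear combinations of products of stated arcs. Thus $\cS_\omega(\fS)$ is generated as an $R$-algebra by the set of all stated arcs.

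For the second stage, given a stated arc $\alpha$, I would induct on $k := |\alpha \pitchfork B|$, the minimal geometric intersection number of $\alpha$ with $B$ over the isotopy class. In the base case $k=0$, the arc $\alpha$ lies entirely inside one component of $\text{Cut}_B(\fS)$, which by the saturation assumption is a copy of $\mathbb{P}_1$. An arc in $\mathbb{P}_1$ is either boundary-parallel or wraps around the puncture; in either case the pair of endpoints lies on a single boundary interval, and using \eqref{wzh.four}--\eqref{wzh.eight} in the monogon, $\alpha$ reduces to a scalar multiple of a product of boundary arcs, which become stated copies of the $b\in B$ bordering the monogon once we re-glue. For the inductive step $k\geq 1$, pick a transverse intersection point $p$ of $\alpha$ with some $b \in B$ and a small neighborhood $U$ of $p$ disjoint from $B\setminus\{b\}$. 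I would derive a local identity of the form
\[
\alpha = \sum_{r=1}^{n} c_r \,[\,b_{i_\alpha, r} \cdot \alpha^{(r)}_{r, j_\alpha}\,]
\]
in $\cS_\omega(\fS)$, where $\alpha^{(r)}_{r, j_\alpha}$ is the stated arc obtained by cutting $\alpha$ at $p$ and re-closing through $b$ on the other side (so $|\alpha^{(r)} \pitchfork B| = k-1$), and the scalars $c_r$ involve the structure constants $\mathbbm{c}_i$ and framing factors. The inductive hypothesis applied to each $\alpha^{(r)}$ then finishes the argument.

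The main obstacle is the local identity in the inductive step. Since the splitting homomorphism $\mathbb{S}_b$ is generally not invertible, one cannot directly ``undo'' it; instead, the required identity must be extracted from the defining relations \eqref{w.cross}--\eqref{wzh.eight} by a careful local computation near $p$, which amounts to the skein-theoretic avatar of the fact that the matrix coefficients of the standard ${\rm SL}_n$ representation generate the coordinate ring via matrix multiplication. Implementing this cleanly requires tracking heights and states across $b$ consistently and verifying, via \eqref{wzh.six}--\eqref{wzh.seven}, that the ``creation–annihilation'' of a pair of opposing endpoints on $b$ in the Weyl-ordered product $[b_{i,r}\cdot \alpha^{(r)}]$ reconstructs $\alpha$ with the correct scalar.
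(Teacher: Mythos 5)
This lemma is cited from \cite[Theorem~6.2(3)]{LY23}; the paper does not supply its own proof, so there is nothing internal to compare your proposal against. Evaluating the proposal on its own terms, there are genuine gaps at each stage.

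In Stage~1, you cannot ``iteratively resolve crossings via \eqref{w.cross}'': that relation is a Hecke-type identity expressing one crossing in terms of the opposite crossing plus a resolution, so it interchanges over- and under-crossings but does not eliminate them. For $n\geq 3$ there is no local crossing-resolution; the mechanism that actually lets you reach crossing-free, vertex-free arcs on a surface with boundary is the \emph{boundary} relations \eqref{wzh.five} and \eqref{wzh.eight}, used to open $n$-valent vertices at $\partial\fS$ and to reorder heights of boundary points, together with an isotopy pushing local configurations to the boundary. Without those, Stage~1 does not terminate.

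In Stage~2 the base case is not as simple as stated. An arc $\alpha$ disjoint from $B$ lives in a component $M$ of $\mathsf{Cut}_B(\fS)$, but the inclusion $M\hookrightarrow\fS$ is \emph{not} a strict embedding: the portions of $\partial M$ coming from copies of arcs in $B$ are interior to $\fS$, so the boundary relations \eqref{wzh.six}--\eqref{wzh.eight} that make $\cS_\omega(\mathbb P_1)\cong R$ cannot be applied there once you are working in $\cS_\omega(\fS)$. Concretely, an arc in $M$ with endpoints on two different $\partial\fS$-pieces may separate several $B$-arcs from the puncture, in which case it is not isotopic in $\fS$ to any single $b\in B$ nor boundary-parallel, and the claimed reduction to a scalar times a product of $b_{ij}$'s does not go through without more work.

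In the inductive step, the local identity $\alpha=\sum_r c_r\,[\,b_{i_\alpha,r}\cdot\alpha^{(r)}_{r,j_\alpha}\,]$ is not one of the defining relations and is not obviously derivable from them. In stated skein theory there is no relation that splits an arc at an \emph{interior} transverse intersection with another arc; all the state-creating relations \eqref{wzh.six}--\eqref{wzh.eight} occur at $\partial\fS$. The correct maneuver (and the one this paper itself uses in Lemma~\ref{lem-essential-arc}) is to isotope a segment of $\alpha$ to lie near a boundary puncture at an endpoint of $b$ and apply \eqref{wzh.seven} or \eqref{wzh.eight} there, splitting $\alpha$ into a product of two arcs terminating on $\partial\fS$. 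Even then, it is not automatic that the resulting pieces have strictly smaller geometric intersection number with $B$; one needs to choose the isotopy and the intersection point $p$ carefully (typically the one closest along $\alpha$ to an endpoint) and verify the complexity actually decreases. As written, ``cut at $p$ and re-close through $b$'' has no skein-theoretic meaning, and you flag this yourself; it is exactly the point where the argument needs to be made precise.
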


Recall that a properly embedded oriented arc in $\fS$ is called an essential arc  
if its endpoints lie on two distinct components of $\partial \fS$.

\begin{lemma}\label{lem-essential-arc}
Let $\fS$ be a triangulable pb surface without interior punctures. 
Suppose that each connected component of $\fS$ contains at least two punctures. 
Then the algebra $\cS_{\omega}(\fS)$ is generated by a finite family of stated essential arcs. 
\end{lemma}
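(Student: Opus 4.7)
The plan is to invoke Lemma~\ref{lem-saturated}: it provides a finite generating family $\{b_{ij}\mid b\in B,\ 1\le i,j\le n\}$ of $\cS_\omega(\fS)$ for any saturated system $B$. It therefore suffices to produce a saturated system $B$ consisting entirely of essential arcs, i.e.\ arcs whose two endpoints lie on distinct boundary components of $\fS$.

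The construction would be carried out connected component by connected component. Fix a connected component $\fS_0$ of $\fS$. By hypothesis $\fS_0$ has $p\geq 2$ punctures and no interior ones, so each boundary circle of $\overline{\fS}_0$ carrying $k$ punctures contributes $k$ open-interval boundary components to $\fS_0$; in particular $\fS_0$ has exactly $p\geq 2$ boundary components. I would construct a saturated system $B_0$ for $\fS_0$ inductively, at each stage choosing an essential arc $b$ of one of two types depending on the topology of the current residual surface. In the favorable case, some puncture $q$ of the residual surface lies in the closure of two distinct boundary components $c_1\neq c_2$: take $b$ to be a short essential arc from $c_1$ to $c_2$ encircling $q$; cutting along $b$ then splits off a terminal monogon $\mathbb{P}_1$ containing $q$ and leaves a residual surface with one fewer puncture. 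In the unfavorable case, every boundary circle of the residual surface carries a single puncture, and hence the residual surface has at least two boundary circles or genus $\geq 1$; one then draws an essential arc $b$ between two distinct boundary components---either a non-separating handle-crossing arc (when the genus is positive) or a separating arc between two boundary circles (planar case)---so that cutting along $b$ strictly decreases $-\chi$ while preserving the puncture count and the ``$\geq 2$ punctures'' hypothesis on every non-monogon residue. The iteration terminates in finitely many steps with a disjoint union of monogons, and $B_0$ is maximal automatically since no arc can be added inside a monogon without violating the one-puncture-per-component condition.

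The main obstacle is verifying that an essential arc of the required type always exists at each inductive step, especially in the unfavorable case. Triangulability plays the decisive role here: given any ideal triangulation $\lambda$ of the residual surface, interior edges of $\lambda$ can be perturbed near their puncture endpoints into essential arcs of $\fS_0$---into encircling arcs when some endpoint puncture has two adjacent boundary arcs, and otherwise into non-separating or separating essential arcs connecting two boundary circles. The no-interior-puncture condition persists under cutting, and an Euler-characteristic decrease argument guarantees termination of the induction. Taking the union of the $B_0$'s over all connected components of $\fS$ then yields the desired finite family of essential arcs whose stated versions generate $\cS_\omega(\fS)$.
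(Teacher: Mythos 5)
Your approach is genuinely different from the paper's, and in fact the paper itself flags your route and deliberately sidesteps it. The paper's proof does not attempt to build a saturated system made of essential arcs. Instead it takes an \emph{arbitrary} saturated system $B$, obtaining finitely many generating arcs $C_1,\dots,C_r$ via Lemma~\ref{lem-saturated}, and then handles each non-essential $C_j$ (both endpoints on a single boundary component $c$) algebraically: since the connected component containing $c$ has $\geq 2$ punctures, there is another boundary component $c'$, and one can construct an embedded arc $\gamma$ from $C_j$ to $c'$ meeting $C_j$ only at its start (found by cutting along $C_j$ and taking a proper arc in the piece that still sees $c'$). Isotoping $C_j$ along $\gamma$ so that near $c'$ it looks like the cap on the left of relation~\eqref{wzh.seven}, and then applying that relation, one rewrites $C_j=\sum_t s_t W_t$ where each $W_t$ is a product of two stated essential arcs. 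Iterating over the non-essential generators completes the proof. The Remark immediately after the lemma explicitly says that a saturated system consisting entirely of essential arcs does exist, but that ``Lemma~\ref{lem-saturated} is sufficient, and it is both easier to understand and easier to prove'' --- i.e.\ the authors know your route is available but chose the skein-relation shortcut.

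As written, your argument has real gaps rather than just a different style. You acknowledge the ``main obstacle'' (existence of the required essential arc at each step, especially in the unfavorable case) but resolve it only by gesturing at triangulations; that is precisely the part that would need a careful argument. Concretely: (i) in the unfavorable case you assert that a separating or non-separating essential arc can be chosen so as to preserve the ``$\geq 2$ punctures on every non-monogon residue'' hypothesis, but cutting along an arc between two singly-punctured boundary circles merges them into one boundary circle carrying two punctures while possibly creating new pieces whose residual puncture distribution you do not control; (ii) your termination argument alternates between a step that decreases the puncture count but fixes $-\chi$ (favorable) and a step that decreases $-\chi$ but fixes the puncture count (unfavorable), and you do not exhibit a single well-founded measure; (iii) the maximality of $B_0$ is asserted but only for the end state, not tracked through the induction. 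None of these is fatal to the strategy --- the Remark confirms the target statement is true --- but each would need to be filled in for a complete proof, whereas the paper's skein-relation decomposition avoids all of this surface-topology bookkeeping at the cost of one application of relation~\eqref{wzh.seven}.
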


\begin{proof}
By Lemma~\ref{lem-saturated}, the algebra $\cS_{\omega}(\fS)$ is generated by a finite family of stated arcs
$\{C_1,\dots,C_r\}$, where each $C_i$ is a properly embedded oriented arc in $\fS$.
If every $C_i$ is essential, the proof is complete.  

Suppose instead that for some $j$, the endpoints of $C_j$ both lie on the same boundary component $c\subset \partial \fS$.
Since each connected component of $\fS$ has at least two boundary components, there exists another boundary component $c'$ in the same connected component of $\fS$.  

We claim that there is an embedded arc $\gamma\colon[0,1]\to \fS$ such that 
$\gamma(0)\in C_j$, $\gamma(1)\in c'$, and $\gamma$ intersects $C_j$ only at $\gamma(0)$.
To construct $\gamma$, let $\Sigma$ be the surface obtained by cutting $\fS$ along $C_j$.
Then $\partial \Sigma$ contains two copies of $C_j$, denoted $C_j'$ and $C_j''$.
Since $C_j$ and $c'$ lie in the same connected component of $\fS$, 
at least one of $C_j'$ or $C_j''$ lies in the same connected component of $\Sigma$ as $c'$.
Without loss of generality, assume this is $C_j'$.
Then there exists a properly embedded arc $\gamma'\colon[0,1]\to \Sigma$ with $\gamma'(0)\in C_j'$ and $\gamma'(1)\in c'$.
Projecting back via the natural map ${\bf P}\colon \Sigma\to \fS$, we obtain the desired arc $\gamma={\bf P}(\gamma')$.

Using $\gamma$, we can isotope $C_j$ so that near $c'$, the local configuration of $C_j\cup c'$ matches the left-hand side of relation \eqref{wzh.seven}.
Applying relation \eqref{wzh.seven} (or equivalently \cite[Equation~(60)]{LS21}, depending on the relative height order of the endpoints of $C_j$), we may express
\[
C_j = \sum_{1\leq t\leq n} s_t W_t,
\]
where each $W_t$ is a product of two stated essential arcs.  

Thus, all generators can be expressed in terms of finitely many stated essential arcs, which proves the lemma.
\end{proof}

\begin{remark}
Under the assumptions of Lemma~\ref{lem-essential-arc}, there exists a saturated system $B$ of $\fS$ consisting entirely of essential arcs. 
Lemma~\ref{lem-saturated} then implies that the algebra $\cS_{\omega}(\fS)$ is generated by
\[
\{\, b_{ij}\mid b\in B,\; 1\leq i,j\leq n \,\},
\]
where each $b_{ij}$ is a stated essential arc.  
For the purposes of this paper, however, we do not require such a strong result: Lemma~\ref{lem-saturated} is sufficient, and it is both easier to understand and easier to prove.
\end{remark}


Under the assumptions of Lemma~\ref{lem-essential-arc}, let $\lambda$ be a triangulation of $\fS$ containing the ideal arcs $e_1,e_2,e_3$ as in Figure~\ref{Fig;tau-v1}(A), where $e_1\neq e_2$. 
Let $\tau$ be the triangle whose edges are $e_1,e_2,e_3$, and let 
\[
f_\tau\colon \mathbb P_3 \longrightarrow \fS \qquad \text{(see \eqref{eq-character-map})}
\]
be the characteristic map of $\tau$, sending each edge $e_i$ of $\mathbb P_3$ (Figure~\ref{Fig;coord_ijk}) to the corresponding $e_i$ in Figure~\ref{Fig;tau-v1}. 
In Lemma~\ref{lem-Cij-bar-Cij}, Propositions~\ref{prop-Cij-bar-Cij} and \ref{prop-bar-Cij}, we proved
that
$$C_{ij}, \overline C_{ij}\in \mathscr A_{\omega}(\mathbb P_3) \qquad \text{(see Figure~\ref{Cij-P3})}.$$
 
We regard \(C_{ij}\) and \(\overline{C}_{ij}\) as elements of \(\mathscr A_{\omega}(\fS)\) via the map \(f_\tau\),  
since \(f_\tau\) injectively maps all vertices of \(\mathbb P_3\) that realize \(C_{ij}\) and \(\overline{C}_{ij}\) as cluster variables.  
To distinguish them from the  elements \(C_{ij},\overline{C}_{ij}\in \mathscr{A}_\omega(\mathbb P_3)\),  
we denote them by \(C_{ij}^\fS\) and \(\overline{C}_{ij}^\fS\), respectively.
In other words, \(C_{ij}^\fS, \overline{C}_{ij}^\fS \in \mathscr A_{\omega}(\fS)\) are defined through the map \(f_\tau\) and the cluster–variable formulas given in  
Lemma~\ref{lem-Cij-bar-Cij} and Propositions~\ref{prop-Cij-bar-Cij} and \ref{prop-bar-Cij}.

Recall that we labeled small vertices in $V_{\mathbb P_3}$ by $v_{ij}$ for $(i,j)\in\{0\leq j\leq i\leq n\}\setminus\{(0,0),(n,0),(n,n)\}$.
By a slight abuse of notation, we continue to write $A_{ij}$ for 
$A_{f_\tau(v_{ij})}\in \mathcal A_\omega(\fS,\lambda)$.
Then $A_{ij}\in \mathcal A_\omega(\fS,\lambda)$ is the variable associated to the small vertex in $V_\lambda$ labeled by $ij$ in Figure~\ref{Fig;tau-v1}(B).

Let $\mathbb P_3 \sqcup \fS'$ be the pb surface obtained from $\fS$ by cutting along $e_2$, and let $\lambda'$ be the triangulation of $\fS'$ induced by $\lambda$ when $\fS'\neq\emptyset$.  
For each $1\le j\le n-1$, there is a small vertex 
$v_{nj}'' \in V_{\lambda'}$ that is identified with the small vertex 
$v_{nj} \in V_{\mathbb P_3}$ when we glue $\fS'$ and $\mathbb P_3$ back together to recover $\fS$.  
We denote by $A_{nj}''$ the element $A_{v_{nj}''}\in \mathcal A_\omega(\fS',\lambda')$.
We use the convention that $A_{n0}''=
A_{nn}''=1$.

We set $A_{nj}''=1$, for $0\leq j\leq n$, when $\fS'=\emptyset$, i.e., $\fS=\mathbb P_3$, and use the convention that $\widetilde{\cS}_\omega(\emptyset)=R$.



\begin{lemma}\label{lem:split_e2}
In $\mathscr{A}_\omega(\mathbb P_3)\otimes \mathscr{A}_\omega(\fS'
)$, we have the following:

\begin{enumerate}[label={\rm (\alph*)}]\itemsep0,3em

\item $\mathbb S^{U}_{e_2}(A_{ij})=
    [A_{ij}\otimes A''_{nj}]$ for $(i,j)\in\{0\leq j\leq i\leq n\}\setminus\{(0,0),(n,0),(n,n)\}$.

\item $\mathbb S^{U}_{e_2}(\mu_{(j;j-1)}\cdots \mu_{(i-2;j-1)} \mu_{(i-1;j-1)} (A_{j,j-1}))=[\mu_{(j;j-1)}\cdots \mu_{(i-2;j-1)} \mu_{(i-1;j-1)} (A_{j,j-1})\otimes A''_{nj}]$ for any $i,j$ with $1<j<i$.

\end{enumerate}
\end{lemma}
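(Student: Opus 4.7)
The plan for part~(a) is to apply Theorem~\ref{thm:splitU} directly. That theorem gives $\mathbb S^U_{e_2}(A_v)$ via the formula of Proposition~\ref{prop:image} in terms of the multisets $\mathcal V^1_v,\mathcal V^2_v$ attached to the splitting along $e_2$. For a vertex $v_{nj}$ lying on $e_2$, the second branch of the formula immediately yields $[A_{nj}\otimes A''_{nj}]$. For $v_{ij}$ with $i<n$, I would read off the multisets from the barycentric description in \eqref{def-V-split-skein}: after the cyclic permutation of coordinates that places the appropriate endpoint of $e_2$ at $(n,0,0)$, the barycentric coordinate $(n-i,j,i-j)$ becomes $(j,i-j,n-i)$, whose first entry is $j$. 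This identifies $v_{ij}$ as lying on the curve emanating from $s_j=v_{nj}$ and on no other; since the vertex lies in $\mathbb P_3$ only, the multiset on the $\fS'$-side is empty while the one on the $\mathbb P_3$-side equals $\{s_j\}$ when $j\geq 1$, and both are empty when $j=0$. Substituting into Proposition~\ref{prop:image} and using the conventions $A''_{n0}=A''_{nn}=1$ yields the claim.

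For part~(b) the plan is to rewrite the exchangeable cluster variable using Proposition~\ref{prop-Cij-bar-Cij}, which (after reordering the Weyl bracket) gives
\[
\mu_{(j;j-1)}\cdots \mu_{(i-1;j-1)}(A_{j,j-1}) = [C_{ij}\cdot A_{i0}\cdot A_{jj}],
\]
and then to apply $\mathbb S^U_{e_2}$. Since $\mathbb S^U_{e_2}$ is an algebra homomorphism it preserves Weyl-ordered products, so only the images of the three factors are needed. Part~(a) handles $A_{i0}\mapsto A_{i0}\otimes 1$ (as $A''_{n0}=1$) and $A_{jj}\mapsto [A_{jj}\otimes A''_{nj}]$. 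For $C_{ij}$, the compatibility diagram of Theorem~\ref{thm:splitU} (available via the inclusion $\dS\subset\mathscr A_\omega(\fS)$ of Theorem~\ref{thm-main-3}) gives $\mathbb S^U_{e_2}(C_{ij})=\mathbb S_{e_2}(C_{ij})$. Geometrically, $C_{ij}$ is drawn entirely inside $\mathbb P_3$ with endpoints on $e_1$ and $e_3$ (see Figure~\ref{Cij-P3}); since it has no intersection with $e_2$, the skein splitting formula~\eqref{eq-def-splitting} collapses to a single summand and returns $C_{ij}\otimes 1$. Combining the three images inside the Weyl bracket, and using that elements of the two tensor factors commute, one obtains
\[
\mathbb S^U_{e_2}([C_{ij}\cdot A_{i0}\cdot A_{jj}]) = [C_{ij}\cdot A_{i0}\cdot A_{jj}]\otimes A''_{nj} = [\mu_{(j;j-1)}\cdots\mu_{(i-1;j-1)}(A_{j,j-1})\otimes A''_{nj}],
\]
which is the claim.

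The main technical obstacle in this plan is ensuring that the identity of Proposition~\ref{prop-Cij-bar-Cij}, which is proved inside $\mathscr A_\omega(\mathbb P_3)$, also holds with the same Weyl-normalization inside $\mathscr A_\omega(\fS)$ under the natural identification of cluster variables carrying the same labels. Because the Weyl-ordered products depend on the anti-symmetric matrix $P_\lambda$, and $P_\lambda$ restricted to $V_{\mathbb P_3}$ is not automatically equal to $P_{\mathbb P_3}$, the transfer requires verifying that the skeleton elongations of the vertices $v_{ij}$ ($i<n$, $0\leq j\leq i$) involved in the product stay within the triangle $\mathbb P_3$ (or contribute only in a way that cancels out). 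Once this bookkeeping is resolved, the structural argument above completes the proof.
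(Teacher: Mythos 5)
Your part~(a) is essentially the paper's approach: it is a direct application of the splitting formula from Proposition~\ref{prop:image}/Theorem~\ref{thm:splitU}, reading off the multisets $\mathcal V^1_v,\mathcal V^2_v$ from the barycentric description in \eqref{def-V-split-skein}. The bookkeeping of which tensor factor picks up the extra variable is a little loose, but the outcome $[A_{ij}\otimes A''_{nj}]$ is correct.

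Your part~(b), however, has a genuine circularity. You rewrite the mutated cluster variable as $[C_{ij}\cdot A_{i0}\cdot A_{jj}]$ \emph{inside} $\mathscr A_\omega(\fS)$ and then compute $\mathbb S^U_{e_2}(C_{ij})$ using the compatibility square of Theorem~\ref{thm:splitU} together with the skein splitting of the corner arc. But Proposition~\ref{prop-Cij-bar-Cij} establishes the identity $C_{ij}=[\mu_{(j;j-1)}\cdots\mu_{(i-1;j-1)}(A_{j,j-1})\cdot A_{i0}^{-1}\cdot A_{jj}^{-1}]$ only in $\mathscr A_\omega(\mathbb P_3)$, where the quantum trace is known to be injective. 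For a general $\fS$ the element $C_{ij}^\fS$ of the paper is \emph{defined} by the right-hand cluster expression, and the claim that this cluster expression coincides with the skein corner arc inside $\mathscr U_\omega(\fS)$ is precisely Equation~\eqref{intro-eq-Cij}, which the paper derives \emph{from} Lemma~\ref{lem-splitting-corner-arc}, which in turn invokes the present Lemma~\ref{lem:split_e2}. You cannot use the skein-side evaluation $\mathbb S_{e_2}(C_{ij})=C_{ij}\otimes 1$ to compute $\mathbb S^U_{e_2}$ of the cluster expression without already knowing that the two agree in $\mathscr A_\omega(\fS)$. The issue you flag at the end (whether the Weyl normalization transfers) is secondary; the primary gap is that you are assuming the skein-cluster identity for general $\fS$ in the course of proving the lemma that is used to establish it.

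The paper instead avoids any reference to the skein arc: Lemma~\ref{lem:split_e2} follows from Proposition~\ref{lem:imageofsplite2}, which is an induction on mutation steps carried out entirely in cluster-algebra language. The key ingredients are Corollary~\ref{cor:propotion} (which describes how $\mathbb S^U_e$ interacts with a single quantum $\mathcal A$-mutation at a vertex disjoint from $\mathcal V_{\rm split}$) and the explicit quiver computations of Lemma~\ref{lem:mutation3}, which pin down the exchange relations at each step so that the recursion closes. If you want a shorter proof than the paper's, you would need a direct argument that the splitting homomorphism commutes with the full composite $\mu_{(j;j-1)}\cdots\mu_{(i-1;j-1)}$ up to the explicit frozen monomial $A''_{nj}$, without passing through the skein side; replacing the step that invokes the corner-arc identity with such an argument would remove the circularity.
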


We will prove Lemma \ref{lem:split_e2} in Appendix \ref{sec:proof of Lemmas}.

\begin{lemma}\label{lem-splitting-corner-arc}

We have
\[
\mathbb S^{U}_{e_2}\!\bigl(C_{ij}^\fS\bigr)
= C_{ij}\otimes 1
\;\in\;
\mathscr A_{\omega}(\mathbb P_3)\otimes \mathscr A_{\omega}(\fS').
\]
\end{lemma}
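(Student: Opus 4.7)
My plan is to prove the identity by pushing $\mathbb S^U_{e_2}$ through the Weyl-ordered formula defining $C^\fS_{ij}$ and observing that the factors produced on the $\fS'$ component collapse to $1$, leaving $C_{ij}\otimes 1$ in the $\mathbb P_3$ component.

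The first step is to upgrade Lemma~\ref{lem:split_e2} into a general Weyl-ordering compatibility: for cluster variables $A_{u_1},\ldots,A_{u_m}$ with $u_k=v_{i_kj_k}\in V_{\mathbb P_3}$ and integers $a_1,\ldots,a_m$, I would show
\[
\mathbb S^U_{e_2}\bigl([A_{u_1}^{a_1}\cdots A_{u_m}^{a_m}]\bigr)
= [A_{u_1}^{a_1}\cdots A_{u_m}^{a_m}]\otimes [(A''_{nj_1})^{a_1}\cdots (A''_{nj_m})^{a_m}],
\]
with the analogous statement when one factor is replaced by a mutated cluster variable $\mu_{(j;j-1)}\cdots\mu_{(i-1;j-1)}(A_{j,j-1})$, by appealing to Lemma~\ref{lem:split_e2}(b). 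The underlying fact that makes this work is that $\Pi_\fS(u_k,u_\ell)=\Pi_{\mathbb P_3}(u_k,u_\ell)+\Pi_{\fS'}(v''_{nj_k},v''_{nj_\ell})$ on the relevant vertices, which is forced by comparing $\mathbb S^U_{e_2}(A_uA_{u'})$ and $\mathbb S^U_{e_2}(A_{u'}A_u)$ using that $\mathbb S^U_{e_2}$ is an algebra homomorphism and that tensor factors commute; I would verify this one-line commutation identity first.

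With this compatibility in hand, the lemma reduces to a case-by-case inspection of the three formulas for $C^\fS_{ij}$ given by Lemma~\ref{lem-Cij-bar-Cij}(a),(b) and Proposition~\ref{prop-Cij-bar-Cij}. In each case the second barycentric coordinates of the factors pair up so that the induced $A''_{n\ast}$-exponents sum to zero, using the convention $A''_{n0}=1$. Explicitly: for $j=1$, the factors $A_{i1},A_{i0}^{-1},A_{11}^{-1}$ contribute $A''_{n1}\cdot 1\cdot (A''_{n1})^{-1}=1$; for $j=i$, both $A_{i0}^{-1}$ and $A_{i-1,0}$ have $j$-coordinate $0$, giving trivially $1$; and for $1<j<i$, the mutation expression and $A_{jj}^{-1}$ contribute $A''_{nj}$ and $(A''_{nj})^{-1}$ respectively while $A_{i0}^{-1}$ contributes $1$. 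The $\mathbb P_3$-component meanwhile is by definition $C_{ij}$, yielding $\mathbb S^U_{e_2}(C^\fS_{ij})=C_{ij}\otimes 1$ in every case.

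I do not anticipate a substantive obstacle here: the argument is essentially bookkeeping driven by Lemma~\ref{lem:split_e2}. The one point requiring a moment of care is the case $i=n$, where the vertex $v_{n1}$ itself lies on $e_2$ and is split by $\mathbb S^U_{e_2}$; however, the formula in Lemma~\ref{lem:split_e2}(a) is stated uniformly over $(i,j)\in\{0\leq j\leq i\leq n\}\setminus\{(0,0),(n,0),(n,n)\}$, so the same cancellation pattern applies without modification.
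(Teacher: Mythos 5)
Your proof is correct and follows essentially the same route as the paper's: apply $\mathbb S^U_{e_2}$ factor-by-factor to the Weyl-ordered formulas of Lemma~\ref{lem-Cij-bar-Cij} and Proposition~\ref{prop-Cij-bar-Cij} via Lemma~\ref{lem:split_e2}, then observe that the $A''$-contributions on the $\fS'$ side cancel to $1$ because of the convention $A''_{n0}=A''_{nn}=1$ and the matching $j$-coordinates. The paper leaves the Weyl-ordering compatibility implicit (it is guaranteed by the reflection-invariance of $\mathbb S^U_{e_2}$ established in Theorem~\ref{thm:splitU} and Corollary~\ref{cor:quasi-com}), whereas you spell it out via the additivity of the compatibility matrices and the algebra-homomorphism property; this is a useful clarification but not a different argument, and your remark about the $i=n$ case is indeed a non-issue since Lemma~\ref{lem:split_e2}(a) already applies uniformly.
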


\begin{proof}
    If $j=1$, then by Lemma \ref{lem-Cij-bar-Cij} and Lemma \ref{lem:split_e2}, 
    $$\mathbb S^{U}_{e_2}\!\bigl(C_{i1}^\fS\bigr)= \mathbb S^{U}_{e_2}\!\bigl([A_{i1}\cdot A_{i0}^{-1}\cdot A_{11}^{-1}]\bigr)=[A_{i1}\cdot A_{i0}^{-1}\cdot A_{11}^{-1}]\otimes 1=C_{i1}\otimes 1.$$

    If $j=i$, then by Lemma \ref{lem-Cij-bar-Cij} and Lemma \ref{lem:split_e2}, 
    $$\mathbb S^{U}_{e_2}\!\bigl(C_{ii}^\fS\bigr)= \mathbb S^{U}_{e_2}\!\bigl([A_{i0}^{-1}\cdot A_{i-1,0}]\bigr)=[A_{i0}^{-1}\cdot A_{i-1,0}]\otimes 1=C_{ii}\otimes 1.$$

    If $1<j<i$, then by Proposition \ref{prop-Cij-bar-Cij} and Lemma \ref{lem:split_e2},
 \begin{equation*}
    \begin{array}{rcl}
\mathbb S^{U}_{e_2}\!\bigl(C_{ij}^\fS\bigr)&=& \mathbb S^{U}_{e_2}\!\bigl([\mu_{(j;j-1)}\cdots \mu_{(i-2;j-1)} \mu_{(i-1;j-1)}(A_{j,j-1})\cdot A_{i0}^{-1}\cdot A_{jj}^{-1}]\bigr)\vspace{1mm}\\& =& 
[\mu_{(j;j-1)}\cdots \mu_{(i-2;j-1)} \mu_{(i-1;j-1)} (A_{j,j-1})\cdot A_{i0}^{-1}\cdot A_{jj}^{-1}]\otimes 1\vspace{1mm}\\& =& 
C_{ij}\otimes 1.
\end{array}
\end{equation*}

    The proof is complete.
\end{proof}

We can prove the following using the same technique as Lemma~\ref{lem-splitting-corner-arc}.

\begin{lemma}\label{lem-splitting-corner-bar-arc}

We have
\[
\mathbb S^{U}_{e_2}\!\bigl(\overline C_{ij}^\fS\bigr)
=\overline C_{ij}\otimes 1
\;\in\;
\mathscr A_{\omega}(\mathbb P_3)\otimes \mathscr A_{\omega}(\fS').
\]
\end{lemma}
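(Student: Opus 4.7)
The argument would mirror the proof of Lemma~\ref{lem-splitting-corner-arc} case by case, substituting the bar formulas of Lemma~\ref{lem-Cij-bar-Cij}(c)--(d) and Proposition~\ref{prop-bar-Cij} for those of Lemma~\ref{lem-Cij-bar-Cij}(a)--(b) and Proposition~\ref{prop-Cij-bar-Cij}, and replacing Lemma~\ref{lem:split_e2} by its analog for the variables $\overline A_{ji}$.

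The first step is to establish this bar analog. Matching the two barycentric labelings of $\tau$ gives the identification $\overline v_{ji}=v_{n-j,n-i}$, under which Lemma~\ref{lem:split_e2}(a) translates directly into
\[
\mathbb S^{U}_{e_2}(\overline A_{ji})=[\overline A_{ji}\otimes A''_{n,n-i}].
\]
The same technique used to prove Lemma~\ref{lem:split_e2}(b), namely an iterated application of Corollary~\ref{cor:propotion} together with the observation that every mutable vertex $\overline v_{k,k+s}$ involved in $\overline\mu_{(k;n-i)}$ (for $j\leq k\leq i-1$, $1\leq s\leq n-i$) is interior to $\tau$ and thus lies outside $\mathcal{V}_{\mathrm{split}}$, then produces an identity of the shape
\[
\mathbb S^{U}_{e_2}\!\bigl(\overline \mu_{(j;n-i)}\cdots\overline \mu_{(i-2;n-i)}\overline \mu_{(i-1;n-i)}(\overline A_{j,j+1})\bigr)=\bigl[\overline \mu_{(j;n-i)}\cdots\overline \mu_{(i-2;n-i)}\overline \mu_{(i-1;n-i)}(\overline A_{j,j+1})\otimes X_{ij}\bigr],
\]
where $X_{ij}$ is a specific monomial in the frozen variables $A''_{n,\ast}$ to be computed by induction on $i-j$.

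Next, I would substitute these formulas into each of the three cases. For $i=n$, Lemma~\ref{lem-Cij-bar-Cij}(c) gives $\overline C_{ni}=[\overline A_{ii}^{-1}\cdot\overline A_{i-1,i}]$, and since $\overline v_{ii}$ and $\overline v_{i-1,i}$ have the same second barycentric coordinate $n-i$, the frozen factors $A''_{n,n-i}$ coming from the two splittings cancel, yielding $\overline C_{ni}\otimes 1$. For $i=j$, Lemma~\ref{lem-Cij-bar-Cij}(d) involves $\overline A_{in}$ and $\overline A_{i-1,n}$, whose second barycentric coordinates both vanish, so each splits trivially and one gets $\overline C_{ii}\otimes 1$. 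For $1\leq j<i<n$, substituting Proposition~\ref{prop-bar-Cij} together with the identity above, the three contributions $X_{ij}$, from splitting the mutation part, and those from $\overline A_{jj}^{-1}$ and $\overline A_{in}^{-1}$ have to combine to the empty monomial.

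The main obstacle will be the last cancellation: precisely identifying the monomial $X_{ij}$ produced by the bar mutation sequence, and verifying that it matches the inverse of the frozen factors contributed by $\overline A_{jj}$ and $\overline A_{in}$. This is the genuine content of the bar analog of Lemma~\ref{lem:split_e2}(b), whose proof should parallel the corresponding argument deferred to Appendix~\ref{sec:proof of Lemmas}; once that identification is in hand, the three cases collapse to a formal monomial computation exactly as in Lemma~\ref{lem-splitting-corner-arc}.
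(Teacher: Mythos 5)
Your plan is correct and matches the paper's approach: the paper's proof is literally the single sentence ``We can prove the following using the same technique as Lemma~\ref{lem-splitting-corner-arc},'' and your plan spells out what that technique amounts to, namely deriving the bar analogue of Lemma~\ref{lem:split_e2} via the identification $\overline v_{ab}=v_{n-a,n-b}$ and then running the three-case computation verbatim with Lemma~\ref{lem-Cij-bar-Cij}(c)--(d) and Proposition~\ref{prop-bar-Cij} in place of their unbarred counterparts. Your identification of the bar analogue of Lemma~\ref{lem:split_e2}(b) as the only step requiring real work is accurate, and it does resolve as you expect: the frozen monomial comes out to $A''_{n,n-j}$, cancelling the contribution from $\overline A_{jj}^{-1}$ while $\overline A_{in}^{-1}$ contributes trivially since $A''_{n,0}=1$.
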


Under the assumptions of Lemma~\ref{lem-essential-arc},
let $\gamma$ be an essential arc that is not a corner arc (see the red arc in Figure~\ref{Fig;essential-P4}(A)).  
There are five ideal arcs $c_1.c_2,c_3,c_4,c_5$ nearby $\gamma$ as depicted in Figure~\ref{Fig;essential-P4}(A).
Let $\lambda$ be a triangulation of $\fS$ containing the ideal arcs $c_1,c_2,c_3,c_4,c_5$.
(note that it is possible that $c_1=c_3$).  
Define 
\[
f_\gamma \colon \mathbb P_4 \longrightarrow \fS
\]
to be a homeomorphism such that $f_\gamma$ maps each $c_i$ in Figure~\ref{Fig:P4} to the corresponding $c_i$ in Figure~\ref{Fig;essential-P4}, and $f_\gamma$ is injective on $\mathbb P_4 \setminus (c_1\cup c_3)$.  

For $1\leq i,j\leq n$, Theorem~\ref{thm-P4-Dij} shows that
\[
D_{ij}\in \mathscr A_{\omega}(\mathbb P_4)\qquad\text{(see Figure~\ref{Fig:P4})}.
\]
We may regard $D_{ij}$ as an element of $\mathscr A_{\omega}(\fS)$ via the map $f_\gamma$, since $f_\gamma$ injectively maps all vertices of $\mathbb P_4$ that define $D_{ij}$.
We denote this element by $D_{ij}^\fS\in \mathscr A_{\omega}(\fS)$ to distinguish it from $D_{ij}\in \mathscr A_{\omega}(\mathbb P_4)$.

Let $\mathbb P_4 \sqcup \fS''$ be the pb surface obtained from $\fS$ by cutting along $c_1$ and $c_3$.  
Note that $\fS''=\emptyset$ when $c_1=c_3$.
Let $\lambda''$ be the triangulation of $\fS''$ induced by $\lambda$ when $\fS''\neq\emptyset$, and let $\lambda_4=\{c_1,c_2,c_3,c_4,c_5\}$ denote the triangulation of $\mathbb P_4$.  
Recall the labeling of $A$–variables in $\mathcal A_\omega(\mathbb P_4,\lambda_4)$ introduced in \S\ref{Quadrilateral-case}.  
The map $f_\gamma$ induces the same labeling for the $A$–variables in  
$\mathcal A_\omega(\fS,\lambda)$ associated with the small vertices contained in  
$f_\gamma\bigl(\mathbb P_4 \setminus (c_1 \cup c_3)\bigr)$.  
Similarly, all mutations appearing in Theorem~\ref{thm-P4-Dij} are well defined in  
$\mathscr A_\omega(\fS)$ via $f_\gamma$.

For each $1 \le j \le n-1$, there is a small vertex  
$v_{nj}'' \in V_{\lambda''}$  
(resp.\ $\overline{v}_{0j}'' \in V_{\lambda''}$) that is identified with the small vertex  
$v_{nj} \in V_{\mathbb P_4}$  
(resp.\ $\overline v_{0j} \in V_{\mathbb P_4}$) when we glue $\fS''$ and $\mathbb P_4$ back together to recover $\fS$.  
We denote by $A_{nj}''$ (resp.\ $\overline A_{0j}''$) the element  
$A_{v_{nj}''} \in \mathcal A_\omega(\fS'',\lambda'')$  
(resp.\ $A_{\overline v_{0j}''} \in \mathcal A_\omega(\fS'',\lambda'')$).
Note that, when $\fS''=\emptyset$, we have 
$A_{nj}''=\overline A_{0j},
\overline A_{0j}''=A_{nj}
\in \mathcal A_\omega(\mathbb P_4,\lambda_4)$.

We use the convention that 
$A_{n0}''=A_{nn}''=\overline{A}_{00}''=
\overline{A}_{0n}''=1.$

\begin{lemma}\label{lem:spitP_4}
Recall the convention that $\widetilde{\cS}_\omega(\emptyset)=R$.
In $\mathscr{A}_\omega(\mathbb P_4)\otimes \mathscr{A}_\omega(\fS''
)$, we have the following (we regard ``$\otimes$" as the product in $\mathscr{A}_\omega(\mathbb P_4)$ when $\fS''=\emptyset$):

\begin{enumerate}[label={\rm (\alph*)}]\itemsep0,3em

\item
For $(i,j)\in\{0\leq i,j\leq n\}\setminus\{(0,0), (0,n), (n,0), (n,n)\}$, we have
$$\text{$\mathbb S^{U}_{c_1}\!\bigl(\mathbb S^{U}_{c_3}(A_{ij})\bigr)=[A_{ij}\otimes A''_{nj}\cdot \overline A''_{0i}]$ and $\mathbb S^{U}_{c_1}\!\bigl(\mathbb S^{U}_{c_3}(\overline A_{ij})\bigr)=[\overline A_{ij}\otimes A''_{ni}\cdot \overline A''_{0j}]$.}$$

\item For $j>1$, we have
 \begin{equation*}
    \begin{array}{rcl}
&&\mathbb S^{U}_{c_1}\!\bigl(\mathbb S^{U}_{c_3}(\overline \mu_{(1;n-j)}\cdots \overline \mu_{(j-2;n-j)} \overline \mu_{(j-1;n-j)} (\overline A_{12}))\bigr)\vspace{1mm}\\& =& 
[\overline \mu_{(1;n-j)}\cdots \overline \mu_{(j-2;n-j)} \overline \mu_{(j-1;n-j)} (\overline A_{12})\otimes A''_{nj}\cdot \overline A''_{01}].
\end{array}
\end{equation*}

\item For $1<j<i$, we have
\begin{equation*}
    \begin{array}{rcl}
&&\mathbb S^{U}_{c_1}\!\bigl(\mathbb S^{U}_{c_3}(\left(\mu_{j-1,j-1}\cdots\mu_{22}\mu_{11}\right)\circ \overline \mu^{\diamondsuit}_j\circ  \mu^{\diamondsuit}_{(i;j-1)}(A_{j-1,j-1})))\bigr)\vspace{1mm}\\& =& 
[\left(\mu_{j-1,j-1}\cdots\mu_{22}\mu_{11}\right)\circ \overline \mu^{\diamondsuit}_j\circ  \mu^{\diamondsuit}_{(i;j-1)}(A_{j-1,j-1})\otimes A''_{nj}\cdot \overline A''_{0i}].
\end{array}
\end{equation*}

\end{enumerate}
\end{lemma}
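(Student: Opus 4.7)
The strategy is to apply Proposition \ref{prop:image} together with the mutation-compatibility identity in Corollary \ref{cor:propotion}. Since $c_1$ and $c_3$ are distinct ideal arcs of $\lambda$, the splitting homomorphisms $\mathbb S^U_{c_1}$ and $\mathbb S^U_{c_3}$ commute (this is the upper-cluster analogue of \eqref{com-splitting-torus} and follows from the construction of $\mathbb S^U_{e}$ in Theorem \ref{thm:splitU}); hence it suffices to compute $\mathbb S^U_{c_3}$ first and $\mathbb S^U_{c_1}$ afterwards.

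For part (a), Proposition \ref{prop:image} expresses $\mathbb S^U_{c_3}(A_{ij})$ as the Weyl-ordered product of $A_{ij}$ with the $A$-variables indexed by the split vertices whose $c'$- or $c''$-curves pass through $v_{ij}$. With the barycentric coordinates of $\mathbb P_3^1$ centred at $v_1$, the only such split vertex on $c_3$ is the one of label $j$, whose $\fS''$-copy is $v''_{nj}$; this yields $\mathbb S^U_{c_3}(A_{ij})=[A_{ij}\cdot A''_{nj}]$. Applying $\mathbb S^U_{c_1}$ afterwards, the same geometric reasoning on the triangle $\mathbb P_3^2$ (with centre $\overline v_1$) contributes the additional factor $\overline A''_{0i}$. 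The identity for $\overline A_{ij}$ is symmetric, interchanging the roles of the two triangles and of the split vertices $v''_{nj}\leftrightarrow\overline v''_{0j}$.

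For parts (b) and (c), the essential feature is that every mutation vertex appearing in $\overline \mu^{\diamondsuit}_j$, $\mu^{\diamondsuit}_{(i;j-1)}$, and in the outer sequence $\mu_{j-1,j-1}\cdots\mu_{11}$ lies strictly in the interior of $\mathbb P_4$, away from $c_1\cup c_3$. For such an interior mutable vertex $v$, Corollary \ref{cor:propotion} gives
\[
\mathbb S^U_{c_s}(\mu_v(A_v)) \;=\; \bigl[\mu_v\bigl(\mathbb S^U_{c_s}(A_v)\bigr)\cdot M_v\bigr],
\]
where $M_v$ is a Laurent monomial supported only on the split frozens on the $\fS''$-side of $c_s$. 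Iterating this identity along the whole compound mutation sequence, we conclude that $\mathbb S^U_{c_1}\circ\mathbb S^U_{c_3}$ intertwines the mutation in $\mathbb P_4$ with itself, up to a tensor factor in $\fS''$. That tensor factor is then pinned down by part (a): its exponents at the split frozens $A''_{n\bullet}, \overline A''_{0\bullet}$ are determined by the $A''$-content of the initial cluster variables $A_{j,j-1}, \overline A_{k,k+1}, A_{j-1,j-1}$ involved in the formulas, which yields $A''_{nj}\overline A''_{01}$ in (b) and $A''_{nj}\overline A''_{0i}$ in (c).

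The main obstacle I anticipate is the bookkeeping in part (c), where the long compound mutation $(\mu_{j-1,j-1}\cdots\mu_{11})\circ\overline\mu^{\diamondsuit}_j\circ\mu^{\diamondsuit}_{(i;j-1)}$ must be controlled step-by-step to ensure that each intermediate cluster variable keeps exactly the $A''_{nj},\overline A''_{0i}$ frozen dependence predicted by (a), with no leakage into other split frozens. This can be handled either by a direct induction on the length of the sequence using Lemma \ref{lem:quiver2} (which controls the arrows between the interior mutation block and the split frozens at the intermediate seed) or, more efficiently, by applying the explicit type-$A$ expansion formula already used in Corollary \ref{cor:cv} and checking the tensor identity termwise via part (a) and Lemma \ref{lem:splitk}.
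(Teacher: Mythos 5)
Your proposal follows essentially the same route as the paper: part (a) is read off from Proposition~\ref{prop:image}, and parts (b)--(c) are obtained by pushing the splitting through the compound mutation sequence; in particular your ``more efficient'' alternative --- apply the type~$A$ expansion of Corollary~\ref{cor:cv}, then check termwise using the splitting of individual cluster variables --- is exactly what the paper does for (c). Two small corrections: the termwise check must invoke Lemma~\ref{lem:splitk1} (the $\mathbb S^U_{c_1}\circ\mathbb S^U_{c_3}$ version of the splitting of the mutated variables), not Lemma~\ref{lem:splitk} (which concerns cutting $\mathbb P_4$ along $c_5$); and the iterative control you flag as the ``main obstacle'' is precisely what is carried out inductively, via the quiver computations of Lemma~\ref{lem:mutation3} together with the commutation identity of Lemma~\ref{lem:same}, in establishing Lemma~\ref{lem:splitk1} (by the same argument as Proposition~\ref{lem:imageofsplite2} for the $e_2$-cut case).
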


We will prove Lemma \ref{lem:spitP_4} in Appendix \ref{sec:proof of Lemmas}.

\begin{lemma}\label{lem-splitting-essential-arc}
We have
\[
\mathbb S^{U}_{c_1}\!\bigl(\mathbb S^{U}_{c_3}(D_{ij}^\fS)\bigr)
= D_{ij}\otimes 1
\;\in\;
\mathscr A_{\omega}(\mathbb P_4)\otimes \mathscr A_{\omega}(\fS'').
\]
\end{lemma}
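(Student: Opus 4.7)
The plan is to prove Lemma~\ref{lem-splitting-essential-arc} by a case-by-case calculation, following the same pattern as Lemma~\ref{lem-splitting-corner-arc}. The essential arc $D_{ij}^{\fS}$ falls into one of the four cases of Theorem~\ref{thm-P4-Dij} according to whether $i\geq j=1$, $j>i=1$, $i\geq j>1$, or $j>i>1$, so I would split the argument into the same four cases. In each case the element $D_{ij}^{\fS}\in \mathscr A_\omega(\fS)$ is, by definition (via $f_\gamma$) and by Theorem~\ref{thm-P4-Dij}, a Weyl-ordered product of an exchangeable cluster variable and two frozen inverses of the form $A_{i0}^{-1}$ and $\overline A_{jn}^{-1}$ (or $A_{10}^{-1}$, $\overline A_{1n}^{-1}$).

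First I would compute $\mathbb{S}^U_{c_1}\circ \mathbb{S}^U_{c_3}$ separately on each factor by invoking Lemma~\ref{lem:spitP_4}. For a cluster-variable factor of the form $A_{ij}$, $\overline A_{ij}$, $\overline\mu^{\diamondsuit}_j(\overline A_{12})$, or $(\mu_{j-1,j-1}\cdots\mu_{11})\circ \overline\mu^{\diamondsuit}_j\circ \mu^{\diamondsuit}_{(i;j-1)}(A_{j-1,j-1})$, Lemma~\ref{lem:spitP_4}(a)--(c) provides the image as a Weyl-ordered product of the same expression with a monomial in $A''_{nj}$ and $\overline A''_{0i}$ on the $\fS''$-side. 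For the frozen factors $A_{i0}^{-1}$, $A_{10}^{-1}$, $\overline A_{jn}^{-1}$, $\overline A_{1n}^{-1}$, I apply Lemma~\ref{lem:spitP_4}(a) together with the conventions $A''_{n0}=A''_{nn}=\overline A''_{00}=\overline A''_{0n}=1$; this yields factors $\overline A''_{0i}{}^{-1}$ and $A''_{nj}{}^{-1}$ on the $\fS''$-side.

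Then I would assemble the pieces using the multiplicativity of Weyl-ordered products. The key observation, which should make each case go through cleanly, is that the monomial on the $\fS''$-tensor factor from the exchangeable variable is exactly $A''_{nj}\cdot \overline A''_{0i}$ (for case~3 this is built into Lemma~\ref{lem:spitP_4}(c); for case~2 with $i=1$ one uses $\overline A''_{01}$ and for case~1 with $j=1$ one uses $A''_{n1}$), and this is precisely cancelled by the contributions $\overline A''_{0i}{}^{-1}$ and $A''_{nj}{}^{-1}$ coming from the two frozen inverses $A_{i0}^{-1}$ and $\overline A_{jn}^{-1}$. After cancellation the $\fS''$-side collapses to $1$, and the $\mathbb P_4$-side reassembles to the defining formula of $D_{ij}$ from Theorem~\ref{thm-P4-Dij}. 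Thus $\mathbb{S}^U_{c_1}(\mathbb{S}^U_{c_3}(D_{ij}^{\fS}))=D_{ij}\otimes 1$.

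I do not anticipate a substantive obstacle; the argument is structurally identical to Lemmas~\ref{lem-splitting-corner-arc}--\ref{lem-splitting-corner-bar-arc}, with the only additional care being the handling of the Weyl-ordering powers of $\omega$ when combining factors. These powers are automatically consistent because both $D_{ij}^{\fS}$ and the right-hand side $D_{ij}\otimes 1$ lie in $\widetilde{\mathscr S}_\omega(\fS)$ and $\widetilde{\mathscr S}_\omega(\mathbb P_4)\otimes\widetilde{\mathscr S}_\omega(\fS'')$ respectively via the inclusions of Theorem~\ref{thm-main-3}, and $\mathbb{S}_{c_1}\circ\mathbb{S}_{c_3}$ is already known to satisfy the analogous identity on stated essential arcs at the skein level (this is a direct computation using \eqref{eq-def-splitting}, where $D_{ij}^{\fS}$ is disjoint from $c_1,c_3$ in $\mathbb P_4$). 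Hence the compatibility square of Theorem~\ref{thm:splitU} forces the $\omega$-powers to agree, and the lemma follows.
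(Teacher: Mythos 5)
Your main argument is the same as the paper's: a case-by-case computation following the four cases of Theorem~\ref{thm-P4-Dij}, applying Lemma~\ref{lem:spitP_4}(a)--(c) to each Weyl-ordered factor, and observing that the $\fS''$-tensor-leg monomial $A''_{nj}\cdot \overline A''_{0i}$ coming from the exchangeable variable cancels against the inverses contributed by $A_{i0}^{-1}$ and $\overline A_{jn}^{-1}$ (using the conventions $A''_{n0}=A''_{nn}=\overline A''_{00}=\overline A''_{0n}=1$). This is exactly the paper's proof, and the cancellation mechanism you describe is correct.

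Your final paragraph, however, is both unnecessary and circular as written. You justify the consistency of the $\omega$-powers by asserting that $D_{ij}^{\fS}$ lies in $\widetilde{\mathscr S}_\omega(\fS)$ ``via the inclusions of Theorem~\ref{thm-main-3}'' and then invoking the skein-level computation for the essential arc. But Theorem~\ref{thm-main-3} gives the inclusion $\widetilde{\mathscr S}_\omega(\fS)\subset\mathscr U_\omega(\fS)$, not the reverse, and $D_{ij}^{\fS}$ is by definition the cluster-variable expression pulled back via $f_\gamma$ --- the claim that this cluster expression coincides with the stated essential arc (hence lies in $\widetilde{\mathscr S}_\omega(\fS)$) is precisely what Theorem~\ref{thm-skein-inclusion-A} is trying to establish, using this very lemma together with the compatibility square. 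Assuming it here is circular. Fortunately you do not need this detour: Lemma~\ref{lem:spitP_4} is already stated as an identity of Weyl-ordered products, so it carries the $\omega$-powers with it, and since $\mathbb S^{U}_{c_1}\circ\mathbb S^{U}_{c_3}$ is an algebra homomorphism that is reflection-invariant (Theorem~\ref{thm:splitU}), the image of a Weyl-ordered product is again the Weyl-ordered product of the images. The cancellation then lands directly on $D_{ij}\otimes 1$ with no leftover power of $\omega$ to account for. Dropping the final paragraph leaves a correct proof identical to the paper's.
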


\begin{proof}
    If $i\geq j=1$, then by Theorem \ref{thm-P4-Dij} and Lemma \ref{lem:spitP_4}(a), 
    $$\mathbb S^{U}_{c_1}\!\bigl(\mathbb S^{U}_{c_3}(D_{ij}^\fS)\bigr)=\mathbb S^{U}_{c_1}\!\bigl(\mathbb S^{U}_{c_3}([A_{i1}\cdot A_{i0}^{-1}\cdot \overline A_{1n}^{-1}])\bigr)=[A_{i1}\cdot A_{i0}^{-1}\cdot\overline A_{1n}^{-1}]\otimes 1=D_{ij}\otimes 1.$$

   If $j>i=1$, then by Theorem \ref{thm-P4-Dij} and Lemma \ref{lem:spitP_4}(a), (b), we have
     \begin{equation*}
    \begin{array}{rcl}
\mathbb S^{U}_{c_1}\!\bigl(\mathbb S^{U}_{c_3}(D_{ij}^\fS)\bigr)&=&\mathbb S^{U}_{c_1}\!\bigl(\mathbb S^{U}_{c_3}([\overline \mu_{(1;n-j)}\cdots \overline \mu_{(j-2;n-j)} \overline \mu_{(j-1;n-j)} (\overline A_{12}) \cdot A_{10}^{-1}\cdot \overline A_{jn}^{-1}])\bigr)\vspace{1mm}\\& =& 
[\overline \mu_{(1;n-j)}\cdots \overline \mu_{(j-2;n-j)} \overline \mu_{(j-1;n-j)} (\overline A_{12})\cdot A_{10}^{-1}\cdot \overline A_{jn}^{-1}]\otimes 1\vspace{1mm}\\& =& 
D_{ij}\otimes 1.
\end{array}
\end{equation*}

 If $i\geq j>1$, then by Theorem \ref{thm-P4-Dij} and Lemma \ref{lem:spitP_4}(a), (c), we have
$$\begin{array}{rcl}
\mathbb S^{U}_{c_1}\!\bigl(\mathbb S^{U}_{c_3}(D_{ij}^\fS)\bigr)&=&\mathbb S^{U}_{c_1}\!\bigl(\mathbb S^{U}_{c_3}([\left(\mu_{j-1,j-1}\cdots\mu_{22}\mu_{11}\right)\circ \overline \mu^{\diamondsuit}_j\circ  \mu^{\diamondsuit}_{(i;j-1)}(A_{j-1,j-1}) \cdot A_{i0}^{-1}\cdot \overline A_{jn}^{-1}])\bigr)\vspace{1mm}\\& =&
[\left(\mu_{j-1,j-1}\cdots\mu_{22}\mu_{11}\right)\circ \overline \mu^{\diamondsuit}_j\circ  \mu^{\diamondsuit}_{(i;j-1)}(A_{j-1,j-1}) \cdot A_{i0}^{-1}\cdot \overline A_{jn}^{-1}]\otimes 1\vspace{1mm}\\& =&
D_{ij}\otimes 1.
\end{array}$$

The proof is complete. 
\end{proof}

\begin{proof}[Proof of Theorem \ref{thm-skein-inclusion-A}]
It follows from Lemma~\ref{lem-essential-arc} that it suffices to show Equations~\eqref{intro-eq-Cij}, \eqref{intro-eq-bar-Cij}, and\eqref{into-eq-Dij}.

Equations~\eqref{intro-eq-Cij} and \eqref{intro-eq-bar-Cij} follow immediately from Lemmas \ref{lem-Cij-bar-Cij}, \ref{lem-splitting-corner-arc}, \ref{lem-splitting-corner-bar-arc}, Propositions \ref{prop-Cij-bar-Cij}, \ref{prop-bar-Cij}, and the injectivity and compatibility of the splitting homomorphisms (Theorem \ref{thm:splitU}).

Equation~\eqref{into-eq-Dij} follows immediately from Theorem \ref{thm-P4-Dij}, Lemma \ref{lem-splitting-essential-arc}, and the injectivity and compatibility of the splitting homomorphisms (Theorem \ref{thm:splitU}).

The proof is complete.
\end{proof}

\begin{appendices}
\section{Proof of Lemmas \ref{lem:quiver1}, \ref{lem:mut2}, \ref{lem:splitk}, \ref{lem:same}, \ref{lem:quiver2}, \ref{lem:split_e2}, and \ref{lem:spitP_4}}\label{sec:proof of Lemmas}

We follow the notation of \S\ref{sec-skein-inclusion-cluster} and do not distinguish between a quiver and its signed adjacency matrix.

The following four Lemmas follow directly from a computation of the quiver mutation.

\begin{lemma}\label{lem:mutation1}
    Let $Q$ be the quiver shown below.
    
\begin{center}
\begin{tikzpicture}[->,>=stealth, node distance=1.5cm, auto]
\node (A) at (0,0) {$30$};
\node (B) at (2,0) {$31$};
\node (C) at (4,0) {$32$};
\node (D) at (6,0) {$\cdots$};
\node (E) at (8,0) {$3,t-1$};
\node (F) at (10,0) {$3t$};
\node (G) at (12,0) {$3,t+1$};
\draw (G) -- node {} (F);
\draw (B) -- node {} (A);
\draw (C) -- node {} (B);
\draw (F) -- node {} (E);
\draw (5.55,0) -- (4.35,0);
\draw (7.4,0) -- (6.35,0);
\node (A') at (0,-1) {$20$};
\node (B') at (2,-1) {$21$};
\node (C') at (4,-1) {$22$};
\node (D') at (6,-1) {$\cdots$};
\node (E') at (8,-1) {$2,t-1$};
\node (F') at (10,-1) {$2t$};
\node (G') at (12,-1) {$2,t+1$};
\draw (G') -- node {} (F');
\draw (B') -- node {} (A');
\draw (C') -- node {} (B');
\draw (F') -- node {} (E');
\draw (5.55,-1) -- (4.35,-1);
\draw (7.4,-1) -- (6.35,-1);
\node (A'') at (0,-2) {$10$};
\node (B'') at (2,-2) {$11$};
\node (C'') at (4,-2) {$12$};
\node (D'') at (6,-2) {$\cdots$};
\node (E'') at (8,-2) {$1,t-1$};
\node (F'') at (10,-2) {$1t$};
\node (G'') at (12,-2) {$1,t+1$};
\draw (G'') -- node {} (F'');
\draw (B'') -- node {} (A'');
\draw (C'') -- node {} (B'');
\draw (F'') -- node {} (E'');
\draw (5.55,-2) -- (4.35,-2);
\draw (7.4,-2) -- (6.35,-2);
\draw (A) -- node {} (B');
\draw (A') -- node {} (B'');
\draw (B) -- node {} (C');
\draw (B') -- node {} (C'');
\draw (E) -- node {} (F');
\draw (E') -- node {} (F'');
\draw (B'') -- node {} (B');
\draw (B') -- node {} (B);
\draw (C'') -- node {} (C');
\draw (C') -- node {} (C);
\draw (F) -- node {} (G');
\draw (F') -- node {} (G'');
\draw (4.35,-0.1) -- (5.55,-0.9);
\draw (4.35,-1.1) -- (5.55,-1.9);
\draw (6.35,-0.1) -- (7.4,-0.9);
\draw (6.35,-1.1) -- (7.4,-1.9);
\draw (5.6,-0.8) -- (5.6,-0.2);
\draw (5.6,-1.8) -- (5.6,-1.2);
\draw (6.3,-0.8) -- (6.3,-0.2);
\draw (6.3,-1.8) -- (6.3,-1.2);
\draw (E'') -- node {} (E');
\draw (E') -- node {} (E);
\draw (F'') -- node {} (F');
\draw (F') -- node {} (F);
\draw (G'') -- node {} (G');
\draw (G') -- node {} (G);
\end{tikzpicture}
\end{center}
    
Then the full subquiver of $\mu_{2t}\cdots \mu_{22}\mu_{21}(Q)$ consisting of all vertices incident to at least one vertex in $\{31,32,\cdots, 3t\}$ is isomorphic to the quiver shown below.

    \begin{center}
\begin{tikzpicture}[->,>=stealth, node distance=1.5cm, auto]
\node (B) at (2,0) {$31$};
\node (C) at (4,0) {$32$};
\node (D) at (6,0) {$\cdots$};
\node (E) at (8,0) {$3,t-1$};
\node (F) at (10,0) {$3t$};
\node (G) at (12,0) {$3,t+1$};
\draw (G) -- node {} (F);
\draw (C) -- node {} (B);
\draw (F) -- node {} (E);
\draw (F') -- node {} (G');
\draw (5.55,0) -- (4.35,0);
\draw (7.4,0) -- (6.35,0);
\node (A') at (0,-1) {$11$};
\node (B') at (2,-1) {$21$};
\node (C') at (4,-1) {$22$};
\node (D') at (6,-1) {$\cdots$};
\node (E') at (8,-1) {$2,t-1$};
\node (F') at (10,-1) {$2t$};
\node (G') at (12,-1) {$2,t+1$};
\draw (B') -- node {} (A');
\draw (C') -- node {} (B');
\draw (F') -- node {} (E');
\draw (5.55,-1) -- (4.35,-1);
\draw (7.4,-1) -- (6.35,-1);
\draw (B) -- node {} (B');
\draw (C) -- node {} (C');
\draw (A') -- node {} (B);
\draw (B') -- node {} (C);
\draw (E') -- node {} (F);
\draw (4.35,-0.9) -- (5.55,-0.1);
\draw (6.35,-0.9) -- (7.4,-0.1);
\draw (5.6,-0.2) -- (5.6,-0.8);
\draw (6.3,-0.2) -- (6.3,-0.8);
\draw (E) -- node {} (E');
\draw (F) -- node {} (F');
\draw (G') -- node {} (G);
\end{tikzpicture}
\end{center}
\end{lemma}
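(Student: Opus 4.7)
The approach is a direct but carefully bookkept induction on the mutation index $k$ for $\mu_{2k}$, since the statement is purely about the combinatorics of sequential quiver mutation. First I would normalize the data: from the picture of $Q$, the only arrows incident to the row $\{21,22,\ldots,2t\}$ of vertices to be mutated are the horizontal arrows $2k\to 2,k{-}1$ and $2,k{+}1\to 2k$, the vertical arrows $2k\to 3k$ and $1k\to 2k$, and the diagonal arrows $3,k{-}1\to 2k$ and $2,k{-}1\to 1k$. Every vertex $2k$ therefore has exactly three inward arrows (from $2,k{+}1$, $1k$, $3,k{-}1$) and three outward arrows (to $2,k{-}1$, $3k$, $1k$) in $Q$; this symmetry is what makes the sequential mutation behave well.

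The inductive claim I would set up is the following. Let $Q^{(k)} = \mu_{2k}\cdots\mu_{22}\mu_{21}(Q)$. I will show that in $Q^{(k)}$ the restriction to the vertices $\{21,22,\ldots,2,k{+}1\}\cup\{31,\ldots,3,k{+}1\}\cup\{11,\ldots,1k\}$ (together with $11$'s left neighbor $10$ if $k=0$) has a very explicit ``zigzag'' form: the vertices $21,\ldots,2k$ have been flipped so that the arrows $2j\to 3j$ become $3j\to 2j$, the arrows $1j\to 2j$ become $2j\to 1j$, the horizontal arrows in row 2 among the already-mutated vertices are reversed, and new diagonal arrows $2j\to 1,j{+}1$ (for $j<k$) and $3,j{+}1\to 2j$ are created, while the arrows between neighbors that would have formed in the unmutated row still to come are canceled by a pair appearing in opposite directions. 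In particular, after $\mu_{2k}$, the vertex $2k$ has precisely the inward arrows $\{31,\ldots,3k,\,3,k{+}1,\,11,\ldots,1k\}$ replaced by what one sees after the flip, and the only genuinely new arrows appearing in the subquiver relevant to row 3 are $3j\to 2j$ and $3,k{+}1\to 2k$ together with the ``diagonal jump'' $3,j{+}1\to 2j$.

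The inductive step $k-1\to k$ is the core computation. Applying $\mu_{2k}$ to $Q^{(k-1)}$, one needs to: (i) reverse the six arrows incident to $2k$; (ii) for each pair (in-arrow $u\to 2k$, out-arrow $2k\to v$), add an arrow $u\to v$; (iii) cancel maximal 2-cycles. The six in- and out-arrows of $2k$ in $Q^{(k-1)}$ are the ones listed in item (i), so the pairs to combine are small in number, and the new arrows produced either match exactly the claimed pattern for $Q^{(k)}$ or kill preexisting opposite arrows produced at the previous step $\mu_{2,k-1}$. The base case $k=1$ is a direct check of $\mu_{21}(Q)$ using the same rules.

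The main technical obstacle is not conceptual but bookkeeping: making sure that each application of $\mu_{2k}$ accounts correctly for the diagonal arrows $3,k{-}1\to 2k$ and $2,k{-}1\to 1k$ created at the previous mutation step, and verifying that the 2-cycles created in the third row (for instance between $3j$ and $3,j{+}1$) cancel precisely. Once this zigzag invariant is established, restricting $Q^{(t)}$ to the vertices incident to $\{31,\ldots,3t\}$ and erasing $\{10,\ldots,1,t-1\}\cup\{3,t{+}1\}$-free pieces yields exactly the displayed quiver, completing the proof.
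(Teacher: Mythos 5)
The paper does not actually present a proof of Lemma \ref{lem:mutation1}: it is introduced with the phrase ``The following four Lemmas follow directly from a computation of the quiver mutation.'' So the only meaningful comparison is between your proposed organization of that computation and a correct computation. Your strategy --- induct on $k$ with an explicit invariant describing $Q^{(k)} = \mu_{2k}\cdots\mu_{21}(Q)$ near the mutated vertices --- is the right shape, and the observation that each $2k$ has three in-arrows and three out-arrows is correctly identified as the structural feature that makes the sweep tractable.

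However, the invariant you state is not the one that actually holds, so the inductive step as written would not close. Concretely: (i) the diagonal out-arrow of $2k$ in $Q$ goes to $1,k{+}1$, not to $1k$ (the arrows in the picture are $2,k\to 1,k{+}1$ and $3,k\to 2,k{+}1$). (ii) The arrows $1j\to 2j$ for $j\ge 2$ do not become $2j\to 1j$; they are \emph{canceled} (for instance $\mu_{21}$ creates $22\to 12$ against the existing $12\to 22$, leaving no arrow between $12$ and $22$ before $\mu_{22}$ is even applied). (iii) The horizontal arrows inside row $2$ are not ``reversed among the already-mutated vertices'': each internal arrow $2,j{+}1\to 2,j$ is flipped by $\mu_{2j}$ and flipped back by $\mu_{2,j{+}1}$, so in $Q^{(k)}$ it has its original direction for $j<k$, and only the frontier arrow (between $2,k$ and $2,k{+}1$) is reversed. (iv) The new diagonals between rows $2$ and $3$ point \emph{from row $2$ to row $3$}: the target picture has $21\to 32, \ldots, 2,t{-}1\to 3,t$, i.e.\ $2,j\to 3,j{+}1$, whereas you wrote $3,j{+}1\to 2,j$; moreover $2,j\to 3,j{+}1$ is created only at the step $\mu_{2,j{+}1}$, not at $\mu_{2j}$, which changes where it should sit in the inductive hypothesis. (v) The invariant omits the arrow $11\to 31$ (a genuine skip between rows $1$ and $3$) produced already at $\mu_{21}$, which is present in the target quiver and needs to survive the remaining mutations. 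Correcting the invariant to account for these points --- especially the cancellation of the $1j\leftrightarrow 2j$ arrows, the behavior at the frontier, and the skip arrow $11\to 31$ --- would make the induction go through, but as stated the hypothesis does not match what $\mu_{21}$, let alone $\mu_{22}$, actually produces.
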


\begin{lemma}\label{lem:mutation2}
    Let $Q$ be the quiver shown below. 
    
     \begin{center}
\begin{tikzpicture}[->,>=stealth, node distance=1.5cm, auto]
\node (A'') at (0,1) {$30$};
\node (B'') at (2,1) {$31$};
\node (C'') at (4,1) {$32$};
\node (D'') at (6,1) {$\cdots$};
\node (E'') at (8,1) {$3,t-1$};
\node (F'') at (10,1) {$3t$};
\node (G'') at (12,1) {$3,t+1$};
\draw (G'') -- node {} (F'');
\draw (B'') -- node {} (A'');
\draw (C'') -- node {} (B'');
\draw (F'') -- node {} (E'');
\draw (5.55,1) -- (4.35,1);
\draw (7.4,1) -- (6.35,1);
\node (B) at (2,0) {$21$};
\node (C) at (4,0) {$22$};
\node (D) at (6,0) {$\cdots$};
\node (E) at (8,0) {$2,t-1$};
\node (F) at (10,0) {$2t$};
\node (G) at (12,0) {$2,t+1$};
\draw (G) -- node {} (F);
\draw (C) -- node {} (B);
\draw (F) -- node {} (E);
\draw (F') -- node {} (G');
\draw (5.55,0) -- (4.35,0);
\draw (7.4,0) -- (6.35,0);
\node (A') at (0,-1) {$10$};
\node (B') at (2,-1) {$11$};
\node (C') at (4,-1) {$12$};
\node (D') at (6,-1) {$\cdots$};
\node (E') at (8,-1) {$1,t-1$};
\node (F') at (10,-1) {$1t$};
\node (G') at (12,-1) {$1,t+1$};
\draw (B') -- node {} (A');
\draw (C') -- node {} (B');
\draw (F') -- node {} (E');
\draw (5.55,-1) -- (4.35,-1);
\draw (7.4,-1) -- (6.35,-1);
\draw (B) -- node {} (B');
\draw (C) -- node {} (C');
\draw (A') -- node {} (B);
\draw (B') -- node {} (C);
\draw (E') -- node {} (F);
\draw (4.35,-0.9) -- (5.55,-0.1);
\draw (6.35,-0.9) -- (7.4,-0.1);
\draw (5.6,-0.2) -- (5.6,-0.8);
\draw (6.3,-0.2) -- (6.3,-0.8);
\draw (E) -- node {} (E');
\draw (F) -- node {} (F');
\draw (G') -- node {} (G);
\draw (B) -- node {} (B'');
\draw (C) -- node {} (C'');
\draw (E) -- node {} (E'');
\draw (F) -- node {} (F'');
\draw (G) -- node {} (G'');
\draw (A'') -- node {} (B);
\draw (B'') -- node {} (C);
\draw (E'') -- node {} (F);
\draw (F'') -- node {} (G);
\draw (4.35,0.9) -- (5.55,0.1);
\draw (6.35,0.9) -- (7.4,0.1);
\draw (5.6,0.2) -- (5.6,0.8);
\draw (6.3,0.2) -- (6.3,0.8);
\end{tikzpicture}
\end{center}   
    
Then the full subquiver of $\mu_{2t}\cdots \mu_{22}\mu_{21}(Q)$ consisting of all vertices incident to at least one vertex in $\{31,32,\cdots, 3t\}$ is isomorphic to the quiver shown below.

    \begin{center}
\begin{tikzpicture}[->,>=stealth, node distance=1.5cm, auto]
\node (B) at (2,0) {$31$};
\node (C) at (4,0) {$32$};
\node (D) at (6,0) {$\cdots$};
\node (E) at (8,0) {$3,t-1$};
\node (F) at (10,0) {$3t$};
\node (G) at (12,0) {$3,t+1$};
\draw (G) -- node {} (F);
\draw (C) -- node {} (B);
\draw (F) -- node {} (E);
\draw (F') -- node {} (G');
\draw (5.55,0) -- (4.35,0);
\draw (7.4,0) -- (6.35,0);
\node (A') at (0,-1) {$10$};
\node (B') at (2,-1) {$21$};
\node (C') at (4,-1) {$22$};
\node (D') at (6,-1) {$\cdots$};
\node (E') at (8,-1) {$2,t-1$};
\node (F') at (10,-1) {$2t$};
\node (G') at (12,-1) {$2,t+1$};
\draw (B') -- node {} (A');
\draw (C') -- node {} (B');
\draw (F') -- node {} (E');
\draw (5.55,-1) -- (4.35,-1);
\draw (7.4,-1) -- (6.35,-1);
\draw (B) -- node {} (B');
\draw (C) -- node {} (C');
\draw (A') -- node {} (B);
\draw (B') -- node {} (C);
\draw (E') -- node {} (F);
\draw (4.35,-0.9) -- (5.55,-0.1);
\draw (6.35,-0.9) -- (7.4,-0.1);
\draw (5.6,-0.2) -- (5.6,-0.8);
\draw (6.3,-0.2) -- (6.3,-0.8);
\draw (E) -- node {} (E');
\draw (F) -- node {} (F');
\draw (G') -- node {} (G);
\end{tikzpicture}
\end{center}
\end{lemma}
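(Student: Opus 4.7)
The plan is to prove Lemma \ref{lem:mutation2} by reducing it to Lemma \ref{lem:mutation1}, which has the same conclusion but a different starting quiver. The quiver $Q$ in Lemma \ref{lem:mutation2} differs from the one in Lemma \ref{lem:mutation1} only by the addition of arrows between the second and third rows running in the opposite direction (the top-right diagonals $2k \to 3,k-1$ and $3k \to 2,k+1$, etc.) as displayed. Since the conclusion asserts the same subquiver in both cases, the task is to show that these extra arrows are precisely absorbed by the mutation sequence $\mu_{2t}\cdots\mu_{22}\mu_{21}$.

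I would proceed by induction on $t$. The base case $t=1$ reduces to a single mutation $\mu_{21}$, which can be verified by direct application of the mutation formula \eqref{eq-mutation-Q}: the extra arrow $21 \to 30$ together with the pre-existing arrows through $21$ creates exactly one 2-cycle that cancels the corresponding arrow created in the Lemma \ref{lem:mutation1} side. For the inductive step, I would apply the hypothesis to the truncation of $Q$ on columns $0, 1, \ldots, t$, and then analyze the final mutation $\mu_{2t}$. At this stage the only new arrows I must track are those incident to the vertex $2t$ coming from the extra row-$2$-to-row-$3$ diagonals; for each such arrow, the mutation $\mu_{2t}$ creates an arrow from a length-two composition $3k \to 2t \to 2,t+1$ (or analogous), which combines with the pre-existing opposite arrow arising from the extra diagonals to form a 2-cycle that is then removed by the cancellation step of matrix mutation.

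An alternative, more conceptual route is to use the matrix identity from Lemma \ref{lem-matrix-mul}\ref{lem-matrix-mul-a}: writing the total mutation as a product $Q' = E_{2t,\epsilon}\cdots E_{21,\epsilon}\, Q\, F_{21,\epsilon}\cdots F_{2t,\epsilon}$ for an appropriate choice of signs, one can isolate the contribution of the extra arrows as a specific submatrix and verify that its image under this matrix product lies entirely outside the subquiver of interest (i.e.\ the vertices incident to $\{31,\ldots,3t\}$). The main obstacle will be the careful arrow bookkeeping near the boundary columns $k \in \{0, 1, t, t+1\}$, where the translation-invariant pattern of the quiver breaks down and several diagonal arrows accumulate at a single vertex; here a case-by-case check of the 2-cycle cancellations is unavoidable. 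Once the boundary positions are handled, the bulk cancellations in columns $2 \le k \le t-1$ follow by the uniformity of the local quiver structure and the inductive hypothesis.
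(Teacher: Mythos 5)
Your reduction to Lemma \ref{lem:mutation1} rests on a mischaracterization of how the two quivers differ, and that error propagates through every step of the proposed argument. The arrows between rows $2$ and $3$ are in fact \emph{identical} in the two lemmas: both have $2k \to 3k$ and $3k \to 2,k+1$. The genuine difference lies between rows $1$ and $2$: Lemma \ref{lem:mutation1} has $1k \to 2k$ and $2k \to 1,k+1$, while Lemma \ref{lem:mutation2} has $2k \to 1k$ and $1k \to 2,k+1$ (with a special vertex $1,t+1 \to 2,t+1$ at the right boundary). Furthermore, the Lemma \ref{lem:mutation2} quiver has no vertex $20$ at all, so it is not obtained from the Lemma \ref{lem:mutation1} quiver by adding arrows; the arrows $2k \to 3,k-1$ you describe do not appear in either quiver. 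Your bookkeeping about which $2$-cycles cancel under $\mu_{2t}\cdots\mu_{21}$ is therefore tracking the wrong arrows.

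A second gap: the two lemmas do not assert ``the same subquiver.'' The output quivers are isomorphic as abstract graphs, but the vertex southwest of $31$ is labeled $11$ in Lemma \ref{lem:mutation1} and $10$ in Lemma \ref{lem:mutation2}. Since the labels are what matter in the downstream applications (Corollary \ref{cor:quiver1} and the computations in \S\ref{sec-skein-inclusion-cluster}), a reduction argument has to carry this distinction and explain where the relabeling comes from.

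Your proposed ``matrix'' route via Lemma \ref{lem-matrix-mul}\ref{lem-matrix-mul-a} also does not isolate a perturbation: the factors $E_{k,\epsilon,\bv}$ and $F_{k,\epsilon,\bv}$ depend on the signs of the entries of $Q^{\bv}$ through $[-\epsilon Q^{\bv}(i,k)]_+$, so $\mu_k$ is not an affine function of $Q$ and the contribution of modified arrows does not separate into a submatrix that can be tracked in isolation.

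For comparison, the paper proves neither Lemma \ref{lem:mutation1} nor Lemma \ref{lem:mutation2} by reduction; both are asserted to ``follow directly from a computation of the quiver mutation,'' and the induction the paper does carry out (in the remark following Lemma \ref{lem:mutation3}) builds \emph{on top of} these two lemmas rather than between them. Given the boundary irregularities you yourself flag at columns $0,1,t,t+1$ and the absence of $20$, a direct column-by-column verification of the mutation sequence — which is what the paper intends — is both more reliable and no more work than the comparison argument, once you correctly account for where the two quivers actually differ.
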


\begin{lemma}\label{lem:mutation4}
\begin{enumerate}[label={\rm (\alph*)}]\itemsep0,3em

\item Let $Q$ be the quiver shown below.
    
    \begin{center}
\begin{tikzpicture}[->,>=stealth, node distance=1.5cm, auto]
\node (A) at (0,0) {$30$};
\node (B) at (2,0) {$31$};
\node (C) at (4,0) {$32$};
\node (D) at (6,0) {$\cdots$};
\node (E) at (8,0) {$3,t-1$};
\node (F) at (10,0) {$3t$};
\draw (B) -- node {} (A);
\draw (C) -- node {} (B);
\draw (F) -- node {} (E);
\draw (5.55,0) -- (4.35,0);
\draw (7.4,0) -- (6.35,0);
\node (A') at (0,-1) {$20$};
\node (B') at (2,-1) {$21$};
\node (C') at (4,-1) {$22$};
\node (D') at (6,-1) {$\cdots$};
\node (E') at (8,-1) {$2,t-1$};
\node (F') at (10,-1) {$2t$};
\node (G') at (12,-1) {$2,t+1$};
\draw (G') -- node {} (F');
\draw (B') -- node {} (A');
\draw (C') -- node {} (B');
\draw (F') -- node {} (E');
\draw (5.55,-1) -- (4.35,-1);
\draw (7.4,-1) -- (6.35,-1);
\node (A'') at (0,-2) {$10$};
\node (B'') at (2,-2) {$11$};
\node (C'') at (4,-2) {$12$};
\node (D'') at (6,-2) {$\cdots$};
\node (E'') at (8,-2) {$1,t-1$};
\node (F'') at (10,-2) {$1t$};
\node (G'') at (12,-2) {$1,t+1$};
\draw (G'') -- node {} (F'');
\draw (B'') -- node {} (A'');
\draw (C'') -- node {} (B'');
\draw (F'') -- node {} (E'');
\draw (5.55,-2) -- (4.35,-2);
\draw (7.4,-2) -- (6.35,-2);
\draw (A) -- node {} (B');
\draw (A') -- node {} (B'');
\draw (B) -- node {} (C');
\draw (B') -- node {} (C'');
\draw (E) -- node {} (F');
\draw (E') -- node {} (F'');
\draw (B'') -- node {} (B');
\draw (B') -- node {} (B);
\draw (C'') -- node {} (C');
\draw (C') -- node {} (C);
\draw (F) -- node {} (G');
\draw (F') -- node {} (G'');
\draw (4.35,-0.1) -- (5.55,-0.9);
\draw (4.35,-1.1) -- (5.55,-1.9);
\draw (6.35,-0.1) -- (7.4,-0.9);
\draw (6.35,-1.1) -- (7.4,-1.9);
\draw (5.6,-0.8) -- (5.6,-0.2);
\draw (5.6,-1.8) -- (5.6,-1.2);
\draw (6.3,-0.8) -- (6.3,-0.2);
\draw (6.3,-1.8) -- (6.3,-1.2);
\draw (E'') -- node {} (E');
\draw (E') -- node {} (E);
\draw (F'') -- node {} (F');
\draw (F') -- node {} (F);
\draw (G'') -- node {} (G');
\end{tikzpicture}
\end{center}
    
Then the full subquiver of $\mu_{2t}\cdots \mu_{22}\mu_{21}(Q)$ consisting of all vertices incident to at least one vertex in $\{31,32,\cdots, (3,t-1)\}$ is isomorphic to the quiver shown below.

   \begin{center}
\begin{tikzpicture}[->,>=stealth, node distance=1.5cm, auto]
\node (B) at (2,0) {$31$};
\node (C) at (4,0) {$32$};
\node (D) at (6,0) {$\cdots$};
\node (E) at (8,0) {$3,t-1$};
\node (F) at (10,0) {$3t$};
\draw (C) -- node {} (B);
\draw (F) -- node {} (E);
\draw (5.55,0) -- (4.35,0);
\draw (7.4,0) -- (6.35,0);
\node (A') at (0,-1) {$11$};
\node (B') at (2,-1) {$21$};
\node (C') at (4,-1) {$22$};
\node (D') at (6,-1) {$\cdots$};
\node (E') at (8,-1) {$2,t-1$};
\draw (B') -- node {} (A');
\draw (C') -- node {} (B');
\draw (5.55,-1) -- (4.35,-1);
\draw (7.4,-1) -- (6.35,-1);
\draw (B) -- node {} (B');
\draw (C) -- node {} (C');
\draw (A') -- node {} (B);
\draw (B') -- node {} (C);
\draw (E') -- node {} (F);
\draw (4.35,-0.9) -- (5.55,-0.1);
\draw (6.35,-0.9) -- (7.4,-0.1);
\draw (5.6,-0.2) -- (5.6,-0.8);
\draw (6.3,-0.2) -- (6.3,-0.8);
\draw (E) -- node {} (E');
\end{tikzpicture}
\end{center}

\item   Let $Q$ be the quiver shown below. 
    
     \begin{center}
\begin{tikzpicture}[->,>=stealth, node distance=1.5cm, auto]
\node (A'') at (0,1) {$30$};
\node (B'') at (2,1) {$31$};
\node (C'') at (4,1) {$32$};
\node (D'') at (6,1) {$\cdots$};
\node (E'') at (8,1) {$3,t-1$};
\node (F'') at (10,1) {$3t$};
\draw (B'') -- node {} (A'');
\draw (C'') -- node {} (B'');
\draw (F'') -- node {} (E'');
\draw (5.55,1) -- (4.35,1);
\draw (7.4,1) -- (6.35,1);
\node (B) at (2,0) {$21$};
\node (C) at (4,0) {$22$};
\node (D) at (6,0) {$\cdots$};
\node (E) at (8,0) {$2,t-1$};
\node (F) at (10,0) {$2t$};
\node (G) at (12,0) {$2,t+1$};
\draw (G) -- node {} (F);
\draw (C) -- node {} (B);
\draw (F) -- node {} (E);
\draw (5.55,0) -- (4.35,0);
\draw (7.4,0) -- (6.35,0);
\node (A') at (0,-1) {$10$};
\node (B') at (2,-1) {$11$};
\node (C') at (4,-1) {$12$};
\node (D') at (6,-1) {$\cdots$};
\node (E') at (8,-1) {$1,t-1$};
\node (F') at (10,-1) {$1t$};
\draw (B') -- node {} (A');
\draw (C') -- node {} (B');
\draw (F') -- node {} (E');
\draw (5.55,-1) -- (4.35,-1);
\draw (7.4,-1) -- (6.35,-1);
\draw (B) -- node {} (B');
\draw (C) -- node {} (C');
\draw (A') -- node {} (B);
\draw (B') -- node {} (C);
\draw (E') -- node {} (F);
\draw (4.35,-0.9) -- (5.55,-0.1);
\draw (6.35,-0.9) -- (7.4,-0.1);
\draw (5.6,-0.2) -- (5.6,-0.8);
\draw (6.3,-0.2) -- (6.3,-0.8);
\draw (E) -- node {} (E');
\draw (F) -- node {} (F');
\draw (F') -- node {} (G);
\draw (B) -- node {} (B'');
\draw (C) -- node {} (C'');
\draw (E) -- node {} (E'');
\draw (F) -- node {} (F'');
\draw (A'') -- node {} (B);
\draw (B'') -- node {} (C);
\draw (E'') -- node {} (F);
\draw (F'') -- node {} (G);
\draw (4.35,0.9) -- (5.55,0.1);
\draw (6.35,0.9) -- (7.4,0.1);
\draw (5.6,0.2) -- (5.6,0.8);
\draw (6.3,0.2) -- (6.3,0.8);
\end{tikzpicture}
\end{center}   
    
Then the full subquiver of $\mu_{2t}\cdots \mu_{22}\mu_{21}(Q)$ consisting of all vertices incident to at least one vertex in $\{31,32,\cdots, (3,t-1)\}$ is isomorphic to the quiver shown below.

 \begin{center}
\begin{tikzpicture}[->,>=stealth, node distance=1.5cm, auto]
\node (B) at (2,0) {$31$};
\node (C) at (4,0) {$32$};
\node (D) at (6,0) {$\cdots$};
\node (E) at (8,0) {$3,t-1$};
\node (F) at (10,0) {$3t$};
\draw (C) -- node {} (B);
\draw (F) -- node {} (E);
\draw (5.55,0) -- (4.35,0);
\draw (7.4,0) -- (6.35,0);
\node (A') at (0,-1) {$10$};
\node (B') at (2,-1) {$21$};
\node (C') at (4,-1) {$22$};
\node (D') at (6,-1) {$\cdots$};
\node (E') at (8,-1) {$2,t-1$};
\draw (B') -- node {} (A');
\draw (C') -- node {} (B');
\draw (5.55,-1) -- (4.35,-1);
\draw (7.4,-1) -- (6.35,-1);
\draw (B) -- node {} (B');
\draw (C) -- node {} (C');
\draw (A') -- node {} (B);
\draw (B') -- node {} (C);
\draw (E') -- node {} (F);
\draw (4.35,-0.9) -- (5.55,-0.1);
\draw (6.35,-0.9) -- (7.4,-0.1);
\draw (5.6,-0.2) -- (5.6,-0.8);
\draw (6.3,-0.2) -- (6.3,-0.8);
\draw (E) -- node {} (E');
\end{tikzpicture}
\end{center}

\end{enumerate}
\end{lemma}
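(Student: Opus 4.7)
The plan is to deduce Lemma \ref{lem:mutation4} directly from Lemmas \ref{lem:mutation1} and \ref{lem:mutation2} by recognizing that the ambient quiver in Lemma \ref{lem:mutation4}(a) (resp.\ (b)) is obtained from that in Lemma \ref{lem:mutation1} (resp.\ \ref{lem:mutation2}) by deleting the single vertex $3,t+1$ together with its two incident arrows $3,t+1\to 3t$ and $2,t+1\to 3,t+1$; the mutation sequence $\mu_{2t}\mu_{2,t-1}\cdots \mu_{22}\mu_{21}$ is the same in both statements.

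The first step will be to verify that the vertex $3,t+1$ is genuinely inert under this mutation sequence. In the initial quivers of Lemmas \ref{lem:mutation1} and \ref{lem:mutation2}, the only neighbors of $3,t+1$ are $3t$ and $2,t+1$. Since a quiver mutation at $k$ modifies an arrow $u\to v$ only when both $u$ and $v$ are neighbors of $k$, and since $3,t+1$ is not a neighbor of any of the mutation sites $21,22,\ldots,2t$, every arrow incident to $3,t+1$ is preserved throughout the sequence. In particular, the induced sub-quiver on the complement $V_0 := V\setminus\{3,t+1\}$ is closed under the mutation sequence, and performing the mutations on the initial quiver of Lemma \ref{lem:mutation4}(a) (resp.\ (b)) yields exactly the restriction to $V_0$ of the mutated quiver produced in Lemma \ref{lem:mutation1} (resp.\ \ref{lem:mutation2}).

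The second step will be to read off the full sub-quiver on vertices incident to $\{31,32,\ldots,3,t-1\}$. Starting from the explicit description of the mutated quiver given in Lemma \ref{lem:mutation1} (resp.\ \ref{lem:mutation2}), the vertices incident to $\{31,\ldots,3,t-1\}$ are precisely $\{11,21,22,\ldots,2,t-1,31,32,\ldots,3,t-1,3t\}$: each $2j$ with $j\le t-1$ is incident via the arrows $3j\to 2j$ and $2,j-1\to 3j$, while $3t$ is included via the arrow $3t\to 3,t-1$. None of $2t$, $2,t+1$, or $3,t+1$ survive—$2t$ and $2,t+1$ because they are adjacent (in the mutated quiver) only to vertices outside $\{31,\ldots,3,t-1\}$, and $3,t+1$ because it has already been deleted. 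Matching arrow by arrow with the target picture then gives Lemma \ref{lem:mutation4}(a) and (b).

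The main point requiring care is the second step: confirming, directly from the picture supplied in Lemmas \ref{lem:mutation1} and \ref{lem:mutation2}, that $2t$ and $2,t+1$ are incident in the mutated quiver only to vertices outside $\{31,\ldots,3,t-1\}$ (namely to $3t$ and $3,t+1$ on the top row), so that cutting down the incidence set from $\{31,\ldots,3t\}$ to $\{31,\ldots,3,t-1\}$ removes exactly the expected vertices. This is a finite inspection of the cross-row arrows $3j\to 2j$ and $2,j-1\to 3j$ in the mutated diagram, and no fresh computation is needed beyond what Lemmas \ref{lem:mutation1} and \ref{lem:mutation2} already provide.
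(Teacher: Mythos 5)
Your approach is genuinely different from the paper's: the appendix simply asserts that Lemma~\ref{lem:mutation4} ``follows directly from a computation of the quiver mutation,'' whereas you deduce it from Lemmas~\ref{lem:mutation1} and~\ref{lem:mutation2} by a vertex-deletion argument. The logic is sound: if a vertex $v_0$ is never adjacent to any mutation site throughout a mutation sequence, then deleting $v_0$ commutes with that sequence, so the restriction to $V\setminus\{v_0\}$ of the mutated large quiver equals the mutation of the restricted quiver. Your inertness check (no new arrows to $v_0$ can be created because $v_0$ never becomes adjacent to a mutation site, by induction on the sequence) and your identification of the incidence neighborhood of $\{31,\ldots,3,t-1\}$ inside the Lemma~\ref{lem:mutation1}/\ref{lem:mutation2} output pictures are both correct. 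This is a cleaner argument than re-running the mutation computation from scratch.

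However, there is a small but real oversight in part (b). Comparing the initial quiver of Lemma~\ref{lem:mutation4}(b) with that of Lemma~\ref{lem:mutation2}, \emph{two} vertices are deleted, not one: both $3,t+1$ and $1,t+1$ are absent (along with their incident arrows). Your proposal only discusses inertness of $3,t+1$. The argument still closes, because $1,t+1$ is adjacent only to $1t$ and $2,t+1$ in Lemma~\ref{lem:mutation2}'s quiver, neither of which lies in $\{21,\ldots,2t\}$, so $1,t+1$ is also inert by the same reasoning, and $1,t+1$ does not appear in the incidence neighborhood of $\{31,\ldots,3,t-1\}$ in the mutated quiver. You should explicitly record this: without it, the claim that the mutated quiver of Lemma~\ref{lem:mutation4}(b) is obtained from that of Lemma~\ref{lem:mutation2} by deleting ``the single vertex $3,t+1$'' is false as stated.
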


\begin{lemma}\label{lem:mutation3}
Assume that $1<j<i$. 
\begin{enumerate}[label={\rm (\alph*)}]\itemsep0,3em

\item For any $k$ with $1<k\leq t$, in the quiver $Q'=\mu_{i-1,k-1}\cdots \mu_{i-1,2}\mu_{i-1,1}(Q_{\mathbb P_3})$, we have
    $$Q'(v_{i-1,k},v)=\begin{cases}
        1 &\mbox{ if } v=v_{i,k},v_{i-1,0},v_{i-2,k+1},\\
        -1 &\mbox{ if } v=v_{i-1,k-1},v_{i-1,k+1},\\
        0 &\mbox{ otherwise.}
    \end{cases}$$
    
\item For any $s$ with $j\leq s<i-1$, in the quiver $Q'=\mu_{(s+1;j-1)}\cdots \mu_{(i-2;j-1)} \mu_{(i-1;j-1)}(Q_{\mathbb P_3})$, we have
    $$Q'(v_{s1},v)=\begin{cases}
        1 &\mbox{ if } v=v_{s-1,1},v_{s+1,1},\\
        -1 &\mbox{ if } v=v_{i1},v_{s-1,0}, v_{s2},\\
        0 &\mbox{ otherwise.}
    \end{cases}$$

\item For any $(s,t)$ with $j\leq s<i-1$, $1<t\leq j-1$,
in the quiver $$Q'=(\mu_{s,t-1}\cdots \mu_{s2}\mu_{s1})\circ \mu_{(s+1;j-1)}\cdots \mu_{(i-2;j-1)} \mu_{(i-1;j-1)}(Q_{\mathbb P_3}),$$
we have
    $$Q'(v_{st},v)=\begin{cases}
        1 &\mbox{ if } v=v_{s-1,t},v_{s+1,t},\\
        -1 &\mbox{ if } v=v_{s,t-1},v_{s,t+1},\\
        0 &\mbox{ otherwise.}
    \end{cases}$$ 
\end{enumerate}
\end{lemma}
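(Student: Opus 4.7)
The plan is to establish the three parts of Lemma \ref{lem:mutation3} by nested inductions, leveraging the quiver-update results already proved in Lemmas \ref{lem:mutation1}, \ref{lem:mutation2}, and \ref{lem:mutation4}. The key observation is that each mutation in the three sequences only alters a bounded number of arrows, and the local subquiver configurations that arise at each intermediate stage match exactly the hypotheses of those cited lemmas; no global computation is therefore needed, only careful bookkeeping.

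For part (a), I would induct on $k$. The base case $k=1$ is just the initial description of the arrows at $v_{i-1,1}$ in $Q_{\mathbb{P}_3}$, which one reads off directly from Figure \ref{Fig;coord_ijk}. For the inductive step, the mutation $\mu_{i-1,k-1}$ is applied to the quiver obtained at the previous stage; using the mutation formula \eqref{eq-mutation-Q} together with the inductive description of the arrows at $v_{i-1,k-1}$, one updates the arrows at $v_{i-1,k}$ directly. Only the vertices $v_{i-1,k-2}$, $v_{i,k-1}$, $v_{i-2,k}$, $v_{i-1,k}$ can contribute, so the update is completely explicit.

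For part (b), I would induct on $i-s$. The base case $s=i-1$ is part (a) specialised to $k=1$. For the inductive step, one observes that the mutation block $\mu_{(s+1;j-1)}$ acts on the three rows of indices with first coordinate $s-1,s,s+1$ in exactly the configuration that appears in the hypotheses of Lemmas \ref{lem:mutation1} and \ref{lem:mutation2}; applying those lemmas with the identification (row $s+1 \leftrightarrow$ row $3$, row $s \leftrightarrow$ row $2$, row $s-1 \leftrightarrow$ row $1$, and $t=j-1$) converts the local subquiver at $v_{s1}$ into the asserted one. Part (c) then follows by a further induction on $t$: the base case $t=1$ is part (b), and the inductive step applies $\mu_{s,t-1}$ to the quiver obtained at the previous stage, at which point Lemma \ref{lem:mutation4}(a) or (b) (chosen according to whether $s=i-1$ or $s<i-1$) produces precisely the claimed description at $v_{st}$.

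The main obstacle will be to match indices rigorously between the abstract subquiver configurations appearing in the cited update lemmas and the concrete local subquivers present at each stage, especially at boundary positions. When $s-1=0$ (so that row-$0$ vertices are frozen), when $t=j-1$ (so $v_{s,t+1}$ is no longer mutated in subsequent steps), or when the mutation block touches the frozen row $i$, the mutation formula \eqref{eq-mutation-Q} contributes only half-weight arrows to frozen pairs; the verification at these edge cases is somewhat delicate and must be handled separately, checking that the half-arrow corrections cancel in exactly the way needed so that the clean descriptions in (a), (b), (c) continue to hold verbatim.
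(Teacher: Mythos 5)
The paper does not spell out a proof here — Lemma \ref{lem:mutation3} is introduced as one of ``four lemmas [that] follow directly from a computation of the quiver mutation,'' followed only by a one-line remark that it follows from Lemmas \ref{lem:mutation1} and \ref{lem:mutation2} by induction. Your strategy of nested inductions tracking a moving three-row window, invoking the template Lemmas \ref{lem:mutation1}, \ref{lem:mutation2}, and \ref{lem:mutation4}, is therefore in the right spirit (the paper itself cites all three for the analogous Corollary \ref{cor:quiver1}).

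However, the indexing does not close as written. The claimed base case for part (b), ``$s=i-1$ is part (a) specialised to $k=1$,'' is out of range on both ends (part (b) requires $s<i-1$ and part (a) requires $k>1$), and it mismatches structurally: part (a) still records extra arrows to $v_{i-1,0}$ and $v_{i-2,k+1}$ in its ``$=1$'' clause, precisely because the surrounding rows have not yet been mutated, whereas (b) and (c) describe the cleaned-up window after those mutations — so (a) is not a degenerate case of (b)/(c), and the genuine base case $s=i-2$ must be computed on its own. Your row identification for invoking Lemmas \ref{lem:mutation1}/\ref{lem:mutation2} is also off by one: the block $\mu_{(s+1;j-1)}$ mutates row $s+1$, which must play the role of row $2$ (the mutated row in those lemmas), so the correct dictionary is row $s\leftrightarrow 3$, row $s+1\leftrightarrow 2$, row $s+2\leftrightarrow 1$; under your dictionary the lemmas would be asked to deliver the full arrow data at a row-$2$ vertex, which their conclusions do not supply. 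Finally, the case split ``$s=i-1$ or $s<i-1$'' in your step for part (c) is vacuous since $s<i-1$ holds throughout (c); the real split is between the first application (input directly from $Q_{\mathbb{P}_3}$, matching Lemma \ref{lem:mutation1}) and the subsequent applications (input given by the output of the previous stage, matching Lemma \ref{lem:mutation2} or \ref{lem:mutation4}). These slips are repairable, but as written the inductive bookkeeping would fail.
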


Note that Lemma~\ref{lem:mutation3} also applies to $Q_\lambda$, where $\lambda$ is the triangulation in Figure~\ref{Fig:P4} for $\mathbb P_4$, by identifying $\mathbb P_3$ with the triangle bounded by $c_2,c_3,c_5$.

\begin{remark}
Indeed, Lemma \ref{lem:mutation3} follows from Lemmas \ref{lem:mutation1} and \ref{lem:mutation2} via induction.
\end{remark}


\begin{corollary}\label{cor:quiver1}
Let $\lambda$ be the triangulation in Figure~\ref{Fig:P4} for $\mathbb P_4$.
 Let $Q$ be the full subquiver formed by the vertices $v_{ij}$ of the quiver $Q_\lambda$, for $(i,j)\in\{0\leq j\leq i\leq n\}\setminus\{(0,0),(n,0),(n,n)\}$. Then in the quiver $Q'=\mu^{\diamondsuit}_{(i;j-1)}(Q)$ (see \eqref{sec7-def-mu-square-ij}), we have 
\begin{enumerate}[label={\rm (\alph*)}]\itemsep0,3em

\item The subquiver formed by the vertices $v_{j-1,j-1},\cdots, v_{22}, v_{11}$ is the type $A_{j-1}$ quiver with linear orientation.

\item for any $v\neq v_{11},v_{22},\cdots,v_{j-1,j-1}$, we have
$$Q'(v_{11},v)=\begin{cases}
    1 &  \mbox{ if $v=v_{21}$,}\\
    -1 & \mbox{ if $v=v_{i1}$,}\\
    0 & \mbox{ otherwise},
\end{cases}\hspace{5mm}
Q'(v_{j-1,j-1},v)=\begin{cases}
    1 &  \mbox{ if $v=v_{j,j-1}$,}\\
    -1 & \mbox{ if $v=v_{j-1,j-2}$,}\\
    0 & \mbox{ otherwise}.
\end{cases}$$
$$Q'(v_{kk},v)=\begin{cases}
    1 &  \mbox{ if $v=v_{k+1,k}$,}\\
    -1 & \mbox{ if $v=v_{k,k-1}$,}\\
    0 & \mbox{ otherwise,}
\end{cases}$$
for $1<k<j-1$.

\end{enumerate}
\end{corollary}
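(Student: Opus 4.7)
The plan is to verify Corollary~\ref{cor:quiver1} by an iterative application of the quiver-mutation lemmas~\ref{lem:mutation1}--\ref{lem:mutation3}, exploiting the natural decomposition
$$\mu^{\diamondsuit}_{(i;j-1)} = \underbrace{\mu_{(2;1)}\circ\mu_{(3;2)}\circ\cdots\circ\mu_{(j-1;j-2)}}_{=:\,\mathrm{II}} \;\circ\; \underbrace{\mu_{(j;j-1)}\circ\mu_{(j+1;j-1)}\circ\cdots\circ\mu_{(i-1;j-1)}}_{=:\,\mathrm{I}}.$$
Here Stage $\mathrm{I}$ is a cascade of mutations on rows $s\geq j$ of the quiver, and Stage $\mathrm{II}$ is an $A$-type cascade on rows $s\leq j-1$. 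None of the vertices $v_{1,1},v_{2,2},\ldots,v_{j-1,j-1}$ appear in the mutation sequence, so the task reduces to tracking how their neighborhoods evolve, and to showing that the diagonal chain is assembled correctly by $\mathrm{II}$.

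First I would analyze the effect of Stage $\mathrm{I}$ on $Q$. Because every mutation in $\mathrm{I}$ occurs at a vertex $v_{s,t}$ with $s\geq j$, the arrows among $\{v_{k,k}\}_{k\leq j-1}$ are untouched, and no new arrows are created between them through the quiver-mutation composition rule (in the initial quiver $Q$, the neighbors of $v_{k,k}$ lie only in rows $k$ and $k+1$, so no composition through a mutated vertex links two distinct $v_{k,k}, v_{k',k'}$). Lemma~\ref{lem:mutation3}(b)--(c) then provides the precise arrow structure around the ``interface'' vertices $v_{s,t}$ with $j\leq s<i$ after $\mathrm{I}$, which is exactly the input pattern needed by Stage $\mathrm{II}$.

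Next I would prove by induction on $k$, increasing from $k=2$ up to $k=j-1$, that after the initial segment $\mu_{(k;k-1)}\circ\mu_{(k-1;k-2)}\circ\cdots\circ\mu_{(2;1)}\circ\mathrm{I}$ of $\mu^{\diamondsuit}_{(i;j-1)}$ has been applied, the full subquiver on $\{v_{1,1},\ldots,v_{k,k}\}$ is linear type $A_k$ with the claimed orientation, the boundary arrows at $v_{1,1}$ are as in part~(b), and the arrows around the frontier $v_{k,k},v_{k+1,k},v_{k+1,k-1}$ are exactly the configuration required as input to the next block $\mu_{(k+1;k)}$. The inductive step is the application of one block $\mu_{(k+1;k)}=\mu_{k+1,k}\circ\cdots\circ\mu_{k+1,1}$; its effect on rows $k-1,k,k+1$ is controlled directly by Lemmas~\ref{lem:mutation1}, \ref{lem:mutation2}, and \ref{lem:mutation4}, each of which handles a different configuration at the top row (depending on whether $k+2<j$ and on the arrow pattern there). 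The final mutation $\mu_{k+1,k}$ in the block is the one that attaches the new link $v_{k,k}\to v_{k+1,k+1}$ to the chain.

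The main obstacle will be the careful bookkeeping at the boundary of each block: both the ``entry'' arrows at $v_{k+1,1}$, which must match the hypotheses of Lemmas~\ref{lem:mutation1}--\ref{lem:mutation4}, and the ``exit'' arrows at $v_{k+1,k},v_{k,k},v_{k+1,k+1}$, which must satisfy the inductive hypothesis for the next round. Once this interface is handled consistently, part~(a) is immediate from the $k=j-1$ instance of the induction. Part~(b) then follows by combining the inductive description of the chain at each $v_{k,k}$ with Lemma~\ref{lem:mutation3}(b), which pinpoints the surviving ``long'' arrows from $v_{1,1}$ to $v_{i,1}$ and from $v_{j-1,j-1}$ to $v_{j-1,j-2}$ and $v_{j,j-1}$ that persist from Stage $\mathrm{I}$ through the subsequent $A$-type cascade.
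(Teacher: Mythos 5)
Your proposal is correct and follows essentially the same approach as the paper's one-sentence proof, which cites induction via Lemmas~\ref{lem:mutation1}, \ref{lem:mutation2}, and~\ref{lem:mutation4}. You add some useful explicit structure (the Stage I / Stage II decomposition, the observation that the diagonal chain $\{v_{kk}\}$ is never mutated and cannot acquire new internal arrows through a 3-cycle across a mutated vertex, and the supplementary use of Lemma~\ref{lem:mutation3}) that the paper leaves implicit, but there is no substantive difference in method.
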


\begin{proof}
    It follows from Lemmas~\ref{lem:mutation1},  \ref{lem:mutation2}, and \ref{lem:mutation4} via induction. 
\end{proof}

The following Lemma can be proved similarly.

\begin{lemma}\label{lem:quiver5}
We use the $\overline v_{ij}$ labeling for $V_{\mathbb P_3}$. Then for each $j>1$, we defined the mutation  $\overline\mu^{\diamondsuit}_j$ as in \eqref{sec7-def-mu-square-j}.
    In the quiver $Q'=\overline  \mu^{\diamondsuit}_j(Q_{\mathbb P_3})$, we have 
\begin{enumerate}[label={\rm (\alph*)}]\itemsep0,3em

\item The subquiver formed by the vertices $\overline v_{j-1,j-1},\cdots, \overline v_{22},\overline v_{11}$ is the type $A_{j-1}$ quiver with linear orientation.

\item For any $v\neq \overline v_{11},\overline v_{22},\cdots,\overline v_{j-1,j-1}$, we have
$$Q'(\overline v_{11},v)=\begin{cases}
    1 &  \mbox{ if $v=\bar v_{23}$,}\\
    -1 & \mbox{ if $v=\bar v_{12}$,}\\
    0 & \mbox{ otherwise}.
\end{cases}\hspace{5mm}
Q'(\overline v_{j-1,j-1},v)=\begin{cases}
    1 &  \mbox{ if $v=\bar v_{j-1,n}$,}\\
    -1 & \mbox{ if $v=\bar v_{j-1,j+1}$,}\\
    0 & \mbox{ otherwise}.
\end{cases}$$
$$Q'(\overline v_{kk},v)=\begin{cases}
    1 &  \mbox{ if $v=\bar v_{k+1,k+2}$,}\\
    -1 & \mbox{ if $v=\bar v_{k,k+1}$,}\\
    0 & \mbox{ otherwise,}
\end{cases}$$
for $1<k<j-1$.

\end{enumerate}
\end{lemma}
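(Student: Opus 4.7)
The approach is to mirror the proof of Corollary \ref{cor:quiver1}, with the labeling $v_{ij}$ replaced by $\overline v_{ij}$ and the role of the distinguished vertex $v_1$ replaced by $v_2$. Concretely, under the convention $\overline v_{ji}=(j,n-i,i-j)$, the composite $\overline\mu^{\diamondsuit}_j=\bar\mu_{(1;n-j)}\cdots \bar\mu_{(j-2;n-j)}\bar\mu_{(j-1;n-j)}$ (see \eqref{sec7-def-mu-square-j} and \eqref{eq-def-bar-mu-k;t}) is the exact analogue, in the $\overline v$-labeling, of the sequence $\mu_{(2;1)}\mu_{(3;2)}\cdots \mu_{(j-1;j-2)}$ appearing as the second factor of $\mu^{\diamondsuit}_{(i;j-1)}$ in \eqref{sec7-def-mu-square-ij}, but now the role of ``rows going up to $v_1$'' is played by the diagonals $\overline v_{k,k+1},\overline v_{k,k+2},\dots,\overline v_{k,n}$ emanating from $\overline v_{kk}$ toward the edge opposite $v_2$.

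The plan is as follows. First, I would translate Lemmas \ref{lem:mutation1}, \ref{lem:mutation2}, and \ref{lem:mutation4}, which describe the effect of a single block of mutations $\mu_{2t}\cdots\mu_{22}\mu_{21}$ on a strip of three rows of the quiver, into the $\overline v$-labeling. Because $Q_{\mathbb P_3}$ is $\mathbb Z_3$-invariant with respect to cyclic relabeling of $(v_1,v_2,v_3)$, these lemmas hold verbatim after replacing the small vertices $v_{ij}$ by the corresponding $\overline v_{ij}$ and replacing each block $\mu_{2t}\cdots\mu_{21}$ by $\bar\mu_{2,t+2}\cdots\bar\mu_{2,3}$, i.e.\ by $\bar\mu_{(2;t)}$. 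Next I would perform an induction on $j$ exactly as in the proof of Corollary \ref{cor:quiver1}: Lemma \ref{lem:quiver5} for $j=2$ is a direct computation, and the inductive step factors $\overline\mu^{\diamondsuit}_{j+1}=\bar\mu_{(1;n-j-1)}\cdots \bar\mu_{(j-1;n-j-1)}\bar\mu_{(j;n-j-1)}$, letting us apply the translated versions of Lemmas \ref{lem:mutation1}, \ref{lem:mutation2}, \ref{lem:mutation4} to the three consecutive strips determined by the diagonals through $\overline v_{k-1,k-1}$, $\overline v_{kk}$, $\overline v_{k+1,k+1}$.

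From the translated lemmas, the subquiver on $\{\overline v_{11},\overline v_{22},\dots,\overline v_{j-1,j-1}\}$ becomes a linearly oriented $A_{j-1}$-quiver with arrows $\overline v_{kk}\to \overline v_{k-1,k-1}$, which yields part (a). For part (b), the same translated lemmas record exactly the arrows connecting each $\overline v_{kk}$ to the vertices on the adjacent diagonals: the only surviving arrows are the ones pointing to $\overline v_{k,k+1}$ and $\overline v_{k+1,k+2}$ (coming from $\bar\mu_{(k;n-j)}$ and $\bar\mu_{(k+1;n-j)}$ respectively), and the boundary cases $k=1$ and $k=j-1$ pick up the extra connections to $\overline v_{12},\overline v_{23}$ and to $\overline v_{j-1,j+1},\overline v_{j-1,n}$ respectively, which are precisely the arrows listed in the statement.

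The main routine obstacle will be the bookkeeping in the inductive step: one must verify that arrows produced by later blocks $\bar\mu_{(k;n-j)}$ do not disturb the arrows established by earlier blocks between vertices $\overline v_{ss}$ with $s<k$, and that the frozen vertices on the boundary edges contribute only the displayed arrows. This is a purely combinatorial quiver-mutation computation with no conceptual novelty beyond the arguments already carried out for Lemmas \ref{lem:quiver0}--\ref{lem:mut2} and Corollary \ref{cor:quiver1}, and it is handled by the three translated auxiliary lemmas whose sole purpose is to package exactly this bookkeeping.
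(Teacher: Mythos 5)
Your proposal matches the paper's intent: the paper's entire proof of this lemma is the single sentence ``The following Lemma can be proved similarly,'' referring to the preceding Corollary~\ref{cor:quiver1}, which was proved by applying Lemmas~\ref{lem:mutation1}, \ref{lem:mutation2}, and \ref{lem:mutation4} inductively. Your plan—translate those three strip-mutation lemmas into the $\overline v$-labeling and run an induction on $j$—is precisely that argument made explicit, so the approach is the paper's.

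One caveat worth correcting before you flesh out the details: the symmetry relating the $v_{ij}$-labeling (anchored at $v_1$) to the $\overline v_{ji}$-labeling (anchored at $v_2$) is \emph{not} a $\mathbb Z_3$-rotation. Since $v_{ij}=(n-i,j,i-j)$ and $\overline v_{ji}=(j,n-i,i-j)$, the relabeling $v_{ij}\mapsto \overline v_{ji}$ is the transposition of the first two barycentric coordinates, i.e.\ the reflection swapping $v_1\leftrightarrow v_2$, which \emph{reverses} all arrow orientations of $Q_{\mathbb P_3}$ rather than preserving them. Similarly, $\overline\mu^{\diamondsuit}_j$ is not the ``exact analogue'' of either factor of $\mu^{\diamondsuit}_{(i;j-1)}$: the former consists of $j-1$ blocks each of fixed length $n-j$, while the first factor of $\mu^{\diamondsuit}_{(i;j-1)}$ has blocks of increasing length $1,2,\dots,j-2$. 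Neither discrepancy is fatal—Lemmas~\ref{lem:mutation1}--\ref{lem:mutation4} apply to any local three-row strip regardless of labeling, so you can apply them directly near the corner $v_2$ without invoking a global symmetry—but you should not rely on the claimed cyclic invariance when writing out the inductive step, or the arrow directions and index bookkeeping will come out wrong.
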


\begin{proposition}\label{lem:imageofsplit1}
Let $e=c_5$, the ideal arc in $\mathbb P_4$ depicted in Figure~\ref{Fig:P4}.
Assume that $1<j<i$. For any $s$ with $j\leq s\leq i-1$ and $t$ with $1\leq t\leq j-1$, we have 
\begin{equation*}
    \begin{array}{rcl}
&&\mathbb S_e^U\left((\mu_{st}\cdots \mu_{s2}\mu_{s1})\circ \mu_{(s+1;j-1)}\cdots \mu_{(i-2;j-1)} \mu_{(i-1;j-1)} (A_{st})\right)\vspace{1mm}\\& =& 
[(\mu_{st}\cdots \mu_{s2}\mu_{s1})\circ \mu_{(s+1;j-1)}\cdots \mu_{(i-2;j-1)} \mu_{(i-1;j-1)} (A_{st})\otimes \overline A_{s-1,s-1}\cdot \overline A_{ii}].
\end{array}
\end{equation*}   
\end{proposition}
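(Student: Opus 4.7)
The plan is to establish the identity by induction on the length of the total mutation sequence applied to $A_{st}$, treating the proposition as a common generalization of Lemma~\ref{lem:splitk}(a). All mutations involved take place at vertices lying strictly in the interior of the triangle $\mathbb P_3^1\subset \mathbb P_4$, so the scalar factor $\overline A_{s-1,s-1}\cdot \overline A_{ii}$ lives in $\mathscr U_\omega(\mathbb P_3^2)$ and should be stable under the mutation process.

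First I would handle the base case, where no mutation has yet been applied, by computing $\mathbb S_e^U(A_{st})$ directly from Proposition~\ref{prop:image}. The vertex $v_{st}$ with $1\le t\le j-1\le s-1\le i-2$ lies strictly in the interior of $\mathbb P_3^1$ and not on the common edge $c_5$; a direct examination of the construction of the multisets $\mathcal V^1_{v_{st}}$ and $\mathcal V^2_{v_{st}}$ from \eqref{def-V-split-skein} shows that no small vertex on $c_5$ contributes, so $\mathbb S_e^U(A_{st})=A_{st}\otimes 1$. This matches the right hand side for the trivial mutation case, provided the scalar monomial has not been introduced yet.

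Next I would carry out the inductive step. Suppose the identity holds after a proper initial segment of the mutation sequence, ending at some vertex $v\in \mathbb P_3^1$. Applying the next mutation $\mu_w$ at an interior vertex $w$ of $\mathbb P_3^1$, I would invoke Corollary~\ref{cor:propotion} to rewrite $\mathbb S_e^U(\mu_w(A_w))$ as $\mu_w^{\mathbf s_{\rm split}}(\mathbb S_e^U(A_w))$ multiplied by a correction Laurent monomial in the split frozen variables $\{A_{u'},A_{u''}\}_{u\in\mathcal V_{\rm split}}$. The correction is governed by the multisets $\mathcal V^1_{v_1},\mathcal V^2_{v_1}$ attached to the neighbors $v_1\to w$ of $w$ in the current quiver. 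By the explicit quiver descriptions in Lemma~\ref{lem:mutation3} (and their counterparts after the corresponding mutations), all interior-to-interior arrows incident to $w$ are known at each stage of the process. The key observation is that the arrows from $w$ to vertices on $c_5$ remain controlled throughout the sequence: after the `outer' mutations $\mu_{(s+1;j-1)}\cdots\mu_{(i-1;j-1)}$ have been carried out, only the row $s$ becomes adjacent to the split row, and the adjacency occurs only at the endpoints $v_{s0}$ and $v_{si}$-type vertices. Consequently each correction is either trivial (for vertices not adjacent to $c_5$) or involves only the split vertices attached to $\overline A_{s-1,s-1}$ and $\overline A_{ii}$.

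Finally I would verify that the accumulated correction monomial telescopes to exactly $\overline A_{s-1,s-1}\cdot \overline A_{ii}$ in the $\mathbb P_3^2$-factor. The pattern is already visible in Lemma~\ref{lem:splitk}(a), which is the case $t=j-1$, and in Lemma~\ref{lem:split_e2} (after substituting an appropriate geometric reinterpretation), which gives the analogous phenomenon for the other splitting. The main obstacle will be the bookkeeping: writing down, at each intermediate stage, the precise form of the subquiver of $\mu_{(s;t)}\cdots\mu_{(s+1;j-1)}\cdots\mu_{(i-1;j-1)}(Q_\lambda)$ incident to $v_{st}$, and verifying that the correction contributed by the current mutation either cancels with the previous accumulated monomial or combines with it to match the final $\overline A_{s-1,s-1}\cdot \overline A_{ii}$. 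This bookkeeping is essentially the content of Lemma~\ref{lem:mutation3}, so the proof reduces to a careful case analysis of three regimes: (i) $s=i-1$ with $t$ varying, treated purely by Lemma~\ref{lem:mutation3}(a); (ii) $t=1$ with $s$ varying, treated by Lemma~\ref{lem:mutation3}(b); and (iii) the general case, treated by Lemma~\ref{lem:mutation3}(c). Since the correction monomial at each step is determined by which neighbors of the mutated vertex lie on $c_5$, and Lemma~\ref{lem:mutation3} completely describes those adjacencies, the telescoping follows by induction on $t$ (inner loop) and on $s$ (outer loop, decreasing from $i-1$ down to $j$).
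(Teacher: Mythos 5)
Your base-case claim is wrong, and this is a genuine gap. You assert that a direct examination of the multisets $\mathcal V^1_{v_{st}},\mathcal V^2_{v_{st}}$ from \eqref{def-V-split-skein} shows $\mathbb S_e^U(A_{st})=A_{st}\otimes 1$. In fact, for an interior vertex $v_{st}=(n-s,t,s-t)\in \mathbb P_3^1$ with $s\neq t$, the multiset $\mathcal V^2_{v_{st}}$ contains $s_s$ (the $s$-th small vertex on $c_5$), because the curve $c_s''$ is exactly the locus of small vertices with first barycentric coordinate $n-s$. So the correct formula is
\begin{equation*}
\mathbb S_e^U(A_{st}) = [A_{st}\otimes \overline A_{ss}],
\end{equation*}
not $A_{st}\otimes 1$. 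You can verify this against the base case ($s=i-1$, $t=1$) of the paper's proof: applying $\mathbb S_e^U$ to the exchange relation $\mu_{i-1,1}(A_{i-1,1})=[A^{-1}_{i-1,1}A_{i-1,0}A_{i-2,1}A_{i2}]+[A^{-1}_{i-1,1}A_{i-2,0}A_{i-1,2}A_{i1}]$, the frozen factors $\overline A^{-1}_{i-1,i-1}\overline A_{i-1,i-1}\overline A_{i-2,i-2}\overline A_{ii}$ in the first term and $\overline A^{-1}_{i-1,i-1}\overline A_{i-2,i-2}\overline A_{i-1,i-1}\overline A_{ii}$ in the second both collapse to $\overline A_{i-2,i-2}\cdot\overline A_{ii}$; with your factor of $1$ throughout, you could never produce a nontrivial $\mathbb P_3^2$-factor, so your telescoping argument would give the wrong answer before it even starts.

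Beyond that, your overall shape (double induction on $s$ and $t$, three regimes governed by Lemma~\ref{lem:mutation3}(a), (b), (c)) does match the paper's proof structure, and the reliance on Lemma~\ref{lem:mutation3} to know the local quiver at each stage is the right ingredient. However, the paper does not invoke Corollary~\ref{cor:propotion}: it simply applies the algebra homomorphism $\mathbb S_e^U$ termwise to the explicit two-term exchange relation at each step, uses the inductive hypothesis on the one previously computed cluster variable that appears, and reads off the constant frozen factor $\overline A_{s-1,s-1}\cdot\overline A_{ii}$ from both terms. Using Corollary~\ref{cor:propotion} requires iterating it along the mutation sequence, which in turn requires knowing that the seeds $\mu_w(\mathbf s)$ and $\mu_w(\mathbf s_{\rm split})$ remain in the ``splitting'' relationship at every step — this is not automatic from the definitions and is not established in the paper in the generality you would need. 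Also, be careful with your appeal to Lemma~\ref{lem:splitk}(a) as evidence that the pattern is already visible: Lemma~\ref{lem:splitk} is deduced as a corollary of the very proposition you are trying to prove, so citing it here would be circular.
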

\begin{proof}
We proceed by induction on $i-1-s$ and $t$.

First, consider the case $s=i-1$. 

If $t=1$, then 
$$\mathbb S^U_e(\mu_{i-1,1}(A_{i-1,1}))=\mathbb S^U_e([A^{-1}_{i-1,1}\cdot A_{i-1,0}\cdot A_{i-2,1}\cdot A_{i2}]+[A^{-1}_{i-1,1}\cdot A_{i-2,0}\cdot A_{i-1,2}\cdot A_{i1}])=[\mu_{i-1,1}(A_{i-1,1})\otimes \overline A_{i-2,i-2}\cdot \overline A_{ii}].$$

Now suppose $t>1$. By Lemma \ref{lem:mutation3}(a) and the induction hypothesis on $t$, we have 

\begin{equation*}
    \begin{array}{rcl}
&&\mathbb S_e^U\left(\mu_{i-1,t}\cdots \mu_{i-1,2}\mu_{i-1,1}(A_{i-1,t})\right)\vspace{1mm}\\& =& 
\mathbb S^U_e([A^{-1}_{i-1,t}\cdot A_{it}\cdot A_{i-1,0}\cdot A_{i-2,t+1}]+[A^{-1}_{i-1,t}\cdot \mu_{i-1,t-1}\cdots \mu_{i-1,2}\mu_{i-1,1}(A_{i-1,t-1})\cdot A_{i-1,t+1}])\vspace{1mm}\\& =& 
[\mu_{i-1,t}\cdots \mu_{i-1,2}\mu_{i-1,1}(A_{i-1,t})\otimes \overline A_{i-2,i-2}\cdot \overline A_{ii}].
\end{array}
\end{equation*} 

This establishes the result for $s=i-1$.

The case $t=1$ (for general $s$) follows similarly, using Lemma~\ref{lem:mutation3}(b).

Next, consider the case $s<i-1$ and $t>1$.  Applying Lemma~\ref{lem:mutation3}(c) and the induction hypothesis on $i-1-s+t$, we compute

\begin{equation*}
    \begin{array}{rcl}
&&\mathbb S_e^U\left(\mu_{st}\cdots \mu_{s2}\mu_{s1})\circ \mu_{(s+1;j-1)}\cdots \mu_{(i-2;j-1)} \mu_{(i-1;j-1)}(A_{s,t})\right)\vspace{1mm}\\& =& 
\mathbb S^U_e([A^{-1}_{st}\cdot A_{s-1,t}\cdot  (\mu_{s+1,t}\cdots \mu_{s+1,2}\mu_{s+1,1})\circ \mu_{(s+2;j-1)}\cdots \mu_{(i-2;j-1)} \mu_{(i-1;j-1)}(A_{s+1,t})]
\vspace{1mm}\\& +& 
\mathbb S^U_e([A^{-1}_{st}\cdot (\mu_{s,t-1}\cdots \mu_{s2}\mu_{s1})\circ \mu_{(s+1;j-1)}\cdots \mu_{(i-2;j-1)} \mu_{(i-1;j-1)}(A_{s,t-1})\cdot A_{s,t+1}])\vspace{1mm}\\& =& 
[(\mu_{st}\cdots \mu_{s2}\mu_{s1})\circ \mu_{(s+1;j-1)}\cdots \mu_{(i-2;j-1)} \mu_{(i-1;j-1)} (A_{st})\otimes \overline A_{s-1,s-1}\cdot \overline A_{ii}].
\end{array}
\end{equation*} 
This completes the proof.
\end{proof}

\begin{proof}[Proof of Lemma \ref{lem:quiver1}]
By induction, and by repeatedly applying Lemmas~\ref{lem:mutation1} and \ref{lem:mutation2}, the result follows immediately.  
\end{proof}

\begin{proof}[Proof of Lemma \ref{lem:mut2}]
(a) 
The result is clear if $i=j+1$.

For $i>j+1$, repeatedly applying Lemmas~\ref{lem:mutation1} and ~\ref{lem:mutation2}, we have $Q'(v_{j+1,j-1},v)=Q'(v,v_{j+1,j-1})=0$ for all $v\in \{v_{jt}\mid 1\leq t\leq j-2\}$ in the quiver $Q'=\mu_{(j+1;j-2)}\mu_{(j+2;j-1)}\cdots  \mu_{(i-1;j-1)}(Q_{\mathbb P_3})$. 

Therefore, the mutation at $v_{j+1,j-1}$ commutes with the mutations $\mu_{(j;j-2)}$ for the quiver $Q'=\mu_{(j+1;j-2)}\mu_{(j+2;j-1)}\cdots  \mu_{(i-1;j-1)}(Q_{\mathbb P_3})$, and we obtain
\begin{equation*}
    \begin{array}{rcl}
&&\mu_{(j;j-1)}\mu_{(j+1;j-1)}\mu_{(j+2;j-1)}\cdots  \mu_{(i-1;j-1)} (A_{j,j-2})\vspace{1mm}\\& =& 
\mu_{(j;j-2)}\left(\mu_{j+1,j-1}\mu_{(j+1;j-2)}\right) \mu_{(j+2;j-1)}\cdots \mu_{(i-1;j-1)} (A_{j,j-2})\vspace{1mm}\\& =& 
\mu_{j+1,j-1}\mu_{(j;j-2)}\mu_{(j+1;j-2)} \mu_{(j+2;j-1)}\cdots \mu_{(i-1;j-1)} (A_{j,j-2})\vspace{1mm}\\& =& 
\mu_{(j;j-2)}\mu_{(j+1;j-2)}\mu_{(j+2;j-1)} \cdots \mu_{(i-1;j-1)} (A_{j,j-2}).
\end{array}
\end{equation*}

Repeatedly applying Lemma~\ref{lem:mutation1} in a similar manner, we further deduce that
$$\mu_{(j;j-1)}\cdots \mu_{(i-2;j-1)} \mu_{(i-1;j-1)} (A_{j,j-2})=\mu_{(j;j-2)}\cdots \mu_{(i-2;j-2)} \mu_{(i-1;j-2)} (A_{j,j-2}).$$

(b) The proof is analogous to that of (a), and is therefore omitted.
\end{proof}

\begin{proof}[Proof of Lemma \ref{lem:splitk}]
Part (a) follows directly as a corollary of Proposition \ref{lem:imageofsplit1}.

Part (b) can be established using a similar argument to that of part (a).
\end{proof}

\begin{proof}[Proof of Lemma \ref{lem:same}]
The proof of Lemma \ref{lem:same} is analogous to that of Lemma \ref{lem:mut2}, and is therefore omitted.    
\end{proof}

\begin{proof}[Proof of Lemma \ref{lem:quiver2}]
It follows by Corollary \ref{cor:quiver1} and Lemma \ref{lem:quiver5}.
\end{proof}

\begin{proposition}\label{lem:imageofsplite2}
With the notation in Lemma \ref{lem:split_e2}.
Assume that $1<j<i$. For any $s$ with $j\leq s\leq i-1$ and $t$ with $1\leq t\leq j-1$, we have 

\begin{enumerate}[label={\rm (\alph*)}]\itemsep0,3em
\item $\mathbb S^{U}_{e_2}(A_{ij})=
    [A_{ij}\otimes A''_{nj}]$ for $(i,j)\in\{0\leq j\leq i\leq n\}\setminus\{(0,0),(n,0),(n,n)\}$.

\item 
\begin{equation*}
    \begin{array}{rcl}
&&\mathbb S_{e_2}^U\left((\mu_{st}\cdots \mu_{s2}\mu_{s1})\circ \mu_{(s+1;j-1)}\cdots \mu_{(i-2;j-1)} \mu_{(i-1;j-1)} (A_{st})\right)\vspace{1mm}\\& =& 
[(\mu_{st}\cdots \mu_{s2}\mu_{s1})\circ \mu_{(s+1;j-1)}\cdots \mu_{(i-2;j-1)} \mu_{(i-1;j-1)} (A_{st})\otimes A''_{n,t+1}].
\end{array}
\end{equation*}  
\end{enumerate}
\end{proposition}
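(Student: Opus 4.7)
The plan is to follow the exact same pattern as the proof of Proposition~\ref{lem:imageofsplit1}, merely changing the splitting edge from $c_5$ to $e_2$ and correspondingly changing the frozen tensor factor produced by the splitting from $\overline{A}_{s-1,s-1}\cdot \overline{A}_{ii}$ to the single variable $A''_{n,t+1}$. Structurally, part (a) plays the role of the analogous formula for $\mathbb S^U_{c_5}$ on cluster variables in $\mathscr A_\omega(\mathbb P_3)$, and part (b) is proved by induction on $(i-1-s)+(t-1)$ exactly as in Proposition~\ref{lem:imageofsplit1}.

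For part (a), I would invoke Theorem~\ref{thm:splitU} (equivalently Proposition~\ref{prop:image}). For $i<n$ the vertex $v_{ij}$ lies in the interior of the triangle cut off by $e_2$, so the multisets $\mathcal V^1_{v_{ij}}$ and $\mathcal V^2_{v_{ij}}$ defined in \eqref{def-V-split-skein} are controlled by the curves $c'_k, c''_k$ near $v_{ij}$; a direct inspection shows that only the splitting vertex $s_j$ on $e_2$ at height $j$ contributes, and it contributes a factor of $A''_{nj}$ on the $\fS'$ side. For $i=n$, the vertex lies on $e_2$ itself, and the second case of Proposition~\ref{prop:image}(a) produces $[A_{nj'}A_{nj''}]$, which under our labeling conventions is exactly $[A_{nj}\otimes A''_{nj}]$.

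For part (b), I would induct on $(i-1-s, t)$ in lexicographic order, with three base/step cases mirroring Proposition~\ref{lem:imageofsplit1}. When $s=i-1$ and $t=1$, I expand $\mu_{i-1,1}(A_{i-1,1})$ using the quiver neighborhood of $v_{i-1,1}$ in $Q_{\mathbb P_3}$, obtaining two Weyl-ordered monomials in $A$-variables at vertices $v_{uv}$ with $u<n$ or $v=0$; part~(a) then gives their images. For general $s<i-1$ and $t=1$ (resp.\ $t>1$), I use Lemma~\ref{lem:mutation3}(b) (resp.\ (c)) to expand the outer mutation $\mu_{s,t}$ into two monomials in already-handled cluster variables, and apply the inductive hypothesis term by term. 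Reassembling the result should produce the claimed formula, with the frozen tensor factor $A''_{n,t+1}$ appearing by a telescoping cancellation between the two summands.

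The main obstacle I anticipate is the bookkeeping of the frozen tensor factor. The two summands in the mutation formula for $\mu_{s,t}$ pick up a priori different combinations of $A''_{n,\cdot}$'s from part~(a), and one must verify that after Weyl reordering they collapse to a common factor $A''_{n,t+1}$ so that the image factors as a single tensor product. This is precisely the kind of cancellation governed by the compatibility condition (d) in Proposition~\ref{prop:split} and quantified by Corollary~\ref{cor:propotion}, which forces $\iota(\mu_v^{\mathbf s}(A_v))$ and $\mu_v^{\mathbf s_{\rm split}}(A_v)$ to differ only by an explicit Weyl-ordered monomial in frozen variables. Tracking this monomial at every inductive step, rather than the individual summands, will be the key technical check; the pattern should be the direct analogue of the $\overline A_{s-1,s-1}\cdot \overline A_{ii}\leadsto \overline A_{s,s}\cdot \overline A_{ii}$ telescoping seen in Proposition~\ref{lem:imageofsplit1}, now with a single frozen $A''_{n,t}\leadsto A''_{n,t+1}$ on the $\fS'$ side.
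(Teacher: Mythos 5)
Your proposal matches the paper's proof: the paper's own argument for this proposition is the one‑line remark that, given Lemma~\ref{lem:mutation3}, the proof of Proposition~\ref{lem:imageofsplit1} carries over verbatim, which is exactly your plan. You have also correctly identified the one substantive difference in the bookkeeping --- that the frozen tensor factor now telescopes in the $t$-index (with $A''_{n,t}$ becoming $A''_{n,t+1}$ after the mutation at $v_{s,t}$) rather than in the $s$-index as in Proposition~\ref{lem:imageofsplit1} --- and the crucial technical check that both summands produced by each mutation formula acquire the same frozen factor, so that the image genuinely factors as a tensor.
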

\begin{proof}
    Using Lemma~\ref{lem:mutation3}, the proof of Proposition~\ref{lem:imageofsplit1} works here.
\end{proof}

\begin{proof}[Proof of Lemma \ref{lem:split_e2}]
 Lemma \ref{lem:split_e2} is an immediate corollary of Proposition \ref{lem:imageofsplite2}.
\end{proof}

The following lemma can be proved similarly as Lemma \ref{lem:split_e2}.

\begin{lemma}\label{lem:splitk1}
  For any $i,j$ with $j\leq i$ and any $k$ with $2\leq k\leq j$, we have 

\begin{enumerate}[label={\rm (\alph*)}]\itemsep0,3em

\item
 \begin{equation*}
    \begin{array}{rcl}
&&\mathbb S^{U}_{c_1}\!\bigl(\mathbb S^{U}_{c_3}(\mu_{(k;k-1)}\cdots \mu_{(i-2;k-1)} \mu_{(i-1;k-1)} (A_{k,k-1})))\vspace{1mm}\\& =& 
[\mu_{(k;k-1)}\cdots \mu_{(i-2;k-1)} \mu_{(i-1;k-1)} (A_{k,k-1})\otimes A''_{n,k}\cdot \overline A''_{0,k-1}\cdot \overline A''_{0,i}].
\end{array}
\end{equation*} 

\item  
 \begin{equation*}
    \begin{array}{rcl}
&&\mathbb S^{U}_{c_1}\!\bigl(\mathbb S^{U}_{c_3}(\overline \mu_{(k;n-j)}\cdots \overline \mu_{(j-2;n-j)} \overline \mu_{(j-1;n-j)} (\overline A_{k,k+1})))\vspace{1mm}\\& =& 
[\overline \mu_{(k;n-j)}\cdots \overline \mu_{(j-2;n-j)} \overline \mu_{(j-1;n-j)} (\overline A_{k,k+1})\otimes A''_{nj}\cdot A''_{n,k-1}\cdot \overline A''_{0k}].
\end{array}
\end{equation*} 
\end{enumerate}  
\end{lemma}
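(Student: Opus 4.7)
The plan is to mirror the proof of Lemma \ref{lem:split_e2}, whose key technical step was the more general Proposition \ref{lem:imageofsplite2} established by induction from Lemma \ref{lem:mutation3}. Concretely, I would formulate and prove an intermediate proposition parallel to Proposition \ref{lem:imageofsplite2} for the two-edge splitting $\mathbb S^{U}_{c_1}\circ \mathbb S^{U}_{c_3}$. Its statement asserts that for indices $1\le t\le k-1$ and $k\le s\le i-1$ (with $2\le k\le j\le i$),
\begin{align*}
&\mathbb S^{U}_{c_1}\!\Bigl(\mathbb S^{U}_{c_3}\bigl((\mu_{st}\cdots\mu_{s1})\circ \mu_{(s+1;k-1)}\cdots\mu_{(i-1;k-1)}(A_{st})\bigr)\Bigr) \\
&\qquad = \bigl[(\mu_{st}\cdots\mu_{s1})\circ \mu_{(s+1;k-1)}\cdots\mu_{(i-1;k-1)}(A_{st})\,\otimes\, A''_{n,t+1}\cdot \overline A''_{0,s-1}\cdot \overline A''_{0,i}\bigr],
\end{align*}
together with a companion statement in the bar labeling adapted to part (b). Specializing $s=k$ and $t=k-1$ recovers Lemma \ref{lem:splitk1}(a), and the bar analogue specializes to Lemma \ref{lem:splitk1}(b).

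The intermediate proposition is then proved by the same double induction on the pair $(i-1-s,\,t)$ that drove the proof of Proposition \ref{lem:imageofsplite2}. The base cases $s=i-1$ and $t=1$ each reduce to a single mutation whose exchange relation is given by Lemma \ref{lem:mutation3}(a) and (b); applying Lemma \ref{lem:spitP_4}(a) (which gives $\mathbb S^{U}_{c_1}(\mathbb S^{U}_{c_3}(A_{ij}))=[A_{ij}\otimes A''_{nj}\cdot \overline A''_{0i}]$) to each of the two Laurent monomials in the exchange relation produces the asserted frozen decoration $A''_{n,t+1}\cdot \overline A''_{0,s-1}\cdot \overline A''_{0,i}$. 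For the inductive step, Lemma \ref{lem:mutation3}(c) supplies the local quiver at $v_{st}$ in the intermediate seed obtained just before applying $\mu_{st}$, so that the exchange relation writes the mutated cluster variable as a sum of two Weyl-ordered Laurent monomials whose factors have known images under $\mathbb S^{U}_{c_1}\circ \mathbb S^{U}_{c_3}$ by the inductive hypothesis. Summing these images, combining with the $\asymp$-invariance of cluster variables up to frozen monomials, and simplifying yields the claim. Part (b) is proved symmetrically, using the bar labeling on the $\mathbb P_3^2$ side together with the mirror analogue of Lemma \ref{lem:mutation3}.

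The main obstacle is the careful bookkeeping of how the three frozen factors $A''_{n,t+1}$, $\overline A''_{0,s-1}$, and $\overline A''_{0,i}$ assemble consistently across the two terms of each exchange relation. In Proposition \ref{lem:imageofsplite2} one had to track only the single factor $A''_{n,t+1}$ coming from the splitting at $e_2$, so compatibility of the two monomials was essentially automatic. Here the two independent splittings at $c_1$ and $c_3$ generate frozen factors along opposite edges, and the two terms of the exchange relation at $v_{st}$ inherit these factors from genuinely different boundary neighbours of $v_{st}$ (one from the north/east direction, the other from the south/west direction in the labeling of Figure \ref{Fig;tau-v1}(B)). Verifying that both contributions collapse to the single universal decoration $A''_{n,t+1}\cdot \overline A''_{0,s-1}\cdot \overline A''_{0,i}$ relies on matching the adjacency data in Lemma \ref{lem:mutation3}(c) with the explicit formulas of Lemma \ref{lem:spitP_4}(a), together with the commutativity of the splitting homomorphisms $\mathbb S^{U}_{c_1}$ and $\mathbb S^{U}_{c_3}$ inherited from the analogous commutativity of the skein splitting maps. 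Once this compatibility is confirmed at each inductive step, the proof of Lemma \ref{lem:splitk1} is complete.
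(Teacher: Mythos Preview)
Your proposal is correct and matches the paper's approach: the paper simply states that Lemma~\ref{lem:splitk1} ``can be proved similarly as Lemma~\ref{lem:split_e2}'', which in turn rests on the intermediate Proposition~\ref{lem:imageofsplite2} proved by the same double induction via Lemma~\ref{lem:mutation3}. Your explicit intermediate proposition with the decoration $A''_{n,t+1}\cdot \overline A''_{0,s-1}\cdot \overline A''_{0,i}$ is exactly the right generalization, and your discussion of the bookkeeping obstacle correctly identifies the only new work required beyond the single-edge case.
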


Note that Lemma \ref{lem:same} and Corollary \ref{cor:cv} also hold in $\mathscr U_\omega(\fS)$ for the same reason.

\begin{proof}[Proof of Lemma \ref{lem:spitP_4}]
Part (a) is immediate.

Part (b) is an immediate corollary of Lemma \ref{lem:splitk1}.

For part (c), by Lemma \ref{lem:splitk1}, Lemma \ref{lem:same} and Corollary \ref{cor:cv}, we have 
 \begin{equation*}
    \begin{array}{rcl}
&&\mathbb S^{U}_{c_1}\!\bigl(\mathbb S^{U}_{c_3}\left(\left(\mu_{j-1,j-1}\cdots\mu_{22}\mu_{11}\right)\circ \overline \mu^{\diamondsuit}_j\circ  \mu^{\diamondsuit}_{(i;j-1)}(A_{j-1,j-1})\right)\vspace{1mm}\\& =& \mathbb S^{U}_{c_1}\!\bigl(\mathbb S^{U}_{c_3}\left([A_{11}^{-1}\cdot A_{i1}\cdot \overline \mu^{\diamondsuit}_j\circ  \mu^{\diamondsuit}_{(i;j-1)}(\overline A_{12})]+[A_{j-1,j-1}^{-1}\cdot  \overline \mu^{\diamondsuit}_j\circ  \mu^{\diamondsuit}_{(i;j-1)}(A_{j,j-1})\cdot \overline A_{j-1,n}]\right) \vspace{1mm}\\
&+&\mathbb S^{U}_{c_1}\!\bigl(\mathbb S^{U}_{c_3}\left(\sum_{k=2}^{j-1}[A_{k-1,k-1}^{-1}\cdot A_{kk}^{-1}\cdot \overline \mu^{\diamondsuit}_j\circ  \mu^{\diamondsuit}_{(i;j-1)}(A_{k,k-1})\cdot \overline \mu^{\diamondsuit}_j\circ  \mu^{\diamondsuit}_{(i;j-1)}(\overline A_{k,k+1})]\right)\vspace{1mm}\\
&=& [\left(\mu_{j-1,j-1}\cdots\mu_{22}\mu_{11}\right)\circ \overline \mu^{\diamondsuit}_j\circ  \mu^{\diamondsuit}_{(i;j-1)}(A_{j-1,j-1})\otimes A''_{nj}\cdot \overline A''_{0i}].
\end{array}
\end{equation*}  
This completes the proof.
\end{proof}

\end{appendices}

\bibliography{ref.bib}

\newcommand{\etalchar}[1]{$^{#1}$}
\begin{thebibliography}{GHKK18}

\bibitem[BFZ05]{BFZ}
Arkady Berenstein, Sergey Fomin, and Andrei Zelevinsky.
\newblock Cluster algebras. {III}. {U}pper bounds and double {B}ruhat cells.
\newblock {\em Duke Math. J.}, 126(1):1--52, 2005.

\bibitem[BW11]{BW11}
Francis Bonahon and Helen Wong.
\newblock {Quantum traces for representations of surface groups in
  $SL_2(\mathbb{C})$}.
\newblock {\em Geometry \& Topology}, 15(3):1569--1615, 2011.

\bibitem[BZ05]{BZ}
Arkady Berenstein and Andrei Zelevinsky.
\newblock {Quantum cluster algebras}.
\newblock {\em Advances in Mathematics}, 195(2):405--455, 2005.

\bibitem[CGG{\etalchar{+}}25]{CGGLS}
Roger Casals, Eugene Gorsky, Mikhail Gorsky, Ian Le, Linhui Shen, and Jos\'e
  Simental.
\newblock Cluster structures on braid varieties.
\newblock {\em J. Amer. Math. Soc.}, 38(2):369--479, 2025.

\bibitem[CK22]{CK}
Peigen Cao and Bernhard Keller.
\newblock On leclerc’s conjectural cluster structures for open richardson
  varieties.
\newblock {\em arXiv preprint arXiv:2207.10184}, 2022.

\bibitem[CKQ24]{CKQ}
Peigen Cao, Bernhard Keller, and Fan Qin.
\newblock The valuation pairing on an upper cluster algebra.
\newblock {\em J. Reine Angew. Math.}, 806:71--114, 2024.

\bibitem[cL20]{CL}
\.{I}lke \c{C}anak\c{c}{\i} and Philipp Lampe.
\newblock An expansion formula for type {$A$} and {K}ronecker quantum cluster
  algebras.
\newblock {\em J. Combin. Theory Ser. A}, 171:105132, 30, 2020.

\bibitem[cLS15]{CLS}
\.{I}lke \c{C}anak\c{c}{\i}, Kyungyong Lee, and Ralf Schiffler.
\newblock On cluster algebras from unpunctured surfaces with one marked point.
\newblock {\em Proc. Amer. Math. Soc. Ser. B}, 2:35--49, 2015.

\bibitem[CS98]{Cohn}
Paul~Moritz Cohn and K~Strambach.
\newblock {Skew fields, Theory of general division rings}.
\newblock {\em Jahresbericht der Deutschen Mathematiker Vereinigung},
  100(1):6--8, 1998.

\bibitem[Dav18]{D}
Ben Davison.
\newblock Positivity for quantum cluster algebras.
\newblock {\em Ann. of Math. (2)}, 187(1):157--219, 2018.

\bibitem[Fad95]{F95}
LD~Faddeev.
\newblock {Discrete Heisenberg-Weyl group and modular group}.
\newblock {\em Letters in Mathematical Physics}, 34(3):249--254, 1995.

\bibitem[FG06]{FG06}
Vladimir Fock and Alexander Goncharov.
\newblock {Moduli spaces of local systems and higher Teichm{\"u}ller theory}.
\newblock {\em Publications Math{\'e}matiques de l'IH{\'E}S}, 103:1--211, 2006.

\bibitem[FG07]{FG09b}
Vladimir~V Fock and Alexander~B Goncharov.
\newblock {The quantum dilogarithm and representations quantum cluster
  varieties}.
\newblock {\em arXiv preprint math/0702397}, 2007.

\bibitem[FG09]{FG09a}
Vladimir~V Fock and Alexander~B Goncharov.
\newblock {Cluster ensembles, quantization and the dilogarithm}.
\newblock In {\em Annales scientifiques de l'{\'E}cole normale sup{\'e}rieure},
  volume~42, pages 865--930, 2009.

\bibitem[FK94]{FK94}
Ludwig~D Faddeev and Rinat~M Kashaev.
\newblock Quantum dilogarithm.
\newblock {\em Modern Physics Letters A}, 9(05):427--434, 1994.

\bibitem[FST08]{FST}
Sergey Fomin, Michael Shapiro, and Dylan Thurston.
\newblock Cluster algebras and triangulated surfaces. {I}. {C}luster complexes.
\newblock {\em Acta Math.}, 201(1):83--146, 2008.

\bibitem[FZ02]{FZ}
Sergey Fomin and Andrei Zelevinsky.
\newblock Cluster algebras. {I}. {F}oundations.
\newblock {\em J. Amer. Math. Soc.}, 15(2):497--529, 2002.

\bibitem[GHKK18]{GHKK}
Mark Gross, Paul Hacking, Sean Keel, and Maxim Kontsevich.
\newblock Canonical bases for cluster algebras.
\newblock {\em J. Amer. Math. Soc.}, 31(2):497--608, 2018.

\bibitem[GL23]{PT}
Pavel Galashin and Thomas Lam.
\newblock Positroid varieties and cluster algebras.
\newblock {\em Ann. Sci. \'Ec. Norm. Sup\'er. (4)}, 56(3):859--884, 2023.

\bibitem[GLFS20]{GLS}
Christof Gei\ss, Daniel Labardini-Fragoso, and Jan Schr\"oer.
\newblock Generic caldero-chapoton functions with coefficients and applications
  to surface cluster algebras.
\newblock {\em arXiv preprint arXiv:2007.05483}, 2020.

\bibitem[GLS08]{GLS2}
Christof Geiss, Bernard Leclerc, and Jan Schr\"oer.
\newblock Partial flag varieties and preprojective algebras.
\newblock {\em Ann. Inst. Fourier (Grenoble)}, 58(3):825--876, 2008.

\bibitem[GLS20]{GLS1}
Christof Geiss, Bernard Leclerc, and Jan Schr\"oer.
\newblock Quantum cluster algebras and their specializations.
\newblock {\em J. Algebra}, 558:411--422, 2020.

\bibitem[GS19]{GS19}
Alexander Goncharov and Linhui Shen.
\newblock {Quantum geometry of moduli spaces of local systems and
  representation theory}.
\newblock {\em arXiv preprint arXiv:1904.10491}, 2019.

\bibitem[GSV10]{GSV}
Michael Gekhtman, Michael Shapiro, and Alek Vainshtein.
\newblock {\em Cluster algebras and {P}oisson geometry}, volume 167 of {\em
  Mathematical Surveys and Monographs}.
\newblock American Mathematical Society, Providence, RI, 2010.

\bibitem[GY20]{GY}
K.~R. Goodearl and M.~T. Yakimov.
\newblock The {B}erenstein-{Z}elevinsky quantum cluster algebra conjecture.
\newblock {\em J. Eur. Math. Soc. (JEMS)}, 22(8):2453--2509, 2020.

\bibitem[Hia10]{Hiatt}
Christopher Hiatt.
\newblock {Quantum traces in quantum Teichm{\"u}ller theory}.
\newblock {\em Algebraic \& Geometric Topology}, 10(3):1245--1283, 2010.

\bibitem[Hig23]{higgins2020triangular}
Vijay Higgins.
\newblock {Triangular decomposition of ${\rm SL}_3$ skein algebras}.
\newblock {\em Quantum Topology}, 14(1):1--63, 2023.

\bibitem[Hua22]{H1}
Min Huang.
\newblock An expansion formula for quantum cluster algebras from unpunctured
  triangulated surfaces.
\newblock {\em Selecta Math. (N.S.)}, 28(2):Paper No. 21, 58, 2022.

\bibitem[Hua23]{H2}
Min Huang.
\newblock Positivity for quantum cluster algebras from unpunctured orbifolds.
\newblock {\em Trans. Amer. Math. Soc.}, 376(2):1155--1197, 2023.

\bibitem[Hua24]{H3}
Min Huang.
\newblock Positivity for quantum cluster algebras from orbifolds.
\newblock {\em arXiv preprint arXiv:2406.03362}, 2024.

\bibitem[Ing19]{G}
Grace Ingermanson.
\newblock {\em Cluster {A}lgebras of {O}pen {R}ichardson {V}arieties}.
\newblock ProQuest LLC, Ann Arbor, MI, 2019.
\newblock Thesis (Ph.D.)--University of Michigan.

\bibitem[IOS23]{IOS}
Tsukasa Ishibashi, Hironori Oya, and Linhui Shen.
\newblock {$\mathcal{A}=\mathcal{U}$} for cluster algebras from moduli spaces
  of {$G$}-local systems.
\newblock {\em Adv. Math.}, 431:Paper No. 109256, 50, 2023.

\bibitem[IY23]{ishibashi2023skein}
Tsukasa Ishibashi and Wataru Yuasa.
\newblock {Skein and cluster algebras of unpunctured surfaces for
  $\mathfrak{sl}_3$}.
\newblock {\em Mathematische Zeitschrift}, 303(3):72, 2023.

\bibitem[IY25]{IY1}
Tsukasa Ishibashi and Wataru Yuasa.
\newblock Skein and cluster algebras of unpunctured surfaces for
  {$\mathfrak{sp}_4$}.
\newblock {\em Adv. Math.}, 465:Paper No. 110149, 68, 2025.

\bibitem[Kim20]{Kim20}
Hyun~Kyu Kim.
\newblock ${\rm sl}_3$-laminations as bases for ${\rm pgl}_3$ cluster varieties
  for surfaces.
\newblock {\em arXiv preprint arXiv:2011.14765}, 2020.

\bibitem[Kim25]{Kim21}
Hyun~Kyu Kim.
\newblock Naturality of ${\rm sl}_3$ quantum trace maps for surfaces.
\newblock {\em Quantum Topology}, 16(1), 2025.

\bibitem[KW24]{KimWang}
Hyun~Kyu Kim and Zhihao Wang.
\newblock {Naturality of ${\rm SL}_n$ quantum trace maps for surfaces}.
\newblock {\em arXiv preprint arXiv:2412.16959}, 2024.

\bibitem[Lad13]{L}
Sefi Ladkani.
\newblock On cluster algebras from once punctured closed surfaces.
\newblock {\em arXiv preprint arXiv:1310.4454}, 2013.

\bibitem[L{\^e}18]{le2018triangular}
Thang~TQ L{\^e}.
\newblock Triangular decomposition of skein algebras.
\newblock {\em Quantum Topology}, 9(3):591--632, 2018.

\bibitem[Lec16]{L1}
B.~Leclerc.
\newblock Cluster structures on strata of flag varieties.
\newblock {\em Adv. Math.}, 300:190--228, 2016.

\bibitem[LF09]{Lab09}
Daniel Labardini-Fragoso.
\newblock {Quivers with potentials associated to triangulated surfaces}.
\newblock {\em Proceedings of the London Mathematical Society}, 98(3):797--839,
  2009.

\bibitem[LS15]{LS}
Kyungyong Lee and Ralf Schiffler.
\newblock Positivity for cluster algebras.
\newblock {\em Ann. of Math. (2)}, 182(1):73--125, 2015.

\bibitem[LS24]{LS21}
Thang~TQ L{\^e} and Adam~S Sikora.
\newblock {Stated $SL(n)$-skein modules and algebras}.
\newblock {\em Journal of Topology}, 17(3):e12350, 2024.

\bibitem[LY22]{LY22}
Thang~TQ L{\^e} and Tao Yu.
\newblock Quantum traces and embeddings of stated skein algebras into quantum
  tori.
\newblock {\em Selecta Mathematica}, 28(4):66, 2022.

\bibitem[LY23]{LY23}
Thang~TQ L{\^e} and Tao Yu.
\newblock {Quantum traces for $SL_n$-skein algebras}.
\newblock {\em arXiv preprint arXiv:2303.08082}, 2023.

\bibitem[Mul13]{M}
Greg Muller.
\newblock Locally acyclic cluster algebras.
\newblock {\em Adv. Math.}, 233:207--247, 2013.

\bibitem[Mul14]{M1}
Greg Muller.
\newblock {$\mathcal A=\mathcal U$} for locally acyclic cluster algebras.
\newblock {\em SIGMA Symmetry Integrability Geom. Methods Appl.}, 10:Paper 094,
  8, 2014.

\bibitem[Mul16]{muller2016skein}
Greg Muller.
\newblock {Skein and cluster algebras of marked surfaces}.
\newblock {\em Quantum topology}, 7(3):435--503, 2016.

\bibitem[MW24]{MW}
Han-Bom Moon and Helen Wong.
\newblock Consequences of the compatibility of skein algebra and cluster
  algebra on surfaces.
\newblock {\em New York J. Math.}, 30:1648--1682, 2024.

\bibitem[Qin24]{Q}
Fan Qin.
\newblock Bases for upper cluster algebras and tropical points.
\newblock {\em J. Eur. Math. Soc. (JEMS)}, 26(4):1255--1312, 2024.

\bibitem[Rup11]{R}
Dylan Rupel.
\newblock On a quantum analog of the {C}aldero-{C}hapoton formula.
\newblock {\em Int. Math. Res. Not. IMRN}, (14):3207--3236, 2011.

\bibitem[Sco06]{S}
Joshua~S. Scott.
\newblock Grassmannians and cluster algebras.
\newblock {\em Proc. London Math. Soc. (3)}, 92(2):345--380, 2006.

\bibitem[Sik05]{Sik05}
Adam~S Sikora.
\newblock {Skein theory for $SU(n)$-quantum invariants}.
\newblock {\em Algebraic \& Geometric Topology}, 5(3):865--897, 2005.

\bibitem[SS17]{schrader2017continuous}
Gus Schrader and Alexander Shapiro.
\newblock {Continuous tensor categories from quantum groups I: algebraic
  aspects}.
\newblock {\em arXiv preprint arXiv:1708.08107}, 2017.

\bibitem[SSBW19]{SSW}
K.~Serhiyenko, M.~Sherman-Bennett, and L.~Williams.
\newblock Cluster structures in {S}chubert varieties in the {G}rassmannian.
\newblock {\em Proc. Lond. Math. Soc. (3)}, 119(6):1694--1744, 2019.

\bibitem[SW21]{SW}
Linhui Shen and Daping Weng.
\newblock Cluster structures on double {B}ott-{S}amelson cells.
\newblock {\em Forum Math. Sigma}, 9:Paper No. e66, 89, 2021.

\end{thebibliography}

\end{document}